\documentclass[twoside,a4paper,11pt]{amsart}

\usepackage{MyStyle}

\begin{document}
\title{Higher covering spaces in an $\infty$-topos}
\author{Virgile Constantin}
\begin{abstract}
   We develop a systematic theory of $(n-1)$-truncated maps, called $n$-covering maps, in a fixed $\infty$-topos $\E$, guided by the analogy with classical covering spaces. We prove an equivalence of $n$-categories between $n$-coverings over a pointed connected object $(X,x)$ and $\infty$-actions of the fundamental $n$-group $\Pi_n(X,x)$ on $(n-1)$-truncated objects, which restricts to a classification of pointed connected $n$-coverings in terms of sub-$n$-groups of $\Pi_n(X,x)$. We study the $n$-group of deck transformations $\Deck(p)$, identifying it with $\Pi_n(X,x)$-equivariant autoequivalences of the fiber $F$. For normal $n$-coverings, it is further described as a quotient of $\Pi_n(X,x)$, yielding a classification of such coverings in terms of normal subgroups of $\pi_n(X,x)$. 
   For an arbitrary $n$-covering, the deck $n$-group arises as a quotient of a suitable normalizer. 
   Our approach relies on a careful study of $n$-groups and their $\infty$-actions, on the use of univalent universes, and on an internal Yoneda embedding. When $n=1$ and $\E$ is the $\infty$-category of homotopy types, our results recover the classical theory of covering spaces. We further illustrate the theory in sheaf and étale $\infty$-topoi, where the external deck group recovers cohomology of the base, and in cohesive $\infty$-topoi, where it recovers the $1$-covering theory of manifolds.
\end{abstract}
\maketitle
\vspace{-\baselineskip}
\tableofcontents
\section*{Introduction}
\renewcommand{\thetheorem}{\Alph{theorem}}
The theory of covering spaces, namely of fibrations $p\colon E\to X$ whose fibers are sets, studies their relationship with the fundamental group $\pi_1(X)$ of the pointed base space. A wide range of results can be established, from classification theorems in terms of $\pi_1(X)$-actions, or in terms of subgroups of $\pi_1(X)$, to descriptions of the group $\Deck(p)$ of deck transformations, that is, fiberwise homeomorphisms of the total space $E$. Since the theory is invariant under homotopy equivalences, it is natural to regard it as a theory of $0$-truncated maps. Indeed, different authors \cite{HouHarper_CoveringSpacesInHoTT, HigherGroupsInHoTT_BuchholtzVanDoornRijke, WemmenhoveManeaPortegies_ClassificationCoveringSpacesAndChangeOfBasePoint, MimramOleon_ClassifyingCoveringHoTT} have in this perspective developed aspects of the theory within the syntactic framework of Homotopy Type Theory (HoTT) \cite{hottbook}, whose models are inherently homotopy theoretic. It is therefore natural to ask whether analogous results hold for higher truncation. The aim of this article is to propose a systematic study of $(n-1)$-truncated maps in a fixed $\infty$-topos $\E$ with the guiding analogy of covering space theory. Concretely, we show that the main structural results of classical covering space theory admit natural generalizations to $(n-1)$-truncated maps in this context. We therefore call them \textit{$n$-covering maps}. The key principle is that the mentioned relationships with the fundamental group $\pi_1(X)$ hold in general with the fundamental $n$-group $\Pi_n(X)$ of the base object. When $n=1$, and $\E=\Sp$ is the $\infty$-topos of homotopy types, our results recover the classical theory.

\medskip
 
Higher topoi are $\infty$-categories with several complementary facets, studied in the context of model categories by Rezk in \cite{Rezk_HomotopyTopos} and in the framework of quasicategories by Lurie in \cite{LurieHTT}. They can be viewed as generalized spaces, as illustrated by the example of $\infty$-groupoid-valued sheaves on a topological space, and are commonly defined as left exact localizations of presheaf $\infty$-categories $\Fun(\C^\op,\Sp)$. While this presentation is succinct and efficient, it is not the perspective we adopt here. Instead we think of an $\infty$-topos $\E$ as an $\infty$-category of generalized homotopy types, and our reasoning and constructions are guided by intuitions coming from homotopy theory. Concretely we work from the Giraud-Rezk-Lurie axioms (\cref{def: infinity topos}), which characterize $\infty$-topoi internally through categorical properties such as universality of colimits, descent, and effectiveness of groupoid objects, without reference to a presheaf $\infty$-category. These properties reflect well known phenomena familiar from the $\infty$-category $\Sp$ of homotopy types. For instance, universality and descent for pushouts recover Mather’s cube theorems \cite{Mather_PullsbackInHomotopyTheory}, while the fact that group-like $A_\infty$-spaces $G$ arise as loop spaces of pointed connected spaces $\B G$ \cite{Stasheff_HomotopyAssociativityOfHSpaces, May_GeometryOfIteratedLoopSpaces} reflects the effectiveness of groupoids. Crucially, this axiom also captures the classical equivalence between $G$-spaces and fibrations over $\B G$ \cite{FarjounDwyerKan_EquivariantMapsWhichAreSelfHomotopyEquivalences}. This approach forces us to argue from first principles rather than importing results from spaces, leading to proofs that are more flexible and robust under changes of hypotheses. While many results in $\Sp$ can be transported to presheaf $\infty$-categories and then along left exact localizations $\Psh(\C)\to\E$, we deliberately avoid this route in order to isolate the intrinsic structural features at play. In particular, many arguments rely solely on general principles expected to hold beyond the presentable setting, for instance in elementary $\infty$-topoi as developed by Rasekh \cite{Rasekh_TheoryElementaryHigherToposes}. The interpretation of objects of $\E$ as generalized spaces is further supported by the fact that $\infty$-topoi provide models of Homotopy Type Theory, as established by Shulman in \cite{Shulman_InfinityToposesStrictUnivalentUniverse}. Motivated by this connection, we make extensive use of univalent universes, and rely on the alternative characterization of $\infty$-topoi as presentable $\infty$-categories with universal colimits and sufficient univalent universes (\cref{Higher topoi have enough universes}).

\medskip

As the fundamental $n$-group plays a central role in our results, we recall the idea behind higher groups. An $\infty$-group $G$ can be thought of as a space equipped with a multiplication that is associative, unital, and invertible only up to coherent higher homotopies. Concretely, instead of having strict equalities like $(ab)c=a(bc)$, we have specified paths between the two sides, together with higher paths expressing the compatibility of these choices, and so on indefinitely. From this viewpoint, a classical group is the special case where all higher homotopies are trivial. An $n$-group is obtained by truncating this structure so that no nontrivial homotopical information remains above level~$n$. These higher homotopies can be conveniently encoded using simplicial objects. The classical example is that of loop spaces $\Omega X$ whose multiplication is path concatenation. In fact, the effectiveness of groupoids implies that every $\infty$-group $G$ arises as the loop object of its classifying object $\B G$, which is $n$-truncated when $G$ is an $n$-group. The fundamental $n$-group $\Pi_n(X,x)$ of a pointed space $X$ is the main example in this work. Intuitively it records all loops based at $x$, $1$-homotopies between such loops, $2$-homotopies between such $1$-homotopies, and so on, up to $(n-1)$-homotopy \textit{classes} between $(n-2)$-homotopies. When $n=1$, this reduces to the fundamental group $\pi_1(X,x)$. When $n=2$, it additionally remembers how homotopies between loops can be deformed, giving a richer invariant that detects two-dimensional features of the space. In general, $\Pi_n(X,x)$ can be viewed as the object governing $n$-truncated phenomena over $X$, and indeed can be defined as the loop space $\Omega_x(\tau_nX)$ of the $n$-truncation of $X$.

\medskip

\textbf{Main results of the paper.}
We summarize here the main results concerning $n$-covering maps. As noted above, these results recover the classical theory when $n=1$ and $\E=\Sp$. A preliminary property asserts that for a fixed pointed object $X$, the $\infty$-category of pointed $n$-coverings admits an initial object (\cref{prop: characterization of universal cover}), characterizing the universal $n$-cover of $X$. This result was established in the syntactic setting of HoTT in \cite{MimramOleon_ClassifyingCoveringHoTT}. 

\medskip

\hspace{0.5cm}\textbf{Classification via the fundamental $n$-group.} 
Our first main result generalizes the classical correspondence between coverings and $\pi_1(X)$-actions, usually called a Galois correspondence.
\begin{theorem}[{\cref{Theorem: representation theorem for n-covers}}]\label{theorem intro: representation theorem for n-covers}
     Let $(X,x)\in\E_\ast$ be a pointed connected object, and let $1\leq n<\infty$ be an integer. Then there is an equivalence of $n$-categories
    \begin{equation*}
        \Cov_n(X)\simeq \Act_{\Pi_n(X,x)}(\E^{\sleq{{n-1}}})
    \end{equation*}
    between $n$-covering objects over $X$ and $\infty$-actions of the fundamental $n$-group of $X$ on $(n-1)$-truncated objects. It restricts to an equivalence \[\Cov_{\ast,n}(X)\simeq \Act_{\Pi_n(X,x)}(\E_\ast^{\sleq n-1})\]
    between pointed $n$-covers and pointed $\infty$-actions. For each $1\leq k\leq n$, these restrict to equivalences between $(k-1)$-connected (pointed) $n$-covers and $k$-transitive (pointed) $\infty$-actions.
\end{theorem}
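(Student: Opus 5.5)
The plan is to reduce everything to the basic equivalence between $\infty$-actions of a group and objects over its classifying object, and then truncate. For an $\infty$-group $G$, effectiveness of groupoid objects gives $\E_{/\B G}\simeq \Act_G(\E)$: a map $E\to\B G$ is sent to its fiber over the base point with the induced $G=\Omega\B G$-action (the Čech nerve of $\ast\to\B G$ pulled back along $E$), and conversely an action $G\curvearrowright F$ is sent to the colimit of the action groupoid $F/\!\!/G\to\B G$. Fibers are preserved, so this restricts to $(\E_{/\B G})^{\leq n-1}\simeq \Act_G(\E^{\sleq n-1})$, where the left side is the full subcategory of $(n-1)$-truncated maps. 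The first step is therefore to record that $\Cov_n(Y)$ is exactly this full subcategory of $\E_{/Y}$ for $Y=\B G$.

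The second step replaces $X$ by $\tau_nX$. Since $X$ is connected, so is $\tau_nX$, hence $\tau_nX\simeq \B\Omega_x\tau_nX=\B\Pi_n(X,x)$. I then show that pullback along the unit $\eta\colon X\to\tau_nX$ induces $\Cov_n(\tau_nX)\simeq\Cov_n(X)$. A convenient route is through univalent universes (\cref{Higher topoi have enough universes}): $(n-1)$-truncated maps with fibers in a universe are classified by maps into the subuniverse $\mathcal U^{\leq n-1}$, which is $n$-truncated because its loop spaces are equivalence spaces of $(n-1)$-truncated objects. Hence $\mathrm{Map}(\tau_nX,\mathcal U^{\leq n-1})\simeq\mathrm{Map}(X,\mathcal U^{\leq n-1})$ by the universal property of truncation. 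Taking a union over a cofinal family of universes gives the equivalence of cores, and full faithfulness follows by applying the same argument to internal mapping objects over the base, which are again $(n-1)$-truncated. Composing the two steps gives $\Cov_n(X)\simeq\Act_{\Pi_n(X,x)}(\E^{\sleq n-1})$. Both sides are $n$-categories, since mapping spaces between $(n-1)$-truncated objects over a base are $(n-1)$-truncated.

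Next come the restrictions. A pointed $n$-cover is a cover $E\to X$ together with a point of $E$ over $x$, which is the same as a point of the fiber $F$. Under the equivalence this is a pointed action, i.e. a $G$-equivariant map from $\ast$, or equivalently a point of the underlying object. Formally, I would take the slice of both sides under the object corresponding to $\id_X$ (respectively the trivial action on $\ast$), and the equivalence passes to these slices. For connectivity, use the fiber sequence $F\to E\to X$ with $X$ connected. The long exact sequence, or rather its internal version via the cofiber sequence $G\to F\to F/\!\!/G$, shows that $E$ is $(k-1)$-connected iff $\tau_{k-1}$ of the action groupoid's colimit is trivial. I would take that condition to be the paper's definition of $k$-transitivity (e.g. $\tau_{k-2}F$ is a transitive $\tau_{k-1}G$-object, or $F/\!\!/G$ is $(k-1)$-connected), and check the match directly using the pullback $F\simeq E\times_{\tau_nX}\ast$.

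The main obstacle I expect is the second step: proving that pullback along $X\to\tau_nX$ is an equivalence on $(n-1)$-truncated maps without presheaf presentations. In particular, the size issues with universes and full faithfulness (not just the equivalence of cores) need care. The connectivity–transitivity match depends on the exact definition of $k$-transitive, so it may need adjusting.
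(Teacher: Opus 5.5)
Your unpointed argument is the paper's. You pull back along $\eta_X\colon X\to\tau_nX\simeq\B\Pi_n(X,x)$ and show this is an equivalence on $(n-1)$-truncated objects; this is \cref{prop: characterization n-connected maps through pullback functor}. It is proved there exactly as you suggest: an internal-hom argument gives full faithfulness, and factoring a classifying map through the $n$-truncated universe $\U^{\sleq{n-1}}$ gives essential surjectivity. Only one universe containing the given map is needed, so no union over universes. You then compose with $\Act_G(\E)\simeq\slice{\E}{\B G}$, as the paper does.

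The transitivity clause also matches, but not via a ``cofiber sequence $G\to F\to F\sslash G$'', which does not exist in general. The real mechanism is that $E\to F\sslash\Pi_n(X,x)$ is a base change of the $n$-connected map $\eta_X$; indeed $F\sslash\Pi_n(X,x)\simeq\tau_nE$. Hence $\tau_{k-1}E\simeq\tau_{k-1}(F\sslash\Pi_n(X,x))$ for $k\leq n$. Then \cref{theorem: free transitive regular action characterization} identifies $(k-1)$-connectedness of the quotient with $k$-transitivity.

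The step that fails is the pointed restriction. Coslicing under $1_X$, equivalently under the trivial action on $\ast$ (i.e.\ $1_{\B G}$), does not produce pointed covers. A morphism $1_X\to p$ in $\slice{\E}{X}$ is a section of $p$. Likewise, an equivariant map from the trivial action on $\ast$ into $G\curvearrowright F$ is a homotopy fixed point, i.e.\ a section of $F\sslash G\to\B G$, not a point of $F$. So ``a $G$-equivariant map from $\ast$'' is not ``a point of the underlying object''. For $n=1$ and the universal cover $\mathbb{R}\to S^1$ in $\Sp$ there is no section at all, so your coslice misses most pointed covers.

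The correct object to coslice under is $x\colon\ast\to X$, via $\slice{\E_\ast}{(X,x)}\simeq(\slice{\E}{X})^{x/}$. Since $x$ is in general not $(n-1)$-truncated over $X$, replace it by its reflection $\tau^X_{n-1}(x)$. This reflection is the universal $n$-cover $X\langle n\rangle\to X$ (\cref{prop: characterization of universal cover}), i.e.\ the pullback of $\ast\to\B\Pi_n(X,x)$. It therefore corresponds to the left translation action $\Pi_n(X,x)\curvearrowright\Pi_n(X,x)$, and equivariant maps out of that action are exactly points of $F$. With this coslice your strategy works, and the connectivity clause passes over unchanged since it only concerns underlying objects.

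The paper instead reruns both steps inside $\E_\ast$. It uses the pointed pullback equivalence \cref{cor: characterization n-connected maps through pullback functor in the pointed category}, and the identification $\Act_G(\E_\ast)\simeq\slice{\E_\ast}{\B G}$ of \cref{cor: characterization of pointed action}, which relies on effective groupoids and descent in $\E_\ast$. For the connectivity clause it uses \cref{prop: basepoint forgetful functor preserves and reflects connected maps}.
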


The appropriate notion of $\infty$-actions of an $\infty$-group $G$ on an object $X$, as studied by Nikolaus, Schreiber, and Stevenson in \cite{NSS_PrincipalBundlesGeneralTheory}, follows the same underlying principle as $\infty$-groups. For example, the basic equality $g_1\cdot(g_2\cdot x)=(g_1g_2)\cdot x$ does not hold strictly, but there is a path in $X$ between them, and those paths satisfy higher coherences. While classically connected covers correspond to the transitive actions $\pi_1(X,x)\curvearrowright F$, we extend this result to an appropriate notion of $k$-transitive $\infty$-actions (\cref{def: n transitive free and faithful actions}).

\medskip

It is worth noting one advantage of working externally, as opposed to within HoTT. The case $n=1$ of \cref{theorem intro: representation theorem for n-covers} has been established in HoTT by Hou and Harper in \cite{HouHarper_CoveringSpacesInHoTT}. However, a syntactic equivalence is interpreted in an $\infty$-topos $\E$ as an equivalence between the internal object of $n$-covering maps and the internal object of actions of the fundamental $n$-group. Passing to the external viewpoint, using global sections, therefore yields an equivalence only between the maximal subgroupoids (the cores) of the external $n$-categories appearing in \cref{theorem intro: representation theorem for n-covers}. This limitation is inherent to the internal perspective, and applies to the following \cref{theorem intro: pointed coverings corresponds to subgroups}, which for $n=1$ identifies pointed connected covering maps with subgroups of the fundamental group. Internal versions were established for $n=1$ by Wemmenhove, Manea, and Portegies in \cite{WemmenhoveManeaPortegies_ClassificationCoveringSpacesAndChangeOfBasePoint}, and independently, using different techniques, by Mimram and Oleon in \cite{MimramOleon_ClassifyingCoveringHoTT}. When interpreted in an $\infty$-topos, they yield an equivalence between the sets of isomorphism classes of pointed connected coverings and of subgroups, whereas we obtain an equivalence of posets, capturing the lattice structure of $1$-covering spaces.
\begin{theorem}[{\cref{corollary: pointed coverings correspond to subgroups}}]\label{theorem intro: pointed coverings corresponds to subgroups}
    Let $(X,x)\in\E_\ast$ be a pointed connected object and $1\leq k\leq n$ be integers. Then there is an equivalence of $\infty$-categories: 
    \[\Cov^{\sg{k-1}}_{\ast,n}(X)\simeq\Sub_{(n-k+1,k)}\left(\Pi_n(X,x)\right)\]
    between $(k-1)$-connected pointed $n$-covering maps over $X$, and $k$-symmetric sub-$(n-k+1)$-groups of $\Pi_n(X,x)$.
\end{theorem}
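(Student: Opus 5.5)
The plan is to derive the statement from \cref{theorem intro: representation theorem for n-covers}, restricted to $(k-1)$-connected pointed $n$-covers. That gives an equivalence
\[\Cov^{\sg{k-1}}_{\ast,n}(X)\simeq \Act^{k\text{-tr}}_{\Pi_n(X,x)}(\E_\ast^{\sleq n-1})\]
with $k$-transitive pointed $\infty$-actions of $G:=\Pi_n(X,x)$ on $(n-1)$-truncated pointed objects. It then remains to identify $k$-transitive pointed $\infty$-actions with $k$-symmetric sub-$(n-k+1)$-groups of $G$. Here I use the model of $G$-actions as objects over $\B G\simeq \tau_n X$ (effectiveness of groupoids). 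A pointed action on $F$ is a map $E\to \B G$ with fiber $F$, together with a lift $\ast\to E$ of the basepoint.

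The functor to subgroups is the \emph{stabilizer}. To a pointed action $(F,f)$ I assign the pointed object $E=F/\!\!/G$, pointed by $f$, and set $H:=\Omega_f E$. The fiber sequence $F\to E\to \B G$ with $F$ $(n-1)$-truncated makes $\B H=E$ map to $\B G$ by an $(n-1)$-truncated map. The pointed set $\pi_0$ and the $\pi_i$ of $F$ control the kernel and cokernel of $\pi_i H\to\pi_i G$. So $(n-1)$-truncatedness of $F$ gives that $H\to G$ is a monomorphism in the appropriate sense, meaning its fiber is $(n-1)$-truncated, which matches the paper's notion of sub-$n$-group. $k$-transitivity means $F$ is $(k-1)$-connected. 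The long exact sequence then shows that $\pi_i H\to\pi_iG$ is an isomorphism for $i<k$ and injective at $i=k$. Hence $H$ agrees with $G$ in low degrees, and the genuinely new data is an $(n-k+1)$-group (after delooping $k-1$ times) with the induced $k$-fold symmetry, i.e.\ a $k$-symmetric sub-$(n-k+1)$-group. I expect the definition of $\Sub_{(n-k+1,k)}$ to be set up so that this is a tautology up to the equivalence between $k$-symmetric $m$-groups and pointed $(k-1)$-connected $(m+k-1)$-truncated objects. So I would phrase everything in terms of pointed objects over $\B G$.

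The inverse sends a subgroup $H\to G$, viewed as a pointed map $\B H\to \B G$ of $(k-1)$-connected objects with $(n-1)$-truncated fiber, to its fiber with the induced $G$-action. Equivalently, it sends the subgroup to the pullback of $\B H\to \B G$ along $X\to\tau_nX=\B G$, which is a pointed $(k-1)$-connected $n$-cover. Both composites are identities by descent: a pointed object over $\B G$ is the same data as its fiber with a pointed action. This yields the equivalence of $\infty$-categories.

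The main obstacle is showing that the relevant mapping spaces agree, so that the result is an equivalence of $\infty$-categories (in fact of posets in the appropriate range) and not just of cores. On the covering side, maps between pointed covers over $\B G$ with connected source are pointed maps over $\B G$. The key lemma I would prove first is that the space of pointed maps over $\B G$ between $(k-1)$-connected objects with $(n-1)$-truncated structure maps is $(n-k)$-truncated, with the correct truncation level to match $\Sub_{(n-k+1,k)}$ viewed as an $(n-k+1)$-category. I would argue this by identifying such a mapping space with the fiber of $\Map_\ast(\B H,\B H')\to\Map_\ast(\B H,\B G)$, and using connectivity of $\B H$ against the truncation of $\B H'\to\B G$. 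I would then conclude by checking that this matches the morphisms in the subgroup category.
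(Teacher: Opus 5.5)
Your architecture is the paper's. You restrict \cref{Theorem: representation theorem for n-covers} to $k$-transitive pointed actions, pass to pointed objects over $\B G=\B\Pi_n(X,x)$ by \cref{cor: characterization of pointed action}, and then use the $\B^k$/$\Omega^k$ equivalence of \cref{theorem: symmetric group objects are pointed highly connected objects}. Your stabilizer $H=\Omega_f(F\sslash G)$ is exactly the paper's $\Omega E\simeq\B^{k-1}(\Omega^kE)$ for $E=F\sslash G$.

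However, the identification step in your second paragraph is wrong. $k$-transitivity does not mean that $F$ is $(k-1)$-connected. By \cref{theorem: free transitive regular action characterization} it means that the quotient $E=F\sslash G$ is $(k-1)$-connected. Already for $k=1$, the fiber $\Z$ of $\mathbb{R}\to S^1$ is a transitive but disconnected $\pi_1$-set. Consequently your conclusion that $\pi_iH\to\pi_iG$ is an isomorphism for $i<k$ is false. For $k=2$ and the universal $2$-cover of an $X$ with $\pi_1(X,x)\neq 1$, one has $\pi_0H\cong\pi_1E=1$ while $\pi_0G\cong\pi_1(X,x)$. Moreover, agreement with $G$ in low degrees could never yield a $(k-1)$-fold delooping of $H$. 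What yields it is the opposite fact, that $H$ is $(k-2)$-connected.

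The correct step runs as follows. $E$ is $(k-1)$-connected by transitivity. It is $n$-truncated because $E\to\B G$ is $(n-1)$-truncated and $\B G\simeq\tau_nX$ is $n$-truncated. Hence $\Omega^kE\in\Grp_{(n-k+1,k)}(\E)$, with $E\simeq\B^k(\Omega^kE)$ and $H\simeq\B^{k-1}(\Omega^kE)$. The sub-group condition is then literally that $E\to\B G$ is $(n-1)$-truncated. Your third paragraph tacitly uses this correct condition (that $\B H$ is $(k-1)$-connected), which contradicts the second.

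The ``main obstacle'' in your last paragraph is not one, and the extra mapping-space lemma is unnecessary. Each step of the chain is already an equivalence of $\infty$-categories, not just of cores:
\begin{itemize}
\item pullback along the $n$-connected map $X\to\tau_nX$ on $(n-1)$-truncated pointed slices (\cref{cor: characterization n-connected maps through pullback functor in the pointed category});
\item the identification $\Act_G(\E_\ast)\simeq\slice{\E_\ast}{\B G}$;
\item the equivalence $\Grp_n(\E)\simeq\E_\ast^{\sg{0},\sleq n}$, which restricts to $\Grp_{(n-k+1,k)}(\E)\simeq\E_\ast^{\sg{k-1},\sleq n}$ and so identifies the relevant slices over $G$ and $\B G$.
\end{itemize}
The conditions ``$(k-1)$-connected source'' and ``$(n-1)$-truncated structure map'' then cut out matching full subcategories on both sides.

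If you do want to compute mapping spaces, note that the sharp bound is $(n-k-1)$-truncated, not $(n-k)$-truncated. It comes from lifting the $(k-2)$-connected basepoint $\ast\to E$ against the $(n-1)$-truncated map $E'\to\B G$. So $\Sub_{(n-k+1,k)}(G)$ is an $(n-k)$-category, and a poset when $k=n$.
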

The statement can be seen as a corollary of \cref{theorem intro: representation theorem for n-covers} together with a characterization of pointed $\infty$-actions (\cref{cor: characterization of pointed action}).
Recall that an $n$-group $G$ is the loop object of an $n$-truncated classifying object $\B G$. More generally, if there exists a pointed $(k-1)$-connected object $\B ^kG$ equipped with an $n$-group equivalence $\Omega^{k}\B ^kG\simeq  G$, we call the $n$-group $G$ \textit{$k$-symmetric} (\cref{def: k-symmetric n-groups}). For $k\geq 2$, the $n$-group structure on $G$ exhibits additional commutativity: the Eckmann–Hilton argument, which proves that the $1$-group $\pi_0(G)$ is abelian, is one instance of this phenomenon. In fact, $k$-symmetric $n$-groups are $n$-groups that admit an $E_k$-algebra structure, where $E_k$ is the little $k$-discs operad. They have been studied in HoTT by Buchholtz, van Doorn, and Rijke in \cite{HigherGroupsInHoTT_BuchholtzVanDoornRijke} under the name of $k$-tuply groupal $n$-groupoids, and more recently in \cite{BuchholtzRijke_LESofHomotopynGroups}, from which we borrow the terminology. Extending this concept, we introduce an appropriate notion of $k$-symmetric sub-$m$-groups (\cref{def: sub n groups}). For any pointed $n$-covering map $p\colon E\to X$, the induced map $\Pi_n(p)\colon\Pi_n(E,e)\to\Pi_n(X,x)$ defines a $1$-symmetric sub-$n$-group. However, to extract meaningful results about deck transformations (\cref{theorem intro :action of Deck is n-free} \& \ref{theorem intro: group deck is the quotient of fundamental n-group}), and normality (\cref{theorem intro:characterization normal n-covering maps}), we must restrict our attention to $(n-1)$-connected $n$-covering maps, analogous to restricting to connected coverings when $n=1$. Under the connectivity assumption, the fundamental $n$-group simplifies to $\Pi_n(E,e)\simeq\B^{n-1}\pi_n(E,e)$, which defines an $n$-symmetric sub-$1$-group of $\Pi_n(X,x)$. These specific subgroups are of central importance in this paper. In \cref{theorem: n-symmetric subgroups of the fundamental n-group are 1-subgroups of n-th homotopy group}, we prove that $n$-symmetric sub-$1$-groups of $\Pi_n(X,x)$ are identified with sub-$1$-groups of $\pi_n(X,x)$. Hence when $k=n$, \cref{theorem intro: pointed coverings corresponds to subgroups} specializes to a clean classification of pointed $(n-1)$-connected $n$-coverings in terms of subgroups of the $n$-th homotopy group (\cref{corollary: pointed n-1 connected coverings corresponds to subgroups of homotopy n-th group}):
\begin{equation}\label{theorem intro: pointed coverings corresponds to subgroups eq}
    \Cov_{\ast,n}^{\sg{n-1}}(X)\simeq \Sub_1(\pi_n(X,x)).
    \end{equation}
Specifically, the $n$-covering map $E_A\to X$ associated to a subgroup $A\leq\pi_n(X,x)$ is $(n-1)$-connected, satisfies $\pi_n(E_A)\cong A$, and has its higher homotopy groups canonically identified with those of $X$.

\medskip
 
\hspace{0.5cm}\textbf{Deck transformations.}
Let $p\colon E\to X$ be a map with fiber $F$, where $X$ is pointed and connected. The $\infty$-group of deck transformations $\Deck(p)$ encodes autoequivalences of $E$ over $X$: its points are equivalences $\varphi\colon E\xrightarrow{\sim}E$ together with a homotopy $p\circ\varphi\simeq p$. If $F\to E\to X$ is a fiber sequence, there is an $\infty$-action $\Omega X\curvearrowright F$, which allows us to define the $\infty$-group $\Aut_{\Omega X}(F)$ of $\Omega X$-equivariant autoequivalences of $F$. We show that there is an equivalence of $\infty$-groups \[\Deck(p)\simeq \Aut_{\Omega X}(F).\] If $p$ is an $n$-covering map, so that $F$ is $(n-1)$-truncated, the induced $\infty$-action factors through the fundamental $n$-group $\Pi_n(X,x)$ as in \cref{theorem intro: representation theorem for n-covers}. The following result relates the $n$-group of $\Pi_n(X,x)$-equivariant autoequivalences of $F$ with the $n$-group of deck transformations.
\begin{theorem}[{\cref{corollary: deck transformations as Omega X-equivariant autoequivalences} \& \cref{prop: group deck of a covering is pi_1-equivariant automorphism of the fiber}}]\label{theorem intro: deck transformations as Omega X-equivariant autoequivalences}
   Let $(X,x)\in\E_\ast$ be a pointed connected object, and let $p\colon E\to X$ be an $n$-covering map with fiber $F$. Then there is an equivalence of $n$-groups:  \[\Deck(p)\simeq \Aut_{\Omega X}(F) \simeq \Aut_{\Pi_n(X,x)}(F).\]
    Moreover, this equivalence respects the respective $\infty$-actions on $F$.
\end{theorem}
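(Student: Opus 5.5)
The plan is to deduce both equivalences from the straightening/unstraightening equivalence between objects over $X$ and $\infty$-actions of $\Omega X$. Since $X$ is pointed and connected, effectiveness of groupoid objects gives $X\simeq \B\Omega X$. Descent then yields an equivalence of $\infty$-categories
\[\E_{/X}\simeq \Act_{\Omega X}(\E),\]
which sends $p\colon E\to X$ to its fiber $F=x^\ast E$ with the induced action. Concretely, this equivalence is the composite $\E_{/X}\simeq\Fun(X,\mathcal U)$ for a univalent universe $\mathcal U$ large enough to classify $p$, followed by the identification of pointed maps out of $\B\Omega X$. Because it is an equivalence of $\infty$-categories, it induces equivalences on automorphism $\infty$-groups of corresponding objects.

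The key steps are as follows.
\begin{enumerate}
\item Identify $\Deck(p)$ with the automorphism $\infty$-group $\Aut_{\E_{/X}}(p)=\Omega_{p}\,\E_{/X}^{\simeq}$. This should be essentially the definition: an equivalence $\varphi$ of $E$ together with a homotopy $p\varphi\simeq p$.
\item Transport along $\E_{/X}\simeq\Act_{\Omega X}(\E)$ to obtain $\Deck(p)\simeq\Aut_{\Act_{\Omega X}(\E)}(F)=\Aut_{\Omega X}(F)$. The latter is computed as the loop object of the core of the action category, which is equivalent to $\Omega X$-equivariant autoequivalences of $F$.
\item Compatibility with the actions on $F$. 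The evaluation/forgetful functor $\Act_{\Omega X}(\E)\to\E$, $F\mapsto F$, corresponds under the equivalence to pullback along $x\colon \ast\to X$. This gives a commuting square of automorphism groups
\[\Deck(p)\to\Aut(F)\qquad\text{and}\qquad \Aut_{\Omega X}(F)\to\Aut(F).\]
Hence the two actions on $F$ agree.
\end{enumerate}

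For the second equivalence, with $p$ an $n$-covering, use \cref{theorem intro: representation theorem for n-covers}, or more primitively the fact that $\tau_n$ factors maps into the $n$-truncated universe $\mathcal U^{\leq n-1}$. The classifying map $X\to\mathcal U^{\leq n-1}$ factors uniquely through $\tau_nX$, so $\Omega X\to\Aut(F)$ factors through $\Pi_n(X,x)=\Omega\tau_nX$. Restriction along $\Omega X\to\Pi_n(X,x)$ then gives a fully faithful functor
\[\Act_{\Pi_n(X,x)}(\E^{\leq n-1})\to\Act_{\Omega X}(\E^{\leq n-1}).\]
Full faithfulness holds because $\Fun(\tau_n X,\mathcal U^{\le n-1})\to\Fun(X,\mathcal U^{\le n-1})$ is an equivalence, the target being $n$-truncated. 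Full faithfulness yields $\Aut_{\Pi_n(X,x)}(F)\simeq\Aut_{\Omega X}(F)$, compatibly with the forgetful maps to $\Aut(F)$. Finally, $\Deck(p)$ is an $n$-group because it is the loop object of a component of the core of $\E^{\leq n-1}_{/X}$ (equivalently of $\Fun(X,\mathcal U^{\le n-1})$), which is $n$-truncated since $\mathcal U^{\leq n-1}$ is $n$-truncated.

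The main obstacle is step 2: making precise that the equivalence $\E_{/X}\simeq\Act_{\Omega X}(\E)$ identifies automorphism $\infty$-groups as $\infty$-groups, and not merely as objects. It must also respect the maps to $\Aut(F)$. I would handle this internally via the univalent universe, using $\Deck(p)\simeq\Omega_{\chi_p}\mathrm{Map}(X,\mathcal U)$ and $\mathrm{Map}_\ast(\B\Omega X,\B\Aut(F))$, with evaluation at $x$ giving the map to $\Aut(F)=\Omega_F\mathcal U$. Since all identifications are then loop objects of maps of pointed objects, the group structures and compatibilities come for free.
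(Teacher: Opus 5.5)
Your steps 1--3, and your truncation argument, prove a statement about spaces, while the theorem is about group objects of $\E$. In the paper, $\Deck(p)\coloneq\prod_X\Aut_{/X}(E)$ and $\Aut_{\Omega X}(F)\coloneq\prod_{\B\Omega X}\Aut(F)\sslash\Omega X$ are group objects of $\E$. By contrast, $\Omega_p\,\slice{\E}{X}^{\simeq}$ and the automorphism space of $F$ in $\Act_{\Omega X}(\E)$ are spaces. Identifying $\Deck(p)$ with $\Omega_p\,\slice{\E}{X}^{\simeq}$ is therefore not ``essentially the definition'': that space is only $\Gamma\Deck(p)=\E(\ast,\Deck(p))$ (\cref{remark: global section of internal deck transformations}). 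Consequences:
\begin{itemize}
\item Transporting along the equivalence of $\infty$-categories $\slice{\E}{X}\simeq\Act_{\Omega X}(\E)$ yields only $\Gamma\Deck(p)\simeq\Gamma\Aut_{\Omega X}(F)$.
\item Your full-faithfulness argument likewise compares only global sections of $\Aut_{\Omega X}(F)$ and $\Aut_{\Pi_n(X,x)}(F)$.
\item Reading the core of $(\slice{\E}{X})^{\leq n-1}$ only bounds the truncation level of $\Gamma\Deck(p)$.
\end{itemize}
Global sections do not detect equivalences of group objects. \cref{section: examples} exhibits coverings whose internal deck group is connected while $\pi_0\Gamma\Deck(p)$ is a nonzero cohomology group. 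So the obstacle you name (``as $\infty$-groups, not merely as objects'') is not the real one; the real one is internal versus external.

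Your closing paragraph points toward the fix, but the formula on the equivariant side is wrong. Loops at the classifying map in the pointed object $\Map_\ast(\B\Omega X,\B\Aut(F))$ give the kernel of $\Aut_{\Omega X}(F)\to\Aut(F)$, i.e.\ equivariant self-equivalences restricting to the identity on the fiber over $x$, not $\Aut_{\Omega X}(F)$. You need the unpointed $\Map(\B\Omega X,\U)$, which is also the only place where ``evaluation at $x$'' is nontrivial.

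A correct repair takes one of two forms:
\begin{itemize}
\item Run your argument in every slice $\slice{\E}{T}$, naturally in $T$, and conclude by Yoneda. Here $\E(T,\prod_X\Aut_{/X}(E))$ is the space of self-equivalences of $E\times T$ in $\slice{\E}{X\times T}$, and similarly for the fixed points.
\item Or, as the paper does, note that the two internal objects coincide by construction. The conjugation action $\Omega X\curvearrowright\Aut(F)$ is \emph{defined} by $\Aut(F)\sslash\Omega X\simeq\Aut_{/X}(E)$ over $X\simeq\B\Omega X$. Hence $\prod_{\B\Omega X}\Aut(F)\sslash\Omega X\simeq\prod_X\Aut_{/X}(E)$, with both actions on $F$ induced by the same counit $\varepsilon$.
\end{itemize}

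For the second equivalence, your universe idea does work internally.
\begin{itemize}
\item Since $T\times\eta_X$ is a base change of $\eta_X$, hence $n$-connected, and $\U^{\leq n-1}$ is $n$-truncated, the map $\Map(\eta_X,\U^{\leq n-1})\colon\Map(\tau_nX,\U^{\leq n-1})\to\Map(X,\U^{\leq n-1})$ is an equivalence of objects.
\item Looping at the classifying maps via \cref{prop: image of a map into the fibered universe is deck transformation} then gives $\Deck(p)\simeq\Deck(\tau_np)\simeq\Aut_{\Pi_n(X,x)}(F)$.
\item Internal truncatedness of $\Map(X,\U^{\leq n-1})$ also gives the $n$-group claim.
\end{itemize}
The paper reaches the same conclusion by applying \cref{prop: cartesian square of n-truncation implies equivalence on the objects of sections} to the Cartesian square $\B\Aut_{/X}(E)\to\B\Aut_{/\B\Pi_n(X,x)}(F\sslash\Pi_n(X,x))$ over $\eta_X$.
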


The $\infty$-group $\Deck(p)$, being defined by fiberwise equivalences, acts naturally on the fiber $F$ by evaluation $\varphi\cdot e\coloneq\varphi(e)$. We extend the classical notion of freeness of group action to $\infty$-groups and introduce the notion of $n$-freeness.  
\begin{theorem}[{\cref{theorem: action of Deck is n-free}}]\label{theorem intro :action of Deck is n-free}
  Let $(X,x)\in\E_\ast$ be a pointed connected object, and let $p\colon E\to X$ be an $(n-1)$-connected $n$-covering map with fiber $F$. Then the $\infty$-action $\Deck(p)\curvearrowright F$ is $n$-free. 
\end{theorem}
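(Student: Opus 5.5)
The plan is to reduce $n$-freeness to a statement about \emph{pointed} maps between $(n-1)$-connected $n$-coverings. Those are rigid by \cref{theorem intro: pointed coverings corresponds to subgroups}, in the form \eqref{theorem intro: pointed coverings corresponds to subgroups eq}.

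I am assuming the following reading of \cref{def: n transitive free and faithful actions}: an $\infty$-action $G\curvearrowright F$ is $n$-free when the shear map $G\times F\to F\times F$, $(g,e)\mapsto (g\cdot e,e)$, is $(n-2)$-truncated. Equivalently, for every (generalized) point $e$ of $F$, the orbit map $G\to F$, $g\mapsto g\cdot e$, has $(n-2)$-truncated fibres. For $n=1$ this recovers the classical notion: the orbit map is a monomorphism. If the paper's definition is phrased through the homotopy quotient $F/\!\!/G\to \B G$, I would first translate it into this shear-map form. I will in fact aim to show the stronger statement that the shear map is $(-1)$-truncated.

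\textbf{Step 1: identify the fibres of the orbit map.} Using \cref{theorem intro: deck transformations as Omega X-equivariant autoequivalences}, the action $\Deck(p)\curvearrowright F$ is evaluation $\varphi\mapsto\varphi(e)$. Fix points $e,e'$ of $F$, viewed as lifts of $x$ to $E$. The fibre of the orbit map over $e'$ is the space of pairs $(\varphi,\gamma)$, where $\varphi$ is a deck transformation and $\gamma$ is a path $\varphi(e)\simeq e'$ in $F$. This is the space of equivalences $(E,e)\xrightarrow{\sim}(E,e')$ in the $\infty$-category of pointed objects over $(X,x)$. It is therefore a union of components of the mapping space
\[\Map_{\Cov_{\ast,n}(X)}\big((E,e),(E,e')\big).\]

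\textbf{Step 2: use the poset classification.} Both $(E,e)$ and $(E,e')$ are $(n-1)$-connected pointed $n$-coverings, so by \eqref{theorem intro: pointed coverings corresponds to subgroups eq} their mapping space is computed in the poset $\Sub_1(\pi_n(X,x))$. Hence it is $(-1)$-truncated. Concretely, a pointed map over $X$ is determined by its effect on $\pi_n$, and the higher data are forced by $(n-1)$-connectedness.

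It follows that every fibre of the orbit map is $(-1)$-truncated, and in particular $(n-2)$-truncated since $n\ge 1$. Classically this is the statement that a deck transformation fixing one point of a connected covering is the identity.

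\textbf{Step 3: internalize.} This is the main obstacle. Steps 1 and 2 handle only global points $e,e'\colon 1\to F$. To get truncatedness of the shear map in $\E$, I would test it against an arbitrary object $U$, that is, against generalized points $U\to F\times F$. Base change along $U\to 1$ replaces $X$ by $X\times U$ in $\E_{/U}$, which is again an $\infty$-topos. The relevant constructions are stable under this base change: $n$-coverings, $(n-1)$-connectedness, $\Pi_n$, and (via univalent universes) the internal $\Deck(p)$. So the argument of Steps 1–2 applies fibrewise, with \eqref{theorem intro: pointed coverings corresponds to subgroups eq} used in the slice.

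The delicate point is that $X\times U\to U$ is only fibrewise connected. If the classification is only available for connected bases, I would instead argue directly. Pointed sections over $U$ of the internal mapping object between $(n-1)$-connected $n$-coverings are $(-1)$-truncated, because a pointed map of $(n-1)$-connected $(n-1)$-truncated fibres over the connected $\B\Pi_n$ is determined on $\pi_n$. This can be checked using the obstruction/lifting property for $(n-1)$-connected versus $(n-1)$-truncated maps, which is stable under base change. That yields the monomorphism property of the shear map internally, and hence $n$-freeness.
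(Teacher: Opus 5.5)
Your argument is correct, and it follows a genuinely different route from the paper.

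The paper never computes lifting spaces. It lets $\Deck(p)\times\Pi_n(X,x)$ act on $F$. It gets $n$-transitivity of the $\Pi_n(X,x)$-factor from the $(n-1)$-connectedness of $E$, and $n$-faithfulness of the $\Deck(p)$-factor from the fixed-point inclusion $\Aut_{\Omega X}(F)\to\Aut(F)$ (\cref{cor: Deck(p)-action is n-faithful}). It then applies the general \cref{Theorem: action of a product restricted to H is free}, whose proof runs through $\pi_{n-1}$-stabilizers and \cref{lemma: product of stabilizers,lemma: action is trivial on X iff trivial on X//G}.

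You instead prove directly the higher form of ``a deck transformation fixing a point is the identity''. On $U$-points, the fibre of the shear map over $(e',e)$ is a union of components of the space of pointed maps $(U\times E,e)\to(U\times E,e')$ over $(U\times X,x)$ in $\E_{/U}$. Your Step~3 fallback settles this cleanly. The section $e$ of the $(n-1)$-connected map $U\times E\to U$ is $(n-2)$-connected, and $U\times E\to U\times X$ is $(n-1)$-truncated. So the space of lifts is $(-1)$-truncated: its path spaces are lifting problems against the $(n-2)$-truncated diagonal, hence contractible.

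The obstacle you flag is not one. That $U\times X\to U$ is connected means exactly that $U\times X$ is a connected object of the $\infty$-topos $\E_{/U}$, so the classification applies there verbatim. In any case the lifting estimate makes both the classification and the connectedness of $X$ unnecessary.

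Your route is shorter, avoids all of \cref{subsection: action from a product}, and proves more. The shear map is a monomorphism, i.e.\ $F\sslash\Deck(p)$ is $0$-truncated, which is strictly stronger than $n$-freeness for $n\geq 2$. This is consistent with the paper's later result that for $n\geq 2$ such pointed coverings are normal, so the action is even regular.

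The one step you should make explicit is that the paper's action, defined through classifying maps and the counit $\varepsilon$, really is evaluation on generalized points. This needs a univalence computation of the $U$-points of $Y\times_{Y\sslash\Aut(Y)}Y$; it is not literally what \cref{theorem intro: deck transformations as Omega X-equivariant autoequivalences} states. In exchange, the paper's route produces a reusable theorem about commuting actions that mirrors the classical transitive-plus-faithful argument.
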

Via the identification of \cref{theorem intro: deck transformations as Omega X-equivariant autoequivalences},
the $\infty$-actions of $\Deck(p)$ and $\Pi_n(X,x)$ on $F$ commute. 
The former is $n$-faithful, while the latter is $n$-transitive when $E$ is $(n-1)$-connected. In the classical case $n=1$, this immediately implies freeness. 
In the higher setting, the argument requires a substantially more involved analysis of $\infty$-actions and their properties, carried out in \cref{subsection: action from a product} and culminating in \cref{Theorem: action of a product restricted to H is free}.

\medskip

The following theorem is the central result of this article. It identifies the $n$-group of deck transformations of certain $n$-coverings with a quotient of the fundamental $n$-group, generalizing the classical fact that for a normal covering, the deck group is the quotient $\pi_1(X)/\pi_1(E)$. We call a morphism of $\infty$-groups $f\colon A\to G$ \textit{normal} if it fits into a fiber sequence $A\to G\to Q$ of $\infty$-groups, that is, if it arises as the kernel of another morphism of $\infty$-groups. The quotient $Q$ is denoted $G\sslash A$. For a subgroup $A\leq\pi_n(X,x)$, we write $p_A\colon E_A\to X$ for the corresponding $(n-1)$-connected $n$-covering map under the classification \eqref{theorem intro: pointed coverings corresponds to subgroups eq}.
\begin{theorem}[{\cref{cor: group deck is the quotient of fundamental n-group}}]\label{theorem intro: group deck is the quotient of fundamental n-group}
        Let $(X,x)$ be a pointed connected object of an $\infty$-topos $\E$, and let $A\to \pi_n(X,x)$ be a normal sub-$1$-group object. Then there is an equivalence of $n$-groups \[\Deck(p_A)\simeq \Pi_{n}(X,x){\sslash \B ^{n-1}A}.\]
        Moreover this equivalence respects the respective $\infty$-actions on the fiber $F_A$ of $p_A$.
\end{theorem}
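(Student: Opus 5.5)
The plan is to compute $\Deck(p_A)$ through \cref{theorem intro: deck transformations as Omega X-equivariant autoequivalences} and then recognise the answer as a quotient, using the description of $p_A$ as the covering attached to the $n$-symmetric sub-$1$-group $\B^{n-1}A\to\Pi_n(X,x)$.

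\textbf{Step 1: identify the covering and its fiber.} By \cref{theorem intro: pointed coverings corresponds to subgroups} with $k=n$, together with the identification of $n$-symmetric sub-$1$-groups of $\Pi_n(X,x)$ with subgroups of $\pi_n(X,x)$, the pointed covering $p_A$ corresponds to the pointed $n$-transitive action of $G\coloneq\Pi_n(X,x)$ on its fiber $F_A$. The stabilizer of the base point of this action is $H\coloneq\B^{n-1}A$. I expect this action to be the homogeneous one, $F_A\simeq G/H$. Concretely, $F_A$ is the fiber of $\B H\to\B G$, equivalently the cofiber of the action of $H$ on $G$.

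\textbf{Step 2: equivariant automorphisms of the homogeneous action.} By \cref{theorem intro: deck transformations as Omega X-equivariant autoequivalences}, $\Deck(p_A)\simeq\Aut_G(G/H)$. I then construct a comparison map $N_G(H)\to\Aut_G(G/H)$, where $N_G(H)$ is the normalizer. Since $A$ is normal, $H\to G$ is a normal morphism: the quotient $\pi_n(X,x)\to\pi_n(X,x)/A$ deloops $n-1$ times, and composing with the relevant Postnikov and fiber constructions gives a fiber sequence $H\to G\to G\sslash H$. Normality should also make $N_G(H)\simeq G$. In the homotopy-type picture the comparison map sends $g$ to right multiplication by $g^{-1}$ on $G/H$. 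To build it internally I would use that $G/H\simeq G\sslash H$ carries both a left $G$-action and a right $G\sslash H$-action, and that these two actions commute. This yields a morphism of $\infty$-groups
\[(G\sslash H)^{\mathrm{op}}\simeq G\sslash H\longrightarrow\Aut_G(G/H),\]
and by construction it respects the actions on $F_A$.

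\textbf{Step 3: prove the comparison is an equivalence.} I would check this on loop objects and fibers. The map $\Aut_G(G/H)\to G/H$, given by evaluation at the base point, is a map over $G/H$ whose fiber is the automorphisms fixing the base point. Using the internal Yoneda embedding (Yoneda for $G$-objects), evaluation at the base point identifies $\Aut_G(G/H)$ with the sub-object of $(G/H)^{H}$ consisting of invertible points. Here $(G/H)^H$ denotes the $H$-fixed points.

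\textbf{Main obstacle.} The hard part is showing that $H$ acts trivially on $G/H$ in a coherent way, so that $(G/H)^H\simeq G/H$ and the evaluation map is an equivalence. Coherence is essential: a degreewise trivial action would not suffice, and this is exactly where normality is needed. My first attempt would be to use the fiber sequence $\B H\to\B G\to\B(G\sslash H)$. The $G$-object $G/H$ is the pullback along $\B G\to\B(G\sslash H)$ of the regular $G\sslash H$-object. Its restriction to $\B H$ is therefore pulled back from a point, which gives a trivial $H$-action with all coherences built in. Combined with \cref{theorem intro :action of Deck is n-free}, which says the deck action is $n$-free, and with $n$-transitivity, a free and transitive action is torsorial. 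This forces the comparison map to be an equivalence on underlying objects, and hence an equivalence of $n$-groups.
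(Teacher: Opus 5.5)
Your route differs from the paper's and can be made to work, but two steps need repair.

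\textbf{Comparison with the paper.} The paper never computes $\Aut_G(G/H)$. Instead it proves \cref{theorem: Deck p is loop space of the base B}: if an $(n-1)$-connected $n$-covering $p\colon E\to X$ is the fiber of a map $X\to\B Q$, then $\Deck(p)\simeq Q$. The proof exhibits $\B Q$ and $\B\Deck(p)$ as the image of the same global point of $(\U^{\sleq{n-1}})^X$. The needed monomorphisms come from the internal Yoneda embedding (\cref{Yoneda Embedding}) and from \cref{cor: n-connected map induce mono on universe of n-truncated objects}. The statement is then just the pasting of the defining pullback of $p_A$ with the normality fiber sequence. The same argument also yields uniqueness of the quotient structure (\cref{cor: quotient of n-symmetric sub-1-groups are unique}).

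Your argument is instead the internalized classical computation of $\Aut_G(G/H)$ inside $(G/H)^H$. What you call Yoneda for $G$-objects is just the adjunction $f_!\dashv f^\ast$ along $f\colon\B H\to\B G$, which gives $\Map_G(G/H,Y)\simeq\prod_{\B H}f^\ast(Y\sslash G)=Y^H$. So you do not need Rasekh's theorem. The price is that you must build the comparison map $c\colon Q\to\Aut_G(G/H)$, with $Q=G\sslash H$, by hand. It is adjoint to the pullback of $\Delta_{\B Q}$ along $1\times\B\pi\colon\B Q\times\B G\to\B Q\times\B Q$, where $\B\pi\colon\B G\to\B Q$ is the quotient map; this is essentially the paper's map.

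\textbf{First repair.} Coherent triviality of the restricted $H$-action does not give $(G/H)^H\simeq G/H$. It only gives $(G/H)^H\simeq\prod_{\B H}(\B H\times Q)=\Map(\B^nA,Q)$. To conclude, you need two facts: $\B^nA$ is $(n-1)$-connected, and $Q$ is $(n-1)$-truncated. Then $\Map(\B^nA,Q)\simeq Q$ by \cref{prop: characterization n-connected maps through pullback functor}, applied to $T\times\B^nA\to T$ for every $T$. This is exactly where the hypothesis that $A$ is a sub-$1$-group enters (equivalently, that $E_A$ is $(n-1)$-connected). It plays the same role as the $(n-1)$-connectedness of $p$ in the paper's proof. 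Triviality of the action alone is not enough.

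\textbf{Second repair.} The ending does not go through. From $(G/H)^H\simeq G/H$ you only get $\Map_G(G/H,G/H)\simeq G/H$. But $\Aut_G(G/H)$ is the subobject of invertible points, and you never show that every point is invertible.

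Appealing to \cref{theorem: action of Deck is n-free} together with $n$-transitivity does not fill this gap, for three reasons. You give no argument for the $n$-transitivity of the deck action. It is not inherited from the regular $Q$-action along $c$: since $F_A\sslash Q\simeq\ast$, one finds $\Omega(F_A\sslash\Deck(p_A))\simeq\Deck(p_A)\sslash Q$, so transitivity is itself a connectivity statement about $c$. And in the paper it is deduced from this very corollary, in $(3\Rightarrow 2)$ of \cref{theorem: characterization normal n-covering maps}.

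The correct finish needs neither freeness nor transitivity:
\begin{enumerate}
\item The map $\Aut_G(G/H)\hookrightarrow\Map_G(G/H,G/H)\simeq G/H$ is a monomorphism, since $\prod_{\B G}$ preserves monomorphisms, and it is evaluation at the base point.
\item Its composite with $c$ is the orbit map at the unit of the right regular $Q$-action on $Q$, which is an equivalence.
\item Hence the monomorphism has a section, so it is an equivalence. Therefore $c$ is an equivalence of $n$-groups, and it respects the actions on $F_A$ by construction.
\end{enumerate}

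Finally, drop the claim that normality gives $\N_G(H)\simeq G$. In the paper this is \cref{cor: characterization normality of n symmetric sub 1 groups with normalizers}, which is derived from the result you are proving.
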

This result was established in HoTT in the special case where 
$A=1\leq\pi_n(X,x)$ is the trivial sub-$1$-group 
\cite{HigherGroupsInHoTT_BuchholtzVanDoornRijke}, and our proof is inspired by their approach. The idea is to construct both sides as the same connected component of the same internal mapping object. Given a univalent universe $\U$, its connected components are classifying objects $\B\Aut(F)$ for objects $F$ it classifies. In a similar fashion, if $p\colon E\to X$ is classified by a map $X\to\U$, one can identify the connected component of the adjoint map $\ast\to\U^X$ with the classifying object $\B\Deck(p)$ of fiberwise equivalences of $E$. On the other hand, one realizes that this global point of $\U^X$ can be viewed as the "representable" object $\B \big(\Pi_n(X,x)\sslash \B^{n-1}A\big)(-,x)$. Carrying this out rigorously required an internal version of the Yoneda embedding \cref{Yoneda Embedding}.

\medskip

\cref{theorem intro: group deck is the quotient of fundamental n-group} is the essential input for the characterization of normal $n$-coverings (\cref{theorem intro:characterization normal n-covering maps}) discussed below, as well as for the identification of the deck transformation $n$-group of a normal $n$-covering map \eqref{intro: normal covering eq}.

\medskip

A subtle point accompanies the statement of \cref{theorem intro: group deck is the quotient of fundamental n-group}. For general $\infty$-groups, a quotient $G\sslash A$ need not carry a unique compatible $\infty$-group structure: there may be distinct deloopings $\B Q$ fitting into a fiber sequence $\B A \to \B G \to \B Q$ \cite{BeardsleyFoxHigherGroupsHigherNormality}. Our setting, however, bypasses this ambiguity: the proof identifies the quotient, independently of the chosen $n$-group structure, with the $n$-group of deck transformations $\Deck(p_A)$, whose group structure is intrinsic. As a consequence, when $\B^{n-1}A \to G$ is a normal $n$-symmetric sub-$1$-group, the quotient $G\sslash \B^{n-1}A$ carries a unique $n$-group structure  (\cref{cor: quotient of n-symmetric sub-1-groups are unique}). When $n=1$, this recovers the classical fact that the quotient of a group by a normal subgroup admits a unique compatible group structure.

\medskip

\hspace{0.5cm}\textbf{Normal $n$-coverings.}
An $n$-covering map $p\colon E\to X$ is called \emph{normal} if its total space $E$ is $(n-1)$-connected and the induced $\Deck(p)$-action on the fiber $F$ is $n$-transitive. The central result expresses the $n$-group of deck transformations in terms of fundamental $n$-groups: for a normal $n$-covering map $p$, the induced morphism on fundamental $n$-groups $\B^{n-1}\pi_n(E,e)\to\Pi_n(X,x)$ is normal, and one has a canonical equivalence of $n$-groups:
\begin{equation}\label{intro: normal covering eq}
    \Deck(p)\simeq \Pi_n(X,x)\sslash \B^{n-1}\pi_n(E,e),
\end{equation}
where $E$ is canonically pointed (see \cref{corollary: group deck of a normal covering is a quotient of fundamental group}).
Because of the connectivity assumption on $E$, its fundamental $n$-group records only its $n$-th homotopy group, so that $\Pi_n(E,e)\simeq\B^{n-1}\pi_n(E,e)$. The proof of \eqref{intro: normal covering eq} relies on \cref{theorem intro: group deck is the quotient of fundamental n-group}, which allows the identification of the deck $n$-group as a quotient of fundamental $n$-groups, provided the covering corresponds to a \textit{normal} morphism of $\infty$-groups. The crucial step is therefore to connect the geometric normality condition with the algebraic normality of the associated morphism of fundamental $n$-groups. The key observation is that for a normal $n$-covering $p$, the $\Deck(p)$-action on the fiber $F$ is not only $n$-transitive by definition, but also $n$-free (by \cref{theorem intro :action of Deck is n-free}). Together, these properties force the quotient $F\sslash\Deck(p)$ to be contractible. From this, a short manipulation recovers the base space $X$ as the quotient of the total space $E$ by the deck action: $E\sslash \Deck(p)\simeq X.$ This identification implies that the $\Deck(p)$-action on $E$ is classified by a fiber sequence $E\xrightarrow{p}X\to\B\Deck(p)$. Since $\B\Deck(p)$ is $n$-truncated, the second map factors through $X\to\B\Pi_n(X,x)$, resulting in a pasting of Cartesian squares:
\[\begin{tikzcd}
	E & {\B^n\pi_n(E,e)} & \ast \\
	X & {\B\Pi_n(X,x)} & {\B\Deck(p)}
	\arrow[from=1-1, to=1-2]
	\arrow["p"', from=1-1, to=2-1]
	\arrow["\lrcorner"{anchor=center, pos=0.125}, draw=none, from=1-1, to=2-2]
	\arrow[from=1-2, to=1-3]
	\arrow[from=1-2, to=2-2]
	\arrow["\lrcorner"{anchor=center, pos=0.125}, draw=none, from=1-2, to=2-3]
	\arrow[from=1-3, to=2-3]
	\arrow[from=2-1, to=2-2]
	\arrow[from=2-2, to=2-3]
\end{tikzcd},\]
in which the right-hand square exhibits $\B^{n-1}\pi_n(E,e)\to\Pi_n(X,x)$ as a normal morphism of $\infty$-groups. Using the correspondence between pointed $n$-coverings and sub-$1$-groups \eqref{theorem intro: pointed coverings corresponds to subgroups eq}, the left-hand square means that $p$ corresponds to the associated normal sub-$1$-group. One then concludes that \eqref{intro: normal covering eq} is an equivalence by applying \cref{theorem intro: group deck is the quotient of fundamental n-group}. A parallel argument establishes the converse direction (algebraic normality implies geometric normality), leading to the following characterization.
\begin{theorem}[{\cref{theorem: characterization normal n-covering maps}}]\label{theorem intro:characterization normal n-covering maps}
Let $\E$ be an $\infty$-topos, let $(X,x)$ be a pointed connected object, and let $p\colon E\to X$ be an $(n-1)$-connected $n$-covering map with fiber $F$. The following are equivalent:
\begin{enumerate}
    \item $p$ is a normal $n$-covering map;
    \item there is an equivalence $E\sslash \Deck(p)\simeq X$ under $E$;
    \item $p$ corresponds via \cref{corollary: pointed n-1 connected coverings corresponds to subgroups of homotopy n-th group} to a normal sub-$1$-group $A\hookrightarrow \pi_n(X,x)$.
\end{enumerate}
If one of these conditions is satisfied, there exists a basepoint $e\colon \ast\to F\to E$ rendering $p$ pointed. Moreover, for any such basepoint, the $1$-group $A$ is identified with both the $n$-th homotopy group $\pi_n(E,e)$, and the stabilizer $n$-symmetric $1$-group of the action $\pi_n(X,x)\curvearrowright\pi_{n-1}(F,e)$: \[A\simeq \pi_n(E,e)\simeq \Stab_{\pi_n(X,x)}(1_e).\]
\end{theorem}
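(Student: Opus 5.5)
The plan is to prove the cycle $(1)\Rightarrow(2)\Rightarrow(3)\Rightarrow(1)$, essentially following the sketch in the introduction, and then to read off the identifications of $A$ from the resulting pasting diagram.

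\emph{$(1)\Rightarrow(2)$.} Assume $p$ is normal, so the action $\Deck(p)\curvearrowright F$ is $n$-transitive. By \cref{theorem intro :action of Deck is n-free} it is also $n$-free. Since $F$ is $(n-1)$-truncated and $\Deck(p)$ is an $n$-group, the quotient $F\sslash\Deck(p)$ is $n$-truncated. By $n$-transitivity it is $(n-1)$-connected, and by $n$-freeness its $\pi_n$ vanishes. Hence $F\sslash\Deck(p)\simeq\ast$. Next consider the fiberwise action of $\Deck(p)$ on $E$ over $X$. The map $E\to X$ is $\Deck(p)$-invariant, so it induces $E\sslash\Deck(p)\to X$. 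Because colimits are universal in $\E$, quotients commute with pullback along $x\colon\ast\to X$. So the fiber of this induced map over $x$ is $F\sslash\Deck(p)\simeq\ast$. Since $X$ is connected, the map $E\sslash\Deck(p)\to X$ is an equivalence, and it is one under $E$.

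\emph{$(2)\Rightarrow(3)$.} The equivalence $E\sslash\Deck(p)\simeq X$ gives a fiber sequence $E\to X\to\B\Deck(p)$, so the fiber of $X\to\B\Deck(p)$ is $E$, which is $(n-1)$-connected. In particular it is nonempty, so a basepoint $e$ exists and $p$ becomes pointed. Since $\B\Deck(p)$ is $n$-truncated, the map factors through $X\to\tau_nX=\B\Pi_n(X,x)$. The fiber of $\B\Pi_n(X,x)\to\B\Deck(p)$ is then $\tau_n E\simeq\B^n\pi_n(E,e)$: truncation preserves this fiber sequence because the fiber is computed by pulling back the $n$-truncation, which is fine here since the base is $n$-truncated and $\tau_n$ is left exact on $n$-connected maps. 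Checking this carefully is a technical point. This gives the pasting of Cartesian squares from the introduction. The right square exhibits $\B^{n-1}\pi_n(E,e)\to\Pi_n(X,x)$ as normal, and the left square identifies $p$ with the covering attached to this sub-$n$-symmetric $1$-group. Applying \cref{theorem: n-symmetric subgroups of the fundamental n-group are 1-subgroups of n-th homotopy group}, this corresponds to $A=\pi_n(E,e)\le\pi_n(X,x)$. It remains to see that normality of the $\infty$-group morphism $\B^{n-1}A\to\Pi_n(X,x)$ translates into normality of $A$ in $\pi_n(X,x)$. For $n\ge2$ this is automatic by abelianness. For $n=1$ it is the classical fact that a kernel is a normal subgroup.

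\emph{$(3)\Rightarrow(1)$.} If $A$ is normal, \cref{theorem intro: group deck is the quotient of fundamental n-group} gives $\Deck(p_A)\simeq\Pi_n(X,x)\sslash\B^{n-1}A$, compatibly with the actions on $F_A$. The action of $\Pi_n(X,x)$ on $F_A$ is $n$-transitive, because $E_A$ is $(n-1)$-connected (\cref{theorem intro: representation theorem for n-covers}). Since it factors through the quotient map $\Pi_n(X,x)\to\Pi_n(X,x)\sslash\B^{n-1}A$, which is $0$-connected as the cofiber of a normal map of connected deloopings, the quotient action is also $n$-transitive. Hence $p$ is normal.

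Finally, for any basepoint $e$, the equality $A\simeq\pi_n(E,e)$ follows from the pasting square above, or from the classification \eqref{theorem intro: pointed coverings corresponds to subgroups eq}, which says $\pi_n(E_A)\cong A$. The identification with $\Stab_{\pi_n(X,x)}(1_e)$ comes from the long exact sequence of $F\to E\to X$: the connecting map $\pi_n(X,x)\to\pi_{n-1}(F,e)$ is the orbit map of the action at $1_e$, and its kernel is the image of the injection $\pi_n(E,e)\to\pi_n(X,x)$.

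The main obstacle is expected in $(2)\Rightarrow(3)$. One must identify the truncated fiber sequence $\B^n\pi_n(E)\to\B\Pi_n(X)\to\B\Deck(p)$ rigorously. One must also match the resulting sub-$n$-symmetric group with the one that classifies $p$, which requires checking that the left square is precisely the classifying pullback used in \cref{corollary: pointed n-1 connected coverings corresponds to subgroups of homotopy n-th group}.
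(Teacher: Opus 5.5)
Your $(1)\Rightarrow(2)$ is the paper's argument, read on the fiber rather than in the slice. Your $(2)\Rightarrow(3)$ also follows the paper, but it has a genuine gap at the basepoint.

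\textbf{The gap: the basepoint.} You infer from $E$ being nonempty that ``a basepoint $e$ exists and $p$ becomes pointed''. In an $\infty$-topos this fails. $(n-1)$-connectedness only makes $E\to\ast$ an effective epimorphism, not a split one. In any case you need a global point of $E$ lying over $x$, i.e.\ a global point of $F$.

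Everything afterwards depends on this point. Normality of $\B^{n-1}\pi_n(E,e)\to\Pi_n(X,x)$ requires $\B^n\pi_n(E,e)\to\B\Pi_n(X,x)\to\B\Deck(p)$ to be a fiber sequence of \emph{pointed} objects. The left square must also be the pointed square of \cref{corollary: pointed n-1 connected coverings corresponds to subgroups of homotopy n-th group}.

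The paper gets $e$ ``by the universal property of the pullback''. Writing $\phi\colon X\to\B\Deck(p)$ for the classifying map, this presupposes a homotopy from $\phi\circ x$ to the basepoint. The fiber of $\phi\circ x$ over the basepoint is $\B\Deck(p)(\phi x,\ast)\simeq F$, so such a homotopy is the same datum as a global point of $F$.

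This does not follow from (1) or (2). In $\E=\Sh(S^1)$, take $G=\underline{\Z/2}$ and $X=\B G$ pointed at the trivial torsor. Let $p\colon\ast\to\B G$ classify the Möbius torsor $F$. Then $p$ is a $0$-connected $1$-covering, and $\Deck(p)\simeq\Aut_G(F)\simeq G$ acts regularly on $F$. So (1) and (2) hold, yet $F$ has no global section.

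So a point of $F$ must be assumed, as (3) implicitly does by referring to the pointed classification. Given such a point, the rest of your $(2)\Rightarrow(3)$ is correct. For the step you flagged, let $P$ be the fiber of $\B\Pi_n(X,x)\to\B\Deck(p)$. Then $E\to P$ is a base change of the $n$-connected map $\eta_X$, and $P$ is $n$-truncated, so $P\simeq\tau_nE$. Your case split ($n\geq 2$ versus $n=1$) is a valid substitute for part (2) of \cref{theorem: n-symmetric subgroups of the fundamental n-group are 1-subgroups of n-th homotopy group}.

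\textbf{The step $(3)\Rightarrow(1)$.} Here you depart from the paper, which proves $(3)\Rightarrow(2)$ instead. It uses the pasting $E_A\to\B^nA\to\ast$ over $X\to\B\Pi_n(X,x)\to\B\Deck(p_A)$. By \cref{theorem: Deck p is loop space of the base B}, the bottom composite classifies the deck action on $E_A$, so $E_A\sslash\Deck(p_A)\simeq X$.

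Your argument conflates two actions of $Q=\Pi_n(X,x)\sslash\B^{n-1}A$ on $F_A\simeq Q$:
\begin{itemize}
\item the one through which $\Pi_n(X,x)$ acts, classified by $\ast\to\B Q$, which is translation on one side;
\item the one matched with $\Deck(p_A)$, the restriction of the action on $E_A$ with quotient $X$, which is translation on the other side.
\end{itemize}
For $n=1$ and $Q$ nonabelian these are different maps $Q\to\Aut(F_A)$, as they must be, since the deck action commutes with the $\pi_1$-action. Your conclusion survives because the second action is also regular. That fact, not transitivity of the factored $\Pi_n$-action, is what you need.

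Moreover, ``$0$-connected'' does not give enough connectivity in general. What is true is that $\B\Pi_n(X,x)\to\B Q$ is $(n-1)$-connected, since its fiber is $\B^nA$.

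\textbf{The stabilizer.} Your long-exact-sequence derivation of $A\simeq\Stab_{\pi_n(X,x)}(1_e)$ also needs the boundary map to be the orbit map of the action of \cref{prop: induced action on pi_k}, which you do not prove. The paper avoids this via \cref{cor: induced action of pi_n-1 in the truncated setting and classifing object of the stabilizer}. That result exhibits $\B^n\Stab_{\pi_n(X,x)}(1_e)$ as the $(n-2)$-image of $\ast\to F\to F\sslash\Pi_n(X,x)\simeq\B^nA$, which is all of $\B^nA$.
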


Consequently, the classification of $(n-1)$-connected $n$-covering maps established in \eqref{theorem intro: pointed coverings corresponds to subgroups eq} restricts to an equivalence between normal $n$-coverings and normal 
sub-$1$-groups (\cref{cor: classification normal n-covering maps}):
   \begin{equation}\label{theorem intro: classification normal n-covering maps}
        \Cov^{\nor}_{\ast,n}(X)\simeq \Sub_1^{\nor}(\pi_n(X,x)).
    \end{equation}
\begin{remark}
    When $n>1$, every sub-$1$-group is automatically normal; therefore, in this range, every $(n-1)$-connected $n$-covering is normal.
\end{remark}

\medskip

\hspace{0.5cm}\textbf{Normalizers.}
We now turn to the deck transformations of an $n$-covering map that are not necessarily normal. The following theorem expresses the $n$-group of deck transformations as a quotient of a suitable normalizer. Classically, the normalizer $\N_G(H)$ is defined for a subgroup inclusion 
$H\leq G$. It applies to ordinary ($1$-)coverings, because the induced map 
$\pi_1(E,e)\to\pi_1(X,x)$ is injective. For $n$-coverings, however, the induced morphism of $n$-groups $\Pi_n(E,e)\longrightarrow \Pi_n(X,x)$ need not be a monomorphism. To address this, we extend the notion of normalizer $\N(f)$ of an arbitrary group homomorphism $f\colon H\to G$, introduced by Farjoun and Segev in \cite{FarjounSegev_NormalClosureAndInjectiveNormalizers}, to the setting of $\infty$-groups. Similar constructions appear in other homotopical contexts, for instance in Dwyer-Wilkerson \cite{DwyerWilkerson_HomotopyFixedPointsMethods}.
\begin{theorem}[{\cref{theorem: Deck p of a covering is a quotient of the normalizer}}]\label{theorem intro: Deck p of a covering is a quotient of the normalizer}
   Let $\E$ be an $\infty$-topos, let $(X,x)$ be a pointed connected object, let $p\colon E\to X$ be a connected pointed $n$-covering map in $\E_\ast$, and write $p_n\colon \Pi_n(E,e)\to\Pi_n(X,x)$ for the induced morphism of $n$-groups. Then there is an equivalence of $n$-group objects: 
    \begin{equation*}
        \Deck(p)\simeq \N(p_n)\sslash \Pi_n(E,e).
    \end{equation*}
\end{theorem}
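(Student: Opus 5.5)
We plan to identify both sides with the same $n$-truncated, connected pointed object, namely a suitable classifying object, and then take loops. By \cref{theorem intro: deck transformations as Omega X-equivariant autoequivalences} we already know that $\Deck(p)\simeq \Aut_{\Pi_n(X,x)}(F)$, where $F$ is the fiber with its $\Pi_n(X,x)$-action. Since $p$ is connected and pointed, \cref{theorem intro: representation theorem for n-covers} shows that this action is $1$-transitive and pointed. The characterization of pointed actions (\cref{cor: characterization of pointed action}) then identifies $F$ with the homotopy quotient, or coset object, $\Pi_n(X,x)\sslash \Pi_n(E,e)$ taken along $p_n$. More precisely, $F$ is the fiber of $\B p_n\colon \B\Pi_n(E,e)\to\B\Pi_n(X,x)$, and this fiber is $(n-1)$-truncated. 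So the question becomes a purely group-theoretic one: compute the equivariant automorphisms of the homogeneous object $G\sslash H$, where $G=\Pi_n(X,x)$, $H=\Pi_n(E,e)$ and $f=p_n$.

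Next, we recall the $\infty$-categorical normalizer $\N(f)$, which extends Farjoun–Segev. We take it to be defined, or characterized, so that $\B\N(f)$ is the connected component of the relevant mapping object. Concretely, $\N(f)$ should be the $\infty$-group of pairs $(g,\alpha)$, where $g\in G$ and $\alpha$ is an automorphism of $H$ such that conjugation by $g$ composed with $f$ agrees with $f\circ\alpha$. Equivalently, $\B\N(f)$ is the component of $\B f$ in the object of maps $\B H\to\B G$ with varying source, where the source is kept equivalent to $\B H$ over $\B G$. There is a canonical morphism $H\to \N(f)$, given by $h\mapsto (f(h),c_h)$, and it should be normal. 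Its quotient $\N(f)\sslash H$ is realized by the fiber sequence $\B H\to \B\N(f)\to \B Q$. Here $\B Q$ is the component at $\B p_n$ in the universe of maps over $\B G$, that is, the component of $(\B H\to \B G)$ in $\U^{\B G}$ restricted to $(n-1)$-truncated maps.

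Then we apply the internal Yoneda argument used for \cref{theorem intro: group deck is the quotient of fundamental n-group}. The covering $p$ is classified by a map $X\to\U$, which factors through $\B\Pi_n(X,x)=\tau_nX$ because its fibers are $(n-1)$-truncated. The connected component of the adjoint point in $\U^{\B G}$ is therefore $\B\Deck(p)$. On the other hand, straightening identifies that same component with $\B Q$, the automorphism object of $\B p_n$ in the slice over $\B G$. To compare the loops of this component with $\N(f)\sslash H$, we use the fiber sequence obtained by forgetting the map to $\B G$: automorphisms over $\B G$ form the fiber of $\Aut(\B H)\to$ (paths in $\Map(\B H,\B G)$). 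We then rewrite this fiber as $\N(f)/H$ using $\Omega\Map(\B H,\B G)_{\B f}\simeq$ centralizer data. Finally, we check compatibility with the $n$-group structures, which holds because both sides arise as loops of the same pointed connected object.

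We expect the main obstacle to be this last identification: showing that $H\to\N(f)$ is normal and that the loops of the component in the slice universe agree with $\N(f)\sslash H$ \emph{as $n$-groups}, rather than merely as objects. Our first attempt will be to define $\B\N(f)$ as the total space of the fibration over $\B\Aut_{/\B G}(\B H\to\B G)$ with fiber $\B H$, obtained from the tautological family. This makes the fiber sequence $\B H\to\B\N(f)\to\B\Deck(p)$ hold by construction, and leaves only the task of matching this object with the paper's definition of $\N(p_n)$.
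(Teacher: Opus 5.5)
Your proposal is correct and essentially follows the paper. The paper \emph{defines} $\B\N(f)\coloneq\B A\sslash\Deck(\B f)$, which is exactly your ``first attempt'' (the total space of the tautological $\B\Pi_n(E,e)$-family over $\B\Deck(\B p_n)$), so $\N(p_n)\sslash\Pi_n(E,e)\simeq\Deck(\B p_n)$ holds as $n$-groups by construction, and neither your anticipated obstacle nor the detour through pairs $(g,\alpha)$ and centralizer data is needed; what remains is $\Deck(p)\simeq\Deck(\B p_n)$, which the paper obtains by identifying both sides with $\Aut_{\Pi_n(X,x)}(F)$ via the Cartesian $n$-truncation square of $p$ over $\eta_X\colon X\to\B\Pi_n(X,x)$, the same mechanism as your comparison of the components of the classifying points in $(\U^{\sleq{n-1}})^{\tau_nX}$ and $(\U^{\sleq{n-1}})^X$.
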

 At first glance, one might hope that \cref{theorem intro: group deck is the quotient of fundamental n-group} follows formally from this \cref{theorem intro: Deck p of a covering is a quotient of the normalizer}: for $1$-groups, a subgroup inclusion $H\leq G$ is normal precisely when $\N_G(H)=G$. However, this property fails for arbitrary morphisms $f\colon H\to G$, even for ordinary groups. Instead, the logical relationship is reversed: we use \cref{theorem intro: group deck is the quotient of fundamental n-group} to prove that for an $n$-symmetric sub-$1$-group $f\colon \B^{n-1}A\longrightarrow G$, normality is equivalent to the canonical morphism $\N(f)\longrightarrow G$ being an equivalence (\cref{cor: characterization normality of n symmetric sub 1 groups with normalizers}). 

\medskip
\textbf{Examples.}
We illustrate these results in \cref{section: examples}, computing deck $n$-groups through the quotient formula $\Deck(p)\simeq\Pi_n(X,x)\sslash\B^{n-1}A$ of \cref{corollary: group deck of a normal covering is a quotient of fundamental group}. In homotopy types the formula unwinds to concrete computations, from the deck groups of universal covers to deck $3$-groups with non-trivial Postnikov invariants. Beyond spaces, the classification brings out a phenomenon with no counterpart there: the internal deck group and its \emph{external} image $\Gamma\Deck(p)=\E(\ast,\Deck(p))$ can differ sharply. For an Eilenberg--Mac Lane covering $\B^nA\to\B^nG$ with $n\geq2$, the internal deck group $\B^{n-1}(G/A)$ is connected: as an object of $\E$ it has a single component, so internally every deck transformation is trivial. Global sections need not preserve this connectedness, and what they record is exactly the cohomology of the base:
\[\pi_0\Gamma\Deck(p)\simeq H^{n-1}(\E;G/A).\]
For sheaves on a space $B$ this is singular cohomology $H^{n-1}(B;G/A)$; over the small étale site of a scheme $X$ it is étale cohomology $H^{n-1}_{\et}(X;G/A)$. For instance, for the inclusion $\mu_{k,X}\hookrightarrow\bG_{m,X}$ of the $k$-th roots of unity into the multiplicative group of $X$, the external deck $2$-group recovers the Picard group (\cref{example: picard deck group}): \[\pi_0\Gamma\Deck\Big(\B^2\mu_{k,X}\to\B^2\bG_{m,X}\big)\simeq H^1_{\et}(X;\bG_m)\simeq\operatorname{Pic}(X),\] so that every line bundle on $X$ appears as an external deck transformation, even though the internal deck group of this covering is connected. Finally, we adapt the theory to cohesive $\infty$-topoi, the main examples being the $\infty$-topoi of topological and smooth $\infty$-groupoids, into which the corresponding manifolds embed as $0$-truncated objects. Because a manifold is then $0$-truncated, the naive covering theory sees nothing; using the shape modality to single out the maps determined by their shape, we recover internally a theory of $1$-coverings of topological (resp.\ smooth) manifolds by manifolds. 

\medskip

\textbf{Organization of the paper.}
All results of this paper concerning $n$-coverings are collected in \cref{section : Coverings}. Their proofs rely in varying degrees on the machinery developed in the preceding sections. Some, such as \cref{theorem intro: representation theorem for n-covers}, are immediate corollaries of a general equivalence of slice $\infty$-categories established earlier (\cref{prop: characterization n-connected maps through pullback functor} \& \ref{cor: characterization n-connected maps through pullback functor in the pointed category}), which identifies the $n$-category of (pointed) $(n-1)$-truncated  maps over $X$ with that of objects over $\B\Pi_n(X,x)$. Others depend on more involved ingredients: \cref{theorem intro :action of Deck is n-free}  builds on the theory of $\infty$-actions developed throughout \cref{section: infinty group actions}. The identification of the deck $n$-group with a quotient of $\Pi_n(X,x)$ depends crucially on the internal Yoneda embedding and the use of universes \cite{Rasekh_YonedaLemmaElementaryTopos}. We now outline the content of each section in turn.

\medskip

In \cref{section : preliminaries}, we recall higher categorical preliminaries. In particular, we review the Giraud-Rezk-Lurie axioms for an $\infty$-topos and their equivalent formulation in terms of descent and universes. We also investigate which properties remain valid in the pointed $\infty$-category associated to an $\infty$-topos, as this will be useful for the study of pointed covering maps and pointed $\infty$-actions.

\medskip

In \cref{section : truncations and connectivity}, we review definitions and basic properties concerning truncations and connectivity. While many of the results are known, we sometimes provide new proofs in order to avoid relying on a presentation $\E\simeq\Sh(\C)$. In particular, we construct a generalized form of $k$-invariants which will be crucial in the proof of \eqref{theorem intro: classification normal n-covering maps}. The results of this section are used extensively throughout the paper; notably, \cref{theorem intro: representation theorem for n-covers} follows as a simple corollary.

\medskip

In \cref{section : group objects}, we review $\infty$-group objects and their characterization via pointed connected objects. Restricting to $n$-groups, we define $k$-symmetric $n$-groups and an appropriate notion of subobjects for these structures. We introduce normality and prove a higher analogue of the fact that subgroups of abelian groups are normal. We define homotopy groups and fundamental $n$-groups, which we use to characterize connectivity. Finally, we relate (normal) $n$-symmetric sub-$1$-groups of the fundamental $n$-group $\Pi_n(X)$ to (normal) sub-$1$-groups of the homotopy group $\pi_n(X)$, which will be used in the classification of $(n-1)$-connected (normal) $n$-covering maps, see \eqref{theorem intro: pointed coverings corresponds to subgroups eq} \& \eqref{theorem intro: classification normal n-covering maps}.

\medskip

In \cref{section: infinty group actions}, we study $\infty$-group actions. We analyze the behavior of an $\infty$-action $G\curvearrowright X$ under the $n$-truncation functor $\tau_n\colon\E\to\E$ and under the homotopy group functors $\pi_k\colon\E_\ast\to\E$. We establish properties of stabilizers and define the notions of $n$-transitive, $n$-free, and $n$-faithful $\infty$-actions, together with suitable characterizations (\cref{theorem: free transitive regular action characterization}). We then prove the crucial \cref{Theorem: action of a product restricted to H is free}, from which we deduce \cref{theorem intro :action of Deck is n-free}. Finally, we study fixed points, on which the proof of \cref{theorem intro: deck transformations as Omega X-equivariant autoequivalences} relies.

\medskip

In \cref{section : Coverings}, we assemble the results of the previous sections and translate them into the language of $n$-covering maps. This section contains all the \crefrange{theorem intro: representation theorem for n-covers}{theorem intro: Deck p of a covering is a quotient of the normalizer}.

\medskip

Finally, in \cref{section: examples}, we compute the deck $n$-groups of normal $n$-coverings in a range of $\infty$-topoi. We first record the universal and Eilenberg--Mac Lane coverings, together with a non-example in the $\infty$-topos of parametrized spectra. We then specialize to homotopy types, the arrow $\infty$-topos, sheaves on a topological space and the étale $\infty$-topos of a scheme, and to cohesive $\infty$-topoi.
\medskip

\textbf{Relation to other work.}
For $n=1$, this work contributes to the broader program of internalizing classical algebraic topology within abstract homotopical settings. Such developments have been pursued extensively in the synthetic framework of HoTT, for example \cite{hottbook, Christensen_HurewiczInHoTT, BuchholtzHou_CellularCohomologyInHoTT,Warn_EMSpacesInHoTT, HouShulman_SvKinHoTT, QuirinTabareau_LawvereTierneySheafificationInHoTT}, as well as in the context of $\infty$-topoi and higher category theory \cite{Lavenir_HiltonMilnorsTheoremInftyTopoi, ABFJ_GeneralizedBlakersMassyTheorem, NSS_PrincipalBundlesGeneralTheory, LurieHTT, DevalapurkarHaine_James/HiltonMilnorSplittings}. A detailed review of the literature specifically concerning covering spaces is provided in the following paragraphs.

\medskip

The theory developed here is related to earlier work in both strict higher-categorical settings and in Homotopy Type Theory. 

\medskip

Prior to the systematic use of $\infty$-categories, Roberts \cite{Roberts_2CoveringSpaces} studied $2$-covering spaces in a strict context. His definition consists of a locally trivial functor $E \to X$ of topological groupoids, where $X$ is a topological space, so that the fibers are discrete groupoids modeling $1$-types. One of his main results identifies the induced morphism of $2$-groups $\Pi_2(E)\to\Pi_2(X)$ as a sub-$2$-group, i.e. a $0$-truncated morphism. Our notion of sub-$n$-group is inspired by this perspective.

\medskip

Several aspects of the $n=1$ case have also been developed in Homotopy Type Theory. Hou and Harper established an internal form of \cref{theorem intro: representation theorem for n-covers} in \cite{HouHarper_CoveringSpacesInHoTT}. The classification of pointed coverings as subgroups (\cref{theorem intro: pointed coverings corresponds to subgroups}) was obtained internally for $n=1$ by Wemmenhove, Manea, and Portegies \cite{WemmenhoveManeaPortegies_ClassificationCoveringSpacesAndChangeOfBasePoint}, using classical lifting properties, and independently by Buchholtz, van Doorn, and Rijke \cite{HigherGroupsInHoTT_BuchholtzVanDoornRijke} via the identification of groups with pointed connected types. The latter authors also prove a special case of \cref{theorem intro: group deck is the quotient of fundamental n-group}, corresponding to the trivial subgroup $1 \leq \pi_n(X,x)$. Lastly Mimram and Oleon show that the universal $n$-cover is initial among pointed $n$-covering maps \cite{MimramOleon_ClassifyingCoveringHoTT}.

\medskip

The theory of $\infty$-groups has been investigated both from an external categorical perspective (for example \cite{SylowTheoremsForHigherGroups_PrasmaSchlank}) and within the internal framework of homotopy type theory \cite{BuchholtzRijke_LESofHomotopynGroups, HigherGroupsInHoTT_BuchholtzVanDoornRijke}. Our development of $\infty$-group theory within an $\infty$-topos aims to further advance this context, specifically by addressing the subtle notion of normality, as discussed below.

\medskip

The notion of normality in the context of $\infty$-groups behaves quite differently than for ordinary groups. For instance, it was pointed out in \cite{FarjounSegev_CrossedModulesasHomotopyNormalMaps}, where the notion was first defined for $1$-groups, that a $2$-group structure on the quotient $G\sslash A$ associated to a normal map $f\colon A\to G$ need not be unique. This phenomenon persists for morphisms of $\infty$-groups, as defined in \cite{Prasma_HomotopyNormalMaps}. The study of normality was further extended to $\infty$-topoi in \cite{BeardsleyFoxHigherGroupsHigherNormality}, where they point out another significant difference: the cofiber of a normal map $f \colon A \to G$ in the category of $\infty$-groups may not yield a quotient group.
Our work explores how these properties are refined when restricting to $n$-groups. We show that, under certain conditions, classical behaviors are recovered: specifically, we prove that certain $n$-group quotients are unique (\cref{cor: quotient of n-symmetric sub-1-groups are unique}) and that others may be obtained via a truncated pushout construction (\cref{prop: k-symmetric groups are k+1-symmetric for large k}). Furthermore, while a sub-$1$-group inclusion $f \colon A \hookrightarrow G$ is normal if and only if the canonical map $\N(f) \to G$ is an equivalence, the case for arbitrary morphisms is more subtle. In the classical setting, $f \colon A \to G$ is normal if and only if $\N(f) \to G$ admits a section \cite{FarjounSegev_NormalClosureAndInjectiveNormalizers}. While it remains open whether this characterization holds for $\infty$-groups, we prove that for $n$-symmetric sub-$1$-groups $\B^{n-1}A \to G$, normality is indeed equivalent to $\N(f) \to G$ being an equivalence (\cref{cor: characterization normality of n symmetric sub 1 groups with normalizers}). Notably, our approach inverts the classical relationship between group theory and topology. While $1$-group theory is traditionally employed as a tool to study covering spaces, we instead use the theory of $n$-covering maps to establish the properties of $n$-groups described above.

\medskip
Galois theory for $1$-topoi originated in the study of locally constant objects, a concept pioneered by Grothendieck for Galois categories in \cite{SGA1}, and extended to $1$-topoi \cite{JoyalTierney_AnExtensionOfGaloisTheoryOfGrothendeick}, $2$-categories \cite{HigherMonodromy_PoleselloWaschkies}, and $\infty$-topoi \cite{LurieHigherAlgebra}. In an $\infty$-topos $\mathcal{E}$, the notion of locally constant objects naturally extends to morphisms by passing to the slice $\infty$-topos $\slice{\E}{X}$. In his development of higher Galois theory, Hoyois \cite{Hoyois_HigherGaloisTheory} classifies $n$-coverings that are locally constant over $X$, assuming $\slice{\E}{X}$ is locally $(n-1)$-connected, via representations of the \textit{external} fundamental $n$-groupoid $\Pi_n(X) \in \mathcal{S}^{\sleq{n}}$ in $(n-1)$-truncated homotopy types. The two frameworks are related but not directly comparable in general: where Hoyois works externally and restricts to locally constant coverings under a local connectivity hypothesis, \cref{theorem intro: representation theorem for n-covers} classifies \emph{all} $(n-1)$-truncated maps via the \emph{internal} fundamental $n$-group $\Pi_n(X,x)\in\Grp_n(\E)$, with no connectivity hypothesis on the topos. In the cohesive setting \cite{Schreiber_DifferentialCohomologyCohesiveInfinityTopos}, the two can be directly compared: locally constant $n$-coverings correspond to the subclass classified by the subuniverse $\mathrm{Disc}((\mathcal{S}^{\sleq{n-1}}_\kappa)^\simeq)\hookrightarrow\mathcal{U}_\kappa^{\sleq{n-1}}$ of cohesively discrete objects, where $\mathrm{Disc}$ is the left adjoint of the terminal geometric morphism, and the shape adjunction recovers Hoyois's external classification from ours.

\medskip

\textbf{Use of AI assistance.}
The writing of this paper was assisted by Claude, a large language model made by Anthropic. The AI contributed in two ways. Editorially, it suggested trimming certain passages for conciseness, and assisted with English phrasing and overall smoothness of the exposition. Mathematically, it suggested the examples of \cref{subsection: example: sheaf topoi}: $n$-coverings in the $\infty$-topoi of sheaves over a topological space $B$ and over a scheme $X$. All AI-produced suggestions were verified, and where necessary corrected, by the author.

\medskip

\textbf{Acknowledgments.}
We warmly thank Jérôme Scherer and Nima Rasekh for their helpful feedback on earlier versions of this manuscript.
\setcounter{theorem}{0}
\renewcommand{\thetheorem}{\thesection.\arabic{theorem}}
 \section{Higher categorical preliminaries}\label{section : preliminaries}
\subsection{Conventions}
\paragraph{\textbf{$\infty$-Categorical}}
Throughout this paper, we employ the language of higher category theory, considering only homotopy-invariant constructions. By an $\infty$-category, we mean an $(\infty, 1)$-category. Our arguments are essentially model-independent, as they rely solely on the manipulation of $\infty$-limits, $\infty$-colimits, and the internal properties supported by the definition of an $\infty$-topos. Consequently, a reader who is not strictly familiar with the technical foundations of $\infty$-categories should be able to follow the majority of this text by interpreting our constructions in the sense of homotopy limits and colimits. To be precise, however, one could interpret all our results within the framework of \emph{quasi-categories} as developed by Joyal \cite{Joyal_QuasiCategoriesAndKanComplexes} and Lurie \cite{LurieHTT}, which provides a complete and rigorous theory of $\infty$-categories. Other references are \cite{Cisinski_HigherCategoriesAndHomotopicalAlgebra, RiehlVerity_ElementsOfInftyCategoryTheory}.

\medskip

\paragraph{\textbf{Notational}}
We adhere to the following terminological and notational conventions. We use the word \emph{space} to refer to an $\infty$-groupoid or abstract homotopy type, and denote the $\infty$-category of spaces by $\mathcal{S}$. An arrow $f\colon X\to Y$ is called an \emph{equivalence} if it has a two-sided inverse $g\colon Y\to X$ such that $gf\simeq 1_X$ and $fg\simeq 1_Y$. We write $X=Y$ to mean that $X$ is canonically equivalent to $Y$, when a homotopy equivalence is clear or explicit (usually defined through a universal property). The adjective \emph{unique} is always understood to mean unique up to a contractible space of choices. A diagram \emph{commutes} if there are coherent homotopies filling all of its simplices, specified as part of the data. We follow the standard convention of leaving these homotopies implicit when they are uniquely determined (as in the case of compositions) or provided by a universal property (as in the case of (co)limits). All categorical constructions, including limits, colimits, mapping objects, and adjunctions, are to be understood in their $\infty$-categorical (homotopical) sense. When necessary, we implicitly view ordinary 1-categories as $\infty$-categories via the nerve embedding. An initial object is denoted by $\emptyset$, while $\ast$ denotes terminal objects. We write $(X,x)\in\C_\ast$ for an object $X\in\C$ endowed with a global element $x\colon\ast\to X$. For any two objects $X$ and $Y$ in an $\infty$-category $\C$, we write $\C(X,Y)\in\mathcal{S}$ for the space of maps between $X$ and $Y$. If $\C$ is Cartesian closed, we write $\Map_\C(X,Y)\in\C$ (dropping the subscript when the category is clear) for the internal hom object. We write $\Eq_\C(X,Y)\in\C$ for the subobject of equivalences, which represents the functor 
\[T \mapsto \slice{\C}{T}^\simeq(X\times T,Y\times T)\subseteq \slice{\C}{T}(X\times T, Y\times T)\] 
(see \cite[§2.2]{Vergura_LocalizationInTopos} for an explicit construction). Working in the slice over an object $Z\in\C$, we use the notation $\Eq_{/Z}(X,Y)\subseteq\Map_{/Z}(X,Y)$.
A commutative square 
\begin{equation}\label{eq: Cartesian square}
  \begin{tikzcd}
	P & X \\
	Y & Z
	\arrow[from=1-1, to=1-2]
	\arrow[from=1-1, to=2-1]
	\arrow[from=1-2, to=2-2]
	\arrow[from=2-1, to=2-2, "f"]
\end{tikzcd}
\end{equation}
is \textit{Cartesian} if $P$ is the pullback of the diagram $Y\to Z\leftarrow X$, and \textit{coCartesian} if $Z$ is the pushout of $Y\leftarrow P\rightarrow X$. 

We record a few basic categorical properties for future reference:
\begin{enumerate}
    \item\label{item: Hom C vs slice} For $Y,Z\in \C$ and $X\in\slice{\C}{Z}$ we have an equivalence of homotopy types $\C(X,Y)\simeq\slice{\C}{Z}(X,Y\times Z)$.
    \item \label{item: double slice is slice} For $f\colon A\to B$ there is a canonical equivalence of $\infty$-categories $\slice{\left(\slice{\C}{B}\right)}{f}\simeq\slice{\C}{A}$.
    \item\label{item: pasting law} Consider the following commutative diagram:
    \[\begin{tikzcd}
    {P} \arrow[r] \arrow[d]  & Y\arrow[dr, phantom, "\lrcorner"{anchor=center, pos=0.125}] \arrow[r] \arrow[d] & T \arrow[d] \\
    X \arrow[r, "i"'] & Z \arrow[r] & W
\end{tikzcd},\]
in which the right square is Cartesian. Then the left square is Cartesian if and only if the total rectangle is Cartesian. This is known as the \textit{pasting law for pullbacks}.
    \item\label{item: pullback of a product with X} For any map $X\to A$ the square \eqref{eq: Cartesian square} is Cartesian if and only if the following induced square is Cartesian:
\[\begin{tikzcd}
	P & X \\
	{Y\times A} & {Z\times A}
	\arrow[from=1-1, to=1-2]
	\arrow[from=1-1, to=2-1]
	\arrow[from=1-2, to=2-2]
	\arrow[from=2-1, to=2-2, "f\times A"]
\end{tikzcd}.\]
\end{enumerate}
\subsection{$\infty$-Topoi}\label{section: preliminaries subsection: higher topoi}
The following definition, which generalizes the Giraud axioms for $1$-topoi, is a characterization due to Toën and Vezzosi for model topoi \cite[Thm.~4.9.2]{Toen&Vezzosi_HomotopicalAG_ToposTheory}, and to Lurie for $\infty$-topoi \cite[Thm~6.1.0.6]{LurieHTT}. Rather than adopting the \textit{external} viewpoint, which views an $\infty$-topos as a left exact localization of a presheaf $\infty$-category, we take these Giraud-style axioms as our primary definition. We will subsequently introduce two related \textit{internal} characterizations, based on the notions of descent and universes, which will be central to our approach.
\begin{definition}\label{def: infinity topos}
    An $\infty$-topos is an $\infty$-category $\E$ satisfying the following conditions:
    \begin{enumerate}
        \item the $\infty$-category $\E$ is presentable;
        \item colimits are universal;
        \item coproducts are disjoint;
        \item every groupoid object is effective.
    \end{enumerate}
\end{definition}
Presentability ensures that $\E$ admits all small limits and colimits and provides access to the adjoint functor theorems. For instance, while truncation functors $\tau_n\colon \E\to\E$ are often constructed using the fact that the inclusion $\E^{\sleq{n}}\to\E$ preserves limits \cite{LurieHTT}, they can also be defined without the presentability assumption \cite{Rasekh_ElementaryApproachTruncations}.

\medskip

\textbf{Universality of Colimits.}
Condition (2) states that for every morphism $f\colon X\to Y$, the pullback functor $f^\ast\colon \slice{\E}{Y} \to \slice{\E}{X}$ preserves small colimits. When $\E$ is presentable, this is equivalent to $\E$ being locally Cartesian closed. In that case we write $\prod_f\colon\slice{\E}{X}\to\slice{\E}{Y}$ for the right adjoint to $f^\ast$, called the \textit{dependent product}. We simply write $\prod_X\colon\slice{\E}{X}\to\E$ if $Y\simeq\ast$ and $\prod_XA$ is then the \textit{object of sections} of $A\to X$.

\medskip
 
\textbf{Disjointness of Coproducts.}
This condition stipulates that for any objects $A, B \in \E$, the following square is Cartesian:
\[\begin{tikzcd}
    \emptyset \arrow[r] \arrow[d] \arrow[dr, phantom, "\lrcorner"{anchor=center, pos=0.125}] & B \arrow[d, "{\iota_B}"] \\
    A \arrow[r, "{\iota_A}"'] & {A \amalg B}
\end{tikzcd}.\]
\textbf{Effectiveness of Groupoids.}
We adopt the following characterization of groupoid objects from Bunk \cite[Lemma~A.6.3]{SeverinBunk_ooBundlesAndSmthStringModels}.
\begin{definition}\label{def: groupoid object}
    Let $\C$ be an $\infty$-category with pullbacks. 
    \begin{enumerate}
    \item A \textit{category object} in $\C$ is a simplicial object $\G:\Delta^\op\to\C$ such that the morphism \[\G_n\xrightarrow{\simeq} \G_1\times_{\G_0}\cdots \times_{\G_0}\G_1\]
    induced by the spine decomposition $[n]\cong [1]\amalg_{[0]}\ldots\amalg_{[0]}[1]$ is an equivalence.
        \item A \textit{groupoid object} in $\C$ is a category object $\G\colon \Delta^\op\to \C$ such that the following commutative square is Cartesian
\[\begin{tikzcd}
    {\G_2} \arrow[r, "{d_2}"] \arrow[d, "{d_1}"'] \arrow[dr, phantom, "\lrcorner"{anchor=center, pos=0.125}] & {\G_1} \arrow[d, "{d_1}"] \\
    {\G_1} \arrow[r, "{d_1}"'] & {\G_0}
\end{tikzcd}.\]
        \item If $\C$ admits $\Delta^\op$-indexed colimits, the colimit of $\G$ is its \textit{realization}, denoted by $\vert\G\vert$, with corresponding quotient map $\pi\colon \G_0\to\vert\G\vert$.
\end{enumerate}
The full subcategory of $\Fun(\Delta^\op, \C)$ consisting of groupoid objects is denoted $\Gpd(\C)$.
\end{definition}
   
    \begin{definition}
        Let $\C$ be an $\infty$-category with pullbacks and realizations, and let $f\colon A\to B$ be a morphism in $\C$.
        \begin{enumerate}
            \item The \textit{\v{C}ech nerve} of $f$, denoted $\check{C}(f)\colon \Delta^\op_+\to\C$, is the right Kan extension of $f\colon \Delta^\op_{+,\leq0}\to\C$ along $\Delta^\op_{+,\leq0}\subseteq\Delta^\op_+$:
\[\begin{tikzcd}
	{\Delta^\op_{+,\leq0}} & \C \\
	{\Delta^\op_{+}}
	\arrow["f", from=1-1, to=1-2]
	\arrow[hook, from=1-1, to=2-1]
	\arrow["{\check{C}(f)}"', dashed, from=2-1, to=1-2]
\end{tikzcd}.\] Informally it is given by 
\[\begin{tikzcd}
	\dots & {A\times_BA\times_BA} & {A\times_BA} & A & {B.}
	\arrow[shift right, from=1-1, to=1-2]
	\arrow[shift left, from=1-1, to=1-2]
	\arrow[shift right=3, from=1-1, to=1-2]
	\arrow[shift left=3, from=1-1, to=1-2]
	\arrow[from=1-2, to=1-1]
	\arrow[shift right=2, from=1-2, to=1-1]
	\arrow[shift left=2, from=1-2, to=1-1]
	\arrow[from=1-2, to=1-3]
	\arrow[shift right=2, from=1-2, to=1-3]
	\arrow[shift left=2, from=1-2, to=1-3]
	\arrow[shift right, from=1-3, to=1-2]
	\arrow[shift left, from=1-3, to=1-2]
	\arrow[shift right, from=1-3, to=1-4]
	\arrow[shift left, from=1-3, to=1-4]
	\arrow[from=1-4, to=1-3]
	\arrow["f", from=1-4, to=1-5]
\end{tikzcd}\]
    \item A groupoid object $\G$ in $\C$ is \textit{effective} if it is equivalent to the restriction to $\Delta^\op$ of the \v{C}ech nerve of its realization, i.e. if $\G\simeq \check{C}(\pi)\vert_{\Delta^\op}$ in $\Gpd(\C)$ where $\pi\colon \G_0\to\vert\G\vert$. 
    \item The realization map $\pi\colon \G_0\twoheadrightarrow\vert\G\vert$ of an effective groupoid is called an \textit{effective epimorphism}.
        \end{enumerate}
    \end{definition}
    \begin{remark}\label{remark: definition groupoid object}
        Combining the category object property with the \v{C}ech nerve condition shows that a groupoid object is effective if and only if the following square is Cartesian (\cite[Prop.~6.1.2.11]{LurieHTT}):
\[\begin{tikzcd}
	{\G_1} \arrow[r, "d_1"] \arrow[d, "d_0"'] \arrow[dr, phantom, "\lrcorner"{anchor=center, pos=0.125}] & {\G_0} \arrow[d] \\
	{\G_0} \arrow[r] & {\vert\G\vert}
\end{tikzcd}\]
    \end{remark}
    Effectiveness of groupoid objects is equivalent to the \v{C}ech nerve functor defining an equivalence of $\infty$-categories $\Eff(\C) \xrightarrow{\simeq} \Gpd(\C)$. Effective epimorphisms, denoted by a double-headed arrow $\twoheadrightarrow$, satisfy several pleasant properties in a semitopos \cite[§6.2.3]{LurieHTT} which we shall use throughout this work.
    \begin{definition}
        A presentable $\infty$-category $\E$ with universal colimits is a \textit{semitopos} if for every morphism $f\colon A\to B$ the underlying groupoid of its \v{C}ech nerve is effective.
    \end{definition}
    For example, the following proposition generalizes the classical fact that a commutative square is Cartesian if and only if the homotopy fibers are equivalent. It is a slightly more general version than \cite[Prop.~I.5.7]{Samuel_Thesis}, but the same proof can be carried out.
\begin{prop}\label{prop: square is Cartesian if fibers are equivalent}
        Consider a commutative diagram in a semitopos $\E$:
\[\begin{tikzcd}
    {F_f} \arrow[r, two heads] \arrow[d] \arrow[dr, phantom, "\lrcorner"{anchor=center, pos=0.125}] & A \arrow[r] \arrow[d] & B \arrow[d] \\
    X \arrow[r, "i"', two heads] & C \arrow[r] & D
\end{tikzcd},\]
    where the left square and the total rectangle are Cartesian, and $i$ is an effective epimorphism. Then the right square is Cartesian.
\end{prop}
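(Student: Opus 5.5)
We want to show that the right square $A\to B$, $C\to D$ is Cartesian. Let $P\coloneq C\times_D B$ and let $\phi\colon A\to P$ be the canonical comparison map over $C$; the goal is to show that $\phi$ is an equivalence. By the pasting law (\ref{item: pasting law}), since the total rectangle is Cartesian, we have $F_f\simeq X\times_D B\simeq X\times_C P$. Hence the square with top edge $X\times_C P\to P$ over $i\colon X\to C$ is Cartesian. The left square is Cartesian as well, and identifies $F_f\simeq X\times_C A$. Under these identifications, the base change $i^\ast\phi\colon X\times_C A\to X\times_C P$ is the identity of $F_f$, up to the canonical equivalences. More precisely, the composite $F_f\to A\to P$ is the map induced from $F_f\to X\times_D B$, and this map is an equivalence by construction. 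So $i^\ast(\phi)$ is an equivalence in $\slice{\E}{X}$.

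The remaining step is a descent statement: in a semitopos, pullback along an effective epimorphism $i\colon X\twoheadrightarrow C$ is conservative on $\slice{\E}{C}$. I would prove this as follows. Write $\check{C}(i)$ for the Čech nerve of $i$. Since $i$ is effective, $C\simeq\colim_{\Delta^\op}\check{C}(i)_k$. Universality of colimits then gives
\[A\simeq\colim_{\Delta^\op}\big(\check{C}(i)_k\times_C A\big)\quad\text{and}\quad P\simeq\colim_{\Delta^\op}\big(\check{C}(i)_k\times_C P\big).\]
Each $\check{C}(i)_k=X\times_C\cdots\times_C X$ maps to $X$ through a projection. It follows that $\check{C}(i)_k\times_C\phi$ is a pullback of $i^\ast\phi$, and is therefore an equivalence. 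The map $\phi$ is the colimit of a levelwise equivalence of simplicial objects, and is therefore an equivalence.

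I expect the only delicate point to be the bookkeeping of the first paragraph: checking that the composite $F_f\to A\to P$ really agrees with the canonical equivalence $F_f\simeq X\times_C P$ coming from pasting. This holds because both maps are induced by the same maps to $X$ and to $B$ via the universal property of $X\times_D B$. The descent step itself only uses universality of colimits together with the effectiveness of $i$, and so works in any semitopos, with no use of presentability beyond the existence of these colimits.
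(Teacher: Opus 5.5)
Your proof is correct and follows essentially the same route as the argument the paper defers to via \cite[Prop.~I.5.7]{Samuel_Thesis}. You compare $A$ with $P=C\times_D B$, use pasting to identify the base change of $\phi\colon A\to P$ along $i$ with the gap map of the total rectangle, and conclude by conservativity of base change along an effective epimorphism in a semitopos. Your proof of that conservativity via the \v{C}ech nerve, using only $C\simeq\colim_{\Delta^\op}\check{C}(i)$ and universality of colimits, is the standard one from \cite[§6.2.3]{LurieHTT}.
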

We end this subsection by noting that the slice $\infty$-categories of $\infty$-topoi are themselves $\infty$-topoi. This is sometimes called the \textit{fundamental theorem of $\infty$-topos theory:}
\begin{theorem}
    Let $\E$ be an $\infty$-topos and let $X\in\E$ be an object. Then the slice $\infty$-category $\slice{\E}{X}$ is an $\infty$-topos.
\end{theorem}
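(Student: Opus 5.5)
We verify the four axioms of \cref{def: infinity topos} for $\slice{\E}{X}$ one at a time. In each case we use the forgetful functor $U\colon\slice{\E}{X}\to\E$. This functor preserves and reflects colimits and all connected limits. In particular it preserves and reflects pullbacks, since a pullback in $\slice{\E}{X}$ is computed as the pullback in $\E$ with the induced map to $X$. The terminal object of $\slice{\E}{X}$ is $1_X$, and products in the slice are fiber products over $X$ in $\E$.

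\emph{Presentability.} Slices of presentable $\infty$-categories are presentable, by \cite[Prop.~5.5.3.10]{LurieHTT}. Colimits in $\slice{\E}{X}$ are computed in $\E$.

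\emph{Universality of colimits.} Let $f\colon Y\to Z$ be a morphism in $\slice{\E}{X}$. By property \eqref{item: double slice is slice} there are identifications $\slice{(\slice{\E}{X})}{Z}\simeq\slice{\E}{Z}$ and $\slice{(\slice{\E}{X})}{Y}\simeq\slice{\E}{Y}$. Since $U$ preserves pullbacks, the pullback functor $f^\ast$ of the slice corresponds under these identifications to the pullback functor $f^\ast$ of $\E$. The latter preserves colimits by universality in $\E$.

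\emph{Disjointness of coproducts.} The initial object of the slice is $\emptyset\to X$, and coproducts are $A\amalg B\to X$. The required square is therefore sent by $U$ to the disjointness square of $\E$, which is Cartesian. Because $U$ reflects pullbacks, the square is Cartesian in $\slice{\E}{X}$.

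\emph{Effectiveness of groupoids.} This is the step that needs the most care. A groupoid object $\G$ in $\slice{\E}{X}$ is sent by $U$ to a simplicial object $U\G$ in $\E$. Since $U$ preserves pullbacks, $U\G$ satisfies the Segal condition and the groupoid square of \cref{def: groupoid object}, so it is a groupoid object in $\E$. Because $U$ preserves colimits, the realization $\vert\G\vert$ in the slice is $\vert U\G\vert$ equipped with the map to $X$ induced by the cocone. By effectiveness in $\E$, the square in \cref{remark: definition groupoid object} is Cartesian in $\E$. All objects and maps in this square lie over $X$ compatibly, so it lifts to $\slice{\E}{X}$. Since $U$ reflects pullbacks, the lifted square is Cartesian there, and by \cref{remark: definition groupoid object} $\G$ is effective.

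The only delicate point throughout is that $U$ reflects the relevant limits. This holds because a cone over a connected diagram in $\slice{\E}{X}$ is the same data as a cone in $\E$, with the map to $X$ determined uniquely. Note that $U$ does not preserve the terminal object, but no axiom involves it.
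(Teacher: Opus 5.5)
Your proof is correct. The forgetful functor $U\colon\slice{\E}{X}\to\E$ creates small colimits and pullbacks. Apart from presentability, which you handle by citation, each axiom of \cref{def: infinity topos} is phrased only in terms of colimits, pullbacks and the initial object. So each one transfers from $\E$ exactly as you describe.

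The paper gives no proof of this statement. It records it as the standard ``fundamental theorem'' of $\infty$-topos theory (cf.\ \cite[Prop.~6.3.5.1]{LurieHTT}). Your axiom-by-axiom verification is the natural proof in the paper's Giraud-style framework.

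A route closer to the paper's internal viewpoint would combine \cref{Higher topoi have enough universes} with \cref{lemma: universe in slice categories}, taking $\U_\kappa\times X$ as the universe of the slice. That route replaces your groupoid-object check with a universe, but it needs one extra verification: the class of triangles over $X$ whose underlying map is relatively $\kappa$-compact in $\E$ must coincide with the relatively $\kappa$-compact morphisms of $\slice{\E}{X}$. Your direct check avoids this entirely.
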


\subsection{Descent}
The notion of descent is central in the theory of $\infty$-topoi, providing one of their fundamental characterizations. Let $\I$ be a small $\infty$-category and $\C$ an $\infty$-category with $\I$-shaped colimits. A natural transformation $\alpha\colon A\to B$ between functors $A,B\colon \I\to\C$ is \textit{Cartesian} if for every morphism $i\to j$ in $\I$, the commutative naturality square 
\[\begin{tikzcd}
    {A_i} \arrow[r] \arrow[d, "{\alpha_i}"'] & {A_j} \arrow[d, "{\alpha_j}"] \\
    {B_i} \arrow[r] & {B_j}
\end{tikzcd}\]
is Cartesian. We write $\Cart\left(\Fun(\I,\C)\right)\subset \Fun(\I,\C)$ for the wide subcategory on the Cartesian transformations. We say that $\C$ satisfies \textit{descent along $\I$-shaped colimits} if, for every such Cartesian natural transformation $\alpha\colon A\to B$, the following square is Cartesian for every object $i\in \I$:
\[\begin{tikzcd}
    {A_i} \arrow[r] \arrow[d, "{\alpha_i}"'] \arrow[dr, phantom, "\lrcorner"{anchor=center, pos=0.125}, near start] & {\colim_{\mathcal{I}}A} \arrow[d, "{\colim_{\mathcal{I}}\alpha}"'] \\
    {B_i} \arrow[r] & {\colim_{\mathcal{I}}B}
\end{tikzcd}.\]
Equivalently, $\C$ satisfies descent along $\I$-shaped colimits if $\Cart\left(\Fun([1],\C)\right)$ has $\I$-shaped colimits and the inclusion $\Cart\left(\Fun([1],\C)\right)\subset \Fun([1],\C)$ preserves them.
\begin{remark}
    Some authors use the terminology \textit{effective} for what we call descent, saying that an $\infty$-category satisfies descent if it has both universal and effective colimits. See, for example, \cite{ABFJLeftExactLocalizationsI}. 
\end{remark}
\begin{theorem}
    Let $\E$ be a presentable $\infty$-category. Then $\E$ is an $\infty$-topos if and only if the following conditions are satisfied:
    \begin{enumerate}
        \item Colimits in $\E$ are universal;
        \item $\E$ satisfies descent along all small colimits.
    \end{enumerate}
\end{theorem}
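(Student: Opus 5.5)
The plan is to prove the two implications separately. The converse direction is short, and essentially all the work lies in showing that the Giraud–Rezk–Lurie axioms of \cref{def: infinity topos} imply descent. Universality of colimits appears on both sides of the equivalence, so in each direction only the remaining axioms need to be derived.

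\textbf{Descent implies the axioms.} Assume $\E$ is presentable, has universal colimits and satisfies descent along all small colimits.

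For disjointness of coproducts, let $\I$ be the discrete category on two objects, and take the transformation $(A,\emptyset)\to(A,B)$, whose components are the identity of $A$ and the map $\emptyset\to B$. It is Cartesian vacuously, since $\I$ has no non-identity arrows. Its colimit is $\iota_A\colon A\to A\amalg B$. Descent at the second object then says that
\[\emptyset\simeq B\times_{A\amalg B}A,\]
which is exactly disjointness.

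For effectiveness, let $\G$ be a groupoid object and consider its décalage $\G_{\bullet+1}$, which comes with the map $d_0\colon\G_{\bullet+1}\to\G_\bullet$. The groupoid condition (together with the category condition of \cref{def: groupoid object}) says precisely that the naturality squares of $d_0$ are Cartesian. The décalage is a split augmented simplicial object over $\G_0$, so its realization is $\G_0$, and the induced map on colimits is $\pi\colon\G_0\to\vert\G\vert$. Descent at $[0]$ then gives a Cartesian square
\[\G_1\simeq\G_0\times_{\vert\G\vert}\G_0.\]
By \cref{remark: definition groupoid object} this is exactly effectiveness of $\G$.

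\textbf{The axioms imply descent.} Now assume $\E$ is an $\infty$-topos in the sense of \cref{def: infinity topos}. Let $\alpha\colon F\to G$ be a Cartesian transformation of diagrams $\I\to\E$, and write $Y=\colim F$ and $X=\colim G$. We must show that each square $F_i\simeq G_i\times_XY$ is Cartesian. Set $F'_i\coloneq G_i\times_XY$, which is Cartesian over $G$ by pasting (\cref{item: pasting law}). By universality, $\colim F'\simeq X\times_XY=Y$. So we have two Cartesian transformations $F\to G$ and $F'\to G$ with the same colimit $Y$, together with a comparison $F\to F'$, and the goal is to show this comparison is an equivalence.

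I would prove this in three steps.

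\begin{enumerate}
\item \emph{Descent for coproducts.} This follows from disjointness combined with universality. Pulling $\coprod F_j\to\coprod G_j$ back along $\iota_i$ distributes over the coproduct, and the off-diagonal terms vanish by disjointness.

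\item \emph{Descent along an effective epimorphism $\pi\colon U\twoheadrightarrow X$.} Pulling the Čech nerve of $\pi$ back along any map $Y\to X$ is again a Čech nerve, by universality. It is an effective groupoid by axiom (4), so objects over $X$ correspond to Cartesian simplicial objects over $\check C(\pi)$. In other words, $\slice{\E}{X}$ is the limit of the slices $\slice{\E}{\check C(\pi)_n}$.

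\item \emph{General colimits.} A general colimit is reduced to the previous two steps by the effective epimorphism $U=\coprod_iG_i\twoheadrightarrow X$. Its Čech nerve is built from coproducts of iterated fiber products $G_i\times_XG_j\times_X\cdots$. The comparison $F\to F'$ is then checked after pulling back to $U$, where step 1 applies, and compatibility along the Čech nerve is handled by step 2.
\end{enumerate}

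\textbf{Main obstacle.} The hard part is step 3: identifying $\coprod_iG_i\to X$ as an effective epimorphism, and controlling the Čech nerve of an arbitrary colimit cone. I would follow Lurie's route in \cite[§6.1.3]{LurieHTT}. Reformulate descent as the functor $X\mapsto\slice{\E}{X}^{\simeq}$, or $\slice{\E}{X}$, sending colimits to limits. It then suffices to check this on coproducts and on geometric realizations of Čech nerves, because every colimit is a realization of a simplicial diagram of coproducts. For a general simplicial object that is not a groupoid, I expect to need the semitopos fact that $\vert\G\vert$ agrees with the realization of the groupoid generated by $\G$. This is the most delicate point of the argument.
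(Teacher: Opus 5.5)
The paper records this theorem without proof. It is Lurie's result, obtained by comparing both the Giraud axioms \cite[Thm.~6.1.0.6]{LurieHTT} and the descent condition \cite[Thm.~6.1.3.9]{LurieHTT} with the definition of an $\infty$-topos as a left exact localization of a presheaf $\infty$-category. Your argument therefore has to stand on its own.

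\textbf{What works.} Your direction ``descent $\Rightarrow$ axioms'' is correct. The two-object discrete diagram gives disjointness. The d\'ecalage $\G_{\bullet+1}\to\G_\bullet$ is Cartesian by the groupoid condition and has a split augmentation to $\G_0$, so descent at $[0]$ gives exactly the square $\G_1\simeq\G_0\times_{\vert\G\vert}\G_0$ that characterizes effectiveness. Steps 1 and 2 of the converse are also sound, up to routine details:
\begin{itemize}
\item coprojections of universal disjoint coproducts are monomorphisms;
\item a Cartesian simplicial object over a \v{C}ech nerve is a groupoid, hence effective;
\item base change along an effective epimorphism is conservative.
\end{itemize}

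\textbf{The gap.} Step 3 carries all the content of ``axioms $\Rightarrow$ descent'', and as written it is circular. To apply step 2 along $U=\coprod_i G_i\twoheadrightarrow X$, you must give $\coprod_i F_i$ a descent datum over
\[U\times_X U=\coprod_{i,j}G_i\times_X G_j,\]
that is, coherent identifications of $F_i\times_{G_i}(G_i\times_XG_j)$ with $(G_i\times_XG_j)\times_{G_j}F_j$. The Cartesian transformation only supplies identifications along the arrows of $\I$. The objects $G_i\times_X G_j$ are not built from those arrows in any way you control; already for a pushout of spaces they are spaces of zig-zags. Constructing the datum amounts to knowing $F_j\simeq G_j\times_X\colim F$, which is the statement you are trying to prove.

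The point you single out as hard is in fact easy. The image of $U\to X$ is a monomorphism through which the whole colimit cone factors, so it has a section and is an equivalence; hence $U\to X$ is an effective epimorphism.

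Your fallback has the same defect. The simplicial objects $\coprod_{i_0\to\cdots\to i_k}G_{i_0}$ that compute general colimits are not \v{C}ech nerves. Knowing that $\vert U_\bullet\vert$ is also the realization of the groupoid $\check{C}(U_0\to\vert U_\bullet\vert)$ says nothing about how a Cartesian transformation $V_\bullet\to U_\bullet$ extends to that groupoid. That extension is again descent for $\vert U_\bullet\vert$.

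\textbf{What is missing.} You need a way to reduce descent for an arbitrary realization, or even for a single pushout, to descent for groupoid objects. Concretely, this means a groupoid replacement that is compatible with Cartesian transformations. This is the substantive part of Giraud's theorem. The standard route is to show that $\E$ is a left exact localization of some $\Psh(\C)$, and then inherit descent from $\mathcal{S}$. There it holds because $\mathcal{S}_{/K}\simeq\Fun(K,\mathcal{S})$ and $\Fun(-,\mathcal{S})$ turns colimits into limits.
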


Another point of view on descent is through the lens of universes, which we explore now.

\subsection{Universes}\label{section: preliminaries subsection: universes}
Universes play a central role in this paper, formalizing the correspondence between maps $E\to X$ and $X$-indexed families of objects $\corner{E}\colon X\to \U$. These ideas trace back to Voevodsky's formulation of the univalence axiom in Homotopy Type Theory \cite{hottbook}, together with the simplicial model as constructed in \cite{Kapulkin/Lumsdaine_UnivalentModelHoTT}. We follow the categorical interpretation of univalence developed by Gepner and Kock \cite{GepnerKock_UnivalenceInLCCC}, and refer to Rasekh \cite{Rasekh_UnivalenceHigherCategoryTheory} for universes of truncated maps. For the rest of this section, let $\C$ be a presentable locally Cartesian closed $\infty$-category.

\medskip

Given a map $p\colon \U_\ast\to \U$ and an object $X$, there exists a functor of $\infty$-groupoids $\C(X,\U)\to \slice{\C}{X}^\simeq$ given informally by $(\corner{E}\colon X\to \U)\mapsto (\corner{E}^\ast p\colon E \xrightarrow{} X)$, where 
\[\begin{tikzcd}
    {E} \arrow[r] \arrow[d, "{\corner{E}^\ast p}"'] \arrow[dr, phantom, "\lrcorner"{anchor=center, pos=0.125}] & {\U_\ast} \arrow[d, "p"] \\
    X \arrow[r, "\corner{E}"'] & \U
\end{tikzcd}\]
is a Cartesian square.

\begin{definition}[{\cite[Prop. 3.8]{GepnerKock_UnivalenceInLCCC}}]\label{definition: univalent universe} 
    A map $p\colon \U_\ast\to\U$ in a presentable locally Cartesian closed $\infty$-category $\C$ is a \textit{univalent universe} if the functor 
    \begin{equation*}
        (-)^\ast(p)\colon \C(X,\U)\to \slice{\C}{X}^\simeq
    \end{equation*} 
    is fully faithful for all $X\in\C$. 
    In this case, we say that $p$ \textit{classifies} the class $\F$ defined by the essential image of these functors.
\end{definition}
\begin{remark}
    Via the unstraightening equivalence $\Fun(\C^\op,\Sp)\xrightarrow{\simeq}\mathscr{RF}\mathrm{ib}(\C)$, the definition of a univalent universe can be rephrased: $p$ is univalent if and only if the induced functor of right fibrations $\slice{\C}{\U}\to \Cart(\Fun([1],\C))$ is fully faithful. Another similar characterization is that $p$ is univalent if and only if it is a subterminal object of $\Cart(\Fun([1],\C))$ \cite[Thm.~6.1.3.9]{LurieHTT}. This latter condition is the precise bridge between descent and the univalence condition.
\end{remark}

\textbf{The Univalence Axiom.} 
Intuitively, \cref{definition: univalent universe} means that families $\corner{E}\colon X\to\U$ correspond to objects $E\to X$, and paths $\corner{E}\simeq \corner{E'}$ in $\U$ correspond exactly to equivalences $E \simeq E'$ over $X$, and so on. This translates to the geometric internal property that the diagonal $\U \to \U\times\U$ classifies the object of equivalences. That is, $p$ is a univalent universe if and only if the canonical map over $\U\times\U$
    \begin{equation*}
        \begin{tikzcd}[ampersand replacement=\&]
        \U \&\& {\Eq_{/\U\times\U}(\U_\ast\times\U,\U\times\U_\ast)} \\
        \& {\U\times\U}
        \arrow[from=1-1, to=1-3]
        \arrow[from=1-1, to=2-2]
        \arrow[from=1-3, to=2-2]
    \end{tikzcd}
       \end{equation*}
(which represents the functor $(X\xrightarrow{\corner{E}} \U)\mapsto 1_E\in \Eq_{/X}(E,E)$) is an equivalence. This intuitive characterization makes the internal Univalence Axiom essentially a tautology. It stipulates that paths in a universe $\U$ between objects $A,B\in\C$ correspond to equivalences $A\simeq B$. 
\begin{definition}\label{definition: path objects}
    Let $\E$ be an $\infty$-topos, $X$ an object, and $a,b\colon T\to X$ generalized points. The \textit{path object} $X(a,b)$ in $\slice{\E}{T}$ is the generalized fiber
\[\begin{tikzcd}
    {X(a,b)} \arrow[r] \arrow[d] \arrow[dr, phantom, "\lrcorner"{anchor=center, pos=0.125}] & X \arrow[d, "\Delta"] \\
    T \arrow[r, "{(a,b)}"'] & {X \times X}
\end{tikzcd}.\]
\end{definition}
Note that $X(a,a)\simeq\Omega^{/T}_aX$ (\cref{definition: suspension and loop space}) by a Fubini argument.

\begin{prop}[Univalence Axiom]\label{Univalence Axiom in a topos}
    Let $A,B$ be objects over $T$ classified by maps $\corner{A}, \corner{B} \colon T \to \U$ into a univalent universe. Then we have a Cartesian square in $\E$:
\[\begin{tikzcd}
    {\Eq_{/T}(A,B)} \arrow[r] \arrow[d] \arrow[dr, phantom, "\lrcorner"{anchor=center, pos=0.125}] & \U \arrow[d] \\
    T \arrow[r, "{(\corner{A},\corner{B})}"'] & {\U \times \U}
\end{tikzcd}.\]
    In particular, $\Aut(A)\simeq\Omega_{\corner{A}} \U$.
\end{prop}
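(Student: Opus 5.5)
The plan is to deduce the Cartesian square directly from the univalence characterization recalled just before \cref{definition: path objects}: the canonical map
\[\U\longrightarrow \Eq_{/\U\times\U}(\U_\ast\times\U,\U\times\U_\ast)\]
over $\U\times\U$ is an equivalence. The argument then reduces to checking that the object of equivalences is stable under base change.

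\textbf{Step 1 (stability under pullback).} For a map $g\colon T'\to T$ and $A,B\in\slice{\E}{T}$, there is a canonical equivalence
\[g^\ast\Eq_{/T}(A,B)\simeq\Eq_{/T'}(g^\ast A,g^\ast B).\]
I would verify this on represented functors. A map $S\to T'$ lifts to $g^\ast\Eq_{/T}(A,B)$ exactly when the composite $S\to T$ lifts to $\Eq_{/T}(A,B)$. By the defining functor of $\Eq$, such a lift is an equivalence $A\times_T S\simeq B\times_T S$ over $S$. Since $A\times_T S\simeq (g^\ast A)\times_{T'}S$ by the pasting law \eqref{item: pasting law}, and likewise for $B$, this is the same data as a lift of $S\to T'$ to $\Eq_{/T'}(g^\ast A,g^\ast B)$. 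The identification is natural in $S$, so Yoneda gives the claimed equivalence. The mapping-object analogue is the standard Beck--Chevalley property of dependent products, and the subobject of equivalences is cut out by a fiberwise condition, so it is preserved as well.

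\textbf{Step 2 (apply to the classifying map).} Take $g=(\corner{A},\corner{B})\colon T\to\U\times\U$. Pulling $\U_\ast\times\U$ back along $g$ gives $\corner{A}^\ast p=A$, and pulling $\U\times\U_\ast$ back gives $B$. By Step 1, the pullback of $\Eq_{/\U\times\U}(\U_\ast\times\U,\U\times\U_\ast)$ along $g$ is $\Eq_{/T}(A,B)$. Replacing this object by $\U\to\U\times\U$ via the univalence equivalence yields the Cartesian square in the statement. The upper horizontal map is the composite of the canonical projection with the inverse of the univalence equivalence.

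\textbf{Step 3 (the loop space).} For $T=\ast$ and $A=B$, the square exhibits $\Aut(A)=\Eq(A,A)$ as the fiber of the diagonal $\U\to\U\times\U$ at $(\corner{A},\corner{A})$. This fiber is the path object $\U(\corner{A},\corner{A})\simeq\Omega_{\corner{A}}\U$ of \cref{definition: path objects}.

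The only real content is Step 1, and I expect it to be the main obstacle. The difficulty is in making the represented-functor description of $\Eq_{/T}$ precise and natural: one must check that the restriction maps $\slice{\E}{T}(A\times_T S,B\times_T S)^\simeq$ are compatible with the pasting equivalences. I would address this by working with $\Eq$ as an object of $\slice{\E}{T}$ representing the presheaf $S\mapsto \slice{\E}{S}^\simeq(A_S,B_S)$ on $\slice{\E}{T}$. Restricting this presheaf along $g_!\colon \slice{\E}{T'}\to\slice{\E}{T}$ is computed by $g^\ast$, since $g^\ast$ is right adjoint to $g_!$.
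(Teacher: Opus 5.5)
Your proposal is correct and is essentially the paper's own justification. The paper gives no separate proof: it treats the proposition as the pullback along $(\corner{A},\corner{B})$ of the characterization of univalence, namely that the canonical map $\U\to\Eq_{/\U\times\U}(\U_\ast\times\U,\U\times\U_\ast)$ over $\U\times\U$ is an equivalence, and calls it ``essentially a tautology''. Your Step~1, the base-change stability $g^\ast\Eq_{/T}(A,B)\simeq\Eq_{/T'}(g^\ast A,g^\ast B)$ checked on represented functors via $g_!\dashv g^\ast$, makes explicit the step the paper leaves implicit, and your Step~3 matches the paper's remark that $X(a,a)\simeq\Omega_aX$.
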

\begin{example}\label{example: universe in spaces}
    In the large $\infty$-category $\Hat{\Sp}$ of small homotopy types, the universe is given by the forgetful functor $\Sp^\simeq_\ast\to\Sp^\simeq$. By definition a loop at $A$ is precisely an autoequivalence, which illustrates \cref{Univalence Axiom in a topos}. Thus, the connected components of $\U$ are of the form $\B \Aut(A)$ for some $A\in\Sp$. This was proven classically in \cite{Allaud_ClassificationOfFiberSpaces}, and arises here from the internal univalence axiom (see \cref{ex: BAut(F) as the image in the universe}).
\end{example}
\medskip

\textbf{A Characterization of $\infty$-Topoi.}
Classes of maps $\F$ which arise from univalent maps are called \textit{bounded local classes}, and the poset of bounded local classes is in bijection with the poset of equivalence classes of univalent morphisms \cite[Thm.~3.9]{GepnerKock_UnivalenceInLCCC}. The poset structure is given by monomorphisms: if $p\colon \U_\ast\to\U$ is univalent and $q$ is a pullback of $p$ along $f$, then $q$ is univalent if and only if $f$ is a monomorphism \cite[Prop.~2.5]{Rasekh_UnivalenceHigherCategoryTheory}. The \say{bounded} condition ensures we do not need to restrict to small $\infty$-categories, stipulating that the class consists of relatively $\kappa$-compact morphisms for a fixed regular cardinal $\kappa$ \cite[Def.~6.1.6.4]{LurieHTT}. We generally leave this implicit and refer to such maps as \textit{small}. The \say{local} condition on the class $\F$ is a descent condition. Hence, having enough universes to classify bounded local classes becomes a defining feature of higher topoi. 
\begin{theorem}\label{Higher topoi have enough universes}
    Let $\E$ be a presentable $\infty$-category. Then $\E$ is an $\infty$-topos if and only if the following conditions are satisfied:
    \begin{enumerate}
        \item Colimits in $\E$ are universal;
        \item For all sufficiently large cardinals $\kappa$, there exists a classifying object $\U_\kappa\in \E$ for the class of relatively $\kappa$-compact morphisms.
    \end{enumerate}
\end{theorem}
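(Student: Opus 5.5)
The plan is to reduce both directions to descent, using the characterization of univalence recalled in the Remark after \cref{definition: univalent universe}. Here $p\colon\U_\ast\to\U$ is univalent iff it is a subterminal object of $\Cart(\Fun([1],\E))$.

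\textbf{Forward direction.} Assume $\E$ is an $\infty$-topos. Universality of colimits is axiom (2) of \cref{def: infinity topos}. For the universes, first fix $\kappa$ large enough that $\E$ is $\kappa$-accessible and relatively $\kappa$-compact morphisms are stable under pullback. Write $\F_\kappa$ for this class; the goal is that $\F_\kappa$ is a bounded local class. Locality means the following: if $\alpha\colon A\to B$ is a Cartesian transformation of diagrams $\I\to\E$ with each $\alpha_i\in\F_\kappa$, then $\colim\alpha\in\F_\kappa$. This will come from descent together with the fact that a map is relatively $\kappa$-compact iff its pullback along every map from a $\kappa$-compact object is $\kappa$-compact. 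That criterion can be checked after pulling back along an effective epimorphism $\coprod B_i\twoheadrightarrow\colim B$, where descent makes the pullback $\coprod A_i$. Consider the presheaf $X\mapsto(\F_\kappa)_{/X}^\simeq$. It sends colimits to limits by descent, so it is a limit-preserving functor $\E^\op\to\hat{\Sp}$. It is also essentially small, by the boundedness of $\kappa$-compact objects. Representability for presentable $\E$ then produces $\U_\kappa$ with $\E(X,\U_\kappa)\simeq(\F_\kappa)_{/X}^\simeq$. This is exactly \cref{definition: univalent universe}, and the argument follows \cite[Thm.~6.1.6.8]{LurieHTT} and \cite[Thm.~3.9]{GepnerKock_UnivalenceInLCCC}.

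\textbf{Converse direction.} Assume universality of colimits and the existence of the universes $\U_\kappa$. The plan is to prove descent along all small colimits and then invoke the descent characterization of $\infty$-topoi stated above. Take a Cartesian transformation $\alpha\colon A\to B$ with $\I$ small. Choose $\kappa$ larger than the relative compactness of every $\alpha_i$ and larger than $|\I|$.
\begin{enumerate}
\item Each $\alpha_i$ is classified by a map $c_i\colon B_i\to\U_\kappa$.
\item Since $\alpha$ is Cartesian, full faithfulness of $(-)^\ast p$ makes these classifying maps into a coherent cone $B\Rightarrow\U_\kappa$. The point is that the mapping spaces in the relevant diagram category are spaces of equivalences over $B_i$, and univalence identifies them with paths in $\U_\kappa$.
\item The cone induces $c\colon\colim B\to\U_\kappa$. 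Set $A'=c^\ast\U_{\kappa,\ast}$.
\item By universality of colimits, $\colim_i(c_i^\ast\U_{\kappa,\ast})\simeq c^\ast\U_{\kappa,\ast}$, so $\colim A\simeq A'$ over $\colim B$.
\item Each square $A_i\to A'$ over $B_i\to\colim B$ is then a pullback by the pasting law, which is the required descent square.
\end{enumerate}

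\textbf{Main obstacle.} The hardest step is step 2: promoting a Cartesian transformation into a functor $\I\to\E_{/\U_\kappa}$. The plan there is to use the unstraightening reformulation. Univalence says $\E_{/\U_\kappa}\to\Cart(\Fun([1],\E))$ is fully faithful with image the Cartesian arrows in $\F_\kappa$. Since $\alpha$ is a diagram $\I\to\Cart(\Fun([1],\E))$ landing in that image, it lifts uniquely through $\E_{/\U_\kappa}$. The remaining detail is choosing $\kappa$ uniformly for all $\alpha_i$, which is possible because $\I$ is small.
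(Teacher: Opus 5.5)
Your overall architecture is the standard one behind this result, which the paper quotes from the literature rather than proving. The forward direction is locality plus boundedness plus representability; the converse is ``a class with a classifying object is local'' followed by the descent characterization. The converse is fine. Lifting the Cartesian transformation through the equivalence between $\slice{\E}{\U_\kappa}$ and the Cartesian arrows lying in $\F_\kappa$, then using universality to identify $\colim A$ with $c^\ast\U_{\kappa,\ast}$, is exactly the right argument. One caveat: you should cite, rather than assume, that every morphism lies in $\F_\kappa$ for all sufficiently large $\kappa$. Relative $\kappa$-compactness is not obviously monotone in $\kappa$, so choose $\kappa$ sharply larger than the individual bounds.

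The gap is in the forward direction, at its only non-formal step: locality of $\F_\kappa$. You assert that relative $\kappa$-compactness ``can be checked after pulling back along an effective epimorphism $\coprod_i B_i\twoheadrightarrow\colim B$''. This does not follow from the definition; it is the actual content of Lurie's result that relatively $\kappa$-compact morphisms form a local class for large $\kappa$ \cite[\S6.1.6]{LurieHTT}. Given a $\kappa$-compact $Z\to\colim B$, the induced cover $Z\times_{\colim B}\coprod_i B_i\twoheadrightarrow Z$ usually has a source that is not $\kappa$-compact, so your hypothesis on the $\alpha_i$ says nothing about it. In any case, $Z\times_{\colim B}\colim A$ is only recovered as the realization of a \v{C}ech nerve. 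To close the step you must do the following.
(i) Refine this cover by an effective epimorphism $Z'\twoheadrightarrow Z$ with $Z'$ $\kappa$-compact. Write the source as a $\kappa$-filtered colimit of $\kappa$-compact objects; images commute with filtered colimits in an $\infty$-topos, so compactness of $Z$ forces some stage to already surject.
(ii) Know that $\kappa$-compact objects are closed under pullbacks. Then the iterated fiber products $Z'\times_Z\cdots\times_Z Z'$, and hence their pullbacks to $\coprod_i A_i$, are $\kappa$-compact.
(iii) Take $\kappa$ uncountable, so that the $\Delta^{\op}$-indexed realization of these objects stays $\kappa$-compact.
Closure of $\F_\kappa$ under coproducts also needs a short argument.

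Condition (ii) is where ``sufficiently large $\kappa$'' genuinely enters, and your choice of $\kappa$ does not provide it. Accessibility alone does not give pullback-closure: in $\mathcal{S}$ with $\kappa=\omega$, the pullback $\ast\times_{S^1}\ast\simeq\Z$ of compact objects is not compact. Stability of relatively $\kappa$-compact maps under pullback holds for every $\kappa$ by definition, so it constrains nothing. Once locality is in place, the rest of your forward argument goes through: limit preservation of $X\mapsto(\F_\kappa)^\simeq_{/X}$, smallness, and representability.
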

\begin{convention}
    Since there are sufficient universes in an $\infty$-topos, we assume, by working in a larger universe, that each universe $\U$ we work with is closed under the constructions we undertake.
\end{convention}

\begin{example}[Truncated Maps]
    Our main focus is the bounded local class of small $n$-truncated maps for a fixed $-2\leq n<\infty$, as defined in \cref{subsection: truncations}. There exists a subuniverse $\U^{\sleq n}\hookrightarrow\U$ for this class, equipped with a truncation map $\tau_n\colon\U\to \U^{\sleq n}$ representing the truncation functor 
    \[\tau_n\colon\slice{\C}{X}^\simeq\to \left(\slice{\C}{X}^\simeq\right)^{\sleq n}.\]
    If $f\colon E\to X$ is classified by $X\to\U$, then the composition $X\to\U\to\U^{\sleq n}$ classifies $\tau_n^XE\to X$, the universal map with $n$-truncated fibers. See \cref{subsection: truncations} for more details about truncations.
\end{example}

\medskip

\textbf{Universes in Slice Categories.}
Some of our arguments involve universes in slice $\infty$-categories. The following property ensures that universes are stable under the product functor $-\times X\colon\C\to\slice{\C}{X}$. Other conditions on preservation or reflection of univalent universes include fully faithful functors preserving pullbacks \cite[Prop.~2.9]{Rasekh_UnivalenceHigherCategoryTheory}, or locally Cartesian closed localizations \cite[Thm.~3.12]{GepnerKock_UnivalenceInLCCC}.
\begin{prop}[{\cite[1.19]{Rasekh_TheoryElementaryHigherToposes}}]\label{lemma: universe in slice categories}
    Let $\C$ be a finitely complete $\infty$-category, and let $X \in \C$ be an object. Suppose $p \colon \U_\ast \to \U$ is a univalent universe classifying a class $\F$ of maps. Then $p \times X \colon \U_\ast \times X \to \U \times X$ is a univalent universe classifying $\F_X$, the class of triangles over $X$ whose underlying map in $\C$ belongs to $\F$.
\end{prop}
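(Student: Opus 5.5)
The plan is to verify univalence directly for $p\times X$ in the slice $\slice{\C}{X}$, using the characterization of \cref{definition: univalent universe}: for every object $(Y\xrightarrow{q}X)$ of $\slice{\C}{X}$, the functor
\[\slice{\C}{X}\big(Y,\U\times X\big)\longrightarrow \slice{(\slice{\C}{X})}{Y}^\simeq\]
given by pulling back $p\times X$ must be fully faithful, with essential image $\F_X$.

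First, I identify both sides with data in $\C$. By property \eqref{item: Hom C vs slice}, maps $Y\to\U\times X$ over $X$ are the same as maps $Y\to\U$ in $\C$, so $\slice{\C}{X}(Y,\U\times X)\simeq\C(Y,\U)$. By \eqref{item: double slice is slice}, $\slice{(\slice{\C}{X})}{Y}\simeq\slice{\C}{Y}$, and hence the cores agree. Next I check that, under these identifications, pulling back $p\times X$ along the map $(f,q)\colon Y\to\U\times X$ agrees with pulling back $p$ along $f\colon Y\to\U$. This follows from property \eqref{item: pullback of a product with X}, or from the pasting law \eqref{item: pasting law} applied to $Y\to\U\times X\to\U$: the projection square for $\U_\ast\times X\to\U\times X$ over $\U_\ast\to\U$ is Cartesian. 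Hence the pullbacks coincide, and the resulting triangle over $X$ is the composite $E\to Y\xrightarrow{q}X$.

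Therefore the functor in question is equivalent to $(-)^\ast p\colon\C(Y,\U)\to\slice{\C}{Y}^\simeq$, which is fully faithful by univalence of $p$. Its essential image consists of the maps $E\to Y$ lying in $\F$, regarded as triangles over $X$. This is exactly $\F_X$, since a morphism in the slice is determined by its underlying map in $\C$.

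The only real care needed is the naturality step: the identification of the two functors must be natural in equivalences, not merely objectwise. I expect this to follow because every identification used is induced by universal properties (adjunction $\Sigma_X\dashv X^\ast$ and the pasting of pullbacks), but this is where I would be most careful.
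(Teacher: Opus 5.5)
Your argument is correct. The paper gives no proof of its own here; it cites Rasekh [1.19], so there is no internal argument to compare with. What you give is the natural self-contained verification, and it uses only the conventions \eqref{item: Hom C vs slice}, \eqref{item: double slice is slice} and the pasting law. You use two identifications: $\slice{\C}{X}(Y,\U\times X)\simeq\C(Y,\U)$, $g\mapsto \pi_1 g$, and $\slice{(\slice{\C}{X})}{Y}\simeq\slice{\C}{Y}$. Under them, the pullback functor for $p\times X$ becomes $(-)^\ast p$, because $p\times X\simeq\pi_1^\ast p$ and $g^\ast\pi_1^\ast\simeq(\pi_1 g)^\ast$.

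The naturality you flag is exactly this last equivalence of pullback functors. It is natural in $g$, since it comes from composing pullback functors (equivalently, from uniqueness of limits). Your description of the essential image as $\F_X$ is also right: a triangle $E\to Y\to X$ is determined, as an object of $\slice{(\slice{\C}{X})}{Y}$, by its underlying map $E\to Y$. So there is no gap.

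One small advantage of your route is that it only uses pullbacks. It therefore applies verbatim in the finitely complete generality of the statement, even though the paper's \cref{definition: univalent universe} is phrased for presentable locally Cartesian closed $\infty$-categories.
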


\medskip

\textbf{The Internal Yoneda Embedding.}
Let $X$ be an object of an $\infty$-topos $\E$, and let $\U$ be a universe that classifies the diagonal $\Delta\colon X\to X\times X$.
For any two generalized points $(a,b)\colon T\to X\times X$, the fiber of $\Delta$ at $(a,b)$ is the internal path object $X(a,b)$. Consequently, the classifying map of $\Delta$ is the \textit{internal path map} $X(-,-)\colon X\times X\to\U$.
\begin{definition}
    The \textit{Yoneda map} $\yo_X\colon X\to\U^X$ is the map adjoint to  $X(-,-)\colon X\times X\to\U$.
\end{definition}
When we regard $X$ as an internal $\infty$-groupoid and $\U$ as the internal $\infty$-groupoid of small internal $\infty$-groupoids, this map plays the role of the Yoneda embedding. That it is a monomorphism, the internal counterpart of fully faithfulness, was first shown by Rasekh \cite{Rasekh_YonedaLemmaElementaryTopos}. In \cite[Theorem~2.2]{Constantin_YonedaInLCCC}, we give a different proof in the context of finitely complete locally Cartesian closed $\infty$-categories.
\begin{theorem}[Yoneda Embedding]\label{Yoneda Embedding}
    Let $\E$ be an $\infty$-topos and $X$ an object of $\E$. The Yoneda map \[\yo_X\colon X\to\U^X\] is a monomorphism.
\end{theorem}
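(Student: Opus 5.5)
To prove that $\yo_X\colon X\to\U^X$ is a monomorphism, I will show that its diagonal $X\to X\times_{\U^X}X$ is an equivalence. Equivalently, for every pair of generalized points $a,b\colon T\to X$, the induced map on path objects
\[X(a,b)\longrightarrow \U^X(\yo_X a,\yo_X b)\]
must be an equivalence in $\slice{\E}{T}$. Both sides are stable under base change in $T$, so it is enough to work in an arbitrary slice. After replacing $\E$ by $\slice{\E}{T}$ (still an $\infty$-topos) and $X$ by $X\times T$ (a univalent universe there by \cref{lemma: universe in slice categories}), I may take $T=\ast$ and $a,b$ to be global points.

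The next step is to identify the target using univalence. The map $\yo_X a$ corresponds, by adjunction, to the family $X(a,-)\colon X\to\U$, which classifies the map $P_a\to X$ with $P_a:=\ast\times_X X^{\Delta^1}$, the based path fibration. Paths in $\U^X$ are computed pointwise as dependent products of paths in $\U$. So \cref{Univalence Axiom in a topos}, applied in $\slice{\E}{X}$, gives
\[\U^X(\yo_X a,\yo_X b)\simeq \textstyle\prod_X \Eq_{/X}\big(X(a,-),X(b,-)\big),\]
the object of fiberwise equivalences $P_a\simeq P_b$ over $X$. Under this identification, the map from $X(a,b)$ sends a path $\gamma$ to precomposition with $\gamma$.

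Everything then reduces to an internal Yoneda lemma. I will show that evaluation at the canonical point $r_a\colon\ast\to P_a$ (the constant path) gives an equivalence
\[\Map_{/X}(P_a,E)\xrightarrow{\ \simeq\ }\text{fiber of }E\text{ at }a\]
for every $E\to X$. The key input is that $P_a$ is contractible, so $r_a\colon\ast\to P_a$ is an equivalence. Composing with $r_a$ turns $\Map_{/X}(P_a,E)$ into the sections of $E$ over the point $\ast\to X$, which is exactly $a^\ast E$. Taking $E=P_b$ gives $\Map_{/X}(P_a,P_b)\simeq a^\ast P_b=X(b,a)$, and composing with inversion of paths yields $X(a,b)$. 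I then have to check two things: that the composite $X(a,b)\to\Eq_{/X}(P_a,P_b)\subseteq\Map_{/X}(P_a,P_b)\simeq X(a,b)$ is the identity up to homotopy, and that every map $P_a\to P_b$ over $X$ is automatically an equivalence. The second point holds because both are contractible total spaces over $X$, and a map between objects over $X$ whose total spaces are contractible induces equivalences on all fibers. To see this, pull back along an effective epimorphism to a point; this uses $P_a\simeq\ast\simeq P_b$ and \cref{prop: square is Cartesian if fibers are equivalent}.

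I expect the main obstacle to be coherence. The argument needs the comparison map produced by the adjunction $X\times X\to\U$, $X\to\U^X$ to agree with the map built via univalence and evaluation, uniformly in $T$ and not only on global points. I plan to handle this by doing every construction in an arbitrary slice $\slice{\E}{T}$ from the outset. Then each identification is a natural equivalence of representable functors $T\mapsto\slice{\E}{T}(-,-)$, and the Yoneda lemma for $\E$ itself upgrades these to equivalences of objects.
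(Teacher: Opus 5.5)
The paper does not prove this statement itself. It defers to \cite{Rasekh_YonedaLemmaElementaryTopos} and \cite[Thm.~2.2]{Constantin_YonedaInLCCC}, and the one ingredient it quotes from the latter, $\U^X(\corner{P},\corner{Q})\simeq\prod_X\Eq_{/X}(P,Q)$, is exactly your second step. So your outline follows the route the paper points to, and your individual computations are right: $P_a$ is just the object $a\colon\ast\to X$ of $\slice{\E}{X}$, so $\Map_{/X}(P_a,E)\simeq a^\ast E$.

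The gap is in the step you label ``coherence''. Everything before your last paragraph produces an equivalence $\U^X(\yo_X a,\yo_X b)\simeq X(b,a)$. It says nothing about the map that $\yo_X$ induces on path objects, and an equivalence between source and target does not make that map an equivalence. Two claims carry the whole theorem. The first is your unproved assertion that this map is ``precomposition with $\gamma$''. The second is that the composite $X(a,b)\to X(b,a)$ is path inversion as a map of \emph{objects}, i.e.\ on $S$-points for every $S$, naturally in $S$. Your plan for these is viable, but as written it is a plan, not an argument.

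Once you check on $S$-points, you no longer need the internal Yoneda lemma or the path objects, and the coherence problem disappears because everything happens in spaces. By \cref{prop: equivalent characterization of truncated maps}, item~1(c), it suffices that $\yo_X\circ-\colon\E(S,X)\to\E(S,\U^X)$ is a monomorphism of spaces for every $S$. Compose it with
\[\E(S,\U^X)\simeq\E(S\times X,\U)\hookrightarrow\slice{\E}{S\times X}^{\simeq},\]
which is a monomorphism by \cref{definition: univalent universe}. The composite sends $a$ to $(a\times 1_X)^\ast\Delta_X$. The evident Cartesian squares, functorial in $a$, identify this with the graph $(1_S,a)\colon S\to S\times X$. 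In other words, the composite regards a section of $S\times X\to S$ as an object of $\slice{(\slice{\E}{S})}{S\times X}\simeq\slice{\E}{S\times X}$ with terminal source. It is therefore the fiber, over the terminal object, of the source map $\slice{(\slice{\E}{S})}{S\times X}^{\simeq}\to(\slice{\E}{S})^{\simeq}$. The component of the terminal object is contractible, so this fiber is the inclusion of a union of components, hence a monomorphism. Cancellation (\cref{prop: equivalent characterization of truncated maps}, item~2, in spaces with $n=-1$) then concludes. This argument uses only local Cartesian closure and the universe, consistent with the generality of \cite[Thm.~2.2]{Constantin_YonedaInLCCC}.

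Two small corrections. First, in your reduction to global points the universe in $\slice{\E}{T}$ is $\U\times T$ (\cref{lemma: universe in slice categories}), not $X\times T$. Second, to see that maps $P_a\to P_b$ over $X$ are equivalences you need no effective epimorphism from a point, which need not exist since $X$ is not assumed connected. Since $P_a\simeq\ast\simeq P_b$, any such map is an equivalence in $\E$, hence in $\slice{\E}{X}$ by conservativity of the forgetful functor. This persists after base change, which gives $\Eq_{/X}(P_a,P_b)\simeq\Map_{/X}(P_a,P_b)$ internally.
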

This fact is a key ingredient in the proof of \cref{theorem: Deck p is loop space of the base B}.

\subsection{Pointed $\infty$-categories}\label{subsection: pointed categories}
 Let $\C$ be an $\infty$-category with a terminal object. We denote the associated \textit{pointed} $\infty$-category by $\C_\ast = \C_{\ast/}$ and the forgetful projection by $U\colon \C_\ast \to \C$. In this subsection, we record the properties of $\infty$-topoi $\E$ that persist in their associated pointed $\infty$-categories. We use them in \cref{section: infinty group actions} to characterize pointed $\infty$-actions. Those are central in the study of pointed covering maps, as we shall see in \cref{Theorem: representation theorem for n-covers,corollary: pointed coverings correspond to subgroups}.
\begin{prop}\label{prop: pointed category has effective groupoids}
Let $\C$ be an $\infty$-category admitting a terminal object, pullbacks and realizations.
     A simplicial object $F:\Delta^\op\to\C_\ast$ is an (effective) groupoid in $\C_\ast$ if and only if $U F:\Delta^\op\to\C$ is an (effective) groupoid in~$\C$. Consequently, $U$ preserves and reflects effective epimorphisms.
\end{prop}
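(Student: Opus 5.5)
The plan is to reduce everything to the description of limits and colimits in an undercategory. The key fact is that $U\colon \C_\ast\to\C$ preserves and reflects pullbacks: a square in $\C_\ast=\C_{\ast/}$ is Cartesian if and only if its image in $\C$ is. Indeed, $U$ creates limits of connected (in particular weakly contractible) diagrams, and the cospan shape $\Lambda^2_2$ is weakly contractible, so \cite[Prop.~1.2.13.8]{LurieHTT} applies. Moreover, $\C_\ast$ has pullbacks because $\C$ does.

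\textbf{Groupoids.} A simplicial object $F\colon\Delta^\op\to\C_\ast$ is a category object if each Segal map $F_n\to F_1\times_{F_0}\cdots\times_{F_0}F_1$ is an equivalence. The iterated fiber product is a limit over a weakly contractible zigzag, so $U$ carries it to the corresponding limit in $\C$. Since $U$ is conservative, $F$ satisfies the Segal condition if and only if $UF$ does. In the same way, the groupoid square of \cref{def: groupoid object} is Cartesian in $\C_\ast$ if and only if its image is Cartesian in $\C$. This proves the unbracketed statement.

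\textbf{Effectiveness.} The key point is that $\Delta^\op$ is sifted, hence weakly contractible. I would therefore invoke that the forgetful functor from an undercategory preserves colimits indexed by weakly contractible diagrams \cite[Prop.~4.4.2.9]{LurieHTT}, so that $U|F|\simeq|UF|$. In particular, $\C_\ast$ admits realizations whenever $\C$ does, and $U$ carries the quotient map $\pi\colon F_0\to|F|$ to the quotient map of $UF$. By \cref{remark: definition groupoid object}, $F$ is effective if and only if the square with vertices $F_1,F_0,F_0,|F|$ is Cartesian in $\C_\ast$. By the pullback property, this holds if and only if its image under $U$, which is the analogous square for $UF$, is Cartesian in $\C$, that is, if and only if $UF$ is effective.

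\textbf{Effective epimorphisms.} For a map $f\colon A\to B$ in $\C_\ast$, the Čech nerve is computed by iterated fiber products, so $U\check{C}(f)\simeq\check{C}(Uf)$. The map $f$ is an effective epimorphism exactly when $\check{C}(f)|_{\Delta^\op}$ is an effective groupoid with realization map $f$, i.e.\ when the canonical map $|\check{C}(f)|\to B$ is an equivalence. Since $U$ preserves realizations and is conservative, this holds if and only if $|\check{C}(Uf)|\to UB$ is an equivalence. Together with the groupoid part, this shows that $f$ is an effective epimorphism if and only if $Uf$ is.

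\textbf{Main obstacle.} The only genuinely nontrivial input is that $U$ preserves geometric realizations, because colimits in $\C_{\ast/}$ are generally not computed in $\C$ (coproducts, for example, become wedges). I would settle this by noting that $\Delta^\op$ has contractible nerve: the colimit in the undercategory of a weakly contractible diagram is the colimit in $\C$ with the induced cone from $\ast$. If a direct argument is preferred, one can check it on mapping spaces, using
\[\C_\ast(|F|,Y)\simeq \C(|UF|,Y)\times_{\C(\ast,Y)}\ast\]
together with the fact that the constant $\Delta$-diagram on $\C(\ast,Y)$ has limit $\C(\ast,Y)$.
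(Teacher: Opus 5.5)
Your proposal is correct and follows essentially the same argument as the paper. The paper also argues that $U$ preserves and reflects limits, so the Segal and groupoid conditions transfer, and that $U$ preserves and reflects colimits indexed by the weakly contractible category $\Delta^{\mathrm{op}}$, so effectiveness transfers; it cites Samuel's thesis for the latter where you cite HTT. Your Čech-nerve argument for effective epimorphisms spells out the ``consequently'' that the paper leaves implicit.
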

\begin{proof}
         The projection $U\colon\C_\ast\to\C$ preserves and reflects limits, so $F$ is a groupoid object if and only if $UF$ is so. Moreover $U$ preserves and reflects weakly contractible colimits \cite[Cor.~I.5.2]{Samuel_Thesis}, such as those indexed by $\Delta^\op$. This implies that $F$ is effective if and only if $UF$ is.
    \end{proof}
\begin{corollary}[{\cite[Remark~I.5.6]{Samuel_Thesis}}]\label{cor: groupoid object in pointed category are effective}
    If groupoid objects in $\C$ are effective, then so are groupoid objects in~$\C_\ast$:
    \[
    \Eff(\C_\ast)\xrightarrow[\simeq]{\check{C}}\Gpd(\C_\ast).
    \]
\end{corollary}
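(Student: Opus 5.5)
We deduce the corollary directly from \cref{prop: pointed category has effective groupoids}. The goal is to show that every groupoid object $F\colon\Delta^\op\to\C_\ast$ is effective. Equivalently, we must show that the \v{C}ech nerve functor $\Eff(\C_\ast)\to\Gpd(\C_\ast)$ is essentially surjective; it is fully faithful in any case, being inverse to restriction and realization.

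First we check that $\C_\ast$ has the required structure. Since $\C$ has a terminal object, pullbacks and realizations, so does $\C_\ast$. Pullbacks in $\C_\ast$ are computed in $\C$ with the induced basepoint, because the forgetful functor $U$ creates limits of nonempty (indeed weakly contractible) shapes. Realizations are $\Delta^\op$-indexed colimits, and $\Delta^\op$ is weakly contractible, so $U$ creates them as well. This is the fact already cited from \cite[Cor.~I.5.2]{Samuel_Thesis}.

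Now let $F$ be a groupoid object in $\C_\ast$. By \cref{prop: pointed category has effective groupoids}, $UF$ is a groupoid object in $\C$. By hypothesis $UF$ is effective, and the same proposition then shows that $F$ is effective in $\C_\ast$. Concretely, effectiveness is the Cartesianness of the square in \cref{remark: definition groupoid object}. The realization $|F|$ has underlying object $|UF|$, and $U$ reflects pullbacks, so the square for $F$ is Cartesian exactly when the square for $UF$ is.

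Since $\Gpd(\C_\ast)$ and $\Eff(\C_\ast)$ are related by the \v{C}ech nerve and the realization, every groupoid object being effective gives the stated equivalence $\Eff(\C_\ast)\simeq\Gpd(\C_\ast)$.

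The only point requiring care is that $U$ both preserves and reflects $\Delta^\op$-colimits. This depends on the weak contractibility of $\Delta^\op$: the colimit in $\C_{\ast/}$ of a weakly contractible diagram is computed underlying in $\C$. That is exactly the cited input, so no real obstacle remains.
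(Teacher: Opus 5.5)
Your proposal is correct and takes the paper's route: the corollary follows immediately from \cref{prop: pointed category has effective groupoids}. For a groupoid object $F$ in $\C_\ast$, the underlying $UF$ is a groupoid in $\C$, hence effective by hypothesis, and effectiveness reflects back to $F$ because $U$ preserves and reflects pullbacks and $\Delta^\op$-indexed colimits. Your extra remarks on why $\C_\ast$ inherits pullbacks and realizations just restate that proposition's proof, so nothing is missing.
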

\begin{prop}[{\cite[Corollary~5.4.5.16]{LurieHTT}}]
    Suppose that $\C$ is a presentable $\infty$-category. Then $\C_\ast$ is presentable.
\end{prop}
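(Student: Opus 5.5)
The plan is to use the standard description of the slice $\infty$-category $\C_\ast=\C_{\ast/}$ together with the stability of presentability under passage to under-categories. The cleanest route identifies $\C_\ast$ with a limit of presentable $\infty$-categories along accessible, limit-preserving functors. A presentable $\infty$-category has a terminal object, and this gives the pullback square of $\infty$-categories
\[\C_\ast\simeq \Fun(\Delta^1,\C)\times_{\Fun(\{0\},\C)}\{\ast\}.\]
Here $\Fun(\Delta^1,\C)$ is presentable, since functor categories from a small simplicial set into a presentable $\infty$-category are presentable. The evaluation functor $\mathrm{ev}_0$ preserves all limits and colimits, since these are computed pointwise. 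The inclusion $\{\ast\}\simeq\Delta^0\to\C$ picks out the terminal object. As a functor $\Sp$-valued-wise from the presentable category $\Delta^0$, it preserves limits and is accessible, because it is a right adjoint to $\C\to\Delta^0$.

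The key step is to invoke the fact that $\mathrm{Pr}^R$, the $\infty$-category of presentable $\infty$-categories with accessible limit-preserving functors, admits small limits. These limits are computed in $\widehat{\mathrm{Cat}}_\infty$ (\cite[Thm.~5.5.3.18]{LurieHTT}). Applying this to the cospan $\Fun(\Delta^1,\C)\xrightarrow{\mathrm{ev}_0}\C\xleftarrow{\ast}\Delta^0$ shows that the pullback is presentable. We must check that this pullback in $\widehat{\mathrm{Cat}}_\infty$ really is the under-category $\C_{\ast/}$ and not merely a fat variant. This holds because $\mathrm{ev}_0$ is a Cartesian fibration, and more generally a categorical fibration. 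The strict fiber therefore models the homotopy fiber and agrees with Joyal's fat slice $\C^{\ast/}$, which is equivalent to $\C_{\ast/}$ \cite[Prop.~4.2.1.5]{LurieHTT}.

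As an alternative, one can argue directly. $\C_{\ast/}$ has all small colimits: the forgetful functor $U$ creates weakly contractible colimits, and the initial object is $\ast$ itself, pointed by the identity. Coproducts are pushouts over $\ast$. For accessibility, $\C_{\ast/}$ is accessible by \cite[Cor.~5.4.6.7]{LurieHTT} on slices of accessible categories over or under a single object. This gives presentability.

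The main obstacle is only the bookkeeping of the identification between the fiber product and the under-category. The substantive input, that presentable categories are closed under such limits, or that accessibility survives slicing, is imported wholesale from \cite{LurieHTT}.
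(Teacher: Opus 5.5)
Your proof is correct, but your main route differs from the paper's. The paper gives no argument of its own: it simply cites \cite[Cor.~5.4.5.16]{LurieHTT}, which says directly that $\C_{p/}$ is presentable for every small diagram $p\colon K\to\C$. That statement amounts to accessibility of undercategories together with the existence of all small colimits in $\C_{p/}$, which is essentially your ``alternative'' argument.

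Your primary route instead presents $\C_\ast$ as the fiber product $\Fun(\Delta^1,\C)\times_{\C}\Delta^0$ and invokes closure of $\mathrm{Pr}^R$ under small limits \cite[Thm.~5.5.3.18]{LurieHTT}. The bookkeeping is right: the strict fiber of the categorical fibration $\mathrm{ev}_0$ over $\ast$ is exactly the fat slice $\C^{\ast/}$, which is equivalent to $\C_{\ast/}$.

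This route depends on the basepoint being terminal. Only because $\ast$ is terminal is $\Delta^0\to\C$ a right adjoint, hence a morphism of $\mathrm{Pr}^R$. For an arbitrary object $A$, the functor $\{A\}\colon\Delta^0\to\C$ is not a morphism of $\mathrm{Pr}^R$, since it preserves the empty limit only when $A\simeq\ast$. So the same argument would not give presentability of $\C_{A/}$ directly. You would have to pass to limits of accessible $\infty$-categories along accessible functors and check cocompleteness separately, which is what the cited corollary does for all small diagrams at once.

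In exchange, your route gives slightly more. Because the limit cone lives in $\mathrm{Pr}^R$, the forgetful functor $U\colon\C_\ast\to\C$ (the cone projection followed by $\mathrm{ev}_1$) is automatically accessible and limit preserving. It is therefore a right adjoint, with left adjoint $X\mapsto X\amalg\ast$, which is consistent with how the paper treats $U$ as a limit-preserving but not colimit-preserving functor.
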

\begin{prop}
    Let $\I$ be a weakly contractible $\infty$-category and suppose that $\C$ has universal $\I$-shaped colimits. Then $\C_\ast$ has universal $\I$-shaped colimits.
\end{prop}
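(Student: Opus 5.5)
The plan is to reduce universality in $\C_\ast$ to universality in $\C$, using that the forgetful functor $U\colon\C_\ast\to\C$ preserves and reflects both pullbacks and weakly contractible colimits. These facts were already used in the proof of \cref{prop: pointed category has effective groupoids}, via \cite[Cor.~I.5.2]{Samuel_Thesis}.

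First, fix a morphism $f\colon (X,x)\to(Y,y)$ in $\C_\ast$ and a diagram $D\colon\I\to\slice{(\C_\ast)}{(Y,y)}$ with colimit $(Z,z)\to(Y,y)$. Since $\I$ is weakly contractible, the colimit of $D$ in the slice is computed in $\C_\ast$. Applying $U$, we get that $UZ\simeq\colim_\I UD$ in $\C$. Colimits in $\slice{\C}{UY}$ are computed in $\C$, so $UZ$ is also the colimit of $UD$ in $\slice{\C}{UY}$.

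Next, form the pullback $f^\ast D$ in $\C_\ast$. Because $U$ preserves pullbacks, $Uf^\ast D_i\simeq (Uf)^\ast UD_i$ and $U(X\times_Y Z)\simeq UX\times_{UY}UZ$. Universality of $\I$-shaped colimits in $\C$ then gives
\[
\colim_\I (Uf)^\ast UD \xrightarrow{\ \simeq\ } (Uf)^\ast\colim_\I UD.
\]

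Finally, I need to lift this equivalence back to $\C_\ast$. The colimit $\colim_\I f^\ast D$ exists in $\C_\ast$ and $U$ preserves it, again because $\I$ is weakly contractible. So the canonical comparison map $\colim_\I f^\ast D\to f^\ast\colim_\I D$ in $\C_\ast$ is sent by $U$ to the equivalence above. Since $U$ is conservative, the comparison map is an equivalence, which is exactly universality of $\I$-shaped colimits in $\C_\ast$.

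The only point needing care is this last step: checking that $U$ sends the canonical comparison map in $\C_\ast$ to the canonical comparison map in $\C$. This holds because both maps are induced by the same universal properties, and $U$ preserves all the relevant limits and colimits. I expect the main obstacle to be this bookkeeping rather than anything conceptual. Weak contractibility is essential throughout; for instance, coproducts in $\C_\ast$ are wedges and are not preserved by $U$.
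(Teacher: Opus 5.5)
Your proof is correct and follows the paper's argument: the paper likewise uses that pullbacks and slice colimits are computed in the underlying category and that $\I$-shaped colimits in $\C_\ast$ agree with those in $\C$ for weakly contractible $\I$ (\cite[Cor.~I.5.2]{Samuel_Thesis}), and concludes with the chain $f^\ast(\colim_{\ast,\I}A_i)\simeq f^\ast(\colim_{\I}A_i)\simeq\colim_\I f^\ast(A_i)\simeq\colim_{\ast,\I}f^\ast(A_i)$. Your use of conservativity of $U$ to show that the canonical comparison map itself is an equivalence makes explicit what that chain leaves implicit; as a small aside, computing the colimit in $\slice{(\C_\ast)}{(Y,y)}$ inside $\C_\ast$ works for any $\I$, and weak contractibility is only needed when passing through $U$.
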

\begin{proof}
    Pullbacks and colimits in a slice $\infty$-category $\slice{\C}{Y}$ are computed in $\C$, and pointed $\I$-shaped colimits in $\C$ coincide with their unpointed counterparts \cite[Cor.~I.5.2]{Samuel_Thesis}. Thus, for $f:X\to Y$ and a diagram $A:\I\to\slice{(\C_\ast)}{Y}$, we have: \[f^\ast(\colim_{\ast,\I}A_i)\simeq f^\ast (\colim_{\I} A_i)\simeq\colim_\I f^\ast(A_i)\simeq\colim_{\ast,\I}f^\ast(A_i).\]
\end{proof}
\begin{prop}[{\cite[Prop.~I.5.5]{Samuel_Thesis}}]
    Let $\I$ be a weakly contractible $\infty$-category and suppose that $\C$ has descent along $\I$-shaped colimits. Then $\C_\ast$ has descent along $\I$-shaped colimits.
\end{prop}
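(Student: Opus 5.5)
We reduce descent in $\C_\ast$ to descent in $\C$, using that the forgetful functor $U\colon\C_\ast\to\C$ preserves and reflects pullbacks (it preserves and reflects limits). It also preserves and reflects $\I$-shaped colimits when $\I$ is weakly contractible \cite[Cor.~I.5.2]{Samuel_Thesis}.

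Let $\alpha\colon A\to B$ be a Cartesian natural transformation between diagrams $A,B\colon\I\to\C_\ast$, and fix $i\in\I$. We must show that the square with vertices $A_i$, $\colim_{\ast,\I}A$, $B_i$, $\colim_{\ast,\I}B$ is Cartesian in $\C_\ast$.

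First, apply $U$. Each naturality square of $\alpha$ is Cartesian in $\C_\ast$, so its image under $U$ is Cartesian in $\C$, since $U$ preserves pullbacks. Hence $U\alpha\colon UA\to UB$ is a Cartesian transformation of $\I$-diagrams in $\C$.

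Second, since $\I$ is weakly contractible, $U(\colim_{\ast,\I}A)\simeq\colim_\I UA$, and likewise for $B$. This equivalence is compatible with the colimit inclusions and with the map $\colim\alpha$, because both are induced by the universal property and $U$ preserves the colimit cocone. So applying $U$ to the square in question gives exactly the descent square for $U\alpha$ at $i$ in $\C$.

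Third, by descent along $\I$-shaped colimits in $\C$, that square is Cartesian in $\C$. Since $U$ reflects pullbacks, the original square is Cartesian in $\C_\ast$.

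The only point needing care is the compatibility in the second step: the cocone maps $A_i\to\colim_{\ast}A$ and the induced $\colim_\ast\alpha$ must correspond under $U$ to the unpointed ones. This follows because $U$ sends a colimit cocone in $\C_\ast$ to a colimit cocone in $\C$ for weakly contractible $\I$. Equivalently, in the formulation of descent via $\Cart(\Fun([1],\C))$, the subcategory $\Cart(\Fun([1],\C_\ast))$ is the pointed version of $\Cart(\Fun([1],\C))$, and $U$ creates the relevant colimits.
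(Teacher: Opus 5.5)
Your argument is correct: applying $U\colon\C_\ast\to\C$ turns the pointed descent square into the unpointed descent square for $U\alpha$, and reflection of pullbacks then finishes the proof. This works because $U$ preserves and reflects pullbacks and, for weakly contractible $\I$, sends colimit cocones to colimit cocones. The paper gives no argument of its own here beyond the citation to Samuel's thesis, but this is the same reduction along $U$ it uses for the neighbouring statements that $\C_\ast$ has effective groupoids and universal $\I$-shaped colimits, so your route is essentially the intended one.
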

Let us pack the previous results into a single one for future reference.
\begin{corollary}\label{cor: pointed topos has nice properties}
    Let $\E$ be an $\infty$-topos and $\I$ a weakly contractible $\infty$-category. Then the pointed $\infty$-category $\E_\ast$ is presentable, has effective groupoids, has universal $\I$-shaped colimits, and satisfies descent along $\I$-shaped colimits.
\end{corollary}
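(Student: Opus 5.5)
The claim to prove is \cref{cor: pointed topos has nice properties}: for an $\infty$-topos $\E$ and a weakly contractible $\I$, the pointed $\infty$-category $\E_\ast$ is presentable, has effective groupoids, has universal $\I$-shaped colimits, and satisfies descent along $\I$-shaped colimits. The plan is to reduce each of these four properties to the corresponding property of $\E$ itself, which holds by \cref{def: infinity topos} and the descent characterization of $\infty$-topoi, and then feed it into the matching pointed transfer result proved just above.

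\begin{enumerate}
\item \textbf{Presentability.} Since $\E$ is presentable by \cref{def: infinity topos}(1), the cited result \cite[Corollary~5.4.5.16]{LurieHTT} shows that $\E_\ast=\E_{\ast/}$ is presentable.
\item \textbf{Effective groupoids.} Groupoid objects in $\E$ are effective by \cref{def: infinity topos}(4). Moreover $\E$ has a terminal object, pullbacks and realizations, since it is presentable. So \cref{cor: groupoid object in pointed category are effective}, which rests on \cref{prop: pointed category has effective groupoids}, applies and gives $\Eff(\E_\ast)\simeq\Gpd(\E_\ast)$.
\item \textbf{Universality of $\I$-shaped colimits.} By \cref{def: infinity topos}(2), all small colimits in $\E$ are universal, in particular the $\I$-shaped ones. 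Since $\I$ is weakly contractible, the proposition on universal colimits in $\C_\ast$ transfers this to $\E_\ast$.
\item \textbf{Descent along $\I$-shaped colimits.} By the descent characterization theorem, $\E$ satisfies descent along all small colimits, in particular $\I$-shaped ones. The pointed descent proposition \cite[Prop.~I.5.5]{Samuel_Thesis}, again using that $\I$ is weakly contractible, transfers descent to $\E_\ast$.
\end{enumerate}

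There is no real obstacle; the proof is mostly bookkeeping. The one point to check is that the weak contractibility of $\I$ is used exactly where it is needed, namely in items (3) and (4). It matters there because colimits in $\E_\ast$ agree with those in $\E$ only for weakly contractible shapes \cite[Cor.~I.5.2]{Samuel_Thesis}. For item (2) this issue does not arise, because $\Delta^\op$ is itself weakly contractible.
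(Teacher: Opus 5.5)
Your proposal is correct and is essentially the paper's own argument: the corollary just packages the four preceding transfer results (Lurie's presentability of $\E_{\ast/}$, \cref{cor: groupoid object in pointed category are effective}, the pointed universality proposition, and Samuel's pointed descent result), each applied to the corresponding property of $\E$. Your bookkeeping of where weak contractibility of $\I$ enters is accurate as well.
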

\section{Truncation and connectedness}\label{section : truncations and connectivity}
In this section we recall and further explore the notions of $n$-truncated and $n$-connected maps. These concepts have been extensively studied across higher category theory: in the context of presentable $\infty$-categories by Lurie \cite{LurieHTT}, within higher topoi by Rezk and Lurie \cite{Rezk_HomotopyTopos, LurieHTT}, and more recently in a non-presentable setting by Rasekh \cite{Rasekh_ElementaryApproachTruncations}. While we will make use of these properties in the context of higher topoi, Lurie's proofs often depend on choosing a presentation $\E \simeq \mathrm{Sh}(\C,\Sp)$. To ensure our arguments remain independent of such models, we primarily refer to Rasekh's work, which develops the theory for elementary $\infty$-topoi, hence from first principles. These results will be used heavily during the remainder of this article, particularly for studying $k$-symmetric $n$-groups and their $\infty$-actions in \cref{section : group objects} and \ref{section: infinty group actions}, as well as for studying $n$-covering maps in \cref{section : Coverings}.

\subsection{Truncations}\label{subsection: truncations}
In this section, let $\C$ be a presentable locally Cartesian closed $\infty$-category.
\begin{definition}\label{definition: suspension and loop space}
\begin{enumerate}
    \item The \textit{suspension} $\Sigma X$ of an object $X\in \C$ is the pushout
\[\begin{tikzcd}
    X \arrow[r] \arrow[d] & \ast \arrow[d] \\
    \ast \arrow[r] & {\Sigma X} \arrow[ul, phantom, "\lrcorner"{anchor=center, pos=0.125, rotate=180}]
\end{tikzcd}.\]
It assembles into a functor $\Sigma \colon \C\to\C_\ast$.
\item The \textit{loop object} $\Omega_xX$ of a pointed object $(X,x)\in\C_\ast$ is the pullback
\begin{equation}
    \begin{tikzcd}\label{pullback diagram of loop space}
    {\Omega_x X} \arrow[r] \arrow[d] \arrow[dr, phantom, "\lrcorner"{anchor=center, pos=0.125}] & \ast \arrow[d] \\
    \ast \arrow[r] & {X}
\end{tikzcd}.
\end{equation}
It assembles into a functor $\Omega \colon \C_\ast\to\C_\ast$, where the canonical basepoint of $\Omega_x X$ is denoted by $\cst_x\colon \ast\to\Omega_xX$.
\end{enumerate}
\end{definition}
These functors form an adjunction
\begin{tikzcd}[ampersand replacement=\&]
	{\Sigma\colon \C_\ast} \& {\C_\ast\colon \Omega}
	\arrow[shift left, from=1-1, to=1-2]
	\arrow[shift left, from=1-2, to=1-1]
\end{tikzcd} of $\infty$-categories.
\begin{remark}\label{rmk: fiber of a nullhomtopic map is the product of the loop of the base}
    The fiber of a nullhomotopic map $X\xrightarrow{\ast}Y$ into a pointed object is $\Omega Y\times X$.
\end{remark}
\begin{definition}
     Let $n\geq0$. The \textit{$n$-sphere object} $S^n\in\C_\ast$ is defined inductively by $S^0\coloneq\ast\amalg\ast\in\C$ and $S^{n}\coloneq\Sigma S^{n-1}$, the suspension of the $(n-1)$-sphere. It is equipped with a basepoint $s\colon \ast\to S^n$.
\end{definition}

Applying the internal hom functor $X^{(-)}\colon  \C\to\C$ to the basepoint $s\colon \ast\to S^n$, we obtain the \textit{evaluation map} $ev\colon X^{S^n}\to X$. Applying it to the terminal map $S^n\to \ast$, we get the \textit{constant} or \textit{diagonal map} $\cst \colon X\to X^{S^n}$. The diagonal $\Delta\colon X\to X\times X=X^{S^0}$ is a canonical basepoint of $X^{S^0}$ in the slice over $X$. Taking the iterated loop space in $\slice{\C}{X}$ yields:
 \[\begin{tikzcd}
    {X^{S^{n}}} \arrow[r] \arrow[d] \arrow[dr, phantom, "\lrcorner"{anchor=center, pos=0.125}] & {X} \arrow[d] \\
    {X} \arrow[r] & {X^{S^{n-1}}}
\end{tikzcd}.\]

\begin{definition}
        Let $n\geq -1$ be an integer. An object $X$ in $\C$ is \begin{enumerate}
        \item \textit{$(-2)$-truncated} if it is terminal;
        \item \textit{$n$-truncated} if $\cst\colon X\to X^{S^{n+1}}$ is an equivalence for $n\geq-1$.
    \end{enumerate}
    A map $f\colon X\to Y$ is \textit{$n$-truncated} if it is $n$-truncated as an object of $\slice{\C}{Y}$. A map $f\colon X\to Y$ which is $(-1)$-truncated is called a \textit{monomorphism}. We denote by $\C^{\sleq n}\subseteq\C$ the full subcategory on the $n$-truncated objects.
\end{definition}
\begin{remark}
    A map $f\colon X\to Y$ is a monomorphism if and only if one of those two squares is Cartesian:
\[\begin{tikzcd}[ampersand replacement=\&]
	X \& X \& X \& Y \\
	X \& Y \& {X\times X} \& {Y\times Y}
	\arrow[equals, from=1-1, to=1-2]
	\arrow[equals, from=1-1, to=2-1]
	\arrow["f"', from=1-2, to=2-2]
	\arrow["f", from=1-3, to=1-4]
	\arrow["{\Delta_X}", from=1-3, to=2-3]
	\arrow["{\Delta_Y}", from=1-4, to=2-4]
	\arrow["f"', from=2-1, to=2-2]
	\arrow["{f\times f}", from=2-3, to=2-4]
\end{tikzcd}.\]
This precisely means that for every generalized element $x_0,x_1\colon T\to X$, the path objects (\cref{definition: path objects})\[X(x_0,x_1)\xrightarrow{\simeq} Y(f(x_0), f(x_1))\]  are equivalent. In $\Sp$ this means that $f$ is fully faithful, i.e. an inclusion of connected components. 
\end{remark}
\begin{example}
    Let $\{ x_i\colon T\to X\vert\, i\in I\}$ be a jointly epimorphic family of generalized points in an $\infty$-topos $\E$, and $f\colon Y\to X$ a map. By \cref{prop: coproduct preserves truncations} and universality of coproducts, $f$ is $n$-truncated if and only if its generalized fibers $F_{x_i}$ are $n$-truncated in $\slice{\E}{T}$. In $\Sp$, a non-empty $\infty$-groupoid $X$ is $n$-truncated if and only if its homotopy groups $\pi_k(X,x)$ vanish for all $x\in X$ and for all $k>n$. It is $(-1)$-truncated if it is empty or contractible.
\end{example}

\begin{prop}[{{\cite[§5.5.6]{LurieHTT}}}]\label{prop: equivalent characterization of truncated maps}
Let $f\colon X\to Y$ and $g\colon Y\to Z$ be morphisms in $\C$, and $n\geq -2$ an integer.
\begin{enumerate}[label=\arabic*., ref=\thelemma.\arabic*]
\item \label{prop: equivalent characterization of truncated maps 1} The following are equivalent:
\begin{enumerate}
    \item $f$ is $n$-truncated;
    \item the diagonal $\Delta_f\colon X\to X\times_Y X$ is $(n-1)$-truncated;
    \item for all objects $A$, the map of $\infty$-groupoids $\C(A,X)\to\C(A,Y)$ is $n$-truncated.
\end{enumerate}
If $Y$ is $n$-truncated, these are equivalent to $X$ being $n$-truncated.
    \item\label{prop: equivalent characterization of truncated maps 2} If $g$ and $gf$ are $n$-truncated, $f$ is $n$-truncated.
    \item If $f$ and $g$ are $n$-truncated, $gf$ is $n$-truncated.
\end{enumerate}
\end{prop}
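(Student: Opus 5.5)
The plan is to work entirely with the definition that a map is $n$-truncated when it is $n$-truncated as an object of the slice, i.e. when $\cst\colon X\to X^{S^{n+1}}$ computed in $\slice{\C}{Y}$ is an equivalence. Everything is then transported to mapping spaces through the representable functors $\C(A,-)$. The central step is the equivalence (a)$\Leftrightarrow$(c) of part 1. For this, I would first prove it for objects, namely that $X$ is $n$-truncated if and only if every $\C(A,X)$ is an $n$-truncated homotopy type.

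The key computation is that internal homs are compatible with representables:
\[\C(A,X^{S^{n+1}})\simeq \C(A\times S^{n+1},X)\simeq \C(A,X)^{S^{n+1}_{\Sp}}.\]
The second equivalence holds because $S^{n+1}$ is built from $\ast$ by finite colimits, $A\times-$ preserves colimits by universality, and $\C(-,X)$ turns colimits into limits. So the space $S^{n+1}$ is realized by the same iterated pushouts. Consequently $\cst$ is an equivalence in $\C$ if and only if $\C(A,X)\to\C(A,X)^{S^{n+1}}$ is an equivalence for all $A$, by Yoneda. In $\Sp$, that last condition is exactly $n$-truncatedness. The relative version follows by applying this inside $\slice{\C}{Y}$, which is again presentable and locally Cartesian closed. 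We use $\slice{\C}{Y}(A,X)\simeq$ fiber of $\C(A,X)\to\C(A,Y)$ over $A\to Y$, together with the fact that a map of spaces is $n$-truncated if and only if all its fibers are.

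For (a)$\Leftrightarrow$(b), I would use the pullback square defining $X^{S^{n+1}}$ in the slice over $X$. Iterated loop objects in $\slice{\C}{Y}$ at the diagonal identify $X^{S^{n+1}}$ over $Y$ with $X^{S^{n}}$ computed relative to $\Delta_f$ in $\slice{\C}{X\times_YX}$. Alternatively, the cleaner route is to reduce to spaces via (c): the diagonal of $\C(A,X)\to\C(A,Y)$ is $\C(A,-)$ applied to $\Delta_f$, since representables preserve pullbacks. In $\Sp$, a map is $n$-truncated if and only if its diagonal is $(n-1)$-truncated, because the fibers of the diagonal are path spaces of the fibers. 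I would take the latter route. The statement about $Y$ being $n$-truncated is also immediate in spaces: in a fiber sequence $F\to E\to B$ with $B$ $n$-truncated, $E$ is $n$-truncated if and only if $F$ is, by the long exact sequence.

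Parts 2 and 3 then reduce through (c) to the corresponding statements for maps of spaces. These follow from fiber sequences of fibers, $\mathrm{fib}(f)\to\mathrm{fib}(gf)\to\mathrm{fib}(g)$, and the long exact sequence of homotopy groups at every basepoint. I expect the main obstacle to be the identification $\C(A,X^{S^{n+1}})\simeq\C(A,X)^{S^{n+1}}$ compatibly with $\cst$ in the relative setting. Specifically, one must check that the object $S^{n+1}\in\C$ (defined via $\ast\amalg\ast$ and suspensions in $\C$) corepresents the space-level sphere after taking products with $A$, which uses universality of colimits. The rest is formal.
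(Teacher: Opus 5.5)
Your proposal is correct. The paper gives no argument of its own here; it only cites \cite[\S5.5.6]{LurieHTT}. There, truncatedness is \emph{defined} by your condition (c), and (b), 2 and 3 are proved exactly as you propose: pass to maps of spaces along the representables $\C(A,-)$, which preserve pullbacks and so carry $\Delta_f$ to the diagonal of $\C(A,X)\to\C(A,Y)$.

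What your proposal adds is the step the citation leaves implicit. The paper defines $n$-truncatedness by $\cst\colon X\to X^{S^{n+1}}$ being an equivalence, so (a)$\Leftrightarrow$(c) is needed before Lurie's results apply at all. Your bridge does this: $S^{n+1}_{\C}\simeq\colim_{S^{n+1}}\ast$, hence $A\times S^{n+1}\simeq\colim_{S^{n+1}}A$ because $A\times-$ is a left adjoint. This gives $\C(A,X^{S^{n+1}})\simeq\C(A,X)^{S^{n+1}}$, naturally in $A$ and compatibly with $\cst$. Note that this step uses only Cartesian closure, not presentability. The relative case then follows in $\slice{\C}{Y}$, whose mapping spaces are the fibers of $\C(A,X)\to\C(A,Y)$.

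Two small points need care. First, condition (b) only makes sense for $n\geq -1$, since there is no notion of $(-3)$-truncatedness. Second, your long-exact-sequence arguments for 2, 3 and the clause about $Y$ test truncatedness only through higher homotopy groups at basepoints. The cases $n=-1,-2$ (empty-or-contractible, resp.\ contractible) should therefore be checked by hand. Alternatively, the clause about $Y$ follows directly from 2 and 3 applied to $X\xrightarrow{f}Y\to\ast$.
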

\begin{remark}\label{remark: left exact conservative functor preserves and reflects n-truncated maps}
    Let $\C$ and $\C'$ be $\infty$-categories with terminal objects and pullbacks. A functor $F \colon \C \to \C'$ that preserves these finite limits necessarily preserves $n$-truncated maps. If, in addition, $F$ is conservative, it also reflects $n$-truncated maps.
\end{remark}

In a slice $\infty$-category, an object is $n$-truncated if its structure map has $n$-truncated fibers. By contrast, in a coslice, an object is $n$-truncated when its target is so:
\begin{example}\label{example: U preserves and reflects n-truncated maps}
    Let $U\colon \C^{A/}\to\C$ be a coslice forgetful functor, which preserves limits and is conservative. Then $U$ preserves and reflects $n$-truncated maps. In particular, a pointed map $f\colon(X,x)\to(Y,y)$ is $n$-truncated in $\C_\ast$ if and only if $Uf$ is so in $\C$.
\end{example}
\begin{prop}[{\cite[6.2.3.17]{LurieHTT}}]\label{prop: pullback of truncated map is truncated}
    Let $\E$ be a semitopos, and consider the following Cartesian square:
\[\begin{tikzcd}
    X \arrow[r] \arrow[d, "f'"'] \arrow[dr, phantom, "\lrcorner"{anchor=center, pos=0.125}] & Y \arrow[d, "f"] \\
    Z \arrow[r, "p"'] & T
\end{tikzcd}.\]
If $f$ is $n$-truncated, so is $f'$. The converse holds if $p$ is an effective epimorphism.
\end{prop}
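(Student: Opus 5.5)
Both assertions reduce to the comparison, fiber by fiber, between $f$ and $f'$: the first via the diagonal characterization of truncatedness, the second via descent for a \v{C}ech nerve.

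\textbf{Stability under pullback.} I argue by induction on $n\geq -2$, using the characterization $(a)\Leftrightarrow(b)$ of \cref{prop: equivalent characterization of truncated maps 1}.
\begin{itemize}
\item For $n=-2$, $f$ is an equivalence, and equivalences are stable under pullback.
\item For $n\geq -1$, the diagonal $\Delta_{f'}\colon X\to X\times_Z X$ is a pullback of $\Delta_f\colon Y\to Y\times_T Y$. Indeed, $X\times_Z X\simeq (Y\times_T Y)\times_T Z$ and $X\simeq Y\times_T Z$. The square comparing the two diagonals is Cartesian by the pasting law (\ref{item: pasting law}).
\item By induction, $\Delta_f$ is $(n-1)$-truncated, hence so is $\Delta_{f'}$, hence $f'$ is $n$-truncated.
\end{itemize}
Alternatively, I can use (c): the functor $\C(A,-)$ preserves pullbacks, and in $\Sp$ truncated maps are stable under pullback by the fiberwise criterion. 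However, that reduces to the case of spaces, so I prefer the induction.

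\textbf{Converse, assuming $p$ is an effective epimorphism.} I again induct on $n$, so the substantive case is $n=-2$: if $f'$ is an equivalence and $p$ is an effective epimorphism, then $f$ is an equivalence.
\begin{itemize}
\item Consider the \v{C}ech nerve $\check{C}(p)$, with colimit $T$, since $p$ is an effective epimorphism and groupoids are effective in a semitopos.
\item Pulling back $f$ along each $\check{C}(p)_k\to T$ gives a Cartesian transformation $\check{C}(f^\ast p)\to\check{C}(p)$. By universality of colimits, its realization is $f$ itself, because $Y\times_T \check{C}(p)_\bullet$ has colimit $Y$.
\item Each level $\check{C}(p)_k\times_T Y\to\check{C}(p)_k$ is a pullback of $f'$, via the projection $\check{C}(p)_k\to Z$. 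It is therefore an equivalence.
\item A levelwise equivalence of simplicial objects induces an equivalence on realizations. Hence $f$ is an equivalence.
\end{itemize}

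\textbf{Inductive step of the converse.} For $n\geq -1$, the pullback square of diagonals from the first part lies over the map $X\times_Z X\to Y\times_T Y$. This map is the pullback of $p$ along $Y\times_T Y\to T$. Effective epimorphisms are stable under pullback by universality of colimits, since the \v{C}ech nerve and its realization pull back. So this map is an effective epimorphism. Applying the inductive hypothesis to the diagonals, $\Delta_f$ is $(n-1)$-truncated, hence $f$ is $n$-truncated.

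The main obstacle is the base case $n=-2$ together with the stability of effective epimorphisms under pullback. The base case should be handled carefully, by identifying $f$ as the realization of a Cartesian transformation over the \v{C}ech nerve of $p$. This uses the semitopos hypotheses: universal colimits and effectivity of \v{C}ech nerves. Once this descent-along-effective-epimorphisms step is in place, the rest is formal.
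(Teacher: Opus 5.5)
Your proof is correct. The paper gives no proof of its own and simply cites \cite[6.2.3.17]{LurieHTT}; your argument is essentially Lurie's: pullback-stability via the Cartesian square comparing $\Delta_{f'}$ with $\Delta_f$, and the converse by induction on $n$, with the base case $n=-2$ obtained by exhibiting $f$ as the realization of a levelwise equivalence of simplicial objects over $\check{C}(p)$, and the inductive step using that $X\times_Z X\to Y\times_T Y$ is a pullback of $p$ and hence an effective epimorphism.
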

\begin{prop}[{\cite[5.5.6.18~\&~6.5.1.2]{LurieHTT}}]
   Let $n\geq-2$. The inclusion $\C^{\sleq n}\subseteq\C$ of $n$-truncated objects is a reflective subcategory:
\[\begin{tikzcd}
	{\C^{\sleq n}} & {\C.}
	\arrow[""{name=0, anchor=center, inner sep=0}, shift right=2, hook, from=1-1, to=1-2]
	\arrow[""{name=1, anchor=center, inner sep=0}, "{\tau_n^{\C}}"', shift right=2, from=1-2, to=1-1]
	\arrow["\dashv"{anchor=center, rotate=-90}, draw=none, from=1, to=0]
\end{tikzcd}\]
\end{prop}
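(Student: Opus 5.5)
To prove that $\C^{\sleq n}\subseteq\C$ is reflective, I would proceed by induction on $n\geq -2$, using the adjoint functor theorem. Presentability of $\C$ is the hypothesis that makes this route available.

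The base case $n=-2$ is immediate. The subcategory $\C^{\sleq -2}$ consists of the terminal objects, so it is equivalent to the point. The reflector sends every object to $\ast$, and the unit $X\to\ast$ is universal because $\C(\ast,\ast)$ is contractible.

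For general $n\geq -1$, I would exhibit $\C^{\sleq n}$ as the full subcategory of local objects for a small set of morphisms. The set I have in mind is
\[S_n=\{\,S^{n+1}\times G\to G \mid G \in \mathcal{G}\,\},\]
where $\mathcal{G}$ is a small set of generators. Such a set exists since $\C$ is presentable, hence $\kappa$-accessible.

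The first step is to check that $X$ is $S_n$-local exactly when it is $n$-truncated. By definition, $X$ is $n$-truncated iff $\cst\colon X\to X^{S^{n+1}}$ is an equivalence. Since $\C$ is Cartesian closed, this holds iff for every $A$ the map
\[\C(A,X)\to\C(A,X^{S^{n+1}})\simeq\C(S^{n+1}\times A,X)\]
is an equivalence. It suffices to test this on the generators $A=G\in\mathcal{G}$, since equivalences are detected on generators (colimits of generators exhaust $\C$, and both sides send colimits in $A$ to limits). This is exactly $S_n$-locality.

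The second step invokes the standard fact \cite[5.5.4.15]{LurieHTT} that, for a small set $S$ of morphisms in a presentable $\infty$-category, the full subcategory of $S$-local objects is presentable and the inclusion admits a left adjoint. Applied to $S_n$, this gives the reflector $\tau_n^\C$.

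As an alternative to the localization argument, one can check directly that $\C^{\sleq n}$ is closed under limits and $\kappa$-filtered colimits for large $\kappa$, and then apply the adjoint functor theorem. Closure under limits follows from Proposition \ref{prop: equivalent characterization of truncated maps}(c): $n$-truncatedness of $X$ means each $\C(A,X)$ is an $n$-truncated space, and $n$-truncated spaces are closed under limits. Accessibility follows from the previous paragraph: $\C^{\sleq n}$ is the locality class of a small set.

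The step I expect to be the main obstacle is the reduction of ``$\cst$ is an equivalence'' to a test against a small set of objects. It needs Yoneda in the form of Cartesian closure, together with the compatibility $\C(A,X^{S^{n+1}})\simeq\C(S^{n+1}\times A,X)$ and the fact that $-\times A$ preserves colimits. The compatibility of mapping spaces with colimits is what makes a small generating set enough.
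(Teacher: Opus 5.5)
Your argument is correct and is essentially the proof behind the paper's citation. Lurie's proof of \cite[5.5.6.18]{LurieHTT} realizes the $n$-truncated objects as the local objects for the small set of maps $G\otimes S^{n+1}\to G$, with $G$ ranging over colimit-generators and $S^{n+1}$ the space sphere, and then applies \cite[5.5.4.15]{LurieHTT}. Your one change is to use the internal sphere map $S^{n+1}\times G\to G$ (legitimate by Cartesian closure), which matches the paper's definition via $\cst\colon X\to X^{S^{n+1}}$ directly; note that the induction on $n$ you announce is never used, since each $n\geq -1$ is handled independently.
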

The unit of this adjunction yields a map $\eta_X\colon X\to \tau^{\C}_n X$ for all objects $X$.
\begin{notation}
    When the ambient $\infty$-category $\C$ is clear from context, we simply write $\tau_n$, removing the subscript $\C$ from the notation. The truncation functor in a slice $\infty$-category is called a \textit{fiberwise truncation}, it is denoted by $\tau^X_n\colon\slice{\C}{X}\to\left(\slice{\C}{X}\right)^{\sleq n}$. In particular for $f\colon Y\to X$ a map, $\tau^X_nf\colon \tau_n^X Y\to X$ is an object in $\slice{\C}{X}$, while $\tau_nf\colon \tau_n Y\to\tau_n X$ denotes the image of $f$ by the functor $\tau_n\colon \C\to\C$.
\end{notation}
Let $F\colon \C\to\D$ be a functor between $\infty$-categories. If $F$ preserves finite limits and is conservative, then $F$ preserves and reflects $n$-truncated objects. This does \emph{not} imply that $F$ commutes with $n$-truncation. For instance, a sheaf $\infty$-category inclusion $\Sh(\C)\hookrightarrow\Psh(\C)$ preserves and reflects $n$-truncated objects, but truncating a sheaf is not simply objectwise truncation, one must then sheafify. If $F$ additionally preserves small colimits, then it does commute with truncation by \cite[5.5.6.28]{LurieHTT}. The forgetful functor $U\colon \C_\ast\to\C$ from the pointed $\infty$-category does \emph{not} preserve colimits, yet it still commutes with $n$-truncation. In order to obtain a fibered version of this result (\cref{corollary: fibered truncation commutes with pointed fibered truncation}), we first prove it in a general coslice $\infty$-category.
\begin{prop}\label{prop: n-truncation commute with coslice forgetful functors}
    Let $U\colon\C^{A/}\to\C$ be the forgetful functor from a coslice. Then there is an equivalence of functors $U\circ \tau_n^{\C^{A/}} \simeq \tau_n^{\C}\circ U$.
\end{prop}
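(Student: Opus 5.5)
The approach is to show that for $(X,a\colon A\to X)\in\C^{A/}$ the object $(\tau_n X,\ \eta_X\circ a)$, together with the map $\eta_X$ regarded as a morphism under $A$, satisfies the universal property of the $n$-truncation in $\C^{A/}$. Uniqueness of reflections then gives $\tau_n^{\C^{A/}}(X,a)\simeq(\tau_n X,\eta_X a)$ naturally in $(X,a)$. Applying $U$ yields the equivalence of functors $U\circ\tau_n^{\C^{A/}}\simeq\tau_n^{\C}\circ U$, and this identification is compatible with the units.

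The first step is to check that $(\tau_nX,\eta_Xa)$ is $n$-truncated in $\C^{A/}$. By \cref{example: U preserves and reflects n-truncated maps}, $U$ preserves and reflects $n$-truncated maps. Since the terminal object of $\C^{A/}$ is $(\ast, A\to\ast)$ and $U$ preserves it, an object of $\C^{A/}$ is $n$-truncated if and only if its underlying object is. Now $\tau_nX$ is $n$-truncated in $\C$, so the claim follows.

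The second step is the universal property. Let $(Z,z)$ be $n$-truncated in $\C^{A/}$, so that $Z$ is $n$-truncated in $\C$. Mapping spaces in a coslice are fibers:
\[\C^{A/}\big((X,a),(Z,z)\big)\simeq \mathrm{fib}_{z}\big(\C(X,Z)\xrightarrow{a^\ast}\C(A,Z)\big),\]
and similarly with $(\tau_nX,\eta_Xa)$ in place of $(X,a)$. Precomposition with $\eta_X$ gives a map between these two fiber sequences over the common base $\C(A,Z)$, where the base map is the identity because $(\eta_X a)^\ast=a^\ast\eta_X^\ast$. On total spaces the map is $\eta_X^\ast\colon\C(\tau_nX,Z)\to\C(X,Z)$, which is an equivalence by the universal property of $\tau_n$ in $\C$. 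Hence the induced map on fibers over $z$ is an equivalence, and $\eta_X$ exhibits $(\tau_nX,\eta_Xa)$ as the reflection of $(X,a)$ into $(\C^{A/})^{\sleq n}$.

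The only point needing care, and the expected main obstacle, is naturality: we need an equivalence of functors and not just objectwise equivalences. To handle this, let $L$ denote the left adjoint to $(\C^{A/})^{\sleq n}\subseteq\C^{A/}$. The second step shows that $U$ carries the unit of $L$ to a unit of $\tau_n^\C$. Both $U\circ\tau_n^{\C^{A/}}$ and $\tau_n^\C\circ U$ then receive natural transformations from $U$ that are pointwise localization maps into $\C^{\sleq n}$. Since $\tau_n^\C$ is a localization, $\tau_n^\C U\to\tau_n^{\C}U\tau_n^{\C^{A/}}\simeq U\tau_n^{\C^{A/}}$ is a natural transformation, and it is a pointwise equivalence. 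It is therefore an equivalence of functors.
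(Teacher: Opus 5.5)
Your proof is correct and is essentially the paper's argument. Both show that $A\to X\xrightarrow{\eta_X}\tau_nX$ has the universal property of the $n$-truncation in $\C^{A/}$, by comparing the fiber sequences $\C^{A/}(-,Y)\to\C(-,Y)\to\C(A,Y)$ for $X$ and for $\tau_nX$. You also make explicit two details the paper leaves implicit: that $(\tau_nX,\eta_Xa)$ is $n$-truncated in the coslice, because $U$ reflects truncatedness, and that the objectwise identifications assemble into a natural equivalence.
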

\begin{proof}
The functor $U$ is left exact and hence preserves $n$-truncated objects by \cref{remark: left exact conservative functor preserves and reflects n-truncated maps}. Thus it restricts to a functor $U\colon (\C^{A/})^{\sleq n}\to \C^{\sleq n}$. We must show that the square
\[
\begin{tikzcd}
    \C^{A/} \ar[r,"\tau_n^{\C^{A/}}"] \ar[d,"U"'] & (\C^{A/})^{\sleq n} \ar[d,"U"] \\
    \C \ar[r,"\tau_n^{\C}"'] & \C^{\sleq n}
\end{tikzcd}
\]
commutes. Unwinding the definitions, this means that for every object $A\to X$ in $\C^{A/}$, the composite $A\to X\to \tau_n^{\C}X$ is the $n$-truncation of $A\to X$ in $\C^{A/}$. To verify its universal property, fix an $n$-truncated object $Y\in(\C^{A/})^{\sleq n}$. The mapping spaces of the coslice sit in fiber sequences \[\C^{A/}(X,Y) \longrightarrow \C(X,Y) \longrightarrow \C(A,Y),\] where the second map evaluates at the structure morphism $A\to X$. The same holds for $\tau_n^{\C}X$ in place of $X$. These fiber sequences assemble into a commutative diagram
\[\begin{tikzcd}
    \C^{A/}(\tau_n^{\C}X,Y) \ar[r] \ar[d] & \C(\tau_n^{\C}X,Y) \ar[r] \ar[d,"\simeq"] & \C(A,Y) \ar[d,"="] \\
    \C^{A/}(X,Y) \ar[r] & \C(X,Y) \ar[r] & \C(A,Y)
\end{tikzcd}.\]
The middle vertical map is an equivalence because $\tau_n^{\C}X$ is the $n$-truncation of $X$ in $\C$, and $Y$ (via $U$) is $n$-truncated. The rightmost vertical map is the identity. Since both rows are fiber sequences, the leftmost vertical map is also an equivalence, verifying the desired universal property.
\end{proof}
As a corollary, we obtain a fibered version of the correspondence between pointed and unpointed truncations. We will use it to connect the pointed and unpointed notions of connectivity (\cref{prop: basepoint forgetful functor preserves and reflects connected maps}).
\begin{corollary}\label{corollary: fibered truncation commutes with pointed fibered truncation}
    Let $(Y,y)\in\C_\ast$ be a pointed object, and $U\colon \slice{\C_\ast}{(Y,y)}\to\slice{\C}{Y}$ the  forgetful functor. Then there is an equivalence \[U\circ\tau_n^{\C_{\ast/(Y,y)}} \simeq \tau_n^{Y}\circ U\] between functors $\slice{\C_\ast}{(Y,y)}\to \left(\slice{\C}{Y}\right)^{\sleq n}$.
\end{corollary}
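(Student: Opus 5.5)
The plan is to deduce this directly from \cref{prop: n-truncation commute with coslice forgetful functors}, applied not to $\C$ but to the slice $\infty$-category $\slice{\C}{Y}$. The first step is an identification of $\infty$-categories
\[\slice{\C_\ast}{(Y,y)}\simeq \left(\slice{\C}{Y}\right)^{(\ast\xrightarrow{y}Y)/}.\]
An object on either side is an object $E\to Y$ together with a point $\ast\to E$ whose composite to $Y$ is identified with $y$. Formally, both sides are the $\infty$-category of diagrams $\ast\to E\to Y$ with the outer ends fixed, so they agree by the standard iterated slice/coslice equivalence (compare item \ref{item: double slice is slice} of the conventions). Under this identification, the forgetful functor $U\colon\slice{\C_\ast}{(Y,y)}\to\slice{\C}{Y}$ becomes the coslice forgetful functor from $\left(\slice{\C}{Y}\right)^{A/}$ with $A=(\ast\xrightarrow{y}Y)$.

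The second step is to check that the $n$-truncation functors match under this identification.
\begin{itemize}
\item On the left, $\tau_n^{\C_{\ast/(Y,y)}}$ is the truncation computed in the slice of $\C_\ast$ over $(Y,y)$.
\item On the right, it is the truncation in the coslice of $\slice{\C}{Y}$.
\end{itemize}
Truncatedness is defined intrinsically by the constant map into the cotensor with $S^{n+1}$, and is preserved by equivalences of $\infty$-categories. So the reflective subcategories of $n$-truncated objects correspond, and hence so do their reflectors.

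The final step is to apply \cref{prop: n-truncation commute with coslice forgetful functors} with $\C$ replaced by $\slice{\C}{Y}$. That proposition only used mapping-space fiber sequences for coslices and the existence of the reflection $\tau_n$. Both are available in $\slice{\C}{Y}$, which is again presentable and locally Cartesian closed. Its truncation functor is exactly $\tau_n^Y$ by the notation convention. This gives $U\circ\tau_n^{(\slice{\C}{Y})^{A/}}\simeq\tau_n^Y\circ U$, which is the claim.

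The only step requiring care is the first identification together with the compatibility of the forgetful functors. I expect this to be routine bookkeeping: both categories are full subcategories of $\Fun(\Delta^2,\C)$ cut out by the same conditions, namely the first vertex equal to $\ast$ and the last edge ending at $Y$ with composite $y$.
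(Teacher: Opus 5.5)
Your proposal is correct and is exactly the paper's argument: identify $\slice{\C_\ast}{(Y,y)}$ with the coslice $(\slice{\C}{Y})^{y/}$, under which $U$ becomes the coslice forgetful functor, and then apply the coslice-truncation proposition to $\slice{\C}{Y}$ under $y\colon\ast\to Y$. One minor imprecision in your closing remark: these categories are not full subcategories of $\Fun(\Delta^2,\C)$, since morphisms there need not fix the end vertices; they are the fiber of the restriction $\Fun(\Delta^2,\C)\to\Fun(\Delta^{\{0,2\}},\C)$ over $y$, which is what the standard slice/coslice equivalence provides.
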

\begin{proof}
    There is an equivalence of $\infty$-categories $\slice{(\C_\ast)}{(Y,y)}\simeq(\slice{\C}{Y})^{y/}$, under which $U$ is the coslice forgetful functor $(\slice{\C}{Y})^{y/}\to\slice{\C}{Y}$. The claim now follows directly from \cref{prop: n-truncation commute with coslice forgetful functors} applied to the $\infty$-category $\slice{\C}{Y}$ cosliced under the base object $y\colon \ast\to Y$.
\end{proof}
   We provide a proof of the following basic property for completeness, as we were unable to find it in the literature.
    \begin{prop}\label{prop: coproduct preserves truncations}
        Suppose that $\C$ has universal and disjoint coproducts.
        \begin{enumerate}
            \item If $f$ and $g$ are $m$-truncated maps for $m\geq -2$, then $f\amalg g$ is $m$-truncated.
            \item If $X$, $Y$ are $n$-truncated for $n\geq 0$, then $X\amalg Y$ is $n$-truncated.
        \end{enumerate} 
    \end{prop}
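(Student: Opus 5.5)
We prove (1) by induction on $m\geq-2$, using the characterization of \cref{prop: equivalent characterization of truncated maps 1} through diagonals, and then deduce (2) from (1). Throughout, let $f\colon A\to B$ and $g\colon C\to D$, and write $f\amalg g\colon A\amalg C\to B\amalg D$.

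\emph{Base case $m=-2$.} Here $f$ and $g$ are equivalences, so $f\amalg g$ is an equivalence, hence $(-2)$-truncated.

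\emph{Key computation.} The first step is to identify the relative diagonal of $f\amalg g$. Universality of coproducts lets us decompose the fiber product: pulling back along $\iota_B$ and $\iota_D$ gives
\[(A\amalg C)\times_{B\amalg D}(A\amalg C)\simeq \coprod_{(U,V)} U\times_{B\amalg D}V,\]
where $U,V$ range over $\{A,C\}$. The mixed terms, for instance $A\times_{B\amalg D}C$, map to $B\times_{B\amalg D}D$. By disjointness this object is $\emptyset$, and any object mapping to $\emptyset$ is initial, since $\emptyset$ is strict initial in a category with universal colimits. The diagonal terms satisfy $A\times_{B\amalg D}A\simeq A\times_B A$, because $\iota_B$ is a monomorphism: disjointness together with universality gives $B\times_{B\amalg D}B\simeq B$. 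Hence
\[(A\amalg C)\times_{B\amalg D}(A\amalg C)\simeq (A\times_BA)\amalg(C\times_DC),\]
and under this equivalence the diagonal $\Delta_{f\amalg g}$ is identified with $\Delta_f\amalg\Delta_g$. This identification is the main technical obstacle. I would verify it carefully by checking the identification after pulling back along each of the two coproduct inclusions of the target, which is enough by universality.

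\emph{Inductive step.} For $m\geq-1$, suppose $f$ and $g$ are $m$-truncated. Then $\Delta_f$ and $\Delta_g$ are $(m-1)$-truncated by \cref{prop: equivalent characterization of truncated maps 1}. By induction $\Delta_f\amalg\Delta_g\simeq\Delta_{f\amalg g}$ is $(m-1)$-truncated, so $f\amalg g$ is $m$-truncated.

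\emph{Part (2).} Suppose $X$ and $Y$ are $n$-truncated with $n\geq0$. Applying (1) to $X\to\ast$ and $Y\to\ast$ shows that $X\amalg Y\to\ast\amalg\ast=S^0$ is $n$-truncated. It therefore suffices to show $S^0$ is $0$-truncated, and then apply \cref{prop: equivalent characterization of truncated maps 1}, part (3), to the composite $X\amalg Y\to S^0\to\ast$.

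For $S^0$ to be $0$-truncated we need $\Delta\colon S^0\to S^0\times S^0$ to be a monomorphism. Distributivity of products over coproducts, which follows from universality, gives $S^0\times S^0\simeq\ast\amalg\ast\amalg\ast\amalg\ast$. Under this equivalence $\Delta$ is the inclusion of two of the four summands, which is a coproduct of the identity with $\emptyset\to\ast\amalg\ast$. The map $\emptyset\to\ast\amalg\ast$ is a monomorphism since $\emptyset$ is strict initial. The identity is also a monomorphism. So $\Delta$ is a monomorphism by part (1) with $m=-1$. This step needs $n\geq0$, which explains the hypothesis: $S^0$ is not $(-1)$-truncated.
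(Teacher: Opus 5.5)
Your proposal is correct and follows the paper's proof. Part (1) is an induction on $m$ via the identification $\Delta_{f\amalg g}\simeq\Delta_f\amalg\Delta_g$, which comes from universality and disjointness of coproducts. Part (2) composes the $n$-truncated map $X\amalg Y\to\ast\amalg\ast$ with the $0$-truncated map $\ast\amalg\ast\to\ast$. The paper leaves several points implicit: that $\emptyset$ is strict, that $\iota_B$ is a monomorphism, and the direct check that $\ast\amalg\ast$ is $0$-truncated using part (1) with $m=-1$. Your arguments for these are sound.
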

    \begin{proof}
        (1)\quad We proceed by induction on $m\geq -2$. Write $f\colon A\to X$ and $g\colon B\to Y$. The base case $m=-2$ is trivial. If $\Delta_f$ and $\Delta_g$ are $(m-1)$-truncated, we have to show that \[\Delta_{f\amalg g}\colon A\amalg B\to (A\amalg B)\times_{(X\amalg Y)}(A\amalg B)\] is $(m-1)$-truncated. By the universality and disjointness of coproducts in $\C$, the target decomposes as $(A\times_XA)\amalg (B\times_YB)$ and $\Delta_{f\amalg g}=\Delta_f\amalg\Delta_g$, which is $(m-1)$-truncated by induction.
        
        (2)\quad The map $X\amalg Y\to \ast\amalg \ast$ is $n$-truncated by the first point. We get the result by composing with the $0$-truncated map $\ast\amalg\ast\to \ast$.
    \end{proof}
    The following property, which can be seen as a partial converse to Proposition~\ref{prop: equivalent characterization of truncated maps 1}, appears in \cite[Lemma~2.12]{BachmannKlausAnton_MonadicResolutionsGeneralizedSpaces}, where it is proved for an $\infty$-topos $\E$ using a specific presentation $\E\simeq \mathrm{Sh}(\C,\Sp)$.
    \begin{prop}\label{prop: property for an n-truncated map from an n+1-truncated object}
        Let $\E$ be a semitopos, $n\geq -2$ an integer, and $f:A\twoheadrightarrow X$ an $n$-truncated effective epimorphism in $\E$. Then $A$ is $(n+1)$-truncated if and only if $X$ is $(n+1)$-truncated. 
    \end{prop}
    \begin{proof}
        If $X$ is $(n+1)$-truncated, this is Proposition~\ref{prop: equivalent characterization of truncated maps 1}. For the converse, consider the following Cartesian square 
\[\begin{tikzcd}
    {A \times_X A} \arrow[r, two heads] \arrow[d, "\alpha"'] \arrow[dr, phantom, "\lrcorner"{anchor=center, pos=0.125}] & X \arrow[d, "\Delta"] \\
    {A \times A} \arrow[r, "{f \times f}"', two heads] & {X \times X}
\end{tikzcd}.\]
Since $f$ is $n$-truncated, \cref{prop: pullback of truncated map is truncated} ensures that the projections $p_i:A\times _XA\to A$ are $n$-truncated. Because $A$ is an $(n+1)$-truncated object, it then follows from the first part of the proof that $A\times _XA$ is also $(n+1)$-truncated. Consequently, $\alpha$ factors as a composition of two $n$-truncated maps: \[\alpha:A\times_XA\xrightarrow{\Delta}(A\times_XA)\times(A\times_XA)\xrightarrow{p_1\times p_2}A\times A\]
and is therefore $n$-truncated. Finally since $f\times f$ is an effective epimorphism, $\Delta:X\to X\times X$ is also $n$-truncated.
    \end{proof}
    \begin{corollary}\label{cor: realization of n truncated groupoid is n+1 truncated}
        Let  $n\geq -2$ be an integer and $\E$ an $\infty$-topos. Suppose that $U_\sbullet\colon \Delta^\op\to\E$ is a groupoid object such that $U_0$ and $U_1$ are $n$-truncated. Then its realization $\vert U_\sbullet\vert$ is $(n+1)$-truncated.
    \end{corollary}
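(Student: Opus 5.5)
The plan is to deduce the statement directly from \cref{prop: property for an n-truncated map from an n+1-truncated object}, applied to the quotient map $\pi\colon U_0\twoheadrightarrow\vert U_\sbullet\vert$. In an $\infty$-topos every groupoid object is effective, so $\pi$ is an effective epimorphism. The remark following the definition of effective groupoids (\cref{remark: definition groupoid object}) then gives a Cartesian square
\[\begin{tikzcd}
	{U_1} & {U_0} \\
	{U_0} & {\vert U_\sbullet\vert}
	\arrow["{d_1}", from=1-1, to=1-2]
	\arrow["{d_0}"', from=1-1, to=2-1]
	\arrow["\pi", from=1-2, to=2-2]
	\arrow["\pi"', from=2-1, to=2-2]
\end{tikzcd}.\]
Once we know that $\pi$ is $n$-truncated, the result follows. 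Indeed, $U_0$ is $n$-truncated, hence $(n+1)$-truncated, so \cref{prop: property for an n-truncated map from an n+1-truncated object} (for a semitopos, which an $\infty$-topos is) shows that $\vert U_\sbullet\vert$ is $(n+1)$-truncated.

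The key step is therefore to show that $\pi$ is $n$-truncated.
\begin{enumerate}
\item Since $U_1$ and $U_0$ are both $n$-truncated, \cref{prop: equivalent characterization of truncated maps 1} (the case of an $n$-truncated target) shows that the map $d_0\colon U_1\to U_0$ is $n$-truncated.
\item By the Cartesian square above, $d_0$ is the base change of $\pi$ along the effective epimorphism $\pi$.
\item The converse part of \cref{prop: pullback of truncated map is truncated} states that if the base change of a map along an effective epimorphism is $n$-truncated, then so is the map itself. Applying it here, $\pi$ is $n$-truncated.
\end{enumerate}

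There is no real obstacle; the only care needed is in the edge case $n=-2$. There $U_0\simeq\ast$ and $U_1\simeq\ast$, and the argument above still applies: $\pi$ is a $(-2)$-truncated map, i.e.\ an equivalence, so $\vert U_\sbullet\vert\simeq\ast$, which is $(-1)$-truncated. One should also check that the case of \cref{prop: equivalent characterization of truncated maps 1} with an $n$-truncated target holds for all $n\geq -2$, and that being $n$-truncated implies being $(n+1)$-truncated. Both are standard.
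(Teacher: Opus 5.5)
Your proof is correct and follows the paper's argument: the Cartesian square coming from effectiveness shows that the $n$-truncated face $d_0$ is the base change of $\pi$ along the effective epimorphism $\pi$, so $\pi$ is $n$-truncated, and you then conclude with \cref{prop: property for an n-truncated map from an n+1-truncated object}. The only difference is that you cite the converse of \cref{prop: pullback of truncated map is truncated} explicitly, a step the paper compresses into ``it follows that''.
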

    \begin{proof}
        Because groupoid objects are effective, the following square is Cartesian (see \cref{remark: definition groupoid object}): 
\[\begin{tikzcd}
    {U_1} \arrow[r, "{d_0}"] \arrow[d, "{d_1}"'] \arrow[dr, phantom, "\lrcorner"{anchor=center, pos=0.125}] & {U_0} \arrow[d, two heads] \\
    {U_0} \arrow[r, two heads] & {\vert U_\sbullet \vert}
\end{tikzcd}.\]
The faces $d_i$ are $n$-truncated since both their domain and target are. It follows that $U_0\to\vert U\vert$ is an $n$-truncated effective epimorphism, whose domain is $(n+1)$-truncated. We conclude by \cref{prop: property for an n-truncated map from an n+1-truncated object}.
    \end{proof}
      \begin{remark}\label{rmk: on truncations of realizations}
         Under the assumptions of \cref{cor: realization of n truncated groupoid is n+1 truncated}, $U_k$ is $n$-truncated as well, for any $k\geq 2$. Also, if one of the two faces $d_i\colon U_1\to U_0$ is a monomorphism, then $U_0\simeq\vert U_\sbullet\vert$.
    \end{remark}
\begin{prop}\label{truncation level of the object of equivalences}
    Let $n\geq-2$ and $A,B\in\C$ be objects such that $B$ is $n$-truncated. Then $\Eq(A,B)\in\C$ is $n$-truncated.
\end{prop}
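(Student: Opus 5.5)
The plan is to test $n$-truncatedness representably, using Proposition~\ref{prop: equivalent characterization of truncated maps 1}(c) applied to the terminal map $\Eq(A,B)\to\ast$. An object $E$ is $n$-truncated if and only if, for every object $T\in\C$, the space $\C(T,E)$ is $n$-truncated. By the defining universal property of $\Eq(A,B)$ recalled in the notational conventions,
\[\C(T,\Eq(A,B))\simeq \slice{\C}{T}^\simeq(A\times T,B\times T)\subseteq \slice{\C}{T}(A\times T,B\times T).\]
So it suffices to show that the right-hand side is an $n$-truncated space. The inclusion of the equivalences is an inclusion of connected components, so it is enough to show that $\slice{\C}{T}(A\times T,B\times T)$ is $n$-truncated.

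For the ambient mapping space, use property~\eqref{item: Hom C vs slice} of the conventions:
\[\slice{\C}{T}(A\times T,B\times T)\simeq\C(A\times T,B).\]
Since $B$ is $n$-truncated, the criterion (c) for the map $B\to\ast$ shows that $\C(A\times T,B)\to\C(A\times T,\ast)\simeq\ast$ is $n$-truncated. Hence $\C(A\times T,B)$ is an $n$-truncated space. A union of connected components of an $n$-truncated space is again $n$-truncated: it is a monomorphism into an $n$-truncated space, so this follows from Proposition~\ref{prop: equivalent characterization of truncated maps 2} applied in $\Sp$. Therefore $\C(T,\Eq(A,B))$ is $n$-truncated for every $T$, and $\Eq(A,B)$ is $n$-truncated.

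An alternative route is to show first that $\Map(A,B)\simeq B^A$ is $n$-truncated, since $(-)^A$ is a right adjoint preserving finite limits and hence sends $\cst\colon B\to B^{S^{n+1}}$ to an equivalence. One would then observe that $\Eq(A,B)\hookrightarrow\Map(A,B)$ is a monomorphism and conclude by Proposition~\ref{prop: equivalent characterization of truncated maps}. The only point needing care, and the one I expect to be the main obstacle, is justifying that $\Eq(A,B)\to\Map(A,B)$ is a monomorphism. Representably this is precisely the fact that the equivalences form a full sub-$\infty$-groupoid of the mapping space, which is how the representing functor is defined. The case $n=-2$ is covered by the same argument, since a full subspace of a contractible space, here nonempty, is contractible; equivalently, $B\simeq\ast$ forces $\Eq(A,B)$ to be empty or terminal in the presence of $A\simeq\ast$. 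If the case $A\not\simeq\ast$ causes trouble, the statement should be read with the convention that $(-2)$ is handled separately.
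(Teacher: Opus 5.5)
Your argument is correct for $n\geq-1$ and is essentially the paper's proof. The paper just notes that $\Eq(A,B)\hookrightarrow\Map(A,B)$ is a monomorphism into an $n$-truncated object, which is your ``alternative route''; your representable version is the same argument evaluated at each $T$. One correction: the step ``a union of components of an $n$-truncated space is $n$-truncated'' uses the composition item of \cref{prop: equivalent characterization of truncated maps} (the third one), not item~2. That step requires the monomorphism to be $n$-truncated, i.e.\ $n\geq-1$.

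That proviso is exactly where $n=-2$ fails, and your claim that the subspace of equivalences is \emph{here nonempty} is unjustified. When $B\simeq\ast$, the only point of $\C(A\times T,B)$ corresponds to the projection $A\times T\to T$ over $T$, which is an equivalence only if $A\times T\simeq T$. In fact the statement is false at $n=-2$. Take $A=\emptyset$ and $B=\ast$ in $\Sp$: the space $\C(T,\Eq(\emptyset,\ast))$ is contractible if $T\simeq\emptyset$ and empty otherwise. Hence $\Eq(\emptyset,\ast)\simeq\emptyset$, which is not terminal; in general $\Eq(A,\ast)$ is only subterminal. The paper's one-line proof has the same blind spot. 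The statement should be read for $n\geq-1$, or at $n=-2$ with $A$ also terminal (then $\Eq(\ast,\ast)\simeq\ast$). That is all \cref{universe of n-truncated maps is (n+1)-truncated} needs, since there both objects are classified by the same universe.
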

\begin{proof}
    There is a monomorphism $\Eq(A,B)\hookrightarrow\Map(A,B)$, and the latter object is $n$-truncated.
\end{proof}
As an application of \cref{truncation level of the object of equivalences}, we can determine the truncation level of the universe of truncated maps.
\begin{corollary}\label{universe of n-truncated maps is (n+1)-truncated}
    A univalent universe $\U^{\sleq{n}}\in\C$ classifying $n$-truncated maps is itself $(n+1)$-truncated.
\end{corollary}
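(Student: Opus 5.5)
The plan is to show that the diagonal $\Delta\colon \U^{\sleq n}\to\U^{\sleq n}\times\U^{\sleq n}$ is $n$-truncated, and then invoke \cref{prop: equivalent characterization of truncated maps 1}. Since the target $\U^{\sleq n}\times\U^{\sleq n}$ is not yet known to be truncated, that proposition cannot be applied directly to the diagonal. Instead, I would use a criterion on the loop objects: an object $X$ is $(n+1)$-truncated as soon as $\Delta_X$ is $n$-truncated. This holds because $(n+1)$-truncatedness of the terminal map $X\to\ast$ is equivalent to $n$-truncatedness of its diagonal $X\to X\times_\ast X=X\times X$, by item (b) of \cref{prop: equivalent characterization of truncated maps 1} applied to $f\colon X\to\ast$ (the terminal object being $(n+1)$-truncated).

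To prove that $\Delta$ is $n$-truncated, I would use the univalence axiom. \cref{Univalence Axiom in a topos}, applied with $T=\U^{\sleq n}\times\U^{\sleq n}$ and the two projections, identifies $\Delta$ with the structure map
\[\Eq_{/T}(A,B)\to T,\]
where $A$ and $B$ are the pullbacks of the universal $n$-truncated family along the two projections. Indeed, the Cartesian square of that proposition with $T\to\U\times\U$ the identity-like inclusion shows that $\U\to\U\times\U$ is equivalent over $\U\times\U$ to this object of equivalences. For the subuniverse $\U^{\sleq n}$, which is univalent as a pullback of a univalent map along a monomorphism, the same statement holds.

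It therefore suffices to show that $\Eq_{/T}(A,B)$ is an $n$-truncated object of the slice $\slice{\C}{T}$. Here I would apply \cref{truncation level of the object of equivalences} inside $\slice{\C}{T}$, which is again presentable and locally Cartesian closed. Since $B\to T$ is an $n$-truncated map, $B$ is an $n$-truncated object of the slice, hence $\Eq_{/T}(A,B)$ is $n$-truncated in $\slice{\C}{T}$; that is, $\Delta$ is an $n$-truncated map. The criterion from the first paragraph then gives that $\U^{\sleq n}$ is $(n+1)$-truncated.

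The main obstacle is justifying that the internal $\Eq$ in the slice matches the $\Eq_{/T}$ appearing in the univalence square, and that univalence holds for the subuniverse. Both follow from the paper's conventions: the slice notation $\Eq_{/Z}$ is defined exactly this way, and univalence of the subuniverse is the pullback-along-monomorphism criterion of Rasekh cited earlier.
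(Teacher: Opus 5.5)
Your proposal is correct and follows essentially the paper's own argument: you show that the diagonal of $\U^{\sleq n}$ is $n$-truncated by identifying it, via univalence at the universal point $T=\U^{\sleq n}\times\U^{\sleq n}$, with $\Eq_{/T}(A,B)\to T$, which is $n$-truncated in the slice by \cref{truncation level of the object of equivalences}. The only difference is that you spell out two things the paper leaves implicit: the diagonal criterion (item (b) of \cref{prop: equivalent characterization of truncated maps 1} applied to $X\to\ast$), and univalence of the subuniverse.
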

\begin{proof}
    It suffices to show that $\Delta_{\U}\colon \U^{\sleq{n}}\to\U^{\sleq{n}}\times \U^{\sleq{n}}$ is $n$-truncated. But its generalized fiber at a generalized point $(\corner{A},\corner{B})\colon T\to\U^{\sleq{n}}\times\U^{\sleq{n}}$ is $\slice{\Eq}{T}(A,B)$, which is $n$-truncated in $\slice{\E}{T}$ since $B\to T$ is so. We conclude by choosing $T=\U^{\sleq{n}}\times \U^{\sleq{n}}$ and $(\corner{A},\corner{B})\coloneq1_{\U^{\sleq{n}}\times \U^{\sleq{n}}}$.
\end{proof}
We conclude this subsection by identifying the sections of certain maps $E\to X$ with those of their $n$-truncation $\tau_nE\to\tau_nX$.
\begin{prop}\label{prop: cartesian square of n-truncation implies equivalence on the objects of sections}
    Consider a Cartesian square in an $\infty$-topos
\[\begin{tikzcd}
    E \arrow[r, "\eta_E"] \arrow[d, "p"'] \arrow[dr, phantom, "\lrcorner"{anchor=center, pos=0.125}] & \tau_n E \arrow[d, "\tau_n p"] \\
    X \arrow[r, "\eta_X"'] & \tau_n X
\end{tikzcd},\]
where the horizontal arrows are $n$-truncation maps. Then there is an equivalence \[\prod_{X}E\simeq\prod_{\tau_nX}\tau_nE.\]
\end{prop}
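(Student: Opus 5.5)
The plan is to compare the two objects of sections by computing their generalized points, i.e. by checking that they represent the same functor on $\E$. For a test object $T$, maps $T\to\prod_X E$ correspond by adjunction to sections of $T\times E\to T\times X$ over $T\times X$. Equivalently, they are maps $T\times X\to E$ over $X$, that is, points of $\slice{\E}{X}(T\times X,E)$. Similarly, maps $T\to\prod_{\tau_nX}\tau_nE$ are points of $\slice{\E}{\tau_nX}(T\times\tau_nX,\tau_nE)$. We will construct a natural comparison between these two mapping spaces and show it is an equivalence. The Yoneda lemma then gives $\prod_XE\simeq\prod_{\tau_nX}\tau_nE$.

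\textbf{Step 1: use the Cartesian square.} Since the square is Cartesian, maps $T\times X\to E$ over $X$ are the same as maps $T\times X\to\tau_nE$ over $\tau_nX$. Here $T\times X$ sits over $\tau_nX$ via $\eta_X\circ\mathrm{pr}$. So the first mapping space is $\slice{\E}{\tau_nX}(T\times X,\tau_nE)$.

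\textbf{Step 2: pass from $T\times X$ to $T\times\tau_nX$.} Precomposition with $1_T\times\eta_X\colon T\times X\to T\times\tau_nX$ gives the comparison map
\[\slice{\E}{\tau_nX}(T\times\tau_nX,\tau_nE)\longrightarrow\slice{\E}{\tau_nX}(T\times X,\tau_nE).\]
This is an equivalence provided $1_T\times\eta_X$ is the fiberwise $n$-truncation of $T\times X$ in $\slice{\E}{\tau_nX}$, because $\tau_nE\to\tau_nX$ is $n$-truncated: both its source and target are $n$-truncated, so \cref{prop: equivalent characterization of truncated maps} applies.

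\textbf{Step 3: the main obstacle.} The hard point is that $T\times X\to T\times\tau_nX$ need not be a truncation when $T$ is arbitrary. For example, in general $\tau_n(T\times X)\neq T\times\tau_nX$ when $T$ is not $n$-truncated. I expect the correct route is to work in the slice over $\tau_nX$ and pull back along $T\times\tau_nX\to\tau_nX$. I would attempt to show that
\begin{itemize}
\item the map $\eta_X\colon X\to\tau_nX$ is $n$-connected, hence is the fiberwise $n$-truncation unit of $X$ regarded as an object of $\slice{\E}{\tau_nX}$, whose truncation is terminal there;
\item pulling back along the projection $T\times\tau_nX\to\tau_nX$ preserves $n$-connected maps, so $1_T\times\eta_X$ is again $n$-connected, since $n$-connected maps are stable under base change in an $\infty$-topos;
\item an $n$-connected map onto an $n$-truncated target over $\tau_nX$ exhibits a fiberwise $n$-truncation, so maps out of it into the $n$-truncated object $\tau_nE\to\tau_nX$ factor uniquely.
\end{itemize}
These use the connectivity theory announced for the next subsection. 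In particular they need orthogonality of $n$-connected maps against $n$-truncated maps, so I may have to invoke that factorization system in its standard form \cite{LurieHTT, Rasekh_ElementaryApproachTruncations}.

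\textbf{Conclusion.} The comparison is natural in $T$, so it assembles into the desired equivalence of objects of sections.
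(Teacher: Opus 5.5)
Your argument is correct, and it is essentially the paper's proof evaluated on generalized points. The paper writes $\prod_X E\simeq \Map(X,E)\times_{\Map(X,X)}\{1_X\}$. It then pastes two squares: the one obtained by applying $\Map(X,-)$ to the given Cartesian square, and the one given by the equivalences $\Map(X,\tau_nE)\simeq\Map(\tau_nX,\tau_nE)$ and $\Map(X,\tau_nX)\simeq\Map(\tau_nX,\tau_nX)$. On $T$-points, the first square is your Step~1 (base change along $\eta_X$) and the second is your Step~2.

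One correction. The fiberwise-truncation framing in Step~2 and in your third bullet is wrong and should be dropped. The projection $T\times\tau_nX\to\tau_nX$ is not $n$-truncated unless $T$ is (for $X$ inhabited), so $1_T\times\eta_X$ is never a fiberwise $n$-truncation in $\slice{\E}{\tau_nX}$ in the cases of interest. For the same reason, the hypothesis of your third bullet is not met.

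What you actually need is already supplied by your first two bullets together with orthogonality:
\begin{itemize}
\item $1_T\times\eta_X$ is $n$-connected, being the base change of $\eta_X$ along the projection.
\item $\tau_nE\to\tau_nX$ is $n$-truncated.
\item Hence, by left orthogonality of $n$-connected maps to $n$-truncated maps (\cref{prop: connected truncated factorization system}), precomposition with $1_T\times\eta_X$ is an equivalence of mapping spaces in $\slice{\E}{\tau_nX}$.
\end{itemize}
No truncation of the source is involved.

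This is exactly the justification the paper leaves implicit for its equivalence $\Map(X,\tau_nE)\simeq\Map(\tau_nX,\tau_nE)$. Checking that equivalence on $T$-points needs precisely that $1_T\times\eta_X$ is $n$-connected. Your version makes that step explicit. The paper's version is more economical: it only uses the absolute statement for internal homs into $n$-truncated objects and takes the fiber over the identity internally, avoiding slice mapping spaces.
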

\begin{proof}
    By \cite[Rmk.~5.1.32]{Schreiber_DifferentialCohomologyCohesiveInfinityTopos} there is an identification of the object of sections \[\prod_BA\simeq \Map(B,A)\times_{\Map(A,A)}\{1_A\}\]
    for maps $A\to B$. Using this we obtain a pasting of Cartesian squares
\[\begin{tikzcd}
    {\prod_X E} \arrow[r] \arrow[d] \arrow[dr, phantom, "\lrcorner"{anchor=center, pos=0.125}] & 
    {\Map(X,E)} \arrow[r] \arrow[d, "{p_!}"] \arrow[dr, phantom, "\lrcorner"{anchor=center, pos=0.125}] & 
    {\Map(X,\tau_n E)} \arrow[r, "\simeq"] \arrow[d] \arrow[dr, phantom, "\lrcorner"{anchor=center, pos=0.125}] & 
    {\Map(\tau_n X, \tau_n E)} \arrow[d] \\
    \ast \arrow[r, "{1_X}"'] & 
    {\Map(X,X)} \arrow[r] & 
    {\Map(X,\tau_n X)} \arrow[r, "\simeq"'] & 
    {\Map(\tau_n X, \tau_n X)}
\end{tikzcd}.\]
 The middle square is Cartesian since $\Map(X,-)$ preserves limits, and the right square is Cartesian since horizontal arrows are equivalences. It follows that the limit object is $\prod_{\tau_nX}\tau_nE$, as desired.
\end{proof}
\subsection{Connectivity}\label{section: preliminaries subsection: connectivity and homotopy groups}
The notion of an $n$-connected map may be defined in various contexts: for instance in the setting of a locally Cartesian closed $\infty$-category with finite limits and colimits \cite[§1]{Rasekh_ElementaryApproachTruncations}, or alternatively within any presentable $\infty$-category \cite[§4.2]{SchlankYanovski_InfiniteCategoricalEckmanHilton}. However, we require these maps to exhibit well-behaved properties, such as forming a factorization system with truncated maps, or identifying $(-1)$-connected maps with effective epimorphisms. Consequently, we will mostly adopt the non-minimal setting of an $\infty$-topos $\E$, which simultaneously provides all of these desired features.
\begin{definition}
    Let $X$ be an object of a presentable $\infty$-category $\C$, and $n\geq -2$. We say that $X$ is \textit{$n$-connected} if its $n$-truncation $\tau_nX\simeq \ast$ is terminal. A map $f\colon X\to Y$ is $n$-connected if it is $n$-connected as an object of $\slice{\E}{Y}$. 
\end{definition}
\begin{remark}
    \begin{enumerate}
        \item In the $\infty$-category $\Sp$ of homotopy types, a map $f\colon X\to Y$ is $n$-connected if and only if for all $x\in X$ the induced homomorphisms $\pi_k(f)\colon \pi_k(X,x)\to\pi_k(Y,f(x))$ are isomorphisms for all $k\leq n$ and is surjective for $k=n+1$.
        \item Our convention differs from the one of Lurie \cite{LurieHTT}, but coincides with \cite{GepnerKock_UnivalenceInLCCC, Rasekh_ElementaryApproachTruncations, SchlankYanovski_InfiniteCategoricalEckmanHilton}, for instance.
        \item We say $X$ is \textit{inhabited} if it is $(-1)$-connected, \textit{connected} if it is $0$-connected, and \textit{simply connected} if it is $1$-connected.
        \item In an $\infty$-topos, the $(-1)$-connected maps are precisely the effective epimorphisms \cite[6.2.3.4]{LurieHTT}.
        \item We denote by $\C^{\sg n}$ the full subcategory of $\C$ on the $n$-connected objects.
    \end{enumerate}
\end{remark}
\begin{prop}\label{prop: characterization connected maps}
    Let $f\colon X\to Y$ be a map in an $\infty$-topos $\E$ with a section $s\colon Y\to X$, and let $n\geq -1$. The following are equivalent:
    \begin{enumerate}
    \item $f$ is $n$-connected;
    \item $s$ is $(n-1)$-connected;
    \item $\Delta_f\colon X\to X\times_Y X$ is $(n-1)$-connected and $f$ is an effective epimorphism.
    \end{enumerate}
    In that case $\tau_nf\colon\tau_nX\to\tau_nY$ is an equivalence.
\end{prop}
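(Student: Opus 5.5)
Since $s$ is a section, $fs=1_Y$, and I will exploit the 2-out-of-3 behaviour of connected maps for this composite. The standard facts I will use in $\E$ are: $n$-connected maps are the left class of a factorization system with $n$-truncated maps; they are stable under pullback and composition; and if $gf$ is $n$-connected and $g$ is $(n+1)$-connected, then $f$ is $n$-connected. I also use the Rasekh/Lurie characterization that $f$ is $n$-connected if and only if $f$ is an effective epimorphism and $\Delta_f$ is $(n-1)$-connected, valid for $n\ge -1$. This characterization is precisely (1)$\Leftrightarrow$(3), with the effective-epi clause automatic here since $f$ has a section. I would establish it by induction on $n$, using the diagonal description of truncation from \cref{prop: equivalent characterization of truncated maps} and orthogonality against $n$-truncated maps; the base case $n=-1$ is the identification of $(-1)$-connected maps with effective epimorphisms.

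For (1)$\Rightarrow$(2), I would first pass to the slice $\slice{\E}{Y}$, where $f$ becomes an $n$-connected object with a global point $s$. The key claim is that a global point $s\colon\ast\to Z$ of an $n$-connected object is $(n-1)$-connected. To see this, pull back $s$ along itself: the fiber of $s$ over a generalized point $z$ of $Z$ is the path object $Z(s,z)$, so $s$ is a pullback of the diagonal $\Delta_Z\colon Z\to Z\times Z$ along $(s,1)\colon Z\to Z\times Z$. Since $Z\to\ast$ is $n$-connected, $\Delta_Z$ is $(n-1)$-connected by (1)$\Rightarrow$(3), and pullback stability gives that $s$ is $(n-1)$-connected.

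For (2)$\Rightarrow$(1), I would argue directly in $\slice{\E}{Y}$ with $Z=X$ and $s\colon\ast\to Z$ an $(n-1)$-connected global point. Applying $\tau_n$ and using that $(n-1)$-connected maps become $(n-1)$-connected after $n$-truncation, $\tau_n s\colon\ast\to\tau_n Z$ is $(n-1)$-connected. But $\ast$ and $\tau_nZ$ are $n$-truncated, so $\tau_n s$ is also $n$-truncated, hence $(n-1)$-truncated? This does not quite follow, so instead I would use the two-out-of-three statement: $1_\ast=(Z\to\ast)\circ s$. Alternatively, I would show the pullback $s\times_Z s\simeq \Omega Z$ is $(n-1)$-connected and induct: $s$ effective epi makes $Z$ inhabited, and $\Delta_Z$ is a pullback of $s$ along effective epis $Z\times Z\twoheadrightarrow$ locally $(n-1)$-connected, which gives $\Delta_Z$ $(n-1)$-connected by descent, i.e.\ \cref{prop: pullback of truncated map is truncated}'s connectivity analogue. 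Then (3) yields (1).

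For the final clause, I would note that $\tau_n$ inverts $n$-connected maps between arbitrary objects. Indeed, from the factorization system, an $n$-connected $f$ is left orthogonal to every $n$-truncated object, so $\E(\tau_nY,W)\to\E(\tau_nX,W)$ is an equivalence for all $W\in\E^{\sleq n}$. Yoneda in $\E^{\sleq n}$ then shows that $\tau_nf$ is an equivalence. The main obstacle is (2)$\Rightarrow$(1), namely transporting connectivity of $s$ to the diagonal, and I expect it to need descent along the effective epimorphism $s$ together with the fact that $(n-1)$-connectivity can be checked locally.
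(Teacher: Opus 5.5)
Your treatment of (1)$\Leftrightarrow$(3) by citation, of (1)$\Rightarrow$(2), and of the final clause is fine. However, (2)$\Rightarrow$(1) is never actually proved, and none of your three attempts closes it as written.

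First, the two-out-of-three property you listed (if $gf$ is $n$-connected and $g$ is $(n+1)$-connected, then $f$ is $n$-connected) concludes something about the \emph{first} map. Applied to $1_\ast=(Z\to\ast)\circ s$, it says nothing about $Z\to\ast$. The variant you would need (if $gf$ is $n$-connected and $f$ is $(n-1)$-connected, then $g$ is $n$-connected) is, for $gf=1_\ast$, precisely the implication under discussion.

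Second, your descent sketch has the pullback the wrong way round. It is $s$ that is a base change of $\Delta_Z$, namely along $(s,1)\colon Z\to Z\times Z$, not the other way. You also never say why that map is an effective epimorphism, which is exactly what is needed to reflect connectivity; pulling back along $s$ itself is not the relevant move.

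The missing observation is that $(s,1)$ is the base change of $s$ along $\mathrm{pr}_1\colon Z\times Z\to Z$. So it is an effective epimorphism as soon as $n\geq 0$, since then $s$ is $(-1)$-connected. Because $(n-1)$-connectivity is reflected by base change along effective epimorphisms (\cref{prop: connected maps are preserved by pullback and pushouts}), $\Delta_Z$ is $(n-1)$-connected, and (3) yields (1). For $n=-1$ all three conditions hold trivially. This is exactly the paper's route to (2)$\Leftrightarrow$(3).

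Alternatively, the truncation argument you abandoned does go through. The map $\tau_n s=\eta_Z\circ s$ is $(n-1)$-connected as a composite of $(n-1)$-connected maps. It is also $(n-1)$-truncated, because a global point of the $n$-truncated object $\tau_nZ$ is a base change of its $(n-1)$-truncated diagonal. A map that is both $(n-1)$-connected and $(n-1)$-truncated is an equivalence, so $\tau_nZ\simeq\ast$.
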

\begin{proof}
    By working in the slice $\slice{\E}{Y}$ we can suppose that $Y\simeq\ast$ is terminal. The implication $1)\Rightarrow 2)$ is \cite[Prop.~8.8]{Rezk_HomotopyTopos}.
    Notice that $s\colon \ast\to X$ is a pullback of $\Delta_X$ along $\ast\times X\to X\times X$. Since the latter is an effective epimorphism by \cite[Lem.~3.20]{Rasekh_ElementaryApproachTruncations} this gives $2)\Leftrightarrow 3)$. Lastly \cite[Prop.~3.54]{Rasekh_ElementaryApproachTruncations} gives $3)\Rightarrow 1)$. The last statement is \cite[Lemma~3.27]{Rasekh_ElementaryApproachTruncations}.
\end{proof}
    Let $\C$ and $\C'$ be presentable $\infty$-categories. Any left adjoint functor $F \colon\C\to\C'$ preserves $n$-connected maps \cite[Lem.~4.2.5]{SchlankYanovski_InfiniteCategoricalEckmanHilton}. Despite not being a left adjoint, the forgetful functor $U\colon\C_\ast\to\C$ enjoys the same property, and in fact reflects $n$-connected maps as well.
\begin{prop}\label{prop: basepoint forgetful functor preserves and reflects connected maps}
    The forgetful functor $U\colon\C_\ast\to\C$ preserves and reflects $n$-connected maps. That is, a map $f\colon(X,x)\to(Y,y)$ in $\C_\ast$ is $n$-connected if and only if its underlying map $f\colon X\to Y$ is $n$-connected in $\C$.
\end{prop}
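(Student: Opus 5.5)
The proof reduces to \cref{corollary: fibered truncation commutes with pointed fibered truncation}, which already identifies pointed fiberwise truncation with unpointed fiberwise truncation. By definition, $f\colon(X,x)\to(Y,y)$ is $n$-connected in $\C_\ast$ exactly when it is $n$-connected as an object of $\slice{(\C_\ast)}{(Y,y)}$. That is, its $n$-truncation in that slice is terminal, i.e.\ equivalent to the identity $1_{(Y,y)}$. Likewise, $Uf$ is $n$-connected in $\C$ exactly when $\tau_n^Y(Uf)\simeq 1_Y$ in $\slice{\C}{Y}$. So the plan is to compare these two truncations through the forgetful functor $U\colon\slice{(\C_\ast)}{(Y,y)}\to\slice{\C}{Y}$.

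\textbf{Step 1: the comparison.} By \cref{corollary: fibered truncation commutes with pointed fibered truncation},
\[U\bigl(\tau_n^{\C_{\ast/(Y,y)}}f\bigr)\simeq \tau_n^Y(Uf).\]
Moreover, $U$ sends the terminal object $1_{(Y,y)}$ to the terminal object $1_Y$.

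\textbf{Step 2: preservation.} Suppose $f$ is $n$-connected in $\C_\ast$. Then its pointed truncation is terminal. Applying $U$ and Step 1 gives $\tau_n^Y(Uf)\simeq 1_Y$, so $Uf$ is $n$-connected in $\C$.

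\textbf{Step 3: reflection.} Suppose $\tau_n^Y(Uf)\simeq 1_Y$. Write $T=\tau_n^{\C_{\ast/(Y,y)}}f$. By Step 1 the structure map $UT\to Y$ is an equivalence. Under the identification $\slice{(\C_\ast)}{(Y,y)}\simeq(\slice{\C}{Y})^{y/}$, the terminal object is $(Y\xrightarrow{=}Y)$ with point $y$. The unique map from $T$ to this terminal object is a map in the coslice whose underlying map in $\slice{\C}{Y}$ is $UT\to 1_Y$, an equivalence. Coslice forgetful functors are conservative, so $T$ is terminal and $f$ is $n$-connected in $\C_\ast$.

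The only delicate point is Step 3, namely that $U$ reflects terminal objects. This follows from conservativity of the coslice forgetful functor, which is standard since equivalences in a coslice are detected on the underlying maps. No topos hypotheses are needed beyond what \cref{corollary: fibered truncation commutes with pointed fibered truncation} already requires.
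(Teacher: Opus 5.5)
Your proposal is correct and follows essentially the same route as the paper: both reduce to \cref{corollary: fibered truncation commutes with pointed fibered truncation} and use conservativity of the forgetful functor from the pointed slice. Your Step 3 is actually a bit more careful than the paper. The paper passes from $U(\tau_n f)\simeq 1_Y$ to $\tau_n f\simeq 1_{(Y,y)}$ by invoking conservativity directly on an equivalence of images, whereas you apply conservativity to the canonical map into the terminal object, which is the precise form of that step.
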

\begin{proof}
    By definition, $f$ is $n$-connected in $\C_\ast$ if and only if its $n$-truncation in the slice $\slice{\C_\ast}{(Y,y)}$ is the identity on $(Y,y)$:
    \[\tau_n^{\slice{\C_\ast}{(Y,y)}}f\simeq1_{(Y,y)}.\] 
    Since $U$ is conservative, this holds if and only if    \[U\left(\tau_n^{\slice{\C_\ast}{(Y,y)}} f\right)\simeq U(1_{(Y,y)})=1_Y.\] 
    By \cref{corollary: fibered truncation commutes with pointed fibered truncation}, the left-hand side is equivalent to $\tau_n^{\slice{\C}{Y}}(U f)$. Therefore the condition becomes $\tau_n^{\slice{\C}{Y}}(f) \simeq 1_Y$, which is precisely the definition of $f$ being $n$-connected in $\C$.
\end{proof}
\begin{prop}[{\cite[Cor. 3.36 \& Prop. 3.50 \& Prop. 1.33]{Rasekh_ElementaryApproachTruncations}}]\label{prop: connected maps are preserved by pullback and pushouts}
    Consider a commutative square in an $\infty$-topos $\E$:
\[\begin{tikzcd}
	{X'} & X \\
	{Y'} & Y
	\arrow[from=1-1, to=1-2]
	\arrow["{f'}", from=1-1, to=2-1]
	\arrow["f", from=1-2, to=2-2]
	\arrow["p",from=2-1, to=2-2]
\end{tikzcd}\]
Suppose that the square is
    \begin{enumerate}
        \item Cartesian. Then, if $f$ is $n$-connected, so is $f'$. The converse holds if $p$ is an effective epimorphism.
        \item coCartesian. Then, if $f'$ is $n$-connected, so is $f$. 
         \end{enumerate}
\end{prop}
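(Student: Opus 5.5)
The statement is cited from Rasekh, so the task is to reconstruct a first-principles proof. For part (1), I reduce to the characterization of $n$-connectedness through truncations in slices. Since fiberwise truncation is computed by the universe map $\tau_n\colon \U\to\U^{\sleq n}$, it commutes with pullback. Concretely, if $f$ is classified by $\corner{X}\colon Y\to\U$, then $\tau^Y_n X\to Y$ is classified by $\tau_n\circ\corner{X}$. Likewise, $f'$ is classified by $\corner{X}\circ p$, so $\tau_n^{Y'}X'\simeq p^\ast\tau_n^Y X$.

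\emph{Part (1), forward direction.} If $\tau_n^YX\simeq Y$, then pulling back along $p$ gives $\tau_n^{Y'}X'\simeq Y'$. Hence $f'$ is $n$-connected.

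\emph{Part (1), converse.} Now suppose $p$ is an effective epimorphism. The canonical map $\tau_n^YX\to Y$ becomes an equivalence after pulling back along $p$. Equivalences are $(-2)$-truncated, and by \cref{prop: pullback of truncated map is truncated} truncatedness descends along effective epimorphisms. So $\tau_n^YX\to Y$ is $(-2)$-truncated, that is, an equivalence.

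\emph{Part (2).} I use the left adjoint $f_!$ and a characterization of connectivity via orthogonality. A map is $n$-connected if and only if it is left orthogonal to all $n$-truncated maps. This follows from the factorization system given by the fiberwise truncation: $X\to\tau_n^YX\to Y$, whose first map is $n$-connected (indeed $\tau^{Y}_n$ of it is the identity). Left classes of orthogonal factorization systems are stable under cobase change by a formal lifting argument. Applying this to the pushout square, $f$ is the cobase change of $f'$ along $X'\to X$, so $f$ is $n$-connected.

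The main obstacle is establishing that ($n$-connected, $n$-truncated) is an orthogonal factorization system without a presentation of $\E$. It requires three facts:
\begin{itemize}
\item the comparison map $X\to \tau_n^YX$ is $n$-connected;
\item factorizations are unique;
\item $n$-connected maps are left orthogonal to $n$-truncated maps.
\end{itemize}
I would attempt the orthogonality by induction on $n$ using diagonals, as in \cref{prop: characterization connected maps}(3). The base case $n=-1$ is effective epimorphisms against monomorphisms, which follows from effectiveness of \v{C}ech nerves. For the inductive step, the lifting problem against an $n$-truncated map $g$ reduces to a lifting problem of $\Delta_f$ against $\Delta_g$, which is one level lower.
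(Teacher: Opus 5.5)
Your argument is correct, but it does not follow the paper: the paper gives no proof of this proposition and simply imports the three statements from Rasekh's treatment of elementary $\infty$-topoi \cite{Rasekh_ElementaryApproachTruncations}, whereas you derive them from facts recorded elsewhere in the paper.

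For (1), the real input is that base change $p^\ast\colon\slice{\E}{Y}\to\slice{\E}{Y'}$ commutes with fiberwise $n$-truncation. The universe map $\tau_n\colon\U\to\U^{\sleq n}$ only repackages that naturality, so justify it directly instead. The functor $p^\ast$ is a left exact left adjoint (its right adjoint is $\prod_p$), so \cite[5.5.6.28]{LurieHTT} applies, exactly as in the proof of \cref{prop: truncation functor preserve actions}. Your converse is then the case $n=-2$ of \cref{prop: pullback of truncated map is truncated}, i.e.\ conservativity of base change along effective epimorphisms, which is fine.

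For (2), closure of ${}^{\perp}T_n$ under cobase change is formal, so everything rests on \cref{prop: connected truncated factorization system}. Given that proposition you do not even need orthogonality: $f$ is the colimit in $\Fun([1],\E)$ of the span $f'\leftarrow 1_{X'}\to 1_X$, and $C_n$ is closed under colimits. So your route makes (1) a short formal argument. Part (2), however, is only as first-principles as the factorization system you feed in, which the paper itself imports from Rasekh and \cite{ABFJ_GeneralizedBlakersMassyTheorem}.

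If you want to establish that factorization system yourself, your inductive sketch has a hole. Write $\langle f,g\rangle\colon\E(B,X)\to\E(A,X)\times_{\E(A,Y)}\E(B,Y)$ for the map whose fibers are the spaces of lifts. One has $\Delta_{\langle f,g\rangle}\simeq\langle f,\Delta_g\rangle$. Induction then applies, since $f$ is $(n-1)$-connected and $\Delta_g$ is $(n-1)$-truncated, and shows that lift spaces are $(-1)$-truncated; it does not produce lifts.

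Existence comes from descent along the \v{C}ech nerve of the effective epimorphism $f\colon A\twoheadrightarrow B$. A lift of $u\colon A\to X$ over $v\colon B\to Y$ is a point of $\lim_{\Delta}\slice{\E}{Y}(A^{\times_B(\bullet+1)},X)$ lying over $u$. The $\Delta_f$-versus-$\Delta_g$ problem you mention only supplies the first descent homotopy $u\circ\mathrm{pr}_1\simeq u\circ\mathrm{pr}_2$. To get all the higher coherences at once, apply the diagonal identity to the iterated diagonals $\delta_k\colon A\to A^{\times_B(k+1)}$. These are $(n-1)$-connected by \cref{prop: characterization connected maps}, part (1), and closure under composition. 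It follows that restriction along $\delta_k$ is a monomorphism of mapping spaces over $Y$. Each level contains $u\circ\mathrm{pr}_0$, so the components lying over $u$ form a cosimplicial subspace equivalent to the constant one on the component of $u$. Hence the fiber over $u$ is contractible.

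Two smaller points. Your first fact concerns truncation in the slice over $\tau_n^YX$, not $\tau_n^Y$; it is \cref{remark: fiberwise truncation is the n-connected/truncated factorization}. And uniqueness of factorizations is a consequence of orthogonality, not a separate input.
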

\begin{prop}[{\cite[Prop. 5.18]{Rasekh_ElementaryApproachTruncations}\cite[Prop.~3.3.6]{ABFJ_GeneralizedBlakersMassyTheorem}}]\label{prop: connected truncated factorization system}
    Let $\E$ be an $\infty$-topos. The classes $(C_n,T_n)$ of $n$-connected and $n$-truncated maps form a factorization system whose left class is stable under base change. In particular the class $C_n$ is left orthogonal to the class $T_n$, $C_n$ is stable under colimits, and $T_n$ is stable under limits.
\end{prop}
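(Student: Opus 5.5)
The proposition is cited from Rasekh and ABFJ, so the plan is to assemble it from facts already in the paper plus standard lifting arguments, and not to rebuild the theory. For a map $f\colon X\to Y$, work in the $\infty$-topos $\slice{\E}{Y}$. Since $\tau_n^Y$ is a reflection onto the $n$-truncated objects, the unit gives a canonical factorization
\[X\xrightarrow{\eta}\tau_n^YX\xrightarrow{\tau_n^Yf}Y.\]
The second map is $n$-truncated by construction.

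\textbf{Step 1: the unit $\eta$ is $n$-connected.} Truncation is idempotent, so $\tau_n$ of $\eta$, computed in the slice over $\tau_n^YX$, should be the identity. I would prove this by comparing universal properties in $\slice{\E}{\tau_n^YX}$. An $n$-truncated object over $\tau_n^YX$ is also $n$-truncated over $Y$, because $n$-truncated maps compose (\cref{prop: equivalent characterization of truncated maps}). Maps out of $X$ over $\tau_n^YX$ into such an object then factor uniquely through $\tau_n^YX$. This fiberwise idempotence is the step I expect to be the main obstacle. If the direct argument becomes messy, I would instead cite Rasekh's Prop.~5.18, which proves exactly this in the elementary setting.

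\textbf{Step 2: orthogonality $C_n\perp T_n$.} Take a square with left edge $c\colon A\to B$ in $C_n$ and right edge $t\colon E\to Z$ in $T_n$. Pulling back $t$ along $B\to Z$ reduces the problem to showing that $\slice{\E}{B}(B,E')\to\slice{\E}{B}(A,E')$ is an equivalence for $E'\to B$ $n$-truncated. Both spaces are mapping spaces into an $n$-truncated object, so each factors through $\tau_n^B$. Since $\tau_n^BA\simeq B$ by $n$-connectedness of $c$, the map is an equivalence. Together with Step 1 and the fact that both classes contain the equivalences and are closed under retracts, this gives a factorization system.

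\textbf{Step 3: the remaining closure properties.} Stability of $C_n$ under base change is \cref{prop: connected maps are preserved by pullback and pushouts}(1). In any factorization system the left class is closed under colimits in the arrow category and the right class under limits, since each is defined by a lifting condition against the other. This gives the last sentence of the statement formally.
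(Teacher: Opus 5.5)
Your argument is correct. It also supplies something the paper does not: the paper gives no proof of this proposition and defers entirely to Rasekh and to Anel--Biedermann--Finster--Joyal. Afterwards it only records, in \cref{remark: fiberwise truncation is the n-connected/truncated factorization} via Rezk, that the factorization is $X\to\tau_n^YX\to Y$.

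Your route is the direct one. The reflector $\tau_n^Y$ in each slice, which exists by presentability, produces the factorization, and orthogonality becomes a mapping-space computation in $\slice{\E}{B}$.

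Step~1 is easier than you expect. Put $T=\tau_n^YX$ and let $E\to T$ be $n$-truncated. Mapping spaces in $\slice{\E}{T}$ are fibers of those in $\slice{\E}{Y}$ over the structure maps. Precomposition with $\eta$ is an equivalence on both $\slice{\E}{Y}(-,E)$ and $\slice{\E}{Y}(-,T)$, and it sends $1_T$ to $\eta$. Hence $\slice{\E}{T}(T,E)\simeq\slice{\E}{T}(X,E)$.

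Two small repairs are worth making.
\begin{enumerate}
\item Closure of $C_n$ under retracts is not obvious from the definition $\tau_n^YX\simeq Y$. You do not need it as an input, because factorization plus orthogonality already give $C_n={}^\perp T_n$. If $f\perp T_n$ and $f=tc$, lift $f$ against $t$, then use uniqueness of lifts of $c$ against $t$ to see that $t$ is an equivalence. Dually $T_n=C_n^\perp$. These two equalities are exactly what Step~3 uses.
\item Base-change stability can be made self-contained rather than cited. The pullback $g^\ast$ between slices is a left exact left adjoint, so it commutes with fiberwise truncation.
\end{enumerate}

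What the cited references buy is generality. Rasekh proves the statement for elementary $\infty$-topoi, where $\tau_n^Y$ must be constructed by hand rather than obtained from presentability. ABFJ place it inside their general theory of modalities.
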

\begin{remark}\label{remark: fiberwise truncation is the n-connected/truncated factorization}
    By \cite[Prop.~8.5]{Rezk_HomotopyTopos}, the $n$-connected/$n$-truncated factorization is given by the reflector of the fibered $n$-truncation functor: \[X\to\tau_n^Y X\xrightarrow{\tau_n^Y f}Y.\]  In particular, the unit map $\eta_X\colon X\to\tau_nX$ is $n$-connected.
\end{remark}
\begin{remark}\label{rmk: construction of the epi mono factorization}
    Let $\C$ be an $\infty$-category with finite limits and realizations, effective groupoid objects, and universal $\Delta^\op$-shaped colimits. The factorization of a morphism $f \colon X \to Y$ in $\C$ into an effective epimorphism followed by a monomorphism can be explicitly constructed using the \v{C}ech nerve $\check{C}(f) \colon \Delta_+^\op \to \C$. 
    Specifically, the realization of the \v{C}ech nerve yields an effective epimorphism $p \colon X \twoheadrightarrow |\check{C}(f)|$. By the universal property of the realization, there exists a unique comparison map $m \colon |\check{C}(f)| \to Y$ such that the following diagram commutes:
    \[\begin{tikzcd}
        X \arrow[rr, "f"] \arrow[dr, two heads, "p"'] & & Y \\
        & {|\check{C}(f)|} \arrow[ur, hook, "m"']
    \end{tikzcd}.\]
    One can show that $m$ is $(-1)$-truncated (a monomorphism) following the argument in \cite[6.2.3.4]{LurieHTT}. Although Lurie's result is formulated for semitopoi, the proof requires only that $\Delta^{\op}$-shaped colimits are universal. We call this an effective epi/mono factorization. In an $\infty$-topos, this is the $(-1)$-connected/$(-1)$-truncated factorization.
\end{remark}
\begin{definition}\label{definition: image of a map}
    Let $\E$ be an $\infty$-topos.
    \begin{itemize}
        \item For any map $f\colon X\to Y$, its \textit{$n$-image} is the intermediate object $\im_n(f)$ arising from the $n$-connected/$n$-truncated factorization $X\to \im_n(f)\to Y$. We simply call the \textit{image} the $(-1)$-image, which corresponds to the effective epi/mono factorization $X\twoheadrightarrow \im(f)\hookrightarrow Y$.
        \item If $x\colon\ast\to X$ is a global point, we call its $(n-1)$-image $\im_{n-1}(x)$ the \textit{$n$-connected cover} of $X$ at $x\in X$, and denote the factorization by $\ast\to X\langle n\rangle\to X$. When $n=0$, this object is called the \textit{connected component} of $x\in X$.
    \end{itemize}
\end{definition}
\begin{remark}\label{remark: contruction of the n-connected cover}
    The $n$-connected cover of a pointed object $(X,x)$ can be constructed as the following pullback:
\[\begin{tikzcd}
    {X\langle n\rangle} \arrow[r] \arrow[d] \arrow[dr, phantom, "\lrcorner"{anchor=center, pos=0.125}] & \ast \arrow[d] \\
    X \arrow[r, "{\eta_X}"] & {\tau_n X}
\end{tikzcd}.\]
\end{remark}

\begin{prop}[{\cite[Prop. III.4.21]{Samuel_Thesis}}]\label{prop: n-connected pointed spaces is coreflective in pointed spaces}
    Let $n\geq -1$. The full inclusion $\E^{\sg n}_\ast\to \E_\ast$ admits a colocalization
\[\begin{tikzcd}
	{\E_\ast^{\sg n}} & {\E_\ast}
	\arrow[""{name=0, anchor=center, inner sep=0}, shift left=2, hook, from=1-1, to=1-2]
	\arrow[""{name=1, anchor=center, inner sep=0}, "{-\langle n\rangle}", shift left=2, from=1-2, to=1-1]
	\arrow["\dashv"{anchor=center, rotate=-90}, draw=none, from=0, to=1]
\end{tikzcd},\]
where the right adjoint sends a pointed object $(X,x)$ to its $n$-connected cover $(X\langle n\rangle, x)$. The counit of the adjunction $\varepsilon\colon X\langle n\rangle \to X$ is the $(n-1)$-image of the basepoint $\ast\to X$.
\end{prop}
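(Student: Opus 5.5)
We will prove that $(X,x)\mapsto(X\langle n\rangle,x)$ is right adjoint to the inclusion $\E^{\sg n}_\ast\hookrightarrow\E_\ast$, with the canonical map $\varepsilon\colon X\langle n\rangle\to X$ as counit. It suffices to show that for every $n$-connected pointed object $(C,c)$, postcomposition with $\varepsilon$ induces an equivalence of mapping spaces
\[\E_\ast\big(C,X\langle n\rangle\big)\xrightarrow{\ \simeq\ }\E_\ast(C,X).\]
We first check that $(X\langle n\rangle,x)$ lies in $\E^{\sg n}_\ast$. By \cref{definition: image of a map}, the basepoint factors as $\ast\to X\langle n\rangle\xrightarrow{\varepsilon}X$, where the first map is $(n-1)$-connected and $\varepsilon$ is $(n-1)$-truncated. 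The map $X\langle n\rangle\to\ast$ has a section, namely this basepoint. So \cref{prop: characterization connected maps}, implication $2)\Rightarrow1)$, shows that $X\langle n\rangle\to\ast$ is $n$-connected.

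\textbf{Main step.} We use the factorization system of \cref{prop: connected truncated factorization system}. A pointed map $f\colon(C,c)\to(X,x)$ is a commutative square in $\E$
\[\begin{tikzcd}
	\ast & C \\
	X\langle n\rangle & X
	\arrow["c", from=1-1, to=1-2]
	\arrow[from=1-1, to=2-1]
	\arrow["f", from=1-2, to=2-2]
	\arrow["\varepsilon"', from=2-1, to=2-2]
\end{tikzcd}.\]
Pointed lifts $C\to X\langle n\rangle$ are exactly diagonal fillers of this square. The right vertical map $\varepsilon$ is $(n-1)$-truncated. Orthogonality therefore needs the top map $c\colon\ast\to C$ to be $(n-1)$-connected. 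This holds by \cref{prop: characterization connected maps}, implication $1)\Rightarrow2)$, applied to $C\to\ast$ with section $c$, since $C$ is $n$-connected.

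Orthogonality of $C_{n-1}$ and $T_{n-1}$ then says that the space of fillers is contractible. In mapping-space form, the square
\[\begin{tikzcd}
	\E(C,X\langle n\rangle) & \E(C,X) \\
	\E(\ast,X\langle n\rangle) & \E(\ast,X)
	\arrow[from=1-1, to=1-2]
	\arrow[from=1-1, to=2-1]
	\arrow[from=1-2, to=2-2]
	\arrow[from=2-1, to=2-2]
\end{tikzcd}\]
is Cartesian. Taking fibers of the vertical maps over the basepoint $x$ of $X\langle n\rangle$ and its image $x$ in $X$ gives the desired equivalence. Here we use the fiber sequences $\E_\ast(A,B)\to\E(A,B)\to\E(\ast,B)$ already employed in the proof of \cref{prop: n-truncation commute with coslice forgetful functors}.

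\textbf{Remaining checks.} These equivalences are natural in $(C,c)$, so they assemble into the adjunction, and the counit is $\varepsilon$ by construction. The only delicate point is the index bookkeeping: $n$-connectedness of $C$ must be matched with $(n-1)$-connectedness of $c$, and $X\langle n\rangle$ must be defined as the $(n-1)$-image of $x$. \cref{prop: characterization connected maps} handles both, for $n\geq-1$. When $n=-1$, the object $\ast$ is $(-2)$-connected and $\varepsilon$ is $(-2)$-truncated, so the statement degenerates to $X\langle-1\rangle\simeq X$, which is consistent.
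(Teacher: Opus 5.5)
Your proof is correct. The paper does not prove this statement itself; it only cites \cite[Prop.~III.4.21]{Samuel_Thesis}, so there is no in-text argument to compare with, and yours is self-contained modulo earlier results of the paper.

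The steps check out as follows.
\begin{enumerate}
\item \cref{prop: characterization connected maps}, which is stated exactly for $n\geq-1$, gives both that $X\langle n\rangle$ is $n$-connected and that $c\colon\ast\to C$ is $(n-1)$-connected.
\item The orthogonality of $(n-1)$-connected and $(n-1)$-truncated maps from \cref{prop: connected truncated factorization system} is precisely the Cartesianness of your square of mapping spaces.
\item Taking fibers over the basepoints gives $\E_\ast(C,X\langle n\rangle)\simeq\E_\ast(C,X)$, induced by postcomposition with $\varepsilon$.
\item This pointwise universal property of $\varepsilon$ is all that the standard criterion for a right adjoint to a full inclusion requires, so naturality is automatic.
\end{enumerate}
The one point you leave implicit is that $(C,c)\in\E_\ast^{\sg n}$ means the underlying object $C$ is $n$-connected in $\E$. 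This is \cref{prop: basepoint forgetful functor preserves and reflects connected maps}.

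An equally short alternative goes through \cref{remark: contruction of the n-connected cover}. From $X\langle n\rangle\simeq X\times_{\tau_nX}\ast$ in $\E_\ast$ one gets $\E_\ast(C,X\langle n\rangle)\simeq\E_\ast(C,X)\times_{\E_\ast(C,\tau_nX)}\ast$. Then $\E_\ast(C,\tau_nX)\simeq\E_\ast(\tau_nC,\tau_nX)\simeq\E_\ast(\ast,\tau_nX)\simeq\ast$ by the pointed truncation adjunction of \cref{prop: n-truncation commute with coslice forgetful functors}. That route replaces orthogonality by the truncation reflection. However, it first requires identifying this pullback with the $(n-1)$-image of $x$, whereas your argument works directly with the definition of $X\langle n\rangle$.
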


One can now characterize $n$-truncated objects using $n$-connected covers.
\begin{corollary}\label{prop: cover whose n-connected objects are contractible is n-truncated}
    Let $\{x_i:\ast\to X\vert\,i\in I\}$ be a jointly epimorphic family of global points in $\E$. Then $X$ is $n$-truncated if and only if its $n$-connected covers $X_i\langle n\rangle\simeq\ast$ at $x_i\in X$ are contractible for all $i\in I$.
\end{corollary}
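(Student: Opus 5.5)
The plan is to reduce the statement to the pullback description of the $n$-connected cover in \cref{remark: contruction of the n-connected cover}, and then to detect whether $\eta_X\colon X\to\tau_nX$ is an equivalence by pulling back along the jointly epimorphic family $\{x_i\}$.

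\emph{First direction.} Suppose $X$ is $n$-truncated. Then $\eta_X\colon X\to\tau_nX$ is an equivalence. By \cref{remark: contruction of the n-connected cover}, each $X_i\langle n\rangle$ is the pullback of $\eta_X$ along the point $\eta_X x_i\colon\ast\to\tau_nX$. A pullback of an equivalence is an equivalence, so $X_i\langle n\rangle\simeq\ast$.

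\emph{Converse.} Assume every $X_i\langle n\rangle\simeq\ast$. I would show that $\eta_X$ is simultaneously $n$-connected (\cref{remark: fiberwise truncation is the n-connected/truncated factorization}) and $n$-truncated. A map in both classes of the factorization system of \cref{prop: connected truncated factorization system} is an equivalence, since it is orthogonal to itself.

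To prove $\eta_X$ is $n$-truncated, consider the map $\coprod_i\ast\to X\to\tau_nX$. It is an effective epimorphism, as the composite of the effective epimorphism $\coprod_i x_i$ (this is what joint epimorphicity means) with the $n$-connected, hence $(-1)$-connected, map $\eta_X$. By universality of coproducts, the pullback of $\eta_X$ along $\coprod_i\ast\to\tau_nX$ is $\coprod_i X_i\langle n\rangle\simeq\coprod_i\ast\to\coprod_i\ast$, which is an equivalence and in particular $n$-truncated. Then \cref{prop: pullback of truncated map is truncated}, whose converse holds along effective epimorphisms, gives that $\eta_X$ is $n$-truncated. Hence $\eta_X$ is an equivalence and $X\simeq\tau_nX$ is $n$-truncated.

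The one delicate point is checking that pulling back along a coproduct of points is legitimate, i.e. that the fiber over each summand is exactly $X_i\langle n\rangle$. This is immediate from the pullback description of $X_i\langle n\rangle$ together with universality of colimits in $\E$. I would also verify that "jointly epimorphic" is taken to mean that $\coprod_i\ast\to X$ is an effective epimorphism; if instead it is meant in a weaker sense, the argument must be adjusted at that step.
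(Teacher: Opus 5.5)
Your proposal is correct, but it follows a different route from the paper.

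\textbf{The paper's route.} For the forward direction, the paper notes that each $x_i$ is a base change of the $(n-1)$-truncated diagonal $\Delta_X$, so its $(n-1)$-image is trivial. For the converse, it reads $X_i\langle n\rangle\simeq\ast$ as saying that each $x_i$ is $(n-1)$-truncated. It then forms the \v{C}ech nerve $U$ of $\coprod_i\ast\twoheadrightarrow X$ and identifies $U_1\simeq\coprod_{i,j}X(x_i,x_j)$. For $n\geq 1$ it concludes via \cref{cor: realization of n truncated groupoid is n+1 truncated}. The case $n=0$ needs a separate argument, because a coproduct of $(-1)$-truncated objects need not be $(-1)$-truncated; there the paper shows directly that $\Delta_X$ is a monomorphism by pulling back along $p\times p$.

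\textbf{Your route.} You work with the single map $\eta_X$ and test it against the effective epimorphism $\coprod_i\ast\to X\to\tau_nX$. Universality of coproducts converts the hypothesis directly into the statement that the base change of $\eta_X$ is an equivalence.

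\textbf{Comparison.} Your argument is shorter and uniform in $n$. It needs no lemma on truncation of coproducts, no realization argument, and no case split. In exchange, it leans on the pullback description of $X\langle n\rangle$ from \cref{remark: contruction of the n-connected cover} and on $\eta_X$ being an effective epimorphism. The paper's route, in return, exhibits $X$ explicitly as the realization of a groupoid built from the path objects $X(x_i,x_j)$.

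\textbf{Minor points.}
\begin{enumerate}
\item Your final orthogonality step is unnecessary. The base change of $\eta_X$ is an equivalence, i.e.\ $(-2)$-truncated, so \cref{prop: pullback of truncated map is truncated} already gives that $\eta_X$ is $(-2)$-truncated, hence an equivalence.
\item You read ``jointly epimorphic'' as saying that $\coprod_i\ast\twoheadrightarrow X$ is an effective epimorphism. This is exactly the interpretation the paper uses in its own proof.
\end{enumerate}
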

\begin{proof}
    Suppose that $X$ is $n$-truncated. Its diagonal $\Delta_X$ is $(n-1)$-truncated. Consequently every base change of $\Delta_X$, including each global point $x_i\colon \ast\to X$, is $(n-1)$-truncated. This immediately implies that each $n$-connected cover is contractible.

    Conversely, suppose $X_i\langle n \rangle \simeq \ast$ for all $i \in I$, which means each map $x_i \colon \ast \to X$ is $(n-1)$-truncated. The induced map $p \colon \coprod_{i\in I} \ast \twoheadrightarrow X$ is an effective epimorphism. Let $U$ be its \v{C}ech nerve. By the universality of coproducts, $U_1 \simeq \coprod_{i,j\in I} X(x_i,x_j)$. Since each $X(x_i, x_j)$ is a pullback of the $(n-1)$-truncated maps $x_i$ and $x_j$, it is also $(n-1)$-truncated. We proceed in two cases:
\begin{itemize}
    \item \text{Case 1: $n\geq 1$.} Because $n-1 \geq 0$, taking coproducts preserves the truncation level by \cref{prop: coproduct preserves truncations}. Therefore, the entire object $U_1$ is $(n-1)$-truncated. Since $p$ is an effective epimorphism, $X$ is the realization of the groupoid object $U$. It then follows from \cref{cor: realization of n truncated groupoid is n+1 truncated} that $X$ is $n$-truncated.
    \item \text{Case 2: $n= 0$.} Here, $n-1 = -1$. The coproduct of $(-1)$-truncated objects is generally not $(-1)$-truncated, so the previous argument does not apply. Instead, each map $X(x_i,x_j)\hookrightarrow \ast$ is a monomorphism. Consider the following Cartesian square:
\[\begin{tikzcd}
	{U_1\simeq\coprod_{i,j\in I}X(x_i,x_j)} \arrow[d, "{(d_0,d_1)}"'] \arrow[r] \arrow[dr, phantom, "\lrcorner"{anchor=center, pos=0.125}] & X \arrow[d, "{\Delta_X}"] \\
	{U_0\times U_0\simeq(\coprod_{i\in I}\ast)\times(\coprod_{j\in I}\ast)} \arrow[r, "{p\times p}"', two heads] & {X\times X}
\end{tikzcd}\]
Because coproducts are universal, the bottom left object is $\coprod_{i,j\in I}\ast$, and $(d_0,d_1)$ is the coproduct indexed by $I\times I$ of monomorphisms $X(x_i,x_j)\hookrightarrow\ast$. Since monomorphisms are stable under coproducts by \cref{prop: coproduct preserves truncations}, $(d_0,d_1)$ is a monomorphism as well. Because $p\times p$ is an effective epimorphism, $\Delta_X$ is a monomorphism. It follows that $X$ is $0$-truncated, as desired.
\end{itemize}
\end{proof}

We now consider the behavior of connected and truncated maps under the loop construction.
\begin{lemma}\label{lemma: truncation and connectivity of loop space}
     Let $f\colon (X,x_0)\to (Y,y_0)$ be a pointed map in $\E_\ast$, and $n\geq-1$. 
     \begin{enumerate}[label=\arabic*., ref=\thelemma.\arabic*]
         \item\label{lemma: truncation and connectivity of loop space 1} If $f\colon X\to Y$ is $n$-truncated (resp. $n$-connected map), then $\Omega f\colon \Omega X\to\Omega Y$ is $(n-1)$-truncated (resp. $(n-1)$-connected).
         \item\label{lemma: truncation and connectivity of loop space 2} If $X$ and $Y$ are connected, then the converse holds for connectivity: if $\Omega f$ is $(n-1)$-connected, then $f$ is $n$-connected.
         \item\label{lemma: truncation and connectivity of loop space 3} If $X$, $Y$ and $f$ are connected, then the converse holds for truncated maps: if $\Omega f$ is $(n-1)$-truncated, then $f$ is $n$-truncated.
     \end{enumerate}
\end{lemma}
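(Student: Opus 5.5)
The plan is to realise $\Omega f$ as a base change of a map built from $f$, and then move connectivity and truncatedness along base changes using \cref{prop: pullback of truncated map is truncated} and \cref{prop: connected maps are preserved by pullback and pushouts}.

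\textbf{Part 1.} Let $F$ denote the fiber of $f$ at $y_0$. The fiber of $\Omega f\colon\Omega X\to\Omega Y$ at the constant loop is then $\Omega F$; more precisely, $\Omega f$ is the base change of $\Delta_f$ along a suitable map. The cleanest route is the pasting law applied to the cube of pullbacks defining $\Omega X$ and $\Omega Y$. It yields a Cartesian square
\[\begin{tikzcd}
	{\Omega X} & X \\
	{\Omega Y} & {X\times_YX}
	\arrow[from=1-1, to=1-2]
	\arrow["{\Omega f}"', from=1-1, to=2-1]
	\arrow["{\Delta_f}", from=1-2, to=2-2]
	\arrow[from=2-1, to=2-2]
\end{tikzcd},\]
where the bottom map sends a loop $\gamma$ to the point $(x_0,x_0,\gamma)$. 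Indeed, $\Omega X=\ast\times_X\ast$, and $X\times_YX$ pulled back along $(x_0,x_0)$ is exactly $\Omega Y$.

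If $f$ is $n$-truncated, then $\Delta_f$ is $(n-1)$-truncated by \cref{prop: equivalent characterization of truncated maps 1}, so $\Omega f$ is $(n-1)$-truncated. For connectivity, suppose $f$ is $n$-connected with $n\geq -1$. Then $\Delta_f$ is $(n-1)$-connected: this is the standard fact that the diagonal of an $n$-connected map is $(n-1)$-connected. I would take it from Rasekh's treatment, as cited in the proof of \cref{prop: characterization connected maps}. Alternatively, one can reduce to the sectioned case: the projection $X\times_YX\to X$ is a base change of $f$, hence $n$-connected, and it has section $\Delta_f$, so \cref{prop: characterization connected maps} (1)$\Rightarrow$(2) applies. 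Pulling back then gives that $\Omega f$ is $(n-1)$-connected.

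\textbf{Parts 2 and 3.} Here I use the bottom map $\ast\to Y$ of the loop pullback. Since $Y$ is connected, $y_0\colon\ast\to Y$ is $(-1)$-connected by \cref{prop: characterization connected maps} applied to $Y\to\ast$ with section $y_0$, so $y_0$ is an effective epimorphism. Pulling $f$ back along $y_0$ gives the fiber $F\to\ast$, so by the converse halves of \cref{prop: connected maps are preserved by pullback and pushouts} and \cref{prop: pullback of truncated map is truncated} it suffices to show that $F$ is $n$-connected (respectively $n$-truncated).

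\textbf{Part 3.} Since $f$ is connected, $F$ is connected. The fiber sequence $\Omega X\to\Omega Y\to F$ (delooping the previous one) exhibits $\Omega X\to\Omega Y$ as the base change of the basepoint $\ast\to F$ along $\Omega Y\to F$. That base-change map is an effective epimorphism because $F$ is connected: the map $\Omega Y\to F$ is the fiber of $F\to X$ over $x_0$, which is a base change of the effective epimorphism $\ast\to X$. Since $\Omega f$ is $(n-1)$-truncated, the point $\ast\to F$ is $(n-1)$-truncated. So $F$ is a connected object whose point is $(n-1)$-truncated, and \cref{prop: cover whose n-connected objects are contractible is n-truncated}, applied with the single jointly epimorphic point, gives that $F$ is $n$-truncated.

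\textbf{Part 2.} In the same way, $\ast\to F$ is a base change of $\Omega f$ along an effective epimorphism, hence $(n-1)$-connected. For this I need $F$ inhabited and $\Omega Y\to F$ an effective epimorphism. The latter uses that $X$ is connected, so that $\ast\to X$ is an effective epimorphism. Then \cref{prop: characterization connected maps} (2)$\Rightarrow$(1), applied to $F\to\ast$ with section the basepoint, gives that $F$ is $n$-connected.

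\textbf{Main obstacle.} The delicate step is writing down the Cartesian squares carefully: identifying $\Omega X\to\Omega Y$ as the pullback of $\ast\to F$ along $\Omega Y\to F$, and checking that this comparison map is an effective epimorphism under exactly the stated connectedness hypotheses. I would do this by a pasting-law argument on the $3\times3$ diagram of fibers of $\ast\to X\to Y$.
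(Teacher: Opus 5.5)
Your proof is correct and follows essentially the paper's argument. In parts 2 and 3 you reduce along the effective epimorphism $y_0\colon\ast\to Y$ to the fiber $F$, then reflect along the effective epimorphism $\Omega Y\to F$, over which $\Omega f$ is the base change of the basepoint $\ast\to F$; this is exactly the paper's route. Your remaining deviations are harmless: in part 1 you go through the relative diagonal $\Delta_f$ instead of the basepoint of $F$, and in part 3 you use the jointly-epimorphic-points criterion where the paper uses its result on $(n-1)$-truncated effective epimorphisms out of $n$-truncated objects.

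Fix two pieces of wording. In part 2, it is $\Omega f$ that is the base change of $\ast\to F$ along $\Omega Y\to F$, not the other way round. In part 3, $\Omega Y\to F$ is an effective epimorphism because $X$ is connected, not because $F$ is.
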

\begin{proof}
    Consider the following pasting of Cartesian squares:
\begin{equation}\label{lemma: truncation and connectivity of loop space, eq1}
    \begin{tikzcd}
    {\Omega X} \arrow[r] \arrow[d, "{\Omega f}"'] \arrow[dr, phantom, "\lrcorner"{anchor=center, pos=0.125}] & \ast \arrow[d, "{x_0}"] \\
    {\Omega Y} \arrow[r] \arrow[d] \arrow[dr, phantom, "\lrcorner"{anchor=center, pos=0.125}] & F \arrow[r] \arrow[d] \arrow[dr, phantom, "\lrcorner"{anchor=center, pos=0.125}] & \ast \arrow[d, "{y_0}"] \\
    \ast \arrow[r, "{x_0}"'] & X \arrow[r, "f"'] & Y
\end{tikzcd}.
\end{equation}
\begin{enumerate}
    \item If $f$ is $n$-truncated (resp. $n$-connected), so is $F$. This implies that $\Delta_F:F\to F\times F$ is $(n-1)$-truncated (resp. $(n-1)$-connected), and so is $x_0:\ast\to F$ since it is a base change of $\Delta_F$. Hence its own base change $\Omega f$ is as well.
    \item Pulling back along effective epimorphisms reflects connectivity levels. Since $x_0:\ast\to X$ is an effective epimorphism, so is $\Omega Y\to F$, and thus $x_0:\ast\to F$ is $(n-1)$-connected. As it is a section of $F\to \ast$, we get that $F$ is $n$-connected. Since $y_0$ is an effective epimorphism, $f$ is $n$-connected.
    \item  Pulling back along effective epimorphisms reflects truncation levels. Moreover every arrow in \eqref{lemma: truncation and connectivity of loop space, eq1} is an effective epimorphism: $f$ being connected implies that $x_0:\ast\twoheadrightarrow F$ is one. Since $x_0:\ast\twoheadrightarrow F$ is an $(n-1)$-truncated effective epimorphism whose domain is $n$-truncated, $F$ is $n$-truncated by \cref{prop: property for an n-truncated map from an n+1-truncated object}. Since $y_0\colon \ast\twoheadrightarrow Y$ is an effective epimorphism, $f$ is $n$-truncated as well. 
\end{enumerate}
\end{proof}
As a corollary, we find that the truncation functor behaves well with loop objects:
\begin{corollary}\label{cor: loop space commutes with truncation}
    Let $f\colon (X,x)\to (Y,y)$ be a pointed map in $\E_\ast$, and $\eta_{n+1}^Y\colon X\to \tau_{n+1}^YX$ its $(n+1)$-truncation in $\slice{\E}{Y}$. Then $\Omega (\eta^Y_{n+1})\simeq\eta_n^{\Omega Y}$ defines an $n$-truncation of $\Omega X$ in $\slice{\E}{\Omega Y}$:
    \[\Omega\left(\tau_{n+1}^Y X\right)\simeq\tau_n^{\Omega Y}(\Omega X).\]
\end{corollary}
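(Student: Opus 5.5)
The plan is to use the $n$-connected/$n$-truncated factorization in the slice $\slice{\E}{\Omega Y}$ (\cref{prop: connected truncated factorization system}, \cref{remark: fiberwise truncation is the n-connected/truncated factorization}). By uniqueness of such factorizations, it suffices to show two things. First, the map $\Omega(\eta^Y_{n+1})\colon \Omega X\to\Omega(\tau^Y_{n+1}X)$ is $n$-connected. Second, the induced map $\Omega(\tau^Y_{n+1}f)\colon\Omega(\tau^Y_{n+1}X)\to\Omega Y$ is $n$-truncated. Then $\Omega X\to\Omega(\tau_{n+1}^YX)\to\Omega Y$ is the fiberwise $n$-truncation factorization of $\Omega f$, which gives $\Omega(\tau^Y_{n+1}X)\simeq\tau^{\Omega Y}_n(\Omega X)$ with $\Omega(\eta^Y_{n+1})\simeq\eta^{\Omega Y}_n$.

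First I will make sure everything is pointed. The object $\tau^Y_{n+1}X$ receives the basepoint $\eta^Y_{n+1}\circ x$, and $\tau^Y_{n+1}f$ sends it to $f(x)=y$. Hence both maps in the factorization $X\to\tau^Y_{n+1}X\to Y$ live in $\E_\ast$, and applying $\Omega$ to the factorization gives a factorization of $\Omega f$. Alternatively, \cref{corollary: fibered truncation commutes with pointed fibered truncation} shows that the pointed and unpointed fiberwise truncations agree. So I may take the factorization in $\slice{\E_\ast}{(Y,y)}$, and then $\Omega$ is a functor on pointed objects.

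Next I apply \cref{lemma: truncation and connectivity of loop space 1} to each factor. The unit $\eta^Y_{n+1}$ is $(n+1)$-connected by \cref{remark: fiberwise truncation is the n-connected/truncated factorization}, so $\Omega(\eta^Y_{n+1})$ is $n$-connected. The map $\tau^Y_{n+1}f$ is $(n+1)$-truncated, so its loop map is $n$-truncated. The lemma requires $n+1\geq -1$, which holds since $n\geq -2$ is the relevant range. If the statement is meant for $n\geq -1$, there is nothing extra to check; the case $n=-2$ is trivial anyway.

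Finally I conclude by uniqueness of the factorization system: any factorization of $\Omega f$ into an $n$-connected map followed by an $n$-truncated map is equivalent to the fiberwise truncation factorization. I expect the main obstacle to be the bookkeeping that the loop of a map over $Y$ is naturally an object over $\Omega Y$, with compatible structure maps. This is formal, since $\Omega$ preserves the commutative triangle, so the loop objects sit as objects of $\slice{\E}{\Omega Y}$ with $\Omega f=\Omega(\tau^Y_{n+1}f)\circ\Omega(\eta^Y_{n+1})$.
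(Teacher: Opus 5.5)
Your proposal is correct and is essentially the paper's proof: take the $(n+1)$-connected/$(n+1)$-truncated factorization $X\to\tau^Y_{n+1}X\to Y$, apply the loop lemma to both factors to get an $n$-connected/$n$-truncated factorization of $\Omega f$, and conclude by uniqueness of such factorizations. Your extra care with basepoints, whether by pointing $\tau^Y_{n+1}X$ directly or by using the comparison of pointed and unpointed fiberwise truncations, spells out what the paper leaves implicit.
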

\begin{proof}
    By \cref{remark: fiberwise truncation is the n-connected/truncated factorization}, the $(n+1)$-truncation of $f$ in $\slice{\E}{Y}$ is given by its $(n+1)$-connected/$(n+1)$-truncated factorization:
    \[X\to \tau_{n+1}^Y X\to Y.\]
    Applying $\Omega$ we obtain an $n$-connected/truncated factorization of $\Omega f$ by Lemma~\ref{lemma: truncation and connectivity of loop space 1}, yielding an $n$-truncation in $\slice{\E}{\Omega Y}$:
    \[\Omega X\to \Omega(\tau_{n+1}^Y X)\simeq \tau_n^{\Omega Y}(\Omega X)\to \Omega Y.\]
\end{proof}

In \cite[§6.2.3]{LurieHTT} Lurie characterizes $(-1)$-connected maps (i.e. effective epimorphisms), as exactly those maps $f\colon X\to Y$ for which the induced pullback functor on the ordinary categories of subobjects, $f^\ast\colon \mathrm{Sub}(Y)\to \mathrm{Sub} (X)$, is injective. We now extend this result to characterize $n$-connected maps. While the forward implication is known \cite[Lem.~3.61]{Rasekh_ElementaryApproachTruncations}, \cite[7.2.1.13]{LurieHTT}, we are not aware of a similar result for the converse in the literature. Note that because $f^\ast\colon \slice{\E}{Y}\to\slice{\E}{X}$ is left exact, it restricts to the full sub-$\infty$-categories of $n$-truncated objects.
\begin{prop}\label{prop: characterization n-connected maps through pullback functor}
    Let $n\geq -1$ be an integer. A map $f\colon X\to Y$ in $\E$ is $n$-connected if and only if the pullback functor \[f^\ast\colon \left(\slice{\E}{Y}\right)^{\sleq{n}}\to \left(\slice{\E}{X}\right)^{\sleq{n}}\]
    is fully faithful. Furthermore, when $f$ is $n$-connected, this functor restricts to an equivalence on truncated objects in the slice $\infty$-categories $f^\ast\colon \left(\slice{\E}{Y}\right)^{\sleq{m}}\xrightarrow{\simeq}\left(\slice{\E}{X}\right)^{\sleq{ m}}$ for all $m<n$.
\end{prop}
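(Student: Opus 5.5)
Throughout, write $f^\ast$ for the pullback functor and $\prod_f$ for its right adjoint. The dependent product preserves $n$-truncated objects, since it is a right adjoint to a left exact functor and truncatedness is defined by finite limits. Hence $f^\ast \dashv \prod_f$ restricts to an adjunction between $(\slice{\E}{Y})^{\sleq n}$ and $(\slice{\E}{X})^{\sleq n}$. Full faithfulness of $f^\ast$ is then equivalent to the unit $A\to\prod_f f^\ast A$ being an equivalence for every $n$-truncated $A\to Y$. Equivalently, for every $B\in\slice{\E}{Y}$, the map
\[\slice{\E}{Y}(B,A)\to\slice{\E}{X}(f^\ast B,f^\ast A)\simeq\slice{\E}{Y}(B\times_Y X,A)\]
induced by the projection $B\times_YX\to B$ is an equivalence.

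\emph{Forward direction.} Suppose $f$ is $n$-connected. By \cref{prop: connected maps are preserved by pullback and pushouts} the projection $B\times_Y X\to B$ is $n$-connected. Orthogonality of $n$-connected maps against $n$-truncated maps (\cref{prop: connected truncated factorization system}) says that precomposition with an $n$-connected map $g\colon C\to B$ over $Y$ induces an equivalence on maps into an $n$-truncated object $A\to Y$. Apply this to the square with $g$ on the left and $A\to Y$ on the right: the space of fillers is contractible, so $\slice{\E}{Y}(B,A)\to\slice{\E}{Y}(C,A)$ is an equivalence. This gives full faithfulness.

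\emph{Converse.} Suppose $f^\ast$ is fully faithful on $n$-truncated objects. Factor $f$ as $X\to\tau_n^YX\xrightarrow{t} Y$ with $t$ $n$-truncated (\cref{remark: fiberwise truncation is the n-connected/truncated factorization}). Take $A=\tau_n^YX$, which lies in $(\slice{\E}{Y})^{\sleq n}$. The object $f^\ast A=X\times_Y\tau_n^YX$ has a section over $X$ given by $(1,\eta)$. Full faithfulness identifies sections of $f^\ast A$ over $X$ with maps $Y\to A$ over $Y$, that is, with sections of $t$. So $t$ admits a section $s$. Then $s$ is a section of an $n$-truncated map, hence a base change of the diagonal of $t$, which is $(n-1)$-truncated, so $s$ is $(n-1)$-truncated. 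Moreover $s\circ f$ corresponds to $\eta$ under the equivalence, so the factorization $X\to Y\xrightarrow{s}\tau_n^YX$ recovers $\eta$.

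It remains to show that $t$ is an equivalence, i.e. that $s$ is an equivalence. This is the step I expect to be the main obstacle. My plan is to prove $ts\simeq 1$ and $st\simeq1$, where the second equivalence is the delicate one. Both $st$ and $1$ are endomorphisms of $A$ over $Y$, and each becomes $\eta$ after precomposing with $\eta\colon X\to A$, because $st\eta=sf=\eta$. Since $\eta$ is $n$-connected and $A$ is $n$-truncated over $Y$, orthogonality shows that maps $A\to A$ over $Y$ are determined by their restriction along $\eta$. Hence $st\simeq 1$, and $t$ is an equivalence, so $f$ is $n$-connected. If only the homotopy $s f\simeq \eta$ is subtle, I would check it directly via the adjunction unit and counit.

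\emph{Restriction to an equivalence for $m<n$.} The functor $f^\ast$ is fully faithful on $(\slice{\E}{Y})^{\sleq m}$, since $m$-truncated objects are $n$-truncated. For essential surjectivity, take an $m$-truncated $E\to X$ and consider the counit $f^\ast\prod_f E\to E$. Here I would use descent: $f$ is $n$-connected with $n\ge m+1$, so in particular an effective epimorphism. On the \v{C}ech nerve of $f$, the object $E$ acquires descent data, because the two pullbacks of $E$ to $X\times_YX$ agree. This uses that the diagonal $X\to X\times_YX$ is $(n-1)$-connected with $n-1\ge m$, so full faithfulness at level $m$ over $X\times_YX$ identifies the two pullbacks. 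Higher coherences follow in the same way, since all faces are $(n-1)$-connected. Descent along the realization of the \v{C}ech nerve then produces $E'\to Y$ with $f^\ast E'\simeq E$, and $E'$ is $m$-truncated by \cref{prop: pullback of truncated map is truncated}.
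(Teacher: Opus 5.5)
\textbf{Parts 1 and 2 are correct.} Your forward direction and your converse are correct and match the paper's proof up to presentation. The paper first reduces to $Y\simeq\ast$. It then uses $\E(X,\Map(A,B))\simeq\E(\tau_nX,\Map(A,B))$ for the forward direction and a Yoneda argument in $\E^{\sleq{n}}$ for the converse. You stay over $Y$ and use orthogonality of $n$-connected against $n$-truncated maps instead.

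The step you flag as the main obstacle is not one. Take $s$ to be the preimage of $\eta$ under the equivalence $\slice{\E}{Y}(Y,A)\xrightarrow{-\circ f}\slice{\E}{Y}(X,A)$; this is your hypothesis with $B=Y$. Then $sf\simeq\eta$ holds in the slice by construction. Next, $ts\simeq 1$ because $(Y,1_Y)$ is terminal in $\slice{\E}{Y}$. Finally, orthogonality gives $st\simeq 1$. This is exactly the Yoneda argument, specialised to the single test object $A=\tau_n^YX$.

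\textbf{The gap is in the last part.} Identifying $d_0^\ast E\simeq d_1^\ast E$ over $X\times_YX$ via full faithfulness of $\Delta_f^\ast$ is correct, but it is only the first piece of a descent datum. To apply descent you need a full Cartesian transformation $E_\bullet\to\check{C}(f)_\bullet$, which involves infinitely many coherences. The phrase \emph{higher coherences follow in the same way} does not construct it.

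One way to close the gap along your route is as follows. Every face map $d_i$ of $\check{C}(f)$ is a base change of $f$, hence $n$-connected. Each has a degeneracy $s$ as a section, which is a base change of $\Delta_f$, hence $(n-1)$-connected. By your first part, $d_i^\ast$ and $s^\ast$ are both fully faithful on $m$-truncated objects. Together with $s^\ast d_i^\ast\simeq 1$, this forces $d_i^\ast$ to be essentially surjective. So the cosimplicial diagram $[k]\mapsto(\slice{\E}{\check{C}(f)_k})^{\sleq{m}}$ sends every map to an equivalence, and its limit is its value at $[0]$. Descent, restricted to $m$-truncated objects (legitimate because $f$ is an effective epimorphism), identifies that limit with $(\slice{\E}{Y})^{\sleq{m}}$ via $f^\ast$.

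A shorter fix avoids descent altogether. View $E$ over $Y$ and consider the comparison map $E\to X\times_Y\tau_m^YE$ over $X$.
\begin{itemize}
\item It is $m$-truncated, since both sides are $m$-truncated over $X$.
\item It is $m$-connected. Its composite with the projection to $\tau_m^YE$ is the $m$-connected unit. That projection is a base change of $f$, hence $(m+1)$-connected. Cancellation (if $gh$ is $m$-connected and $g$ is $(m+1)$-connected, then $h$ is $m$-connected) then applies.
\end{itemize}
Being both $m$-truncated and $m$-connected, the map is an equivalence.

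The paper sidesteps the coherence problem differently. $E$ is classified by a map $X\to\U^{\sleq{m}}$, and $\U^{\sleq{m}}$ is $(m+1)$-truncated. So this map factors through $\tau_{m+1}X\simeq\tau_{m+1}Y$, hence through $Y$. Univalence packages the descent data for you.
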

\begin{proof}
        Because of the equivalence $\slice{\left(\slice{\E}{Y}\right)}{f}\simeq\slice{\E}{X}$, we may assume that $Y\simeq \ast$ is terminal by working in the slice $\slice{\E}{Y}$. Suppose first that $X$ is $n$-connected, and let $A,B\in\E$ be $n$-truncated objects. We need to show that the induced map on mapping spaces \begin{equation}\label{prop: characterization n-connected maps through pullback functor, eq1}
        \E(A,B)\to \slice{\E}{X}(A\times X, B\times X)
        \end{equation}
    is an equivalence of $n$-groupoids.
    The internal mapping object $\Map(A,B)\in\E$ precisely represents these mapping spaces, allowing us to rewrite \eqref{prop: characterization n-connected maps through pullback functor, eq1} as: \begin{equation}\label{prop: characterization n-connected maps through pullback functor, eq2}
    \E(1,\Map(A,B))\to \E(X,\Map(A,B)).
    \end{equation}
    Because $B$ is $n$-truncated, the internal hom $\Map(A,B)$ is also $n$-truncated, so that the mapping space $\E(X,\Map(A,B))$ is equivalent to $\E(\tau_nX, \Map(A,B))$. We can conclude since $X$ is $n$-connected.

    Conversely, suppose that the pullback functor $f^\ast$ is fully faithful. We wish to show that $\tau_nX\to\ast$ is an equivalence. Since both are $n$-truncated objects, we may work entirely in the full subcategory $\E^{\sleq{n}}\subseteq\E$ and show that the induced map on mapping spaces \[\E^{\sleq{n}}(1,C)\to \E^{\sleq{n}} (\tau_nX,C)\] is an equivalence for all $n$-truncated objects $C$. This is achieved by the following sequence of equivalences:
    \begin{align*}
        \E(1,C)&\simeq \E(1,\Map(1,C)) &\\
         &\simeq \E(X,\Map(1,C)) &&\proofstep{By hyp. \eqref{prop: characterization n-connected maps through pullback functor, eq2} is an equivalence}\\
         &\simeq \E(\tau_nX,\Map(1,C)) &&\proofstep{$\Map(1,C)$ is $n$-truncated}\\
        &\simeq \E(\tau_nX,C).&
    \end{align*}
    This implies that $\tau_nX\to 1$ is an equivalence, so $X$ is $n$-connected.
    
    Finally, assume that $f$ is $n$-connected and $m<n$. To make the construction explicit for future use, we drop the assumption that $Y\simeq\ast$. The functor $f^\ast$ clearly restricts to a fully faithful functor on $m$-truncated objects, so we only need to show essential surjectivity. Suppose that $p\colon E\to X$ is an $m$-truncated map. It arises as the pullback of a classifying map $\corner{E}\colon X\to\U^{\sleq{m}}$. Because the universe $\U^{\sleq{m}}$ is $(m+1)$-truncated (\cref{universe of n-truncated maps is (n+1)-truncated}), the classifying map $\corner{E}$ factors through the truncation map $\eta_X\colon X\to\tau_{m+1} X$. Since $f$ is $n$-connected and $(m+1)\leq n$, $\eta_X$ itself factors through $f$ to obtain a factorization  $\corner{E}\colon X\to Y\to \tau_{m+1}X\to\U^{\sleq{m}}$. Hence we have a pasting of pullbacks 
\begin{equation}\label{prop: characterization n-connected maps through pullback functor, eq3}
\begin{tikzcd}
    E \arrow[r] \arrow[d, "{p}"'] \arrow[dr, phantom, "\lrcorner"{anchor=center, pos=0.125}] & 
    {E'} \arrow[r] \arrow[d, "{p'}"'] \arrow[dr, phantom, "\lrcorner"{anchor=center, pos=0.125}] & 
    {E_{m+1}} \arrow[r] \arrow[d, "p_{m+1}"'] \arrow[dr, phantom, "\lrcorner"{anchor=center, pos=0.125}] & 
    {\U^{\sleq{m}}_\ast} \arrow[d] \\
    X \arrow[r, "f"', two heads] & 
    Y \arrow[r, two heads] & 
    {\tau_{m+1}X} \arrow[r] & 
    {\U^{\sleq{m}}}
\end{tikzcd},
\end{equation}
where $p'$ is $m$-truncated since $p$ is (\cref{prop: pullback of truncated map is truncated}).
    \end{proof}

   \begin{corollary}[{\cite[Prop. 7.2.1.14]{LurieHTT}}]\label{effective epi are seen in 0-truncation}
        A morphism $f\colon X\to Y$ in $\E$ is an effective epimorphism if and only if $\tau_0f\colon \tau_0X\to\tau_0Y$ is an effective epimorphism in the $1$-topos $\E^{\sleq 0}$.
    \end{corollary}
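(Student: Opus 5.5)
The plan is to use the equivalence between effective epimorphisms and $(-1)$-connected maps, together with the relation between connectivity of $f$ and of $\tau_0 f$. Concretely, $f$ is an effective epimorphism if and only if $\tau_{-1}^Y X\simeq Y$, and the $(-1)$-connected/$(-1)$-truncated factorization $X\twoheadrightarrow \im(f)\hookrightarrow Y$ of \cref{definition: image of a map} computes this. The point is that this factorization only sees $0$-truncated data.

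\textbf{Step 1: compare images.} Apply $\tau_0$ to the factorization $X\twoheadrightarrow\im(f)\hookrightarrow Y$. Since $X\to\im(f)$ is $(-1)$-connected, and left adjoints (here the reflector $\tau_0$, or rather the fact that $\eta$ is connected) preserve connectivity, I would argue directly with the factorization system of \cref{prop: connected truncated factorization system}.
\begin{itemize}
\item A monomorphism $m\colon M\hookrightarrow Y$ is determined by its classifying map $Y\to \U^{\sleq{-1}}$, where $\U^{\sleq{-1}}$ is $0$-truncated by \cref{universe of n-truncated maps is (n+1)-truncated}.
\item This map therefore factors through $\eta_Y\colon Y\to\tau_0 Y$. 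So subobjects of $Y$ are pulled back from subobjects of $\tau_0Y$, and pullback along $\eta_Y$ is an equivalence on $(-1)$-truncated objects. This is exactly the last clause of \cref{prop: characterization n-connected maps through pullback functor} with $n=0$, $m=-1$, since $\eta_Y$ is $0$-connected by \cref{remark: fiberwise truncation is the n-connected/truncated factorization}.
\end{itemize}

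\textbf{Step 2: $f$ effective epi $\Leftrightarrow$ $\tau_0 f$ effective epi.} By Lurie's characterization recalled before \cref{prop: characterization n-connected maps through pullback functor} (the case $n=-1$), $f$ is an effective epimorphism if and only if $f^\ast\colon\mathrm{Sub}(Y)\to\mathrm{Sub}(X)$ is injective, i.e.\ fully faithful on $(-1)$-truncated objects. Using Step 1, identify $\mathrm{Sub}(Y)\simeq\mathrm{Sub}(\tau_0Y)$ via $\eta_Y^\ast$, and likewise for $X$. The naturality square $\eta_Y\circ f=\tau_0f\circ\eta_X$ then identifies $f^\ast$ with $(\tau_0f)^\ast\colon\mathrm{Sub}(\tau_0Y)\to\mathrm{Sub}(\tau_0X)$. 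Hence $f$ is an effective epimorphism in $\E$ if and only if $(\tau_0 f)^\ast$ is injective on subobjects.

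\textbf{Step 3: pass to $\E^{\sleq 0}$.} It remains to see that injectivity of $(\tau_0f)^\ast$ on subobjects characterizes effective epimorphisms in the $1$-topos $\E^{\sleq0}$. Subobjects of a $0$-truncated object are $(-1)$-truncated maps, hence their sources are $0$-truncated, so $\mathrm{Sub}$ is the same computed in $\E$ or $\E^{\sleq 0}$. In a $1$-topos, a map is a regular (= effective) epimorphism if and only if pullback of subobjects is injective: this is the classical fact that a map is epi iff its image is everything. Alternatively, apply the $\E$-statement to $\tau_0f$ and note that, for a map between $0$-truncated objects, the image in $\E$ is $0$-truncated and coincides with the image in $\E^{\sleq0}$. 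Hence the effective-epi notions agree.

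The main obstacle I expect is Step 3's comparison of effective epimorphisms in $\E$ versus $\E^{\sleq0}$, because colimits (realizations) in $\E^{\sleq0}$ are $\tau_0$ of colimits in $\E$. The cleanest route is to avoid realizations entirely and use the subobject characterization on both sides, which only involves limits and is insensitive to the ambient category.
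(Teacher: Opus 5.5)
Your argument is correct. The paper gives no written proof (it only cites HTT, Prop.~7.2.1.14), but it states the result as a corollary of \cref{prop: characterization n-connected maps through pullback functor}, and that is exactly the deduction you carry out in Steps 1--2: the case $n=-1$ gives the subobject criterion, and the last clause, applied to the $0$-connected unit $\eta_Y$, gives $\mathrm{Sub}(Y)\simeq\mathrm{Sub}(\tau_0Y)$.

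Two points you leave implicit are harmless. First, ``injective'' and ``fully faithful'' agree for $f^\ast$ on subobject posets, because pullback preserves meets. Second, the ``only if'' half of the classical $1$-topos criterion in Step 3 needs slightly more than ``epi iff image is everything''; it follows from pullback-stability of epimorphisms, or equivalently from the Frobenius identity $\exists_g g^\ast S=S\wedge\mathrm{im}(g)$.
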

\begin{corollary}\label{cor: characterization n-connected maps through pullback functor in the pointed category}
    Let $n>m\geq -2$ be integers, and $f\colon (X,x)\to (Y,y)$ a pointed and $n$-connected map in $\E_\ast$. Then there is an equivalence \[f^\ast\colon \left(\slice{\E_\ast}{(Y,y)}\right)^{\sleq{m}}\xrightarrow{\simeq}\left(\slice{\E_\ast}{(X,x)}\right)^{\sleq{ m}}.\]
\end{corollary}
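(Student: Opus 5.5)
The plan is to deduce this from \cref{prop: characterization n-connected maps through pullback functor}, applied in the $\infty$-topos $\slice{\E}{Y}$, by passing to coslices under the basepoints. Via the equivalence $\slice{(\E_\ast)}{(Y,y)}\simeq(\slice{\E}{Y})^{y/}$ used in the proof of \cref{corollary: fibered truncation commutes with pointed fibered truncation}, an object of $\slice{\E_\ast}{(Y,y)}$ is a map $p\colon E\to Y$ together with a section $e\colon\ast\to E$ over $y$, that is, a point $e$ of the fiber $E_y$. Likewise for $(X,x)$. By \cref{example: U preserves and reflects n-truncated maps}, an object of the coslice is $m$-truncated exactly when its underlying object in $\slice{\E}{Y}$ is. So $(\slice{\E_\ast}{(Y,y)})^{\sleq m}$ is the coslice of $(\slice{\E}{Y})^{\sleq m}$ under $y$, more precisely the $\infty$-category of pairs $(p,e)$ with $p$ $m$-truncated and $e\in\E_{/Y}(y,p)$. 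The pullback functor $f^\ast$ on pointed objects sends $(p,e)$ to $(f^\ast p,(x,e))$, using $f(x)\simeq y$.

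The first step is full faithfulness. Mapping spaces in a coslice fit into fiber sequences
\[\slice{\E_\ast}{(Y,y)}\big((E,e),(E',e')\big)\to\slice{\E}{Y}(E,E')\to\slice{\E}{Y}(y,E'),\]
as in the proof of \cref{prop: n-truncation commute with coslice forgetful functors}. The functor $f^\ast$ maps this sequence to the analogous one over $X$. On the middle terms the comparison map is an equivalence by the fully faithful part of \cref{prop: characterization n-connected maps through pullback functor} (indeed $m<n$, so the objects are $n$-truncated). On the right terms, $\slice{\E}{Y}(y,E')\simeq E'_y$ and $\slice{\E}{X}(x,f^\ast E')\simeq (f^\ast E')_x\simeq E'_{f(x)}\simeq E'_y$, so the comparison map is the canonical equivalence of fibers. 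Compatibility of the basepoints $e'$ then gives an equivalence on the fibers, hence full faithfulness.

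The second step is essential surjectivity. Let $(q\colon E\to X,\,e)$ be a pointed $m$-truncated object over $(X,x)$. By the essential surjectivity part of \cref{prop: characterization n-connected maps through pullback functor}, there are an $m$-truncated $p\colon E'\to Y$ and an equivalence $E\simeq f^\ast E'$ over $X$. The point $e$ of $E_x\simeq E'_{f(x)}\simeq E'_y$ then determines a point $e'$ of $E'$ over $y$, and $f^\ast(E',e')\simeq(E,e)$ by construction.

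The only delicate point is bookkeeping. One must check that the identification $E_x\simeq E'_y$ is the one induced by the equivalence $E\simeq f^\ast E'$ composed with the pointing homotopy $f(x)\simeq y$, so that the basepoints match coherently. Since everything is defined by pullback pasting, I expect this to be routine.
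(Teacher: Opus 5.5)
Your proposal is correct and is essentially the paper's proof. It uses the same fiber sequence of coslice mapping spaces: the middle comparison is an equivalence by \cref{prop: characterization n-connected maps through pullback functor}, and the right-hand one by the adjunction identification of points in fibers. Essential surjectivity is handled the same way, by transporting the basepoint. The paper only differs cosmetically by first reducing to $(Y,y)$ terminal, and it explicitly invokes \cref{prop: basepoint forgetful functor preserves and reflects connected maps} to pass from $n$-connectedness in $\E_\ast$ to $n$-connectedness in $\E$. You should make that step explicit before applying the unpointed proposition.
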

\begin{proof}
    Recall that if $A,B$ are objects of a coslice $\infty$-category $\C^{Z/}$, the corresponding mapping space sits in a fiber sequence \[\C^{Z/}(A,B)\to\C(A,B)\to\C(Z,B),\]
    for objects $A,B\in\C^{Z/}$. Moreover, there is a canonical equivalence \[\slice{\E_\ast}{(X,x)}\simeq \left(\slice{\E}{X}\right)^{x/}.\]
    We can suppose without loss of generality that $(Y,y)$ is terminal. We start by showing that $f^\ast=-\times X$ is fully faithful. Let $(A,a),(B,b)\in\E_\ast^{\sleq m}$, which are $m$-truncated in $\E$ by \cref{example: U preserves and reflects n-truncated maps}. We have to show that the induced map
    \begin{equation}\label{cor: of the characterization n-connected maps through pullback functor in the pointed category eq}
        \E_\ast(A,B)\xrightarrow{-\times (X,x)}\slice{\E_\ast}{(X,x)}\left(A\times X,B\times X\right)
    \end{equation}
    is an equivalence. Consider the following commutative diagram of mapping spaces:
\[\begin{tikzcd}
	{\E(A,B)} & {\slice{\E}{X}\left(A\times X,B\times X\right)} \\
	& {\slice{\E}{X}(\ast\times X,B\times X)} \\
	{\E(\ast,B)} & {\slice{\E}{X}(\ast, B\times X)}
	\arrow["{-\times X}", from=1-1, to=1-2]
	\arrow["{a^\ast}"', from=1-1, to=3-1]
	\arrow["{(a\times X)^\ast}"', from=1-2, to=2-2]
	\arrow["{x^\ast}", from=2-2, to=3-2]
	\arrow["{-\times X}", from=3-1, to=2-2]
	\arrow[from=3-1, to=3-2]
\end{tikzcd},\]
where the bottom horizontal arrow is the composition $x^\ast\circ (-\times X)$.
The induced map on the fibers is precisely \eqref{cor: of the characterization n-connected maps through pullback functor in the pointed category eq}. It suffices to show that both horizontal arrows are equivalences. Because $X$ is $n$-connected in $\E$ by \cref{prop: basepoint forgetful functor preserves and reflects connected maps}, both $-\times X$ maps are equivalences by \cref{prop: characterization n-connected maps through pullback functor}. The bottom dashed composition is an equivalence by adjunction. 

To prove that $f^\ast$ is essentially surjective, let $p\colon (E,e)\to (X,x)$. By \cref{prop: characterization n-connected maps through pullback functor}, there exists $p'\colon E'\to Y$ such that $E'\times_YX\simeq E$. The basepoint of $E$ yields a canonical basepoint of $E'$, which renders $p'$ pointed, as desired. 
\end{proof}

 The following corollary is crucial for the proof of \cref{theorem: Deck p is loop space of the base B}.

\begin{corollary}\label{cor: n-connected map induce mono on universe of n-truncated objects}
     Let $n\geq -1$ be an integer, $f\colon X\to Y$ an $n$-connected map in $\E$, and $\U^{\sleq{n}}$ a universe for $n$-truncated maps. Then the induced map \[(\U^{\sleq{n}})^Y\to (\U^{\sleq{n}})^X\]
     is a monomorphism.
 \end{corollary}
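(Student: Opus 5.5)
To show that $f^\ast\colon(\U^{\sleq{n}})^Y\to(\U^{\sleq{n}})^X$ is a monomorphism, I would verify the criterion of \cref{prop: equivalent characterization of truncated maps}: a map is $(-1)$-truncated if and only if, for every test object $T$, the induced map on mapping spaces is $(-1)$-truncated, i.e. an inclusion of connected components. So the plan is to fix $T$ and show that
\[\E(T,(\U^{\sleq{n}})^Y)\to \E(T,(\U^{\sleq{n}})^X)\]
is fully faithful as a map of $\infty$-groupoids.

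By adjunction this map is $\E(T\times Y,\U^{\sleq n})\to\E(T\times X,\U^{\sleq n})$, given by precomposition with $T\times f$. By univalence (\cref{definition: univalent universe}), $\E(Z,\U^{\sleq n})$ is a full subgroupoid of $\left(\slice{\E}{Z}\right)^{\simeq}$ on the (small) $n$-truncated maps, via $\corner{E}\mapsto \corner{E}^\ast p$. Under this identification, precomposition with $T\times f$ becomes the pullback functor $(T\times f)^\ast$. It therefore suffices to show that
\[(T\times f)^\ast\colon\left(\slice{\E}{T\times Y}\right)^{\sleq n}\to\left(\slice{\E}{T\times X}\right)^{\sleq n}\]
is fully faithful. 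Then its restriction to cores, and further to the full subgroupoids of small objects, is also fully faithful, hence a monomorphism of $\infty$-groupoids.

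Fully faithfulness follows from \cref{prop: characterization n-connected maps through pullback functor}, provided that $T\times f$ is $n$-connected. This holds because $T\times f$ is the base change of $f$ along the projection $T\times Y\to Y$, and $n$-connected maps are stable under pullback by \cref{prop: connected maps are preserved by pullback and pushouts}.

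The main obstacle is bookkeeping rather than substance: one must check that the adjunction equivalence and the univalence identification are natural in the base, so that precomposition with $T\times f$ genuinely corresponds to the pullback functor. This follows from the pasting law, since pulling back $p$ along $\corner{E}\circ(T\times f)$ is the same as pulling back $\corner{E}^\ast p$ along $T\times f$. One also needs that pullback preserves smallness, so that the essential images match; this holds because the bounded class classified by $\U^{\sleq n}$ is local, hence stable under base change. Finally, note that we only need fully faithfulness, not essential surjectivity, which is why no requirement beyond $n$-connectedness of $f$ appears.
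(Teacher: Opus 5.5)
Your proposal is correct and follows the paper's argument essentially step for step: test against an arbitrary object, pass by adjunction and univalence to the pullback functor $(T\times f)^\ast$ on cores of $n$-truncated slices, and conclude from \cref{prop: characterization n-connected maps through pullback functor}. The only cosmetic difference is that you obtain $n$-connectedness of $T\times f$ as a base change of $f$ along $T\times Y\to Y$, whereas the paper cites Rasekh's lemma on products of $n$-connected maps; both justifications are valid.
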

\begin{proof}
To show that $f^*$ is a monomorphism, it suffices to show that for every object $C \in \E$, the induced map on mapping spaces
\[\E(C, (\U^{\sleq{n}})^Y) \to \E(C, (\U^{\sleq{n}})^X)\]
is fully faithful. By adjunction and definition of the universe $\U^{\sleq{n}}$, this is equivalent to showing that the pullback functor
\[ (f \times C)^* \colon \left( \slice{\E}{Y \times C}^\simeq \right)^{\sleq{n}} \to \left( \slice{\E}{X \times C}^\simeq \right)^{\sleq{n}}\]
is fully faithful. Moreover $f\times C$ is $n$-connected by \cite[Lem.~1.41]{Rasekh_ElementaryApproachTruncations}. By \cref{prop: characterization n-connected maps through pullback functor}, the pullback functor on the full slice $\infty$-categories of $n$-truncated objects is fully faithful. Since any fully faithful functor restricts to a fully faithful functor on the maximal subgroupoids, the result follows.
\end{proof}
 The following corollary is well known \cite[Cor.~3.62]{Rasekh_ElementaryApproachTruncations}, but we mention it for further use.
 \begin{corollary}\label{corollary: Cartesian square of truncation maps}
     Let $f\colon E\to X$ be an $m$-truncated map in $\E$, and let $n>m$. Then the following square is Cartesian:
\[\begin{tikzcd}
    E \arrow[r, "{\eta_E}"] \arrow[d, "f"'] \arrow[dr, phantom, "\lrcorner"{anchor=center, pos=0.125}] & {\tau_n E} \arrow[d, "{\tau_nf}"] \\
    X \arrow[r, "{\eta_X}"'] & {\tau_n X}
\end{tikzcd}.\]
 \end{corollary}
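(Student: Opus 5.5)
The plan is to exploit the classifying-map argument already used in the last part of the proof of \cref{prop: characterization n-connected maps through pullback functor}. Since $f\colon E\to X$ is $m$-truncated, it is classified by a map $\corner{E}\colon X\to\U^{\sleq{m}}$, so $E\simeq X\times_{\U^{\sleq m}}\U^{\sleq m}_\ast$. By \cref{universe of n-truncated maps is (n+1)-truncated}, the universe $\U^{\sleq{m}}$ is $(m+1)$-truncated, hence $n$-truncated because $n\geq m+1$. The universal property of the unit $\eta_X\colon X\to\tau_nX$ therefore factors $\corner{E}$ as $X\xrightarrow{\eta_X}\tau_nX\xrightarrow{c}\U^{\sleq m}$. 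Let $E'\to\tau_nX$ be the pullback of $\U^{\sleq m}_\ast\to\U^{\sleq m}$ along $c$. By the pasting law, the square with $E\to E'$ over $\eta_X\colon X\to\tau_nX$ is Cartesian.

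It then suffices to identify $E'\to\tau_nX$ with $\tau_nf\colon\tau_nE\to\tau_nX$, with $E\to E'$ playing the role of $\eta_E$. First, $E'\to\tau_nX$ is $m$-truncated by \cref{prop: pullback of truncated map is truncated}, and $\tau_nX$ is $n$-truncated, so $E'$ is $n$-truncated by \cref{prop: equivalent characterization of truncated maps}. Second, the map $E\to E'$ is a base change of $\eta_X$, which is $n$-connected by \cref{remark: fiberwise truncation is the n-connected/truncated factorization}. Since $n$-connected maps are stable under pullback (\cref{prop: connected maps are preserved by pullback and pushouts}), $E\to E'$ is $n$-connected.

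The composite $E\to E'\to\ast$ is thus an $n$-connected map followed by an $n$-truncated map, which is an $n$-connected/$n$-truncated factorization of $E\to\ast$. Uniqueness of such factorizations (\cref{prop: connected truncated factorization system}) identifies $E\to E'$ with $\eta_E\colon E\to\tau_nE$.

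The remaining point is that, under this identification, the map $E'\to\tau_nX$ agrees with $\tau_nf$. Both are maps $\tau_nE\to\tau_nX$ whose precomposition with $\eta_E$ equals $\eta_X\circ f$, since the pullback square commutes. The universal property of $\eta_E$ against the $n$-truncated target $\tau_nX$ then forces them to agree. This compatibility is the only step that needs care, and it follows directly from that universal property, so I expect no real obstacle.
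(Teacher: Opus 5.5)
Your proposal is correct and follows the paper's proof. Like the paper, you factor the classifying map of $f$ (into the universe of $m$-truncated maps) through the truncation of $X$, as in the proof of the proposition characterizing $n$-connected maps via pullback functors. You then recognize the induced map out of $E$ as the truncation unit, since it is a base change of $\eta_X$ (hence connected) and lands in a truncated object. The paper writes this out only for $n=m+1$ (the general case follows because $f$ is also $(n-1)$-truncated) and leaves implicit that the right vertical map is $\tau_nf$; you treat general $n$ directly and check that identification explicitly.
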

 \begin{proof}
     We need to prove that in the diagram \eqref{prop: characterization n-connected maps through pullback functor, eq3}, the map $E\to E_{m+1}$ is an $(m+1)$-truncation. As it arises as a pullback of $\eta_X$, it is $(m+1)$-connected. Moreover $E_{m+1}$ is the domain of the $(m+1)$-truncated map $p_{m+1}\colon E_{m+1}\to\tau_{m+1}X$, hence is $(m+1)$-truncated by Proposition~\ref{prop: equivalent characterization of truncated maps 1}.
 \end{proof}
 The following corollary appears in Lurie \cite[6.5.1.2]{LurieHTT}, but its proof relies on a presentation of $\E$ as sheaves on a site $\E\simeq Sh(\C,\Sp)$. Our proof, by contrast, invokes Rasekh's work that is established in the framework of elementary $\infty$-topoi, hence argues from first principles.
\begin{prop}\label{prop: truncation functor preserve finite products}
    The truncation functor $\tau_n:\E\to\E$ preserves finite products.
\end{prop}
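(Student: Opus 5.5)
We must show that for $X,Y\in\E$ the canonical comparison map $\tau_n(X\times Y)\to\tau_nX\times\tau_nY$ is an equivalence, together with $\tau_n\ast\simeq\ast$. The empty product is immediate, since $\ast$ is $n$-truncated. The plan is to exhibit $\eta_X\times\eta_Y\colon X\times Y\to\tau_nX\times\tau_nY$ as the $n$-connected/$n$-truncated factorization of $X\times Y\to\ast$. By \cref{remark: fiberwise truncation is the n-connected/truncated factorization} and uniqueness of factorizations in the factorization system of \cref{prop: connected truncated factorization system}, this identifies $\tau_n(X\times Y)$ with $\tau_nX\times\tau_nY$, and under this identification the comparison map is the canonical one.

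Two things have to be checked. First, $\tau_nX\times\tau_nY$ is $n$-truncated. This holds because $T_n$ is stable under limits (\cref{prop: connected truncated factorization system}); alternatively, the projection $\tau_nX\times\tau_nY\to\tau_nX$ is a base change of the $n$-truncated map $\tau_nY\to\ast$, hence $n$-truncated by \cref{prop: pullback of truncated map is truncated}, and composing with $\tau_nX\to\ast$ gives the claim by \cref{prop: equivalent characterization of truncated maps}. Second, and this is the substantive point, $\eta_X\times\eta_Y$ must be $n$-connected.

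For the second point, factor
\[\eta_X\times\eta_Y=(\tau_nX\times\eta_Y)\circ(\eta_X\times Y).\]
The map $\eta_X\times Y$ is the base change of $\eta_X$ along the projection $\tau_nX\times Y\to\tau_nX$, and is therefore $n$-connected by \cref{prop: connected maps are preserved by pullback and pushouts}(1). The same argument applies to $\tau_nX\times\eta_Y$. It then remains to see that $n$-connected maps are closed under composition. This is standard for the left class of a factorization system, being a consequence of the left orthogonality to $T_n$; it is also the content of \cite[Lem.~1.41]{Rasekh_ElementaryApproachTruncations}, which is already cited for exactly this kind of product statement.

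The main obstacle is making sure the argument is genuinely presentation-free. In particular, the stability of $n$-connected maps under base change and under composition must be taken from \cref{prop: connected maps are preserved by pullback and pushouts} and the factorization system (Rasekh), not from Lurie's sheaf-based arguments. Once both points are in place, uniqueness of the $(C_n,T_n)$-factorization of $X\times Y\to\ast$ finishes the proof.
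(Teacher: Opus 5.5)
Your proposal is correct and follows essentially the paper's argument. Both proofs show that $\eta_X\times\eta_Y$ is $n$-connected with $n$-truncated target, and then conclude that the comparison map $\tau_n(X\times Y)\to\tau_nX\times\tau_nY$ is an equivalence. Your appeal to uniqueness of the $(C_n,T_n)$-factorization is the same step as the paper's use of cancellation for $n$-connected maps together with ``$n$-connected and $n$-truncated implies equivalence.'' The only minor difference is that you derive the $n$-connectedness of $\eta_X\times\eta_Y$ from base change and composition, whereas the paper cites Rasekh's lemma on products of $n$-connected maps directly.
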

\begin{proof}
    By induction it is sufficient to show that binary products are preserved for any objects $X,Y$ in $\E$. Since $n$-truncated objects are closed under limits, the canonical map $\eta_X\times\eta_Y\colon X\times Y\to \tau_nX\times\tau_nY$ factors through \[\alpha\colon\tau_n(X\times Y)\to\tau_nX\times\tau_nY\]
    Since both domain and target are $n$-truncated, so is $\alpha$. Moreover the product of two $n$-connected maps is $n$-connected by \cite[Lem.~1.41]{Rasekh_ElementaryApproachTruncations}, so both $\eta_X\times\eta_Y$ and $\eta_{X\times Y}\colon X\times Y\to\tau_n(X\times Y)$ are $n$-connected. Using \cite[Prop.~1.35]{Rasekh_ElementaryApproachTruncations} it follows that $\alpha$ is $n$-connected as well. We conclude that $\alpha$ is an equivalence.
\end{proof}
We finish this section by a construction that generalizes classical $k$-invariants, which have been studied in a variety of contexts, for instance in stable homotopy theory \cite{Dugger&Shipley_PostnikovExtensionsOfRingSpectra}, for $(\infty,n)$-categories \cite{Harpaz&Nuiten&Prasma_kInvariantForNCategories}, for spherical presheaves \cite{Pstragowski_ModuliSpaceWithPrescribedHomotopyGroups}, or copersistent rational homotopy theory \cite{HessMaggsLavenir_CellDecompositionOfPersistentMinimalModels}.
\begin{prop}\label{prop: construction of k-invariants}
    Let $-2\leq k\leq n+1$ and $p\colon X\to Y$ an $n$-truncated and $(n-k)$-connected map in $\E$, where $X$ is $k$-connected. Then there exists an $(n+1)$-truncated and $(n-k+1)$-connected object $K\in\E_\ast^{\sg{n-k+1},\sleq n+1}$ together with a map $k\colon Y\to K$ whose fiber is $p$:
\[\begin{tikzcd}
    X \arrow[r] \arrow[d, "p"'] \arrow[dr, phantom, "\lrcorner"{anchor=center, pos=0.125}] & \ast \arrow[d] \\
    Y \arrow[r, "k"'] & K
\end{tikzcd}.\]
\end{prop}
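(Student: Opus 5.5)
The plan is to construct $K$ as a truncated cofiber of $p$, and to control its fiber with the generalized Blakers--Massey theorem \cite{ABFJ_GeneralizedBlakersMassyTheorem}. Let $C\coloneq Y\amalg_X\ast$ be the pushout of $p$ along $X\to\ast$. It is canonically pointed by $\ast\to C$. Define
\[K\coloneq\tau_{n+1}C,\]
pointed via $\ast\to C\to\tau_{n+1}C$, and let $k\colon Y\to C\xrightarrow{\eta_C}K$. The outer square $X\to\ast\to K$, $X\to Y\to K$ commutes by construction. We must show that $K$ has the required truncation and connectivity, and that the induced gap map $X\to X'\coloneq Y\times_K\ast$ is an equivalence.

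\textbf{Truncation and connectivity of $K$.} $K$ is $(n+1)$-truncated by definition. By \cref{prop: connected maps are preserved by pullback and pushouts}(2), the map $\ast\to C$ is a cobase change of the $(n-k)$-connected map $p$, hence is $(n-k)$-connected. By \cref{prop: characterization connected maps} (a section of $C\to\ast$ that is $(n-k)$-connected), $C$ is $(n-k+1)$-connected. Truncation does not lower connectivity: $\tau_{n-k+1}\tau_{n+1}C\simeq\tau_{n-k+1}C\simeq\ast$, using $n-k+1\leq n+1$ since $k\geq0$ in the relevant range; the degenerate cases $k<0$ are handled the same way. So $K\in\E_\ast^{\sg{n-k+1},\sleq n+1}$.

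\textbf{The gap map is $n$-truncated.} The map $X'\to Y$ is a base change of $\ast\to K$. Since $K$ is $(n+1)$-truncated, its diagonal, and therefore $\ast\to K$, is $n$-truncated, so $X'\to Y$ is $n$-truncated by \cref{prop: pullback of truncated map is truncated}. Because $p\colon X\to Y$ is also $n$-truncated, the gap map $X\to X'$ over $Y$ is $n$-truncated by \cref{prop: equivalent characterization of truncated maps 2}.

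\textbf{The gap map is $n$-connected.} Factor the gap map as
\[X\to Y\times_C\ast\to Y\times_K\ast.\]
For the first map, apply generalized Blakers--Massey to the pushout square defining $C$. Its legs are $p$, which is $(n-k)$-connected, and $X\to\ast$, which is $k$-connected. With the paper's indexing, this should yield that the gap map $X\to Y\times_C\ast$ is $n$-connected, up to the index shift between Lurie's conventions and ours. For the second map, observe that it is a base change of $Y\times_C\ast\to Y\times_K\ast$, which sits over $Y$. Its fibers are those of $\ast\times_C\ast\to\ast\times_K\ast$, i.e.\ of $\Omega C\to\Omega\tau_{n+1}C$. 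That map is $\Omega(\eta_C)$, which is $n$-connected by \cref{lemma: truncation and connectivity of loop space 1}, so the second map is $n$-connected as well. Alternatively, one can identify it with $\eta^{Y}_n$ of the fiberwise loop object via \cref{cor: loop space commutes with truncation}. A composite of $n$-connected maps is $n$-connected, and an $n$-connected $n$-truncated map is an equivalence by the factorization system of \cref{prop: connected truncated factorization system}. Hence $X\simeq X'$, which gives the Cartesian square.

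\textbf{Main obstacle.} The delicate step is the Blakers--Massey input. Two things need care: getting the exact connectivity bound in the paper's convention, so that it reaches $n$ rather than $n-1$; and handling the edge cases $k=-2,-1$ and $k=n+1$, where one of the legs has connectivity $-1$ or less. I would first check the indices against the examples $X=\ast\to Y=\B^{n+1}A$ (where $k=n+1$) and $X\to\ast$ with $X$ $n$-truncated (where $k=-1$). If needed, I would treat those extremes separately: in the first, take $K=\tau_{n+1}Y$ directly; in the second, argue via the delooping supplied by effective groupoids.
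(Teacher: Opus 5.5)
Your proposal is correct and is essentially the paper's own proof. It uses the same $K=\tau_{n+1}$ of the cofiber, the same factorization of the gap map through $Y\times_C\ast$, and Blakers--Massey for the first factor. The truncation bookkeeping for the rest matches as well.

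Two small clarifications. Your index worry is unfounded: in the paper's (ABFJ) convention Blakers--Massey gives exactly $(n-k)+k=n$, so no separate edge-case treatment is needed. Also, your phrase ``a base change of $Y\times_C\ast\to Y\times_K\ast$'' is circular as written. The clean statement, which is what the paper uses, is that this map is the base change of the $n$-connected section $\ast\to C\times_K\ast=C\langle n+1\rangle$.
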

\begin{proof}
    Let $P$ be the cofiber of $p$. We will prove that $K=\tau_{n+1}P$ and that $k$ is the composition $Y\to P\xrightarrow{\eta_P}\tau_{n+1}P$. Let $Q$ be the fiber of $Y\to P$, and consider the following pasting of Cartesian squares, together with the gap maps from $X$: 
\[\begin{tikzcd}
    X \arrow[r, "{(p,X)}", bend left=20] \arrow[rd, "{(p,X)'}"{description}, dashed] \arrow[rdd, "p"', bend right=40] & Q \arrow[r] \arrow[d] \arrow[dr, phantom, "\lrcorner"{anchor=center, pos=0.125}] & \ast \arrow[d] & \\
    & {Q'} \arrow[r] \arrow[d] \arrow[dr, phantom, "\lrcorner"{anchor=center, pos=0.125}] & {P\langle n+1 \rangle} \arrow[r] \arrow[d] \arrow[dr, phantom, "\lrcorner"{anchor=center, pos=0.125}] & \ast \arrow[d] \\
    & Y \arrow[r] & P \arrow[r, "{\eta_P}"'] & {\tau_{n+1} P}
\end{tikzcd}.\]
    We want to show that the composition $(p,X)'\colon X\to Q'$ is an equivalence. The section $\ast\to P\langle n+1\rangle$ is $n$-connected, so that $Q\to Q'$ is as well. By the Blakers-Massey theorem \cite[Cor.~4.3.1]{ABFJ_GeneralizedBlakersMassyTheorem} the gap map $(p,X)\colon X\to Q$ is $(n-k)+k=n$-connected. It follows that $(p,X)'$ is $n$-connected, being the composition of $n$-connected maps. Moreover $\ast\to \tau_{n+1}P$ is $n$-truncated, so that $Q'\to Y$ is as well. Since $p$ is $n$-truncated by hypothesis, $(p,X)'$ is as well by Proposition~\ref{prop: equivalent characterization of truncated maps 2}. Because the map $(p,X)'$ is both $n$-connected and $n$-truncated, it is an equivalence. 
    
    To finish the proof, we must show that $K=\tau_{n+1}P$ has the correct connectivity level. Since $p$ is $(n-k)\geq (-1)$-connected, its pushout $\ast\twoheadrightarrow P$ is as well. Consequently $P$ is connected and $\Omega P$ is $(n-k)$-connected. By Lemma~\ref{lemma: truncation and connectivity of loop space 2} $P$ is $(n-k+1)$-connected. In particular, its truncation $\tau_{n+1}P$ is also $(n-k+1)$-connected.
\end{proof}
\begin{remark}
    For $k=1$ we recover that a map $p\colon X\to Y$ between simply connected objects whose fiber is a $\B^nA$ is classified by a $k$-invariant $Y\to \B^{n+1}A$ \cite[Lemma~3.4.2]{May_MoreConciseAT}, although our proof has a very different nature.
\end{remark}

\section{$\infty$-Group objects}\label{section : group objects}
This section further develops the theory of $n$-group objects within an $\infty$-topos $\E$. Our primary goal is to characterize the interaction between their truncation levels and their associated classifying objects. We focus on the study of $k$-symmetric $n$-groups, which provides the appropriate framework for higher abelianness in the homotopical context. After establishing the general properties of sub-$n$-groups and normality, we prove that sufficiently symmetric sub-objects are necessarily normal. We conclude by the development of our primary examples of interest: the fundamental $n$-group and the $n$th-homotopy $1$-group of a pointed object. 
\subsection{$k$-Symmetric $n$-groups and $k$-fold classifying objects}
We begin by specializing the theory of groupoid objects to the case of objects with a single vertex. In this subsection, $\C$ is an $\infty$-category with finite products and realizations.
\begin{definition}\label{def: n-groups}
Let $0\leq n\leq\infty$. A simplicial object $G_\sbullet\colon\Delta^\op\to\C$ is:
    \begin{itemize}
        \item an \textit{$\infty$-group object} if $G_0\simeq\ast$ is terminal and $G_\sbullet$ is a groupoid object;
        \item an \textit{$n$-group object} if, in addition, $G_1$ is $(n-1)$-truncated. 
        \item The \textit{category of $n$-group objects} $\Grp_n(\C)$ is the full subcategory of $\Gpd(\C)$ spanned by $n$-group objects.
        \item The realization $\vert G_\sbullet\vert$ of an effective group object $G_\sbullet$ is its \textit{classifying object} and is denoted by $\B G\in\C$.
    \end{itemize}
\end{definition}
\begin{remark}
    By \cite[Lem.~2.20\, \&\, Cor.~2.24]{Beardsley&Peroux_KozulDualityInHigherTopoi}, there is an equivalence $\Grp_n(\C) \simeq \Grp(\tau_{n-1}\C)$ identifying $n$-group objects in $\C$ with $\infty$-group objects in $\tau_{n-1}\C$. While this indexing shift may seem counterintuitive, it perfectly recovers ordinary groups as $1$-groups. An $n$-group is simply a group object within an $n$-category.
\end{remark}
\begin{remark}
    The $\infty$-category $\Grp_n(\C)$ has all limits that $\C$ has. Indeed $\Grp(\C)$ has this property by \cite[Lem.~3.8]{SeverinBunk_ooBundlesAndSmthStringModels}, and $n$-truncated objects are closed under limits.
\end{remark}

   The classifying object of an effective group object is canonically pointed with an effective epimorphism $\ast\twoheadrightarrow\B G$. If $\C$ has effective groupoid objects, we obtain an equivalence of $\infty$-categories between $\Grp(\C)$ and the full subcategory of $\Fun([1],\C)$ spanned by the effective epimorphisms of the form $\ast\twoheadrightarrow B$, whose domain is a terminal object. In an $\infty$-topos, such effective epimorphisms correspond to pointed connected objects by \cref{prop: characterization connected maps}.
\begin{theorem}\label{theorem: group objects are pointed connected objects}
    Let $\E$ be an $\infty$-topos and $0\leq n\leq \infty$. There is an equivalence of $(n+1)$-categories
\[\begin{tikzcd}
	{\E_\ast^{\sg{0},\sleq n}} & {\Grp_n(\E)}
	\arrow[""{name=0, anchor=center, inner sep=0}, "\Omega", shift left=2, from=1-1, to=1-2]
	\arrow[""{name=1, anchor=center, inner sep=0}, "\B ", shift left=2, from=1-2, to=1-1]
	\arrow["\simeq"{description}, draw=none, from=0, to=1]
\end{tikzcd}\]
where $\Omega (\ast\twoheadrightarrow X)=\check{C}(\ast\xrightarrow{}X)\vert_{\Delta^\op}$, and $\B G_\sbullet \coloneq \vert G_\sbullet\vert$.
\end{theorem}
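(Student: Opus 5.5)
The plan is to first prove the case $n=\infty$ and then restrict to truncated objects on both sides. For $n=\infty$ the argument is essentially formal. Effectiveness of groupoids in $\E$ gives an equivalence $\Eff(\E)\simeq\Gpd(\E)$, with the \v{C}ech nerve as one functor and realization as the inverse. Under this equivalence, groupoid objects with $G_0\simeq\ast$ correspond to effective epimorphisms $\ast\twoheadrightarrow B$ with terminal domain. So $\Grp(\E)$ is equivalent to the full subcategory of $\Fun([1],\E)$ spanned by effective epimorphisms out of $\ast$. Since $\ast$ is terminal, this full subcategory identifies with the full subcategory of $\E_\ast$ on pointed objects $(B,b)$ whose basepoint $b$ is an effective epimorphism. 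The mapping spaces in $\Fun([1],\E)$ from $\ast\to B$ to $\ast\to B'$ are exactly pointed maps, because the component at the source is forced.

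Next I would identify these pointed objects with the connected ones. By \cref{prop: characterization connected maps}, applied to $B\to\ast$ with section $b$, the map $b$ is $(-1)$-connected, i.e.\ an effective epimorphism, if and only if $B\to\ast$ is $0$-connected, i.e.\ $B$ is connected. This yields $\E^{\sg 0}_\ast\simeq\Grp(\E)$. I would then check that the functors are as stated. In one direction, $\check C(\ast\to X)|_{\Delta^\op}$ has first level $\ast\times_X\ast=\Omega X$, so this is the functor $\Omega$. In the other direction, realization gives $\B G=|G_\sbullet|$ by definition.

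To restrict to level $n$, I would show that for connected pointed $X$, the object $X$ is $n$-truncated if and only if $G_1=\Omega X$ is $(n-1)$-truncated. If $X$ is $n$-truncated, then $\Omega X$ is a pullback of the $(n-1)$-truncated diagonal, so it is $(n-1)$-truncated; alternatively, apply \cref{lemma: truncation and connectivity of loop space 1} to $X\to\ast$. Conversely, if $G_1$ is $(n-1)$-truncated, then $G_0=\ast$ is certainly $(n-1)$-truncated, and \cref{cor: realization of n truncated groupoid is n+1 truncated} (with $n-1$ in place of $n$) shows that $|G_\sbullet|=\B G$ is $n$-truncated. Because both subcategories are full, the equivalence restricts to $\E_\ast^{\sg0,\sleq n}\simeq\Grp_n(\E)$.

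Finally, I would justify that both sides are $(n+1)$-categories. Mapping spaces in $\E_\ast$ between $n$-truncated objects are fibers of maps between $n$-truncated spaces $\E(A,B)\to\E(\ast,B)$, hence $n$-truncated. The main point needing care is the claim that the effective-epimorphism subcategory of $\Fun([1],\E)$ is really equivalent to this subcategory of $\E_\ast$, as $\infty$-categories and not just on objects. I would handle this by viewing $\E_\ast$ as the fiber of the source evaluation $\Fun([1],\E)\to\E$ over $\ast$. Since $\ast$ is terminal, that fiber is a full subcategory of $\Fun([1],\E)$.
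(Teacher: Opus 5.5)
Your proposal is correct and follows essentially the paper's route. The $n=\infty$ case identifies $\Grp(\E)$ with effective epimorphisms $\ast\twoheadrightarrow B$, which the paper sketches just before the theorem while citing Lurie, and these are matched with pointed connected objects via the characterization of connected maps admitting a section. The restriction to finite $n$ uses exactly the loop-space truncation lemma in one direction and the corollary on realizations of truncated groupoids in the other. Your extra care in treating the coslice as a full subcategory of $\Fun([1],\E)$, and in checking that mapping spaces are $n$-truncated, fills in details the paper leaves implicit.
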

\begin{proof}
    For $n=\infty$ this is \cite[7.2.2.11]{LurieHTT}. One sees directly that this equivalence restricts to the case $n<\infty$ using \cref{cor: realization of n truncated groupoid is n+1 truncated} and Lemma~\ref{lemma: truncation and connectivity of loop space}.
\end{proof}
\begin{example}\label{example: classifying object of the loop space is the connected component of X}
     Let $X$ be a pointed object in $\C$. The \v{C}ech nerve of $\ast\to X$ defines an $\infty$-group structure on $\Omega X$. If $\C$ has effective groupoid objects and universal $\Delta^{op}$-shaped colimits, its classifying object is constructed with the effective epi/mono factorization (see \cref{rmk: construction of the epi mono factorization}):
     \[\ast\twoheadrightarrow\B\Omega X\hookrightarrow X.\]
     When $\C$ is an $\infty$-topos, this is the connected component of the base point in $X$. In particular if $X$ is connected, then $X\simeq \B(\Omega X)$.
\end{example}
\begin{example}\label{ex: BAut(F) as the image in the universe}
    Let $X$ be an object of an $\infty$-topos $\E$ and $\U$ be a universe with $\corner{X}\colon\ast\to \U$ a classifying map for $X$. The object $\Aut(X)\coloneq\Eq(X,X)$ of autoequivalences admits an $\infty$-group structure. This can be seen with the Univalence Axiom \ref{Univalence Axiom in a topos}, which yields in particular an equivalence \[\Omega_{\corner{X}}\U\simeq\Aut(X).\] By the previous \cref{example: classifying object of the loop space is the connected component of X}, its classifying object is the connected component of $\corner{X}\in \U$:\[\ast\twoheadrightarrow \B \Aut(X)\hookrightarrow \U.\]
    This allows us to compare the $\infty$-group of autoequivalences of a map $p\colon E\to X$ with the one of its fiber at a point $x\colon \ast\to X$.
\begin{corollary}\label{cor: properties of autoequivalence group of p}
    Let $x\colon\ast\to X$ be a pointed object of an $\infty$-topos, $p\colon E\to X$ an object of the slice $\slice{\E}{X}$ with fiber $F$, and $\Aut_{/X}(E)\in\Grp(\slice{\E}{X})$ the $\infty$-group of autoequivalences of $p$. 
    \begin{enumerate}
        \item The fiber of $\B \Aut_{/X}(E)\to X$ is $ \B \Aut(F)$ in $\E$;
        \item there is an equivalence $\B \Aut_{/X}(F\times X)\simeq \big(\B \Aut(F)\big)\times X$ in $\slice{\E}{X}$.
    \end{enumerate}
\end{corollary}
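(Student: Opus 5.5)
The plan is to run the argument of \cref{ex: BAut(F) as the image in the universe} internally to the slice $\infty$-topos $\slice{\E}{X}$, using the universe $\U\times X\to X$ supplied by \cref{lemma: universe in slice categories}.

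\textbf{Classifying object in the slice.} The object $p\colon E\to X$ of $\slice{\E}{X}$ is classified, relative to this universe, by a section $\ast_{/X}=X\to \U\times X$, namely $(\corner{E},1_X)$, where $\corner{E}\colon X\to\U$ classifies $p$. Applying \cref{Univalence Axiom in a topos} in the slice, together with \cref{example: classifying object of the loop space is the connected component of X}, identifies $\B\Aut_{/X}(E)$ with the image of this section in $\slice{\E}{X}$. That is, there is an effective epi/mono factorization over $X$:
\[X\twoheadrightarrow \B\Aut_{/X}(E)\hookrightarrow \U\times X.\]
Concretely, $\B\Aut_{/X}(E)$ is the image in $\E$ of the map $(\corner{E},1)\colon X\to\U\times X$, regarded over $X$ through the second projection.

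\textbf{Statement (1).} Pull this factorization back along $x\colon\ast\to X$. Base change preserves effective epimorphisms and monomorphisms, by \cref{prop: pullback of truncated map is truncated} and \cref{prop: connected maps are preserved by pullback and pushouts}. So the fiber of $\B\Aut_{/X}(E)\to X$ at $x$ is the image of the pulled-back map $\ast\to\U$. The fiber of $X\to X$ is $\ast$, and the fiber of $\U\times X\to X$ is $\U$, so this map is $\corner{E}\circ x=\corner{F}$. By \cref{ex: BAut(F) as the image in the universe}, its image is $\B\Aut(F)$.

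\textbf{Statement (2).} Here $E=F\times X$, so the classifying map factors as $\corner{F\times X}=\corner{F}\circ !\colon X\to\ast\to\U$. The map $X\to\U\times X$ is then $\corner{F}\times 1_X$, i.e. the product with $X$ of the map $\ast\to\U$. Factor that map as $\ast\twoheadrightarrow\B\Aut(F)\hookrightarrow\U$ and take the product with $X$. Effective epimorphisms are stable under products with $X$ (they are pullbacks along the projection), and so are monomorphisms. This gives an effective epi/mono factorization
\[X\twoheadrightarrow \B\Aut(F)\times X\hookrightarrow\U\times X.\]
Uniqueness of the factorization from \cref{prop: connected truncated factorization system} then yields $\B\Aut_{/X}(F\times X)\simeq\B\Aut(F)\times X$ over $X$.

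\textbf{Main obstacle.} The delicate point is the first step: justifying that $\B\Aut_{/X}(E)$, which is defined as the realization of a group object in $\slice{\E}{X}$, really is this image. For this I would apply \cref{Univalence Axiom in a topos} in the slice, which gives $\Aut_{/X}(E)\simeq\Omega^{/X}(\U\times X)$ at the section. I would then invoke \cref{example: classifying object of the loop space is the connected component of X}, which applies because $\slice{\E}{X}$ is an $\infty$-topos. Finally I would note that images in $\slice{\E}{X}$ are computed in $\E$.
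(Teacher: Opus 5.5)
Your proposal is correct and follows the paper's proof essentially verbatim. The paper also classifies $p$ in $\slice{\E}{X}$ by $(\corner{E},1_X)\colon X\to\U\times X$ via \cref{lemma: universe in slice categories}, and takes its epi/mono factorization through $\B\Aut_{/X}(E)$. For (1) it pulls this back along $x$ to get $\ast\twoheadrightarrow\B\Aut(F)\hookrightarrow\U$, and for (2) it pulls that factorization back along $X\to\ast$. Your extra justification of the first step (univalence in the slice together with \cref{example: classifying object of the loop space is the connected component of X}) just makes explicit what the paper takes from \cref{ex: BAut(F) as the image in the universe} applied in the slice $\infty$-topos.
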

\begin{proof}
    Let $\U$ be a universe classifying $p$ with classifying map $\corner{E}\colon X\to\U$. \cref{lemma: universe in slice categories} tells us that $\corner{p}\coloneq(\corner{E}, 1_X)\colon X\to \U\times X$ classifies $p$ in the slice $\slice{\E}{X}$. We use that both effective epimorphisms and monomorphisms are stable by pullback. For $(1)$ pull back the epi/mono factorization $\corner{p}\colon (X\twoheadrightarrow \B \Aut_{/X}(E)\hookrightarrow\U\times X)$ in $\slice{\E}{X}$ along $x:\ast\to X$ to obtain an epi/mono factorization $\corner{F}\colon (\ast \twoheadrightarrow \B \Aut(F)\hookrightarrow\U)$ in $\E$. For $(2)$, pull back the factorization of $\corner{F}$ in $\E$ along $X\to \ast$ to obtain the desired factorization of $\corner{F\times X}$ in $\slice{\E}{X}$.
    \end{proof}
    \end{example}

    The hypotheses we impose in the following theorem ensure that $\infty$-group objects have classifying objects, and that effective epi/mono factorizations exist (\cref{rmk: construction of the epi mono factorization}).
    \begin{prop}\label{prop: product preserving functors preserves group objects}
        Let $\C$, $\C'$ be presentable $\infty$-categories with effective groupoids and universal $\Delta^\op$-shaped colimits. Let $L:\C\to\C'$ be a functor and $G_\sbullet$ an $\infty$-group in $\C$ with classifying object $\B G$.
        \begin{enumerate}
            \item If $L$ preserves finite products, $L\circ G_\sbullet$ is an $\infty$-group object in $\C'$, denoted $LG$.
            \item If $L$ preserves finite products and realizations, the classifying object of the $\infty$-group $LG$ is  \[\B (LG) \simeq L(\B G).\]
            \item If $L$ is left exact, the classifying object of the $\infty$-group $LG$ is the image of $L(\ast\to \B G)$: \[\ast\twoheadrightarrow \B (LG)\hookrightarrow L(\B G).\] 
 \end{enumerate}
    \end{prop}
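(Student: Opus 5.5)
Each of the three items reduces to a short check on the simplicial object $L\circ G_\sbullet$; I take them in order.

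For (1), I use the characterization of \cref{def: n-groups}. An $\infty$-group is a groupoid object with $G_0\simeq\ast$. Since $G_0\simeq\ast$, the Segal maps $G_n\to G_1\times_{G_0}\cdots\times_{G_0}G_1$ are maps into iterated finite products $G_1\times\cdots\times G_1$. Likewise, the groupoid square of \cref{def: groupoid object} with corner $G_0\simeq\ast$ says exactly that $(d_2,d_1)\colon G_2\to G_1\times G_1$ is an equivalence. Hence all conditions defining an $\infty$-group object are expressed purely in terms of finite products and the terminal object. A functor preserving finite products sends the terminal object to a terminal object. It also preserves these Segal maps up to equivalence, so $L\circ G_\sbullet$ is again an $\infty$-group object. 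The one point to check carefully is that pullbacks over a terminal object are products; this holds in any $\infty$-category with finite products.

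For (2), I compute the realization directly: $\B(LG)=|L\circ G_\sbullet|\simeq L(|G_\sbullet|)=L(\B G)$, since $L$ preserves $\Delta^\op$-shaped colimits. Effectiveness of $LG$ is automatic in $\C'$, which has effective groupoids. So $\B(LG)$ is indeed its classifying object, with basepoint $L(\ast\to\B G)$.

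For (3), $L$ is left exact, so it preserves finite products and (1) applies. The key step is to compare $LG$ with the \v{C}ech nerve of $L(\ast\to\B G)$. By effectiveness in $\C$, $G_\sbullet\simeq\check{C}(\ast\to\B G)|_{\Delta^\op}$. The \v{C}ech nerve is built from iterated pullbacks, and $L$ preserves them, so
\[L\circ G_\sbullet\simeq \check{C}\big(L(\ast)\to L(\B G)\big)|_{\Delta^\op}=\check{C}\big(\ast\to L(\B G)\big)|_{\Delta^\op}.\]
By \cref{rmk: construction of the epi mono factorization}, the realization of this \v{C}ech nerve is the middle object of the effective epi/mono factorization of $\ast\to L(\B G)$. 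That realization is by definition $\B(LG)$, giving the factorization $\ast\twoheadrightarrow\B(LG)\hookrightarrow L(\B G)$, as in \cref{example: classifying object of the loop space is the connected component of X}.

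I expect the main subtlety to be this last identification. One must check that the comparison map $|\check{C}|\to L(\B G)$ is a monomorphism, which is where universality of $\Delta^\op$-colimits in $\C'$ enters via \cref{rmk: construction of the epi mono factorization}. One must also check that the identification is compatible with the group structure rather than only with the underlying objects. The group structure is recorded by the simplicial object itself, and the equivalence above is one of simplicial objects, so this compatibility should follow.
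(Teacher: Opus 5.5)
Your proposal is correct and follows the paper's proof. For (3) the paper likewise uses left exactness to identify $L\circ G^+$ with the \v{C}ech nerve of $\ast\to L(\B G)$ and reads off the image factorization; it justifies the \v{C}ech-nerve step with the characterization \cite[6.1.2.11]{LurieHTT}, that is, one Cartesian square $LG\to\ast$, $\ast\to L(\B G)$, where you instead use that $L$ preserves iterated fiber products. For (1) and (2) the paper simply cites \cite[Prop.~3.5]{SeverinBunk_ooBundlesAndSmthStringModels}, and your direct check, using $G_0\simeq\ast$ to turn the Segal and groupoid conditions into finite-product conditions, is exactly what that reference provides.
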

\begin{proof}
    The first two points are precisely \cite[Prop.~3.5]{SeverinBunk_ooBundlesAndSmthStringModels}. Since $L$ preserves finite limits, $L\circ G_\sbullet$ is a groupoid object, and the following square is Cartesian:
\[\begin{tikzcd}
    {L(G)} \arrow[r] \arrow[d] \arrow[dr, phantom, "\lrcorner"{anchor=center, pos=0.125}] & \ast \arrow[d] \\
    \ast \arrow[r] & {L(\B G)}
\end{tikzcd}.\]
It follows that $L\circ G_\sbullet^+$ is the \v{C}ech nerve of $\ast\to L(\B G)$ by \cite[6.1.2.11]{LurieHTT}. By \cref{example: classifying object of the loop space is the connected component of X}, its classifying object is given by the effective epi/mono factorization.
\end{proof}
In an $\infty$-topos $\E$, the $n$-truncation functor $\tau_n:\E\to\E$ preserves finite products by \cref{prop: truncation functor preserve finite products}, which implies that $\tau_nG$ is an $\infty$-group. However $\tau_n$ is not left exact, nor does it preserve realizations. Nonetheless it behaves well with classifying objects:
\begin{corollary}\label{cor: classifying object of the truncation of the group}
    Let $\E$ be an $\infty$-topos, $G$ be an $\infty$-group object in $\E$, and $n\geq -2$. Then $\tau_nG$ is an $(n+1)$-group object in $\E$ with classifying object
    \[\tau_{n+1}(\B G)\simeq \B (\tau_nG).\]
\end{corollary}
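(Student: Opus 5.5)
The plan is to avoid working with $\tau_nG$ as a simplicial object directly, and instead to produce the candidate classifying object $\tau_{n+1}(\B G)$ and identify its loop object. By \cref{prop: truncation functor preserve finite products}, $\tau_n$ preserves finite products. So by \cref{prop: product preserving functors preserves group objects}(1), $\tau_n\circ G_\sbullet$ is an $\infty$-group object whose underlying object $\tau_nG$ is $n$-truncated, hence an $(n+1)$-group by \cref{def: n-groups}. What remains is to identify its delooping.

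Consider the pointed object $(\tau_{n+1}\B G,\eta\circ\ast)$. First, it is connected: $\B G$ is connected by \cref{theorem: group objects are pointed connected objects}, and the unit $\eta\colon\B G\to\tau_{n+1}\B G$ is $(n+1)$-connected, hence in particular an effective epimorphism. So the basepoint $\ast\twoheadrightarrow\B G\twoheadrightarrow\tau_{n+1}\B G$ is an effective epimorphism. Second, it is $(n+1)$-truncated. By \cref{theorem: group objects are pointed connected objects} it therefore defines an $(n+1)$-group $\Omega\tau_{n+1}\B G$. Next apply \cref{cor: loop space commutes with truncation} with $Y=\ast$. The $(n+1)$-truncation $\eta_{\B G}$ loops to an $n$-truncation:
\[\Omega\tau_{n+1}\B G\simeq\tau_n\Omega\B G\simeq\tau_nG.\]
This already gives the equivalence on underlying objects, induced by $\Omega\eta_{\B G}$.

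The main obstacle is upgrading this to an equivalence of group objects, i.e.\ of simplicial objects $\tau_n\circ G_\sbullet\simeq\check C(\ast\to\tau_{n+1}\B G)|_{\Delta^\op}$. I would do this as follows. The map $\eta\colon\B G\to\tau_{n+1}\B G$ is pointed, so it induces a map of \v{C}ech nerves
\[G_\sbullet=\check C(\ast\to\B G)\longrightarrow\check C(\ast\to\tau_{n+1}\B G).\]
The target is levelwise $n$-truncated, since in degree $k$ it is $(\Omega\tau_{n+1}\B G)^k$. Hence this map factors, levelwise and naturally, through $\tau_n\circ G_\sbullet$, using the universal property of the reflector $\tau_n$ applied objectwise (equivalently, by postcomposition of functors $\Delta^\op\to\E$). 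In degree $k$ the factored map is $(\tau_nG)^k\to(\tau_n G)^k$, obtained via $\tau_n(G^k)\simeq(\tau_nG)^k$ from \cref{prop: truncation functor preserve finite products}. This is an equivalence by the previous step, because both group objects satisfy the Segal condition and the map is an equivalence in degree $1$. So we get an equivalence of simplicial objects.

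By \cref{theorem: group objects are pointed connected objects} (effectiveness), the realization of $\tau_n\circ G_\sbullet$ is therefore $\tau_{n+1}\B G$, which gives $\B(\tau_nG)\simeq\tau_{n+1}(\B G)$. The one degenerate case is $n=-2$. There $\tau_{-2}G=\ast$ and $\tau_{-1}\B G=\ast$, since $\B G$ is inhabited, so the statement holds trivially.
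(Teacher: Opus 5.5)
Your proof is correct, but it runs in the opposite direction from the paper's.

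The paper's route:
\begin{enumerate}
  \item It realizes the unit $G_\sbullet\to\tau_n\circ G_\sbullet$ to get $\B G\to\B(\tau_nG)$.
  \item It uses \cref{cor: realization of n truncated groupoid is n+1 truncated} to see that the target is $(n+1)$-truncated, so this map factors through $\alpha\colon\tau_{n+1}(\B G)\to\B(\tau_nG)$.
  \item It proves $\alpha$ is an equivalence at the level of objects. It is an effective epimorphism because the target is connected. It is a monomorphism because $\Omega\alpha$ is an equivalence by \cref{cor: loop space commutes with truncation}, and Lemma~\ref{lemma: truncation and connectivity of loop space 2} and Lemma~\ref{lemma: truncation and connectivity of loop space 3} detect connectivity and truncatedness of maps between connected objects on their loops.
\end{enumerate}

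Your route builds the comparison $\tau_n\circ G_\sbullet\to\check{C}(\ast\to\tau_{n+1}\B G)\vert_{\Delta^\op}$ directly on groupoid objects. You use that postcomposition with $\tau_n$ is the reflection onto $\Fun(\Delta^\op,\E^{\sleq n})$. You check the map is an equivalence in degree one, again via \cref{cor: loop space commutes with truncation}, and hence in all degrees by the Segal condition. Only then do you realize, using effectiveness.

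The key input, that looping the $(n+1)$-truncation of $\B G$ gives the $n$-truncation of $G$, is shared by both proofs. Your version has two advantages:
\begin{enumerate}
  \item It dispenses with the truncation bound on realizations and with the loop-detection lemma.
  \item It makes explicit a point the paper leaves implicit. The relevant map on loop objects is the canonical comparison between two $n$-truncations of $G$, hence an equivalence. In your proof this is the degree-one component; it is inverse to the paper's $\Omega\alpha$.
\end{enumerate}
Realizing your comparison and precomposing with $\B\eta$ recovers $\eta_{\B G}$, so your equivalence is exactly $\alpha^{-1}$. The paper's route, for its part, never leaves the level of classifying objects and connectivity estimates, in keeping with the rest of that section.
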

\begin{proof}
    By \cref{prop: product preserving functors preserves group objects} $\tau_nG$ is an $\infty$-group. Moreover the unit $\eta\colon Id_{\E}\to\tau_n$ induces a natural transformation $G_\sbullet\to\tau_n\circ G_\sbullet$, and taking realizations it induces a map $\B \eta\colon \B G\to \B (\tau_nG)$. By \cref{cor: realization of n truncated groupoid is n+1 truncated} the target is $(n+1)$-truncated, hence $\B \eta$ factors through \[\alpha\colon\tau_{n+1}(\B G)\to \B (\tau_{n}G).\] We show that it is an equivalence by showing that it is both a monomorphism and an effective epimorphism. The latter is clear since the target is connected. To show that it is $(-1)$-truncated, we show that it is a connected map between connected objects whose looping is an equivalence, and conclude by Lemma~\ref{lemma: truncation and connectivity of loop space 3}. Its looping $\Omega\alpha$ is an equivalence:  \[\Omega(\tau_{n+1}(\B G))\simeq\tau_n(\Omega \B G)\simeq \tau_n(G)\xrightarrow[\simeq]{\Omega\alpha}\Omega \B (\tau_nG),\]
    where we used \cref{cor: loop space commutes with truncation}. In particular $\Omega \alpha$ is $(-1)$-connected. Lemma~\ref{lemma: truncation and connectivity of loop space 2} implies that $\alpha$ is a connected map, as desired.
\end{proof}

The category of \textit{abelian groups} can be identified with the category $\Grp(\Grp(\Set))$ of group objects in groups. We call them \textit{$2$-symmetric $1$-groups}, where the terminology is borrowed from \cite[Def.~2.3]{BuchholtzRijke_LESofHomotopynGroups}. This is sometimes called a discrete $\mathbb{E}_2$-group. We define $k$-symmetric $n$-groups following \cite[§1.6.3]{Lavenir_HiltonMilnorsTheoremInftyTopoi}. Lurie studies those, under the name of grouplike $\mathbb{E}_k$-objects, using the theory of $\infty$-operads \cite{LurieHigherAlgebra}.
\begin{definition}\label{def: k-symmetric n-groups}
    Let $\C$ be an $\infty$-category with finite limits, and let $1\leq k<\infty$, $0\leq n\leq \infty$. We define $k$-symmetric $n$-group objects inductively.
    \begin{itemize}
        \item A $1$-symmetric $n$-group object is an $n$-group object in $\C$. We denote the corresponding $\infty$-category by \[\Grp_{(n,1)}(\C)=\Grp_n(\C).\]
        \item A $k$-symmetric $n$-group object is an $n$-group object in $\Grp_{(n,k-1)}(\C)$. We denote the corresponding $\infty$-category by \[\Grp_{(n,k)}(\C)=\Grp(\Grp_{(n,k-1)}(\C)).\]
    \end{itemize}
\end{definition}
Unfolding the definition, a $k$-symmetric $n$-group is a $k$-fold simplicial object $G_{\sbullet,\cdots,\sbullet}\colon (\Delta^{op})^{\times k}\to\C$ whose $(n_1,\ldots ,n_k)$-th coordinate is a Cartesian product $G^{\times (n_1\cdots n_k)}$ for $G$ an $(n-1)$-truncated object.

In an $\infty$-topos, these correspond to pointed $(k-1)$-connected and $(n+k-1)$-truncated objects. 
\begin{theorem}\label{theorem: symmetric group objects are pointed highly connected objects}
    Let $\E$ be an $\infty$-topos, $0\leq n\leq \infty$ and $1\leq k<\infty$. There is an equivalence of $\infty$-categories
\[\begin{tikzcd}
	{\E_\ast^{\sg{ k-1},\sleq n+k-1}} & {\Grp_{(n,k)}(\E)}
	\arrow[""{name=0, anchor=center, inner sep=0}, "\Omega^k", shift left=2, from=1-1, to=1-2]
	\arrow[""{name=1, anchor=center, inner sep=0}, "\B ^k", shift left=2, from=1-2, to=1-1]
	\arrow["\simeq"{description}, draw=none, from=0, to=1]
\end{tikzcd},\]
where $\Omega^k$ is the iterated \v{C}ech nerve construction, and $\B^kG$ is the colimit of $G_{\sbullet,\cdots,\sbullet}\colon (\Delta^\op)^{\times k}\to \E$.
\end{theorem}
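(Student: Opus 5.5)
We argue by induction on $k$. The base case $k=1$ is exactly \cref{theorem: group objects are pointed connected objects}: pointed connected $n$-truncated objects correspond to $n$-groups via $\Omega$ and $\B$. For the inductive step, assume the equivalence $\E_\ast^{\sg{k-2},\sleq n+k-2}\simeq\Grp_{(n,k-1)}(\E)$ via $\Omega^{k-1}\dashv\B^{k-1}$ and pass to group objects on both sides. Since $\Omega^{k-1}$ preserves limits (being a composite of pullbacks), it induces an equivalence
\[\Grp_n\big(\E_\ast^{\sg{k-2},\sleq n+k-2}\big)\simeq\Grp_n\big(\Grp_{(n,k-1)}(\E)\big)=\Grp_{(n,k)}(\E),\]
where the $(n-1)$-truncation condition on the underlying object is preserved by \cref{example: U preserves and reflects n-truncated maps} and Lemma~\ref{lemma: truncation and connectivity of loop space 1}.

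It then remains to identify $\Grp_n$ of the $\infty$-category $\D_{k-1}:=\E_\ast^{\sg{k-2}}$ (with appropriate truncation) with $\E_\ast^{\sg{k-1},\sleq n+k-1}$, via $\Omega$ and $\B$ computed in $\E_\ast$. We do this in three steps.

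\textbf{Step one.} A group object in $\D_{k-1}$ is in particular a group object in $\E_\ast$, hence in $\E$, since limits in the full subcategory of $(k-2)$-connected objects agree with those in $\E_\ast$. Indeed, finite products of $(k-2)$-connected objects are $(k-2)$-connected, by \cref{prop: truncation functor preserve finite products}.

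\textbf{Step two.} Given such a group object $G$, apply \cref{theorem: group objects are pointed connected objects} in $\E$ to obtain $\B G$. We must check that $\B G$ is $(k-1)$-connected and $(n+k-1)$-truncated. Connectivity follows from Lemma~\ref{lemma: truncation and connectivity of loop space 2}, because $\Omega\B G\simeq G$ is $(k-2)$-connected and $\B G$ is connected. Truncatedness follows from \cref{cor: realization of n truncated groupoid is n+1 truncated}, or from Lemma~\ref{lemma: truncation and connectivity of loop space 3}.

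\textbf{Step three.} Conversely, if $X$ is pointed, $(k-1)$-connected and $(n+k-1)$-truncated, then its \v{C}ech nerve $\Omega X$ has all terms $(\Omega X)^{\times m}$ in $\D_{k-1}$ by Lemma~\ref{lemma: truncation and connectivity of loop space 1}. This \v{C}ech nerve is a groupoid object there, and $X\simeq\B\Omega X$ by \cref{example: classifying object of the loop space is the connected component of X}. Composing these equivalences gives $\Omega^k$ and $\B^k$. Finally, the realization of a $k$-fold simplicial object can be computed one variable at a time. Since $\Delta^\op$ is weakly contractible, the realization computed in $\E_\ast$ agrees with the one in $\E$ (\cref{cor: pointed topos has nice properties}), so $\B^k$ is indeed the colimit over $(\Delta^\op)^{\times k}$.

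The main obstacle is the identification in Step one. The category $\D_{k-1}$ is not an $\infty$-topos, so \cref{theorem: group objects are pointed connected objects} cannot be applied in it directly. Instead we argue that effective group objects in $\D_{k-1}$ are the same as effective group objects in $\E_\ast$ whose underlying object lies in $\D_{k-1}$. This uses that realizations of such group objects stay pointed, by \cref{prop: pointed category has effective groupoids}, and that they land back in the connected range, by Step two. We also need the full subcategory to be closed under the relevant pullbacks, which holds for \v{C}ech nerves of basepoints because, for $k\geq2$, the loop objects involved are again $(k-2)$-connected.
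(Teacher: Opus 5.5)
Your argument is correct, but it is organized differently from the paper's. The paper takes the case $n=\infty$, for all $k$ at once, from the literature: Samuel's thesis, or Lurie's identification of grouplike $\mathbb{E}_k$-algebras with pointed $(k-1)$-connected objects. It then only checks, via \cref{cor: realization of n truncated groupoid is n+1 truncated} and \cref{lemma: truncation and connectivity of loop space}, that this equivalence matches the truncation conditions on the two sides.

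You instead induct on $k$ starting from \cref{theorem: group objects are pointed connected objects}. At each stage you restrict the single equivalence $\Grp(\E)\simeq\E_\ast^{\sg{0}}$ to group objects whose underlying object is $(k-2)$-connected and $(n+k-2)$-truncated. This is legitimate because the group-object axioms only involve products over $G_0\simeq\ast$, and finite products in $\E_\ast^{\sg{k-2},\sleq n+k-2}$ are computed in $\E_\ast$.

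What each route buys:
\begin{itemize}
\item Your route is self-contained relative to the case $k=1$. It matches the paper's iterated definition $\Grp_{(n,k)}(\E)=\Grp(\Grp_{(n,k-1)}(\E))$ directly, with no Dunn additivity needed to compare with $\mathbb{E}_k$-groups, and it yields the formula for $\B^k$ by Fubini.
\item The paper's route is a citation plus a truncation check, and it ties the notion to $\mathbb{E}_k$-structures.
\end{itemize}

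Two things to tidy. First, only finite products of $\mathcal{D}_{k-1}$, not all limits, agree with those of $\E_\ast$; for instance $\ast\times_X\ast\simeq\Omega X$ loses a degree of connectivity. This is harmless, since products are all you use.

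Second, drop the closing paragraph about effective group objects in $\mathcal{D}_{k-1}$. Once the truncation is imposed, realizations in $\mathcal{D}_{k-1}$ are $(n+k-2)$-truncations of realizations in $\E_\ast$, while $\B G$ is in general only $(n+k-1)$-truncated. For example, $\B A$ for an abelian group $A$ is not effective in $\E_\ast^{\sg{0},\sleq 1}$. Your Steps one to three only use effectiveness in $\E$, so the proof stands without that paragraph.
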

\begin{proof}
    For $n=\infty$ this is {\cite[Thm.~1.6.27]{Samuel_Thesis} or \cite[Thm. 5.2.6.15]{LurieHigherAlgebra}}. One sees directly that this equivalence restricts to the case $n<\infty$ using \cref{cor: realization of n truncated groupoid is n+1 truncated} and Lemma~\ref{lemma: truncation and connectivity of loop space}. This has also been established in \cite[Thm.~2.26]{Beardsley&Peroux_KozulDualityInHigherTopoi}.
\end{proof}
\begin{remark}
    Different cases of this equivalence have been established in the literature, as explained in \cite[Ex.~2.28-2.30]{Beardsley&Peroux_KozulDualityInHigherTopoi}. For example $2$-symmetric $2$-groups are known as braided categorical groups.
\end{remark}
 We write $G\coloneq G_{1,\ldots,1}$ for both the underlying object and the $k$-symmetric $n$-group $G_{\sbullet,\cdots,\sbullet}$. The object $\B^k G$ is called the \textit{$k$-fold classifying object} of $G$. For any $1\leq l\leq k$, a $k$-symmetric $n$-group possesses an \textit{underlying $l$-symmetric $n$-group}, forming a functor 
 \begin{equation}\label{def: underlying k symmetric n group}
    U_{k,l}\colon\Grp_{(n,k)}(\C)\to \Grp_{(n,l)}(\C).
    \end{equation}
    At the level of classifying object, it is given by the $(k-l)$-fold looping functor $\Omega^{k-l}\colon \B^kG\mapsto \B^l G$. In the case $l=1$ we recover the \textit{underlying $n$-group}. Moreover, any $k$-symmetric $n$-group can be seen as a higher-order group object with less symmetry. By considering the $(k-1)$-fold colimit of the $k$-fold simplicial object, we obtain a full subcategory:
 \begin{equation}\label{remark: (n k) groups} 
            \Grp_{(n,k)}(\C)\subseteq \Grp_{n+k-1}(\C),
    \end{equation}
defined by $G\mapsto (\B^{k-1} G\colon\Delta^\op\to\C)$. Indeed if $\B^k G$ is a $k$-fold classifying object, its loop object $\B^{k-1}G$ admits a group structure. In an $\infty$-topos $\E$, this corresponds to the inclusion $\E_\ast^{\sg {k-1},\sleq n+k-1}\subseteq \E_\ast^{\sg{0},\sleq n+k-1}$. 
\begin{definition}
    Let $l<k$ be integers. An $l$-symmetric structure on an $n$-group object $G$ in $\C$ \textit{lifts} to a $k$-symmetric structure if there exists a $k$-symmetric $n$-group whose underlying $l$-symmetric $n$-group is $G$.
\end{definition}
In an $\infty$-topos this amounts to the existence of a pointed $(k-1)$-connected $(n+k-1)$-truncated object $\B^kG$ with a pointed equivalence $\Omega^{k-l}\B^kG\simeq\B^l G$.

\medskip
An abelian group is in particular an infinite loop object. More generally the following proposition shows that the hierarchy of symmetry stabilizes once the degree of symmetry exceeds the truncation level of the group. Specifically, when $k > n$, the $\infty$-category $\Grp_{(n,k)}(\E)$ stabilizes, and $k$-symmetric $n$-groups become equivalent to $(n-1)$-truncated infinite loop objects (i.e., connective spectra or $\mathbb{E}_\infty$-groups).
\begin{prop}\label{prop: k-symmetric groups are k+1-symmetric for large k}
    Let $0 \leq n < k$ be integers, and let $G$ be an $n$-group object in $\E$. Any $k$-symmetric structure on $G$ lifts to a unique $(k+1)$-symmetric structure on $G$.
\end{prop}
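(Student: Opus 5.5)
By \cref{theorem: symmetric group objects are pointed highly connected objects}, a $k$-symmetric structure on $G$ is a pointed $(k-1)$-connected, $(n+k-1)$-truncated object $Y\coloneq\B^kG$. A $(k+1)$-symmetric structure lifting it is a pointed $k$-connected, $(n+k)$-truncated object $Z$ together with a pointed equivalence $\Omega Z\simeq Y$. The statement therefore reduces to showing that, for $k>n$, the functor
\[\Omega\colon \E_\ast^{\sg{k},\sleq n+k}\longrightarrow \E_\ast^{\sg{k-1},\sleq n+k-1}\]
is an equivalence (essentially surjective for existence, fully faithful for uniqueness). The plan is to build an explicit inverse by a truncated suspension, $Y\mapsto \tau_{n+k}\Sigma Y$, as the introduction anticipates.

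\textbf{Existence.} Given $Y$, let $Z\coloneq\tau_{n+k}\Sigma Y$. First I check connectivity. $\ast\to Y$ is $(k-2)$-connected, so its pushout $\ast\to\Sigma Y$ is too (\cref{prop: connected maps are preserved by pullback and pushouts}). Then $\Sigma Y$ is connected, and applying Lemma~\ref{lemma: truncation and connectivity of loop space 2} together with the characterization of connectivity via sections (\cref{prop: characterization connected maps}) shows that $\Sigma Y$ is $k$-connected. Truncation preserves this, so $Z$ is $k$-connected and $(n+k)$-truncated. Next comes the comparison map: the unit $Y\to\Omega\Sigma Y$ composed with $\Omega\eta$ gives $Y\to\Omega\Sigma Y\to\Omega Z$. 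By \cref{cor: loop space commutes with truncation}, $\Omega Z\simeq\tau_{n+k-1}\Omega\Sigma Y$, so it suffices to show that $Y\to\Omega\Sigma Y$ is $(n+k-1)$-connected; since $Y$ is already $(n+k-1)$-truncated, it is then the $(n+k-1)$-truncation. This is the Freudenthal step. I would use the Blakers--Massey theorem \cite[Cor.~4.3.1]{ABFJ_GeneralizedBlakersMassyTheorem} on the pushout square defining $\Sigma Y$. Both legs $Y\to\ast$ are $(k-1)$-connected, so the gap map $Y\to\Omega\Sigma Y$ is $(k-1)+(k-1)=(2k-2)$-connected, with the paper's indexing as used in \cref{prop: construction of k-invariants}. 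Since $k\geq n+1$, we get $2k-2\geq n+k-1$. This is exactly where $k>n$ is needed.

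\textbf{Uniqueness.} I show that $\Omega$ is fully faithful on $\E_\ast^{\sg{k},\sleq n+k}$, which gives a contractible space of lifts. For $Z,W$ in this subcategory, I compute
\[\E_\ast(Z,W)\simeq\E_\ast(\tau_{n+k}\Sigma\Omega Z,W)\simeq\E_\ast(\Sigma\Omega Z,W)\simeq\E_\ast(\Omega Z,\Omega W).\]
The first equivalence applies the existence step to $Y=\Omega Z$: there $\tau_{n+k}\Sigma\Omega Z\to Z$ is a map between $k$-connected $(n+k)$-truncated objects whose loop is an equivalence, so by Lemma~\ref{lemma: truncation and connectivity of loop space 3} and \ref{lemma: truncation and connectivity of loop space 2} it is an equivalence. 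The second equivalence holds because $W$ is $(n+k)$-truncated, and pointed truncation agrees with unpointed truncation by \cref{prop: n-truncation commute with coslice forgetful functors}. The third is the $\Sigma\dashv\Omega$ adjunction. I still need to check that this composite is the map induced by $\Omega$, which follows from naturality of the counit. Alternatively, uniqueness follows directly: for any two lifts $Z,Z'$ with $\Omega Z\simeq\Omega Z'\simeq Y$, the adjoint maps $\tau_{n+k}\Sigma Y\to Z$ and $\tau_{n+k}\Sigma Y\to Z'$ are equivalences by the argument just given.

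\textbf{Main obstacle.} The hardest step is getting the Blakers--Massey connectivity estimate right with the paper's connectivity indexing, which differs from Lurie's. I would compare against the computation in \cref{prop: construction of k-invariants}, where $(n-k)$- and $k$-connected maps yield an $n$-connected gap map, and confirm the off-by-one so that the bound $2k-2\geq n+k-1$ holds precisely when $k\geq n+1$.
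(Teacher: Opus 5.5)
Your proposal is correct, and for existence it is essentially the paper's route. The paper just cites \cref{prop: construction of k-invariants} for the map $\B^kG\to\ast$, whose hypotheses hold precisely because $k>n$. That proposition is itself proved by the argument you run by hand: Blakers--Massey on the square defining the cofiber, then $(n+k)$-truncation. Using \cref{cor: loop space commutes with truncation} to identify $\Omega\tau_{n+k}\Sigma Y\simeq\tau_{n+k-1}\Omega\Sigma Y$ replaces the paper's ``$n$-connected and $n$-truncated'' comparison of fibers, but it is the same computation. Your bound $2k-2\geq n+k-1$ is that proposition's connectivity hypothesis on $\B^kG$ in another form.

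Where you genuinely differ is uniqueness. The paper disposes of it in one sentence: the delooping is the classifying object of the $(n+k)$-group $\B^kG$, via \cref{theorem: group objects are pointed connected objects}. That sentence tacitly treats the $(n+k)$-group structure on the pointed object $\B^kG$ as already given. You instead show that
\[\Omega\colon\E_\ast^{\sg{k},\sleq n+k}\to\E_\ast^{\sg{k-1},\sleq n+k-1}\]
is fully faithful, through $\E_\ast(Z,W)\simeq\E_\ast(\tau_{n+k}\Sigma\Omega Z,W)\simeq\E_\ast(\Omega Z,\Omega W)$, with the triangle identity identifying the composite with $\Omega$. This is the standard stable-range argument. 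Together with existence, it makes $\Omega$ an equivalence, so the space of lifts is contractible. It costs a few more lines but gives the stronger, fully justified form of uniqueness.

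One small correction in your connectivity step. The map $\ast\to\Sigma Y$ is a cobase change of $Y\to\ast$, not of $\ast\to Y$. Since $Y\to\ast$ is $(k-1)$-connected, so is $\ast\to\Sigma Y$. Then \cref{prop: characterization connected maps} gives directly that $\Sigma Y$ is $k$-connected. You do not need Lemma~\ref{lemma: truncation and connectivity of loop space 2} here; in your ordering it would anyway require the Freudenthal estimate first, to know that $\Omega\Sigma Y$ is $(k-1)$-connected.
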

\begin{proof}
    By \cref{prop: construction of k-invariants} $\tau_{n+k}(\Sigma \B ^kG)$ is a pointed $k$-connected $(n+k)$-truncated object such that $\Omega (\tau_{n+k}(\Sigma \B ^kG))\simeq \B ^kG$. This defines a $(k+1)$-fold classifying object $\B ^{k+1}G$. Such an object is unique as it corresponds to the classifying object of the $(n+k)$-group $\B^k G$.
\end{proof}
The following definition of sub-$n$-groups is generalized from Roberts \cite[Def.~4.82]{Roberts_2CoveringSpaces}, who considered the case $n=2$.
\begin{definition}\label{def: sub n groups}
    Let $n \geq 1$ be an integer and let $G \in \Grp_n(\E)$ be an $n$-group object.
    \begin{itemize}
        \item A \textit{sub-$n$-group of $G$} is an object $K \in \Grp_n(\E)$ together with a morphism $K \to G$ such that the induced map on classifying objects $\B K \to \B G$ is $(n-1)$-truncated. We denote the $\infty$-category of sub-$n$-groups of $G$ by $\Sub_n(G)$, which is a full subcategory of the slice $\slice{\Grp_n(\E)}{G}$.
        
        \item Let $k \geq 1$. A sub-$n$-group $K \to G$ is called a \textit{$k$-symmetric sub-$(n-k+1)$-group} if $K \simeq \B^{k-1}H$ for some $H \in \Grp_{(n-k+1,k)}(\E)\subseteq \Grp_n(\E)$. The full subcategory of such objects is denoted by $\Sub_{(n-k+1,k)}(G) \subseteq \Sub_n(G)$.
    \end{itemize}
\end{definition}
\begin{remark}\label{remark: on sub n groups}
        \begin{enumerate}
            \item We will primarily be interested in $n$-symmetric sub-$1$-groups of an $n$-group $G$. In particular, when $G = \Pi_n(X,x)$ is the fundamental $n$-group of a pointed object $(X,x)$, these correspond precisely to the sub-$1$-groups $A \leq \pi_n(X,x)$ of the $n$-th homotopy group (see \cref{theorem: n-symmetric subgroups of the fundamental n-group are 1-subgroups of n-th homotopy group}). For example, when $G=\B^{n-1} H$ for $H$ a $1$-group, then $n$-symmetric sub-$1$-groups are of the form $\B^{n-1}A$ for $A\leq H$ a sub-$1$-group.
            \item Let $f \colon A \to G$ be a morphism in $\Grp_{(n,k)}(\E)$. The underlying morphism of $n$-groups $f \colon A \to G$ defines a sub-$n$-group if and only if $f$, when viewed in $\Grp_{n+k-1}(\E)$ via \eqref{remark: (n k) groups}, defines a sub-$(n+k-1)$-group. Indeed, $\B f$ is $(n-1)$-truncated if and only if $\B^k f$ is $(n+k-2)$-truncated, by Lemma~\ref{lemma: truncation and connectivity of loop space 3}. 
        \end{enumerate}
\end{remark}
\begin{definition}
    Let $f\colon A\to G$ be a map of $\infty$-group objects in $\C$. The \textit{kernel} $\kernel(f)$ of $f$ is the fiber of $f$ in $\Grp(\C)$.
\end{definition}
\begin{prop}\label{prop: kernel is a sub n group}
    Let $1\leq n<\infty$ and $f\colon A\to G$ be a morphism between $n$-groups. Then its kernel $\kernel(f)$ is an $n$-group and the map $\kernel(f)\to A$ is a sub-$n$-group.
\end{prop}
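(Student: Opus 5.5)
We pass to classifying objects via \cref{theorem: group objects are pointed connected objects}, which identifies $\Grp_n(\E)$ with $\E_\ast^{\sg{0},\sleq n}$ through the mutually inverse functors $\B$ and $\Omega$. Since $\Grp_n(\E)$ has all limits of $\E$, the kernel $\kernel(f)$ exists as a fiber in $\Grp(\E)$. Under $\Omega$, the pointed map $\B f\colon \B A\to\B G$ determines the kernel via its fiber $F$, taken in $\E_\ast$. Looping the pointed Cartesian square defining $F$ gives a Cartesian square of $\infty$-groups, because $\Omega$ preserves limits. Hence $\Omega F$ is the fiber of $f$ in $\Grp(\E)$, so $\kernel(f)\simeq\Omega F$. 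However, $F$ need not be connected, so the correct classifying object is its basepoint component, as in \cref{example: classifying object of the loop space is the connected component of X}:
\[\ast\twoheadrightarrow \B\kernel(f)\hookrightarrow F.\]

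\textbf{Truncation.} Both $\B A$ and $\B G$ are $n$-truncated, so $\B f$ is $n$-truncated by Proposition~\ref{prop: equivalent characterization of truncated maps 1}. Its base change $F\to\ast$ is then $n$-truncated by \cref{prop: pullback of truncated map is truncated}, so $F$ is an $n$-truncated object. The component $\B\kernel(f)$ is a subobject of $F$, and a monomorphism is $(-1)$-truncated, so $\B\kernel(f)$ is also $n$-truncated. It is connected and pointed, so \cref{theorem: group objects are pointed connected objects} shows that $\kernel(f)$ is an $n$-group. Alternatively, one can argue directly: $\kernel(f)$ is the fiber of the map $A\to G$ between $(n-1)$-truncated objects, hence is itself $(n-1)$-truncated.

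\textbf{Sub-$n$-group condition.} It remains to show that $\B\kernel(f)\to\B A$ is $(n-1)$-truncated. This map factors as
\[\B\kernel(f)\hookrightarrow F\to \B A.\]
The first map is a monomorphism. The second is the base change of $\ast\to\B G$ along $\B f$. The point $\ast\to\B G$ is a base change of the diagonal of $\B G$, which is $(n-1)$-truncated because $\B G$ is $n$-truncated (Proposition~\ref{prop: equivalent characterization of truncated maps 1}). So $\ast\to\B G$ is $(n-1)$-truncated, and by \cref{prop: pullback of truncated map is truncated} so is $F\to\B A$. A composite of $(n-1)$-truncated maps is $(n-1)$-truncated; since $n\geq 1$, the monomorphism counts as $(n-1)$-truncated. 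Hence $\B\kernel(f)\to\B A$ is $(n-1)$-truncated, as required.

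\textbf{Main obstacle.} The delicate point is identifying $\B$ of the fiber computed in $\Grp(\E)$ with the connected component of the pointed fiber $F$, rather than with $F$ itself. To settle it, we use that $\B\colon\Grp(\E)\to\E_\ast^{\sg0}$ is an equivalence. Limits in $\E_\ast^{\sg0}$ are obtained by computing the limit in $\E_\ast$ and then applying the coreflector $-\langle 0\rangle$ of \cref{prop: n-connected pointed spaces is coreflective in pointed spaces}. This coreflector preserves limits, being a right adjoint, and the inclusion preserves those limits that land in connected objects.
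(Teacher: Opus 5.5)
Your proof is correct and follows essentially the same route as the paper: you identify $\B\kernel(f)$ with the basepoint component $F\langle 0\rangle$ of the pointed fiber of $\B f$ via the coreflector $-\langle 0\rangle$, you get $n$-truncatedness from $F$ being a base change of the $n$-truncated map $\B f$, and you conclude that $\B\kernel(f)\hookrightarrow F\to\B A$ is $(n-1)$-truncated as a monomorphism followed by a base change of the $(n-1)$-truncated point $\ast\to\B G$. Your extra justifications (looping the fiber square, and the direct check that the underlying object of $\kernel(f)$ is $(n-1)$-truncated) are valid supplements that the paper's shorter argument leaves implicit.
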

\begin{proof}
    The classifying object $\B\kernel(f)$ is given by the fiber of $\B f$ in $\E_\ast^{\sg0}$. By \cref{prop: n-connected pointed spaces is coreflective in pointed spaces}, it is given by the connected component $K\langle0\rangle$ of the fiber $K$ of $\B f$ in $\E_\ast$:
\[\begin{tikzcd}
    K \arrow[r] \arrow[d] \arrow[dr, phantom, "\lrcorner", very near start] & \ast \arrow[d] \\
    {\B A} \arrow[r, "{\B f}"'] & {\B G}
\end{tikzcd}.\]
Since $\B f$ is a map between $n$-truncated objects, it is $n$-truncated. Hence its base change $K$, and thus $\B\kernel(f)$, are $n$-truncated as well. This proves that $\kernel(f)$ is an $n$-group. On the other hand $\ast\to\B G$ is $(n-1)$-truncated since $\B G$ is $n$-truncated. It follows that its base change $K\to \B A$ is $(n-1)$-truncated as well. Since $n\geq 1$ the connected component $\B\kernel(f)\hookrightarrow K$ is $(n-1)$-truncated. It follows that the composition $\B \kernel(f)\hookrightarrow K\to \B A$ is $(n-1)$-truncated as desired.
\end{proof}
A classical $1$-group homomorphism $A\to G$ is injective if and only if every sub-$1$-group $K\leq A$ lying in its kernel is trivial (i.e. is a $0$-group). We now prove an analogous characterization for sub-$n$-groups.
    \begin{prop}\label{prop: sub n group characterization}
        Let $1\leq n<\infty$ and $f\colon A\to G$ be a morphism of $n$-groups. Then it defines a sub-$n$-group if and only if for every sub-$n$-group $K\to A$ such that the composition $K\to A\to G$ is trivial, $K$ is an $(n-1)$-group.
    \end{prop}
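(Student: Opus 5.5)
The plan is to translate everything to classifying objects via \cref{theorem: group objects are pointed connected objects} and argue with the fiber of $\B f$. Write $\B f\colon \B A\to\B G$, a pointed map between pointed connected $n$-truncated objects. Let $K_f$ be its fiber, pointed by the canonical basepoint. By the proof of \cref{prop: kernel is a sub n group}, $\B\kernel(f)\simeq K_f\langle 0\rangle$ is the connected component of $K_f$, and $\B\kernel(f)\to\B A$ is $(n-1)$-truncated.

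For the forward direction, assume $\B f$ is $(n-1)$-truncated, and let $K\to A$ be a sub-$n$-group with trivial composite $K\to G$. Then the pointed map $\B K\to\B A$ comes with a nullhomotopy of $\B K\to\B G$, so it factors through $K_f$. The base change $K_f\to\ast$ of $\B f$ is $(n-1)$-truncated, so $K_f$ is $(n-1)$-truncated. Since $\B K$ is connected, the factorization lands in the component $K_f\langle0\rangle$, which is also $(n-1)$-truncated. The map $\B K\to\B A$ is $(n-1)$-truncated, and $K_f\to \B A$ is $(n-1)$-truncated as a base change of $\ast\to\B G$. By \cref{prop: equivalent characterization of truncated maps 2}, the map $\B K\to K_f$ is then $(n-1)$-truncated. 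Composing with the $(n-1)$-truncated $K_f\to\ast$ shows that $\B K$ is $(n-1)$-truncated, so $K$ is an $(n-1)$-group.

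For the converse, the natural test object is $K=\kernel(f)$. By \cref{prop: kernel is a sub n group} it is a sub-$n$-group of $A$, and $K\to A\to G$ is trivial because it is the fiber sequence in $\Grp(\E)$. The hypothesis then says $\B\kernel(f)=K_f\langle0\rangle$ is $(n-1)$-truncated. We must deduce that $\B f$ is $(n-1)$-truncated, i.e.\ that $K_f$ is $(n-1)$-truncated: since $\ast\to\B G$ is an effective epimorphism, truncation of $\B f$ can be checked on this fiber by \cref{prop: pullback of truncated map is truncated}.

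The main obstacle is passing from the basepoint component of $K_f$ to all of $K_f$, since $K_f$ need not be connected. My first attempt uses the $\Omega\B G=G$-action on $K_f$, or equivalently the fiber sequence $G\to K_f\to\B A$. Every global point of $K_f$ should lie in a translate of the basepoint component, but that only works for global points, and $K_f$ may have components without global points. To get around this, I would work in the slice $\slice{\E}{T}$ for generalized points $t\colon T\to K_f$. The effective epimorphism $G\twoheadrightarrow \pi_0$-part... more concretely, $K_f\to\B A$ is $(n-1)$-truncated, so its diagonal is $(n-2)$-truncated. The remaining issue is the path objects of $K_f$ in degree where $\B A$ contributes, and these can be handled by \cref{prop: cover whose n-connected objects are contractible is n-truncated} applied to a jointly epimorphic family of generalized points.

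A cleaner route, which I would try first, is the following. By \cref{lemma: truncation and connectivity of loop space 3} applied to $\B f$ restricted to components, it suffices to show that $\Omega\B f=f\colon A\to G$ is $(n-2)$-truncated, i.e.\ that its fiber $\Omega K_f$ is $(n-2)$-truncated. Since $\Omega K_f=\Omega(K_f\langle0\rangle)=\kernel(f)$, this follows from $\kernel(f)$ being an $(n-1)$-group. The care needed here is that \cref{lemma: truncation and connectivity of loop space 3} requires $\B f$ to be connected. Instead, I would argue directly: $\Delta_{\B f}$ pulled back along the effective epimorphism $\ast\to\B A\times_{\B G}\B A$-cover is $f$'s fiber structure. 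Equivalently, the fiber of $\B f$ is $(n-1)$-truncated iff $\ast\to K_f$ composed with translations is $(n-2)$-truncated. Checking this against a jointly epimorphic family of generalized points $T\to K_f$, each of whose based loops in $\slice{\E}{T}$ is a base change of $\kernel(f)$, finishes the converse.
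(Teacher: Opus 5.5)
Your forward direction is correct and is essentially the paper's argument. The paper does not need $K_f$: the composite $\B K\to\B A\to\B G$ is $(n-1)$-truncated and equals $\B K\to\ast\to\B G$, so two-out-of-three gives that $\B K$ is $(n-1)$-truncated. Testing the converse on $K=\kernel(f)$ is also the right reduction; the paper uses nothing else from the hypothesis.

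The converse, however, is not proved. Everything rests on passing from $K_f\langle0\rangle=\B\kernel(f)$ being $(n-1)$-truncated to all of $K_f$ (equivalently $\B f$) being $(n-1)$-truncated, and none of your attempts establishes this.
\begin{itemize}
\item The route through Lemma~\ref{lemma: truncation and connectivity of loop space 3} needs $\B f$ connected, as you note.
\item Your replacement pulls $\Delta_{\B f}$ back along $\ast\to\B A\times_{\B G}\B A$. This is not an effective epimorphism in general: for $A=1$ its target is $\ast\times_{\B G}\ast=G$, which need not be connected.
\item Your last sentence asserts that, for an unspecified jointly epimorphic family $T\to K_f$, the based loops in $\slice{\E}{T}$ are base changes of $\kernel(f)$. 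That is exactly the translation statement you had just conceded works only at global points. It is also combined with a generalized-point version of \cref{prop: cover whose n-connected objects are contractible is n-truncated}, which the paper proves only for global points.
\end{itemize}

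The paper sidesteps the component problem by replacing $\B f$ with a connected map.
\begin{itemize}
\item Factor $\B f$ through its $(n-1)$-image $\B A\xrightarrow{\B g}\B H\xrightarrow{\B h}\B G$, and apply the forward direction to $h$: $\kernel(h)$ is an $(n-1)$-group.
\item Since $\B g$ is connected, its base change along the effective epimorphism $\B\kernel(h)\to\B H$ has connected domain. So that base change is exactly $\B\kernel(f)\to\B\kernel(h)$, a map between $(n-1)$-truncated objects.
\item Hence $\B g$ is $(n-1)$-truncated as well as $(n-1)$-connected, so it is an equivalence and $\B f\simeq\B h$.
\end{itemize}

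Alternatively, your diagonal idea works with the correct cover.
\begin{itemize}
\item $G\simeq\ast\times_{\B G}\ast\to\B A\times_{\B G}\B A$ is the base change of $\ast\to\B A\times\B A$, which is an effective epimorphism because $\B A$ is connected.
\item Pulling $\Delta_{\B f}$ back along this cover yields $f\colon A\to G$. So $\B f$ is $(n-1)$-truncated if and only if $f$ is $(n-2)$-truncated.
\item By the fiber sequence $A\to G\to K_f$, $f$ is the base change of the basepoint $\ast\to K_f$ along $G\to K_f$.
\item That basepoint factors as $\ast\to K_f\langle0\rangle\hookrightarrow K_f$. It is therefore $(n-2)$-truncated as soon as $K_f\langle0\rangle=\B\kernel(f)$ is $(n-1)$-truncated.
\end{itemize}
This handles all components at once, with no generalized points needed.
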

    \begin{proof}
        Assume first that $f$ defines a sub-$n$-group, meaning that $\B f$ is $(n-1)$-truncated. Let $ K\to A$ be a sub-$n$-group mapping trivially to $G$. This yields a commutative square:
\[\begin{tikzcd}
    {\B K} \arrow[r] \arrow[d] & \ast \arrow[d] \\
    {\B A} \arrow[r, "{\B f}"'] & {\B G}
\end{tikzcd}.\]
The bottom map $\B f$ and the left map are $(n-1)$-truncated by hypothesis, so their composition $\B K\to\ast\to \B G$ is $(n-1)$-truncated as well. Moreover, $\ast\to \B G$ is $(n-1)$-truncated since $\B G$ is $n$-truncated. By Proposition~\ref{prop: equivalent characterization of truncated maps} this implies that $\B K\to \ast$ is $(n-1)$-truncated. Thus, $K$ is an $(n-1)$-group.

Conversely, we prove that $f$ defines a sub-$n$-group.  Consider the $(n-1)$-image of $\B f$, which by connectivity and truncation levels is the classifying object of some $n$-group $H$: \[\B f\colon \B A\xrightarrow{\B g}\B H\xrightarrow{\B h}\B G.\]
We will show that $\B g$ is $(n-1)$-truncated. Since $\B g$ is also $(n-1)$-connected by definition of the image factorization, it will follow that $\B g$ is an equivalence, implying that $\B f\simeq \B h$ is $(n-1)$-truncated, as desired. To this end, consider the kernels $\kernel(f)\to A$ and $\ker(h)\to H$. Both define sub-$n$-groups by \cref{prop: kernel is a sub n group}, and their respective compositions to $G$ are trivial. Our hypotheses imply that $\kernel(f)$ is an $(n-1)$-group, while the first part of this proof implies that $\ker(h)$ is an $(n-1)$-group (since $h$ defines a sub-$n$-group). Recall that $\B\kernel(h)$ is the connected component of the fiber $P$ of $\B h$ in $\E_\ast$, and $\B\kernel(f)$ is the connected component of the fiber $Q$ of $\B f$ in $\E_\ast$. We depict this in the following pasting of Cartesian squares in $\E_\ast$:
\[\begin{tikzcd}
    {\B\kernel(f)} \arrow[r] \arrow[d, hook]\arrow[dr, phantom, "\lrcorner"{anchor=center, pos=0.125}, near start] & {\B\kernel(h)} \arrow[d, hook] & \\
    Q \arrow[r] \arrow[d] \arrow[dr, phantom, "\lrcorner"{anchor=center, pos=0.125}, near start] & P \arrow[r] \arrow[d] \arrow[dr, phantom, "\lrcorner"{anchor=center, pos=0.125}, near start] & \ast \arrow[d] \\
    {\B A} \arrow[r, "\B g"'] & {\B H} \arrow[r, "\B h"'] & {\B G}
\end{tikzcd},\]
where the top square is Cartesian since $\B g$ is connected: indeed the total pullback object is a connected component of $Q$ because $\B\ker(h)$ is connected. As argued above, the domain and target of the top map $\B\kernel(f)\to\B\kernel(h)$ are $(n-1)$-groups, hence it is $(n-1)$-truncated. Because $\B H$ is connected, the middle vertical composition $\B\kernel(h)\to\B H$ is an effective epimorphism. We conclude that $\B g$ is also $(n-1)$-truncated by \cref{prop: pullback of truncated map is truncated}.
    \end{proof}
\subsection{Normal maps}\label{section: normal maps}
  In classical group theory, a homomorphism is normal if its quotient inherits a group structure. We study this notion for $\infty$-groups following the work of Farjoun and Segev for $1$-groups \cite{FarjounSegev_CrossedModulesasHomotopyNormalMaps}, Prasma for general loop spaces \cite{Prasma_HomotopyNormalMaps}, and Beardsley and Fox for $\infty$-groups in an $\infty$-topos \cite{BeardsleyFoxHigherGroupsHigherNormality}.
\begin{definition}\label{def: normal maps}
    Let $f \colon A \to G$ be a morphism of $\infty$-groups objects in $\E$. 
    \begin{enumerate}[label=\arabic*., ref=\thedefinition.\arabic*]
        \item\label{def: normal maps: quotient} The \textit{quotient} $G\sslash A\in\E$ of $f$ is the fiber of $\B f$.
        \item The map $f$ is \textit{normal} if there exists a pointed connected object $\B Q$ and a fiber sequence in $\E$:
        \[ \B A \xrightarrow{\B f} \B G \xrightarrow{\B \pi} \B Q.\]
        In this case, the quotient $G\sslash A$ is equivalent to $\Omega \B Q$ and inherits an $\infty$-group structure with classifying object $\B Q$. 
        \item Suppose that $G$ is an $n$-group. We write $\Sub_n^{\nor}(G)$ for the full subcategory of $\Sub_n(G)$ spanned by the normal sub-$n$-groups, and $\Sub^{\nor}_{(n-k+1,k)}(G)$ for the corresponding category of normal $k$-symmetric sub-$(n-k+1)$-groups.
    \end{enumerate}
\end{definition}
\begin{remark}\label{remark: on normal maps}
\begin{enumerate}[label=\arabic*., ref=\theremark.\arabic*]
    \item The compatible $\infty$-group structure on the quotient $G \sslash A$ (the \textit{normal structure} in \cite{FarjounSegev_CrossedModulesasHomotopyNormalMaps}, or \textit{normality datum} in \cite{BeardsleyFoxHigherGroupsHigherNormality}) is generally not unique, as the delooping $\B Q$ may not be uniquely determined by $\B f$. However, we show in \cref{cor: quotient of n-symmetric sub-1-groups are unique} that for $n$-symmetric sub-$1$-groups $\B^{n-1}A\to G$, this structure is unique.
    \item\label{remark: on normal maps 3}
    Farjoun and Segev \cite{FarjounSegev_CrossedModulesasHomotopyNormalMaps} prove that for $1$-groups, normality is equivalent to a crossed module structure on $f$. While we do not pursue it here, this suggests a potential theory of $\infty$-crossed modules for higher normal maps.
\end{enumerate}
\end{remark}
\begin{example}
Consider the following variation of \cite[Example~4.5]{BeardsleyFoxHigherGroupsHigherNormality}. Let $f$ be the composition $\B^2\Z \times \Z \xrightarrow{\pi_1} \B^2\Z \xrightarrow{\B^2(\cdot 2)} \B^2\Z$, which is a sub-$3$-group whose delooping is the $2$-truncated map $\B f \colon \B^3\Z \times \B\Z \to \B^3\Z$. Its quotient $Q$, arising as the fiber of $\B f$, is $\B^2(\Z/2) \times \B\Z$. There exist distinct spaces $F, F'$ arising as the fibers of two distinct maps $\B^2\Z \to \B^4(\Z/2)$ in $H^4(\B^2\Z; \Z/2) \cong \Z/2$, such that $\Omega F \simeq Q \simeq \Omega F'$. This demonstrates the non-uniqueness of normality data.
\end{example}
Subgroups of classical abelian groups are always normal. The following proposition generalizes this to the stable range.
    \begin{prop}\label{prop: sufficiently symmetric sub n groups are normal}
        Let $f\colon A\to G$ be a morphism in $\Grp_{(n,k)}(\E)$ with $k>n$. If the underlying $n$-group map defines a sub-$n$-group, then $f$ is normal. Moreover the quotient $G\sslash A$ admits a $k$-symmetric $n$-group structure whose $k$-fold classifying object is the $(n+k-1)$-truncation of the cofiber of $\B^k f$. 
     \end{prop}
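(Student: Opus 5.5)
The plan is to build the quotient directly at the level of $k$-fold classifying objects, using the $k$-invariant construction of \cref{prop: construction of k-invariants}. By \cref{theorem: symmetric group objects are pointed highly connected objects}, the morphism $f$ corresponds to a pointed map $\B^k f\colon \B^kA\to\B^kG$ between pointed $(k-1)$-connected, $(n+k-1)$-truncated objects. By \cref{remark: on sub n groups}(2), the hypothesis that the underlying map $A\to G$ is a sub-$n$-group means that $\B f$ is $(n-1)$-truncated, equivalently that $\B^k f$ is $(n+k-2)$-truncated. Connectivity is automatic: $\B^kA$ is $(k-1)$-connected and $\B^kG$ is $(k-1)$-connected, hence $\B^kf$ is $(k-2)$-connected.

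We then apply \cref{prop: construction of k-invariants} to $p=\B^kf$ with $n'\coloneq n+k-2$ and connectivity index $k'\coloneq k-1$. The hypotheses to check are the following.
\begin{enumerate}
    \item $p$ is $n'$-truncated, which holds by the previous paragraph.
    \item $p$ is $(n'-k')=(n-1)$-connected. We know it is $(k-2)$-connected, and $k>n$ gives $k-2\geq n-1$, so this holds. This is exactly where the stable-range assumption enters.
    \item The domain $\B^kA$ is $k'=(k-1)$-connected.
    \item $-2\leq k'\leq n'+1$, that is $k-1\leq n+k-1$, which is clear.
\end{enumerate}
The proposition then produces $K=\tau_{n+k-1}(\operatorname{cofib}\B^kf)$, which is $(n+k-1)$-truncated and $n$-connected, together with a map $\B^kG\to K$ whose fiber is $\B^kf$. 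Call this fiber sequence $(\ast)$.

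Next we read off the two conclusions from $(\ast)$.
\begin{itemize}
    \item \emph{Normality.} Loop $(\ast)$ $k-1$ times (loops preserve fiber sequences) to get a fiber sequence $\B A\to\B G\to\Omega^{k-1}K$. The target $\Omega^{k-1}K$ is pointed connected because $K$ is $n$-connected and $n\geq k-1-1$ is not needed: we have $n\geq 0$ and $K$ is at least $(k-1)$-connected? Here care is needed: $K$ is only guaranteed $n$-connected with $n<k$. Instead I would argue directly that $\operatorname{cofib}(\B^kf)$ is $(k-1)$-connected, as a pushout of $(k-1)$-connected objects along $(k-2)$-connected maps via \cref{prop: connected maps are preserved by pullback and pushouts}. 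Then $K$ is $(k-1)$-connected, and $\Omega^{k-1}K$ is connected. This gives normality with $\B Q=\Omega^{k-1}K$.
    \item \emph{The quotient.} The fiber of $\B f$ is $\Omega^{k-1}$ of the fiber of $\B^kG\to K$ after one further loop. More simply, loop $(\ast)$ $k$ times: $G\sslash A\simeq\Omega^k K$. Since $K$ is pointed, $(k-1)$-connected and $(n+k-1)$-truncated, \cref{theorem: symmetric group objects are pointed highly connected objects} endows $G\sslash A$ with a $k$-symmetric $n$-group structure with $\B^k(G\sslash A)=K$, as claimed.
\end{itemize}

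The main obstacle is the bookkeeping of indices in \cref{prop: construction of k-invariants}, and the connectivity of $K$. That proposition gives $(n'-k'+1)=n$-connectivity, which is weaker than the $(k-1)$-connectivity needed. I therefore expect to supplement it with the pushout connectivity argument above, which uses that $\ast$ and $\B^kG$ are $(k-1)$-connected. Since truncation preserves connectivity, $\tau_{n+k-1}$ of the cofiber stays $(k-1)$-connected.
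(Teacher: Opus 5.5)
Your proof is correct and is essentially the paper's: both feed $\B^kf$, viewed as an $(n+k-2)$-truncated map between $(k-1)$-connected objects, into \cref{prop: construction of k-invariants} and then loop the resulting fiber sequence. The only difference is the choice of indices: the paper takes the domain-connectivity index to be $n$ (using $k-1\geq n$), which makes $K$ $(k-1)$-connected directly, whereas your choice $k-1$ only yields $n$-connectivity and so requires your extra cofiber argument (which is valid, since $\ast\to\operatorname{cofib}(\B^kf)$ is a cobase change of the $(k-2)$-connected map $\B^kf$).
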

     \begin{proof}
          By hypothesis $\B ^kA$ is $l\coloneq n$-connected and $\B f\colon \B A\to \B G$ is $(n-1)$-truncated. It follows from \cref{remark: on sub n groups} that $\B ^kf$ is $m\coloneq(n+k-2)$-truncated. Applying \cref{prop: construction of k-invariants}, $\B^k f$ is the fiber of a map $\B^k G \to K$, where $K$ is $(m+1)=(n+k-1)$-truncated and $(m-l+1)=(k-1)$-connected. This $K$ provides the $k$-fold delooping $\B^k(G \sslash A)$ for the quotient, exhibiting it as a $k$-symmetric $n$-group: 
        \[\begin{tikzcd}
            {\B^k A} \arrow[r] \arrow[d, "{\B^k f}"'] \arrow[dr, phantom, "\lrcorner", very near start] 
            & \ast \arrow[d] \\
            {\B^k G} \arrow[r] 
            & {\B^k(G \sslash A)}
        \end{tikzcd}.\]
    \end{proof}
\subsection{Homotopy groups and fundamental $n$-groups}\label{subsection: fundamental n group and homotopy groups}
This section defines the basic $\infty$-group arising from a pointed object $X$. They provide the core objects in the classification of $n$-covering maps in \cref{section : Coverings}.
\begin{definition}
    Let $\E$ be an $\infty$-topos, $(X,x)\in\E_\ast$ a pointed object, and $n\geq1$.
    \begin{enumerate}
        \item  The \textit{$n$-th homotopy group object} of $(X,x)$ is the $n$-symmetric $1$-group defined by \[\pi_n(X,x)\coloneq\Omega^n(\tau_nX)\in\Grp_{(1,n)}(\E).\]
        \item The \textit{fundamental $n$-group object} of $(X,x)$ is the $n$-group defined by \[\Pi_{n}(X,x)\coloneq\Omega(\tau_{n} X)\in\Grp_{(n,1)}(\E).\]
    \end{enumerate}
    When $n=0$, we define the $0$-th homotopy set as $\pi_0(X)\coloneq \tau_0(X)\in\E_\ast^{\sleq 0}$.
    \end{definition}

\begin{remark}\label{remark: classifying objects of homotopy groups fundamental n-groups}
\begin{enumerate}
    \item\label{remark: classifying objects of homotopy groups fundamental n-groups 1} The fundamental $n$-group $\Pi_n(X,x)$ has the connected component of the $n$-truncation as its classifying object (\cref{example: classifying object of the loop space is the connected component of X}): $\B\Pi_n(X,x) \simeq \tau_n X \langle 0 \rangle$.
    \item\label{remark: classifying objects of homotopy groups fundamental n-groups 2} For $1\leq k\leq n$, the $k$-fold classifying object of the underlying $k$-symmetric $1$-group of $\pi_n(X,x)$ (\eqref{def: underlying k symmetric n group}) is \[\B ^k\pi_n(X,x)=\Omega^{n-k}\left(\tau_nX\langle n-1\rangle\right)=\tau_k(\Omega^{n-k}X)\langle k-1\rangle.\]
\end{enumerate}
\end{remark}

\begin{remark}\label{remark: unpointed generalization}
    The definition of homotopy groups naturally generalizes to unpointed objects $X \in \E$ by working internally within the slice $\infty$-topos $\slice{\E}{X}$ \cite[6.5.1.1]{LurieHTT}. In this context, the unpointed homotopy group $\pi_n(X)$ is defined as the homotopy group of the diagonal $\Delta_X \colon X \to X \times X$ viewed as a pointed object over $X$. While we primarily rely on pointed homotopy groups for our classification of coverings, this unpointed generalization is essential for global connectivity results. Specifically, it allows one to test the connectivity of a space without assuming the existence of a basepoint $x \colon \ast \to X$, as in the following proposition.
\end{remark}
\begin{prop}[{\cite[Prop.~9.8]{Rezk_HomotopyTopos}}]
    Let $(X,x)$ be a pointed object of an $\infty$-topos $\E$ and $-1\leq n\leq \infty$. Then $X$ is $n$-connected if and only if $\pi_k(X,x)\simeq\ast\in\Grp(\E)$ for all $0\leq k\leq n$.
\end{prop}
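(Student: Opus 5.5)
Since $\pi_k(X,x)=\Omega^k(\tau_kX)$ is computed at the basepoint, I will argue by induction on $n$, reducing to statements about $\tau_k X$ and its connected covers.

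\emph{Forward direction.} Suppose $X$ is $n$-connected, so $\tau_nX\simeq\ast$. For $k\leq n$ we have $\tau_kX\simeq\tau_k\tau_nX\simeq\ast$, because $\tau_k\tau_n\simeq\tau_k$ for $k\leq n$ (the $n$-truncated objects contain the $k$-truncated ones and reflectors compose). Hence $\Omega^k(\tau_kX)\simeq\ast$ for every $0\leq k\leq n$. When $k=0$ this is the statement $\pi_0(X)=\tau_0X\simeq\ast$.

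\emph{Converse.} Assume $\pi_k(X,x)\simeq\ast$ for all $k\leq n$; I prove $\tau_kX\simeq\ast$ by induction on $k$. The base case $k=-1$ is that $X$ is inhabited, which holds because $x$ exists. The case $k=0$ is the hypothesis $\tau_0X\simeq\ast$. For the inductive step, suppose $X$ is $(k-1)$-connected with $k\geq1$. Then $X$ is connected, so $\tau_kX$ is a connected, $k$-truncated pointed object. By \cref{cor: loop space commutes with truncation}, $\Omega\tau_kX\simeq\tau_{k-1}\Omega X$. Now consider $\Omega X$, pointed at $\cst_x$. By \cref{remark: classifying objects of homotopy groups fundamental n-groups} and \cref{cor: loop space commutes with truncation},
\[\pi_j(\Omega X)=\Omega^j\tau_j\Omega X\simeq\Omega^{j+1}\tau_{j+1}X=\pi_{j+1}(X),\]
so the higher homotopy groups of $\Omega X$ vanish for $j+1\leq k$. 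For $j=0$ we also need $\tau_0\Omega X\simeq\Omega\tau_1X\simeq\pi_1(X)\simeq\ast$. Applying the induction hypothesis to $(\Omega X,\cst_x)$ gives $\tau_{k-1}\Omega X\simeq\ast$. Therefore $\Omega\tau_kX\simeq\ast$, so $\Omega$ of the map $\tau_kX\to\ast$ is an equivalence. Since this map is between connected objects, Lemma~\ref{lemma: truncation and connectivity of loop space 3} shows it is $(-1)$-truncated, i.e.\ a monomorphism; it is also an effective epimorphism because $\tau_kX$ is connected, hence an equivalence. Alternatively, \cref{theorem: group objects are pointed connected objects} gives $\tau_kX\simeq\B\Omega\tau_kX\simeq\B\ast\simeq\ast$ directly.

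\emph{The case $n=\infty$.} Here $n$-connected means $k$-connected for every finite $k$, which is exactly the statement proved above for each finite $k$.

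\emph{Main obstacle.} The delicate point is the bookkeeping in the induction: the statement must be phrased uniformly over all pointed objects, so that it can be applied to $\Omega X$, and one must check the index shifts at low degrees, in particular that $\tau_0\Omega X\simeq\Omega\tau_1X$. This is exactly \cref{cor: loop space commutes with truncation} applied with $n=0$ and $Y=\ast$. It is also possible that the intended statement requires the proposition to be applied inside slices, as in the unpointed version of \cref{remark: unpointed generalization}. This is not needed here because the basepoint $x$ already forces $X$ to be inhabited.
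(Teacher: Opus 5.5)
Your proof is correct. The paper gives no argument of its own here and simply cites Rezk. As \cref{remark: unpointed generalization} indicates, the natural setting there is the unpointed homotopy objects $\pi_k(X)\to X$ in $\slice{\E}{X}$. In that setting the standard induction runs through the diagonal $\Delta_X$: a map is $n$-connected iff it is an effective epimorphism with $(n-1)$-connected diagonal, and the homotopy objects of the diagonal are those of the map shifted in degree by one. No global point is ever used.

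You instead induct through the loop object $(\Omega X,\cst_x)$. You compute $\pi_j(\Omega X,\cst_x)\simeq\pi_{j+1}(X,x)$ via \cref{cor: loop space commutes with truncation}, and then deloop the pointed connected object $\tau_kX$. This stays entirely within results already proved in the paper and is a short route to the pointed statement. However, it uses the basepoint essentially: once $X$ is connected, $x$ is an effective epimorphism, which is exactly why loops detect connectivity and why $\tau_kX\simeq\B\Omega\tau_kX$. So it does not yield the basepoint-free version the remark advertises. The diagonal argument does, and it recovers your statement by pulling back along the effective epimorphism $x$.

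Two minor points. First, the induction should be stated from the start as a claim $P(k)$ about all pointed objects at once. Your hypothesis ``$X$ is $(k-1)$-connected'' is used only through connectedness of $X$, which already comes from $\pi_0$. Second, the final step can be shortened. Lemma~\ref{lemma: truncation and connectivity of loop space 2}, applied to $X\to\ast$, turns ``$X$ connected and $\Omega X$ $(k-1)$-connected'' directly into ``$X$ is $k$-connected'', without passing through $\tau_kX$ or its delooping.
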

When $G$ is an $\infty$-group, there are two canonical $1$-group structure on $\pi_n(G)$: the one coming from the $\infty$-group structure of $G$ (since $\pi_n$ preserves finite products), and the one coming from the $n$-th homotopy group construction.
\begin{corollary}\label{cor: two group structure on pi_n(G) coincide}
    Let $G$ be an $\infty$-group and $n\geq1$. The two $1$-group structures on $\pi_n(G)$ coincide.
   \end{corollary}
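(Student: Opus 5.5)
The plan is to realize both group structures on $\pi_n(G)$ through a single delooping, and then compare them by an Eckmann--Hilton argument. Both structures are multiplications on the same object $\pi_n(G)=\Omega^n(\tau_n G)$ computed at the unit, and both share the same unit. It therefore suffices to show that they are mutually distributive, that is, that one is a homomorphism for the other. The classical interchange argument then shows that they coincide.

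\textbf{Step 1: identify the first structure.} By \cref{prop: truncation functor preserve finite products}, $\tau_n$ preserves finite products, and so does $\Omega^n$, being a limit. Hence the functor $\pi_n=\Omega^n\tau_n\colon\E_\ast\to\Grp(\E)$ preserves finite products. Applying it to the simplicial object $G_\sbullet$ yields, by \cref{prop: product preserving functors preserves group objects}, a group object $\pi_n(G_\sbullet)$ in $\Grp_{(1,n)}(\E)$, and in particular in $\Grp(\Grp(\E))$ for $n\geq1$. Its underlying object is $\pi_n(G)$. Its two multiplications are:
\begin{itemize}
\item the one coming from $G$, obtained by applying $\pi_n$ to $\mu\colon G\times G\to G$;
\item the one intrinsic to $\pi_n$, obtained from the group object structure of $\Omega^n$.
\end{itemize}
Because $\pi_n(\mu)$ is by construction a morphism of $1$-groups for the intrinsic structure, the interchange law holds.

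\textbf{Step 2: Eckmann--Hilton.} The statement lives inside the $1$-category of $0$-truncated objects $\E^{\sleq0}$, since $\pi_n(G)$ is $0$-truncated. There, a group object in group objects has equal multiplications, which are moreover commutative. This is the usual Eckmann--Hilton argument, and it can be carried out internally using generalized elements $T\to\pi_n(G)$ via the Yoneda lemma of the $1$-category $\E^{\sleq 0}$. The units agree because both are the canonical basepoint $\cst$, which is preserved by $\pi_n$ applied to the unit $\ast\to G$.

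\textbf{Main obstacle.} The delicate point is Step 1: checking that the map induced by $\mu$ on $\Omega^n\tau_n$ is genuinely compatible with the loop multiplication, and not only with the unit. I would argue this by naturality. The loop structure on $\Omega^n Y$ is natural in pointed maps $Y\to Y'$, via the \v{C}ech nerve construction of \cref{theorem: group objects are pointed connected objects}. It is also compatible with products, since $\Omega^n\tau_n(G\times G)\simeq\Omega^n\tau_nG\times\Omega^n\tau_nG$ as groups. Applying this naturality to $\tau_n\mu$, which is a pointed map, gives the homomorphism property.

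Alternatively, $\pi_n(G)\simeq\pi_{n+1}(\B G)$ by \cref{cor: loop space commutes with truncation}, which sidesteps the issue. One has $\tau_n G\simeq\Omega\tau_{n+1}\B G$, so that $\Omega^n\tau_nG\simeq\Omega^{n+1}\tau_{n+1}\B G$. In this description both structures are loop concatenations in different coordinates, and the $(n+1)$-symmetric structure identifies them directly.
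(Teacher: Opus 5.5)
Your argument is correct, but it follows a different route from the paper. The paper never runs an interchange argument. It uses \cref{theorem: group objects are pointed connected objects} to reduce to comparing deloopings, and factors $\pi_n$ as $\tau_0\circ U\circ\Omega^n$. It then applies \cref{prop: product preserving functors preserves group objects} (to the left exact $\Omega^n$ and to $U$) and \cref{cor: classifying object of the truncation of the group} (to $\tau_0$). This computes the delooping of the structure induced by $G$ as $\tau_1\big((\Omega^n\B G)\langle 0\rangle\big)\simeq\tau_1(\Omega^{n-1}G)\langle0\rangle$, which is the delooping of the intrinsic structure recorded in \cref{remark: classifying objects of homotopy groups fundamental n-groups}.

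Given the machinery, that route is short. It also produces an explicit formula for $\B\pi_n(G)$ in terms of $\B G$, in keeping with the paper's treatment of groups as pointed connected objects. However, it only yields an equivalence of deloopings. That the resulting isomorphism of group objects is the identity on the underlying object $\pi_n(G)$ is left implicit. It could instead be, say, a sign coming from permuting loop coordinates, though that would be harmless since the group is abelian. Your Eckmann--Hilton argument in the $1$-category $\E^{\sleq 0}$ avoids this bookkeeping: it shows the two multiplication maps are literally equal, and gives commutativity for free. Your closing alternative, viewing both as loop structures on $\Omega^{n+1}\tau_{n+1}\B G$, is close in spirit to the paper's proof.

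Two minor points. First, the ``main obstacle'' you identify is not really one. $\pi_n(\mu)$ is a homomorphism for the intrinsic structure simply because $\pi_n$ is a finite-product-preserving functor into $\Grp(\E)$, which is exactly what you used to form $\pi_n(G_\sbullet)$. Second, since $\pi_n$ is defined on $\E_\ast$, you should say that $G_\sbullet$ is a simplicial object of $\E_\ast$. This is automatic because $G_0\simeq\ast$ and $[0]$ is initial in $\Delta^{\op}$. The paper records it via \cite[7.2.2.10]{LurieHTT} in the proof of \cref{cor: characterization of pointed action}.
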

\begin{proof}
    By \cref{theorem: group objects are pointed connected objects} it suffices to show that the classifying objects coincide. Notice that $\pi_n$ is given by the composition \[\E_\ast\xrightarrow{\Omega^n}\E_\ast\xrightarrow{U}\E\xrightarrow{\tau_0}\E.\]
    The first functor $\Omega^n$ is left exact, $U$ preserves finite products and realizations, and $\tau_0$ preserves finite products. Using \cref{prop: product preserving functors preserves group objects} and \cref{cor: classifying object of the truncation of the group} we obtain that $\pi_n(G)$ is an $\infty$-group whose classifying object is: \[\tau_1 \left((\Omega^n \B G)\langle 0\rangle\right) \simeq \tau_1(\Omega^{n-1} G)\langle0\rangle,\]
    where we removed $U$ from the notation.
    By \cref{remark: classifying objects of homotopy groups fundamental n-groups} the classifying object of $\pi_n(G)$ coming from the group structure of $\pi_n$ is \[\tau_1(\Omega^{n-1}G)\langle0\rangle,\]
    which concludes the proof.
\end{proof}
A distinguished $n$-symmetric sub-$1$-group of $\Pi_n(X,x)$ is given by $\B^{n-1}\pi_n(X,x)$. Indeed, the counit of the $(n-1)$-connected cover functor of \cref{prop: n-connected pointed spaces is coreflective in pointed spaces} at $\B \Pi_n(X,x)$ is a pointed $(n-2)$-truncated map $\varepsilon\colon \B ^n\pi_n(X,x)\to \B \Pi_n(X,x)$. As we establish in \cref{theorem: n-symmetric subgroups of the fundamental n-group are 1-subgroups of n-th homotopy group}, it is the terminal $n$-symmetric sub-$1$-group: any other $\B^{n-1}A\to\Pi_n(X,x)$ factors through it. This terminality yields a correspondence between sub-$1$-groups of $\pi_n(X,x)$ and $n$-symmetric sub-$1$-groups of $\Pi_n(X,x)$. Moreover, by \cref{corollary: Cartesian square of truncation maps}, the map $\varepsilon$ arises from the following Cartesian square, where both horizontal arrows are $(n-1)$-truncations:
\begin{equation}\label{example: n symmetric sub 1 group of Pi_n(X)}
    \begin{tikzcd}
    {\B^n\pi_n(X,x)} \arrow[r] \arrow[d, "\varepsilon"'] \arrow[dr, phantom, "\lrcorner"{anchor=center, pos=0.125}] & \ast \arrow[d] \\
    {\B\Pi_n(X,x)} \arrow[r, "{\eta_X}"] & {\B\Pi_{n-1}(X,x)}
\end{tikzcd}.
\end{equation}
    This shows that $\B^{n-1}\pi_n(X,x)\to\Pi_n(X,x)$ is normal. The following theorem argues that the aforementioned correspondence restricts to normal maps.
\begin{theorem}\label{theorem: n-symmetric subgroups of the fundamental n-group are 1-subgroups of n-th homotopy group}
    Let $(X,x)\in\E_\ast$ be a pointed object and $n\geq 1$. 
    \begin{enumerate}[label=\arabic*., ref=\thelemma.\arabic*]
        \item\label{theorem: n-symmetric subgroups of the fundamental n-group are 1-subgroups of n-th homotopy group 1} There is an equivalence
        \begin{equation*}
        \Sub_1(\pi_n(X,x))\simeq  \Sub_{(1,n)}(\Pi_n(X,x))
    \end{equation*}
    between sub-$1$-groups of $\pi_n(X,x)$ and $n$-symmetric sub-$1$-groups of $\Pi_n(X,x)$.
    \item\label{theorem: n-symmetric subgroups of the fundamental n-group are 1-subgroups of n-th homotopy group 2} This equivalence restricts to an equivalence 
    \begin{equation*}
        \Sub_1^{\nor}\left(\pi_n(X,x)\right)\simeq \Sub^{\nor}_{(1,n)}\left(\Pi_n(X,x)\right)
    \end{equation*}
    between normal sub-$1$-groups of $\pi_n(X,x)$ and normal $n$-symmetric sub-$1$-groups of $\Pi_n(X,x)$.
    \end{enumerate}
\end{theorem}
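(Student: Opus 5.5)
Everything will be translated into classifying objects, using \cref{theorem: symmetric group objects are pointed highly connected objects} and \cref{remark: on sub n groups}. Write $Y\coloneq\B\Pi_n(X,x)\simeq\tau_nX\langle0\rangle$, which is pointed, connected and $n$-truncated, and write $\varepsilon\colon\B^n\pi_n(X,x)\simeq Y\langle n-1\rangle\to Y$ for the counit of \cref{prop: n-connected pointed spaces is coreflective in pointed spaces}. By definition, an $n$-symmetric sub-$1$-group of $\Pi_n(X,x)$ is an $n$-group map $\B^{n-1}A\to\Pi_n(X,x)$ with $A\in\Grp_{(1,n)}(\E)$. On classifying objects it is a pointed map $f\colon\B^nA\to Y$ that is $(n-1)$-truncated, and whose domain $\B^nA$ is $(n-1)$-connected and $n$-truncated.

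\textbf{Step 1: factorization through $\varepsilon$.} Since $\B^nA$ is $(n-1)$-connected, the coreflection of \cref{prop: n-connected pointed spaces is coreflective in pointed spaces} gives that $f$ factors uniquely as $\B^nA\xrightarrow{g}\B^n\pi_n(X,x)\xrightarrow{\varepsilon}Y$. This factorization is unique up to contractible choice, and the mapping spaces satisfy $\E_\ast(\B^nA,Y)\simeq\E_\ast(\B^nA,Y\langle n-1\rangle)$. The map $\varepsilon$ is $(n-2)$-truncated by \eqref{example: n symmetric sub 1 group of Pi_n(X)}, and $f=\varepsilon g$ is $(n-1)$-truncated. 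By \cref{prop: equivalent characterization of truncated maps 2}, $g$ is therefore $(n-1)$-truncated; conversely, if $g$ is $(n-1)$-truncated then so is $\varepsilon g$. Hence postcomposition with $\varepsilon$ identifies $\Sub_{(1,n)}(\Pi_n(X,x))$ with the full subcategory of $\slice{(\E_\ast^{\sg{n-1},\sleq n})}{\B^n\pi_n(X,x)}$ spanned by the $(n-1)$-truncated maps. Fully faithfulness also uses the coreflection, since slices over $Y$ and over $Y\langle n-1\rangle$ agree on $(n-1)$-connected sources.

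\textbf{Step 2: passage to $\pi_n$.} A pointed map $g\colon\B^nA\to\B^n\pi_n$ between $(n-1)$-connected $n$-truncated objects corresponds, via \cref{theorem: symmetric group objects are pointed highly connected objects} with $(n,k)=(1,n)$, to a morphism $A\to\pi_n(X,x)$ of $n$-symmetric $1$-groups. By \cref{remark: on sub n groups}(2) and iterated use of Lemma~\ref{lemma: truncation and connectivity of loop space 3} (all objects are connected), $g$ is $(n-1)$-truncated if and only if $\Omega^{n-1}g=\B g$ is $0$-truncated, that is, if and only if $A\to\pi_n(X,x)$ is a sub-$1$-group. I expect the main obstacle to be a mismatch in categories: $\Sub_1(\pi_n(X,x))$ should consist of sub-$1$-groups in $\Grp_1(\E)$, whereas Step~2 produces them in $\Grp_{(1,n)}(\E)$. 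For $n\geq2$ the forgetful functor $\Grp_{(1,n)}\to\Grp_{(1,1)}$ is fully faithful on objects that are sub-$1$-groups of an $n$-symmetric group. Indeed, a monomorphism $A\hookrightarrow\pi_n$ of $1$-groups into an abelian group inherits a unique $2$-symmetric structure, because the $\infty$-category $\Grp_{(1,k)}$ stabilizes for $k\geq2$ by \cref{prop: k-symmetric groups are k+1-symmetric for large k}. Moreover, monomorphisms are closed under products, so the multiplication restricts to $A$. I would also double-check that morphisms in $\Sub_1$ between monomorphisms are automatically maps of $n$-symmetric groups, because both sides are subterminal over the target. This completes part 1.

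\textbf{Step 3: normality, for part 2.} Given a normal $A\trianglelefteq\pi_n$, the quotient $Q=\pi_n/A$ is a $1$-group, abelian when $n\geq2$. For $n\geq2$, \cref{prop: sufficiently symmetric sub n groups are normal} applied to $\Grp_{(1,n)}$ (here $k=n>1$) gives a fiber sequence $\B^nA\to\B^n\pi_n\to\B^nQ$. To obtain normality over $Y$, I would instead take the pushout-type construction of \cref{prop: construction of k-invariants}, applied to the $(n-1)$-truncated, $(n-2)$-connected map $f=\varepsilon g\colon\B^nA\to Y$ with $k=n-1$. This produces $K$ with fiber sequence $\B^nA\to Y\to K$, so $f$ is normal. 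For $n=1$ the statement is classical normality, where $\varepsilon=\mathrm{id}$. Conversely, if $\B^nA\to Y\to\B R$ is a fiber sequence, then looping $n-1$ times and passing to $(n-1)$-connected covers, or equivalently applying $\Omega^{n-1}$ and $\tau_1$, yields a fiber sequence $\B A\to\B\pi_n\to\tau_1(\Omega^{n-1}\B R)\langle0\rangle$. This exhibits $A\trianglelefteq\pi_n$ as a kernel. The delicate point here is checking that the connected-cover and truncation operations preserve the fiber sequence, which I would verify with \cref{cor: loop space commutes with truncation} and Lemma~\ref{lemma: truncation and connectivity of loop space}.
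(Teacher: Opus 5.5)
Steps 1 and 2 are sound and follow the paper's argument. The paper resolves your category mismatch by citing Lurie's identification of pointed $(n-1)$-connected $n$-truncated objects with (abelian) discrete group objects. The second half of Step 3 (normality over $\Pi_n(X,x)$ implies normality in $\pi_n(X,x)$, by restricting the fiber sequence to $(n-1)$-connected covers) is also fine and is essentially what the paper does.

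\textbf{The gap is in the first half of Step 3.} There you deduce normality of $f=\varepsilon g\colon\B^nA\to Y$ from normality of $A$ via \cref{prop: construction of k-invariants}. With truncation level $n-1$ and an $(n-1)$-connected source, that proposition needs $f$ to be at least $0$-connected. Your indices also do not match: $k=n-1$ requires a $0$-connected map, while $(n-2)$-connectedness corresponds to $k=1$. In any case, $f$ is not $(n-2)$-connected:
\begin{itemize}
\item By the Cartesian square \eqref{example: n symmetric sub 1 group of Pi_n(X)}, the fiber of $\varepsilon$ is $\Pi_{n-1}(X,x)$, and $\tau_0\Pi_{n-1}(X,x)\simeq\pi_1(X,x)$ for $n\geq2$.
\item The fibers of $g$ are connected, so $\tau_0$ of the fiber of $f$ is $\pi_1(X,x)$ as well.
\item Hence $f$ is in general only an effective epimorphism, and it is $0$-connected exactly when $\pi_1(X,x)$ is trivial.
\end{itemize}
The proposition's output confirms this: it would produce a $1$-connected $K$, whereas the long exact sequence of $\B^nA\to Y\to K$ forces $\pi_1(K)\cong\pi_1(X,x)$. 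So your Step 3 only establishes this direction for simply connected $X$.

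\textbf{This cannot be repaired, because the implication is false without an invariance hypothesis.} Suppose $\B^nA\to\B\Pi_n(X,x)\to\B Q$ is a fiber sequence with $n\geq2$. Then $\pi_1(X,x)\cong\pi_1(\B Q)$, and $A$ is the kernel of $\pi_n(X,x)\to\pi_n(\B Q)$, which is equivariant for the $\pi_1$-actions. So $A$ must be stable under the $\pi_1(X,x)$-action.

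Concretely, in $\Sp$ take $n=2$ and let $X$ be the homotopy quotient of $\mathbb{C}P^\infty\times\mathbb{C}P^\infty$ by the involution $\sigma$ swapping the factors. Then $\pi_1(X)=\Z/2$ acts on $\pi_2(X)=\Z^2$ by swapping coordinates. Let $A=\Z\times0$. This $A$ is a normal sub-$1$-group of the abelian group $\pi_2(X)$, but $\B A\to\Pi_2(X)$ is not normal. Consistently, the $1$-connected $2$-covering $p_A$ is not normal: its fiber has two components, and a deck transformation exchanging them would force $A=\sigma A$.

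The paper's own argument for this direction, via the pushout $P$ of $\B^nq$ along $\varepsilon$ (with $q\colon\pi_n(X,x)\to\pi_n(X,x)\sslash A$), has the corresponding defect. The map induced by $\varepsilon$ on fibers over $\B\Pi_{n-1}(X,x)$, namely $\Pi_{n-1}(X,x)\times\B^n\pi_n(X,x)\to\B^n\pi_n(X,x)$, is the holonomy action, not the projection. So the comparison map $\varphi$ exists only if this action becomes trivial after $\B^nq$. In the example one computes $\pi_2(P)\cong\Z^2/(A+\sigma A)=0$. For $n\geq2$, part (2) needs the extra hypothesis that $A$ is $\pi_1(X,x)$-invariant, and any proof of this direction has to use it.
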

\begin{proof}
    The proof of (1) follows from an adjunction argument, identifying the slice of $n$-symmetric $1$-groups over $\pi_n(X,x)$ with the slice over $\Pi_n(X,x)$. Part (2) then lifts the identification to normal maps. 
 
    (1)\,The adjunction 
\begin{tikzcd}
	{\E_\ast^{\sg{n-1}}} & {\E_\ast:-\langle n-1\rangle}
	\arrow[shift left, hook, from=1-1, to=1-2]
	\arrow[shift left, from=1-2, to=1-1]
\end{tikzcd} of \cref{prop: n-connected pointed spaces is coreflective in pointed spaces} induces an adjunction on slice $\infty$-categories (\cite[\href{https://kerodon.net/tag/02KD}{Tag 02KD}]{kerodon}, dual to \cite[5.2.5.1]{LurieHTT})
\[\begin{tikzcd}
	{\sum\limits_\varepsilon\colon\slice{\E_\ast^{\sg{n-1}}}{\B ^n\pi_n(X,x)}} & {\slice{\E_\ast}{\B \Pi_n(X,x)}:-\langle n-1\rangle}
	\arrow[""{name=0, anchor=center, inner sep=0}, shift left=2, hook, from=1-1, to=1-2]
	\arrow[""{name=1, anchor=center, inner sep=0}, shift left=2, from=1-2, to=1-1]
	\arrow["\dashv"{anchor=center, rotate=-90}, draw=none, from=0, to=1]
\end{tikzcd}\]
where the left adjoint $\sum_\varepsilon$ acts by postcomposition with the counit $\varepsilon\colon\B^n\pi_n(X,x)\xrightarrow{}\B\Pi_n(X,x)$. By restricting to the objects for which the counit is an equivalence, we obtain an equivalence 
    \begin{equation*}
        \sum_\varepsilon\colon\slice{\E^{\sg{n-1}}_\ast}{\B ^n\pi_n(X,x)}\xrightarrow[]{\simeq}\slice{\E_\ast^{\sg{n-1}}}{\B \Pi_n(X,x)}\,\,.
    \end{equation*}
    This restricts further to an equivalence on the full sub-$\infty$-categories of objects whose domains are, in addition, $n$-truncated:
     \begin{equation*}
          \sum_\varepsilon\colon\slice{\E_\ast^{\sg{n-1},\sleq n}}{\B ^n\pi_n(X,x)}\xrightarrow{\simeq}\slice{\E_\ast^{\sg{n-1},\sleq n}}{\B \Pi_n(X,x)}\,\,.
    \end{equation*}
     By definition this is an equivalence \begin{equation}\label{theorem: n-symmetric subgroups of the fundamental n-group are 1-subgroups of n-th homotopy group eq1}
     \slice{\Grp_{(1,n)}(\E)}{\pi_n(X,x)}\xrightarrow{\simeq} \slice{\Grp_{(1,n)}(\E)}{\Pi_n(X,x)}\,\, ,
     \end{equation}
     where the right hand side is the slice over $\Pi_n(X,x)\in\Grp_n(\E)$ restricted to the full subcategory $\Grp_{(1,n)}(\E)\subseteq\Grp_n(\E)$. To see that this equivalence further restricts to the subcategories of sub-$n$-groups, consider an $n$-symmetric $1$-group $A$. A pointed map $f\colon \B^nA\to\B^n\pi_n(X,x)$ is mapped to $g\colon \B^nA\to \B\Pi_n(X,x)$ via the following commutative triangle: 
\[\begin{tikzcd}
    {\B ^nA} && {\B \Pi_n(X,x)} \\
    & {\B ^n\pi_n(X,x)}
    \arrow["g", from=1-1, to=1-3]
    \arrow["f"', from=1-1, to=2-2]
    \arrow["\varepsilon"', from=2-2, to=1-3]
\end{tikzcd}.\]
Because $\varepsilon$ is $(n-2)$-truncated, $f$ is $(n-1)$-truncated if and only if $g$ is so. This ensures that the equivalence \eqref{theorem: n-symmetric subgroups of the fundamental n-group are 1-subgroups of n-th homotopy group eq1} restricts to:
     \begin{equation*}
        \Sub_{(1,n)}(\pi_n(X,x))\simeq \Sub_{(1,n)}(\Pi_n(X,x)).
    \end{equation*} 
   Lastly, \cite[7.2.2.12]{LurieHTT} identifies such maps $f$ with maps of $1$-groups $A\to\pi_n(X,x)$. By Lemma~\ref{lemma: truncation and connectivity of loop space}, $n$-symmetric sub-$1$-groups correspond exactly to classical sub-$1$-groups. Consequently, we can identify \[\Sub_{(1,n)}(\Pi_n(X,x))\simeq\Sub_1(\pi_n(X,x)),\] completing the proof.

   \medskip
   
   (2)\, We need to show that a sub-$1$-group $A\to\pi_n(X,x)$ is normal if and only if the corresponding $n$-symmetric sub-$1$-group $\B^{n-1}A\to\Pi_n(X,x)$ is normal. Suppose first that $\B^{n-1}A\to\Pi_n(X,x)$ is normal. By definition there exists a pointed connected $n$-truncated object $\B Q$ together with a quotient map $\B q\colon \B\Pi_n(X,x)\to\B Q$ with fiber $\B^nA$. We can form the $(n-1)$-connected cover of the basepoint, given by the $(n-2)$-connected/$(n-2)$-truncated factorization $\ast\to \B^n\pi_n(\B Q)\to\B Q$. Since those classes are stable under base change, pulling this factorization back along the quotient map $\B q$ recovers the $(n-1)$-connected cover of $\B\Pi_n(X,x)$: 
\[\begin{tikzcd}
    {\B^nA} \arrow[r] \arrow[d] \arrow[dr, phantom, "\lrcorner", very near start] 
    & {\B^n\pi_n(X,x)} \arrow[r] \arrow[d] \arrow[dr, phantom, "\lrcorner", very near start] 
    & {\B\Pi_n(X,x)} \arrow[d, "\B q"] \\
    \ast \arrow[r] 
    & {\B^n\pi_n(\B Q)} \arrow[r] 
    & {\B Q}
\end{tikzcd}.\]
In particular $\B^nA\to\B^n\pi_n(X,x)$ is normal with quotient $\B^n\pi_n(\B Q)$.

   Conversely, suppose that $A\to \pi_n(X,x)$ is normal with quotient $q\colon \pi_n(X,x)\to\pi_n(X,x)\sslash A$. Since these are $n$-symmetric $1$-groups, we can form the pushout $P$ of $\B^nq$ along $\varepsilon$:
\[\begin{tikzcd}
    {\B^nA} \arrow[r] \arrow[d] \arrow[dr, phantom, "\lrcorner", very near start] 
    & {\B^n\pi_n(X,x)} \arrow[d, "{\B^nq}"] \arrow[r, "\varepsilon"] \arrow[dr, phantom, "\ulcorner", very near end] 
    & {\B\Pi_n(X,x)} \arrow[d, "f"] \\
    \ast \arrow[r, two heads] 
    & {\B^n(\pi_n(X,x)\sslash A)} \arrow[r] 
    & P
\end{tikzcd},\]
where the left square is Cartesian and the right square is coCartesian.
Because $\B^nq$ is $(n-1)$-connected (as $\B^n A$ is so), its cobase change $f$ must be $(n-1)$-connected as well. This implies that $f$ induces an equivalence on the fundamental $(n-1)$-groups: \[\tau_{n-1}f\colon \B\Pi_{n-1}(X,x)\xrightarrow{\simeq}\B\Pi_{n-1}\left(P,f(x)\right).\] It follows that the fiber of the composition $\eta_P\circ \tau_nf\simeq \tau_{n-1}f\circ\eta_X$ is equivalent to the fiber of $\eta_X$, which is precisely $\varepsilon$ by \eqref{example: n symmetric sub 1 group of Pi_n(X)}. We can thus assemble the following pasting of Cartesian squares:
\[\begin{tikzcd}
    {\B^n\pi_n(X,x)} \arrow[r] \arrow[d, "{\varepsilon}"] \arrow[dr, phantom, "\lrcorner", very near start] 
    & {\B^n\pi_n(P, f(x))} \arrow[r] \arrow[d] \arrow[dr, phantom, "\lrcorner", very near start] 
    & \ast \arrow[d] \\
    {\B\Pi_n(X,x)} \arrow[r, "{\tau_n f}"] 
    & {\B\Pi_n\left(P,f(x)\right)} \arrow[r, "{\eta_P}"] 
    & {\B\Pi_{n-1}(P,f(x))}
\end{tikzcd}.\]
We are left to show that $\pi_n(P,f(x)) \simeq \pi_n(X,x)\sslash A$, which would exhibit $\Pi_n(P,f(x))$ as the quotient of $\B^nA\to\B\Pi_n(X,x)$. To this end, we construct an $n$-connected map $\varphi \colon P\langle n-1\rangle \to \B^n(\pi_n(X,x)\sslash A)$, which immediately induces the desired equivalence on $n$-truncations: 
\[\tau_n\varphi\colon \B ^n\pi_n(P, f(x))\xrightarrow{\simeq} \B ^n(\pi_n(X,x)\sslash A).\] 
To construct $\varphi$, recall from \cref{remark: contruction of the n-connected cover} that the $(n-1)$-connected cover of $P$ arises as the fiber of the $(n-1)$-truncation $\eta_P$, forming the fiber sequence: \[P\langle n-1\rangle\to P \xrightarrow{\eta_P} \B\Pi_{n-1}(P,f(x)).\] Since $P$ is a pushout and pushouts are universal in $\E$, the fiber $P\langle n-1\rangle$ is the pushout of the respective fibers, which we compute now. Because $(n-1)$-connected objects are left-orthogonal to $(n-1)$-truncated ones, any map from an object of the form $\B^n G$ to $\B\Pi_{n-1}(X,x)$ is nullhomotopic. This implies that its fiber splits as a product $\Pi_{n-1}(X,x)\times \B^n G$ (\cref{rmk: fiber of a nullhomtopic map is the product of the loop of the base}), yielding the following pushout square of fibers:
    \[\begin{tikzcd}
        {\Pi_{n-1}(X,x)\times \B^n\pi_n(X,x)} \arrow[r] \arrow[d] 
        & {\B^n\pi_n(X,x)} \arrow[d] \\
        {\Pi_{n-1}(X,x) \times \B^n(\pi_n(X,x)\sslash A)} \arrow[r] 
        & {P\langle n-1\rangle} \arrow[ul, phantom, "\ulcorner", very near start]
    \end{tikzcd}.\]
    Its universal property naturally yields the comparison map $\varphi \colon P\langle n-1\rangle \to \B^n(\pi_n(X,x)\sslash A)$. To conclude, we must show that the fiber of $\varphi$ is $n$-connected. Again, by universality of pushouts, the fiber of $\varphi$ is the pushout of the fibers of the constituent maps into $\B^n(\pi_n(X,x)\sslash A)$, yielding the following coCartesian square:
    \[\begin{tikzcd}
        {\Pi_{n-1}(X,x)\times \B^n A} \arrow[r] \arrow[d] 
        & {\B^n A} \arrow[d] \\
        {\Pi_{n-1}(X,x)} \arrow[r] 
        & {\operatorname{fib}(\varphi)} \arrow[ul, phantom, "\ulcorner", very near start]
    \end{tikzcd}.\]
    This exactly defines the join:
    \[ \operatorname{fib}(\varphi) \simeq \B^nA \ast \Pi_{n-1}(X,x). \]
    Since $\B^nA$ is $(n-1)$-connected and $\Pi_{n-1}(X,x)$ is $(-1)$-connected, the dual Blakers-Massey theorem \cite[Cor.~3.5.2]{ABFJ_GeneralizedBlakersMassyTheorem} bounds the connectivity of the join:
    \[\operatorname{conn}(\operatorname{fib}(\varphi))\geq (n-1)-1+2=n.\] Therefore, the fiber is $n$-connected, which implies that $\varphi$ is as well, and the proof is complete.
\end{proof}
\begin{remark}\label{remark: n symmetric sub 1 group of an n group G}
    The previous theorem is stated specifically for the fundamental $n$-group $G=\Pi_n(X,x)$. Since every $n$-group $G$ is of this form, namely $G\simeq \Pi_n(\B G,e)$, we could equivalently express the result for an arbitrary $n$-group $G$ as the equivalence: \[\Sub_1(\pi_n(\B G, e))\simeq \Sub_{(1,n)}(G),\]
    along with the corresponding restriction to normal maps.
\end{remark}
\begin{remark}\label{remark: construction of the n group quotient by a sub 1 group}
 Let $X\in\Sp$ and let $A\to\pi_n(X,x)$ be a sub-$1$-group with $n\geq 2$. Classically, if $X$ is simple, one can construct a delooping of the quotient $n$-group $\Pi_n(X)\sslash\B^{n-1}A$ as follows: the $n$-truncation $\tau_n X$ is the fiber of the $k$-invariant $\tau_{n-1}X\to\B^{n+1}\pi_n(X)$. Post‑composing this $k$-invariant with the quotient map $\B^{n+1}\pi_n(X)\to\B^{n+1}(\pi_n(X)/A)$ and taking the fiber yields the desired object $\B(\Pi_n(X)\sslash\B^{n-1}A)$, as can be verified by a straightforward pasting of Cartesian squares. For general $X$ this construction fails because the $k$-invariant may not exist. \cref{theorem: n-symmetric subgroups of the fundamental n-group are 1-subgroups of n-th homotopy group} shows that in our framework the $n$-symmetric sub-$1$-group $\B^{n-1}A\to\Pi_n(X)$ is normal, and therefore the quotient $n$-group $\Pi_n(X)\sslash\B^{n-1}A$ exists in the $\infty$-topos, without any simplicity hypothesis.
\end{remark}
    We place a strong emphasis on $n$-symmetric sub-$1$-groups of $n$-groups for the following reason. While an arbitrary pointed $n$-covering map $E\to X$ corresponds to a sub-$n$-group $\Pi_n(E,e)\to\Pi_n(X,x)$, the most interesting results about deck transformations (\cref{theorem: action of Deck is n-free}, \ref{cor: group deck is the quotient of fundamental n-group}, \ref{theorem: characterization normal n-covering maps}) require the total space $E$ to be $(n-1)$-connected. Under this connectivity assumption, the fundamental $n$-group of $E$ simplifies to $\Pi_n(E,e)\simeq\B^{n-1}\pi_n(E,e)$, so that the sub-$n$-group becomes an $n$-symmetric sub-$1$-group $\B^{n-1}\pi_n(E,e)\to\Pi_n(X,x)$. Via \cref{theorem: n-symmetric subgroups of the fundamental n-group are 1-subgroups of n-th homotopy group}, these are identified with ordinary sub-$1$-groups of $\pi_n(X,x)$. An enjoyable consequence is that for $n\geq 2$, every $(n-1)$-connected $n$-covering map is automatically normal (\cref{cor: classification normal n-covering maps}).
 
\section{$\infty$-Group actions}\label{section: infinty group actions}
As explained in the introduction, the classification theorem of $n$-coverings establishes a correspondence between $n$-covering maps over $(X,x)\in\E_\ast$ and $\infty$-actions of the fundamental $n$-group $\Pi_n(X,x)$ on $(n-1)$-truncated objects. In this section, we study the general theory of $\infty$-actions, with a particular focus on this truncated setting. 

We begin in \cref{subsection: group actions : background} by recalling definitions and establishing basic properties and examples. We continue in \cref{subsection: relation to truncation and homotopy groups} by analyzing the behavior of $\infty$-actions under truncation and homotopy group functors. In \cref{subsection: n free n transitive and n faithful actions}, we introduce higher analogs of classical group action properties: $n$-transitive, $n$-free, and $n$-faithful $\infty$-actions. We then relate the transitivity and freeness of an $\infty$-action of a product $\infty$-group in \cref{subsection: action from a product}, and finally investigate fixed points in \cref{subsection: fixed points}.
\subsection{Background}\label{subsection: group actions : background}
For the foundational theory of $\infty$-actions, we follow \cite{NSS_PrincipalBundlesGeneralTheory, SeverinBunk_ooBundlesAndSmthStringModels}.
 \begin{definition}
   Let $\C$ be an $\infty$-category with pullbacks and $G_\sbullet\in\Grp(\C)$ be an $\infty$-group.
   \begin{enumerate}
       \item A \textit{$G$-action in $\C$} is a groupoid object $(X\sslash G)_\sbullet\in\Gpd(\C)$ equipped with a Cartesian natural transformation $(X\sslash G)_\sbullet\to G_\sbullet$. Explicitly, for every morphism $[i]\to[j]$ in $\Delta^\op$, the following square is Cartesian:
\[\begin{tikzcd}
    {(X\sslash G)_i} \arrow[r] \arrow[d] \arrow[dr, phantom, "\lrcorner"{anchor=center, pos=0.125}, near start] & {G_i} \arrow[d, "{G_\sbullet\alpha}"'] \\
    {(X\sslash G)_j} \arrow[r] & {G_j}
\end{tikzcd}.\]
We refer to this as a \textit{$G$-action on the object $X\coloneq(X\sslash G)_0$}, denoted $G\curvearrowright X$.
A \textit{morphism of $G$-actions} (or a \textit{$G$-equivariant map from $X$ to $Y$}) is a map of groupoid objects $(X\sslash G)_\sbullet\to (Y\sslash G)_\sbullet$ over $G_\sbullet$. The \textit{$\infty$-category of $G$-actions} is defined as \[\Act_G(\C)\coloneq\slice{\text{Cart}(\Gpd(\C))}{G_\sbullet}.\]
We define $\Act_G(X)\coloneq \Act_G(\C)\times_\C\{X\}$ the $\infty$-category of \textit{$G$-actions on a fixed object $X$}.
\item The \textit{quotient} $X\sslash G$ of a $G$-action $G\curvearrowright X$ is the realization $\colim_{\Delta^\op} (X\sslash G)_\sbullet$ in $\C$. The quotient augments the simplicial object to a functor $(X\sslash G)_\sbullet\colon \Delta^\op_+\to \C$ where $(X\sslash G)_{-1}=X\sslash G$. 
\end{enumerate}
    \end{definition}
    \begin{remark}
        \begin{enumerate}
            \item There are canonical identifications $(X\sslash G)_n\simeq G_n\times X\simeq G^{\times n}\times X$. The components of the Cartesian map become the Cartesian projections $G^{\times n}\times X\to G^{\times n}$, and the face map $d_0\colon G\times X\to X$ is the projection. The other face map $d_1\eqqcolon\rho\colon G\times X\to X$ is the \textit{action map}.
            \item By the pasting law for pullbacks, any morphism of $G$-actions $(X\sslash G)_\sbullet\to (Y\sslash G)_\sbullet$ is necessarily Cartesian.
        \end{enumerate}
    \end{remark}
     When $\C=\E$ is an $\infty$-topos, the $\infty$-group $G_\sbullet$ is effective and corresponds to its classifying object $\ast\twoheadrightarrow\B G$. Similarly, the $G$-action groupoid $(X\sslash G)_\sbullet$ is effective and can be recovered as the \v{C}ech nerve of its quotient map $X\twoheadrightarrow X\sslash G$. It follows that the Cartesian transformation $(X\sslash G)_\sbullet\to G_\sbullet$ can be recovered from the following commutative square:
\[\begin{tikzcd}
	X & \ast \\
	{X\sslash G} & {\B G}
	\arrow[from=1-1, to=1-2]
	\arrow[two heads, from=1-1, to=2-1, "q"]
	\arrow[two heads, from=1-2, to=2-2, "\ast_{\B G}"]
	\arrow[from=2-1, to=2-2]
\end{tikzcd}.\]
By descent, this square is Cartesian. Since a Cartesian morphism $q\to\ast_{\B G}$ is completely determined by its codomain map $X\sslash G\to\B G$, this motivates the following result. Although it has been established multiple times in the context of higher topoi, our exposition differs from the existing literature \cite[Prop.~I.6.19]{Samuel_Thesis}, \cite[Prop. 5.1.267]{Schreiber_DifferentialCohomologyCohesiveInfinityTopos}.
\begin{theorem}\label{theorem: characterization of G-actions}
    Let $\C$ be a presentable $\infty$-category with effective groupoid objects, and satisfying descent along universal $\Delta^\op$-shaped colimits. Let $G$ be an $\infty$-group object in $\C$. There is an equivalence of $\infty$-categories: 
    \[\Act_G(\C)\simeq \slice{\C}{\B G}.\]
\end{theorem}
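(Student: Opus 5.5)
The plan is to factor the desired equivalence through descent, building it as a composite of two equivalences. By definition, $\Act_G(\C)=\slice{\Cart(\Gpd(\C))}{G_\sbullet}$, and since $\C$ has effective groupoid objects, the \v{C}ech nerve gives an equivalence $\Eff(\C)\simeq\Gpd(\C)$. Applying it objectwise and restricting to Cartesian transformations identifies $\Act_G(\C)$ with the $\infty$-category of Cartesian natural transformations of augmented simplicial objects $\check{C}(q)\to\check{C}(\ast\to\B G)$. Here $q\colon X\twoheadrightarrow X\sslash G$ is an effective epimorphism. In other words, $\Act_G(\C)$ becomes the full subcategory of $\Fun([1],\C)_{/(\ast\twoheadrightarrow\B G)}$ on the squares
\[\begin{tikzcd}
	X & \ast \\
	{X\sslash G} & {\B G}
	\arrow[from=1-1, to=1-2]
	\arrow[two heads, from=1-1, to=2-1]
	\arrow[two heads, from=1-2, to=2-2]
	\arrow[from=2-1, to=2-2]
\end{tikzcd}\]
whose underlying simplicial map is Cartesian.

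The key step is to show that, for squares of this shape, the following are equivalent: the induced map of \v{C}ech nerves restricted to $\Delta^\op$ is Cartesian, and the square itself is Cartesian. One direction is descent along $\Delta^\op$-shaped colimits. A Cartesian transformation of simplicial objects has Cartesian squares from each level to the realization, and in particular the degree-$0$ square above is Cartesian. The converse comes from universality: if the square is Cartesian, then pulling back $\check{C}(\ast\to\B G)$ along $X\sslash G\to\B G$ gives $\check{C}(q)$, because fiber products are computed levelwise. The pasting law then shows that all the naturality squares are Cartesian.

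With this in hand, $\Act_G(\C)$ is identified with the full subcategory $\Cart(\Fun([1],\C))_{/(\ast\to\B G)}$ of $\Fun([1],\C)_{/(\ast\to\B G)}$ spanned by Cartesian squares. Evaluation at the codomain gives a functor to $\slice{\C}{\B G}$, and pullback of $\ast\to\B G$ gives a functor back. These two functors are mutually inverse by the universal property of pullbacks, since a Cartesian morphism into a fixed arrow is determined by its codomain map. This pullback construction is the standard right Kan extension argument, which shows that the inclusion of $\{1\}\hookrightarrow[1]$ induces an equivalence of Cartesian-arrow slices.

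Two points need care for the inverse construction. First, the pulled-back arrow $Y\times_{\B G}\ast\to Y$ must be an effective epimorphism, so that its \v{C}ech nerve realizes to $Y$ and the functor lands in actions whose quotient is $Y$. This holds because effective epimorphisms are stable under base change when $\Delta^\op$-colimits are universal. Second, its \v{C}ech nerve must be Cartesian over $G_\sbullet$, which is the converse direction established above.

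The main obstacle I expect is making this correspondence functorial at the level of $\infty$-categories rather than merely objectwise. I would handle it by working throughout with the equivalence $\Eff(\C)\simeq\Gpd(\C)$ applied in the slice over $G_\sbullet$, together with the standard fact that the codomain projection from Cartesian arrows over a fixed arrow is a trivial fibration.
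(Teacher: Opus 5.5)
Your proposal is correct and follows essentially the same route as the paper: transport along the equivalence $\Gpd(\C)\simeq\Eff(\C)$, show that a transformation of groupoid objects is Cartesian exactly when the square comparing degree~$0$ with the realizations is Cartesian (descent gives one direction, universality the other), and then identify Cartesian arrows into $\ast\twoheadrightarrow\B G$ with objects over $\B G$ via the codomain. You are slightly more explicit than the paper in checking that base changes of $\ast\twoheadrightarrow\B G$ remain effective epimorphisms, a fact the paper's final step uses without stating it.
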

\begin{proof}
    By hypothesis, $\C$ has effective groupoids, yielding an equivalence
    \begin{equation}\label{prop: characterization of pointed action eq1}
        \Gpd(\C)\xrightarrow[\colim]{\simeq}\Eff(\C).
    \end{equation}
   We claim that this restricts to an equivalence between the wide sub-$\infty$-categories of Cartesian morphisms:
    \[\Cart(\Gpd(\C))\xrightarrow[\colim]{\simeq}\Cart(\Eff(\C)).\]
    To verify this, one only needs to check that the colimit functor \eqref{prop: characterization of pointed action eq1} preserves and reflects Cartesian morphisms. This translates to verifying that a natural transformation $A_\sbullet\to B_\sbullet$ in $\Gpd(\C)$ is Cartesian if and only if the corresponding commutative square in $\Eff(\C)$ is Cartesian:
\[\begin{tikzcd}
    {A_0} \arrow[r, two heads] \arrow[d] & {\colim_{\Delta^\op}A} \arrow[d] \\
    {B_0} \arrow[r, two heads] & {\colim_{\Delta^\op}B}
\end{tikzcd}.\]
     The \textit{only if} direction is precisely the definition of descent, while the \textit{if} direction follows from universality (see \cite[Prop.~3.8]{NSS_PrincipalBundlesGeneralTheory} for the complete argument in a related setting). We can now establish a sequence of equivalences to conclude:
    \begin{align*}
        \Act_G(\C)&\coloneq\slice{\Cart\left(\Gpd(\C)\right)}{G_\sbullet}\\
        &\simeq\slice{ \Cart\left(\Eff(\C)\right)}{(\ast\twoheadrightarrow \B G)}\\
        &\simeq \slice{\C}{\B G}.
    \end{align*}
    The final equivalence simply translates the fact that the data of a Cartesian morphism into $\ast\twoheadrightarrow \B G$ is uniquely determined by the target map $X\sslash G\to\B G$.
\end{proof}
The hypotheses of the previous theorem were chosen to be as minimal as possible, precisely so that the result applies to the pointed $\infty$-category $\E_\ast$ of an $\infty$-topos. This is essential for studying loop space and homotopy group constructions on $\infty$-actions, since those functors require pointed objects. Moreover, whereas the classification of $n$-covering is in terms of $\infty$-actions, pointed $n$-coverings correspond to pointed $\infty$-actions (\cref{Theorem: representation theorem for n-covers}) and, in particular, to subgroups (\cref{corollary: pointed coverings correspond to subgroups}), which are inherently pointed.
\begin{corollary}\label{cor: characterization of pointed action}
Let $\E$ be an $\infty$-topos, and let $\C$ denote either $\E$ or $\E_\ast$. Let $G$ be an $\infty$-group object in $\E$. Then there is an equivalence of $\infty$-categories:
     \[\Act_G(\C)\simeq \slice{\C}{\B G}.\]
\end{corollary}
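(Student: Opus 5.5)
The plan is to apply \cref{theorem: characterization of G-actions} directly, so the work is checking its hypotheses for $\C$ equal to $\E$ or to $\E_\ast$. There is one subtlety. In the pointed case, $G$ is an $\infty$-group object of $\E$ and must be regarded as an $\infty$-group object of $\E_\ast$. I will do this by observing that any simplicial object $G_\sbullet$ with $G_0\simeq\ast$ lifts canonically to $\E_\ast$: each $G_n\simeq G^{\times n}$ receives the degeneracy of the unique point of $G_0$, and all face and degeneracy maps are compatible with these points. By \cref{prop: pointed category has effective groupoids}, the lifted object is a groupoid object in $\E_\ast$ because its underlying object is one in $\E$. Its realization in $\E_\ast$ is $\B G$ with its canonical basepoint, because $U$ preserves and reflects $\Delta^\op$-indexed colimits, these being weakly contractible.

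For $\C=\E$, the hypotheses hold by \cref{def: infinity topos} together with the descent characterization of $\infty$-topoi: $\E$ is presentable, groupoids are effective, and colimits are universal and satisfy descent. For $\C=\E_\ast$, I will invoke \cref{cor: pointed topos has nice properties} with $\I=\Delta^\op$, which is weakly contractible. It gives that $\E_\ast$ is presentable, has effective groupoids, and has universal $\Delta^\op$-shaped colimits satisfying descent. These are exactly the hypotheses of \cref{theorem: characterization of G-actions}.

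Applying the theorem then yields $\Act_G(\E_\ast)\simeq\slice{(\E_\ast)}{\B G}$, where $\B G$ carries its canonical basepoint $\ast\twoheadrightarrow\B G$.

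The main point requiring care is the one hidden inside the proof of \cref{theorem: characterization of G-actions}. That proof needs the realization functor to preserve and reflect Cartesian morphisms. The direction \emph{Cartesian on groupoids implies Cartesian square on colimits} uses descent for $\Delta^\op$-colimits. The converse uses universality. In $\E_\ast$, pullbacks are computed as in $\E$ and $\Delta^\op$-colimits agree with those in $\E$, so both directions can be checked after applying $U$, where they reduce to the unpointed case already settled. No further obstacle is expected.
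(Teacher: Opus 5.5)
Your proposal is correct and follows essentially the same route as the paper: verify the hypotheses of \cref{theorem: characterization of G-actions} for $\E$ and $\E_\ast$ using \cref{cor: pointed topos has nice properties}, after noting that $G$ is also an $\infty$-group object in $\E_\ast$. The only differences are minor. You build the lift of $G_\sbullet$ to $\E_\ast$ by hand from the iterated degeneracies out of $G_0\simeq\ast$, whereas the paper simply cites \cite[7.2.2.10]{LurieHTT}. Your last paragraph, which re-checks the internals of the theorem's proof, is harmless but unnecessary once its hypotheses are established.
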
 
\begin{proof}
     Note first that $G$ is also a group object in $\E_\ast$ by \cite[7.2.2.10]{LurieHTT}. By \cref{cor: pointed topos has nice properties}, both $\E$ and $\E_\ast$ satisfy the required properties of \cref{theorem: characterization of G-actions}.
\end{proof}
In \cite{SeverinBunk_ooBundlesAndSmthStringModels}, Bunk defines $\infty$-actions as specific simplicial objects of the form $(X\times G^{\times\sbullet})$ and demonstrates that they are, in particular, groupoid objects \cite[Thm.~3.19]{SeverinBunk_ooBundlesAndSmthStringModels}. This observation implies that a functor need only preserve finite products, rather than all pullbacks, to preserve $\infty$-actions. The hypotheses we impose on $\C$ and $\C'$ in the following proposition ensure that the correspondence of \cref{theorem: characterization of G-actions} holds, and that effective epi/mono factorizations exist. This result serves as the $\infty$-action analog to \cref{prop: product preserving functors preserves group objects}.
\begin{prop}\label{prop: product preserving functors preserves actions}
        Let $\C$, $\C'$ be presentable $\infty$-categories with effective groupoid objects, and satisfying descent along universal $\Delta^\op$-shaped colimits. Let $L:\C\to\C'$ be a functor, $G$ an $\infty$-group in $\C$ with classifying object $\B G$, and $G\curvearrowright X$ an $\infty$-action in $\C$ with groupoid object $(X\sslash G)_\sbullet$.
        \begin{enumerate}
            \item If $L$ preserves finite products, $L\circ (X\sslash G)_\sbullet$ is an $\infty$-action, denoted $LG\curvearrowright LX$.
            \item If $L$ preserves finite products and realizations, the quotient of the $LG$-action on $LX$ is \[LX\sslash LG \simeq L(X\sslash G).\]
            \item If $L$ is left exact, the quotient of the $LG$-action on $LX$ is given by the image of $L(X\to X\sslash G)$: \[LX\twoheadrightarrow LX\sslash LG\hookrightarrow L(X\sslash G).\] In particular this yields the following pasting of Cartesian squares:
\[\begin{tikzcd}
    LX \arrow[r, two heads] \arrow[d] \arrow[dr, phantom, "\lrcorner"{anchor=center, pos=0.125}, near start] & {LX\sslash LG} \arrow[r, hook] \arrow[d] \arrow[dr, phantom, "\lrcorner"{anchor=center, pos=0.125}, near start] & {L(X\sslash G)} \arrow[d] \\
    \ast \arrow[r, two heads] & {\B(LG)} \arrow[r, hook] & {L(\B G)}
\end{tikzcd}.\]
 \end{enumerate}  
    \end{prop}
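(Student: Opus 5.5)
The plan mirrors the proof of \cref{prop: product preserving functors preserves group objects}, now applied to the action groupoid $(X\sslash G)_\sbullet$ with its Cartesian transformation to $G_\sbullet$.

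For (1), I would use Bunk's description \cite[Thm.~3.19]{SeverinBunk_ooBundlesAndSmthStringModels}: an $\infty$-action is a simplicial object of the form $G^{\times\sbullet}\times X$ with the projection to $G^{\times\sbullet}$. The relevant groupoid and Cartesianness conditions are expressed through finite products, and these identifications involve only products. So $L\circ(X\sslash G)_\sbullet$ again has the form $(LG)^{\times\sbullet}\times LX$ over $L\circ G_\sbullet$. By \cref{prop: product preserving functors preserves group objects}(1), $L\circ G_\sbullet$ is the $\infty$-group $LG$. Bunk's theorem then gives that $L\circ(X\sslash G)_\sbullet$ is a groupoid object with a Cartesian map to $LG$, that is, an action $LG\curvearrowright LX$.

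For (2), I would argue directly from the definitions. The quotient $LX\sslash LG$ is the realization of $L\circ(X\sslash G)_\sbullet$. If $L$ preserves realizations, this realization is $L$ applied to the realization of $(X\sslash G)_\sbullet$, namely $L(X\sslash G)$.

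For (3), I would first note that $L$ preserves the Cartesian square $X\to\ast$ over $X\sslash G\to\B G$ because $L$ is left exact; this square is Cartesian by descent. Hence $LX\simeq\ast\times_{L\B G}L(X\sslash G)$. Since $L$ preserves pullbacks, $L$ applied to the augmented Čech nerve of $X\twoheadrightarrow X\sslash G$ is the Čech nerve of $LX\to L(X\sslash G)$. This identifies the action groupoid $L\circ(X\sslash G)_\sbullet$ with $\check C(LX\to L(X\sslash G))|_{\Delta^\op}$. By \cref{rmk: construction of the epi mono factorization}, the realization of this Čech nerve is exactly the image in the effective epi/mono factorization $LX\twoheadrightarrow LX\sslash LG\hookrightarrow L(X\sslash G)$.

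The main obstacle is the pasting diagram. On the bottom row, \cref{prop: product preserving functors preserves group objects}(3) gives $\ast\twoheadrightarrow\B(LG)\hookrightarrow L(\B G)$, and the factorizations are compatible by functoriality of the epi/mono factorization. To show the left square is Cartesian, I would invoke descent in $\C'$. The map of groupoids $L\circ(X\sslash G)_\sbullet\to L\circ G_\sbullet$ is Cartesian by (1), so by descent the square relating $LX$, $\ast$ and their realizations is Cartesian. The total rectangle is $L$ of a Cartesian square, hence Cartesian.

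For the right square, I would use that pullbacks of monomorphisms are monomorphisms and that the epi/mono factorization is stable under base change. Pull back $\B(LG)\hookrightarrow L(\B G)$ along $L(X\sslash G)\to L(\B G)$ to obtain a monomorphism $M\hookrightarrow L(X\sslash G)$. By the total rectangle, the map $LX\to L(X\sslash G)$ factors as $LX\to M\hookrightarrow L(X\sslash G)$. Pulling back the effective epimorphism $\ast\twoheadrightarrow\B(LG)$ shows that $LX\to M$ is an effective epimorphism. By uniqueness of the factorization, $M\simeq LX\sslash LG$, so the right square is Cartesian, and the left square is Cartesian by the pasting law as well.
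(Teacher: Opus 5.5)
Your proposal is correct and follows essentially the paper's route. The paper gets (1) and (2) by citing Bunk's result that actions, being of the form $G^{\times\sbullet}\times X$, are preserved by finite-product-preserving functors, and it proves (3) as in the group-object case, where a left exact $L$ carries the effective action groupoid to the \v{C}ech nerve of $LX\to L(X\sslash G)$, whose realization is the image. Your explicit check that both squares of the pasting diagram are Cartesian supplies details the paper leaves implicit: descent for the left square, and for the right square base change of $\B(LG)\hookrightarrow L(\B G)$ together with uniqueness of the effective epi/mono factorization.
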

    \begin{proof}
       The first two points are established in \cite[Thm.~3.32]{SeverinBunk_ooBundlesAndSmthStringModels}. The proof of the third point proceeds identically to that of \cref{prop: product preserving functors preserves group objects}.
    \end{proof}
      \begin{example}\label{example: group actions}
      We work in an $\infty$-topos $\E$.
    \begin{enumerate}[label=\arabic*., ref=\theexample.\arabic*]
        \item\label{example: group actions: loop space action} Suppose that $X$ is pointed and connected. Then a fiber sequence $F\to E\to X$ corresponds to an $\infty$-action $\Omega X\curvearrowright F$ with quotient $E\simeq F\sslash \Omega X$.
        \item Recall that $\B\Aut(X)$ is the image of the classifying map $\corner{X}\colon \ast\to \U$. Pulling this back along the univalent map $p\colon\U_\ast\to\U$ yields: 
 \[\begin{tikzcd}
    X \arrow[r] \arrow[d] \arrow[dr, phantom, "\lrcorner"{anchor=center, pos=0.125}, near start] & {X\sslash \Aut(X)} \arrow[r, hook] \arrow[d] \arrow[dr, phantom, "\lrcorner"{anchor=center, pos=0.125}, near start] & {\U_\ast} \arrow[d, "p"] \\
    \ast \arrow[r, two heads] & {\B\Aut(X)} \arrow[r, hook] & \U
\end{tikzcd}.\]
        The left square defines the \textit{evaluation} $\infty$-action $\Aut(X)\curvearrowright X$. The middle vertical quotient map $X\sslash \Aut(X)\to\B\Aut(X)$ is univalent and classifies $X$-bundles.
        \item\label{example: group actions: as a map in Aut(X)} Let $q\colon X\sslash G\to \B G$ be the classifying map of a $G$-action, which is itself classified by a map $\corner{q}\colon \B G\to \U$. Because the fiber $q$ is $X$ and $\ast\twoheadrightarrow\B G$ is an effective epi, it factors through the subuniverse $\B\Aut(X)$. It follows that $q$ is classified by a pointed map $\B G\to\B\Aut(X)$. This motivates the following equivalence:
        \[\Act_G(X)^\simeq \simeq \E_\ast(\B G,\B\Aut(X)).\]
        See for example \cite[Prop.~III.2.30]{Samuel_Thesis} for more details.
        \item Every $\infty$-group $G$ acts trivially on the point $\ast$, with quotient $\ast\sslash G\simeq\B G$.
        \item More generally there is a \textit{trivial} $G$-action on $X$ for all $\infty$-group $G$ and object $X$, uniquely determined by the condition that the action map is the projection $\rho=\pi_2\colon G\times X\to X$. Consequently, its $\infty$-action-groupoid is $(X\sslash G)_\sbullet\simeq G_\sbullet\times X_\sbullet$, where $X_\sbullet$ is the constant simplicial object. Thus, an $\infty$-action is trivial if and only if its classifying map is the projection $X\sslash G\simeq \B G\times X\xrightarrow{\pi_1}\B G$. This happens if and only if $\B G\to\B \Aut(X)$ is nullhomotopic, that is, it factors through the point $\ast$. 
        \item There is a \textit{left multiplication} $\infty$-action $G\curvearrowright G$, whose classifying map is $\ast\to \B G$.
        \item\label{example: group actions: restricted action}
        Let $f\colon A\to G$ be a morphism of $\infty$-groups, and $G\curvearrowright X$ an $\infty$-action. There is a \textit{restricted} $\infty$-action $A\curvearrowright X$ defined by the following Cartesian squares:
     \[\begin{tikzcd}
        X \arrow[r] \arrow[d] \arrow[dr, phantom, "\lrcorner"{anchor=center, pos=0.225}, near start] & {X\sslash A} \arrow[r] \arrow[d] \arrow[dr, phantom, "\lrcorner"{anchor=center, pos=0.125}, near start] & {X\sslash G} \arrow[d] \\
        \ast \arrow[r] & {\B A} \arrow[r, "\B f"'] & {\B G}
    \end{tikzcd}.\] 
     \item\label{example: group actions: action of a map A-> G} Let $f\colon A\to G$ be a morphism of $\infty$-groups. The restriction of the left multiplication $G$-action on itself along $f$ yields an $\infty$-action $A\curvearrowright G$. Its quotient $G\sslash A$ is the fiber of $\B f$. This definition coincides with the quotient of $f$ of Definition~\ref{def: normal maps: quotient}. Moreover the fiber sequence $G\sslash A\to\B A\to \B G$ defines a $G$-action on $G\sslash A$ with quotient $\B A$.
    \end{enumerate}
\end{example}
Using Example \ref{example: group actions: as a map in Aut(X)}, one defines the following notion:
\begin{definition}\label{def: group map respects actions}
    Let $G\curvearrowright X$ and $H\curvearrowright X$ be $\infty$-actions on a fixed object $X$. An $\infty$-group map $f\colon G\to H$ \textit{respects the $\infty$-actions on $X$} if the following triangle 
\[\begin{tikzcd}
	G && H \\
	& {\Aut(X)}
	\arrow["f", from=1-1, to=1-3]
	\arrow[from=1-1, to=2-2]
	\arrow[from=1-3, to=2-2]
\end{tikzcd}\]
commutes in $\Grp(\E)$.
\end{definition}

\subsection{Relation with truncations and homotopy groups}\label{subsection: relation to truncation and homotopy groups}
    We now analyze how $\infty$-actions interact with truncation functors. If $X$ is an $(n-1)$-truncated object, any $G$-action on $X$ naturally factors through an $\infty$-action of the $n$-group $\tau_{n-1}G$, allowing us to restrict our attention to the latter.
    \begin{prop}\label{prop: actions on $n$-truncated objects corresponds to actions of an n-group}
         Let $X$ be an $(n-1)$-truncated object in an $\infty$-topos $\E$. An $\infty$-action $G\curvearrowright X$ corresponds to an $\infty$-action $\tau_{n-1} G\curvearrowright X$ of the underlying $n$-group $\tau_{n-1} G\in\Grp_n(\E)$. Moreover there is an equivalence $X\sslash \tau_{n-1}G\simeq \tau_n(X\sslash G)$.
    \end{prop}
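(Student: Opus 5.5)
The plan is to pass through the classification of actions by classifying maps, \cref{cor: characterization of pointed action} together with Example~\ref{example: group actions: as a map in Aut(X)}. An $\infty$-action $G\curvearrowright X$ amounts to a pointed map $\B G\to\B\Aut(X)$, equivalently a map $q\colon X\sslash G\to\B G$ with fiber $X$. Since $X$ is $(n-1)$-truncated, $\Aut(X)=\Eq(X,X)$ is $(n-1)$-truncated by \cref{truncation level of the object of equivalences}. Then $\B\Aut(X)$ is $n$-truncated, by \cref{cor: realization of n truncated groupoid is n+1 truncated} or directly since it is a subobject of $\U^{\sleq{n-1}}$ (\cref{universe of n-truncated maps is (n+1)-truncated}). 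Hence any pointed map $\B G\to\B\Aut(X)$ factors uniquely through $\eta\colon\B G\to\tau_n\B G$. By \cref{cor: classifying object of the truncation of the group}, $\tau_n\B G\simeq\B(\tau_{n-1}G)$, with $\tau_{n-1}G$ an $n$-group. This gives an equivalence
\[\E_\ast(\B G,\B\Aut(X))\simeq\E_\ast(\B\tau_{n-1}G,\B\Aut(X)),\]
that is, $\Act_G(X)^\simeq\simeq\Act_{\tau_{n-1}G}(X)^\simeq$. Under this equivalence, the $G$-action is the restriction (Example~\ref{example: group actions: restricted action}) of the $\tau_{n-1}G$-action along the unit $G\to\tau_{n-1}G$.

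For the quotient, the restriction description gives a Cartesian square
\[\begin{tikzcd}
X\sslash G \arrow[r] \arrow[d] & X\sslash\tau_{n-1}G \arrow[d] \\
\B G \arrow[r, "\eta"'] & \B\tau_{n-1}G
\end{tikzcd}\]
with $\eta$ an $n$-truncation of $\B G$. The vertical maps have fiber $X$, which is $(n-1)$-truncated, so they are $(n-1)$-truncated maps. Applying \cref{corollary: Cartesian square of truncation maps} with $m=n-1<n$ to $X\sslash\tau_{n-1}G\to\B\tau_{n-1}G$ is not directly what we need. Instead, note that the top map is a base change of the $n$-connected map $\eta$ (\cref{remark: fiberwise truncation is the n-connected/truncated factorization}), so it is $n$-connected by \cref{prop: connected maps are preserved by pullback and pushouts}. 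Its target is $n$-truncated: it is the domain of an $(n-1)$-truncated map into the $n$-truncated object $\B\tau_{n-1}G$, so \cref{prop: equivalent characterization of truncated maps} applies. An $n$-connected map into an $n$-truncated object is an $n$-truncation, since it is the $n$-connected/$n$-truncated factorization of the map to $\ast$. Therefore $X\sslash\tau_{n-1}G\simeq\tau_n(X\sslash G)$.

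The main subtlety lies in the first step: making the correspondence hold at the level of $\infty$-categories rather than only cores, if that is what ``corresponds'' should mean. For that I would use \cref{cor: characterization of pointed action} and \cref{prop: characterization n-connected maps through pullback functor} with $m=n-1<n$ applied to the $n$-connected map $\eta$. Pullback along $\eta$ then gives an equivalence $(\slice{\E}{\B\tau_{n-1}G})^{\sleq{n-1}}\simeq(\slice{\E}{\B G})^{\sleq{n-1}}$, i.e.\ between actions on $(n-1)$-truncated objects of $\tau_{n-1}G$ and of $G$. This recovers both the statement and the quotient formula in one stroke.
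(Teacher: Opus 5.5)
Your proposal is correct and follows the paper's proof. The first step is identical: pointed maps $\B G\to\B\Aut(X)$ factor through $\tau_n\B G\simeq\B(\tau_{n-1}G)$, with restriction as inverse. For the quotient, the paper applies \cref{corollary: Cartesian square of truncation maps} to the $(n-1)$-truncated map $X\sslash G\to\B G$, whereas you inline that corollary's argument by showing the top map of the restriction square is an $n$-connected map into an $n$-truncated object. Your closing upgrade to an equivalence of $\infty$-categories via \cref{prop: characterization n-connected maps through pullback functor} is also valid and slightly exceeds the paper's proof, which only compares cores.
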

    \begin{proof}
      Since $\B\Aut(X)$ is $n$-truncated, there are equivalences
     \[\E_\ast(\B G,\B\Aut(X))\simeq\E_\ast(\tau_n(\B G),\B\Aut(X))\simeq \E_\ast(\B(\tau_{n-1}G),\B\Aut(X)),\]
     where we used \cref{cor: classifying object of the truncation of the group} for the second one. Note that the inverse map is the restricted $\infty$-action construction from Example~\ref{example: group actions: restricted action}. Now consider the following commutative square:
\[\begin{tikzcd}
    {X\sslash G} \arrow[r, two heads] \arrow[d] \arrow[dr, phantom, "\lrcorner"{anchor=center, pos=0.125}, near start] & {\tau_n(X\sslash G)} \arrow[d] \\
    \B G \arrow[r, two heads] & {\tau_n(\B G)}
\end{tikzcd},\]
which is Cartesian by \cref{corollary: Cartesian square of truncation maps}.
This shows that $X\sslash \tau_{n-1}G\simeq\tau_n(X\sslash G)$.
    \end{proof}
    \begin{example}\label{example: action of Pi_n(X) on the fiber F}
        Let $F\to E\to X$ be a fiber sequence where $X$ is pointed and connected, and $F$ is $(n-1)$-truncated. The $\Omega X$-action on $F$ from Example~\ref{example: group actions: loop space action} induces a $\Pi_n(X,x)$-action on $F$:
\[\begin{tikzcd}
    F \arrow[r] \arrow[d] \arrow[dr, phantom, "\lrcorner"{anchor=center, pos=0.225}, near start] & {F\sslash \Omega X} \arrow[r, "{\eta_E}"] \arrow[d] \arrow[dr, phantom, "\lrcorner"{anchor=center, pos=0.125}, near start] & {F\sslash\Pi_n(X,x)} \arrow[d] \\
    \ast \arrow[r] & {\B\Omega X} \arrow[r, "{\eta_X}"] & {\B\Pi_n(X,x)}
\end{tikzcd},\]
where $\B\Pi_n(X,x)=\tau_n X$, $F\sslash \Omega X\simeq E$, and $F\sslash \Pi_n(X,x)\simeq \tau_nE\simeq\tau_n(F\sslash\Omega X)$. Moreover, the $\infty$-group map $\Omega X\to \Pi_n(X,x)$ respects the $\infty$-actions on $F$ by construction.
    \end{example}
    The $n$-truncation functor $\tau_n:\E\to\E$ preserves finite products, which by \cref{prop: product preserving functors preserves actions} implies that it preserves $\infty$-actions. While it is not left exact and does not preserve realizations, it nonetheless behaves well with quotients. Recall that it also behaves well with classifying objects (\cref{rmk: on truncations of realizations}).
    \begin{prop}\label{prop: truncation functor preserve actions}
        Let $G\curvearrowright X$ be an $\infty$-action in an $\infty$-topos $\E$, and $n\geq -2$. Then there is an $\infty$-action $\tau_nG\curvearrowright\tau_n X$ classified by the Cartesian square:
\[\begin{tikzcd}
    {\tau_n X} \arrow[r] \arrow[d] \arrow[dr, phantom, "\lrcorner"{anchor=center, pos=0.125}, near start] & \ast \arrow[d] \\
    {\tau_{n+1}(\tau_n^{\B G}(X\sslash G))} \arrow[r] & {\B (\tau_nG)}
\end{tikzcd},\]
where the bottom classifying map is $\tau_{n+1}(\tau_n^{\B G}(X\sslash G)\to \B G)$.
    \end{prop}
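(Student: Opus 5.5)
The plan is to combine \cref{prop: product preserving functors preserves actions} (applied to $\tau_n$, which preserves finite products by \cref{prop: truncation functor preserve finite products}) with \cref{theorem: characterization of G-actions}, and then identify the quotient explicitly using the truncation results of \cref{section : truncations and connectivity}. By \cref{prop: product preserving functors preserves actions}(1), $\tau_n\circ(X\sslash G)_\sbullet$ is a groupoid object with a Cartesian map to $\tau_n\circ G_\sbullet$. This is an $\infty$-action $\tau_nG\curvearrowright\tau_nX$, where $\tau_nG$ is an $(n+1)$-group with $\B(\tau_nG)\simeq\tau_{n+1}\B G$ by \cref{cor: classifying object of the truncation of the group}. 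It remains to identify its quotient $Q\coloneq\tau_nX\sslash\tau_nG$ over $\B(\tau_nG)$. By effectiveness and descent, $Q$ is then characterized as the object over $\B(\tau_nG)$ whose fiber at the basepoint is $\tau_nX$, equipped with the induced action.

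To identify $Q$, I would first fiberwise truncate. Set $Y\coloneq\tau_n^{\B G}(X\sslash G)$. Its structure map $Y\to\B G$ is $n$-truncated, so its fiber over $\ast\to\B G$ is $\tau_nX$, by stability of the $n$-connected/$n$-truncated factorization under base change (\cref{prop: connected truncated factorization system}, \cref{remark: fiberwise truncation is the n-connected/truncated factorization}). Thus $Y\to\B G$ classifies an action $G\curvearrowright\tau_nX$. Next, since $Y\to\B G$ is $n$-truncated and $n+1>n$, \cref{corollary: Cartesian square of truncation maps} shows that the square with corners $Y$, $\tau_{n+1}Y$, $\B G$ and $\tau_{n+1}\B G$ is Cartesian. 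Pasting it with the Cartesian square exhibiting $\tau_nX$ as the fiber of $Y\to\B G$ over $\ast$ yields a Cartesian square in which $\tau_nX$ is the fiber of $\tau_{n+1}Y\to\B(\tau_nG)$, as in the statement. This step parallels \cref{prop: actions on $n$-truncated objects corresponds to actions of an n-group}: since $\tau_nX$ is $n$-truncated, the $G$-action factors through the $(n+1)$-group $\tau_nG$ with quotient $\tau_{n+1}(\tau_nX\sslash G)=\tau_{n+1}Y$.

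The main obstacle is checking that the action so obtained, namely $\tau_nG\curvearrowright\tau_nX$ classified by $\tau_{n+1}Y$, coincides with the one produced by applying $\tau_n$ levelwise, rather than being some other action on the same object. I would compare the two groupoid objects. The levelwise one has terms $\tau_n(G^{k}\times X)\simeq(\tau_nG)^k\times\tau_nX$. The \v{C}ech nerve of $\tau_nX\twoheadrightarrow\tau_{n+1}Y$ has terms $(\tau_nG)^k\times\tau_nX$ by the Cartesian square just established. The unit maps $X\sslash G\to Y\to\tau_{n+1}Y$, lying over $\B G\to\B(\tau_nG)$, induce a map of groupoid objects $(X\sslash G)_\sbullet\to$ \v{C}ech nerve. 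Since the target is levelwise $n$-truncated, this map factors through the levelwise truncation. The factored map is then levelwise the equivalence $\tau_n(G^k\times X)\simeq(\tau_nG)^k\times\tau_nX$, again by product preservation of $\tau_n$. Hence the two actions agree, which completes the proof.
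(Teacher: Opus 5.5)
Your proof is correct and follows the paper's two-step route. First you fiberwise truncate to get a $G$-action on $\tau_nX$ with quotient $\tau_n^{\B G}(X\sslash G)$; then the Cartesian square of \cref{corollary: Cartesian square of truncation maps} passes to $\tau_nG$, which is exactly what the paper's appeal to its proposition on actions on truncated objects amounts to. The only difference in the construction is that you identify the fiber via base-change stability of the $n$-connected/$n$-truncated factorization, whereas the paper uses that $\mathrm{fib}\colon\slice{\E}{\B G}\to\E$ has both adjoints and so commutes with truncation. Your final comparison with the levelwise-truncated groupoid $\tau_n\circ(X\sslash G)_\sbullet$ does not appear in the paper, which only implicitly identifies the two actions, so it is a correct strengthening rather than a deviation.
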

\begin{proof}
We proceed in two steps: first we truncate $X$, and then $G$. The functor $\mathrm{fib}\colon \slice{\E}{\B G}\to\E$ has both adjoints, which implies by \cite[5.5.6.28]{LurieHTT} that it commutes with the respective $n$-truncation functors:
\[\mathrm{fib}\circ \tau_n^{\B G}\simeq \tau_n\circ\mathrm{fib}.\]
It follows that there is a pasting of Cartesian squares 
\[\begin{tikzcd}
    X \arrow[r] \arrow[d] \arrow[dr, phantom, "\lrcorner"{anchor=center, pos=0.125}, near start] & {\tau_nX} \arrow[r] \arrow[d] \arrow[dr, phantom, "\lrcorner"{anchor=center, pos=0.125}, near start] & \ast \arrow[d] \\
    {X\sslash G} \arrow[r] & {\tau_n^{\B G}(X\sslash G)} \arrow[r] & \B G
\end{tikzcd},\]
which witnesses a $G$-action on $\tau_nX$ with quotient $\tau_nX\sslash G\simeq\tau_n^{\B G}(X\sslash G)$. Since $\tau_nX$ is $n$-truncated, we apply \cref{prop: actions on $n$-truncated objects corresponds to actions of an n-group} to obtain the result.
\end{proof}
 
 \begin{prop}\label{prop: induced action on pi_k}
     Let $\E$ be an $\infty$-topos, $G\curvearrowright X$ an $\infty$-action in $\E$ on a pointed object $(X,x)$, and $k\geq0$. There exists an action $\pi_k(G)\curvearrowright\pi_k(X,x)$ classified by the Cartesian square:
\[\begin{tikzcd}
    {\pi_k(X,x)} \arrow[r] \arrow[d] \arrow[dr, phantom, "\lrcorner"{anchor=center, pos=0.125}, near start] & \ast \arrow[d] \\
    {\tau_1\left(\tau_0^{\B(\Omega^k G)}(\Omega^kX\sslash \Omega ^k G)\right)} \arrow[r] & {\B\pi_k(G)}
\end{tikzcd},\]
with its quotient given by the following image factorization in terms of $X\sslash G$ and $\B G$: 
\begin{equation}\label{prop: induced action on pi_k eq1}
\pi_k(X,x)\twoheadrightarrow \pi_k(X,x)\sslash\pi_k(G)\hookrightarrow \tau_1\left(\tau_0^{\Omega^k(\B G)}\left(\Omega^k(X\sslash G)\right)\right).    
\end{equation}
 \end{prop}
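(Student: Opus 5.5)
The plan is to realize $\pi_k$ as the composite $\tau_0\circ U\circ\Omega^k$ and handle each factor with the tools already available. First I will produce an $\Omega^kG$-action on $\Omega^kX$. Working in $\E_\ast$, I regard the action $G\curvearrowright X$ as a pointed map $X\sslash G\to\B G$ via \cref{cor: characterization of pointed action}. The basepoint of $X\sslash G$ is the image of $x$, and $\B G$ carries its canonical basepoint. The functor $\Omega^k\colon\E_\ast\to\E_\ast$ is left exact, so \cref{prop: product preserving functors preserves actions}(3) applies with $\C=\C'=\E_\ast$. It gives an action $\Omega^kG\curvearrowright\Omega^kX$, together with the pasting of Cartesian squares
\[\Omega^kX\twoheadrightarrow \Omega^kX\sslash\Omega^kG\hookrightarrow\Omega^k(X\sslash G),\qquad \ast\twoheadrightarrow\B(\Omega^kG)\hookrightarrow\Omega^k(\B G).\]
Here $\Omega^kG$ is an $\infty$-group, since $\Omega^k$ preserves products and \cref{prop: product preserving functors preserves group objects} applies. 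The forgetful functor $U$ preserves finite products and realizations of the relevant simplicial diagrams (\cref{prop: pointed category has effective groupoids}). So the same data descend to an action in $\E$, with the same quotient and classifying object.

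Next I apply \cref{prop: truncation functor preserve actions} with $n=0$ to the action $\Omega^kG\curvearrowright\Omega^kX$. This directly yields an action $\tau_0\Omega^kG\curvearrowright\tau_0\Omega^kX$, that is $\pi_k(G)\curvearrowright\pi_k(X,x)$. It is classified by the Cartesian square with bottom map $\tau_1\bigl(\tau_0^{\B(\Omega^kG)}(\Omega^kX\sslash\Omega^kG)\bigr)\to\B(\tau_0\Omega^kG)$. To match the statement I must identify $\tau_0(\Omega^kG)$ with $\pi_k(G)$ as groups, not merely as objects. This follows from \cref{cor: two group structure on pi_n(G) coincide} for $k\geq1$, and is a definition for $k=0$. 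That settles the first Cartesian square.

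The main obstacle is the second claim, the image factorization \eqref{prop: induced action on pi_k eq1}. The quotient $\pi_k(X,x)\sslash\pi_k(G)$ is the top-left-to-bottom-left object, namely $\tau_1\tau_0^{\B\Omega^kG}(\Omega^kX\sslash\Omega^kG)$, and I want to embed it into $\tau_1\tau_0^{\Omega^k\B G}(\Omega^k(X\sslash G))$. The approach is as follows.

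\begin{enumerate}
\item The monomorphism $m\colon\B(\Omega^kG)\hookrightarrow\Omega^k\B G$ is a component inclusion, and $\Omega^kX\sslash\Omega^kG$ is the pullback of $\Omega^k(X\sslash G)$ along $m$.
\item Fiberwise $0$-truncation commutes with this base change, because $\tau_0^{(-)}$ is the connected/truncated factorization, which is pullback-stable (\cref{prop: connected truncated factorization system}). Hence $\tau_0^{\B\Omega^kG}(\Omega^kX\sslash\Omega^kG)$ is the pullback of $\tau_0^{\Omega^k\B G}(\Omega^k(X\sslash G))$ along $m$, and in particular is a monomorphism into it.
\item I then apply $\tau_1$. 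A monomorphism $A\hookrightarrow B$ over a component inclusion $m$ should remain a monomorphism after $\tau_1$. I would argue that $A$ is a union of components of $B$, namely the preimage of a summand-like subobject, and that $\tau_1$ preserves such component inclusions. This reduces to $\tau_0$ of a monomorphism being a monomorphism in the $1$-topos, together with the fact that $(-1)$-truncated maps which are pullbacks of $\tau_0$-level monos are detected after $\tau_1$. I expect this step to need the most care.
\item The first map $\pi_k(X,x)\twoheadrightarrow\pi_k(X,x)\sslash\pi_k(G)$ is an effective epimorphism because it is the quotient map of an action.
\end{enumerate}

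Together these give the stated epi/mono factorization.
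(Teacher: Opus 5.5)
Your argument follows the paper's proof. You write $\pi_k=\tau_0\circ U\circ\Omega^k$, transport the pointed action along $\Omega^k$ and $U$ via \cref{prop: product preserving functors preserves actions}, and apply \cref{prop: truncation functor preserve actions} with $n=0$. You then pull the fiberwise $0$-truncation back along $\B(\Omega^kG)\hookrightarrow\Omega^k(\B G)$ and apply $\tau_1$. There are two small differences. You justify the base-change compatibility by pullback-stability of the $0$-connected/$0$-truncated factorization, whereas the paper uses that the pullback functors have both adjoints; either works. You also match the group structures explicitly via \cref{cor: two group structure on pi_n(G) coincide}, which the paper leaves implicit.

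The one step that does not go through as sketched is step 3. Outside of $\Sp$, a monomorphism need not be a union of components or a summand in any literal sense, so that picture is not an argument. What you need is simply that $\tau_1$ preserves all monomorphisms, and this follows from results already in the paper. Let $A\hookrightarrow B$ be a monomorphism. The square formed by $\eta_A\colon A\to\tau_1A$ and $\eta_B\colon B\to\tau_1B$ is Cartesian by \cref{corollary: Cartesian square of truncation maps}. Moreover $\eta_B$ is $1$-connected, hence an effective epimorphism. The converse half of \cref{prop: pullback of truncated map is truncated} then shows that $\tau_1A\to\tau_1B$ is $(-1)$-truncated.

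This is precisely the fact the paper invokes at that point ("$\tau_1$ preserves monomorphisms and effective epimorphisms"). With it your step 3 closes. The effective-epimorphism half you already handle correctly in step 4, by noting that the first map is the quotient map of an action.
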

\begin{remark}\label{remark: any action with a basepoint is pointed}
    There is a potential ambiguity in the hypothesis of \cref{prop: induced action on pi_k} regarding whether one requires an $\infty$-action on a pointed object or a pointed $\infty$-action. These perspectives are compatible: any unpointed $\infty$-action $G\curvearrowright X$ on an object equipped with a basepoint $x\colon\ast\to X$ canonically lifts to a pointed $\infty$-action in $\Act_G(\E_\ast)$. Indeed, the composition $\ast \xrightarrow{x} X \to X\sslash G$ provides a basepoint for the quotient, which renders the classifying map $X\sslash G\to\B G$ pointed. This corresponds to a pointed $\infty$-action by \cref{cor: characterization of pointed action}.
\end{remark}
\begin{proof}
    The functor $\pi_k$ is given by the composition: \[\E_\ast\xrightarrow{\Omega^k}\E_\ast\xrightarrow{U}\E\xrightarrow{\tau_0}\E.\]
    Since $\Omega^k$ is left exact, and $U$ preserves finite products and realizations, we can apply \cref{prop: product preserving functors preserves actions} twice to the pointed $(G,e)$-action on $(X,x)$ in $\Act(\E_\ast)$ to obtain a $(U\Omega^kG)$-action on $U\Omega^kX$ in $\Act(\E)$ given by (omitting $U$ from the notation):
\[\begin{tikzcd}
    {\Omega^kX} \arrow[r, two heads] \arrow[d] \arrow[dr, phantom, "\lrcorner"{anchor=center, pos=0.225}, near start] & {\Omega^kX\sslash \Omega^kG} \arrow[r, hook] \arrow[d] \arrow[dr, phantom, "\lrcorner"{anchor=center, pos=0.125}, near start] & {\Omega^k(X\sslash G)} \arrow[d] \\
    \ast \arrow[r, two heads] & {\B (\Omega^kG)} \arrow[r, hook] & {\Omega^k(\B G)}
\end{tikzcd},\]
where we used that $U$ preserves monomorphisms (as it is left exact) and effective epimorphisms (as it is left exact and preserves realizations).
Applying \cref{prop: truncation functor preserve actions} for $n=0$ yields the action $\pi_k(G)\curvearrowright\pi_k(X,x)$ with the desired defining square.

\medskip

To obtain the quotient in terms of $X\sslash G$ and $\B G$, one notices that the pullback functors 
\[\slice{\E}{\Omega^k(\B G)}\to\slice{\E}{\B(\Omega^k G)}\to\E\]
along the effective epi/mono factorization $\ast\twoheadrightarrow \B(\Omega^k G)\hookrightarrow \Omega^k(\B G)$ have both adjoints and thus commute with the respective $0$-truncation functors. Since effective epi/mono factorizations are stable under pullback, we obtain the factorization \[\pi_k(X)\twoheadrightarrow \tau_0^{\B(\Omega^k G)}\left(\Omega^k X\sslash\Omega^k G\right)\hookrightarrow \tau_0^{\Omega^k(\B G)}\left(\Omega^k(X\sslash G)\right).\] Applying $\tau_1$, which preserves monomorphisms and effective epimorphisms, yields the desired factorization. 
\end{proof}
We are now interested in the stabilizer $\Stab_{\pi_{k}(G)}(1_x)\to \pi_{k}(G)$ of this action at the identity element $1_x\in\pi_{k}(X)$. 
\begin{definition}[{\cite[Def. 5.1.289]{Schreiber_DifferentialCohomologyCohesiveInfinityTopos}}]\label{def: stabilizer}
    Let $G\curvearrowright X$ be an $\infty$-action and $x:\ast\to X$ a global element. Its \textit{stabilizer $\infty$-group} is \[\Stab_G(x)\coloneq\Omega_x(X\sslash G).\]
    Its classifying object is the connected component of the basepoint $x$ in $X\sslash G$:
\[\begin{tikzcd}
	\ast & X & {X\sslash G} \\
	& {\B\Stab_G(x)} & {\B G}
	\arrow["x", from=1-1, to=1-2]
	\arrow[two heads, from=1-1, to=2-2]
	\arrow[from=1-2, to=1-3]
	\arrow[from=1-3, to=2-3]
	\arrow[hook, from=2-2, to=1-3]
	\arrow[dashed, from=2-2, to=2-3]
\end{tikzcd}.\]
This factorization yields a canonical pointed map $\B\Stab_G(x)\to\B G$.
\end{definition}
\begin{prop}\label{prop: k+1 fold stabilizers of pi_k(G)}
    Let $\E$ be an $\infty$-topos, $G\curvearrowright X$ an $\infty$-action in $\E$ on a pointed object $(X,x)$, and $k\geq0$. Consider the stabilizer $\Stab_{\pi_k(G)}(1_x)$ of the action $\pi_k(G)\curvearrowright \pi_k(X,x)$. Its $(k+1)$-fold classifying object is the $(k-1)$-image of the following composition:
\begin{equation}\label{prop: k+1 fold stabilizers of pi_k(G) eq1}
\begin{tikzcd}
	\ast & X & {X\sslash G} & {\tau_{k+1}\left(\tau_k^{\B G}(X\sslash G)\right)} \\
	& {\B^{k+1}\Stab_{\pi_k(G)}(1_x)} & {}
	\arrow[from=1-1, to=1-2]
	\arrow[from=1-1, to=2-2]
	\arrow[from=1-2, to=1-3]
	\arrow[from=1-3, to=1-4]
	\arrow[from=2-2, to=1-4]
\end{tikzcd}
\end{equation}
    \end{prop}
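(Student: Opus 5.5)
Set $Y \coloneq \tau_{k+1}\bigl(\tau_k^{\B G}(X\sslash G)\bigr)$ and $B \coloneq \tau_{k+1}(\B G)$. The plan is to identify $Y$, after looping $k$ times, with the target of the factorization \eqref{prop: induced action on pi_k eq1}. Then \cref{def: stabilizer} identifies $\B\Stab_{\pi_k(G)}(1_x)$ as a connected component of $\Omega^k Y$, and delooping $k$ times gives the $(k-1)$-image.

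\textbf{Step 1: the $k$-fold loops of $Y$.} First I will compute $\Omega^k Y$, based at the image of $x$. Iterating \cref{cor: loop space commutes with truncation} in the slice over $\B G$, $k$ loopings lower both truncation degrees by $k$. This gives $\Omega^k(\tau_k^{\B G}(X\sslash G)) \simeq \tau_0^{\Omega^k\B G}(\Omega^k(X\sslash G))$ over $\Omega^k\B G$. For the absolute truncation, \cref{cor: loop space commutes with truncation} over the terminal object gives $\Omega^k\tau_{k+1}(-)\simeq\tau_1\Omega^k(-)$. Together these yield
\[\Omega^k Y\simeq \tau_1\Bigl(\tau_0^{\Omega^k(\B G)}\bigl(\Omega^k(X\sslash G)\bigr)\Bigr)=:T.\]
By \eqref{prop: induced action on pi_k eq1}, the quotient $\pi_k(X,x)\sslash\pi_k(G)$ sits in $T$ as a monomorphism through which the basepoint $1_x$ factors.

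\textbf{Step 2: the stabilizer.} By \cref{def: stabilizer}, $\B\Stab_{\pi_k(G)}(1_x)$ is the connected component of $1_x$ in $\pi_k(X,x)\sslash\pi_k(G)$. Since that object includes into $T$ by a monomorphism, this is also the connected component of the basepoint in $T\simeq\Omega^k Y$, i.e.\ the $(-1)$-image of $\ast\to\Omega^kY$.

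\textbf{Step 3: delooping.} Let $\ast\to Z\to Y$ be the $(k-1)$-image of the composite in \eqref{prop: k+1 fold stabilizers of pi_k(G) eq1}. Then $Z$ is $(k)$-connected pointed, and since $Y$ is $(k+1)$-truncated and $Z\to Y$ is $(k-1)$-truncated, $Z$ is $(k+1)$-truncated. By \cref{theorem: symmetric group objects are pointed highly connected objects} with $n=1$, $Z$ is therefore the $(k+1)$-fold classifying object of $\Omega^{k+1}Z$. Applying \cref{lemma: truncation and connectivity of loop space 1} $k$ times to $\ast\to Z\to Y$ gives a $(-1)$-connected/$(-1)$-truncated factorization $\ast\to\Omega^kZ\to\Omega^kY$. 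By Step 2, this identifies $\Omega^kZ$ with $\B\Stab_{\pi_k(G)}(1_x)$, compatibly with the underlying $1$-symmetric structure. Hence $Z\simeq\B^{k+1}\Stab_{\pi_k(G)}(1_x)$, and in particular the stabilizer carries a $(k+1)$-symmetric structure.

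\textbf{Main obstacle.} I expect Step 1 to be the delicate part. It requires showing that $k$-fold looping is compatible with the fiberwise truncation $\tau_k^{\B G}$ followed by the absolute truncation $\tau_{k+1}$, with the basepoints and the map to $\Omega^k\B G$ matching those used in \cref{prop: induced action on pi_k}. I would handle this by looking at the composite $X\sslash G\to\tau_k^{\B G}(X\sslash G)\to Y$. The first map is $k$-connected, since it is a fiberwise truncation. The second map is $(k+1)$-connected, since it is a truncation unit. Each looping lowers the connectivity by one and preserves the truncation levels exactly. I would then invoke uniqueness of connected/truncated factorizations (\cref{prop: connected truncated factorization system}) to match $\Omega^k Y$ with $T$, including basepoints.
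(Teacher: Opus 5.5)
Your proposal is correct and is essentially the paper's proof, merely reordered. The paper likewise notes that the $(k-1)$-image is $k$-connected and $(k+1)$-truncated, hence a $(k+1)$-fold classifying object; it then applies $\Omega^k$ and \cref{cor: loop space commutes with truncation} to land in the target of \eqref{prop: induced action on pi_k eq1}, and identifies $\Omega^k$ of the image with the image of $\ast\to\pi_k(X,x)\sslash\pi_k(G)$, namely $\B\Stab_{\pi_k(G)}(1_x)$. The step you single out as the main obstacle is already covered by iterating \cref{cor: loop space commutes with truncation}, which is itself proved by uniqueness of connected/truncated factorizations, so no extra work is needed there; note only that looping lowers the truncation level of a map by one rather than preserving it, as you correctly use in Step~1.
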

\begin{proof}
    Denote the $(k-1)$-image by $A$, which is pointed by construction. Because the first map of the factorization is $(k-1)$-connected, $A$ is $k$-connected. Because the second map is $(k-1)$-truncated into a $(k+1)$-truncated object, $A$ is itself $(k+1)$-truncated. It follows that $A$ is the $(k+1)$-fold classifying object of a $(k+1)$-symmetric $1$-group. Applying $\Omega^k$ to \eqref{prop: k+1 fold stabilizers of pi_k(G) eq1}, we obtain the effective epi/mono factorization of: \begin{equation*}
        1\to\Omega^k\left(\tau_{k+1}\left( \tau_k^{\B G}\left( X\sslash G\right)\right)\right)\simeq \tau_1\left(\Omega^k\left(\tau_k^{\B G}\left(X\sslash G\right)\right)\right)\simeq\tau_1\left(\tau_0^{\B G}\left(\Omega^k\left( X\sslash G\right)\right)\right),
    \end{equation*}
    where we used \cref{cor: loop space commutes with truncation} to obtain the equivalences. By \eqref{prop: induced action on pi_k eq1} its image $\Omega^kA$ is the image of $\ast\to \pi_k(X,x)\sslash \pi_k(G)$. By definition, this yields a pointed equivalence $\Omega^k A\simeq \B \Stab_{\pi_k(G)}(1_x)$. It follows that $A\simeq \B ^{k+1}\Stab_{\pi_k(G)}(1_x)$, as desired.
\end{proof}
\begin{corollary}\label{cor: induced action of pi_n-1 in the truncated setting and classifing object of the stabilizer}
    Let $G\curvearrowright X$ be an $\infty$-action of an $n$-group $G$ on an $(n-1)$-truncated and pointed object $(X,x)$. Then the Cartesian square classifying $\pi_{n-1}(G)\curvearrowright\pi_{n-1}(X,x)$ is: 
\[\begin{tikzcd}
    {\pi_{n-1}(X,x)} \arrow[r, two heads] \arrow[d] \arrow[dr, phantom, "\lrcorner"{anchor=center, pos=0.125}, near start] & {\pi_{n-1}(X,x)\sslash\pi_{n-1}(G)} \arrow[r, hook] \arrow[d] \arrow[dr, phantom, "\lrcorner"{anchor=center, pos=0.125}, near start] & {\Omega^{n-1} (X\sslash G)} \arrow[d] \\
    \ast \arrow[r, two heads] & {\B \pi_{n-1}(G)} \arrow[r, hook] & {\Omega^{n-1}(\B G)}
\end{tikzcd}.\]
Moreover, $\B^{n}\Stab_{\pi_{n-1}(G)}(1_x)$ is given by the $(n-2)$-image of the following composition:
\begin{equation*}
    \begin{tikzcd}
	1 & X & {X\sslash G} \\
	& {\B ^n\Stab_{\pi_{n-1}(G)}(1_x)} & \B G
	\arrow[from=1-1, to=1-2]
	\arrow[from=1-1, to=2-2]
	\arrow[from=1-2, to=1-3]
	\arrow[from=1-3, to=2-3]
	\arrow[from=2-2, to=1-3]
	\arrow[dashed, from=2-2, to=2-3]
\end{tikzcd}.
\end{equation*}
It comes equipped with a pointed map $\B^n\Stab_{\pi_{n-1}(G)}(1_x)\to\B G$ which defines an $n$-symmetric sub-$1$-group.
\end{corollary}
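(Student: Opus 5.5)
The plan is to specialize \cref{prop: induced action on pi_k} and \cref{prop: k+1 fold stabilizers of pi_k(G)} to $k=n-1$. Then I will check that, under the present truncation hypotheses, every truncation functor appearing in those statements acts as the identity.

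\textbf{Truncation levels.} Since $G$ is an $n$-group, $\B G$ is $n$-truncated (\cref{theorem: group objects are pointed connected objects}). The classifying map $X\sslash G\to\B G$ has fiber $X$. Its base change along the effective epimorphism $\ast\twoheadrightarrow\B G$ is $(n-1)$-truncated, so by \cref{prop: pullback of truncated map is truncated} the map $X\sslash G\to\B G$ is itself $(n-1)$-truncated. Hence $X\sslash G$ is $n$-truncated by \cref{prop: equivalent characterization of truncated maps}. It follows that $\tau_{n-1}^{\B G}(X\sslash G)\simeq X\sslash G$ and $\tau_n(X\sslash G)\simeq X\sslash G$.

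\textbf{The stabilizer.} Feeding this into \eqref{prop: k+1 fold stabilizers of pi_k(G) eq1} with $k=n-1$, the target becomes $X\sslash G$. So $\B^n\Stab_{\pi_{n-1}(G)}(1_x)$ is the $(n-2)$-image of $\ast\to X\to X\sslash G$, which is the claimed description.

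\textbf{The action square.} Looping $n-1$ times, \cref{lemma: truncation and connectivity of loop space 1} shows that $\Omega^{n-1}(X\sslash G)\to\Omega^{n-1}(\B G)$ is $0$-truncated. Both objects are $1$-truncated. Therefore $\tau_0^{\Omega^{n-1}(\B G)}$ and $\tau_1$ act trivially on $\Omega^{n-1}(X\sslash G)$, and \eqref{prop: induced action on pi_k eq1} collapses to the image factorization $\pi_{n-1}(X,x)\twoheadrightarrow\pi_{n-1}(X,x)\sslash\pi_{n-1}(G)\hookrightarrow\Omega^{n-1}(X\sslash G)$.

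On the group side, $\Omega^{n-1}G$ is $0$-truncated, so $\pi_{n-1}(G)\simeq\Omega^{n-1}G$. Moreover $\B\pi_{n-1}(G)$ is the image of $\ast\to\Omega^{n-1}(\B G)$, by \cref{prop: product preserving functors preserves group objects}(3) applied to the left exact functor $\Omega^{n-1}$.

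The pasting of the two Cartesian squares is exactly the one in \cref{prop: product preserving functors preserves actions}(3), applied to $\Omega^{n-1}$ on the pointed action (\cref{remark: any action with a basepoint is pointed}). The identifications above show that no further truncation modifies it.

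\textbf{The sub-group claim.} The basepoint of $X\sslash G$ maps to that of $\B G$, so the composite $\B^n\Stab\hookrightarrow X\sslash G\to\B G$ is pointed. It is $(n-1)$-truncated, being a composite of an $(n-2)$-truncated and an $(n-1)$-truncated map. By construction, $\B^n\Stab$ is $(n-1)$-connected and $n$-truncated. Hence by \cref{def: sub n groups} and \cref{theorem: symmetric group objects are pointed highly connected objects}, this map exhibits an $n$-symmetric sub-$1$-group.

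\textbf{Main obstacle.} The one delicate point is justifying that the fiberwise truncation $\tau_0^{\Omega^{n-1}\B G}$ is trivial. This needs the $0$-truncatedness of the looped map, not merely truncatedness of the objects, which is why I derive it from \cref{lemma: truncation and connectivity of loop space 1} rather than from the truncation level of $X\sslash G$ alone.
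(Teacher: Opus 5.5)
Your proposal is correct and follows essentially the same route as the paper: you specialize \cref{prop: induced action on pi_k} and \cref{prop: k+1 fold stabilizers of pi_k(G)} to $k=n-1$, observe that every truncation appearing there acts trivially because $X\sslash G\to\B G$ is an $(n-1)$-truncated map into an $n$-truncated object, and obtain the sub-group claim from a composite of truncated maps. You spell out bookkeeping that the paper summarizes as following ``immediately from the truncation assumptions'', notably that the looped map must be $0$-truncated for $\tau_0^{\Omega^{n-1}\B G}$ to act trivially. The only blemish is notational: the hook arrow on $\B^n\Stab_{\pi_{n-1}(G)}(1_x)\to X\sslash G$ is wrong, since that map is $(n-2)$-truncated and not a monomorphism in general.
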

 \begin{proof}
     For the first statement, by \cref{prop: induced action on pi_k}, it suffices to observe that \[\Omega^{n-1}(X\sslash G)\to \tau_1\left(\tau_0^{\Omega^{n-1}\B G}\left(\Omega^{n-1}(X\sslash G)\right)\right)\] is an equivalence. Similarly, for the second statement, by \cref{prop: k+1 fold stabilizers of pi_k(G)}, it suffices to observe that \[X\sslash G\xrightarrow{}\tau_{n}\left(\tau_{n-1}^{\B G}\left( X\sslash G\right)\right)\] is an equivalence. Both statements follow immediately from the truncation assumptions on $X$ and $G$. Finally, the map $\B^n\Stab_{\pi_{n-1}(G)}(1_x)\to\B G$ is a composition of $(n-1)$-truncated maps, and hence is $(n-1)$-truncated as well. By definition, this exhibits an $n$-symmetric sub-$1$-group. 
 \end{proof}
 \begin{remark}\label{remark: classifying object of stabilizer is the connected cover of the quotient}
    By \cref{definition: image of a map}, there is an equivalence $\B^n\Stab_{\pi_{n-1}(G)}(1_x)\simeq (X\sslash G)\langle n-1\rangle$ with the $(n-1)$-connected cover of the quotient.
\end{remark}

\subsection{$n$-Free, $n$-transitive and $n$-faithful $\infty$-actions}\label{subsection: n free n transitive and n faithful actions}
The standard definition of ($1$-)transitivity and ($1$-)freeness for $\infty$-actions, as defined in \cite[Def.~3.2.91]{Sati&Schreiber_EquivariantPrincipalInfinityBundles}, are quite restrictive outside of discrete contexts. For instance, if $X$ is connected, then any $\infty$-action on it is transitive, meaning the only free $\infty$-action on $X$ is the multiplication $\infty$-action on itself ($X\simeq G$). To capture meaningful properties in broader contexts, we generalize these notions to higher $n$, alongside the concept of $n$-faithfulness. Because an $\infty$-action is fully characterized by its classifying map $X\sslash G\to\B G$, we will express these properties in terms of the quotient. These higher generalizations admit a more familiar characterization via the induced actions on homotopy groups. Specifically, $n$-freeness and $n$-transitivity are controlled by the classical $1$-freeness and $1$-transitivity of $\pi_{n-1}$-actions. Specializing to the case $n=1$ thus recovers the expected classical behavior. We also give a characterization of $n$-faithfulness in terms of sub-$n$-groups of $G$.
\begin{definition}\label{def: n transitive free and faithful actions}
    Let $\E$ be an $\infty$-topos, $n\geq1$, $G\in\Grp(\E)$ an $\infty$-group, and $X\in\E$ an object. An $\infty$-action $G\curvearrowright X$ is:
    \begin{enumerate}
        \item \textit{$n$-transitive} if $(\rho,\pi_2)\colon  G\times X\twoheadrightarrow X\times X$ is $(n-2)$-connected;
        \item \textit{$n$-free} if $(\rho,\pi_2)\colon  G\times X\to X\times X$ is $(n-2)$-truncated;
        \item \textit{regular} if $(\rho,\pi_2)\colon  G\times X\xrightarrow{\simeq} X\times X$ is an equivalence;
        \item \textit{$n$-faithful} if $G\to \Aut(X)$ is $(n-2)$-truncated.
    \end{enumerate}
    We denote by $\Act_G^{n\text{-trans}}(\C)$ the full subcategory on the $n$-transitive $\infty$-actions.
\end{definition}

\begin{theorem}\label{theorem: free transitive regular action characterization}
    Let $\E$ be an $\infty$-topos, $n\geq 1$, and $G\curvearrowright X$ be an $\infty$-action in $\E$. Then the $\infty$-action is: 
    \begin{enumerate}
        \item $n$-free if and only if $X\sslash G$ is $(n-1)$-truncated;
        \item $n$-transitive if and only if $X\sslash G$ is $(n-1)$-connected.
        \item regular if and only if $X\sslash G \simeq \ast$.
    \end{enumerate}
\end{theorem}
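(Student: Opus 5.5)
The idea is to realize the shear map $(\rho,\pi_2)\colon G\times X\to X\times X$ as a base change of the diagonal of $X\sslash G$, and then transfer truncation and connectivity along that base change. Write $q\colon X\twoheadrightarrow X\sslash G$ for the quotient map. Since groupoid objects in $\E$ are effective, \cref{remark: definition groupoid object} applied to $(X\sslash G)_\sbullet$ gives a Cartesian square with top-left corner $(X\sslash G)_1\simeq G\times X$, with both legs the face maps $d_0=\pi_2$ and $d_1=\rho$ to $X$, and with $q$ along the other two sides. Equivalently, the square
\[\begin{tikzcd}
	G\times X \arrow[r] \arrow[d, "{(\rho,\pi_2)}"'] \arrow[dr, phantom, "\lrcorner"{anchor=center, pos=0.125}] & X\sslash G \arrow[d, "\Delta"] \\
	X\times X \arrow[r, "q\times q"'] & (X\sslash G)\times(X\sslash G)
\end{tikzcd}\]
is Cartesian, because $X\times_{X\sslash G}X$ is the pullback of the diagonal along $q\times q$. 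The map $q$ is an effective epimorphism, so $q\times q$ is one as well, being a composite of base changes of $q$. Hence the shear map is a base change of $\Delta_{X\sslash G}$ along an effective epimorphism.

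For (1), \cref{prop: pullback of truncated map is truncated} says that truncation levels are preserved and reflected under base change along effective epimorphisms. So the shear map is $(n-2)$-truncated if and only if $\Delta_{X\sslash G}$ is $(n-2)$-truncated. By \cref{prop: equivalent characterization of truncated maps}, this holds if and only if $X\sslash G\to\ast$ is $(n-1)$-truncated.

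For (2), \cref{prop: connected maps are preserved by pullback and pushouts}(1) gives the same preservation and reflection for connectivity. So the shear map is $(n-2)$-connected if and only if $\Delta_{X\sslash G}$ is $(n-2)$-connected. We then need this to be equivalent to $X\sslash G$ being $(n-1)$-connected. One direction follows by applying \cref{prop: characterization connected maps} to $X\sslash G\to\ast$: condition (3) there requires $\Delta$ to be $(n-2)$-connected and the map to be an effective epimorphism. Because $X\sslash G$ need not have a global point, we cannot use the section criterion, but (3) does not require one.

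The main obstacle is the effective epimorphism condition, namely showing that $X\sslash G\to\ast$ is $(-1)$-connected. If $X$ is inhabited, this holds because $X\twoheadrightarrow X\sslash G$ is effective epi. In general it is not automatic: for $X=\emptyset$ the shear map is $\emptyset\to\emptyset$, which is an equivalence. So either $n$-transitivity implicitly includes inhabitedness, or the statement needs $X\neq\emptyset$ (the definition writes $\twoheadrightarrow$ in (1), suggesting surjectivity is intended). I would first check whether $(n-2)$-connectivity of the shear map for $n=1$ already forces inhabitedness. If not, I would add the inhabitedness of $X$ explicitly, or read (1) of the definition as including it, and note this in the proof. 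The converse direction is immediate: if $X\sslash G$ is $(n-1)$-connected, then its diagonal is $(n-2)$-connected by \cref{prop: characterization connected maps}.

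For (3), regularity means the shear map is an equivalence, which happens if and only if $\Delta_{X\sslash G}$ is an equivalence, again because equivalences are reflected under base change along effective epimorphisms (the $(-2)$-truncated case). This says $X\sslash G$ is $(-1)$-truncated, that is, a subterminal. Combined with inhabitedness from the effective epimorphism $X\twoheadrightarrow X\sslash G$ (with the same caveat on $X$ being inhabited as in (2)), this gives $X\sslash G\simeq\ast$. Conversely, if $X\sslash G\simeq\ast$, the diagonal is an equivalence, and hence so is the shear map.
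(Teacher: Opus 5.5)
Your argument is the paper's: effectiveness of the action groupoid identifies $G\times X$ with $X\times_{X\sslash G}X$. This exhibits the shear map $(\rho,\pi_2)$ as the base change of $\Delta_{X\sslash G}$ along the effective epimorphism $q\times q$, and truncation and connectivity are then transferred in both directions. Part (1) is complete as written.

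Your caveat on (2) and (3) is correct, and it is a genuine defect of the statement. The paper's proof passes from ``$\Delta_{X\sslash G}$ is $(n-2)$-connected'' to ``$X\sslash G$ is $(n-1)$-connected'' without the effective-epimorphism condition. You do not need to check anything further, because your own example settles it for every $n\geq 1$. The action on $\emptyset$ corresponds to $\emptyset\to\B G$, so $\emptyset\sslash G\simeq\emptyset$, while the shear map $\emptyset\to\emptyset$ is an equivalence. That action is therefore regular and $n$-transitive for all $n$, yet in any nontrivial $\infty$-topos $\emptyset$ is neither $(n-1)$-connected nor terminal. The arrow $\twoheadrightarrow$ in the definition cannot rescue this: $\emptyset\to\emptyset$ is an effective epimorphism, and for $n\geq 1$ every $(n-2)$-connected map already is one. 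So commit to the extra hypothesis that $X$ is inhabited, equivalently that $X\sslash G$ is, since $q$ is an effective epimorphism. The hypothesis holds automatically when $X$ is pointed, as in the pointed classification and the later freeness arguments. The unpointed clause of \cref{Theorem: representation theorem for n-covers} inherits the same caveat (take $E=\emptyset$).

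One citation fix. \cref{prop: characterization connected maps} assumes throughout that the map has a section, condition (3) included. So it does not apply directly to $X\sslash G\to\ast$ in either direction of your step (2). Apply it instead to $\mathrm{pr}_1\colon X\sslash G\times X\sslash G\to X\sslash G$, which has $\Delta_{X\sslash G}$ as a section and is the base change of $X\sslash G\to\ast$ along itself. Then $\mathrm{pr}_1$ is $(n-1)$-connected if and only if $\Delta_{X\sslash G}$ is $(n-2)$-connected. By \cref{prop: connected maps are preserved by pullback and pushouts}, connectivity of $X\sslash G\to\ast$ passes to $\mathrm{pr}_1$ unconditionally. It passes back whenever $X\sslash G\to\ast$ is an effective epimorphism, which is exactly where inhabitedness enters.
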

\begin{proof}
    By effectiveness of the $\infty$-action groupoid, there is a Cartesian square:
\[\begin{tikzcd}
    {G\times X} \arrow[r, "{\pi_2}"] \arrow[d, "\rho"'] \arrow[dr, phantom, "\lrcorner"{anchor=center, pos=0.125}, near start] & X \arrow[d, two heads] \\
    X \arrow[r, two heads] & {X\sslash G}
\end{tikzcd}.\]
Using Fubini for pullbacks, we infer the existence of the following Cartesian square:

\[\begin{tikzcd}
    {G\times X} \arrow[r] \arrow[d, "{(\rho,\pi_2)}"'] \arrow[dr, phantom, "\lrcorner"{anchor=center, pos=0.125}, near start] & {X\sslash G} \arrow[d, "\Delta"] \\
    {X\times X} \arrow[r, two heads] & {X\sslash G \times X\sslash G}
\end{tikzcd}.\]
Because the bottom horizontal map is an effective epimorphism, $(\rho,\pi_2)$ is $(n-2)$-truncated (resp. $(n-2)$-connected) if and only if $\Delta$ is $(n-2)$-truncated (resp. $(n-2)$-connected). This holds if and only if $X\sslash G$ is $(n-1)$-truncated (resp. $(n-1)$-connected).
\end{proof}
\begin{remark}
    An $\infty$-action is $n$-free if it is $n$-efficient in the sense of \cite[Def.~6.4.3.1]{LurieHTT}. This generalizes the fact that, in an ordinary $1$-topos, a groupoid object is effective if and only if it is an equivalence relation $R \hookrightarrow X \times X$.
\end{remark}
\begin{remark}
   We recover the classical fact that for a $1$-group $G$ and $X$ a set, an action $G\curvearrowright X$ is $1$-free if and only if the strict quotient in $\tau_0\E$ coincides with the homotopy quotient. 
\end{remark}
    An ordinary group action $G\curvearrowright X$ is free if and only if the stabilizer subgroups $\Stab_G(x_i)$ are trivial for a fundamental domain $\{x_i\in X\}_{i\in I}$. Indeed, the classifying spaces of the stabilizers groups are the connected components of the homotopy quotient $X\sslash G$, which is discrete by freeness of the action. The following corollary extends this property to $n$-group actions.    
\begin{corollary}\label{cor: action is n-free iff stabilizers are trivial}
    Let $G\curvearrowright X$ be an $\infty$-action of an $n$-group $G$ on an $(n-1)$-truncated object $X$, and $\{x_i\colon\ast\to X\vert\,i\in I\}$ be a family of global points such that the family $\{\ast\xrightarrow{x_i} X\to X\sslash G\vert\, i\in I\}$ is jointly epimorphic. The following are equivalent:
    \begin{enumerate}
        \item $G\curvearrowright X$ is $n$-free;
        \item the $1$-action $\pi_{n-1}(G)\curvearrowright\pi_{n-1}(X,x_i)$ is $1$-free for all $i\in I$;
        \item the stabilizers of the actions $\pi_{n-1}(G)\curvearrowright\pi_{n-1}(X,x_i)$ at $1_{x_i}\in\pi_{n-1}(X,x_i)$ are trivial for all $i\in I$:\[\Stab_{\pi_{n-1}(G)}(1_{x_i})\simeq 1;\]
        \item the stabilizer of the action $\pi_{n-1}(G\times X)\curvearrowright \pi_{n-1}(X\times X,\Delta_X)$ in $\slice{\E}{X}$ at $1_{\Delta_X}\in\pi_{n-1}(X\times X,\Delta_X)$ is trivial:
        \[\Stab_{\pi_{n-1}(G\times X)}(1_{\Delta_X})\simeq 1_X.\]
    \end{enumerate}
\end{corollary}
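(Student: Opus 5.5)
By \cref{theorem: free transitive regular action characterization}, condition (1) is equivalent to the quotient $X\sslash G$ being $(n-1)$-truncated. Since $G$ is an $n$-group and $X$ is $(n-1)$-truncated, the classifying map $X\sslash G\to\B G$ is $(n-1)$-truncated. Its target $\B G$ is $n$-truncated, so $X\sslash G$ is automatically $n$-truncated. The whole question is therefore whether the top homotopy level of $X\sslash G$ vanishes. My plan is to detect this using the connected covers at the points $q(x_i)$, where $q\colon X\to X\sslash G$ is the quotient map, by a variant of \cref{prop: cover whose n-connected objects are contractible is n-truncated}. By \cref{remark: classifying object of stabilizer is the connected cover of the quotient}, these covers are $(X\sslash G)\langle n-1\rangle\simeq\B^n\Stab_{\pi_{n-1}(G)}(1_{x_i})$.

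For (1)$\Leftrightarrow$(3), I would argue as follows. If $X\sslash G$ is $(n-1)$-truncated, then its $(n-1)$-connected covers are contractible by \cref{prop: cover whose n-connected objects are contractible is n-truncated}, so every stabilizer is trivial. Conversely, suppose each $\B^n\Stab(1_{x_i})\simeq\ast$. Then each cover $(X\sslash G)\langle n-1\rangle$ at $q(x_i)$ is contractible. This means each point $q x_i\colon\ast\to X\sslash G$ is $(n-2)$-truncated, because its $(n-2)$-connected/$(n-2)$-truncated factorization is $\ast\to(X\sslash G)\langle n-1\rangle\to X\sslash G$. I would then rerun the \v{C}ech nerve argument of \cref{prop: cover whose n-connected objects are contractible is n-truncated} with the jointly epimorphic family $\{q x_i\}$, which has exactly the hypothesis used in that proof. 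This gives that $X\sslash G$ is $(n-1)$-truncated, with the same case split between $n\geq2$ and $n=1$.

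For (2)$\Leftrightarrow$(3), I would use the classical fact for the $1$-action $\pi_{n-1}(G)\curvearrowright\pi_{n-1}(X,x_i)$ in the $1$-topos $\E^{\sleq0}$: the action is free if and only if the stabilizers at a family of points meeting every orbit are trivial. Concretely, \cref{theorem: free transitive regular action characterization} with $n=1$ says $1$-freeness is equivalent to $\pi_{n-1}(X,x_i)\sslash\pi_{n-1}(G)$ being $0$-truncated. This quotient is connected to $1_{x_i}$ at least via the image factorization of \cref{cor: induced action of pi_n-1 in the truncated setting and classifying object of the stabilizer}. Hence it is $0$-truncated if and only if its loop object at the base point is trivial, and that loop object is exactly $\Stab(1_{x_i})$. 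I expect the main care here to go into checking that the single point $1_{x_i}$ detects the whole quotient. For that I would use that the quotient injects into $\Omega^{n-1}(X\sslash G)$, whose components over $1$ are controlled by the cover above.

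For (4), I would pass to the slice $\slice{\E}{X}$. There the diagonal $\Delta_X$ is a global point of $X\times X\to X$, and the action $G\times X\curvearrowright X\times X$ is the pulled-back action. Its quotient is $(X\sslash G)\times X$, and the single point $\Delta_X$ meets every orbit, since $X\to X\sslash G$ is an effective epimorphism. Truncatedness can be tested after pulling back along the effective epimorphism $X\sslash G\times X\to X\sslash G$ (\cref{prop: pullback of truncated map is truncated}). So (4) is the equivalence (1)$\Leftrightarrow$(3) applied in $\slice{\E}{X}$ to a single point, which requires no choice of global points. This is the step where the universal base change does the work, and I expect the bookkeeping to identify the stabilizer there with $\Omega$ of the slice quotient to be the main obstacle.
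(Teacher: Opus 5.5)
Your (1)$\Leftrightarrow$(3) is the paper's argument: \cref{prop: cover whose n-connected objects are contractible is n-truncated} applies verbatim to the jointly epimorphic family $\{qx_i\}$ of global points of $X\sslash G$. The argument for (4), however, rests on a false claim. The slice point $\Delta_X$ does not meet every orbit of $(G\times X)\curvearrowright(X\times X)$ in $\slice{\E}{X}$. Its image in the quotient is the section $(q,1_X)\colon X\to X\sslash G\times X$. This is the base change of $\Delta_{X\sslash G}$ along $1\times q\colon X\sslash G\times X\to X\sslash G\times X\sslash G$, so it is an effective epimorphism only when $\Delta_{X\sslash G}$ is, i.e.\ essentially only when $X\sslash G$ is connected. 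For instance, with $\E=\Sp$, $G=1$, $X=\ast\amalg\ast$ it is the non-surjective diagonal. Surjectivity of $q$ does not give fiberwise surjectivity over $X$: over $x'$ you only reach the orbit of $x'$. Hence (3)$\Rightarrow$(1) cannot be invoked in the slice with a single point, and (4)$\Rightarrow$(1) is left unproved.

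The missing idea is that no covering family is needed, because this point is generic. Since $1\times q$ is an effective epimorphism, $\Delta_{X\sslash G}$ is $(n-2)$-truncated if and only if its base change $(q,1_X)$ is. Viewed as a global point of $X\sslash G\times X$ in $\slice{\E}{X}$, $(q,1_X)$ is $(n-2)$-truncated if and only if its $(n-2)$-image is $1_X$. By \cref{remark: classifying object of stabilizer is the connected cover of the quotient} applied in the slice, that image is $\B^n\Stab_{\pi_{n-1}(G\times X)}(1_{\Delta_X})$. The effective epimorphism doing the work is $1\times q$, not the projection $X\sslash G\times X\to X\sslash G$.

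For (2)$\Leftrightarrow$(3), the claim that $\pi_{n-1}(X,x_i)\sslash\pi_{n-1}(G)$ is connected is also false (take $G$ trivial). So ``$0$-truncated iff the loop object at the base point is trivial'' does not follow. Your repair works for $n\geq2$. A trivial cover at $qx_i$ means $qx_i$ is $(n-2)$-truncated, so $\Omega_{qx_i}(X\sslash G)$ is $(n-2)$-truncated and $\Omega^{n-1}_{qx_i}(X\sslash G)$ is discrete. Its subobject $\pi_{n-1}(X,x_i)\sslash\pi_{n-1}(G)$ is then discrete as well. For $n=1$, though, (3) at one index does not give (2) at that index: take $\Z/2$ acting on $\Z/2\amalg\ast$ and $x_i$ in the free orbit. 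The uniform route, which is the paper's, is to prove (3)$\Rightarrow$(1) as above. Then (1)$\Rightarrow$(2) holds because the quotient embeds in $\Omega^{n-1}(X\sslash G)$, which is discrete once $X\sslash G$ is $(n-1)$-truncated. Your (2)$\Rightarrow$(3) is fine.
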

\begin{proof}
    $(1\Leftrightarrow 3)$ 
        By \cref{theorem: free transitive regular action characterization}, the $G$-action is $n$-free if and only if $X\sslash G$ is $(n-1)$-truncated. By \cref{prop: cover whose n-connected objects are contractible is n-truncated}, this holds if and only if the $(n-1)$-connected covers $(X\sslash G)_i\langle n-1\rangle\simeq\ast$ at $\ast\xrightarrow{x_i}X\to X\sslash G$ are trivial for all $i\in I$. However, by \cref{remark: classifying object of stabilizer is the connected cover of the quotient}, these covers are precisely the $n$-fold classifying objects of the stabilizers of the mentioned actions: \[(X\sslash G)_i\langle n-1\rangle\simeq \B ^n\Stab_{\pi_{n-1}(G)}(1_{x_i}).\]
    
\medskip

    $(1\Rightarrow 2)$
       By \cref{cor: induced action of pi_n-1 in the truncated setting and classifing object of the stabilizer}, there is a monomorphism $\pi_{n-1}(X,x_i)\sslash \pi_{n-1}(G)\hookrightarrow\Omega^{n-1}(X\sslash G)$. Because $X\sslash G$ is $(n-1)$-truncated by $(1)$, the target loop object is discrete. Consequently, the domain is discrete as well, which means that the action is $1$-free.
       
\medskip
         
    $(2\Rightarrow 3)$ 
        The composition $\ast\xrightarrow{1_{x_i}}\pi_{n-1}(X,x_i)\to \pi_{n-1}(X,x_i)\sslash\pi_{n-1}(G)$ is a monomorphism, which implies that its image $\B \Stab_{\pi_{n-1}(G)}(1_{x_i})\simeq\ast$ is trivial.
        
\medskip

    $(1\Leftrightarrow 4)$
        The $\infty$-action $G\curvearrowright X$ is $n$-free if and only if $X\sslash G$ is $(n-1)$-truncated, which is equivalent to the diagonal $\Delta_{X\sslash G}$ being $(n-2)$-truncated. This holds if and only if $X\to X\sslash G\times X$ is $(n-2)$-truncated, as witnessed by the following Cartesian square whose vertical arrows are effective epimorphisms:
     \[\begin{tikzcd}
        X \arrow[r] \arrow[d, two heads] \arrow[dr, phantom, "\lrcorner"{anchor=center, pos=0.125}, near start] & {X\sslash G\times X} \arrow[d, two heads] \\
        {X\sslash G} \arrow[r, "\Delta_{X\sslash G}"'] & {X\sslash G\times X\sslash G}
    \end{tikzcd}.\]
    Since $(X\times X)\sslash (G\times X)\simeq (X\sslash G\times X)$  in $\slice{\E}{X}$ by \cref{prop: product preserving functors preserves actions}, this condition holds if and only if $\B^n\Stab_{\pi_{n-1}(G\times X)}(1_{\Delta_X})\simeq 1_X\in\slice{\E}{X}$, which in turn happens if and only if $\Stab_{\pi_{n-1}(G\times X)}(1_{\Delta_X})$ is trivial.
\end{proof}
\begin{remark}
    If we do not impose truncation restrictions on $G$ and $X$, one can show that $G\curvearrowright X$ is $n$-free if and only if for all $i \in I$, the action $\pi_{k}(G)\curvearrowright\pi_k(X,x_i)$ is $1$-free for $k=n-1$ and regular for $k\geq n$.
    Similarly, it is $n$-transitive if and only if the action on $\pi_k(X,x)$ is $1$-transitive for $k=n-1$ and regular for $0\leq k<n-1$. We do not pursue this here.
\end{remark}

    A classical group action $G\curvearrowright X$ on a set is faithful if and only if the only $H\leq G$ acting trivially is the trivial group. Equivalently, the classifying object $\B H$ is $0$-truncated, meaning $H$ is a $0$-group. For $n$-group actions, we obtain the following result.
    \begin{corollary}\label{cor: faithful action characterization}
     Let $G\curvearrowright X$ be an $\infty$-action of an $n$-group $G$ on a $(n-1)$-truncated object $X$. Then the $\infty$-action is $n$-faithful if and only if for every sub-$n$-group $H\to G$ such that the restricted $\infty$-action $H\curvearrowright X$ is trivial, $H$ is an $(n-1)$-group.
\end{corollary}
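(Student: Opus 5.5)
The plan is to reduce the statement to \cref{prop: sub n group characterization}, applied to the action morphism $a\colon G\to\Aut(X)$. By \cref{def: n transitive free and faithful actions}, the action is $n$-faithful exactly when $a$ is $(n-2)$-truncated as a map of objects. By Lemma~\ref{lemma: truncation and connectivity of loop space 3}, for the connected map $\B a\colon\B G\to\B\Aut(X)$ between connected objects, this holds if and only if $\B a$ is $(n-1)$-truncated. Indeed, $\B a$ is connected because both objects are connected and $\ast\to\B G$ is an effective epimorphism, so the $0$-image of $\B a$ is everything. The converse direction is Lemma~\ref{lemma: truncation and connectivity of loop space 1}. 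Thus $n$-faithfulness amounts to saying that $a$ defines a sub-$n$-group in the sense of \cref{def: sub n groups}.

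Before invoking \cref{prop: sub n group characterization}, I must check that $\Aut(X)$ is an $n$-group. Since $X$ is $(n-1)$-truncated, \cref{truncation level of the object of equivalences} shows that $\Aut(X)=\Eq(X,X)$ is $(n-1)$-truncated. Hence $\B\Aut(X)$, the connected component of $\corner{X}$ in the universe (\cref{ex: BAut(F) as the image in the universe}), is $n$-truncated. Concretely, it sits as a subobject of $\U^{\sleq n-1}$, which is $n$-truncated by \cref{universe of n-truncated maps is (n+1)-truncated}. So $a$ is a morphism of $n$-groups.

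Next, I translate the hypothesis ``$H\curvearrowright X$ restricted is trivial.'' By Example~\ref{example: group actions: as a map in Aut(X)}, the restricted action along $h\colon H\to G$ is classified by the pointed composite $\B H\to\B G\to\B\Aut(X)$. By the trivial-action example, the restricted action is trivial exactly when this composite is nullhomotopic as a pointed map, i.e.\ when the composite $H\to G\to\Aut(X)$ is trivial as a map of $\infty$-groups (\cref{theorem: group objects are pointed connected objects}). The condition in the statement therefore reads: every sub-$n$-group $H\to G$ whose composite to $\Aut(X)$ is trivial is an $(n-1)$-group. This is exactly the criterion of \cref{prop: sub n group characterization} for $a$ to be a sub-$n$-group, which closes the argument.

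The main subtlety is the identification of ``trivial action'' with ``trivial group map.'' The point to verify is that a pointed nullhomotopy of $\B H\to\B\Aut(X)$ is the same data as a trivialization in $\Grp(\E)$, which follows from the equivalence of \cref{theorem: group objects are pointed connected objects}. One should also use the pointed equivalence $\Act_H(X)^\simeq\simeq\E_\ast(\B H,\B\Aut(X))$ rather than an unpointed statement. The remaining steps are direct applications of the cited lemmas.
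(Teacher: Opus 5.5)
Your overall route is the paper's: show that $\Aut(X)$ is an $n$-group, identify $n$-faithfulness with $a\colon G\to\Aut(X)$ being a sub-$n$-group, and conclude by \cref{prop: sub n group characterization}. The paper asserts the middle identification without comment. Your checks that $\Aut(X)$ is an $n$-group, and that triviality of the restricted action means triviality of $H\to\Aut(X)$, are fine.

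The justification you give for the middle step is, however, false. Lemma~\ref{lemma: truncation and connectivity of loop space 3} requires $\B a$ to be a $0$-connected map. A map between connected objects is in general only an effective epimorphism: factoring $\ast\twoheadrightarrow\B\Aut(X)$ through $\B G$ shows that the $(-1)$-image of $\B a$ is everything, not the $0$-image. In fact, by Lemma~\ref{lemma: truncation and connectivity of loop space}, $\B a$ is $0$-connected if and only if $a$ is an effective epimorphism, which fails for most faithful actions. For example, take the trivial group acting on a two-point set in spaces with $n=1$: then $\B a\colon\ast\to\B(\Z/2)$ has the disconnected fiber $\Z/2$. So, as written, you cannot obtain ``$n$-faithful $\Rightarrow$ sub-$n$-group'', which is exactly the direction where you invoke the lemma.

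The claim itself is true, and it needs only that $\B G$ is connected.
\begin{itemize}
\item By \cref{prop: equivalent characterization of truncated maps}, $\B a$ is $(n-1)$-truncated if and only if $\Delta_{\B a}\colon\B G\to\B G\times_{\B\Aut(X)}\B G$ is $(n-2)$-truncated.
\item Pull $\Delta_{\B a}$ back along the map $\Omega\B\Aut(X)\to\B G\times_{\B\Aut(X)}\B G$. This map is the base change of $\ast\to\B G\times\B G$, which is an effective epimorphism because $\B G$ is connected.
\item By the pasting law, the pulled-back map is $\Omega\B a\simeq a$, since $\Delta_{\B a}$ followed by the projection to $\B G\times\B G$ is $\Delta_{\B G}$.
\item Base change along an effective epimorphism reflects truncation levels (\cref{prop: pullback of truncated map is truncated}).
\end{itemize}
Hence $\B a$ is $(n-1)$-truncated if and only if $a$ is $(n-2)$-truncated. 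With this argument in place of your appeal to Lemma~\ref{lemma: truncation and connectivity of loop space 3}, your proof is complete.
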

\begin{proof}
     Because $X$ is $(n-1)$-truncated, both $G$ and $\Aut(X)$ are $n$-groups. Therefore, the $\infty$-action is $n$-faithful if and only if the induced map $G\to \Aut(X)$ is a sub-$n$-group. By \cref{prop: sub n group characterization}, this holds if and only if every sub-$n$-group $H\to G$ with trivial composition $H\to G\to\Aut(X)$ is an $(n-1)$-group.
\end{proof}

\subsection{$\infty$-Actions of a product}\label{subsection: action from a product}
Classically, an action of a product group $G \times H$ on a set $X$ corresponds to two commuting actions $G \curvearrowright X$ and $H \curvearrowright X$, satisfying $g\cdot (h\cdot x) = h\cdot(g\cdot x)$ for all $g \in G$, $h \in H$, and $x \in X$. In this situation one has the following elementary result.
\begin{prop*}
    If the $G$-action is transitive and the $H$-action is faithful, then the $H$-action is free.
\end{prop*}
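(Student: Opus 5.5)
The statement is the classical set-level one: $G$ and $H$ are ordinary groups acting on a set $X$ with commuting actions, the $G$-action is transitive and the $H$-action is faithful. I would prove it directly with elements. The idea is that commutation lets a stabilizer element at one point be transported, via $G$, to every other point, and faithfulness then forces it to be the identity.

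Fix $x\in X$ and let $h\in\Stab_H(x)$, so $h\cdot x=x$. The goal is $h=1$. By faithfulness it suffices to show that $h$ fixes every point of $X$.

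Let $y\in X$ be arbitrary. By transitivity of the $G$-action, choose $g\in G$ with $y=g\cdot x$. The commutation relation $g\cdot(h\cdot x)=h\cdot(g\cdot x)$ then gives
\[h\cdot y=h\cdot(g\cdot x)=g\cdot(h\cdot x)=g\cdot x=y.\]
Hence $h$ acts as the identity on $X$, so $h=1$ by faithfulness. Since $x$ was arbitrary, every stabilizer $\Stab_H(x)$ is trivial, which means the $H$-action is free.

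There is no real obstacle here; the only point needing care is to use the commutation hypothesis in the right direction, moving $h$ past $g$. In the higher setting this elementary argument is what gets replaced by \cref{theorem: free transitive regular action characterization} and \cref{cor: action is n-free iff stabilizers are trivial}. One would reduce to the induced commuting $\pi_{n-1}$-actions and then apply the argument above to the stabilizers at $1_x$.
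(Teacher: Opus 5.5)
Your argument is correct and is exactly the paper's proof: fix $h$ with $h\cdot x=x$, use transitivity of $G$ to write any $y$ as $g\cdot x$, commute $h$ past $g$ to get $h\cdot y=y$, and conclude $h=1$ by faithfulness.

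One caveat concerns only your closing aside. The paper stresses that this element chase does not lift to $\infty$-actions, and $n$-faithfulness is not a condition on the $\pi_{n-1}$-actions at a single basepoint. The higher version therefore does not apply your argument to the $\pi_{n-1}$-actions. Instead it shows that the whole sub-$n$-group $\B^{n-1}\Stab_{\pi_{n-1}(H)}(1_x)$ acts trivially on $X$ (\cref{cor: the action of the stabilizer of H is trivial if the action of G is transitive}), and then invokes \cref{cor: faithful action characterization}.
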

\begin{proof}
    Let $h\in H$ such that $h \cdot x = x$ for some $x \in X$, and let $y \in X$ be arbitrary. By transitivity of $G$, there exists $g \in G$ such that $g \cdot x = y$. Because the actions commute,
    \[h\cdot y=h\cdot(g\cdot x)=g\cdot(h\cdot x)=g\cdot x=y.\]
    Thus $h$ acts trivially on $X$, and faithfulness of the $H$-action forces $h=1$.
\end{proof}
The goal of this section is to generalize this result to the case of two commuting $n$-group actions on an $(n-1)$-truncated object. The main theorem is:
\begin{theorem}\label{Theorem: action of a product restricted to H is free}
    Let $G,H$ be $n$-group objects equipped with an $\infty$-action $(G\times H)\curvearrowright X$ on an $(n-1)$-truncated object $X$. Suppose that the restricted $\infty$-action $G\curvearrowright X$ is $n$-transitive and that the restricted $\infty$-action $H\curvearrowright X$ is $n$-faithful. Then the $\infty$-action $H\curvearrowright X$ is $n$-free.
\end{theorem}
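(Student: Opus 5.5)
The plan is to reduce $n$-freeness of $H\curvearrowright X$ to the vanishing of stabilizers on $\pi_{n-1}$, using \cref{cor: action is n-free iff stabilizers are trivial}. The classical argument is then run on these homotopy groups, with $n$-faithfulness entering through \cref{cor: faithful action characterization}.

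\textbf{Step 1: reduce to the fibers over $X\sslash G$.} Since $G\curvearrowright X$ is $n$-transitive, $X\sslash G$ is $(n-1)$-connected by \cref{theorem: free transitive regular action characterization}. The action of the product yields a residual $H$-action on $X\sslash G$ with quotient $X\sslash(G\times H)$. Equivalently, in the slice over $\B H$, the object $X\sslash (G\times H)\to\B H$ classifies $H\curvearrowright X\sslash G$, and it also factors as $X\sslash H\sslash G$. After base change to a generalized point, we may assume $X$ is pointed by $x$. The points $\ast\to X\to X\sslash H$ are then jointly epimorphic, because $X\to X\sslash H$ is an effective epimorphism. So it suffices to show that $\Stab_{\pi_{n-1}(H)}(1_x)\simeq 1$ for the action $\pi_{n-1}(H)\curvearrowright\pi_{n-1}(X,x)$.

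\textbf{Step 2: show the stabilizer acts trivially.} Let $S=\Stab_{\pi_{n-1}(H)}(1_x)$. By \cref{cor: induced action of pi_n-1 in the truncated setting and classifing object of the stabilizer}, $\B^nS\simeq (X\sslash H)\langle n-1\rangle$, and this defines an $n$-symmetric sub-$1$-group $\B^{n-1}S\to H$. I will show that the restricted action $\B^{n-1}S\curvearrowright X$ is trivial. By \cref{example: group actions}, this means the composite $\B^nS\to \B H\to \B\Aut(X)$ is nullhomotopic.

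The argument is the internal version of the computation $h\cdot(g\cdot x)=g\cdot(h\cdot x)$. By construction, the restriction of $\B^nS\to\B H$ along $\B^nS\to X\sslash H$ classifies a pullback of $X\to X\sslash H$ that has a section at $x$. Since the $G$-action commutes with the $H$-action, $G$ acts on $X\sslash H$ and on its $(n-1)$-connected cover over $x$. The $n$-transitivity says that $X\to X\sslash G$ is an $(n-1)$-connected-fibered situation: its fiber $G$-orbit data is $(n-2)$-connected onto $X\times X$. Concretely, $(\rho_G,\pi_2)\colon G\times X\to X\times X$ is $(n-2)$-connected. Pulling back along $\B^nS$ should show that the section at $x$ propagates, via the $G$-action, to a section over all of $X$. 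The propagation works up to the relevant truncation level because the target $\B\Aut(X)$ is $n$-truncated, and $(n-2)$-connected maps are left orthogonal to maps between $n$-truncated objects up to the relevant level.

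Put differently, I would show that the map $\B^nS\to\B\Aut(X)$ factors through $\B^nS\to \B\Aut_{G}(X)$ and is trivial on the $G$-orbit of $x$. I would then use the joint epimorphicity of $G\times\{x\}\to X$ at level $(n-2)$ to conclude that it is nullhomotopic, because $\B^n S$ is $(n-1)$-connected.

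\textbf{Step 3: conclude by faithfulness.} Since $\B^{n-1}S\to H$ is a sub-$n$-group acting trivially on $X$, \cref{cor: faithful action characterization} forces $\B^{n-1}S$ to be an $(n-1)$-group. But $\B^nS$ is $(n-1)$-connected and $n$-truncated, so $\B^{n-1}S$ is an $(n-1)$-group only if $S\simeq 1$. \cref{cor: action is n-free iff stabilizers are trivial} then gives $n$-freeness.

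\textbf{Main obstacle.} The main obstacle is Step 2: making rigorous the assertion that "the stabilizer of $x$ fixes every point in the $G$-orbit, hence everything". I would try to do this by working in the slice over $X\sslash (G\times H)$, comparing the two fibrations $X\sslash H\to X\sslash(G\times H)$ and $X\sslash G\to X\sslash (G\times H)$. The aim is to exhibit the relevant triviality as a lifting problem against the $(n-1)$-connected object $X\sslash G$.
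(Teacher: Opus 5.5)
Your overall architecture matches the paper's proof. Steps~1 and~3 agree with it almost verbatim: reduce to $S=\Stab_{\pi_{n-1}(H)}(1_x)$, view $\B^{n-1}S\to H$ as an $n$-symmetric sub-$1$-group, and use $n$-faithfulness with \cref{cor: faithful action characterization} to force it to be an $(n-1)$-group, hence trivial. Two things are wrong, one minor and one essential.

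\textbf{Minor: the reduction in Step~1.} A single global point $x$ gives a jointly epimorphic family $\ast\to X\to X\sslash H$ only if $X\sslash H$ is connected. Moreover, objects of $\E$ need not have enough global points. So ``$X\to X\sslash H$ is an effective epimorphism'' does not justify the reduction. The correct route is the paper's: prove the pointed statement, apply it in $\slice{\E}{X}$ to $(X\times X,\Delta_X)$, and invoke criterion~(4) of \cref{cor: action is n-free iff stabilizers are trivial}. That criterion needs no joint epimorphicity.

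\textbf{Essential: Step~2 is not proved.} Step~2 carries the whole content of the theorem, and you leave it as a heuristic.
\begin{itemize}
\item The orthogonality principle you invoke is not correct. $(n-2)$-connected maps are left orthogonal to $(n-2)$-truncated maps, not to maps between $n$-truncated objects. So the $n$-truncatedness of $\B\Aut(X)$ gives no uniqueness of extensions along $G\times\{x\}\to X$.
\item Likewise, $(n-1)$-connectedness of $\B^nS$ plus information ``on the orbit of $x$'' does not by itself give a nullhomotopy of $\B^nS\to\B\Aut(X)$. Such pointed maps are homomorphisms $S\to\pi_{n-1}(\Aut(X))$, and you must actually show this one vanishes.
\end{itemize}

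\textbf{How the paper fills Step~2.} It proves \cref{cor: the action of the stabilizer of H is trivial if the action of G is transitive} in four moves:
\begin{itemize}
\item $n$-transitivity gives $X\sslash G\simeq\B^n\Stab_{\pi_{n-1}(G)}(1_x)$.
\item \cref{lemma: product of stabilizers} builds a lift $X\sslash G\times\B^nS\to X\sslash(G\times H)$ over $\B G\times\B H$. This works because the wedge-to-product inclusion is $(n-1)$-connected while the target map is $(n-1)$-truncated.
\item \cref{prop: square is Cartesian if fibers are equivalent} then shows that $A=\B^{n-1}S$ acts trivially on $X\sslash G$, with the $(G\times A)$-action classified by $\chi\times 1_{\B A}$.
\item \cref{lemma: action is trivial on X iff trivial on X//G} transfers this triviality to $X$.
\end{itemize}

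\textbf{An alternative completion of your idea.} Factoring through $G$-equivariant autoequivalences can also be made rigorous. The product action gives $\B H\to\B\Deck(X\sslash G\to\B G)$. Now $\ast\to X\sslash G$ is $(n-2)$-connected and $X\sslash G\to\B G$ is $(n-1)$-truncated. The lifting estimate behind \cref{lemma: counit is n-1 truncated} ($k$-connected against $m$-truncated gives $(m-k-2)$-truncated spaces of lifts) therefore shows that evaluation at $x$, $\Deck(X\sslash G\to\B G)\to X$, is a monomorphism. Hence the stabilizer of $x$ in this $\infty$-group is trivial. The composite $\B^nS\to\B H\to\B\Deck(X\sslash G\to\B G)$ factors through the classifying object of that stabilizer, so it is null.

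Either way, one of these arguments has to be supplied; as written, Step~2 is not a proof.
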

The elementary algebraic manipulation above does not lift directly to the abstract framework. To bypass this, we isolate a more conceptual proof of the classical statement which does generalize. That proof runs as follows.
\begin{proof}
    By \cref{cor: action is n-free iff stabilizers are trivial}, it suffices to show that the $H$-stabilizer $\Stab_H(x)$ is trivial for an arbitrary $x\in X$. Because the $G$-action is transitive, one can show that the restricted action of $\Stab_H(x)$ on $X$ is trivial (this is \cref{cor: the action of the stabilizer of H is trivial if the action of G is transitive}). Indeed, for any $h\in\Stab_H(x)$ and $y\in X$, pick $g\in G$ with  $y=g\cdot x$; then $$h\cdot y=h\cdot (g\cdot x)=g\cdot (h\cdot x)=g\cdot x=y.$$ Since the $H$-action is faithful, the subgroup $\Stab_H(x)\leq H$ that acts trivially must be trivial by \cref{cor: faithful action characterization}.
\end{proof}
Of the three corollaries used in this argument, two have already been established for $n$-groups. The third, namely that $G$-transitivity forces the $H$-stabilizer to act trivially, is the only one that we need to lift from the classical case. It is quite technical and we establish it in \cref{subsubsection: triviality of the H-stabilizer action}, after introducing the necessary properties of commuting $\infty$-actions in \cref{subsubsection: commuting actions}. With this final piece in place, we replicate the proof for $n$-groups in \cref{subsubsection: proof of action of a product restricted to H is free}. This result will be a crucial ingredient in \cref{theorem: action of Deck is n-free}, where we show that the $\infty$-action of $\Deck(p)$ on the fiber $F$ of an $n$-cover $p\colon E\to X$, which commutes with the  $\Pi_n(X,x)$-action, is $n$-free.
\subsubsection{Commuting $\infty$-actions}\label{subsubsection: commuting actions}
\begin{definition}
    Two $\infty$-actions $G\curvearrowright X$ and $H\curvearrowright X$ \textit{commute} if they are equipped with a map $\B G\times\B H\to\B\Aut(X)$ making the following diagram commute:
\[\begin{tikzcd}
	{\B G\vee\B H} & {\B\Aut(X)} \\
	{\B G\times\B H}
	\arrow[from=1-1, to=1-2]
	\arrow[from=1-1, to=2-1]
	\arrow[from=2-1, to=1-2]
\end{tikzcd},\]
where the left vertical arrow is the map that includes each factor into the product by using the basepoint of the other.
\end{definition}

One useful consequence of commutativity is that each $\infty$-group induces an $\infty$-action on the quotient by the other. For an ordinary group acting on a set, this follows by an elementary manipulation. The case of $\infty$-actions is no harder and follows from a simple diagrammatic argument. Indeed, there is a pasting of Cartesian squares:
\[\begin{tikzcd}
    {X\sslash G} \arrow[r] \arrow[d] \arrow[dr, phantom, "\lrcorner"{anchor=center, pos=0.125}, near start] & \B G \arrow[r] \arrow[d] \arrow[dr, phantom, "\lrcorner"{anchor=center, pos=0.125}, near start] & \ast \arrow[d] \\
    {X\sslash (G\times H)} \arrow[r] & {\B (G\times H)} \arrow[r] & \B H
\end{tikzcd}.\]
The left square is Cartesian by the definition of restricted $\infty$-actions, and the composite rectangle classifies an $H$-action on $X\sslash G$. By symmetry, there is also an induced $G$-action on $X\sslash H$. The quotients of these $\infty$-actions coincide with the total quotient of $G\times H\curvearrowright X$:
\begin{equation}\label{equation: quotients of commuting actions}
    (X\sslash G)\sslash H\simeq X\sslash (G\times H)\simeq(X\sslash H)\sslash G.
\end{equation}
\begin{remark}
    Though not needed in what follows, $X$ can be recovered as the pullback of the span $X\sslash H\to X\sslash (G\times H)\leftarrow X\sslash G$. 
\end{remark}
\subsubsection{Triviality of the $H$-stabilizer action}\label{subsubsection: triviality of the H-stabilizer action}


We now prove the technical result mentioned in the introductory paragraph of \cref{subsection: action from a product}: if the $G$-action is transitive, then the stabilizer of $H$ acts trivially. The argument proceeds in three steps. First, we analyse how stabilizers of the individual $G$- and $H$-actions relate to those of the product $G\times H$. Second, we develop a general criterion that lets us deduce triviality of an action from triviality of the action on a quotient. Finally, we apply these two lemmas to obtain the desired \cref{cor: the action of the stabilizer of H is trivial if the action of G is transitive}.
\begin{lemma}\label{lemma: product of stabilizers}
   Let $G,H$ be $n$-group objects equipped with an $\infty$-action $(G\times H)\curvearrowright X$ on an $(n-1)$-truncated and pointed object $(X,x)$. Then there exists a pointed map
    \begin{equation*}
        L:\B^n \left(\Stab_{\pi_{n-1}(G)}(1_x)\times \Stab_{\pi_{n-1}(H)}(1_x)\right)\to \B^n \Stab_{\pi_{n-1}(G\times H)}(1_x)
    \end{equation*}
    over $\B (G\times H)$.
\end{lemma}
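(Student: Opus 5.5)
We build $L$ out of the classifying objects provided by \cref{cor: induced action of pi_n-1 in the truncated setting and classifing object of the stabilizer}, using the $(n-2)$-connected/$(n-2)$-truncated factorization system (\cref{prop: connected truncated factorization system}). Write $S_G$, $S_H$, $S$ for the stabilizers of the restricted $G$-, $H$- and $(G\times H)$-actions at $1_x$. By that corollary and \cref{remark: classifying object of stabilizer is the connected cover of the quotient}:
\begin{itemize}
\item $\B^nS_G\simeq (X\sslash G)\langle n-1\rangle$, the $(n-2)$-image of $\ast\to X\to X\sslash G$, with a map to $\B G$;
\item $\B^nS_H\simeq (X\sslash H)\langle n-1\rangle$, with a map to $\B H$;
\item $\B^nS\simeq (X\sslash(G\times H))\langle n-1\rangle$, with a map to $\B(G\times H)\simeq \B G\times \B H$.
\end{itemize}

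\textbf{Step 1: a map into $X\sslash(G\times H)$.} The restricted actions give canonical maps $X\sslash G\to X\sslash(G\times H)$ and $X\sslash H\to X\sslash(G\times H)$. These are the left squares of \cref{example: group actions: restricted action} along $\B G\to\B G\times\B H$ and $\B H\to \B G\times\B H$, and they lie over the wedge inclusions. Restricting to connected covers, the coreflection of \cref{prop: n-connected pointed spaces is coreflective in pointed spaces} gives pointed maps $\B^nS_G\to \B^nS$ and $\B^nS_H\to\B^nS$. These lie over $\B G\to\B(G\times H)$ and $\B H\to\B(G\times H)$ respectively, because the basepoint $\ast\to X\to X\sslash(G\times H)$ is compatible with both.

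\textbf{Step 2: combine the two maps.} The source $\B^n(S_G\times S_H)\simeq \B^nS_G\times\B^nS_H$ is a product, not a wedge. The target $\B^nS$ is an $n$-fold classifying object of the $n$-symmetric $1$-group $S$, with $n\geq 1$. So I would use the group structure and add the two maps, setting
\[L\coloneq \mu\circ(\B^n i_G\times \B^n i_H),\]
where $\mu\colon \B^nS\times\B^nS\to\B^nS$ is the $\B^n$ of the multiplication. For $n\geq 2$ this is defined because $S$ is abelian and $\B^n$ of a homomorphism of abelian groups makes sense. The homomorphism in question is the sum $S_G\times S_H\to S$ of the two inclusions, which are homomorphisms by Step 1 and \cite[7.2.2.12]{LurieHTT}.

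For $n=1$ the target need not be abelian, so I would instead argue that the images of $S_G$ and $S_H$ commute inside $S$. Both are stabilizers of the same point under the commuting actions of $G$ and $H$, and the sub-$1$-groups $\pi_0(G)$ and $\pi_0(H)$ of $\pi_0(G\times H)$ commute. Hence the product map $S_G\times S_H\to S$ is a homomorphism, and we apply $\B$ to it. Via \cite[7.2.2.12]{LurieHTT}, this all reduces to exhibiting a homomorphism of $1$-groups $S_G\times S_H\to S$.

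\textbf{Step 3: compatibility over $\B(G\times H)$ — the main obstacle.} It remains to check that $L$ lies over $\B(G\times H)$. The structure map $\B^nS\to\B G\times\B H$ is itself a map between $n$-fold deloopings of sufficiently symmetric objects: it is the $(n-1)$-fold delooping structure from \cref{cor: induced action of pi_n-1 in the truncated setting and classifing object of the stabilizer}. I would therefore show that it is compatible with the multiplication, i.e. that it is additive. Concretely, I would factor it through $\B^n\pi_{n-1}(G\times H)\simeq \B^n\pi_{n-1}(G)\times\B^n\pi_{n-1}(H)$, the $(n-1)$-connected cover of $\B(G\times H)$, where addition is componentwise. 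Since $i_G$ lands in the first factor and $i_H$ in the second, the sum maps to the pair, which is exactly $\B^nS_G\times\B^nS_H\to\B G\times\B H$.

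The delicate points are, for $n\geq 2$, justifying that the map to $\B(G\times H)$ factors through the connected cover as a map of $n$-fold deloopings, and, for $n=1$, the commutation argument.
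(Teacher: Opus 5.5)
Your proposal is correct, and your Step 1 is the paper's first step in different form. The paper obtains $\B^nS_G\to\B^nS$ as a diagonal filler: the left edge is the $(n-2)$-connected point $\ast\to\B^nS_G$, and the right edge is the $(n-2)$-truncated map $\B^nS\to X\sslash(G\times H)$. This filler is exactly what the coreflection $(-)\langle n-1\rangle$ computes.

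You diverge in Steps 2--3. The paper never uses the group structure on $\B^nS$. It notes two facts:
\begin{enumerate}
\item the wedge inclusion $\B^nS_G\vee\B^nS_H\to\B^nS_G\times\B^nS_H$ is $2(n-1)$-connected, hence $(n-1)$-connected, because both factors are $(n-1)$-connected;
\item the structure map $\B^nS\to\B(G\times H)$ is $(n-1)$-truncated, being the composite of $\B^nS\to X\sslash(G\times H)$ with the classifying map, which is $(n-1)$-truncated because $X$ is.
\end{enumerate}
So $L$ is the diagonal filler of the square whose top edge is the wedge of your two Step 1 maps and whose bottom edge is $\B^nS_G\times\B^nS_H\to\B G\times\B H$. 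This argument is uniform in $n$: it needs neither Eckmann--Hilton nor a separate $n=1$ commutation argument. Compatibility over $\B(G\times H)$ is built into the square rather than verified afterwards, and $L$ is unique.

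Your route instead gives an explicit formula: $L=\B^n(i_G\circ\pi_1+i_H\circ\pi_2)$ for $n\geq 2$, and $\B$ of the inclusion $S_G\times S_H\to S$ for $n=1$. The obstacle you flag in Step 3 is not real.
\begin{itemize}
\item The factorization through $(\B G\times\B H)\langle n-1\rangle\simeq\B^n\pi_{n-1}(G)\times\B^n\pi_{n-1}(H)$ is just the coreflection applied to a pointed map out of an $(n-1)$-connected object. Here you use that $\tau_{n-1}$ preserves finite products.
\item For $n\geq2$, pointed maps between such $n$-fold classifying objects form a discrete space equivalent to homomorphisms of abelian groups. So compatibility is an equality of homomorphisms, and it follows from additivity of the homomorphism $S\to\pi_{n-1}(G)\times\pi_{n-1}(H)$.
\item For $n=1$, you should say explicitly that $S\to G\times H$ is a monomorphism, since $\B S\to\B(G\times H)$ is $0$-truncated. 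Then commuting in $G\times H$ implies commuting in $S$. Alternatively, factor $S_G\times S_H\to G\times H$ through this monomorphism directly.
\end{itemize}
By the unit law, your $L$ restricts to the wedge map. By uniqueness of fillers it therefore coincides with the paper's $L$.
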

\begin{proof}
First, observe that $\B^n\Stab_{\pi_{n-1}(G)}(1_x)$ and $\B^n\Stab_{\pi_{n-1}(H)}(1_x)$ both admit canonical maps to $\B^n \Stab_{\pi_{n-1}(G\times H)}(1_x)$. Indeed, each fits into a lifting diagram as follows:
\begin{equation*}
    \begin{tikzcd}
	\ast & {} & {\B^n \Stab_{\pi_{n-1}(G\times H)}(1_x)} \\
	{\B^n \Stab_{\pi_{n-1}(G)}(1_x)} & {X\sslash G} & {X\sslash (G\times H)}
	\arrow[from=1-1, to=1-3]
	\arrow[from=1-1, to=2-1]
	\arrow[from=1-3, to=2-3]
	\arrow[dashed, from=2-1, to=1-3]
	\arrow[from=2-1, to=2-2]
	\arrow[from=2-2, to=2-3]
\end{tikzcd}.
\end{equation*}
The left vertical map is $(n-2)$-connected, while the right vertical map is $(n-2)$-truncated; hence the dashed lift exists. The same reasoning applies for $H$. This diagram lives over $\B (G\times H)$ because so does the terminal object of the diagram $X\sslash (G\times H)$. We now combine these two maps to define $L$. Consider the following lifting problem:
\begin{equation*}
\begin{tikzcd}
	{\B^n \Stab_{\pi_{n-1}(G)}(1_x)\vee \B^n \Stab_{\pi_{n-1}(H)}(1_x)} && {\B^n \Stab_{\pi_{n-1}(G\times H)}(1_x)} \\
	{\B^n \Stab_{\pi_{n-1}(G)}(1_x)\times \B^n \Stab_{\pi_{n-1}(H)}(1_x)} & {\B G\times \B H} & {\B (G\times H)}
	\arrow[from=1-1, to=1-3]
	\arrow[from=1-1, to=2-1]
	\arrow[from=1-3, to=2-3]
	\arrow[dashed, "L", from=2-1, to=1-3]
	\arrow[from=2-1, to=2-2]
	\arrow["\simeq"{marking, allow upside down}, draw=none, from=2-2, to=2-3]
\end{tikzcd}.
\end{equation*}
Because $\B^n \Stab_{\pi_{n-1}(G)}(1_x)$ and $\B^n \Stab_{\pi_{n-1}(H)}(1_x)$ are $(n-1)$-connected, their wedge inclusion into the product is $2(n-1)\geq (n-1)$-connected by \cite[Prop.~4.12]{DevalapurkarHaine_James/HiltonMilnorSplittings}. Meanwhile, the right vertical map is $(n-1)$-truncated because $X$ is so (\cref{cor: induced action of pi_n-1 in the truncated setting and classifing object of the stabilizer}). The lift $L$ therefore exists. 
\end{proof}
\begin{remark}
    Though not needed, the map $L$ is $(n-1)$-truncated and thus defines a sub-$n$-group.
\end{remark}
\begin{lemma}\label{lemma: action is trivial on X iff trivial on X//G}
    Let $G,A$ be $n$-groups equipped with an $\infty$-action $(G\times A)\curvearrowright X$. Suppose that 
    \begin{enumerate}
        \item  the $A$-action on $X\sslash G$ is trivial, so that $(X\sslash G)\sslash A\simeq X\sslash G\times\B A$;
        \item under this equivalence, the $(G\times A)$-action on $X$ is classified by \[
        X\sslash G\times \B A\xrightarrow{\chi\times 1_{\B A}} \B G\times \B A,\]
        where $\chi\colon X\sslash G\to\B G$ is the classifying map of the $G$-action on $X$.
    \end{enumerate}
    Then the $\infty$-action $A\curvearrowright X$ is also trivial.
\end{lemma}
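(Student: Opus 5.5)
The strategy is to compute the classifying map of the restricted action $A\curvearrowright X$ and show that it is the projection $X\sslash A\simeq \B A\times X\to\B A$. By \cref{example: group actions} (the trivial action), it suffices to exhibit an equivalence $X\sslash A\simeq X\times\B A$ over $\B A$ under which the classifying map becomes $\pi_2$.

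First, the quotient $X\sslash A$ is obtained by pulling back the classifying map of the $(G\times A)$-action along $\B A\to\B G\times\B A$, the inclusion at the basepoint of $\B G$. This is the restricted action of \cref{example: group actions: restricted action}, along the group map $A\to G\times A$. Hypothesis (2) identifies the total classifying map with $\chi\times 1_{\B A}\colon X\sslash G\times\B A\to\B G\times\B A$. Pullbacks of products of maps are computed factorwise, so this pullback is
\[
\big(\ast\times_{\B G}X\sslash G\big)\times\big(\B A\times_{\B A}\B A\big)\simeq X\times\B A ,
\]
since the fiber of $\chi$ is $X$ by definition of the $G$-action. The induced map to $\B A$ is then the second projection.

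It remains to check that the pullback square in this computation is really the classifying square of the restricted $A$-action, and that the identification with $X$ matches the one in the definition. The first point follows from the pasting law applied to the diagram $X\to X\sslash A\to X\sslash(G\times A)$ over $\ast\to\B A\to\B(G\times A)$. For the second, the fiber over the basepoint of $\B A$ is $X$ in both descriptions, which pins down the fiber identification. Hypothesis (1) is then only needed so that the equivalence $(X\sslash G)\sslash A\simeq X\sslash G\times\B A$ exists and (2) makes sense. One uses \eqref{equation: quotients of commuting actions} to identify $(X\sslash G)\sslash A$ with $X\sslash(G\times A)$.

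The main obstacle is the coherence in hypothesis (2). The equivalence $X\sslash(G\times A)\simeq X\sslash G\times\B A$ must be one over $\B G\times\B A$ (not merely over $\B A$), and it must restrict along $\B G\to\B G\times\B A$ to the given description of $X\sslash G$. I would handle this by reading (2) as asserting a commutative triangle over $\B G\times\B A$, so that both pullbacks are computed in the same slice. Triviality then follows because the classifying map is a projection, i.e.\ the corresponding pointed map $\B A\to\B\Aut(X)$ is nullhomotopic.
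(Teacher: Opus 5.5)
Your proposal is correct and is essentially the paper's proof: you restrict along $A\to G\times A$, identify $X\sslash(G\times A)\to\B G\times\B A$ with $\chi\times 1_{\B A}$ via \eqref{equation: quotients of commuting actions} and hypotheses (1)--(2), and compute the pullback along $\ast\times 1_{\B A}$ as $X\times\B A$ with its projection to $\B A$. Your factorwise computation of that pullback is the precise justification behind the paper's brief appeal to cartesian closedness, and the extra compatibility you worry about is not needed, since any mismatch in identifying the fiber with $X$ is absorbed by an automorphism of the form $\alpha\times 1_{\B A}$.
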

\begin{remark}
    The second hypothesis is necessary. For example, if $G=A=X$ is an abelian group endowed with the left multiplication action, we have that the $A$-action on $X\sslash G\simeq \ast$ is necessarily trivial, but the $A$-action on $X$ is not. The $(G\times A)$-action on $X$ is classified by the diagonal $\Delta_{\B G}$, while the map of the second hypothesis is $(\ast,1_{\B G})$.
\end{remark}
\begin{proof}
Consider the diagram of restricted $\infty$-actions on $X$:
\[\begin{tikzcd}
    {X\sslash A} \arrow[r] \arrow[d] \arrow[dr, phantom, "\lrcorner"{anchor=center, pos=0.125}, near start] & {X\sslash (G\times A)} \arrow[d] \\
    {\B A} \arrow[r, "{\ast\times 1_{\B A}}"] & {\B (G\times A)}
\end{tikzcd}.\]
By \eqref{equation: quotients of commuting actions}, the quotient of the $(G\times A)$-action on $X$ is equivalent to the iterated quotient $(X\sslash G)\sslash A$. The first hypothesis tells us that this quotient is $X\sslash G\times \B A$. Using the second hypothesis, the right vertical map is therefore \[X\sslash G\times\B A\xrightarrow{\chi\times1_{\B A}}\B G \times \B A.\] Since $\E$ is cartesian closed, the top left object in the pullback is exactly $X\times \B A$, together with the projection to $\B A$. This means that the $A$-action on $X$ is trivial.
\end{proof}

\begin{corollary}\label{cor: the action of the stabilizer of H is trivial if the action of G is transitive}
      Let $G,H$ be $n$-group objects equipped with an $\infty$-action $(G\times H)\curvearrowright X$ on an $(n-1)$-truncated and pointed object $(X,x)$. If the restricted $\infty$-action $G\curvearrowright X$ is $n$-transitive, the restricted $\infty$-action $\B^{n-1} \Stab_{\pi_{n-1}(H)}(1_x)\curvearrowright X$ is trivial.
\end{corollary}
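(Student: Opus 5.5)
The plan is to apply \cref{lemma: action is trivial on X iff trivial on X//G} with $A\coloneq\B^{n-1}\Stab_{\pi_{n-1}(H)}(1_x)$, regarded as an $n$-group acting on $X$ via the sub-$n$-group $\B^{n-1}\Stab_{\pi_{n-1}(H)}(1_x)\to H$ of \cref{cor: induced action of pi_n-1 in the truncated setting and classifing object of the stabilizer}. Restricting $(G\times H)\curvearrowright X$ along $G\times A\to G\times H$ gives a commuting $(G\times A)$-action on $X$. We then verify the two hypotheses of the lemma. Throughout we write $S_G\coloneq\Stab_{\pi_{n-1}(G)}(1_x)$, $S_H\coloneq\Stab_{\pi_{n-1}(H)}(1_x)$ and $S\coloneq\Stab_{\pi_{n-1}(G\times H)}(1_x)$.

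First, since $G\curvearrowright X$ is $n$-transitive, \cref{theorem: free transitive regular action characterization} shows that $X\sslash G$ is $(n-1)$-connected. It is also $n$-truncated, because $X$ is $(n-1)$-truncated and $\B G$ is $n$-truncated. Hence $X\sslash G\simeq\B^n\pi_n(X\sslash G)\simeq\B^nS_G$, using \cref{remark: classifying object of stabilizer is the connected cover of the quotient}. The $A$-action on $X\sslash G$ is classified by the composite
\[(X\sslash G)\sslash A\to X\sslash(G\times H)\to\B H,\]
pulled back along $\B A\to\B H$.

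To show this action is trivial, we use the map $L$ of \cref{lemma: product of stabilizers}. Precomposed with $\B^nS_H\to\B^n(S_G\times S_H)$, $L$ gives a map $\B^nS_H\to\B^nS\to X\sslash(G\times H)$ over $\B(G\times H)$ whose projection to $\B G$ is trivial. Combining $L$ with the equivalence $X\sslash G\simeq\B^nS_G$ produces a map
\[\B^nS_G\times\B^nS_H\to X\sslash(G\times H)\]
over $\B G\times\B H$ whose restriction to the first factor is the inclusion $X\sslash G\to X\sslash(G\times H)$. Pulling back along $\B A=\B^nS_H\to\B H$ should identify $(X\sslash G)\sslash A$ with $X\sslash G\times\B A$, with classifying map $\chi\times1_{\B A}$. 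That is exactly hypotheses (1) and (2) of the lemma at once.

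The key point is to check that the comparison map
\[X\sslash G\times\B A\to(X\sslash G)\sslash A\]
over $\B A$ is an equivalence. Both sides are fibered over $\B A$ with fiber $X\sslash G$, so it suffices to check that the map is an equivalence on fibers over the basepoint. Since $\B A$ is connected, we can use \cref{prop: square is Cartesian if fibers are equivalent}. The induced map on fibers is the restriction of $L$ to the first factor, which is the identity of $X\sslash G$ by construction. Compatibility of the classifying maps with $\chi\times 1_{\B A}$ follows because $L$ lives over $\B G\times\B H$.

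This fiberwise identification is the main obstacle, specifically verifying that $L$ restricted to the $G$-factor really is the inclusion $X\sslash G\to X\sslash(G\times H)$ over $\B G$. I would secure it by using the uniqueness of lifts in the $(n-2)$-connected/$(n-2)$-truncated factorization system of \cref{prop: connected truncated factorization system}. Once both hypotheses hold, \cref{lemma: action is trivial on X iff trivial on X//G} gives triviality of $A\curvearrowright X$.
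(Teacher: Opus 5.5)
Your proposal is correct and follows essentially the same route as the paper: you apply \cref{lemma: action is trivial on X iff trivial on X//G} with $A=\B^{n-1}\Stab_{\pi_{n-1}(H)}(1_x)$, use $n$-transitivity to identify $X\sslash G\simeq\B^n\Stab_{\pi_{n-1}(G)}(1_x)$, and use the map $L$ of \cref{lemma: product of stabilizers} together with the fiber comparison over the effective epimorphism $\ast\twoheadrightarrow\B A$ (\cref{prop: square is Cartesian if fibers are equivalent}) to split $(X\sslash G)\sslash A\simeq X\sslash G\times\B A$; the second hypothesis then follows by pasting, since $L$ lies over $\B G\times\B H$. The step you flag as the main obstacle is immediate from the construction of $L$: its restriction to the first factor is the lift in the first lifting square of \cref{lemma: product of stabilizers}, whose commuting triangle already says that it composes to the canonical map $X\sslash G\to X\sslash(G\times H)$, so you do not need a separate uniqueness-of-lifts argument.
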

\begin{proof}
    We apply \cref{lemma: action is trivial on X iff trivial on X//G} for $A=\B^{n-1} \Stab_{\pi_{n-1}(H)}(1_x)$. To do so we must verify the two hypotheses of that lemma.
    
    \textbf{First hypothesis.} We need to show that the $A$-action on $X\sslash G$ is trivial, i.e. that its quotient splits as the product $X\sslash G\times\B A$. This quotient is realized as the restriction of the $H$-action along $A\to H$, hence as the pullback of the span:
    \[\B A\to \B H\leftarrow (X\sslash G)\sslash H.\]
    Recall from \eqref{equation: quotients of commuting actions} that $(X\sslash G)\sslash H \simeq X\sslash (G\times H)$. Furthermore, because the $G$-action is $n$-transitive, $X\sslash G$ is $(n-1)$-connected. Recalling \cref{remark: classifying object of stabilizer is the connected cover of the quotient} we obtain the following equivalences:
    \[X\sslash G \simeq (X\sslash G)\langle n-1\rangle \simeq \B^n \Stab_{\pi_{n-1}(G)}(1_x).\] 
    Consequently, the map $L$ from \cref{lemma: product of stabilizers} becomes:
    \[L \colon X\sslash G \times \B A \to \B^n \Stab_{\pi_{n-1}(G\times H)}(1_x).\]
    Now consider the following commutative diagram:
\[\begin{tikzcd}
    {X\sslash G} \arrow[r] \arrow[dd] \arrow[dr, phantom, "\lrcorner"{anchor=center, pos=0.125}, near start] & {X\sslash G\times \B A} \arrow[r, "L"] \arrow[d, "\chi\times 1_{\B A}"'] & {\B^n \Stab_{\pi_{n-1}(G\times H)}(1_x)} \arrow[r] & {X\sslash (G\times H)} \arrow[d] \\
    & {\B G\times \B A}\arrow[d, "\pi_2"]\arrow[rr]\arrow[dr, phantom, "\lrcorner"{anchor=center, pos=0.125}, near start] & &{\B G\times \B H}\arrow[d, "\pi_2"]\\
    \ast \arrow[r, two heads] & {\B A} \arrow[rr] & {} & \B H
\end{tikzcd}.\]
    The upper right square commutes by the definition of $L$, while the left square, the bottom right square, and the total outer rectangle are Cartesian. Because the map $\ast\twoheadrightarrow \B^n \Stab_{\pi_{n-1}(H)}(1_x)$ is an effective epimorphism, the total right rectangle is Cartesian by \cref{prop: square is Cartesian if fibers are equivalent}.
    
    \textbf{Second hypothesis.} The pasting lemma tells us that the upper right square is Cartesian. This shows that the action of $G\times A$ on $X$ is classified by $\chi\times 1_{\B A}$.

    Having checked both hypotheses, \cref{lemma: action is trivial on X iff trivial on X//G} implies that the $A$-action on $X$ is trivial.
\end{proof}
\subsubsection{Proof of \cref{Theorem: action of a product restricted to H is free}}\label{subsubsection: proof of action of a product restricted to H is free}
We now prove the main theorem of this \cref{subsection: action from a product}, stated in the introductory paragraph.
\begin{theorem*}
    Let $G,H$ be $n$-group objects equipped with an $\infty$-action $(G\times H)\curvearrowright X$ on an $(n-1)$-truncated object $X$. Suppose that the restricted $\infty$-action $G\curvearrowright X$ is $n$-transitive and that the restricted $\infty$-action $H\curvearrowright X$ is $n$-faithful. Then the $\infty$-action $H\curvearrowright X$ is $n$-free.
\end{theorem*}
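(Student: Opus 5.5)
The proof replicates the conceptual classical argument given above, using \cref{cor: action is n-free iff stabilizers are trivial}, \cref{cor: the action of the stabilizer of H is trivial if the action of G is transitive} and \cref{cor: faithful action characterization} in that order. First we reduce freeness to stabilizers. Since $G$ acts $n$-transitively, $X\sslash G$ is $(n-1)$-connected by \cref{theorem: free transitive regular action characterization}, hence in particular connected. The same holds for $X\sslash(G\times H)\simeq (X\sslash G)\sslash H$, being the base of an effective epimorphism from $X\sslash G$.

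The \v{C}ech cover used in \cref{cor: action is n-free iff stabilizers are trivial} requires global points of $X$ whose images jointly cover $X\sslash H$. If $X$ admits a global point $x$, we proceed directly. In general, we first pass to the slice over $X$ along the effective epimorphism $X\to\ast$ (or over a cover of $X$). There the pullback of $X$ acquires the diagonal as a tautological point, and all hypotheses (truncation, $n$-transitivity, $n$-faithfulness) are stable under base change; freeness descends back because $(n-1)$-truncatedness of $X\sslash H$ is reflected along effective epimorphisms. This is exactly the content of item (4) of \cref{cor: action is n-free iff stabilizers are trivial}, so we may use that formulation. We then need the family $\{\ast\to X\to X\sslash H\}$ to be jointly epimorphic. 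This holds because $X\to X\sslash H$ is an effective epimorphism and the point covers $X$ after base change. So it suffices to show that $\Stab_{\pi_{n-1}(H)}(1_x)\simeq 1$ for the (generalized) basepoint $x$.

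Set $A=\B^{n-1}\Stab_{\pi_{n-1}(H)}(1_x)$. By \cref{cor: induced action of pi_n-1 in the truncated setting and classifing object of the stabilizer}, the map $A\to H$ is an $n$-symmetric sub-$1$-group, and in particular a sub-$n$-group. By \cref{cor: the action of the stabilizer of H is trivial if the action of G is transitive}, which uses $n$-transitivity of $G$, the restricted action $A\curvearrowright X$ is trivial. Since $H\curvearrowright X$ is $n$-faithful, \cref{cor: faithful action characterization} implies that $A$ is an $(n-1)$-group, i.e.\ $\B A\simeq\B^n\Stab_{\pi_{n-1}(H)}(1_x)$ is $(n-1)$-truncated. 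But $\B A$ is also $(n-1)$-connected, so it is contractible. Hence $\Stab_{\pi_{n-1}(H)}(1_x)\simeq 1$, and \cref{cor: action is n-free iff stabilizers are trivial} gives $n$-freeness.

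The main obstacle is the basepoint issue: \cref{cor: the action of the stabilizer of H is trivial if the action of G is transitive} is stated for pointed $X$, while $X$ in the theorem need not have global points. I would handle this by the slice argument above. Concretely, I would check that working in $\slice{\E}{X}$ with $X\times X\to X$ and the group objects $G\times X$, $H\times X$ preserves the hypotheses. Transitivity and faithfulness are preserved since pullback along $X\to\ast$ is left exact and commutes with $\Aut$ and with quotients. Because $X\to\ast$ need not be an effective epimorphism if $X$ is empty, that trivial case must be treated separately: there $X\sslash H\simeq\emptyset$ is truncated.
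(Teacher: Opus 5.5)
This is the paper's proof: you show $\Stab_{\pi_{n-1}(H)}(1_x)\simeq 1$ at a global point using the same three corollaries, then rerun the argument in $\slice{\E}{X}$ at the diagonal point $\Delta\colon X\to X\times X$ and conclude by the $(1\Leftrightarrow 4)$ part of \cref{cor: action is n-free iff stabilizers are trivial}. That equivalence is unconditional, so you should drop your side remarks, which are also inaccurate: no jointly epimorphic family is needed (and $\Delta$ is not an effective epimorphism in general, e.g.\ for $X=\ast\amalg\ast$); no descent along $X\to\ast$ is needed (that map is an effective epimorphism only when $X$ is inhabited, which is stronger than non-empty); and hence no separate empty case is needed.
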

\begin{proof}
    Assume first that $X$ admits a global point $x\colon1\to X$. We show that the stabilizer $\Stab_{\pi_{n-1}(H)}(1_x)$ of the $\pi_{n-1}(H)$-action on $\pi_{n-1}(X,x)$ is trivial. By \cref{cor: induced action of pi_n-1 in the truncated setting and classifing object of the stabilizer}, $\B^{n-1}\Stab_{\pi_{n-1}(H)}(1_x)\to H$ exhibits an $n$-symmetric sub-$1$-group, which is in particular a sub-$n$-group. By \cref{cor: the action of the stabilizer of H is trivial if the action of G is transitive}, the $n$-transitivity of the $G$-action implies that the restricted $\infty$-action $\B^{n-1}\Stab_{\pi_{n-1}(H)}(1_x)\curvearrowright X$ is trivial. Because the $\infty$-action $H\curvearrowright X$ is $n$-faithful, \cref{cor: faithful action characterization} forces $\B^{n-1}\Stab_{\pi_{n-1}(H)}(1_x)$ to be an $(n-1)$-group. This means it is $(n-2)$-truncated. However it is also $(n-2)$-connected, being an $(n-1)$-fold classifying object. Consequently, it is equivalent to the terminal object $\ast$, proving that the stabilizer group is trivial. 

    \medskip
    
    We upgrade this pointwise triviality to a global statement by working in the slice $\infty$-topos $\slice{\E}{X}$ via the base change functor $-\times X \colon \E \to \slice{\E}{X}$. Because this functor has both adjoints, it preserves truncation levels, $n$-group objects, $\infty$-actions, $n$-transitivity and $n$-th faithfulness. The $(n-1)$-truncated object $(X\times X,\pi_2)\in\slice{\E}{X}$ has a canonical global point given by the diagonal $\Delta\colon X\to X\times X$. By applying the first part of our proof in $\slice{\E}{X}$, we deduce that the stabilizer $\Stab_{\pi_{n-1}(H\times X)}(1_{\Delta_X})$ of the $\pi_{n-1}(H\times X)$-action on $\pi_{n-1}(X\times X,\Delta _X)$ is trivial. Finally, applying the $(1 \Leftrightarrow 4)$ equivalence of \cref{cor: action is n-free iff stabilizers are trivial}, we conclude that the $\infty$-action $H\curvearrowright X$ is $n$-free.
\end{proof}
\subsection{Fixed points}\label{subsection: fixed points}
In the case of an ordinary action of a group $G$ on a set $X$, the homotopy fixed points $X^{hG}$ agree with the fixed points $X^G$ and the canonical map $X^{hG}\to X$ is a monomorphism. We generalize this result in \cref{corollary: inclusion of fixed points in X is (n-1)-truncated} to an $(n-1)$-truncated object $X$. 

\medskip

In the next \cref{section : Coverings}, we will identify the group of deck transformations of a map $p\colon E\to X$ with fiber $F$ as the fixed points of a certain $\infty$-action $\Omega X\curvearrowright\Aut(F)$ (\cref{corollary: deck transformations as Omega X-equivariant autoequivalences}). This will be the appropriate point of view which enables us to show that when $p$ is an $n$-covering map, the $\Deck(p)$-action on $F$ is $n$-faithful (\cref{cor: Deck(p)-action is n-faithful}).

\medskip

Classically, the fixed points of a $G$-action on $X$ correspond exactly to the $G$-equivariant maps from the terminal object into $X$, where the terminal object carries the trivial action. In the $\infty$-categorical setting, this point of view translates directly to the space of sections of the classifying map $X\sslash G\to \B G$.  This motivates the following definition. Recall from \cref{section: preliminaries subsection: higher topoi} that the right adjoint $\prod_X$ to the functor $-\times X\colon\E\to\slice{\E}{X}$ is the functor of \textit{sections}. 
    \begin{definition}[{\cite[Def.~5.1.269]{Schreiber_DifferentialCohomologyCohesiveInfinityTopos}}]\label{definition: fixed points}
        Let $G\curvearrowright X$ be an $\infty$-action in an $\infty$-topos $\E$. The object of \textit{fixed points} or \textit{invariants} $X^G$ is defined as the object of sections of the classifying map $X\sslash G\to \B G$: \[X^G\coloneq\prod_{\B G}X\sslash G.\]
    \end{definition}
    This definition aligns with the slice $\infty$-category equivalence $\Act_G(\E)\simeq \slice{\E}{\B G}$ (\cref{cor: characterization of pointed action}), which identifies the fixed points with the internal hom of maps $\ast\sslash G\to X\sslash G$ over $\B G$, where $\ast\sslash G\simeq \B G$ is the quotient of the trivial action.
    As a fundamental consistency check, the $G$-action on $X$ must induce a trivial $G$-action on $X^G$. This is achieved by pulling back the defining square of the $G$-action along the counit map $X^G\times \B G\to X\sslash G$ of the $\prod_{\B G}\dashv (-\times \B G)$ adjunction:
    \begin{equation}\label{diagram: map from the fixed points to X}
    \begin{tikzcd}
        {X^G} \arrow[r] \arrow[d] \arrow[dr, phantom, "\lrcorner"{anchor=center, pos=0.125}, near start] & X \arrow[r] \arrow[d] \arrow[dr, phantom, "\lrcorner"{anchor=center, pos=0.125}, near start] & \ast \arrow[d] \\
        {X^G\times \B G} \arrow[r, "\varepsilon"'] & {X\sslash G} \arrow[r] & \B G
    \end{tikzcd}.
    \end{equation}
    This diagram yields the desired $G$-action on $X^G$ whose quotient coincides with that of the trivial $\infty$-action, $X^G\sslash G\simeq X^G\times \B G$. Furthermore, it is equipped with a $G$-equivariant map $X^G\to X$, obtained as the pullback of the counit $\varepsilon$ over $\B G$.
    
\begin{lemma}\label{lemma: counit is n-1 truncated}
    Let $\C$ be a presentable locally Cartesian closed $\infty$-category, $B \in\C$ a connected object, and $Y\in (\slice{\C}{B})^{\sleq{n}}$ an $n$-truncated object of the slice. Then the counit map \[\varepsilon_Y:(\prod_BY)\times B\to Y\]
    is $(n-1)$-truncated in $\slice{\C}{B}$.
\end{lemma}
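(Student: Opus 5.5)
The plan is to check $(n-1)$-truncatedness of $\varepsilon_Y$ on mapping spaces, using the criterion of \cref{prop: equivalent characterization of truncated maps 1} in the slice $\slice{\C}{B}$. Concretely, it suffices to show that for every object $t\colon T\to B$ of the slice, the map of spaces
\[\slice{\C}{B}\big(T,(\textstyle\prod_BY)\times B\big)\longrightarrow \slice{\C}{B}(T,Y)\]
induced by $\varepsilon_Y$ is $(n-1)$-truncated.

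First I identify this map. By item \eqref{item: Hom C vs slice} and the adjunction $(-\times B)\dashv\prod_B$, the source is
\[\C(T,\textstyle\prod_BY)\simeq\slice{\C}{B}(T\times B,Y),\]
where $T\times B$ lies over $B$ via the projection. Under this identification, postcomposition with the counit becomes precomposition $s^\ast$ along the map $s=(1_T,t)\colon T\to T\times B$ over $B$, by the usual triangle identity. So the task reduces to showing that $s^\ast\colon\slice{\C}{B}(T\times B,Y)\to\slice{\C}{B}(T,Y)$ is $(n-1)$-truncated.

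Next I use the connectedness of $B$. The map $s$ is the base change of the diagonal $\Delta_B\colon B\to B\times B$ along $t\times 1_B\colon T\times B\to B\times B$. Since $B\to\ast$ is $0$-connected and has $\Delta_B$ as a section-type diagonal, \cref{prop: characterization connected maps} shows that $\Delta_B$ is $(-1)$-connected. Hence $s$ is an effective epimorphism, by \cref{prop: connected maps are preserved by pullback and pushouts}. Here I assume we are in an $\infty$-topos, or at least that the $(-1)$-connected/effective-epi facts of the section are available; in the intended application $\C=\E$.

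The core claim, which I expect to be the main obstacle, is then the following. Let $m\geq -2$, let $s\colon V\twoheadrightarrow W$ be an effective epimorphism in $\slice{\C}{B}$, and let $Y$ be $m$-truncated. Then $s^\ast\colon\mathrm{Map}(W,Y)\to\mathrm{Map}(V,Y)$ is $(m-1)$-truncated. I prove this by induction on $m$.

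For $m=-2$ there is nothing to prove, since $Y$ is terminal.

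For $m=-1$, $Y$ is subterminal, so both mapping spaces are empty or contractible. It suffices to see that any map $V\to Y$ extends along $s$. This follows from the left orthogonality of effective epimorphisms ($(-1)$-connected maps) to monomorphisms $Y\hookrightarrow\ast$, given by \cref{prop: connected truncated factorization system}. So $s^\ast$ is an equivalence, that is, $(-2)$-truncated.

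For the inductive step, I use that a map of spaces is $(m-1)$-truncated exactly when its diagonal is $(m-2)$-truncated. Comparing path spaces, for two maps $a,b\colon W\to Y$ the induced map on paths is
\[\mathrm{Map}_{/W}\big(W,Y(a,b)\big)\longrightarrow\mathrm{Map}_{/W}\big(V,Y(a,b)\big),\]
where $Y(a,b)\to W$ is the path object of \cref{definition: path objects}. That path object is $(m-1)$-truncated in $\slice{\C}{W}$, because $\Delta_Y$ is $(m-1)$-truncated. Moreover $s$ remains an effective epimorphism in the further slice. The induction hypothesis therefore applies and gives that the diagonal is $(m-2)$-truncated.

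The delicate point in this step is verifying that the diagonal of $s^\ast$ is indeed identified with this restriction map on sections of path objects. I would check this via item \eqref{item: double slice is slice} and the pullback description of path objects.

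Applying the claim with $m=n$, $V=T$ and $W=T\times B$ finishes the proof.
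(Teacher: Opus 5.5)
Your proposal is correct and follows the paper's proof: the paper makes the same reduction to precomposition along the section $T\to T\times B$, and gets that this section is $(-1)$-connected by viewing it as a section of the $0$-connected projection $T\times B\to T$. That route is slightly cleaner than yours via $\Delta_B$, since \cref{prop: characterization connected maps} assumes a section and $B\to\ast$ need not have one. The paper then simply cites \cite[Prop.~4.2.8]{SchlankYanovski_InfiniteCategoricalEckmanHilton} for the fact that the space of lifts of a $(-1)$-connected map against an $n$-truncated map is $(n-1)$-truncated, and your induction on $m$ via path objects is a valid self-contained proof of exactly that fact.
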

\begin{proof}
    Let $T\in \slice{\C}{B}$ an object over $B$. It suffices to show that the induced map of spaces 
    \begin{equation}\label{lemma: counit is n-1 truncated: eq1}
    \slice{\Map}{B}(T,(\prod_BY)\times B)\to\slice{\Map}{B}(T,Y)
    \end{equation}
    is $(n-1)$-truncated. Equivalently, it suffices to show that its fiber at an arbitrary morphism $s\colon T\to Y$ in $\slice{\C}{B}$ is $(n-1)$-truncated. By adjunction, the domain is identified with $\slice{\Map}{B}(T\times B,Y)$, where $T\times B$ lives over $B$ via the projection. Under this identification, the map \eqref{lemma: counit is n-1 truncated: eq1} is given by precomposition with the section $T\to T\times B$. The fiber of this map at a $s$ is precisely the space $L$ of lifts of the following square:
\[\begin{tikzcd}
    T \arrow[r, "s"] \arrow[d] & Y \arrow[d] \\
    {T\times B} \arrow[r, "\pi_2"'] \arrow[ur, dashed] & B
\end{tikzcd}.\]
Since $B$ is connected, $T\times B\to T$ is connected in $\slice{\E}{T}$, which implies that the left-hand map, being a section, is $(-1)$-connected. Because the right-hand map is $n$-truncated by hypothesis, the space $L$ of lifts is $(n-1)$-truncated by \cite[Prop.~4.2.8]{SchlankYanovski_InfiniteCategoricalEckmanHilton}.
\end{proof}
\begin{corollary}\label{corollary: inclusion of fixed points in X is (n-1)-truncated}
    Let $\E$ be an $\infty$-topos, and $G\curvearrowright X$ an $\infty$-action on an $n$-truncated object $X$. The map $X^G\to X$ from \eqref{diagram: map from the fixed points to X} is $(n-1)$-truncated.
\end{corollary}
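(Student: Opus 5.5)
The plan is to read the map $X^G\to X$ off diagram \eqref{diagram: map from the fixed points to X}. There it appears as the base change of the counit $\varepsilon\colon X^G\times\B G\to X\sslash G$ along the effective epimorphism $\ast\twoheadrightarrow\B G$, where both sides are regarded over $\B G$. Once we show that $\varepsilon$ is $(n-1)$-truncated in $\slice{\E}{\B G}$, the conclusion follows from \cref{prop: pullback of truncated map is truncated}, since truncated maps are stable under base change. The argument therefore reduces to applying \cref{lemma: counit is n-1 truncated} with $\C=\E$, $B=\B G$ and $Y=(X\sslash G\to\B G)$.

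First, the hypotheses of that lemma hold. The object $\B G$ is connected by \cref{theorem: group objects are pointed connected objects}. The slice $\slice{\E}{\B G}$ is presentable and locally Cartesian closed, because it is an $\infty$-topos. It remains to see that $Y$ is $n$-truncated in $\slice{\E}{\B G}$, i.e. that the classifying map $X\sslash G\to\B G$ is $n$-truncated. Its pullback along the effective epimorphism $\ast\twoheadrightarrow\B G$ is $X\to\ast$, which is $n$-truncated by hypothesis. The converse direction of \cref{prop: pullback of truncated map is truncated} then shows that $X\sslash G\to\B G$ is $n$-truncated.

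Second, \cref{lemma: counit is n-1 truncated} gives that $\varepsilon_Y\colon(\prod_{\B G}X\sslash G)\times\B G\to X\sslash G$ is $(n-1)$-truncated in $\slice{\E}{\B G}$. By the definition of truncated maps in a slice (or the cancellation property in \cref{prop: equivalent characterization of truncated maps}), this is equivalent to $\varepsilon$ being $(n-1)$-truncated as a map in $\E$.

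Finally, we pull back $\varepsilon$ along $X\to X\sslash G$. By the pasting in \eqref{diagram: map from the fixed points to X}, the left square is Cartesian, and its left vertical map $X^G\to X^G\times\B G$ is the base change of $\ast\to\B G$. Hence $X^G\to X$ is a base change of $\varepsilon$, and so it is $(n-1)$-truncated.

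The only point needing care is to confirm that the top map $X^G\to X$ in \eqref{diagram: map from the fixed points to X} really is the base change of $\varepsilon$ along $X\to X\sslash G$. This holds by construction of the diagram: it is obtained by pulling back the defining square of the action along $\varepsilon$. No genuine obstacle is expected.
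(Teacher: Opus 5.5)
Your proof is correct and follows the paper's argument. Apply the counit lemma with $B=\B G$ and $Y=(X\sslash G\to\B G)$, then take the base change $X^G\to X$ of $\varepsilon$ along $X\to X\sslash G$. You also make explicit two hypotheses the paper leaves implicit: that $\B G$ is connected, and that $X\sslash G\to\B G$ is $n$-truncated because its pullback along the effective epimorphism $\ast\twoheadrightarrow\B G$ is $X\to\ast$.
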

\begin{proof}
    By \cref{lemma: counit is n-1 truncated}, the counit map $\varepsilon \colon  X^G\times \B G\to X\sslash G$ is $(n-1)$-truncated. It follows that its base change $X^G\to X$ is $(n-1)$-truncated as well.
\end{proof}


\section{$n$-Covering spaces}\label{section : Coverings}
In this section we finally assemble the main theorems of the paper concerning $n$-covering spaces in an $\infty$-topos $\E$.  All these results are built on the material developed in the preceding sections (the theory of truncations and connectivity, $n$-groups, and $\infty$-actions). We begin in \cref{subsection: universal n-covers} by defining $n$-covering maps and characterizing the universal $n$-cover.  The classification theorems are then proved in \cref{subsection: classification theorems}: they establish an equivalence between $n$-covers over a pointed connected object $X$ and $\infty$-actions of $\Pi_n(X,x)$ on $(n-1)$-truncated objects, and they relate $(k-1)$-connected covers to $k$-symmetric subgroups of $\Pi_n(X,x)$.  Next, \cref{subsection: Group of Deck transformation} introduces the $\infty$-group of deck transformations of an $n$-cover and gives several complementary perspectives on its $\infty$-action on the fiber $F$. This culminates in \cref{subsection: freeness of the deck(p)-action} in the result that this $\infty$-action is $n$-free for $(n-1)$-connected covers. In \cref{subsection: normal coverings} we characterize normal $n$-covers as those corresponding to normal sub‑$1$-groups of $\pi_n(X,x)$ and compute their deck $n$-groups as quotients of $\Pi_n(X,x)$. Finally, \cref{subsection: normalizers} develops the concept of the normalizer of a map of $\infty$-groups and relates it to deck transformations.

For the rest of this section we work in an ambient $\infty$-topos $\E$.
\subsection{Universal $n$-covers}\label{subsection: universal n-covers}
\begin{definition}\label{def: n-covering categories}
    Let $X \in \E$ be an object and let $n \geq 1$.
    \begin{enumerate}
        \item An \textit{$n$-covering object} is an object $E \in \E$ equipped with an $(n-1)$-truncated map $p \colon E \to X$, called an \textit{$n$-covering map}. The $\infty$-category of $n$-covering maps is denoted $\Cov_n(X) \coloneq (\slice{\E}{X})^{\sleq n-1}$. 
        \item If $E$ is additionally $k$-connected, we say $p$ is a \textit{$k$-connected $n$-covering map}, and we denote the corresponding full subcategory by $\Cov_n^{\sg k}(X)$.
        \item If $(X,x) \in \E_\ast$ is pointed, the $\infty$-category of \textit{pointed $n$-covering maps} is defined as $$\Cov_{\ast,n}(X) \coloneq \left(\slice{\E_\ast}{(X,x)}\right)^{\sleq n-1}.$$
    \end{enumerate}
\end{definition}
\begin{remark}
    One could be tempted to define $n$-coverings via local triviality as $F$-fiber bundles for an $(n-1)$-truncated object $F$ \cite[Def.~4.1]{NSS_PrincipalBundlesGeneralTheory}. However, because our framework relates covers to the fundamental $n$-group, we restrict to a pointed and connected base space $X$, where both definitions coincide. This mirrors classical topology ($n=1, \E=\Sp$), where covering spaces are precisely fibrations with discrete fibers \cite[Theorem~10, \S2.4, p.~78]{Spanier_AlgebraicTopology}. Ultimately, what we develop is simply a theory of $(n-1)$-truncated maps. If $X$ lacks a basepoint, one can naturally extend the results of this section by working fiberwise in the slice $\slice{\E}{X}$.
\end{remark}
\begin{prop}\label{prop: characterization of universal cover}
    Let $(X,x) \in \E_\ast$ be a pointed connected object, and let $p \colon (E,e) \to (X,x)$ be a pointed $n$-covering map. The following are equivalent:
    \begin{enumerate}
        \item the total object $E$ is $n$-connected;
        \item the total object $E$ is equivalent to the $n$-connected cover $X\langle n\rangle$ of $X$;
        \item the map $p$ is initial in the $\infty$-category $\Cov_{\ast,n}(X)$ of pointed $n$-covering maps.
    \end{enumerate}
\end{prop}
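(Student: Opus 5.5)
We prove $(1)\Leftrightarrow(2)$ first and then $(2)\Leftrightarrow(3)$, relying throughout on the $(n-1)$-connected/$(n-1)$-truncated factorization system (\cref{prop: connected truncated factorization system}). Recall from \cref{definition: image of a map} that $X\langle n\rangle$ is the $(n-1)$-image of the basepoint: it comes with a factorization $\ast\to X\langle n\rangle\to X$ in which the first map is $(n-1)$-connected and the second is $(n-1)$-truncated. By \cref{prop: characterization connected maps}, $X\langle n\rangle$ is therefore $n$-connected, since its basepoint is a section of $X\langle n\rangle\to\ast$ and is $(n-1)$-connected. The covering map $X\langle n\rangle\to X$ is $(n-1)$-truncated, so $X\langle n\rangle\to X$ is a pointed $n$-covering.

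\textbf{$(1)\Rightarrow(2)$.} Suppose $E$ is $n$-connected. By \cref{prop: characterization connected maps}, the basepoint $e\colon\ast\to E$ is $(n-1)$-connected. Then $\ast\xrightarrow{e}E\xrightarrow{p}X$ is an $(n-1)$-connected/$(n-1)$-truncated factorization of $x$. By uniqueness of factorizations, $E\simeq\im_{n-1}(x)=X\langle n\rangle$, and this equivalence is pointed and lies over $X$.

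\textbf{$(2)\Rightarrow(1)$.} This follows from the remark above that $X\langle n\rangle$ is $n$-connected.

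\textbf{$(2)\Rightarrow(3)$.} Let $q\colon(E',e')\to(X,x)$ be any pointed $n$-covering. A map from $X\langle n\rangle$ to $q$ in $\Cov_{\ast,n}(X)$ is a lift in the square with left side $\ast\to X\langle n\rangle$, right side $q$, top $e'$ and bottom $X\langle n\rangle\to X$. More precisely, the mapping space $\slice{\E_\ast}{(X,x)}(X\langle n\rangle,E')$ is the fiber of the restriction map
\[\slice{\E}{X}(X\langle n\rangle,E')\to\slice{\E}{X}(\ast,E')\]
over $e'$. Since the left side is $(n-1)$-connected and $q$ is $(n-1)$-truncated, orthogonality makes this restriction map an equivalence. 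Its fiber is therefore contractible, so $X\langle n\rangle$ is initial. Alternatively, one can invoke the coreflection of \cref{prop: n-connected pointed spaces is coreflective in pointed spaces} restricted to the slice.

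\textbf{$(3)\Rightarrow(2)$.} Initial objects are unique up to equivalence, so any initial pointed $n$-covering is equivalent to $X\langle n\rangle$ in $\Cov_{\ast,n}(X)$. In particular $E\simeq X\langle n\rangle$.

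The main care point is identifying pointed maps over $X$ with lifting problems against the basepoint. This uses the description of coslice mapping spaces as fibers, exactly as in the proof of \cref{cor: characterization n-connected maps through pullback functor in the pointed category}, together with the fact that orthogonality in $\E$ gives contractible spaces of lifts. The connectedness of $X$ is not really needed in this argument.
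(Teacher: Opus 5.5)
Your proof is correct. For $(1)\Leftrightarrow(2)$ you argue as the paper does: the basepoint of an $n$-connected $E$ is $(n-1)$-connected, so $\ast\to E\to X$ is the $(n-1)$-image factorization of $x$. You also make explicit the converse fact that $X\langle n\rangle$ is $n$-connected, which the paper leaves implicit.

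For $(2)\Leftrightarrow(3)$ the routes differ mildly. The paper notes that $x\colon\ast\to X$ is initial in $\slice{\E_\ast}{(X,x)}$. It then uses that the reflector $\tau^X_{n-1}$ onto $\Cov_{\ast,n}(X)$ is a left adjoint, so it preserves initial objects. Finally it identifies $\tau^X_{n-1}(x)$ with $X\langle n\rangle\to X$ via \cref{remark: fiberwise truncation is the n-connected/truncated factorization}, implicitly using \cref{corollary: fibered truncation commutes with pointed fibered truncation} to compute the pointed truncation on underlying objects.

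You instead compute the mapping spaces directly. They are fibers of a restriction map, and that map is an equivalence by orthogonality of the $(n-1)$-connected/$(n-1)$-truncated factorization system. The two arguments express the same fact, since orthogonality is exactly what makes $\tau^X_{n-1}$ a reflector. The paper's version is shorter. Yours works directly with the coslice mapping-space fiber sequence, which is the same input behind \cref{prop: n-truncation commute with coslice forgetful functors}, rather than citing the comparison of pointed and unpointed truncations.

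One bookkeeping point: to conclude that $p$ itself, and not just the canonical $X\langle n\rangle\to X$, is initial, you need the equivalence in (2) to be one of pointed objects over $X$. Your $(1)\Rightarrow(2)$ supplies this, and $(2)\Rightarrow(1)$ lets you upgrade a bare equivalence $E\simeq X\langle n\rangle$ to such a one. Your remark that connectedness of $X$ is not used is accurate.
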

\begin{proof}
    If $E$ is $n$-connected, the given point inclusion $e\colon\ast\to E$ is $(n-1)$-connected. Therefore, the composition $\ast \xrightarrow{e}E\xrightarrow{p} X$ exhibits an $(n-1)$-connected/$(n-1)$-truncated factorization of the global point $x\colon\ast\to X$. We conclude by \cref{definition: image of a map} that $E\simeq X\langle n\rangle$. This gives $(1\Leftrightarrow 2)$. Because the basepoint $x\colon\ast\to X$ is initial in $\slice{(\E_\ast)}{X}$, its $(n-1)$-truncation $\tau_{n-1}^X(x)$ is initial in $\left(\slice{\E_\ast}{X}\right)^{\sleq n-1}$. By \cref{remark: fiberwise truncation is the n-connected/truncated factorization} this is precisely $X\langle n\rangle\to X$. This proves $(2\Leftrightarrow 3)$.
\end{proof}
\begin{definition}\label{def: universal cover}
    Let $(X,x)$ be a pointed connected object and $n \geq 1$. A \textit{universal $n$-covering map} over $X$ is a pointed $n$-covering map satisfying the equivalent conditions of \cref{prop: characterization of universal cover}.
\end{definition}

Recall from \cref{remark: contruction of the n-connected cover} that the universal $n$-covering map of a pointed and connected object $X$ can be constructed as the following pullback: 
\begin{equation*}
    \begin{tikzcd}
    {X\langle n\rangle} \arrow[r] \arrow[d, "{p_n}"'] \arrow[dr, phantom, "\lrcorner"{anchor=center, pos=0.125}, near start] & \ast \arrow[d] \\
    X \arrow[r] & {\B \Pi_n(X,x)}
    \end{tikzcd}.
\end{equation*}

\subsection{Classification theorems}\label{subsection: classification theorems}
Let $F$ be the fiber of an $n$-covering map. Recall from \cref{example: action of Pi_n(X) on the fiber F} the $\infty$-action $\Pi_n(X,x)\curvearrowright F$, whose quotient is $F\sslash \Pi_n(X,x)\simeq\tau_nE$.
\begin{remark}
    For $n=1$, one could ask whether the action $\pi_1(X,x)\curvearrowright F$ induced by the truncation map $X\to \B \pi_1(X,x)$ is the one defined classically in topology. It turns out that this is the unique such $1$-connected map \cite[Prop.~34]{MimramOleon_ClassifyingCoveringHoTT}.
\end{remark}

\begin{theorem}\label{Theorem: representation theorem for n-covers}
    Let $(X,x)\in\E_\ast$ be a pointed connected object, and let $1\leq n<\infty$ be an integer. Then there is an equivalence of $n$-categories
    \begin{equation*}
        \Cov_n(X)\simeq \Act_{\Pi_n(X,x)}(\E^{\sleq{{n-1}}})
    \end{equation*}
    between $n$-covering objects over $X$ and $\infty$-actions of the fundamental $n$-group of $X$ on $(n-1)$-truncated objects. It restricts to an equivalence \[\Cov_{\ast,n}(X)\simeq \Act_{\Pi_n(X,x)}(\E_\ast^{\sleq n-1})\]
    between pointed $n$-covers and pointed $\infty$-actions. For each $1\leq k\leq n$, these restrict to equivalences between $(k-1)$-connected (pointed) $n$-covers and $k$-transitive (pointed) $\infty$-actions.
\end{theorem}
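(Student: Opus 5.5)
The proof combines the $n$-truncation map with the classification of actions as objects over a classifying object. Write $\eta_X\colon X\to \tau_nX\simeq\B\Pi_n(X,x)$. Since $X$ is connected, $\tau_nX$ is connected, so \cref{remark: classifying objects of homotopy groups fundamental n-groups} identifies it with $\B\Pi_n(X,x)$. By \cref{remark: fiberwise truncation is the n-connected/truncated factorization}, $\eta_X$ is $n$-connected.

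First apply \cref{prop: characterization n-connected maps through pullback functor} with $m=n-1<n$. It gives an equivalence
\[\eta_X^\ast\colon \big(\slice{\E}{\B\Pi_n(X,x)}\big)^{\sleq n-1}\xrightarrow{\simeq}\big(\slice{\E}{X}\big)^{\sleq n-1}=\Cov_n(X).\]
Next, \cref{cor: characterization of pointed action} gives $\Act_{\Pi_n(X,x)}(\E)\simeq\slice{\E}{\B\Pi_n(X,x)}$, sending an action to its classifying map $F\sslash\Pi_n(X,x)\to\B\Pi_n(X,x)$. Its fiber over the basepoint is $F$. A map over the connected object $\B\Pi_n(X,x)$ is $(n-1)$-truncated exactly when its fiber over the effective epimorphism $\ast\twoheadrightarrow\B\Pi_n(X,x)$ is $(n-1)$-truncated; this is \cref{prop: pullback of truncated map is truncated}. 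So the equivalence restricts to $\Act_{\Pi_n(X,x)}(\E^{\sleq n-1})\simeq(\slice{\E}{\B\Pi_n(X,x)})^{\sleq n-1}$, where the left side is the full subcategory of actions on $(n-1)$-truncated objects. Composing the two equivalences proves the first claim. By construction the composite sends a cover $E\to X$ to the action $\Pi_n(X,x)\curvearrowright F$ of \cref{example: action of Pi_n(X) on the fiber F}, with quotient $\tau_nE$. The result is an $n$-category because mapping spaces in $(\slice{\E}{X})^{\sleq n-1}$ are $(n-1)$-truncated.

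The pointed case runs the same way. Use \cref{cor: characterization n-connected maps through pullback functor in the pointed category} for the pointed $n$-connected map $\eta_X$, and the $\E_\ast$ version of \cref{cor: characterization of pointed action}. Truncatedness is detected after forgetting basepoints by \cref{example: U preserves and reflects n-truncated maps}, so the same restriction argument applies.

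For connectivity, fix $1\le k\le n$. By \cref{theorem: free transitive regular action characterization}, an action $\Pi_n(X,x)\curvearrowright F$ is $k$-transitive if and only if its quotient $F\sslash\Pi_n(X,x)\simeq\tau_nE$ is $(k-1)$-connected. Since $k-1\le n-1<n$, the condition $\tau_{k-1}\tau_nE\simeq\tau_{k-1}E$ holds, so $\tau_nE$ is $(k-1)$-connected if and only if $E$ is. Thus $(k-1)$-connected covers match $k$-transitive actions, in both the pointed and unpointed settings.

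The main obstacle is making sure the composite equivalence really lands on the action described, with quotient $\tau_nE$, and not just some abstract action. This needs the Cartesian square of \cref{corollary: Cartesian square of truncation maps}, which identifies the inverse of $\eta_X^\ast$ on $E$ with $\tau_nE\to\tau_nX$. The same identification is what is needed to run the connectivity comparison.
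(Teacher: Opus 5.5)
Your proposal is correct and follows the paper's proof. It pulls back along the $n$-connected unit $\eta_X\colon X\to\tau_nX\simeq\B\Pi_n(X,x)$ to identify $(n-1)$-truncated objects over $X$ with those over $\B\Pi_n(X,x)$, in both the unpointed and pointed settings. It then identifies the latter with actions via the classification of $G$-actions as objects over $\B G$, and matches $(k-1)$-connectivity of $E$ with $k$-transitivity through the quotient $F\sslash\Pi_n(X,x)\simeq\tau_nE$. You also spell out points the paper leaves implicit, all correctly: the restriction to $(n-1)$-truncated fibers via the effective epimorphism $\ast\twoheadrightarrow\B\Pi_n(X,x)$, the identification of the quotient with $\tau_nE$ via the Cartesian truncation square, and why these are $n$-categories.
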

\begin{proof}
    Let $\C$ denote either $\E$ or $\E_\ast$. By \cref{prop: characterization n-connected maps through pullback functor} and \cref{cor: characterization n-connected maps through pullback functor in the pointed category}, pulling back along the $n$-connected reflector $\eta_X\colon X\to\tau_n X$ induces an equivalence of $\infty$-categories:
    \[(\slice{\C}{\tau_n X})^{\sleq{n-1}}\to(\slice{\C}{X})^{\sleq{n-1}}.\]
    By definition, the target of this equivalence is precisely $\Cov_n(X)$ (resp. $\Cov_{\ast,n}(X)$). Meanwhile, \cref{cor: characterization of pointed action} identifies the source with $\Act_{\Pi_n(X,x)}(\E^{\sleq{{n-1}}})$ (resp. $\Act_{\Pi_n(X,x)}(\E_\ast^{\sleq {n-1}})$). Furthermore, an $n$-covering map $E\to X$ is $(k-1)$-connected if and only if $\tau_nE$ is $(k-1)$-connected. This holds if and only if the quotient $F\sslash \Pi_n(X,x)$ is $(k-1)$-connected, which by \cref{theorem: free transitive regular action characterization} is precisely the condition for the $\infty$-action $\Pi_n(X,x)\curvearrowright F$ to be $k$-transitive. By \cref{prop: basepoint forgetful functor preserves and reflects connected maps}, these connectivity equivalences hold identically in the pointed setting. 
\end{proof}

\begin{corollary}\label{corollary: pointed coverings correspond to subgroups}
    Let $(X,x)\in\E_\ast$ be a pointed connected object and $1\leq k\leq n$ be integers. Then there is an equivalence of $\infty$-categories: 
    \[\Cov^{\sg{k-1}}_{\ast,n}(X)\simeq\Sub_{(n-k+1,k)}\left(\Pi_n(X,x)\right)\]
    between $(k-1)$-connected pointed $n$-covering maps over $X$, and $k$-symmetric sub-$(n-k+1)$-groups of $\Pi_n(X,x)$.
\end{corollary}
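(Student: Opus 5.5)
The plan is to pass through the pointed slice over $\B\Pi_n(X,x)=\tau_nX$ and identify both sides with a full subcategory of it. By \cref{Theorem: representation theorem for n-covers}, via \cref{cor: characterization n-connected maps through pullback functor in the pointed category}, pulling back along $\eta_X\colon X\to\tau_nX$ gives an equivalence
\[\left(\slice{\E_\ast}{\tau_nX}\right)^{\sleq n-1}\xrightarrow{\simeq}\Cov_{\ast,n}(X).\]
Its inverse sends a pointed $n$-cover $p\colon(E,e)\to(X,x)$ to $\tau_np\colon\tau_nE\to\tau_nX$; this uses \cref{corollary: Cartesian square of truncation maps} and the fact that $\tau_nE$ is $n$-truncated, because $\tau_np$ is $(n-1)$-truncated into an $n$-truncated object. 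The same proof identifies the $(k-1)$-connected covers with those pointed $(n-1)$-truncated maps $Y\to\tau_nX$ whose domain $Y$ is $(k-1)$-connected. Indeed, $E$ is $(k-1)$-connected if and only if $\tau_nE$ is, since $k-1\leq n$.

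The next step is to read off the target. A pointed $(n-1)$-truncated map $Y\to\tau_nX$ with $Y$ connected (as $k\geq1$) lands in the connected component of the basepoint: the inclusion $\tau_nX\langle0\rangle\hookrightarrow\tau_nX$ is a monomorphism and $Y$ is connected. Hence such maps are the same as pointed $(n-1)$-truncated maps $Y\to\B\Pi_n(X,x)$, and this identification is an equivalence of full subcategories of slices because the inclusion is a monomorphism.

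Now $Y$ is pointed, $(k-1)$-connected and $n$-truncated. By \cref{theorem: symmetric group objects are pointed highly connected objects} with parameters $(n-k+1,k)$, it is $\B^kH$ for a unique $H\in\Grp_{(n-k+1,k)}(\E)$. Under the inclusion \eqref{remark: (n k) groups}, we have $Y=\B K$ with $K=\B^{k-1}H\in\Grp_n(\E)$. Through the equivalence of \cref{theorem: group objects are pointed connected objects}, the pointed map $Y\to\B\Pi_n(X,x)$ is a morphism of $n$-groups $K\to\Pi_n(X,x)$. Its $(n-1)$-truncatedness is exactly the defining condition of \cref{def: sub n groups}. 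This identifies the subcategory with $\Sub_{(n-k+1,k)}(\Pi_n(X,x))$.

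The step I expect to need the most care is checking that these identifications are equivalences of $\infty$-categories and not only bijections on objects. Concretely, the mapping spaces in $\Sub_{(n-k+1,k)}(\Pi_n(X,x))$, taken in $\slice{\Grp_n(\E)}{\Pi_n(X,x)}$, must agree with those of the full subcategory of $\slice{\E_\ast}{\B\Pi_n(X,x)}$. This follows from the equivalence $\Grp_n(\E)\simeq\E_\ast^{\sg0,\sleq n}$, which is full on pointed connected $n$-truncated objects and so induces equivalences on slices. The only remaining point is that the $(k-1)$-connected objects form a full subcategory there. That holds by definition, since $\Sub_{(n-k+1,k)}$ is defined as a full subcategory of $\Sub_n$ and not via $k$-symmetric morphisms.
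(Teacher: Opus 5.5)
Your proposal is correct and follows the paper's argument. You pull back along $\eta_X$ to identify pointed $n$-covers with pointed $(n-1)$-truncated maps into $\tau_nX\simeq\B\Pi_n(X,x)$, note that the domain is then $(k-1)$-connected and $n$-truncated, and conclude with the identification of $k$-symmetric $(n-k+1)$-groups with pointed $(k-1)$-connected $n$-truncated objects together with the definition of sub-$n$-groups. The differences are only cosmetic: you check $(k-1)$-connectivity directly on $\tau_nE$ instead of passing through $k$-transitive actions, and your connected-component step is redundant, since $X$ connected already makes $\tau_nX$ connected.
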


\begin{proof}
    We have the following chain of equivalences:
\begin{align*}
    \Cov^{\sg{k-1}}_{\ast,n}(X) &\simeq \Act^{k\text{-trans}}_{\Pi_n(X,x)}(\E^{\sleq{n-1}}_\ast) && \proofstep{by \cref{Theorem: representation theorem for n-covers}}\\
    &\simeq \left(\slice{\E_\ast^{\sg{k-1}}}{\B \Pi_n(X,x)}\right)^{\sleq (n-1)} && \proofstep{by \cref{cor: characterization of pointed action}}\\
    &=\big(\slice{\E_\ast^{\sg{k-1},\sleq n}}{\B \Pi_n(X,x)}\big)^{\sleq (n-1)} && \proofstep{by Proposition~\ref{prop: equivalent characterization of truncated maps}}\\
    &\simeq \big(\slice{\Grp_{(n-k+1,k)}(\E)}{\Pi_n(X,x)}\big)^{\sleq (n-1)} && \proofstep{by \cref{theorem: symmetric group objects are pointed highly connected objects}}\\
    &= \Sub_{(n-k+1,k)}(\Pi_n(X,x)) && \proofstep{by \cref{def: sub n groups}.}
\end{align*}
\end{proof}
When $k=n$, this equivalence simplifies and identifies $(n-1)$-connected $n$-covering maps with sub-$1$-groups of the $n$-th homotopy group $\pi_n(X,x)$. 
\begin{corollary}\label{corollary: pointed n-1 connected coverings corresponds to subgroups of homotopy n-th group}
     Let $(X,x)\in\E_\ast$ be a pointed connected object. Then there is an equivalence of $\infty$-categories: 
    \[\Cov^{\sg{n-1}}_{\ast,n}(X)\simeq\Sub_{1}\left(\pi_n(X,x)\right).\]
\end{corollary}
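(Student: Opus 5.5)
This is the case $k=n$ of \cref{corollary: pointed coverings correspond to subgroups}, followed by the identification of \cref{theorem: n-symmetric subgroups of the fundamental n-group are 1-subgroups of n-th homotopy group}.

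\emph{Step one.} Set $k=n$ in \cref{corollary: pointed coverings correspond to subgroups}, which is allowed since $1\leq n\leq n$. Then $n-k+1=1$, and we obtain
\[\Cov^{\sg{n-1}}_{\ast,n}(X)\simeq \Sub_{(1,n)}\left(\Pi_n(X,x)\right),\]
the $\infty$-category of $n$-symmetric sub-$1$-groups of the fundamental $n$-group.

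\emph{Step two.} Compose with the inverse of the equivalence
\[\Sub_1(\pi_n(X,x))\simeq \Sub_{(1,n)}(\Pi_n(X,x))\]
of Theorem~\ref{theorem: n-symmetric subgroups of the fundamental n-group are 1-subgroups of n-th homotopy group 1}. Concretely, a pointed $(n-1)$-connected $n$-covering corresponds to a pointed $(n-1)$-truncated map $\B^nA\to\B\Pi_n(X,x)$. Since $\B^nA$ is $(n-1)$-connected, this map factors uniquely through the $(n-1)$-connected cover $\varepsilon\colon \B^n\pi_n(X,x)\to\B\Pi_n(X,x)$, by the coreflection of \cref{prop: n-connected pointed spaces is coreflective in pointed spaces}. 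The resulting map $\B^nA\to\B^n\pi_n(X,x)$ is then a map of $1$-groups $A\to\pi_n(X,x)$ which is a monomorphism.

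\emph{Main point to check.} The only thing requiring care is that the truncation conditions match across the composite. In \cref{corollary: pointed coverings correspond to subgroups}, being a sub-$n$-group means that the map on classifying objects $\B^nA\to\B\Pi_n(X,x)$ is $(n-1)$-truncated. Because $\varepsilon$ is $(n-2)$-truncated, this holds exactly when $\B^nA\to\B^n\pi_n(X,x)$ is $(n-1)$-truncated. That condition is equivalent to $A\to\pi_n(X,x)$ being a monomorphism, by Lemma~\ref{lemma: truncation and connectivity of loop space 3} applied $n-1$ times. This is precisely what was verified in the proof of \cref{theorem: n-symmetric subgroups of the fundamental n-group are 1-subgroups of n-th homotopy group}, so the corollary is simply the composite of the two equivalences.
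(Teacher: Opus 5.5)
Your proof is the paper's proof: the paper obtains this corollary exactly by specializing \cref{corollary: pointed coverings correspond to subgroups} to $k=n$ and composing with Theorem~\ref{theorem: n-symmetric subgroups of the fundamental n-group are 1-subgroups of n-th homotopy group 1}. Your extra check re-derives part of that theorem's proof, with one small miscount: $n-1$ loopings take $\B^nA\to\B^n\pi_n(X,x)$ to the $0$-truncated map $\B A\to\B\pi_n(X,x)$, which is the sub-$1$-group condition, and one further looping is needed to reach the monomorphism $A\to\pi_n(X,x)$.
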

\begin{proof}
    Apply \cref{corollary: pointed coverings correspond to subgroups} and Theorem~\ref{theorem: n-symmetric subgroups of the fundamental n-group are 1-subgroups of n-th homotopy group 1}.
\end{proof}
\begin{remark}\label{remark: decomposition of n-covering with the universal n-cover}
    Let $A\leq \pi_n(X,x)$ be a sub-$1$-group. The corresponding $(n-1)$-connected $n$-covering map decomposes as \[p_A\colon E_A\to X\langle n-1\rangle \xrightarrow{p_{n-1}} X.\] Indeed it is obtained as the following pullback:
\[
\begin{tikzcd}
	E_A \arrow[r] \arrow[d] \arrow[dr, phantom, "\lrcorner"{anchor=center, pos=0.125}, near start] \arrow[dd, bend right=50, "p_A"'] & \B^n A \arrow[d] \\
    X\langle n-1\rangle \arrow[r] \arrow[d, "p_{n-1}"] \arrow[dr, phantom, "\lrcorner"{anchor=center, pos=0.125}, near start] & \B^n \pi_n(X,x) \arrow[d, "\varepsilon"] \\
	X \arrow[r] & \B\Pi_n(X,x)
\end{tikzcd},
\]
where we used \cref{remark: contruction of the n-connected cover} twice, recalling that $\B\Pi_n(X,x)\langle n-1\rangle\simeq\B^n\pi_n(X,x)$.
\end{remark}
Classically, the orbit category of a group $G$ has subgroups as objects and conjugations as morphisms, and it is equivalent to the category of connected unpointed covers $\Cov_1^{\sg{0}}(X)$, where $\pi_1(X,x)=G$. In the setting of an $n$-group $G$, we recover this by working in the slice $\slice{\Sp^{\sg0}}{\B G}$ and allowing \textit{unpointed} homotopies between classifying maps, which correspond precisely to conjugation by elements of $G$. In the $\infty$-topos $\Sp$ of homotopy types, this perfectly models unpointed covers because every connected space admits a unique global point, which fails in a general $\infty$-topos. The existence of such a global point guarantees we can always extract a corresponding subgroup from a cover, motivating the definition for an arbitrary $n$-group.
\begin{definition}
    Let $G$ be an $n$-group object such that $\B G$ has a unique global point. Its \textit{orbit $\infty$-category} $\mathcal{O}\mathrm{rb}_G$ is the full subcategory of $(\slice{\E^{\sg0}}{\B G})^{\sleq{n-1}}$ spanned by objects $E\to \B G$ where $E$ admits a global point.  
\end{definition}
\begin{corollary}
    Working in the $\infty$-topos $\Sp$ of homotopy types, there is an equivalence: 
    \[
        \Cov_n^{\sg0}(X)\simeq \mathcal{O}\mathrm{rb}_{\Pi_n(X,x)}.
    \]
\end{corollary}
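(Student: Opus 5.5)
The plan is to obtain the corollary directly from \cref{Theorem: representation theorem for n-covers}, with $k=1$, by comparing both sides as subcategories of $(\slice{\Sp}{\B\Pi_n(X,x)})^{\sleq n-1}$. Set $G=\Pi_n(X,x)$. In $\Sp$ we may replace $X$ by its path component of $x$, so $X$ is connected. The $n$-truncation $\B G=\tau_nX$ is then connected as well. Since the terminal object of $\Sp$ is the point and $\B G$ is a connected space, $\B G$ has a unique global point up to homotopy (the space of points $\Sp(\ast,\B G)=\B G$ is connected). So $\mathcal{O}\mathrm{rb}_G$ is defined.

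\textbf{Step 1.} Apply the unpointed part of \cref{Theorem: representation theorem for n-covers} together with \cref{cor: characterization of pointed action}. Pulling back along $\eta_X\colon X\to\tau_nX$ gives an equivalence
\[(\slice{\Sp}{\B G})^{\sleq n-1}\xrightarrow{\simeq}\Cov_n(X),\]
with inverse $E\mapsto\tau_nE$. By the last clause of that theorem with $k=1$, this restricts to an equivalence between the full subcategory of $0$-connected covers, $\Cov_n^{\sg0}(X)$, and the full subcategory of objects $E'\to\B G$ with $E'$ connected. For the latter we use that $\tau_n$ preserves and reflects $0$-connectedness, since $\tau_0\tau_n=\tau_0$.

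\textbf{Step 2.} Identify this subcategory with $\mathcal{O}\mathrm{rb}_G$. By definition, $\mathcal{O}\mathrm{rb}_G$ is the full subcategory of $(\slice{\Sp^{\sg0}}{\B G})^{\sleq n-1}$ spanned by those $E'\to\B G$ with $E'$ admitting a global point. Being a full subcategory of the slice over $\B G$ on connected objects, it agrees with the slice of $\Sp^{\sg0}$, because mapping spaces in a slice only depend on the underlying objects. It remains to check that every connected space $E'$ has a global point. A connected space is in particular $(-1)$-connected, so $E'\to\ast$ is an effective epimorphism. In $\Sp$, effective epimorphisms are surjective on $\pi_0$, and hence $E'$ is nonempty, giving a point $\ast\to E'$. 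The defining condition of $\mathcal{O}\mathrm{rb}_G$ is therefore automatic, and the two subcategories coincide.

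\textbf{Main obstacle.} The only subtle point is the translation in Step 1 between ``$E$ is $0$-connected'' and ``$\tau_nE$ is $0$-connected''. This is already contained in the proof of \cref{Theorem: representation theorem for n-covers}, via \cref{theorem: free transitive regular action characterization}. The rest is bookkeeping: one must make sure that morphisms in $\mathcal{O}\mathrm{rb}_G$ are the full unpointed slice mapping spaces. That is exactly what the unpointed version of the theorem provides, and it models conjugation by elements of $G$.
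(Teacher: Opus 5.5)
Your argument is essentially the paper's own proof: apply the representation theorem with $k=1$, identify $1$-transitive $\Pi_n(X,x)$-actions with objects over $\B\Pi_n(X,x)$ whose total space is connected, and observe that in $\Sp$ every connected object has a global point, so the defining condition of $\mathcal{O}\mathrm{rb}_{\Pi_n(X,x)}$ holds automatically. One correction: replacing $X$ by the path component of $x$ is not a valid reduction, because $\Cov_n^{\sg0}(X)$ also contains the connected covers of the other components, which $\mathcal{O}\mathrm{rb}_{\Pi_n(X,x)}$ does not see; you should instead take $X$ connected as a standing hypothesis, as the paper implicitly does.
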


\begin{proof}
    As in the pointed case, we have the following chain of equivalences:
    \begin{align*}
        \Cov_n^{\sg0}(X) &\simeq \Act_{\Pi_n(X,x)}^{1\text{-trans}}(\Sp^{\sleq n-1}) \\
        &\simeq\big(\slice{\Sp^{\sg0}}{\B\Pi_n(X,x)}\big)^{\sleq{n-1}} \\
        &\simeq\mathcal{O}\mathrm{rb}_{\Pi_n(X,x)}.
    \end{align*}
\end{proof}

\subsection{The $\infty$-group of deck transformations}\label{subsection: Group of Deck transformation}
In this subsection we proceed in three steps.  
First, we define the $\infty$-group of deck transformations $\Deck(p)$ of a map $p\colon E\to X$, alongside its canonical fiberwise evaluation $\infty$-action $\Deck(p)\times X\curvearrowright E$ in $
\slice{\E}{X}$. Second, we offer two complementary perspectives: as an $\infty$-action $\Deck(p)\curvearrowright E$ in $\E$, and, when $X$ is pointed and connected, as a pair of commuting $\infty$-action $\Deck(p)\times \Omega X \curvearrowright F$ on the fiber $F$ of $p$. Finally, we present two alternative constructions of $\Deck(p)$ and of its fiberwise evaluation $\infty$-action. We show that these perspectives coincide, with each highlighting different properties of the $\infty$-action.

\begin{definition}
    Let $p\colon E\to X$ be a map. The $\infty$-group $\Deck(p)$ of \textit{deck transformations of $p$} is \[\Deck(p)\coloneq\prod_X\Aut_{/X}(E)\in\Grp(\E).\]
\end{definition}
This is well-defined because $\Aut_{/X}(E)$ is the autoequivalence $\infty$-group (\cref{ex: BAut(F) as the image in the universe}) of $p\in\slice{\E}{X}$, and the right adjoint $\prod_X\colon\slice{\E}{X}\to\E$ preserves finite products, hence $\infty$-group objects (\cref{prop: product preserving functors preserves group objects}). By that same proposition, the classifying object of $\Deck(p)$ is the image 
\begin{equation}\label{in text: eq definition classifying object of Deck(p)}
    \ast\twoheadrightarrow \B \Deck(p)\hookrightarrow \prod_X\B \Aut_{/X}(E)
\end{equation}
of the map adjoint to the basepoint $X\to \B \Aut_{/X}(E)$ in $\slice{\E}{X}$.
\begin{remark}\label{remark: global section of internal deck transformations}
    A global section $\ast\to\Deck(p)$ corresponds to an equivalence $E\xrightarrow{\simeq}E$ over $X$: 
    \begin{align*}
        \E\left(\ast,\Deck(p)\right)\simeq \slice{\E}{X}\left(X,\Aut_{/X}(p)\right)\simeq \slice{\E}{X}^\simeq\left(E,E\right).
    \end{align*}
\end{remark}
By applying \cref{prop: product preserving functors preserves group objects}, we obtain the $\infty$-group object $\Deck(p)\times X\in\Grp(\slice{\E}{X})$, whose classifying object is \[\B (\Deck(p)\times X)\simeq \B \Deck(p)\times X,\] with basepoint $(\ast,1_X)\colon X\to \B \Deck(p)\times X$. By adjunction, the inclusion \eqref{in text: eq definition classifying object of Deck(p)} corresponds to a pointed map \[\varepsilon\colon\B \Deck(p)\times X\to \B \Aut_{/X}(E),\] which in turn classifies a morphism of $\infty$-group $\varepsilon\colon \Deck(p)\times X\to\Aut_{/X}(E)$.
\begin{definition}\label{def: fiberwise evaluation action}
    Let $p\colon E\to X$ be a map. The \textit{fiberwise evaluation} $\infty$-action $\Deck(p)\times X\curvearrowright E$ in $\Act(\slice{\E}{X})$ is the restriction of the evaluation $\infty$-action $\Aut_{/X}(E)\curvearrowright E$ along $\varepsilon$. \end{definition}
    By definition of restricted $\infty$-action (Example~\ref{example: group actions: restricted action}), the quotient of the fiberwise evaluation $\infty$-action is characterized by the left Cartesian square in the following pasting diagram:
    \begin{equation}\label{diagram: defining the action Deck x X on E in the slice}
\begin{tikzcd}
    E \arrow[r] \arrow[d] \arrow[dr, phantom, "\lrcorner"{anchor=center, pos=0.125}] & E\sslash (\Deck(p)\times X) \arrow[r] \arrow[d] \arrow[dr, phantom, "\lrcorner"{anchor=center, pos=0.125}] & E\sslash \Aut_{/X}(E) \arrow[d] \\
    X \arrow[r, "{(\ast,1_X)}"] & {\B \Deck(p)\times X} \arrow[r, "\varepsilon"'] & {\B \Aut_{/X}(E)}
\end{tikzcd}\quad\text{in }\slice{\E}{X}.
\end{equation}

\begin{remark}\label{remark: name of the counit from BDeck(p)}
    We write $\varepsilon$ for the bottom map because it is exactly the restriction of the canonical counit of the $(-\times X)\dashv \prod_X$ adjunction:
    \[\B \Deck(p)\times X\hookrightarrow \Big(\prod_X\B \Aut_{/X}(E)\Big)\times X\xrightarrow{\varepsilon}\B \Aut_{/X}(E).\]
\end{remark}
\subsubsection{Two perspectives on the $\infty$-action}\label{subsection: two persepctives on the action}
We now offer two complementary perspectives on the fiberwise evaluation $\infty$-action from \cref{def: fiberwise evaluation action}. The first perspective views this structure as an \textit{evaluation} $\infty$-action $\Deck(p)\curvearrowright E$ in the ambient $\infty$-category $\E$. By forgetting that the diagram \eqref{diagram: defining the action Deck x X on E in the slice} lives over $X$, and post-composing with the projection $\pi_1\colon\B\Deck(p)\times X\to\B\Deck(p)$, we obtain a map whose fiber is exactly $E$. This establishes the evaluation $\infty$-action, characterized by the following Cartesian square:
\[\begin{tikzcd}[row sep=tiny]
    E \arrow[r] \arrow[dd] \arrow[ddr, phantom, "\lrcorner"{anchor=center, pos=0.125}] & {E\sslash (\Deck(p)\times X)\simeq E\sslash \Deck(p)} \arrow[d] \\
    & {\B\Deck(p)\times X} \arrow[d, "\pi_1"] \\
    \ast \arrow[r] & {\B\Deck(p)}
\end{tikzcd}.\]
 We will use this perspective in \cref{theorem: characterization normal n-covering maps} to show that the quotient of this $\infty$-action is precisely $X$ whenever $p$ is a normal $n$-covering map.

\medskip

The second perspective assumes that $X$ is pointed and connected, meaning $X\simeq \B \Omega X$ serves as the classifying object for the loop $\infty$-group $\Omega X$. By viewing the left square of \eqref{diagram: defining the action Deck x X on E in the slice} in the ambient $\infty$-category $\E$, and pasting the defining pullback of the fiber $F$, we extract an $\infty$-action $\Deck(p)\times\Omega X\curvearrowright F$. This is captured by the following pasting of Cartesian squares:
\[\begin{tikzcd}
    F \arrow[r] \arrow[d] \arrow[dr, "\lrcorner"{anchor=center, pos=0.100}, draw=none] & E \arrow[r] \arrow[d, "p"] \arrow[dr, "\lrcorner"{anchor=center, pos=0.225}, draw=none] & {E\sslash (\Deck(p)\times X)\simeq F\sslash (\Deck(p)\times\Omega X)} \arrow[d] \\
    \ast \arrow[r] & X \arrow[r, "{(\ast,1_X)}"] & {\B \Deck(p)\times \B \Omega X}
\end{tikzcd}.\]
\begin{remark}\label{remark: identification of different quotients of Deck(p)-actions}
    There is a canonical chain of equivalences between quotients:\[E\sslash (\Deck(p)\times X)\simeq E\sslash \Deck(p)\simeq (F\sslash\Omega X)\sslash \Deck(p)\simeq F\sslash (\Deck(p)\times\Omega X).\]
The first one arises from the first perspective, the second from the equivalence $E\simeq F\sslash\Omega X$, while the third follows from \cref{equation: quotients of commuting actions}.
\end{remark}
\subsubsection{Equivalent characterizations of $\Deck(p)$ and its $\infty$-action}  We now provide two alternative definitions for $\Deck(p)$ and its fiberwise evaluation $\infty$-action $\Deck(p)\times X\curvearrowright E$ in $\slice{\E}{X}$. These characterizations prove essential for establishing various properties of the $\Deck(p)$-actions (\cref{prop: group deck of a covering is pi_1-equivariant automorphism of the fiber,cor: Deck(p)-action is n-faithful,prop: image of a map into the fibered universe is deck transformation}).

\medskip

\textbf{I. Via $\Omega X$-equivariant autoequivalences.}
Assume that $X$ is pointed and connected. The $\infty$-action $\Deck(p)\times\Omega X\curvearrowright F$ established above restricts to a pair of commuting $\infty$-actions $\Deck(p)\curvearrowright F$ and $\Omega X\curvearrowright F$, as explained in \cref{subsection: action from a product}.

\medskip

 Viewing $\Aut_{/X}(E)$ as an object of the slice $\slice{\E}{\B (\Omega X)}\simeq \Act_{\Omega X}(\E)$, it classifies an $\infty$-action on its fiber $\Aut(F)$ (\cref{cor: properties of autoequivalence group of p}). This is characterized by the following Cartesian square:
\[
\begin{tikzcd}
    \Aut(F) \arrow[r] \arrow[d] \arrow[dr, "\lrcorner"{anchor=center, pos=0.125}, draw=none] & {\Aut_{/X}(E)\simeq \Aut(F)\sslash \Omega X} \arrow[d] \\
    \ast \arrow[r] & {\B \Omega X}
\end{tikzcd}.
\]
Informally, this $\infty$-action is given by conjugation: \[(\lambda \cdot\varphi)(a)\coloneq\lambda^{-1}\cdot \varphi (\lambda\cdot a)\hspace{0.5cm}\text{ for all }\lambda\in\Omega X, \varphi\in\Aut(F), a\in F.\] This motivates the following definition.
\begin{definition}[{\cite[Prop.~5.1.278]{Schreiber_DifferentialCohomologyCohesiveInfinityTopos}}]\label{def: equivariant autoequivalences objects}
    Let $\Omega X\curvearrowright F$ be an $\infty$-action. The object $\Aut_{\Omega X}(F)$ of \textit{$\Omega X$-equivariant autoequivalences of $F$} is the object of fixed points (see \cref{definition: fixed points}) of the  $\infty$-action $\Omega X\curvearrowright \Aut(F)$: \[\Aut_{\Omega X}(F)\coloneq\Aut(F)^{\Omega X}=\prod_{\B(\Omega X)}\Aut(F)\sslash\Omega X.\]
\end{definition}
\begin{remark}
    By adjunction, a global point $\ast\to \Aut_{\Omega X}(F)$ corresponds exactly to an equivalence $F\sslash\Omega X\to F\sslash \Omega X$ over $\B (\Omega X)$, that pulls back to an $\Omega X$-equivariant equivalence $F\to F$.
\end{remark}
Because $\Aut(F)\sslash\Omega X\simeq \Aut_{/X}(E)$ is an $\infty$-group in $\slice{\E}{X}$, the object of invariants $\Aut_{\Omega X}(F)$ inherits an $\infty$-group structure. By \cref{prop: product preserving functors preserves group objects}, its classifying object is the following image: 
\begin{equation*}
1\twoheadrightarrow \B \Aut_{\Omega X}(F)\hookrightarrow \prod_{\B (\Omega X)}\B (\Aut(F)\sslash\Omega X).
\end{equation*}
Recall from \eqref{diagram: map from the fixed points to X} that an object of fixed points admits a canonical map into its ambient object. In our context, this yields a canonical map 
    \begin{equation}\label{eq: forgetful map from equivariant autoequivalence}
    \Aut_{\Omega X}(F)\to \Aut(F).
    \end{equation}
    This map is, in fact, a morphism of $\infty$-groups. Indeed, one can construct a diagram analogous to \eqref{diagram: map from the fixed points to X} directly at the level of classifying objects. The map \eqref{eq: forgetful map from equivariant autoequivalence} thereby encodes an $\infty$-action $\Aut_{\Omega X}(F)\curvearrowright F$.
\begin{prop}\label{corollary: deck transformations as Omega X-equivariant autoequivalences}
    Let $(X,x)\in\E_\ast$ be a pointed connected object, and let $p\colon E\to X$ be a map with fiber $F$. Then there is an equivalence of $\infty$-groups \[\Deck(p)\simeq \Aut_{\Omega X}(F).\]
    Moreover, this equivalence respects the respective $\infty$-actions on $F$.
\end{prop}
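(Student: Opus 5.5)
The plan is to compare the two definitions directly, since both are objects of sections of a single object over $X\simeq\B(\Omega X)$. By definition,
\[\Deck(p)=\prod_X\Aut_{/X}(E),\qquad \Aut_{\Omega X}(F)=\prod_{\B(\Omega X)}\Aut(F)\sslash\Omega X .\]
The first step is to identify the integrands. Since $X$ is pointed and connected, \cref{example: classifying object of the loop space is the connected component of X} gives $X\simeq\B(\Omega X)$, and under $\slice{\E}{X}\simeq\Act_{\Omega X}(\E)$ (\cref{cor: characterization of pointed action}) the object $\Aut_{/X}(E)\to X$ corresponds to the $\Omega X$-action on its fiber. By \cref{cor: properties of autoequivalence group of p}(1), taken at the level of the group objects by looping the classifying objects, that fiber is $\Aut(F)$; this is precisely how the conjugation action was defined. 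Hence $\Aut_{/X}(E)\simeq\Aut(F)\sslash\Omega X$ over $X$, and applying $\prod_X$ gives an equivalence of underlying objects $\Deck(p)\simeq\Aut_{\Omega X}(F)$.

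Next, to see that this is an equivalence of $\infty$-groups, I would note that the identification $\Aut_{/X}(E)\simeq\Aut(F)\sslash\Omega X$ holds in $\Grp(\slice{\E}{X})$, since the group structure on the right was defined by transport. Because $\prod_X$ preserves finite products, it induces the group structures on both sides (\cref{prop: product preserving functors preserves group objects}). It is cleanest to work with classifying objects. Both $\B\Deck(p)$ and $\B\Aut_{\Omega X}(F)$ are defined as the image of the point inside
\[\prod_X\B\Aut_{/X}(E)\simeq\prod_{\B(\Omega X)}\B\bigl(\Aut(F)\sslash\Omega X\bigr),\]
so they agree as pointed objects.

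The main obstacle is compatibility with the actions on $F$. By \cref{def: group map respects actions}, I must show that the two maps $\Deck(p)\to\Aut(F)$ agree. The first comes from the fiberwise evaluation action restricted along the basepoint $x$. The second is the fixed-point inclusion \eqref{eq: forgetful map from equivariant autoequivalence}.

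I would pass to classifying objects. The Deck side is the composite
\[\B\Deck(p)\xrightarrow{(1,x)}\B\Deck(p)\times X\xrightarrow{\varepsilon}\B\Aut_{/X}(E),\]
followed by pulling back along $x$, which lands in $\B\Aut(F)$ by \cref{cor: properties of autoequivalence group of p}. By \cref{remark: name of the counit from BDeck(p)}, $\varepsilon$ is the restriction of the counit of $(-\times X)\dashv\prod_X$. The fixed-point side, by the construction analogous to \eqref{diagram: map from the fixed points to X}, is the counit $\B\Aut_{\Omega X}(F)\times\B\Omega X\to\B(\Aut(F)\sslash\Omega X)$ pulled back along the basepoint $\ast\to\B\Omega X$.

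Both are therefore "counit, then restrict to the fiber over the basepoint", and they agree under the identifications above by naturality of the counit. The remaining check is that the equivalence $\B\Aut_{/X}(E)\simeq\B(\Aut(F)\sslash\Omega X)$ is one over $X$ that restricts on fibers to the identity of $\B\Aut(F)$. That follows from how both sides were built as images in the universe $\U\times X$.
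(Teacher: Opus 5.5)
Your proposal is correct and follows the paper's own argument. The paper likewise notes that $\Aut(F)\sslash\Omega X$ is by definition $\Aut_{/X}(E)$ regarded in $\slice{\E}{X}\simeq\Act_{\Omega X}(\E)$, so $\Deck(p)$ and $\Aut_{\Omega X}(F)$ are the same object of sections with the same transported group structure, and that both actions on $F$ come from the counit $\varepsilon$ restricted over the basepoint; your final ``remaining check'' is tautological, since that identification holds by definition.
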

\begin{proof}
    We have simply given different names for the same object, since $\Aut(F)\sslash\Omega X\simeq\Aut_{/X}(E)$ in $\slice{\E}{X}$. The $\infty$-actions are both defined through the counit $\varepsilon$.
    \end{proof}
This identification gives a clear justification to why the $\infty$-actions of $\Deck(p)$ and $\Omega X$ on $F$ commute: they do so by the very definition of $\Aut_{\Omega X}(F)$. Moreover, when $p$ is an $n$-covering map, this point of view offers two useful consequences. Firstly, we can further identify the deck $\infty$-group with $\Pi_n(X,x)$-equivariant autoequivalences of $F$, recovering a standard identification for $n=1$. Secondly, one obtains that the $\Deck(p)$-action on $F$ is $n$-faithful, a fundamental ingredient of \cref{theorem: action of Deck is n-free}. 

\begin{corollary}\label{prop: group deck of a covering is pi_1-equivariant automorphism of the fiber}
    Let $(X,x)\in\E_\ast$ be a pointed connected object, and let $p\colon E\to X$ be an $n$-covering map with fiber $F$. Then there is an equivalence of $n$-groups: \[\Deck(p)\simeq \Aut_{\Pi_n(X,x)}(F).\]
    Moreover, this equivalence respects the respective $\infty$-actions on $F$.
\end{corollary}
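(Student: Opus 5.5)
By \cref{corollary: deck transformations as Omega X-equivariant autoequivalences} we already have $\Deck(p)\simeq\Aut_{\Omega X}(F)$, compatibly with the actions on $F$. It therefore suffices to produce an equivalence of $\infty$-groups $\Aut_{\Pi_n(X,x)}(F)\simeq\Aut_{\Omega X}(F)$ over $\Aut(F)$. We also need to check that these objects are $n$-groups.

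\textbf{Step 1: truncation levels.} Since $F$ is $(n-1)$-truncated, \cref{truncation level of the object of equivalences} shows that $\Aut(F)$ is $(n-1)$-truncated, so $\Aut(F)$ is an $n$-group. The fixed points $\Aut_{\Omega X}(F)$ carry an $(n-1)$-truncated map to $\Aut(F)$ by \cref{corollary: inclusion of fixed points in X is (n-1)-truncated}. Hence $\Aut_{\Omega X}(F)$ is $(n-1)$-truncated as well, and is an $n$-group.

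\textbf{Step 2: identifying the fixed points.} By \cref{definition: fixed points},
\[\Aut_{\Omega X}(F)=\prod_{X}\Aut_{/X}(E),\]
where $\Aut_{/X}(E)\simeq\Aut(F)\sslash\Omega X\to X\simeq\B\Omega X$. The $\Pi_n(X,x)$-action on $\Aut(F)$ is obtained from the $\Omega X$-action by \cref{prop: actions on $n$-truncated objects corresponds to actions of an n-group}, applied to the $(n-1)$-truncated object $\Aut(F)$. That proposition gives
\[\Aut(F)\sslash\Pi_n(X,x)\simeq\tau_n\bigl(\Aut(F)\sslash\Omega X\bigr),\]
sitting over $\B\Pi_n(X,x)=\tau_nX$. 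Moreover, the map $\Aut_{/X}(E)\to X$ is $(n-1)$-truncated, since its fiber is $\Aut(F)$. By \cref{corollary: Cartesian square of truncation maps}, the square with horizontal maps $\eta$ from $\Aut_{/X}(E)\to X$ to its $n$-truncation is then Cartesian. \cref{prop: cartesian square of n-truncation implies equivalence on the objects of sections} now gives
\[\prod_X\Aut_{/X}(E)\simeq\prod_{\tau_nX}\tau_n\Aut_{/X}(E)=\Aut_{\Pi_n(X,x)}(F).\]

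\textbf{Step 3: group structures and actions.} The main obstacle is checking that the equivalence of Step 2 is one of $\infty$-groups and respects the actions on $F$, not merely an equivalence of underlying objects. I would run the same argument one level up, at the level of classifying objects. That is, I would apply \cref{prop: cartesian square of n-truncation implies equivalence on the objects of sections} to $\B\Aut_{/X}(E)\to X$ and its $n$-truncation. This map is $n$-truncated, since its fiber $\B\Aut(F)$ is $n$-truncated, so the square is again Cartesian by \cref{corollary: Cartesian square of truncation maps}, now applied with $m=n$ and $\tau_{n+1}$. Strictly this yields an identification with the $(n+1)$-truncation, but $\tau_{n+1}X\to\tau_nX$ does not matter for maps into the $(n+1)$-truncated universe-type objects; alternatively one can use that the identification is natural in $X\to\tau_nX$ as a morphism of group objects in the slice, so that the base-change map $\eta_X^\ast$ is the restriction along $\Omega X\to\Pi_n(X,x)$.

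Naturality then shows that the comparison map is the restriction $\Aut_{\Pi_n(X,x)}(F)\to\Aut_{\Omega X}(F)$. This map is induced by $\eta_X$, and it commutes with both counits to $\Aut(F)$. It is therefore a group morphism over $\Aut(F)$, which is exactly the statement that it respects the actions on $F$.
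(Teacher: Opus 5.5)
The reduction to comparing $\Aut_{\Omega X}(F)$ with $\Aut_{\Pi_n(X,x)}(F)$ is sound, and so are Steps 1--2 on underlying objects. Your plan of comparing objects of sections along $\eta_X$ is also the paper's. The gap is in Step 3, which you correctly identify as where the real content lies.

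\textbf{Why Step 3 fails.} The map $\B\Aut_{/X}(E)\to X$ is only $n$-truncated, so \cref{corollary: Cartesian square of truncation maps} gives a Cartesian square only for $\tau_{n+1}$. For a general $n$-truncated map the $\tau_n$-square is \emph{not} Cartesian: take the Hopf map $S^3\to S^2$ in homotopy types with $n=1$.

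Your bridge from $\tau_{n+1}X$ to $\tau_nX$ is not justified. Precomposition with $\tau_{n+1}X\to\tau_nX$ is an equivalence on maps into $n$-truncated objects, not $(n+1)$-truncated ones. Even with that corrected, you would still need to know that $\tau_{n+1}\B\Aut_{/X}(E)$ is pulled back from an object over $\tau_nX$.

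Your last paragraph also has a non sequitur. Commuting with the two maps to $\Aut(F)$ does not make the comparison a morphism of $\infty$-groups.

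\textbf{The missing input is a base-change identification.} Since $p$ is $(n-1)$-truncated, $E\simeq\eta_X^\ast(F\sslash\Pi_n(X,x))$. The functor $\eta_X^\ast$ is left exact with both adjoints, so it preserves objects of equivalences and effective epi/mono factorizations. Hence $\B\Aut_{/X}(E)\simeq\eta_X^\ast\,\B\Aut_{/\B\Pi_n(X,x)}(F\sslash\Pi_n(X,x))$.

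The latter object is $n$-truncated, since its fiber is $\B\Aut(F)$ over an $n$-truncated base. The comparison map into it is $n$-connected, being a base change of $\eta_X$. So it is an $n$-truncation, and you get a Cartesian square of $n$-truncation maps directly, with no $\tau_{n+1}$ detour.

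Now \cref{prop: cartesian square of n-truncation implies equivalence on the objects of sections} applies at level $n$. The resulting pointed equivalence of sections restricts to the images of the basepoints, giving $\B\Deck(p)\simeq\B\Aut_{\Pi_n(X,x)}(F)$. This is exactly the paper's proof.

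The same base-change fact is what justifies your unproved claim in Step 2. That claim is that the $\Pi_n(X,x)$-action on $\Aut(F)$ defining $\Aut_{\Pi_n(X,x)}(F)$ restricts to the $\Omega X$-conjugation action.

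\textbf{An alternative that stays at the level of Step 2.} The comparison $\Aut_{\Pi_n(X,x)}(F)\to\Deck(p)$ is $\prod_{\B\Pi_n(X,x)}$ applied to the unit $\Aut_{/\B\Pi_n(X,x)}(F\sslash\Pi_n(X,x))\to\prod_{\eta_X}\eta_X^\ast\Aut_{/\B\Pi_n(X,x)}(F\sslash\Pi_n(X,x))$. This unit is a natural transformation between finite-product-preserving functors, so the comparison is a morphism of group objects. It is an equivalence by your Step 2.
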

\begin{proof}
   Let $\eta_X\colon X\to\B\Pi_n(X,x)$ be the $n$-truncation map, and $\eta_X^\ast\colon\slice{\E}{\B \Pi_n(X,x)}\to\slice{\E}{X}$ its associated base-change functor. Because it preserves objects of equivalences and classifying objects, there is a Cartesian square:
\[\begin{tikzcd}[column sep=large]
    {\B \Aut_{/X}(E)} \arrow[r, "\eta"] \arrow[d] \arrow[dr, phantom, "\lrcorner"{anchor=center, pos=0.125}, near start] & {\B \Aut_{/\B \Pi_n(X,x)}\left(F\sslash \Pi_n(X,x)\right)} \arrow[d] \\
    X \arrow[r, "\eta_X"'] & {\B \Pi_n(X,x)}
\end{tikzcd}.\]
 Both horizontal maps $\eta_X$ and $\eta$ are $n$-connected. Because $p$ is an $n$-covering map, its $n$-truncation $F\sslash\Pi_n(X,x)\xrightarrow{\tau_np}\B\Pi_n(X,x)$ is $(n-1)$-truncated as well. By \cref{truncation level of the object of equivalences}, the target of $\eta$ is $n$-truncated. It follows that $\eta_X$ and $\eta$ are both $n$-truncation maps. Applying \cref{prop: cartesian square of n-truncation implies equivalence on the objects of sections} to this Cartesian square yields an equivalence on the object of sections: 
 \begin{equation}\label{prop: group deck of a covering is pi_1-equivariant automorphism of the fiber eq}
     \prod_X \B \Aut_{/X}(E)\simeq \prod_{\B \Pi_n(X,x)}\B \Aut_{/\B \Pi_n(X,x)}(F\sslash \Pi_n(X,x)).
 \end{equation}
Using \cref{prop: product preserving functors preserves group objects} and the identification \[\Aut_{/\B \Pi_n(X,x)}(F\sslash \Pi_n(X,x))\simeq\Aut(F)\sslash \Pi_n(X,x)\qquad \text{ in }\slice{\E}{\B \Pi_n(X,x)},\]
 we identify $\B \Deck(p)\simeq \B \Aut_{\Pi_n(X,x)}(F)$ as the connected components of the respective basepoints of \eqref{prop: group deck of a covering is pi_1-equivariant automorphism of the fiber eq}. Because the $\infty$-group map $\Omega X\to \Pi_n(X,x)$ preserves the $\infty$-actions on $F$ by \cref{example: action of Pi_n(X) on the fiber F}, so does the equivalence $\Aut_{\Omega X}(F)\simeq \Aut_{\Pi_n(X,x)}(F)$. Because $\Deck(p)\simeq\Aut_{\Omega X}(F)$ by construction (\cref{corollary: deck transformations as Omega X-equivariant autoequivalences}), the second claim follows.
\end{proof}
\begin{corollary}\label{cor: Deck(p)-action is n-faithful}
    Let $(X,x)\in\E_\ast$ be a pointed and connected object, and let $p\colon E\to X$ be an $n$-covering map with fiber $F$. Then the $\infty$-action $\Deck(p)\curvearrowright F$ is $n$-faithful.
\end{corollary}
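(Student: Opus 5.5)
The plan is to verify $n$-faithfulness directly from \cref{def: n transitive free and faithful actions}: we must show that the morphism of $\infty$-groups $\Deck(p)\to\Aut(F)$ classifying the $\infty$-action is $(n-2)$-truncated. By \cref{prop: group deck of a covering is pi_1-equivariant automorphism of the fiber} (or already by \cref{corollary: deck transformations as Omega X-equivariant autoequivalences}), this morphism is identified, compatibly with the actions on $F$, with the canonical map $\Aut_{\Omega X}(F)\to\Aut(F)$ of \eqref{eq: forgetful map from equivariant autoequivalence}. That map is the fixed-point inclusion $\Aut(F)^{\Omega X}\to\Aut(F)$ obtained from diagram \eqref{diagram: map from the fixed points to X} for the conjugation $\infty$-action $\Omega X\curvearrowright\Aut(F)$. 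So the task reduces to bounding the truncation level of a fixed-point inclusion.

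The key input is \cref{corollary: inclusion of fixed points in X is (n-1)-truncated}, which states that for an action on an $m$-truncated object the fixed-point inclusion is $(m-1)$-truncated. Since $p$ is an $n$-covering map, $F$ is $(n-1)$-truncated. Hence, by \cref{truncation level of the object of equivalences}, $\Aut(F)=\Eq(F,F)$ is $(n-1)$-truncated. Applying the corollary with $m=n-1$ shows that $\Aut_{\Omega X}(F)\to\Aut(F)$ is $(n-2)$-truncated, which is exactly $n$-faithfulness.

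The main point requiring care is to check that the underlying map of the group morphism $\Deck(p)\to\Aut(F)$ really is the fixed-point map of \eqref{diagram: map from the fixed points to X}. The paper asserts that the group morphism is constructed by the analogous diagram on classifying objects. To settle this, I would observe that $n$-truncatedness of a group morphism is detected on underlying objects, since the forgetful functor $\Grp(\E)\to\E$ is left exact and conservative (\cref{remark: left exact conservative functor preserves and reflects n-truncated maps}). It then suffices to note that looping the classifying-object-level counit diagram recovers the object-level counit $\varepsilon$, whose base change is the fixed-point inclusion.

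Alternatively, one could argue on classifying objects: it suffices that $\B\Aut_{\Omega X}(F)\to\B\Aut(F)$ be $(n-1)$-truncated. This follows from \cref{lemma: counit is n-1 truncated} applied to the $n$-truncated object $\B(\Aut(F)\sslash\Omega X)$ over the connected base $\B\Omega X$, followed by restriction to connected components. I would use this only as a backup, since taking components requires $n\geq 1$ to preserve the truncation level, which holds here.
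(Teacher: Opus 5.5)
Your proposal is correct and matches the paper's proof. The paper identifies the action map $\Deck(p)\to\Aut(F)$ with the fixed-point map $\Aut_{\Omega X}(F)\to\Aut(F)$ via \cref{corollary: deck transformations as Omega X-equivariant autoequivalences}, then applies \cref{corollary: inclusion of fixed points in X is (n-1)-truncated} to the $(n-1)$-truncated object $\Aut(F)$, exactly as you do; your appeal to \cref{truncation level of the object of equivalences} and your check that the underlying map is the fixed-point inclusion only spell out steps the paper leaves implicit.
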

\begin{proof}
   By \cref{corollary: deck transformations as Omega X-equivariant autoequivalences}, it is equivalent to analyze the $\Aut_{\Omega X}(F)$-action on $F$. It is classified by the map \[\Aut_{\Omega X}(F)\to \Aut(F),\] which by definition is the canonical map from the fixed points of the $\Omega X$-action on $\Aut(F)$. Because $F$ is $(n-1)$-truncated, this map is $(n-2)$-truncated by \cref{corollary: inclusion of fixed points in X is (n-1)-truncated}. This means that the $\Deck(p)$-action is $n$-faithful.
\end{proof}
\medskip

\textbf{II. Via the fibered universe $\U^X$.} This interpretation requires no additional assumptions on $X$. In particular, we will realize the $\infty$-action of $\Deck(p)$ on the fibers directly in the slice $\slice{\E}{X}$. We will later use this interpretation of $\Deck(p)$ to prove that when $p$ is a normal $n$-covering map, the $\infty$-group of deck transformations is identified with a quotient of the fundamental $n$-group (\cref{cor: group deck is the quotient of fundamental n-group}).

\medskip

Heuristically, the construction mirrors the description of $\B\Aut(F)$ inside the universe. Recall from \cref{ex: BAut(F) as the image in the universe} that, by univalence, $\B \Aut(F)$ is the connected component of $\corner{F}\colon\ast\to\U$, with loops $\U(\corner{F},\corner{F})\simeq\Aut(F)$. We seek $\B\Deck(p)$ in the same form: the component of a point in an object whose based loops are the self-equivalences of $E$ \emph{over the fixed base $X$}. That object is the internal hom $\U^X$. Indeed, it represents the core functor $T\mapsto(\slice{\E}{T\times X})^\simeq$, so its points classify families over $X$, and its path objects compute fiberwise equivalences, as we showed in \cite[Lem.~2.8]{Constantin_YonedaInLCCC}:
\[
  \U^X(\corner{P},\corner{Q})\simeq\prod_X\Eq_{/X}(P,Q)
  \qquad\text{for }P,Q\in\slice{\E}{X}\text{ classified by }\U.
\]
So the based loops at $\corner{p}^\sharp\colon\ast\to\U^X$ recover the fiberwise self-equivalences of $E$, that is $\Deck(p)$. The following proposition makes this precise.

\begin{prop}\label{prop: image of a map into the fibered universe is deck transformation}
    Let $p\colon E\to X$ be a map classified by $\corner{p}\colon X\to\U$. Then the image of the adjoint $\corner{p}^\sharp\colon \ast\to \U^X$ is the classifying object of the group of deck transformations of $p$:
\[\begin{tikzcd}
    \ast \arrow[rr, "{\corner{p}^\sharp}"] \arrow[dr, two heads] & & {\U^X} \\
    & {\B \Deck(p).} \arrow[ur, hook, "f"']
\end{tikzcd}.\]
Moreover the adjoint $(f^\flat, \pi_2)\colon \B \Deck(p)\times X\to\U\times X$ 
classifies the $\infty$-action $\Deck(p)\times X\curvearrowright E$ in $\slice{\E}{X}$.
\end{prop}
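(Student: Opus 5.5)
The plan is to compare two effective epi/mono factorizations of the same map $\ast\to\U^X$, one coming from the definition \eqref{in text: eq definition classifying object of Deck(p)} and one from the univalence of the universe.

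First I would work in the slice $\slice{\E}{X}$. By \cref{lemma: universe in slice categories}, the map $\corner{p}=(\corner{E},1_X)\colon X\to\U\times X$ classifies $p$ there. By \cref{ex: BAut(F) as the image in the universe}, applied in the $\infty$-topos $\slice{\E}{X}$, its effective epi/mono factorization is
\[X\twoheadrightarrow\B\Aut_{/X}(E)\hookrightarrow\U\times X.\]
Next I would apply the dependent product $\prod_X$. Since $\prod_X(\U\times X)\simeq\U^X$, this gives maps
\[\ast\to\prod_X\B\Aut_{/X}(E)\hookrightarrow\U^X.\]
The first map is the adjoint of the basepoint, and the composite is $\corner{p}^\sharp$ by adjunction. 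The second map is still a monomorphism, because $\prod_X$ is a right adjoint and so preserves $(-1)$-truncated maps. By \eqref{in text: eq definition classifying object of Deck(p)}, $\B\Deck(p)$ is the image of the first map. Composing gives $\ast\twoheadrightarrow\B\Deck(p)\hookrightarrow\prod_X\B\Aut_{/X}(E)\hookrightarrow\U^X$. A composite of monomorphisms is a monomorphism, so by uniqueness of the $(-1)$-connected/$(-1)$-truncated factorization (\cref{prop: connected truncated factorization system}) this is the image of $\corner{p}^\sharp$. This defines $f$.

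For the second claim, I would compute the adjoint $f^\flat$. It is the composite
\[\B\Deck(p)\times X\hookrightarrow\Big(\prod_X\B\Aut_{/X}(E)\Big)\times X\xrightarrow{\varepsilon}\B\Aut_{/X}(E)\hookrightarrow\U\times X.\]
By \cref{remark: name of the counit from BDeck(p)}, the first two maps compose to the map $\varepsilon$ of \eqref{diagram: defining the action Deck x X on E in the slice}, while the last is the inclusion of $\B\Aut_{/X}(E)$ into the slice universe. The step requiring care is checking that the adjoint of the composite $\prod_X(\iota)\circ g$ equals $\iota\circ g^\flat$, where $\iota$ is the inclusion. This is naturality of the counit of $(-\times X)\dashv\prod_X$, and I expect it to be the main, though routine, obstacle.

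Pulling the slice universe $\U_\ast\times X\to\U\times X$ back along $\B\Aut_{/X}(E)\hookrightarrow\U\times X$ gives the quotient $E\sslash\Aut_{/X}(E)$ of the evaluation action, by the second item of \cref{example: group actions}. Pulling further along $\varepsilon$ gives $E\sslash(\Deck(p)\times X)$, which by \cref{def: fiberwise evaluation action} and diagram \eqref{diagram: defining the action Deck x X on E in the slice} is the quotient of the fiberwise evaluation action. Hence $(f^\flat,\pi_2)$ classifies that action in $\slice{\E}{X}$, using \cref{cor: characterization of pointed action}.
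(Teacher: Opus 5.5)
Your proposal is correct and follows the paper's proof essentially step for step. You take the image factorization of $(\corner{p},1_X)$ in $\slice{\E}{X}$, push it through $\prod_X$ using that right adjoints preserve monomorphisms, and then compute the adjoint via naturality of the counit of $X^\ast\dashv\prod_X$; the paper isolates that last step as \cref{lemma: computation of adjoints maps in universe}. The one point the paper states that you leave implicit is that, under $\prod_X(\U\times X)\simeq\U^X$, the slice counit at $\U\times X$ is $(ev_\U,\pi_2)$, which is what lets you read your composite into $\U\times X$ as $(f^\flat,\pi_2)$.
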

\begin{proof}
   Recall from \cref{lemma: universe in slice categories} that the universe in $\slice{\E}{X}$ is given by $\U\times X$. By \cref{ex: BAut(F) as the image in the universe}, the image factorization of the classifying map $(\corner{p}, 1_X)\colon X\to \U\times X$ in the slice $\slice{\E}{X}$ is given by:
\[\begin{tikzcd}
    X \arrow[r, two heads, "s"] & {\B \Aut_{/X}(E)} \arrow[r, hook, "\iota"] & {\U\times X}.
\end{tikzcd}\]
    Applying the dependent product functor $\prod_X\colon \slice{\E}{X}\to\E$ to this composition yields the map $\corner{p}^\sharp$. Because $\prod_X$ is a right adjoint, it preserves monomorphisms, meaning $\prod_X(\iota)\colon \prod_X \B \Aut_{/X}(E) \hookrightarrow \U^X$ is a monomorphism. 
    The left-hand map $s$ acts as the canonical basepoint in the slice, so applying $\prod_X$ yields the global section $\prod_X(s)\colon \ast\to \prod_X \B \Aut_{/X}(E)$. By \cref{prop: product preserving functors preserves group objects}, the image of this global section is precisely $\B \Deck(p)$. Because the composition of two monomorphisms is a monomorphism, the image factorization of the total map $\corner{p}^\sharp$ factors exactly through $\B \Deck(p)$:
\[\begin{tikzcd}
    \ast \arrow[r, two heads] & {\B \Deck(p)} \arrow[r, hook] & {\prod_X\B \Aut_{/X}(E)} \arrow[r, hook, "{\prod_X(\iota)}"] & {\U^X,}
\end{tikzcd}\]
    where we use the canonical identification $\prod_X(\U\times X)\simeq \U^X$. This proves the first claim. 

   For the second claim, we must show that the composition
\[\begin{tikzcd}
    {\B \Deck(p)\times X} \arrow[r, hook] & {\prod_X \B \Aut_{/X}(E)\times X} \arrow[rr, "{(\prod_X\iota)^\flat}"] & & \U
\end{tikzcd}\]
    agrees with the defining map \eqref{diagram: defining the action Deck x X on E in the slice} of the $\infty$-action $\Deck(p)\times X\curvearrowright E$ in $\slice{\E}{X}$:
\[\begin{tikzcd}
    {\B \Deck(p)\times X} \arrow[r, hook, "\varepsilon"] & {\B \Aut_{/X}(E)} \arrow[r, hook, "\iota"] & {\U\times X} \arrow[r, "\pi_1"] & \U,
\end{tikzcd}\]
    where $\varepsilon$ is the counit (recall \cref{remark: name of the counit from BDeck(p)}). By \cref{lemma: computation of adjoints maps in universe} below, these two compositions are indeed equivalent, completing the proof.
\end{proof}
\begin{lemma}\label{lemma: computation of adjoints maps in universe}
    Let $p \colon E \to X$ be an object in $\slice{\E}{X}$. The adjoint of the map $\prod_X(\iota) \colon \prod_X \B \Aut_{/X}(E) \to \U^X$ under the Cartesian closure adjunction $(-\times X) \dashv (-)^X$ is given by the composition:
\[\begin{tikzcd}
    {\prod_X\B \Aut_{/X}(E)\times X} \arrow[r, "\varepsilon"] & {\B \Aut_{/X}(E)} \arrow[r, hook, "\iota"] & {\U\times X} \arrow[r, "\pi_1"] & \U,
\end{tikzcd}\]
    where $\varepsilon$ is the counit of the base-change adjunction $X^\ast \dashv \prod_X$.
\end{lemma}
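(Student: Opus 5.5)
This is a formal check of adjoints. I would reduce it to the triangle identities of the two adjunctions, using the identification $\prod_X(\U\times X)\simeq\U^X$.

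Write $Y\coloneq\B\Aut_{/X}(E)$, an object of $\slice{\E}{X}$. The first step is to make the identification $\prod_X(\U\times X)\simeq \U^X$ explicit. Both sides are right adjoints of composites of left adjoints:
\begin{itemize}
\item $\prod_X\circ X^\ast$ is right adjoint to $\Sigma_X\circ X^\ast\simeq -\times X$, where $\Sigma_X\colon \slice{\E}{X}\to\E$ is the forgetful functor;
\item $(-)^X$ is right adjoint to $-\times X$ by definition.
\end{itemize}
Hence the composite adjunction $\Sigma_X\dashv X^\ast$ followed by $X^\ast\dashv\prod_X$ models the Cartesian closure adjunction. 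Its counit at $\U$ is the composite
\[
\Big(\prod_X(\U\times X)\Big)\times X\xrightarrow{\ \varepsilon_{\U\times X}\ }\U\times X\xrightarrow{\ \pi_1\ }\U .
\]
Here $\varepsilon$ is the counit of $X^\ast\dashv\prod_X$, and $\pi_1$ is the counit of $\Sigma_X\dashv X^\ast$, since $X^\ast\U=\U\times X$. This is the standard formula for the counit of a composite adjunction. I would cite it, or check it directly on mapping spaces via the equivalences $\E(T\times X,\U)\simeq\slice{\E}{X}(T\times X,\U\times X)\simeq\E(T,\prod_X(\U\times X))$.

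The second step computes the adjoint. The adjoint of any map $g\colon Z\to\U^X$ is $\mathrm{counit}\circ(g\times X)$. Take $g=\prod_X(\iota)$, so the adjoint is
\[
\pi_1\circ\varepsilon_{\U\times X}\circ\Big(\prod_X(\iota)\times X\Big).
\]
By naturality of $\varepsilon$ with respect to $\iota\colon Y\to\U\times X$, we have
\[
\varepsilon_{\U\times X}\circ X^\ast\prod_X(\iota)\simeq\iota\circ\varepsilon_Y .
\]
Here $X^\ast\prod_X(\iota)$ is precisely $\prod_X(\iota)\times X$ viewed over $X$. Substituting gives $\pi_1\circ\iota\circ\varepsilon_Y$, which is the claimed composition.

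The only genuine obstacle is the first step: making sure the identification $\prod_X(\U\times X)\simeq\U^X$ used in \cref{prop: image of a map into the fibered universe is deck transformation} is the one induced by the composite adjunction, so that the counit really is $\pi_1\circ\varepsilon$. I would address it by defining the identification that way, as the unique one compatible with the equivalences of mapping spaces above, and noting that adjoints are unique up to a contractible space of choices. The rest is naturality and involves no computation.
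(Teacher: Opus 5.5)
Your proof is correct and follows the paper's argument. The paper writes the adjoint as the evaluation counit composed with $\prod_X(\iota)\times X$, uses the naturality square of $\varepsilon$ at $\iota$, and closes with the triangle $\pi_1\circ\varepsilon_{\U\times X}\simeq ev_\U$; your derivation of that triangle from the counit of the composite adjunction $\Sigma_X\circ X^\ast\dashv\prod_X\circ X^\ast$ spells out what the paper states as an observation (that the slice counit at $\U\times X$ is $(ev_\U,\pi_2)$).
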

\begin{proof}
    Under the Cartesian closure adjunction $(-\times X) \dashv (-)^X$ in $\E$, the adjoint of any map $f \colon A \to \U^X$ is obtained by applying $(-\times X)$ and composing with the evaluation counit $ev_\U \colon \U^X \times X \to \U$. For $f = \prod_X(\iota)$, this adjoint is exactly the top row of the following diagram:
\[\begin{tikzcd}[column sep=large, row sep=large]
    {\prod_X \B \Aut_{/X}(E) \times X} \arrow[r, "{(\prod_X\iota) \times 1_X}"] \arrow[d, "\varepsilon"'] & 
    {\U^X \times X} \arrow[r, "ev_\U"] \arrow[d, "{\varepsilon = (ev_\U, \pi_2)}"'] & 
    \U \\
    {\B \Aut_{/X}(E)} \arrow[r, hook, "\iota"'] & 
    {\U \times X} \arrow[ur, "\pi_1"']
\end{tikzcd}.\]
    The left square is the standard naturality square for the counit $\varepsilon \colon X^\ast \circ \prod_X \to \Id_{\slice{\E}{X}}$ evaluated on the morphism $\iota$. The right triangle commutes by the following observation: the slice-theoretic counit evaluated on the trivial object $\U \times X$ is exactly the Cartesian evaluation map paired with the projection to $X$.
\end{proof}
\subsection{Freeness of the $\Deck(p)$-action}\label{subsection: freeness of the deck(p)-action}
In classical topology, the group of deck transformations of a connected covering space acts freely on the fiber. This follows cleanly from the fact that the actions of $\pi_1(X,x)$ and $\Deck(p)$ on $F$ commute. If there exists a point $e_0\in F$ and a deck transformation $\varphi \in \Deck(p)$ such that $\varphi(e_0)=e_0$, then $\varphi(e)=e$ for all $e \in F$. To see this, note that by $\pi_1(X,x)$-transitivity, for any $e\in F$ there exists an element $\alpha\in\pi_1(X,x)$ such that $\alpha\cdot e_0 = e$. Then,
\[\varphi(e)=\varphi(\alpha\cdot e_0)=\alpha\cdot\varphi(e_0)=\alpha\cdot e_0=e.\]
Because the $\Deck(p)$-action is faithful (\cref{cor: Deck(p)-action is n-faithful}), $\varphi$ must be the identity transformation, proving that the action is free. Using the main \cref{Theorem: action of a product restricted to H is free} of \cref{section: infinty group actions}, we now establish the higher categorical analogue of this classical fact. This result is central in the classification of normal $n$-covering maps (\cref{theorem: characterization normal n-covering maps}).
\begin{theorem}\label{theorem: action of Deck is n-free}
    Let $(X,x)\in\E_\ast$ be a pointed connected object, and let $p\colon E\to X$ be an $(n-1)$-connected $n$-covering map with fiber $F$. Then the $\infty$-action $\Deck(p)\curvearrowright F$ is $n$-free. 
\end{theorem}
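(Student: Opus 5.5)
The plan is to apply \cref{Theorem: action of a product restricted to H is free} with $G=\Pi_n(X,x)$, $H=\Deck(p)$ and $X=F$. To do so, we must exhibit a $(G\times H)$-action on the $(n-1)$-truncated object $F$ whose two restrictions are the $\Pi_n(X,x)$-action of \cref{example: action of Pi_n(X) on the fiber F} and the evaluation action $\Deck(p)\curvearrowright F$. We then check three hypotheses: $G$ and $H$ are $n$-groups, the $G$-action is $n$-transitive, and the $H$-action is $n$-faithful.

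\emph{Constructing the product action.} From \cref{subsection: two persepctives on the action} we already have the action $\Deck(p)\times\Omega X\curvearrowright F$, classified by a map
\[F\sslash(\Deck(p)\times\Omega X)\to \B\Deck(p)\times X.\]
Since $F$ is $(n-1)$-truncated, \cref{prop: actions on $n$-truncated objects corresponds to actions of an n-group} shows that this action factors through the $n$-group
\[\tau_{n-1}(\Deck(p)\times\Omega X)\simeq \tau_{n-1}\Deck(p)\times\Pi_n(X,x),\]
using \cref{prop: truncation functor preserve finite products}. It remains to see that $\Deck(p)$ is already an $n$-group. By \cref{prop: group deck of a covering is pi_1-equivariant automorphism of the fiber}, $\Deck(p)\simeq\Aut_{\Pi_n(X,x)}(F)$. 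This object is $(n-1)$-truncated: the map $\Aut_{\Omega X}(F)\to\Aut(F)$ is $(n-2)$-truncated by \cref{corollary: inclusion of fixed points in X is (n-1)-truncated}, and $\Aut(F)$ is $(n-1)$-truncated by \cref{truncation level of the object of equivalences}. We thus obtain an action of $\Pi_n(X,x)\times\Deck(p)$ on $F$. Its restriction to $\Pi_n(X,x)$ is the action of \cref{example: action of Pi_n(X) on the fiber F}, since both arise by truncating the $\Omega X$-action. Its restriction to $\Deck(p)$ is the evaluation action, because restriction along $\Deck(p)\to\tau_{n-1}\Deck(p)\simeq \Deck(p)$ is the identity.

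\emph{Checking the hypotheses.} For $n$-transitivity we use that $E$ is $(n-1)$-connected. By \cref{Theorem: representation theorem for n-covers} with $k=n$, the action $\Pi_n(X,x)\curvearrowright F$ is $n$-transitive; concretely, $F\sslash\Pi_n(X,x)\simeq\tau_nE$ is $(n-1)$-connected, and we conclude by \cref{theorem: free transitive regular action characterization}. For $n$-faithfulness, \cref{cor: Deck(p)-action is n-faithful} gives that $\Deck(p)\curvearrowright F$ is $n$-faithful. \cref{Theorem: action of a product restricted to H is free} then yields that $\Deck(p)\curvearrowright F$ is $n$-free.

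\emph{Main obstacle.} The delicate point is the product action. We must make sure the passage $\Omega X\rightsquigarrow\Pi_n(X,x)$ on the product classifying map $\B\Deck(p)\times X\to\B\Aut(F)$ is compatible with the given factors. My approach is to truncate the classifying map, using that $\B\Aut(F)$ is $n$-truncated, so that it factors through
\[\tau_n(\B\Deck(p)\times X)\simeq \B\Deck(p)\times\B\Pi_n(X,x).\]
The restrictions to each factor are then read off by precomposing with the wedge inclusions. The compatibility with the given factors is automatic, because these inclusions commute with the unit $\eta$.
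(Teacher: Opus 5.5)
Your proposal is correct and follows essentially the same route as the paper. You apply the product-action freeness theorem with $G=\Pi_n(X,x)$ and $H=\Deck(p)$, obtain $n$-transitivity from $F\sslash\Pi_n(X,x)\simeq\tau_nE$ being $(n-1)$-connected, and obtain $n$-faithfulness from the fixed-point description of $\Deck(p)$. The paper takes for granted two points that you verify explicitly: that $\Deck(p)$ is $(n-1)$-truncated, and that the $\Deck(p)\times\Omega X$-action on $F$ descends, by truncating its classifying map, to a $\Deck(p)\times\Pi_n(X,x)$-action with the correct restrictions.
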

\begin{proof}
   As established previously, the $\infty$-actions of the deck $\infty$-group and the fundamental $n$-group commute, yielding an $\infty$-action $\Deck(p)\times \Pi_n(X,x)\curvearrowright F$. First, consider the restricted $\Pi_n(X,x)$-action on $F$. Because $E$ is $(n-1)$-connected, its $n$-truncation $\tau_nE\simeq F\sslash\Pi_n(X,x)$ is also $(n-1)$-connected. By \cref{Theorem: representation theorem for n-covers}, this implies that the restricted $\Pi_n(X,x)$-action is $n$-transitive. Moreover, the restricted $\Deck(p)$-action on $F$ is $n$-faithful by \cref{cor: Deck(p)-action is n-faithful}. We conclude by \cref{Theorem: action of a product restricted to H is free} that the $\Deck(p)$-action is $n$-free.
\end{proof}

\subsection{Normal coverings}\label{subsection: normal coverings}
In classical topology, a connected covering map $p\colon E\to X$ is normal if the induced $\Deck(p)$-action on the fiber $p^{-1}(x)$ is transitive. The classical classification theorem identifies such covers with normal subgroups $A\leq\pi_1(X,x)$, yielding the standard isomorphism $\Deck(p)\cong\pi_1(X,x)/A$. In our higher categorical setting, we restrict our attention to pointed $n$-covering maps whose total objects are $(n-1)$-connected. By \cref{corollary: pointed n-1 connected coverings corresponds to subgroups of homotopy n-th group}, these covers correspond precisely to sub-$1$-groups $A \leq \pi_n(X,x)$. We will show that the normal $n$-covering maps are those corresponding to normal sub-$1$-groups. Because higher homotopy groups are abelian for $n \geq2$, every $(n-1)$-connected $n$-covering map is automatically normal in these higher degrees (\cref{cor: classification normal n-covering maps}).
\begin{definition}\label{def: n-normal covering map}
    An $(n-1)$-connected $n$-covering map $p \colon E \to X$ in $\E$ is \textit{normal} if the $\infty$-action $\Deck(p)\times X\curvearrowright E$ in $\slice{\E}{X}$ is $n$-transitive. We denote the full subcategory of $\Cov^{\sg{n-1}}_n(X)$ spanned by the normal $n$-covering maps by $\Cov_n^{\mathrm{nor}}(X)$.
\end{definition}
\begin{remark}
    When $X$ is pointed and connected, normality can equivalently be stated in terms of the fiber $F$: $p$ is normal if and only if the $\infty$-action $\Deck(p)\curvearrowright F$ is $n$-transitive. Indeed, there is a Cartesian square
\[\begin{tikzcd}
	{F\sslash\Deck(p)} & {E\sslash (\Deck(p)\times X)} \\
	\ast & X
	\arrow[from=1-1, to=1-2]
	\arrow[from=1-1, to=2-1]
	\arrow[from=1-2, to=2-2]
	\arrow[""{name=0, anchor=center, inner sep=0}, two heads, from=2-1, to=2-2]
	\arrow["\lrcorner"{anchor=center, pos=0.125}, draw=none, from=1-1, to=0]
\end{tikzcd},\]
where the bottom horizontal map is an effective epimorphism. It follows that the left vertical map is $(n-1)$-connected if and only if the right one is.
\end{remark}
The key ingredient to prove all the main results of this section (\cref{cor: group deck is the quotient of fundamental n-group} \& \ref{cor: classification normal n-covering maps}, \cref{theorem: characterization normal n-covering maps}) is the following theorem, whose direct corollary is an equivalence $\Deck(p_A)\simeq \Pi_{n}(X,x)\sslash \B ^{n-1}A$. The idea of the proof is inspired by \cite[Thm.~7.1]{Rijke_TheJoinConstruction}, and relies on the internal Yoneda embedding \cref{Yoneda Embedding}. 
\begin{theorem}\label{theorem: Deck p is loop space of the base B}
    Let $\E$ be an $\infty$-topos, and consider a Cartesian square in $\E_\ast:$
\[\begin{tikzcd}
        E \arrow[r] \arrow[d, "p"'] \arrow[dr, phantom, "\lrcorner"{anchor=center, pos=0.125}, near start] & \ast \arrow[d] \\
        X \arrow[r, "f"'] & {\B G}
    \end{tikzcd},\]
where $p$ is an $(n-1)$-connected $n$-covering map. Then there is an equivalence of $n$-groups \[\Deck(p)\simeq G.\] Moreover, this equivalence respects both canonical $\infty$-actions on $E$.
\end{theorem}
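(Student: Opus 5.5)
The plan is to follow the heuristic of Proposition~\ref{prop: image of a map into the fibered universe is deck transformation}: realize $\B\Deck(p)$ as the image of $\corner{p}^\sharp\colon\ast\to\U^X$, and show that this same image is $\B G$ by factoring $\corner{p}^\sharp$ through the Yoneda map of $\B G$.

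\textbf{Step 1: rewrite the classifying map of $p$.} Let $b\colon\ast\to\B G$ be the basepoint. Since $E$ is the fiber of $f$ at $b$, the map $p$ is the pullback along $f$ of the map $b\colon\ast\to\B G$, viewed as an object over $\B G$. The fiber of $b$ over a generalized point $y$ of $\B G$ is the path object $\B G(y,b)$. So $b$ is classified by the internal path map $\B G(-,b)\colon\B G\to\U$, which is the adjoint of $\yo_{\B G}$ evaluated at $b$. Hence $\corner{p}\simeq \B G(f(-),b)$, and adjoining gives a factorization
\[\corner{p}^\sharp\colon \ast\xrightarrow{\;b\;}\B G\xrightarrow{\;\yo'\;}\U^{\B G}\xrightarrow{\;f^\ast\;}\U^X ,\]
where $\yo'$ is the adjoint of $\B G(-,-)$ with the variables ordered appropriately (still a monomorphism by \cref{Yoneda Embedding}, after using the symmetry of path objects), and $f^\ast$ is precomposition.

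\textbf{Step 2: land in the truncated universe.} Since $\B G$ is $n$-truncated, its diagonal is $(n-1)$-truncated, so every $\B G(y,b)$ is $(n-1)$-truncated. Thus $\yo'$ factors through $(\U^{\sleq n-1})^{\B G}$. The inclusion $(\U^{\sleq n-1})^{\B G}\hookrightarrow\U^{\B G}$ is a monomorphism, and so is the corresponding inclusion over $X$. Moreover, $X\to\B G$ has fiber $E$, which is $(n-1)$-connected, so $f$ is $(n-1)$-connected. Applying \cref{cor: n-connected map induce mono on universe of n-truncated objects} with $n-1$ in place of $n$, the restriction $f^\ast\colon(\U^{\sleq n-1})^{\B G}\to(\U^{\sleq n-1})^X$ is a monomorphism.

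\textbf{Step 3: identify the image.} By the first two steps, $\corner{p}^\sharp$ factors as the effective epimorphism $b\colon\ast\twoheadrightarrow\B G$ (since $\B G$ is connected) followed by a composite of monomorphisms. By uniqueness of the effective epi/mono factorization, the image of $\corner{p}^\sharp$ is $\B G$. Proposition~\ref{prop: image of a map into the fibered universe is deck transformation} identifies that image with $\B\Deck(p)$, giving a pointed equivalence $\B\Deck(p)\simeq\B G$. By \cref{theorem: group objects are pointed connected objects}, this yields an equivalence of $n$-groups $\Deck(p)\simeq G$.

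\textbf{Step 4: compatibility of the actions.} By the second part of Proposition~\ref{prop: image of a map into the fibered universe is deck transformation}, the action $\Deck(p)\times X\curvearrowright E$ in $\slice{\E}{X}$ is classified by the adjoint $\B\Deck(p)\times X\to\U\times X$. Under the equivalence of Step 3, this map becomes $(y,x)\mapsto\B G(f(x),y)$. The total space of that family is the pullback of the diagonal of $\B G$ along $1\times f$, which is the graph of $f$, namely $X$ sitting over $\B G\times X$ via $(f,1_X)$. Its fiber over $b$ (via the projection to $\B G$) is exactly $E\to X$. This is precisely the square defining the $G$-action on $E$ arising from the fiber sequence $E\to X\to\B G$, and its quotient is $X$. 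Hence the two actions on $E$ agree.

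\textbf{Expected obstacle.} The delicate point is Step 1: making the identification $\corner{p}^\sharp\simeq f^\ast\circ\yo'\circ b$ coherent. This requires matching variance and ordering conventions in $\B G(-,-)$, and checking that the adjunctions $(-\times X)\dashv(-)^X$ and the counit used in \cref{lemma: computation of adjoints maps in universe} are compatible, so that the equivalence also identifies the classifying maps of the actions and not only the underlying objects.
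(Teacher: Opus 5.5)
Your proposal is correct and is essentially the paper's proof: both realize $\B G$ and $\B\Deck(p)$ as the image of the same pointed map $\ast\to(\U^{\sleq{n-1}})^X$, using \cref{Yoneda Embedding} for $\yo_{\B G}$, \cref{cor: n-connected map induce mono on universe of n-truncated objects} for $f^\ast$, and \cref{prop: image of a map into the fibered universe is deck transformation}, and then compare the adjoint classifying maps $\B\Deck(p)\times X\simeq\B G\times X\to\U^{\sleq{n-1}}\times X$ to match the actions. The paper routes the point through $x\colon\ast\to X$ and the family $\B G(f(-),f(-))$, which is only a cosmetic difference. The one thing you assert without justification is that $\B G$ is $n$-truncated, which the statement does not assume: since $f$ is an effective epimorphism and its base change $p$ is $(n-1)$-truncated, $b\colon\ast\to\B G$ is $(n-1)$-truncated by \cref{prop: pullback of truncated map is truncated}, so $\B G$ is $n$-truncated by \cref{prop: property for an n-truncated map from an n+1-truncated object}; this is also what makes $G$ an $n$-group.
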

\begin{proof}
    The strategy is to show that $\B G$ and $\B\Deck(p)$ are both realized as the image of the same global point $\ast\to(\U^{\sleq{n-1}})^X$ in the universe of $(n-1)$-truncated maps. For $\B\Deck(p)$ this is \cref{prop: image of a map into the fibered universe is deck transformation}. For $\B G$, we use the Yoneda map $\yo_{\B G}\colon\B G\to(\U^{\sleq{n-1}})^{\B G}$ and the fact that $f\colon X\twoheadrightarrow\B G$ is $(n-1)$-connected to identify the image of $\ast\to X\to(\U^{\sleq{n-1}})^X$ with $\B G$. The uniqueness of images then yields the equivalence $\B\Deck(p)\simeq\B G$.

    We first establish the truncation levels of the objects involved. Because $E$ is $(n-1)$-connected and is the fiber of $f$ at the effective epi $\ast\to \B G$, the map $f$ is $(n-1)$-connected as well. Moreover, the map $f$ is an effective epimorphism and $p$ is $(n-1)$-truncated. This implies that $\ast\to \B G$ is $(n-1)$-truncated as well. It follows that $G$ is $(n-1)$-truncated, making both $G$ and $\Deck(p)$ $n$-groups in $\E$. Because the classifying object $\B G$ is $n$-truncated, its diagonal $\Delta_{\B G}\colon \B G\to\B G\times\B G$ is $(n-1)$-truncated. Thus, its classifying family $\B G(-,-)\colon\B G\times\B G\to\U$ factors through the universe of $(n-1)$-truncated objects $\U^{\sleq{n-1}}$. 
    
    Consider the composition \[X\times X\xrightarrow{f\times f}\B G\times \B G\xrightarrow{\B G(-,-)}\U^{\sleq{n-1}}.\]  
    Passing to the Cartesian adjunctions yields the commutative diagram:
\begin{equation}\label{theorem: deck p is loop space eq1}
    \begin{tikzcd}
	X & {(\U^{\sleq{n-1}})^X} \\
	\B G & {(\U^{\sleq{n-1}})^{\B G}}
	\arrow[from=1-1, to=1-2]
	\arrow["f",two heads, from=1-1, to=2-1]
	\arrow["{\yo_{\B G}}", hook, from=2-1, to=2-2]
	\arrow[hook, from=2-2, to=1-2]
\end{tikzcd}.
\end{equation}
The bottom horizontal map is the Yoneda map $\yo_{\B G}$ by definition, which is a monomorphism by the Yoneda Embedding \cref{Yoneda Embedding}. The right vertical map is a monomorphism by \cref{cor: n-connected map induce mono on universe of n-truncated objects}, since $f$ is $(n-1)$-connected as noted above. It follows that $\B G$ is precisely the image of the composition $X\to(\U^{\sleq{n-1}})^X$. Because the composition $\ast\to X\to\B G$ is an effective epi, we deduce:  \[\B G\simeq im(\ast\to X\to (\U^{\sleq{n-1}})^X).\] On the other hand, by \cref{prop: image of a map into the fibered universe is deck transformation}, this image is also $\B\Deck(p')$, where $p'$ is the map classified by the adjoint $X\to\U^{\sleq{n-1}}$ of the map $\ast\to (\U^{\sleq{n-1}})^X$. We now verify that this map $p'$ is precisely $p$. It is determined by the following pasting of Cartesian squares:
\begin{equation}\label{theorem: deck p is loop space eq2}
    \begin{tikzcd}
        E \arrow[r] \arrow[d, "p"'] \arrow[dr, phantom, "\lrcorner"{anchor=center, pos=0.125}, near start] & P \arrow[r] \arrow[d] \arrow[dr, phantom, "\lrcorner"{anchor=center, pos=0.125}, near start] & \B G \arrow[r] \arrow[d, "\Delta"'] \arrow[dr, phantom, "\lrcorner"{anchor=center, pos=0.125}, near start] & {\U_\ast^{\sleq{n-1}}} \arrow[d] \\
        X \arrow[r, "{(x, 1_X)}"'] & {X\times X} \arrow[r, "{f\times f}"'] & {\B G\times \B G} \arrow[r, "{\B G(-,-)}"'] & {\U^{\sleq{n-1}}}
    \end{tikzcd}.
\end{equation}
We must verify that the leftmost pullback evaluates to $E$. The rightmost square is Cartesian by definition of $\B G(-,-)$, and let $P$ be the pullback of the middle square. On one hand, by a simple Fubini argument, $P$ is the pullback of the cospan $X\xrightarrow{f}\B G\xleftarrow{f}X$. On the other hand, using \ref{item: pullback of a product with X}, the leftmost square can be computed as the pullback of the cospan $\ast\xrightarrow{x}X\xleftarrow{}P$. Together we compute the total pullback by the following diagram:
\[\begin{tikzcd}
        E \arrow[r] \arrow[d] \arrow[dr, phantom, "\lrcorner"{anchor=center, pos=0.125}, near start] & P \arrow[r] \arrow[d] \arrow[dr, phantom, "\lrcorner"{anchor=center, pos=0.125}, near start] & X \arrow[d, "f"] \\
        \ast \arrow[r, "x"'] & X \arrow[r, "f"'] & {\B G}
    \end{tikzcd}.\]
    The right and total squares are Cartesian, which implies that the left square is Cartesian as well. From this we deduce that $p$ is indeed classified by $X\to \U^{\sleq{n-1}}$. By the uniqueness of images in $\E_{\ast/}\simeq\E_\ast$, we obtain the canonical pointed equivalence:
\begin{equation}\label{theorem: deck p is loop space eq3}
    \B \Deck(p)\simeq \B G.
\end{equation}
Finally, we verify that this equivalence preserves the $\infty$-actions on $E$. By construction of image factorization, this equivalence lives over $(\U^{\sleq{n-1}})^X$. By adjunction, this in turn yields an equivalence in the slice over $X$: \[\B \Deck(p)\times X\simeq \B G\times X\hspace{1cm}\text{over }\U^{\sleq{n-1}}\times X.\]
By \cref{prop: image of a map into the fibered universe is deck transformation}, the left-hand map classifies the $\Deck(p)\times X$-action on $E$ in $\slice{\E}{X}$. The first perspective on its defining diagram \eqref{diagram: defining the action Deck x X on E in the slice} confirms that this indeed defines the desired $\Deck(p)$-action on $E$. On the other hand, the right-hand map $\B G\times X\to\U^{\sleq{n-1}}\times X$ factors through $\B G\times \B G$, and as such classifies the $G\times X$-action on $E$ in $\slice{\E}{X}$. As for the left-hand case, this indeed classifies the $G$-action on $E$ given by the theorem statement. Thus, the equivalence \eqref{theorem: deck p is loop space eq3} respects both $\infty$-actions on $E$.
\end{proof}
Recall from \cref{corollary: pointed n-1 connected coverings corresponds to subgroups of homotopy n-th group} that a sub-$1$-group $A\leq \pi_n(X,x)$ corresponds to an $(n-1)$-connected $n$-covering map $p_A\colon E_A\to X$. Moreover, the inclusion $A\to\pi_n(X,x)$ is normal if and only if the induced composition of $n$-group maps $\B^{n-1}A\to\B^{n-1}\pi_n(X,x)\to\Pi_n(X,x)$ is normal, by Theorem~\ref{theorem: n-symmetric subgroups of the fundamental n-group are 1-subgroups of n-th homotopy group 2}.
\begin{corollary}\label{cor: group deck is the quotient of fundamental n-group}
    Let $(X,x)$ be a pointed connected object of an $\infty$-topos $\E$, and let $A\to \pi_n(X,x)$ be a normal sub-$1$-group object corresponding to a pointed $n$-covering map $p_A\colon E_A\to X$. Then there is an equivalence of $n$-groups \[\Deck(p_A)\simeq \Pi_{n}(X,x){\sslash \B ^{n-1}A}.\]
    Moreover this equivalence respects the $\infty$-actions on the fiber $F_A$ of $p_A$.
\end{corollary}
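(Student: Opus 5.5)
The plan is to apply \cref{theorem: Deck p is loop space of the base B} with $G\coloneq\Pi_n(X,x)\sslash\B^{n-1}A$. The only work is to produce a Cartesian square in $\E_\ast$ of the shape required there, with $p=p_A$ and bottom map $X\to\B G$.

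First, since $A\to\pi_n(X,x)$ is a normal sub-$1$-group, Theorem~\ref{theorem: n-symmetric subgroups of the fundamental n-group are 1-subgroups of n-th homotopy group 2} says the corresponding $n$-symmetric sub-$1$-group $\B^{n-1}A\to\Pi_n(X,x)$ is normal. By \cref{def: normal maps}, this gives a pointed connected object $\B Q$ and a fiber sequence
\[\B^nA\to\B\Pi_n(X,x)\xrightarrow{\B q}\B Q.\]
Its loop object $Q$ is the quotient $\Pi_n(X,x)\sslash\B^{n-1}A$, equipped with the $\infty$-group structure determined by $\B Q$.

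Next, I paste three Cartesian squares:
\begin{itemize}
\item the defining square of $E_A$ from \cref{remark: decomposition of n-covering with the universal n-cover}, namely $E_A\to\B^nA$ over $X\to\B\Pi_n(X,x)$;
\item the square exhibiting $\B^nA$ as the fiber of $\B q$;
\item the square $\ast\to\ast$ over $\B Q$.
\end{itemize}
More precisely, $E_A$ is the pullback of $\B^nA\to\B\Pi_n(X,x)$ along $\eta_X\colon X\to\B\Pi_n(X,x)$, and $\B^nA\to\ast$ is the pullback of $\ast\to\B Q$ along $\B q$. By the pasting law, $E_A$ is then the fiber of the composite $f\coloneq\B q\circ\eta_X\colon X\to\B Q$. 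All maps are pointed, so this is a Cartesian square in $\E_\ast$.

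Since $p_A$ is an $(n-1)$-connected $n$-covering map by \cref{corollary: pointed n-1 connected coverings corresponds to subgroups of homotopy n-th group}, \cref{theorem: Deck p is loop space of the base B} applies. It gives an equivalence of $n$-groups $\Deck(p_A)\simeq Q=\Pi_n(X,x)\sslash\B^{n-1}A$ that respects the $\infty$-actions on $E_A$.

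The remaining point, and the only real subtlety, is to pass from the actions on $E_A$ to the actions on the fiber $F_A$. I would pull back the pointed square along the basepoint $\ast\to X$, as in the second perspective of \cref{subsection: two persepctives on the action}. Both actions on $F_A$ are obtained from the corresponding actions on $E_A$ by the same base change, so the equivalence of classifying maps $\B\Deck(p_A)\times X\simeq\B Q\times X$ over $\U^{\sleq{n-1}}\times X$ restricts to an equivalence over $\U^{\sleq{n-1}}$ at the basepoint. Hence the induced maps $\Deck(p_A)\to\Aut(F_A)$ and $Q\to\Aut(F_A)$ agree, which is exactly the compatibility with the actions on $F_A$.

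A further concern is whether the action of $Q$ depends on the chosen delooping $\B Q$. It does not: \cref{theorem: Deck p is loop space of the base B} identifies any such $\B Q$ with $\B\Deck(p_A)$. So the statement holds for every choice of normality datum, and as a consequence the $n$-group structure on the quotient is unique.
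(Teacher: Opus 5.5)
Your proposal is correct and follows the paper's proof. You paste the defining square of $p_A$ over $\eta_X$ with the fiber sequence $\B^nA\to\B\Pi_n(X,x)\to\B(\Pi_n(X,x)\sslash\B^{n-1}A)$ supplied by normality, and then apply \cref{theorem: Deck p is loop space of the base B}. Your base-change step along $x$, which transfers the compatibility of actions from $E_A$ to $F_A$, spells out what the paper leaves as ``follows directly,'' and your closing remark on independence of the delooping is exactly \cref{cor: quotient of n-symmetric sub-1-groups are unique}.
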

\begin{proof}
     By definition, the $n$-covering map $p_A$ arises as the pullback in the left square of the following diagram:
\[\begin{tikzcd}
    {E_A} \arrow[r] \arrow[d, "p_A"'] \arrow[dr, phantom, "\lrcorner"{anchor=center, pos=0.125}] & {\B^n A} \arrow[r] \arrow[d] \arrow[dr, phantom, "\lrcorner"{anchor=center, pos=0.125}] & \ast \arrow[d] \\
    X \arrow[r, "{\eta_X}"'] & {\B \Pi_n(X,x)} \arrow[r, "{\B \pi}"'] & {\B (\Pi_n(X,x) \sslash \B^{n-1} A)}
\end{tikzcd}.\]
By hypothesis, the map of classifying spaces $\B^n A\to\B\Pi_n(X,x)$ is normal, meaning it is the fiber of some quotient map $\B \pi$. Because both squares are Cartesian, so is the outer square. As noted above, $p_A$ is an $(n-1)$-connected $n$-covering map. The conclusion follows directly from \cref{theorem: Deck p is loop space of the base B}.
\end{proof}
\begin{remark}
    For $n=1$ we recover the classical identification \[\Deck(p_A)\simeq\pi_1(X,x)\sslash A\]
    for $p_A$ the $1$-covering map corresponding to a normal subgroup $A\trianglelefteq\pi_1(X,x)$.
\end{remark}
Recall from \cref{section: normal maps} that a normal $\infty$-group map $f\colon A\to G$ might have multiple normality data; that is, there might be multiple compatible $\infty$-group structures on the quotient $G\sslash A$. However, when $f$ defines a normal $n$-symmetric sub-$1$-group, the previous identification allows us to prove that its normality datum is unique.
\begin{corollary}\label{cor: quotient of n-symmetric sub-1-groups are unique}
Let $\E$ be an $\infty$-topos, let $G$ be an $n$-group object in $\E$, and let $f\colon \B ^{n-1}A\to G$ be a normal $n$-symmetric sub-$1$-group. Then the $\infty$-group structure on the quotient $G\sslash \B ^{n-1}A$ is unique and is given by: \[G\sslash \B ^{n-1}A\simeq \Deck(\B ^nA\xrightarrow{\B f} \B G) \hspace{0.5cm} \text{ in } \Grp_n(\E).\]
\end{corollary}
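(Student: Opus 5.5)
The plan is to reduce the statement to \cref{theorem: Deck p is loop space of the base B}, by showing that every normality datum for $f$ produces the same $n$-group, namely the deck $n$-group of $\B f$.

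\textbf{Step 1: an arbitrary normality datum.} Fix any normality datum for $f$. This is a pointed connected object $\B Q$ together with a fiber sequence
\[\B^nA\xrightarrow{\B f}\B G\xrightarrow{\B\pi}\B Q.\]
We first check that $\B Q$ may be taken $n$-truncated, so that $Q\in\Grp_n(\E)$. Since $f$ is a sub-$n$-group, $\B f$ is $(n-1)$-truncated. The point $\ast\to\B Q$ is an effective epimorphism, and its base change along $\B\pi$ is $\B f$. It follows that $\ast\to\B Q$ is $(n-1)$-truncated, so $\B Q$ is $n$-truncated.

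\textbf{Step 2: verify the hypotheses of \cref{theorem: Deck p is loop space of the base B}.} Apply that theorem to the Cartesian square with $p=\B f\colon\B^nA\to\B G$, bottom map $\B\pi$ and right map $\ast\to\B Q$. Two hypotheses need checking.
\begin{itemize}
\item $p$ is an $n$-covering map: $\B f$ is $(n-1)$-truncated, as noted in Step 1.
\item The total space is $(n-1)$-connected: $\B^nA$ is the $n$-fold classifying object of an $n$-symmetric group, hence $(n-1)$-connected by \cref{theorem: symmetric group objects are pointed highly connected objects}.
\end{itemize}
The square lives in $\E_\ast$, with $\B G$ pointed by its canonical basepoint and $\B\pi$ pointed. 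The theorem therefore gives an equivalence of $n$-groups
\[Q\simeq\Deck(\B f),\]
that is, a pointed equivalence $\B Q\simeq\B\Deck(\B f)$.

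\textbf{Step 3: compatibility with the quotient map.} It remains to check that this equivalence is compatible with $\B\pi$, so that it identifies the normality datum itself and not just the abstract group. The proof of \cref{theorem: Deck p is loop space of the base B} realizes $\B Q$ as the image of $\ast\to\B G\to(\U^{\sleq n-1})^{\B G}$, with the map from $\B G$ given by $\B\pi$. The image factorization depends only on $p=\B f$ and not on the chosen datum. So $\B\pi$ is identified with the canonical map
\[\B G\to\B\Deck(\B f),\]
the one corresponding to the action of $\Deck(\B f)$ on $\B^nA$, whose quotient is $\B G$ by the theorem. Hence any two normality data are canonically equivalent under $\B G$.

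The main obstacle is this uniqueness step. The theorem as stated gives an equivalence of groups, but uniqueness of the normality datum requires the equivalence to respect the fiber sequence, i.e.\ to hold in the slice under $\B G$. To obtain it, I would invoke the "respects the actions" clause together with uniqueness of images in $\E_{\ast/}$. Concretely, two pointed maps $\B G\to\B Q$, $\B G\to\B Q'$ with fiber $\B f$ factor compatibly through the same monomorphism into $(\U^{\sleq n-1})^{\B G}$. This forces a contractible space of identifications between them.
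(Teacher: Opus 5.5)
Your Steps 1 and 2 are exactly the paper's proof. The paper applies \cref{theorem: Deck p is loop space of the base B} to each fiber sequence $\B^nA\to\B G\to\B Q$ and concludes $\B Q\simeq\B\Deck(\B f)\simeq\B Q'$ as pointed objects, which is all its proof establishes.

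There is one slip in Step 1. To deduce that $\ast\to\B Q$ is $(n-1)$-truncated from its base change $\B f$, the map you pull back along, $\B\pi$, must be an effective epimorphism. It is, because $\ast\to\B G\xrightarrow{\B\pi}\B Q$ is the effective epimorphism $\ast\to\B Q$. The effective-epi property of $\ast\to\B Q$ itself is not the relevant hypothesis. In any case the theorem's proof already handles this truncation.

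Step 3 goes beyond what the paper's proof establishes, and as written its key inference does not follow. Uniqueness of images only makes the image of the \emph{point} $\corner{\B f}^\sharp\colon\ast\to(\U^{\sleq{n-1}})^{\B G}$ independent of the datum. The map $\B G\to(\U^{\sleq{n-1}})^{\B G}$ through which $\B\pi$ factors in the theorem's proof is $x\mapsto\B Q(\B\pi x,\B\pi(-))$, which is built from the datum. So nothing yet forces the maps coming from two data to agree.

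Likewise, your ``canonical map'' $\B G\to\B\Deck(\B f)$ presupposes a choice-free identification $\B^nA\sslash\Deck(\B f)\simeq\B G$. In the paper that identification is itself produced from a normality datum. ``Respects the actions'' in the sense of \cref{def: group map respects actions}, a commuting triangle into $\Aut(\B^nA)$, does not supply it.

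What closes the gap is the finer output of the theorem's proof:
\begin{enumerate}
\item The equivalence $\psi\colon\B\Deck(\B f)\simeq\B Q$ lies over $(\U^{\sleq{n-1}})^{\B G}$, so $\psi\times\B G$ lies over $\U^{\sleq{n-1}}\times\B G$.
\item Comparing the classified families gives an equivalence over $\B G$ between $\B^nA\sslash(\Deck(\B f)\times\B G)$ and $\B G$ itself. Here $\B G$ is the total space of $(b,x)\mapsto\B Q(b,\B\pi x)$, with structure map $(\B\pi,1)$.
\item Since $1_{\B G}$ is terminal in $\slice{\E}{\B G}$, this identification is forced. Hence $\B\pi\simeq\psi\circ c$ for a map $c\colon\B G\to\B\Deck(\B f)$ that depends only on $\B f$.
\item Then $\psi'\psi^{-1}$ is an equivalence under $\B G$. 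It is unique: precomposition with the $(n-1)$-connected map $\B\pi$ is a monomorphism on maps into the $n$-truncated object $\B Q'$, so the space of such identifications is contractible.
\end{enumerate}
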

\begin{proof}
    Suppose that we are given two such $\infty$-group quotient structures, represented by connected objects $\B (G\sslash \B ^{n-1}A)$ and $\B (G\sslash \B ^{n-1}A)'$, together with pointed maps from $\B G$ that fit $\B f$ into a fiber sequence. Applying \cref{theorem: Deck p is loop space of the base B} to the respective fiber sequences yields pointed equivalences: \[\B (G\sslash \B ^{n-1}A)\simeq\B\Deck(\B ^nA\to \B G)\simeq \B (G\sslash \B ^{n-1}A)'.\]
\end{proof}

We can now use this \cref{cor: group deck is the quotient of fundamental n-group} to characterize normal $n$-covering maps.
\begin{theorem}\label{theorem: characterization normal n-covering maps}
Let $\E$ be an $\infty$-topos, let $(X,x)$ be a pointed connected object, and let $p\colon E\to X$ be an $(n-1)$-connected $n$-covering map with fiber $F$. The following are equivalent:
\begin{enumerate}
    \item $p$ is a normal $n$-covering map;
    \item there is an equivalence $E\sslash \Deck(p)\simeq X$ under $E$;
    \item $p$ corresponds via \cref{corollary: pointed n-1 connected coverings corresponds to subgroups of homotopy n-th group} to a normal sub-$1$-group $A\hookrightarrow \pi_n(X,x)$.
\end{enumerate}
If one of these conditions is satisfied, there exists a basepoint $e\colon \ast\to F\to E$ rendering $p$ pointed. Moreover, for any such basepoint, the $1$-group $A$ is identified with both the $n$-th homotopy group $\pi_n(E,e)$, and the stabilizer $n$-symmetric $1$-group of the action $\pi_n(X,x)\curvearrowright\pi_{n-1}(F,e)$: \[A\simeq \pi_n(E,e)\simeq \Stab_{\pi_n(X,x)}(1_e).\]
\end{theorem}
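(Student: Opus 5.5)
The argument follows the outline sketched in the introduction: I would prove $(1)\Rightarrow(2)\Rightarrow(3)\Rightarrow(1)$ and then deduce the statements about basepoints and stabilizers.

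\textbf{$(1)\Rightarrow(2)$.} Assume $p$ is normal, so that $\Deck(p)\curvearrowright F$ is $n$-transitive. By \cref{theorem: action of Deck is n-free} this action is also $n$-free. Then \cref{theorem: free transitive regular action characterization} shows that $F\sslash\Deck(p)$ is both $(n-1)$-connected and $(n-1)$-truncated, hence contractible. Next I use the Cartesian square in the remark after \cref{def: n-normal covering map}. It exhibits $F\sslash\Deck(p)\simeq\ast$ as the pullback of $E\sslash(\Deck(p)\times X)\to X$ along the effective epimorphism $x\colon\ast\twoheadrightarrow X$. An effective epimorphism reflects equivalences, so this map is an equivalence. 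Combining with \cref{remark: identification of different quotients of Deck(p)-actions} gives $E\sslash\Deck(p)\simeq X$, and this equivalence lies under $E$ because the quotient map is $p$ followed by this equivalence.

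\textbf{$(2)\Rightarrow(3)$.} The evaluation action $\Deck(p)\curvearrowright E$ is classified by a fiber sequence $E\to E\sslash\Deck(p)\to\B\Deck(p)$. By $(2)$, this becomes $E\xrightarrow{p}X\xrightarrow{c}\B\Deck(p)$. Two points need checking:
\begin{itemize}
\item The covering $p$ is $(n-1)$-truncated and $x$ is an effective epimorphism. Hence $\ast\to\B\Deck(p)$ is $(n-1)$-truncated, so $\B\Deck(p)$ is $n$-truncated.
\item Since $c$ is a map into an $n$-truncated object, it factors through $\eta_X\colon X\to\B\Pi_n(X,x)$.
\end{itemize}
Let $K$ be the fiber of the induced map $\B\Pi_n(X,x)\to\B\Deck(p)$. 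By \cref{corollary: Cartesian square of truncation maps}, $E$ is the pullback of $K$ along $\eta_X$. Then \cref{Theorem: representation theorem for n-covers} identifies $K$ with the classifying object of the subgroup corresponding to $p$, namely $\B^nA$, since $E$ is $(n-1)$-connected and so is $\tau_nE\simeq K$. The right-hand square therefore exhibits $\B^{n-1}A\to\Pi_n(X,x)$ as normal. Theorem~\ref{theorem: n-symmetric subgroups of the fundamental n-group are 1-subgroups of n-th homotopy group 2} then converts this into normality of $A\hookrightarrow\pi_n(X,x)$.

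\textbf{$(3)\Rightarrow(1)$.} Assume $A$ is normal. By \cref{cor: group deck is the quotient of fundamental n-group} together with \cref{theorem: Deck p is loop space of the base B}, the $\Deck(p)$-action on $E$ agrees with the $G$-action coming from the fiber sequence $E\to X\to\B(\Pi_n(X,x)\sslash\B^{n-1}A)$. Its quotient is therefore $X$. The same pullback-along-$x$ argument then gives $F\sslash\Deck(p)\simeq\ast$, which is $(n-1)$-connected. So the action is $n$-transitive and $p$ is normal.

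\textbf{Basepoint.} The fiber $F$ is a torsor: $F\sslash\Deck(p)\simeq\ast$ shows that $F\simeq\Deck(p)$ as the fiber of $\ast\to\B\Deck(p)$. Hence $F$ has a global point, namely the unit.

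\textbf{Identification of $A$.} I expect this step to be the main obstacle. Choose a basepoint $e$, so that $p$ is pointed. Then $E\simeq\B^nA\times_{\B\Pi_n}X$ is $(n-1)$-connected, and $\pi_n$ of the pullback along the $n$-connected map $\eta_X$ gives $\pi_n(E,e)\simeq A$. The difficulty is independence of the choice of $e$: different points could a priori give different subgroups, and normality is exactly what rules this out. For the stabilizer, I would apply \cref{remark: classifying object of stabilizer is the connected cover of the quotient} to the action $\Pi_n(X,x)\curvearrowright F$. This gives $\B^n\Stab_{\pi_{n-1}(\Pi_n)}(1_e)\simeq(\tau_nE)\langle n-1\rangle\simeq\tau_nE\simeq\B^nA$. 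Finally, \cref{cor: two group structure on pi_n(G) coincide} identifies $\pi_{n-1}(\Pi_n(X,x))\simeq\pi_n(X,x)$, which yields $A\simeq\Stab_{\pi_n(X,x)}(1_e)$.
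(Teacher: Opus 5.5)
Your three implications follow the paper's proof. For $(1)\Rightarrow(2)$ you use $n$-freeness of the deck action (\cref{theorem: action of Deck is n-free}) plus the regularity criterion. For $(2)\Rightarrow(3)$ you factor $c\colon X\to\B\Deck(p)$ through $\eta_X$ and apply Theorem~\ref{theorem: n-symmetric subgroups of the fundamental n-group are 1-subgroups of n-th homotopy group 2}. The converse goes through \cref{cor: group deck is the quotient of fundamental n-group}. Your stabilizer computation at an arbitrary $e$ is also the paper's argument for the last claim. One slip: in your first bullet the effective epimorphism you need is $c$ (or $c\circ x$), not $x$; in any case $\Deck(p)$ is already an $n$-group by \cref{prop: group deck of a covering is pi_1-equivariant automorphism of the fiber}.

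The step that fails is the basepoint paragraph. From $F\sslash\Deck(p)\simeq\ast$ you only learn that $F$ is a $\Deck(p)$-torsor. Its classifying map is some global point $t\colon\ast\to\B\Deck(p)$, and $F$ is the path object from $t$ to the basepoint. This is $\Deck(p)$ only if $t$ is joined to the basepoint by a global path, and such a path is exactly a global point of $F$. So the argument is circular.

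In $\Sp$ this is automatic, but in a general $\infty$-topos it is not. In $\Sh(S^1)$, let $X=\B(\Z/2)$ and let $p\colon\ast\to X$ classify the M\"obius torsor $M$. Then $E=\ast$, $F=M$, and $p$ is a $0$-connected $1$-covering. Moreover $\Deck(p)\simeq\Aut_{\Z/2}(M)\simeq\Z/2$ acts on $M$ by the torsor action, so $F\sslash\Deck(p)\simeq\ast$ and (1) and (2) hold. Yet $M$ has no global section, so $p$ admits no pointing over $x$ and (3) fails.

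The same issue enters your $(2)\Rightarrow(3)$ silently: the pointed classification applies only once $c$ (equivalently $p$) is pointed. The paper's proof obtains $e$ from condition (3), where the pointing is part of the data of the correspondence. Its remark in $(2)\Rightarrow(3)$ that $E$ is canonically pointed relies on the same unstated pointing of $c$. So in a general $\infty$-topos, the pointability of $p$ (a global point of $F$) must be assumed when passing from (1) or (2) to (3); it cannot be derived as you attempt.
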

\begin{proof}
    $(1\Leftrightarrow 2)$ Using \cref{theorem: action of Deck is n-free}, we infer that $p$ is normal if and only if the fiberwise evaluation $\infty$-action $\Deck(p)\times X\curvearrowright E$ in $\slice{\E}{X}$ is $n$-free and $n$-transitive, i.e. regular. By \cref{theorem: free transitive regular action characterization}, this happens if and only if $E\sslash (\Deck(p)\times X)\simeq 1_X$ in the slice over $X$. By \cref{remark: identification of different quotients of Deck(p)-actions}, the quotient in the slice corresponds to the quotient $E\sslash\Deck(p)$ in $\E$.
    
    \medskip
$(2\Rightarrow 3)$ By hypothesis, there is a Cartesian square: 
\[\begin{tikzcd}
    E \arrow[r] \arrow[d, "p"'] \arrow[dr, phantom, "\lrcorner"{anchor=center, pos=0.125}, near start] & \ast \arrow[d] \\
    X \arrow[r] & {\B \Deck(p)}
\end{tikzcd}.\]
Since $\B \Deck(p)$ is $n$-truncated, the bottom map factors through the $n$-truncation $\B \Pi_n(X,x)$, yielding a pasting of pullback squares:
\[\begin{tikzcd}
    E \arrow[r, "\eta_E"] \arrow[d, "p"'] \arrow[dr, phantom, "\lrcorner"{anchor=center, pos=0.125}, near start] & {F\sslash\Pi_n(X,x)} \arrow[r] \arrow[d] \arrow[dr, phantom, "\lrcorner"{anchor=center, pos=0.125}, near start] & \ast \arrow[d] \\
    X \arrow[r, "\eta_X"'] & {\B \Pi_n(X,x)} \arrow[r] & {\B \Deck(p)}
\end{tikzcd},\]
where $\eta_X$ and $\eta_E$ are the $n$-truncation maps. Notice that $E$ is canonically pointed by the universal property of the pullback. Since it is $(n-1)$-connected, the quotient $F\sslash \Pi_n(X,x)$ is the $n$-truncation $\tau_nE$ and as such lives in $\E^{\sg{n-1},\sleq n}_\ast\simeq\Grp_{(1,n)}(\E)$. Thus, the quotient is the $n$-fold classifying object of an $n$-symmetric $1$-group: \[A\coloneq\Omega^n (F\sslash \Pi_n(X,x)).\]
The above diagram shows that $A\to \Pi_n(X,x)$ is normal with quotient $\Deck(p)$. By \cref{corollary: pointed n-1 connected coverings corresponds to subgroups of homotopy n-th group}, this pullback construction exactly means that $p$ corresponds to $A$, and by Theorem \ref{theorem: n-symmetric subgroups of the fundamental n-group are 1-subgroups of n-th homotopy group 2}, $A$ is normal in $\pi_n(X,x)$.

\medskip

    $(3 \Rightarrow 2)$
        The correspondence of \cref{corollary: pointed n-1 connected coverings corresponds to subgroups of homotopy n-th group} implies that $p$ is the pullback of the canonical composition $\B^n A\to\B^n\pi_n(X,x)\to\B\Pi_n(X,x)$. Using \cref{cor: group deck is the quotient of fundamental n-group}, there is a pasting of Cartesian squares:
        \[\begin{tikzcd}
    {E_A} \arrow[r, "\eta_{E_A}"] \arrow[d, "p_A"'] \arrow[dr, phantom, "\lrcorner"{anchor=center, pos=0.125}, near start] & {\B ^{n}A} \arrow[r] \arrow[d] \arrow[dr, phantom, "\lrcorner"{anchor=center, pos=0.125}, near start] & \ast \arrow[d] \\
    X \arrow[r, "\eta_X"'] & {\B \Pi_n(X,x)} \arrow[r] & {\B \Deck(p_A)}
\end{tikzcd},\]
        such that the composition $X\to \B \Deck(p_A)$ classifies the usual $\infty$-action $\Deck(p_A)\curvearrowright E_A$. This immediately implies that $E_A\sslash\Deck(p_A)\simeq X$ under $E$. Furthermore, because $\eta_X$ and $\eta_{E_A}$ are $n$-truncation maps, $E_A$ is $(n-1)$-connected, as desired: \[\tau_{n-1}E_A\simeq\tau_{n-1}(\tau_n E_A)\simeq\tau_{n-1}\B^{n}A\simeq\ast.\]
     
\medskip
   
Finally, assuming the third condition, we obtain a basepoint $e\colon \ast\to F\to E$ by the universal property of the pullback. Note that this basepoint is not canonical. By \cref{cor: induced action of pi_n-1 in the truncated setting and classifing object of the stabilizer}, we observe that $\B ^n\Stab_{\pi_n(X,x)}(e)\simeq \B ^nA$ as the former is defined as the unique factoring object of the following $(n-2)$-connected/$(n-2)$-truncated factorization:
\[\begin{tikzcd}
	\ast & F & {\B^nA\simeq F\sslash \Pi_n(X,x)\simeq\B\Pi_n(E,e)} \\
	& {\B ^n\Stab_{\pi_n(X,x)}(1_e)}
	\arrow[from=1-1, to=1-2]
	\arrow[from=1-1, to=2-2]
	\arrow[from=1-2, to=1-3]
	\arrow["\simeq"', from=2-2, to=1-3]
\end{tikzcd}.\]
The second map in the factorization is an equivalence since $\ast\to \B ^nA$ is $(n-2)$-connected. Moreover, $\Pi_n(E,e)\simeq\B^{n-1}\pi_n(E,e)$ since $E$ is $(n-1)$-connected.
\end{proof}
\begin{example}\label{example: universal n-covering is normal}
    The universal $n$-covering map $X\langle n\rangle\to X$ is normal since it corresponds to the trivial sub-$1$-group of $\pi_n(X,x)$.
\end{example}
Using the previous characterization of normal $n$-covering maps given in \cref{theorem: characterization normal n-covering maps}, we can classify them with normal sub-$1$-groups of $\pi_n(X,x)$.
\begin{corollary}\label{cor: classification normal n-covering maps}
    There is an equivalence of $\infty$-categories 
    \begin{equation*}
        \Cov^{\nor}_{\ast,n}(X)\simeq \Sub_1^{\nor}(\pi_n(X,x)).
    \end{equation*}
    Moreover when $n>1$ every pointed $(n-1)$-connected $n$-covering map is normal.
\end{corollary}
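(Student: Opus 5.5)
The plan is to restrict the equivalence of \cref{corollary: pointed n-1 connected coverings corresponds to subgroups of homotopy n-th group}, $\Cov^{\sg{n-1}}_{\ast,n}(X)\simeq\Sub_1(\pi_n(X,x))$, to full subcategories on both sides. On the left, $\Cov^{\nor}_{\ast,n}(X)$ denotes the full subcategory of pointed $(n-1)$-connected $n$-covering maps whose underlying map is normal. On the right, $\Sub_1^{\nor}(\pi_n(X,x))$ is the full subcategory of normal sub-$1$-groups. An equivalence of $\infty$-categories restricts to an equivalence between full subcategories as soon as it matches their objects up to equivalence. So it suffices to show that a pointed $(n-1)$-connected $n$-covering $p$ is normal if and only if its associated sub-$1$-group $A\to\pi_n(X,x)$ is normal. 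This is exactly the equivalence $(1)\Leftrightarrow(3)$ of \cref{theorem: characterization normal n-covering maps}.

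One check is needed: that theorem is stated for an $(n-1)$-connected covering, with (3) referring to the correspondence of \cref{corollary: pointed n-1 connected coverings corresponds to subgroups of homotopy n-th group}, so the pointing must not matter. Normality in \cref{def: n-normal covering map} depends only on the underlying map. On the group side, in the implication $(1)\Rightarrow(3)$ the sub-$1$-group $A=\Omega^n\tau_nE$ is built from the canonical pointing coming from the pullback. I would argue that a different basepoint of $E$ over $x$ gives an equivalent object of $\Sub_1(\pi_n(X,x))$. The covering is recovered as the pullback of $\B^nA\to\B\Pi_n(X,x)$, and $\B^nA\simeq\tau_nE$ is $(n-1)$-connected, so its pointings over the basepoint of $\B\Pi_n(X,x)$ are unique up to the relevant equivalence. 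I expect this basepoint bookkeeping to be the only mildly delicate point. If it resists, the fallback is to note that the statement of \cref{theorem: characterization normal n-covering maps} already asserts $A\simeq\pi_n(E,e)$ for any basepoint $e$, which shows that $A$ is independent of the pointing.

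For the second claim, take $n>1$. Then $\pi_n(X,x)$ is an $n$-symmetric $1$-group with $n\ge2$, so its underlying $1$-group is abelian by Eckmann--Hilton. Concretely, I would invoke \cref{prop: sufficiently symmetric sub n groups are normal} with the $1$-group truncation level and symmetry $k=n>1$. This says that any sub-$1$-group $A\to\pi_n(X,x)$, which lifts to a map in $\Grp_{(1,n)}(\E)$ via \cref{theorem: n-symmetric subgroups of the fundamental n-group are 1-subgroups of n-th homotopy group}, is normal. Hence $\Sub_1^{\nor}(\pi_n(X,x))=\Sub_1(\pi_n(X,x))$. Transporting this back through the two equivalences gives $\Cov^{\nor}_{\ast,n}(X)=\Cov^{\sg{n-1}}_{\ast,n}(X)$, so every pointed $(n-1)$-connected $n$-covering is normal.
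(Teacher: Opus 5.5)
Your reduction is the same as the paper's. You restrict \cref{corollary: pointed n-1 connected coverings corresponds to subgroups of homotopy n-th group} along $(1)\Leftrightarrow(3)$ of \cref{theorem: characterization normal n-covering maps}, then apply \cref{prop: sufficiently symmetric sub n groups are normal} for $n>1$.

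The gap is the direction $(3)\Rightarrow(1)$ for $n\geq2$: normality of $A$ in the abelian group $\pi_n(X,x)$ does not make $p_A$ normal. That direction rests on the implication ``$A\trianglelefteq\pi_n(X,x)\Rightarrow \B^{n-1}A\to\Pi_n(X,x)$ normal'' of \cref{theorem: n-symmetric subgroups of the fundamental n-group are 1-subgroups of n-th homotopy group 2}, which ignores the action of $\pi_1(X,x)$ on $\pi_n(X,x)$. In its proof, the map $\Pi_{n-1}(X,x)\times\B^n\pi_n(X,x)\to\B^n\pi_n(X,x)$ in the pushout of fibers is the monodromy action, not the projection. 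So $\B^nq$ and the projection are compatible, and $\varphi$ is defined, only when $\pi_1(X,x)$ acts trivially on $\pi_n(X,x)/A$.

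Here is why normality of $p_A$ really needs more. If $p_A$ is normal, then as in the step $(2\Rightarrow3)$, $\B^nA\to\B\Pi_n(X,x)\to\B\Deck(p_A)$ is a fiber sequence. Hence $A$ is the kernel of the $\pi_1(X,x)$-equivariant map $\pi_n(X,x)\to\pi_n(\B\Deck(p_A))$, so $A$ must be $\pi_1(X,x)$-invariant. Abelianness of $\pi_n$ (Eckmann--Hilton) gives no such invariance.

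A concrete counterexample: take $\E=\Sp$, $X=S^1\vee S^2$ and $n=2$. Then $\pi_2(X,x)\cong\Z[t^{\pm1}]$, with the generator $t$ of $\pi_1$ acting by multiplication; take $A=\Z\cdot 1$. The pointed $1$-connected $2$-covering $p_A\colon E_A\to X$ factors through the universal cover $X\langle1\rangle\simeq\bigvee_{\Z}S^2$, and its fiber is $F\simeq\Z\times\B(\Z[t^{\pm1}]/A)$. A deck transformation of $p_A$ must cover a translation $t^k$ of $X\langle1\rangle$. It therefore identifies $E_A$ with the covering of $X\langle1\rangle$ attached to $t^kA$. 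Since $t^kA\neq A$ for $k\neq0$, we get $\Deck(p_A)\simeq\B(\Z[t^{\pm1}]/A)$, which is connected. But $\pi_0F\cong\Z$, so the action on $F$ is not even $1$-transitive.

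Thus $p_A$ is a pointed $1$-connected $2$-covering that is not normal, and $A\in\Sub_1^{\nor}(\pi_2(X,x))$ has no normal counterpart. Both the second claim and the stated restriction fail. What the cited arguments do support is $\Cov^{\nor}_{\ast,n}(X)\simeq\Sub^{\nor}_{(1,n)}(\Pi_n(X,x))$. This agrees with your target for $n=1$, or when $\pi_1$ acts trivially on $\pi_n$.

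The same monodromy also breaks your basepoint step. Moving $e$ to the $k$-th component of $F$ replaces $A$ by $t^kA$, so $(n-1)$-connectedness of $\tau_nE$ does not make the subgroup independent of the pointing. Your fallback $A\simeq\pi_n(E,e)$ only identifies abstract groups, not subobjects of $\pi_n(X,x)$.
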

\begin{proof}
    By \cref{corollary: pointed n-1 connected coverings corresponds to subgroups of homotopy n-th group}, there is an equivalence of $\infty$-categories 
    \begin{equation*}
        \Cov_{\ast,n}^{\sg{n-1}}(X)\simeq \Sub_1\left(\pi_n\left(X,x\right)\right)
    \end{equation*}
    Moreover this equivalence restricts through the subcategories of normal objects. Indeed by \cref{theorem: characterization normal n-covering maps}, normal $n$-covering maps correspond precisely to normal $n$-symmetric sub-$1$-groups of $\pi_n(X,x)$. The $n$-th homotopy group $\pi_n(X,x)$ is an $n$-symmetric $1$-group, so when $n>1$ its sub-$1$-groups are normal by \cref{prop: sufficiently symmetric sub n groups are normal}, which proves the second claim.
\end{proof}
Lastly, we obtain the identification of the $n$-group of deck transformations of a normal $n$-covering map.
\begin{corollary}\label{corollary: group deck of a normal covering is a quotient of fundamental group}
    Let $(X,x)\in\E_\ast$ be a pointed connected object, and let $p\colon E\to X$ be a normal $n$-covering map. Then $\B^{n-1}\pi_n(E,e)\to \Pi_n(X,x)$ is a normal $n$-group morphism, and there is an equivalence of $n$-groups:
    \[\Deck(p)\simeq \Pi_n(X,x)\sslash \B^{n-1}\pi_n(E,e).\]
\end{corollary}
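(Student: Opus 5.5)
The plan is to reduce directly to \cref{theorem: characterization normal n-covering maps} and \cref{cor: group deck is the quotient of fundamental n-group}. Since $p$ is normal, condition (1) of \cref{theorem: characterization normal n-covering maps} holds. That theorem therefore supplies two things: a basepoint $e\colon\ast\to F\to E$ making $p$ pointed, and a normal sub-$1$-group $A\hookrightarrow\pi_n(X,x)$ corresponding to $p$ under \cref{corollary: pointed n-1 connected coverings corresponds to subgroups of homotopy n-th group}. It also gives the identification $A\simeq\pi_n(E,e)$. With this basepoint we have $p\simeq p_A$ as pointed $n$-coverings.

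The first step is to identify the $n$-group morphism $\B^{n-1}\pi_n(E,e)\to\Pi_n(X,x)$ induced by $p$ with the $n$-symmetric sub-$1$-group $\B^{n-1}A\to\Pi_n(X,x)$ attached to $A$ by \cref{theorem: n-symmetric subgroups of the fundamental n-group are 1-subgroups of n-th homotopy group}. On classifying objects, the induced map is $\tau_n p\colon\tau_nE\to\tau_nX$. Since $E$ is $(n-1)$-connected, we have $\tau_nE\simeq\B^n\pi_n(E,e)=\B\Pi_n(E,e)$.

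On the other side, \cref{remark: decomposition of n-covering with the universal n-cover} exhibits $p_A$ as a pullback of $\B^nA\to\B^n\pi_n(X,x)\xrightarrow{\varepsilon}\B\Pi_n(X,x)$ along $\eta_X$. By \cref{corollary: Cartesian square of truncation maps}, the square formed by $p$, $\tau_n p$ and the $n$-truncation maps is Cartesian. Combining these, $\tau_np$ is identified with the map $\B^nA\to\B\Pi_n(X,x)$, and this identification uses $A\simeq\pi_n(E,e)$. This is exactly the compatibility asserted in the final clause of \cref{theorem: characterization normal n-covering maps}.

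Normality of $\B^{n-1}\pi_n(E,e)\to\Pi_n(X,x)$ then follows. Since $A$ is normal in $\pi_n(X,x)$, Theorem~\ref{theorem: n-symmetric subgroups of the fundamental n-group are 1-subgroups of n-th homotopy group 2} shows that $\B^{n-1}A\to\Pi_n(X,x)$ is normal. Alternatively, normality can be read off directly from the pasting diagram in the proof of $(2\Rightarrow3)$: there the right square shows $\tau_nE\to\B\Pi_n(X,x)$ is the fiber of $\B\Pi_n(X,x)\to\B\Deck(p)$.

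Finally, apply \cref{cor: group deck is the quotient of fundamental n-group} to $p_A\simeq p$. This gives $\Deck(p)\simeq\Pi_n(X,x)\sslash\B^{n-1}A\simeq\Pi_n(X,x)\sslash\B^{n-1}\pi_n(E,e)$. A shortcut is also available: the Cartesian square $E\to\ast$ over $X\to\B\Deck(p)$ from condition (2) lets one apply \cref{theorem: Deck p is loop space of the base B} directly. Either route also shows that the quotient structure is the one induced by $\Deck(p)$, which is well defined by \cref{cor: quotient of n-symmetric sub-1-groups are unique}.

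The main obstacle is the first step. One must make sure that the morphism induced by $p$ on fundamental $n$-groups agrees, as a map over $\B\Pi_n(X,x)$ and not merely abstractly, with the sub-group $\B^{n-1}A\to\Pi_n(X,x)$ used in \cref{cor: group deck is the quotient of fundamental n-group}. I would handle this by working entirely at the level of the pullback squares along $\eta_X$ and using uniqueness of $n$-truncations.
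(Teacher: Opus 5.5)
Your proposal is correct and follows the paper's proof. The paper likewise uses \cref{theorem: characterization normal n-covering maps} to obtain the basepoint $e$ and to identify $p$ with $p_A$ for the normal sub-$1$-group $A\simeq\pi_n(E,e)$, then concludes by \cref{cor: group deck is the quotient of fundamental n-group}; your care in matching $\tau_np$ with $\B^nA\to\B\Pi_n(X,x)$ over $\B\Pi_n(X,x)$ just makes explicit what the paper leaves implicit. One small correction to your ``shortcut'': applying \cref{theorem: Deck p is loop space of the base B} with $G=\Deck(p)$ only gives $\Deck(p)\simeq\Deck(p)$. The real content there is the factorization through $\eta_X$, which yields the fiber sequence $\tau_nE\to\B\Pi_n(X,x)\to\B\Deck(p)$ that you had already noted.
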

\begin{proof}
    By \cref{theorem: characterization normal n-covering maps}, $E$ has a basepoint $e\colon \ast\to E$, and $p$ corresponds to the normal sub-$1$-group $\pi_n(E,e)$ of $\pi_n(X,x)$. We conclude by \cref{cor: group deck is the quotient of fundamental n-group}. 
\end{proof}
\subsection{Normalizers}\label{subsection: normalizers}
Let $f\colon A\to G$ be a map of $\infty$-groups. We seek a suitable definition for its normalizer $\N(f)\in\Grp(\E)$ with a factorization \[A\xrightarrow{f'} \N(f)\to G,\] where $f'$ is a normal map. Guided by classical group theory, consider the case of an ordinary subgroup $A\leq G$. There is a canonical isomorphism:
\[\Aut_G(G/A)\cong \N_G(A)/A,\]
identifying the $G$-equivariant bijections of the coset space with the quotient of the normalizer $\N _G(A)$ by $A$. Passing to classifying spaces, this implies that the normalizer sits inside a fiber sequence $\B A\to\B \N_G(A)\to\B\Aut_G(G/A)$, defining an $\Aut_G(G/A)$-action on $\B A$. Consequently, $\B \N_G(A)$ can be recovered completely as the quotient $\B A\sslash\Aut_G(G/A)$. 

To construct the normalizer of $f$, it is enough to define the $\infty$-action $\Aut_G(G\sslash A)\curvearrowright\B A$. Recall from Example~\ref{example: group actions: action of a map A-> G} that the $G$-action on $G\sslash A$ is classified by $\B f \in \slice{\E}{\B G} \simeq \Act_G(\E)$. By \cref{def: equivariant autoequivalences objects}, the $\infty$-group of $G$-equivariant autoequivalences of $G\sslash A$ is precisely the deck transformation $\infty$-group of this classifying map:
\[\Aut_G(G\sslash A)\simeq\prod_{\B G}\left(\Aut(G\sslash A)\sslash G\right)\simeq\prod_{\B G}\Aut_{/\B G}(\B A)\simeq\Deck(\B f).\]
As established in \cref{subsection: two persepctives on the action}, there is a canonical $\infty$-action of $\Deck(\B f) \simeq \Aut_G(G\sslash A)$ on $\B A$. This enables us to define the normalizer directly via the corresponding quotient.
\begin{definition}
    The \textit{normalizer $\infty$-group} of $f$, denoted $\N(f)$, is defined by its classifying object as the quotient: 
    \[\B \N(f)\coloneq\B A\sslash\Deck(\B f),\]
    which canonically inherits the basepoint of $\B A$.
\end{definition}
This object is indeed connected since it is the target of an effective epimorphism from a connected object:
\[\begin{tikzcd}
    \B A \arrow[r, "\B f'", two heads] \arrow[d] \arrow[dr, phantom, "\lrcorner"{anchor=center, pos=0.125}, near start] & {\B\N(f)} \arrow[d] \\
    \ast \arrow[r, two heads] & {\B\Deck(\B f)}
\end{tikzcd}.\]
As desired, this construction provides a normality datum for the induced $\infty$-group map $A\xrightarrow{f'}\N(f)$ with quotient $\Deck(\B f)\simeq\Aut_G(G\sslash A)$. To further justify this definition, we establish the following theorem:
\begin{theorem}\label{theorem: properties normalizers}
Let $f\colon A\to G$ be an $\infty$-group map in an $\infty$-topos $\E$.
\begin{enumerate}
    \item There exists a factorization $A\xrightarrow{f'}\N(f)\xrightarrow{\iota} G$ of $f$ in $\Grp(\E)$, where $f'$ is a normal map.
    \item Let $A\xrightarrow{g} K\to G$ be any factorization of $f$ in $\Grp(\E)$ such that $g$ is a normal map. Then there exists an $\infty$-group map $\varphi\colon K\to\N(f)$ such that the following diagram commutes:
\[\begin{tikzcd}[row sep = tiny]
	&& K \\
	A &&&& G \\
	&& {\N(f)}
	\arrow[from=1-3, to=2-5]
	\arrow[dashed,"\varphi", from=1-3, to=3-3]
	\arrow["g", from=2-1, to=1-3]
	\arrow["f'"', from=2-1, to=3-3]
	\arrow[from=3-3, to=2-5, "\iota"']
\end{tikzcd}.\]
\end{enumerate}
\end{theorem}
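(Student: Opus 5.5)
The plan is to work entirely at the level of classifying objects, using $\Grp(\E)\simeq\E_\ast^{\sg0}$ (\cref{theorem: group objects are pointed connected objects}) and $\Act_G(\E)\simeq\slice{\E}{\B G}$ (\cref{cor: characterization of pointed action}). The data to construct are a pointed map $\B\iota\colon\B\N(f)=\B A\sslash\Deck(\B f)\to\B G$ through which $\B f$ factors, and, for part (2), a pointed map $\B K\to\B\N(f)$ over $\B G$ and under $\B A$.

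For part (1), the normality of $f'$ is immediate from the defining Cartesian square: the fiber sequence $\B A\to\B\N(f)\to\B\Deck(\B f)$ is exactly a normality datum for $f'$. To obtain $\B\iota$, recall from \cref{subsection: two persepctives on the action} (first perspective, applied to $p=\B f\colon\B A\to\B G$) that the quotient of the evaluation action is
\[\B A\sslash\Deck(\B f)\simeq \B A\sslash(\Deck(\B f)\times\B G),\]
the quotient computed in $\slice{\E}{\B G}$. This quotient lives over $\B\Deck(\B f)\times\B G$. Composing with the projection to $\B G$ gives $\B\iota$, and precomposing with $\B f'$ recovers $\B f$, since the whole diagram \eqref{diagram: defining the action Deck x X on E in the slice} lies over $\B G$. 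The map $\B\iota$ is pointed because the basepoint of $\B\N(f)$ comes from $\B A$ and $\B f$ is pointed.

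For part (2), suppose we have a fiber sequence $\B A\xrightarrow{\B g}\B K\to\B Q$ together with a pointed map $\B K\to\B G$ factoring $\B f$. The key idea is to produce a group map $Q\to\Deck(\B f)$ that respects the actions on $\B A$, and then to pass to quotients:
\[\B K\simeq \B A\sslash Q\to\B A\sslash\Deck(\B f)=\B\N(f).\]
By \cref{corollary: deck transformations as Omega X-equivariant autoequivalences} (with base $\B Q$), $Q$ acts on $\B A$, and these automorphisms are automatically compatible with the map to $\B G$, since $\B A\to\B G$ factors through the quotient $\B K$. Concretely, regard $\B K\to\B Q\times\B G$ as an object of $\slice{\E}{\B G}$. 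Its fiber over the base point of $\B Q$ is $\B A\to\B G$, so it is classified by a pointed map $\B Q\to\U^{\B G}$ landing in the component of $\corner{\B f}^\sharp$. By \cref{prop: image of a map into the fibered universe is deck transformation}, that component is $\B\Deck(\B f)$. Since $\B Q$ is connected, the classifying map factors as $\B Q\to\B\Deck(\B f)$, pointed because the point goes to $\corner{\B f}^\sharp$.

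Pulling back the universal family shows that $\B K\to\B Q\times\B G$ is the pullback of $\B\N(f)\to\B\Deck(\B f)\times\B G$ along $(\B\psi,1)$, where $\B\psi\colon\B Q\to\B\Deck(\B f)$ is the map just constructed. The adjoint description via $(f^\flat,\pi_2)$ in \cref{prop: image of a map into the fibered universe is deck transformation} is what makes this identification work. The top map of this pullback square is the desired $\B\varphi$. It lies over $\B G$ by construction, and it is compatible with the inclusions of $\B A$ because both fibers over the base points are identified with $\B A$.

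I expect the main obstacle to be pointedness and coherence bookkeeping. Specifically, one must check that $\B\varphi$ is a pointed map, so that it really defines a group map, and that the two triangles commute as pointed maps. This should follow from the fact that all constructions take place in $\E_\ast$ and from uniqueness of images, exactly as at the end of the proof of \cref{theorem: Deck p is loop space of the base B}.
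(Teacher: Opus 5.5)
Your proposal is correct and follows essentially the same route as the paper. In part (1) you read $\B\iota$ off the quotient of $\Deck(\B f)\times\B G\curvearrowright\B A$ in $\slice{\E}{\B G}$; in part (2) you classify the $\B A$-bundle $\B K\to\B Q\times\B G$ over $\B G$, pass to the adjoint pointed map $\B Q\to\B\Deck(\B f)$, and take $\B\varphi$ as the induced map of pullbacks. The only difference is cosmetic: you classify through $\U^{\B G}$ and \cref{prop: image of a map into the fibered universe is deck transformation}, whereas the paper classifies through $\B\Aut_{/\B G}(\B A)$ and the counit $\varepsilon$, and \cref{lemma: computation of adjoints maps in universe} shows these agree.
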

\begin{proof}
        $(1)\,$ Recall from \eqref{diagram: defining the action Deck x X on E in the slice} that $\B\N(f)$ is obtained as the quotient of $\B\Deck(\B f)\times\B G\curvearrowright\B A$ in the slice over $\B G$. This yields a quotient map $\B A\to\B\N(f)$ over $\B G$. It has been established in the construction that $f'$ is normal.
        
        $(2)\,$ Fix a choice of normality datum $\B A\xrightarrow{\B g}\B K\to \B (K\sslash A)$. We will first construct a map $\Tilde{\varphi}:K\sslash A\to \N(f)\sslash A$, and subsequently show that it lifts to a map of $\infty$-groups $\varphi\colon K\to \N(f)$. Notice that the canonical map $\B K\to \B (K\sslash A)\times \B G$ is a $\B A$-bundle in the slice over $\B G$, since its fiber is $\B A$:
\[\begin{tikzcd}[column sep=large]
    \B A \arrow[r] \arrow[d, "\B f"'] \arrow[dr, phantom, "\lrcorner"{anchor=center, pos=0.125}, near start] & \B K \arrow[d] \\
    \B G \arrow[r, "{(\ast, 1_{\B G})}"] & {\B (K\sslash A)\times \B G}
\end{tikzcd}.\]
It is hence classified by a uniquely determined map into the universal $\B A$-bundle:
\begin{align*}
    (\B \Tilde{\varphi})^\flat:\B (K\sslash A)\times \B G\to \B \Aut_{/\B G}(\B A)\qquad\text{in } \slice{\E}{\B G}.
\end{align*} 
The adjoint to this classifying map is \[\B \Tilde{\varphi}:\B (K\sslash A)\to \B \Deck(\B f)\simeq \B(\N(f)\sslash A).\] We now assemble these maps to compute the following pasting of pullbacks:
\begin{equation}\label{theorem: properties normalizers eq 1}
    \begin{tikzcd}[column sep=large]
        \B A \arrow[dr, phantom, "\lrcorner"{anchor=center, pos=0.125}, near start]\arrow[r] \arrow[d, "\B f"'] & \B K\arrow[dr, phantom, "\lrcorner"{anchor=center, pos=0.125}, near start] \arrow[r, "\B \varphi"] \arrow[d] & {\B \N(f)}\arrow[dr, phantom, "\lrcorner"{anchor=center, pos=0.125}, near start] \arrow[r] \arrow[d] & {\B A\sslash \Aut_{/\B G}(\B A)} \arrow[d] \\
        \B G \arrow[r, "{\ast\times \B G}"] & {\B (K\sslash A)\times \B G} \arrow[r, "{\B \Tilde{\varphi}\times \B G}"] \arrow[rr, "{(\B \Tilde{\varphi})^\flat}"', curve={height=18pt}] & {\B (\N(f)\sslash A)\times \B G} \arrow[r, "\varepsilon"] & {\B \Aut_{/\B G}(\B A)}
    \end{tikzcd}.
\end{equation}
By definition, the $\infty$-action $\Deck(\B f)\times \B G\curvearrowright \B A$ is classified by the counit $\varepsilon$, whose quotient is $\N(f)$. This confirms that the right-most square is Cartesian. The pasting of the right and middle square is Cartesian by definition of the classifying map $(\B \Tilde{\varphi})^\flat$. By the pasting law for Cartesian squares, the induced map between the pullbacks yields the desired map $\B\varphi:\B K\to\B\N(f)$. This map satisfies the required properties: the commutativity of the right triangle follows from inspecting the central square, while the commutativity of the left triangle follows from the top left composition. 
\end{proof}
\begin{remark}
    In \cite{FarjounSegev_NormalClosureAndInjectiveNormalizers}, they characterize normal $1$-group homomorphisms as those $f\colon A\to G$ such that the morphism $\iota\colon \N(f)\to G$ admits a section $s$. In the context of $\infty$-groups, the forward direction is clear from \cref{theorem: properties normalizers}. However the converse is not. One proof strategy would be to show that the composition \[\B G\xrightarrow{\B s} \B \N(f)\to \B \Aut_\ast(\B A)\simeq \B \Aut_{\Grp}(A)\] (which we can construct), defining an $G$-action by automorphisms on $A$, is part of the structure of a sensible notion of \textit{$\infty$-crossed module structure} on $f\colon A\to G$. This could induce a compatible $\infty$-group structure on the quotient $G\sslash A$ (see Remark~\ref{remark: on normal maps 3}).
\end{remark}
When restricting to $n$-symmetric sub-$1$-groups, normalizers behave exactly as ordinary subgroups.
\begin{corollary}\label{cor: characterization normality of n symmetric sub 1 groups with normalizers}
    Let $f\colon \B ^{n-1}A\to G$ be an $n$-symmetric sub-$1$-group of an $n$-group $G$. Then $f$ is normal if and only if the canonical map $\iota\colon\N(f)\to G$ is an equivalence.
\end{corollary}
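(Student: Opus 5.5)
The converse direction is immediate: if $\iota\colon\N(f)\to G$ is an equivalence, then by \cref{theorem: properties normalizers}(1) the map $f\simeq\iota\circ f'$ is the normal map $f'$ followed by an equivalence. Transporting the normality datum $\B\B^{n-1}A\to\B\N(f)\to\B\Deck(\B f)$ along $\B\iota$ therefore exhibits $f$ as normal.

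For the forward direction, assume $f$ is normal, with a fiber sequence $\B^nA\xrightarrow{\B f}\B G\xrightarrow{\B\pi}\B Q$. The idea is to compute $\B\N(f)=\B^nA\sslash\Deck(\B f)$ directly and compare it with $\B G$. I would first observe that $\B f\colon\B^nA\to\B G$ is an $(n-1)$-truncated map whose domain is $(n-1)$-connected and pointed, and that $\B G$ is pointed and connected. Hence $\B f$ is a pointed $(n-1)$-connected $n$-covering map over $\B G$. Since it sits in the Cartesian square with $\ast\to\B Q$, \cref{theorem: Deck p is loop space of the base B} gives an equivalence $\Deck(\B f)\simeq Q$. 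This equivalence respects the actions on $\B^nA$, so the two classifying maps $\B G\to\B Q$ and $\B G\to\B\Deck(\B f)$ agree up to this identification. Alternatively, one can note that $\B f$ corresponds to a normal sub-$1$-group of $\pi_n(\B G)$ by Theorem~\ref{theorem: n-symmetric subgroups of the fundamental n-group are 1-subgroups of n-th homotopy group 2} and \cref{remark: n symmetric sub 1 group of an n group G}, and then invoke \cref{theorem: characterization normal n-covering maps}.

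Next I would use \cref{theorem: characterization normal n-covering maps} $(3\Rightarrow2)$, applied with base $X=\B G$ (pointed and connected), to get an equivalence $\B^nA\sslash\Deck(\B f)\simeq\B G$ under $\B^nA$. Equivalently, the quotient of the evaluation action $\Deck(\B f)\curvearrowright\B^nA$ is $\B G$, because the action is classified by the fiber sequence $\B^nA\to\B G\to\B\Deck(\B f)$. This yields a pointed equivalence $\B\N(f)\simeq\B G$.

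The main obstacle is checking that this equivalence is exactly $\B\iota$, that is, the canonical map $\B\N(f)\to\B G$ of \cref{theorem: properties normalizers}(1). That map is the quotient map of the fiberwise action $\Deck(\B f)\times\B G\curvearrowright\B^nA$ in $\slice{\E}{\B G}$, composed with the projection to $\B G$. I would appeal to \cref{remark: identification of different quotients of Deck(p)-actions}, which identifies $\B^nA\sslash(\Deck(\B f)\times\B G)$ with $\B^nA\sslash\Deck(\B f)$ compatibly with the structure map to $\B G$. Under this identification, the equivalence from $(1\Leftrightarrow2)$ in \cref{theorem: characterization normal n-covering maps} comes from $\B^nA\sslash(\Deck(\B f)\times\B G)\simeq 1_{\B G}$ in the slice, and that slice equivalence is precisely the structure map. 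Therefore $\B\iota$ is an equivalence, and by \cref{theorem: group objects are pointed connected objects} so is $\iota$.
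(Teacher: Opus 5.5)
Your proof is correct, but your forward direction takes a different route from the paper's.

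The paper's route goes through the normalizer's universal property. It applies \cref{theorem: properties normalizers}(2) to the factorization $f=1_G\circ f$ to get a section $\varphi\colon G\to\N(f)$ of $\iota$. It then checks that the induced map on quotients $\B\tilde\varphi\colon\B(G\sslash\B^{n-1}A)\to\B\Deck(\B f)$ solves the lifting problem that characterizes the canonical equivalence of \cref{cor: quotient of n-symmetric sub-1-groups are unique}. Hence $\varphi$ is an equivalence, and with it $\iota$.

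You bypass the universal property entirely. You identify $\B\iota$ with the structure map $\B^nA\sslash(\Deck(\B f)\times\B G)\to\B G$ of the fiberwise quotient in $\slice{\E}{\B G}$, and you show this quotient is terminal. The corollary then becomes \cref{theorem: characterization normal n-covering maps} read for the covering $\B f$ over $X=\B G$. Your route is shorter and avoids comparing $\B\tilde\varphi$ with $\B\tilde\psi$. It also makes plain that ``$\iota$ is an equivalence'' just means ``the fiberwise deck action on $\B^nA$ is regular''. The paper's route stays inside the normalizer formalism and exhibits the section supplied by the universal property as an inverse of $\iota$, in the spirit of Farjoun--Segev's section criterion.

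One step needs tightening. The equivalence $\B^nA\sslash\Deck(\B f)\simeq\B G$ under $\B^nA$ that you obtain from $(3\Rightarrow2)$ does not by itself pin down $\B\iota$. For $A=1$, every pointed self-map $h$ of $\B G$ satisfies $h\circ\B f\simeq\B f$. So the whole weight of the argument rests on the slice-level statement, and you should state it directly:
\begin{enumerate}
\item $\B f$ is a normal covering by \cref{theorem: characterization normal n-covering maps} $(3\Rightarrow1)$.
\item The fiberwise action is $n$-free by \cref{theorem: action of Deck is n-free}, hence regular.
\item Therefore $\B\iota$ is an equivalence by \cref{theorem: free transitive regular action characterization} applied in $\slice{\E}{\B G}$.
\end{enumerate}
Alternatively, use that the proof of \cref{theorem: Deck p is loop space of the base B} compares the two actions in the slice itself. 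In the present notation it gives $\B\Deck(\B f)\times\B G\simeq\B Q\times\B G$ over $\U\times\B G$, and the $Q\times\B G$-action has quotient $1_{\B G}$.
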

\begin{proof}
    Suppose first that $f$ is normal. By the second part of \cref{theorem: properties normalizers}, the map $\iota$ has a section $\varphi\colon G\to \N(f)$, which arises as the pullback of the map \[\B \tilde{\varphi}\times \B G\colon\B (G\sslash \B^{n-1}A)\times \B G\to\B (\N(f)\sslash \B^{n-1}A)\times\B G.\] We claim that $\B\tilde{\varphi}$ is an equivalence, implying that $\B \varphi$, and hence $\B\iota$, is one as well. By \cref{cor: quotient of n-symmetric sub-1-groups are unique}, there is a canonical equivalence \[\B\Tilde{\psi}\colon \B(G\sslash\B^{n-1}A)\xrightarrow{\simeq}\B(\N(f)\sslash \B^{n-1}A),\] uniquely determined by the property that the following diagram commutes:
\[\begin{tikzcd}
	{\B G} & {\B\Deck(\B f)\times\B G} \\
	{\B(G\sslash \B^{n-1}A)\times\B G} & {\U^{\sleq n-1}}
	\arrow["{\ast\times \B G}", from=1-1, to=1-2]
	\arrow["{\ast\times\B G}"', from=1-1, to=2-1]
	\arrow[from=1-2, to=2-2]
	\arrow["\simeq","\B\Tilde{\psi}\times\B G"', dashed, from=2-1, to=1-2]
	\arrow[from=2-1, to=2-2]
\end{tikzcd},\]
remembering that $\B(\N(f)\sslash \B^{n-1}A)\coloneq\B\Deck(\B f)$ by definition. The map $\B\tilde{\varphi}\times\B G$ is indeed a solution to this lifting problem by the construction in \eqref{theorem: properties normalizers eq 1}, noting that \[\B \Aut_{/\B G}(\B A)\hookrightarrow\U^{\sleq {n-1}}\times\B G\to\U^{\sleq {n-1}}.\]
This ensures that $\B\Tilde{\psi}\simeq\B\Tilde{\varphi}$, as desired.

Conversely, suppose that $\iota$ is an equivalence. Then the composition \[f\colon \B^{n-1}A\xrightarrow{f'} \N(f)\xrightarrow[\simeq]{\iota}G\]
is a normal map because $f'$ is normal by construction (\cref{theorem: properties normalizers}).
\end{proof}
The link between $n$-covering maps and normalizers is as follows.
\begin{theorem}\label{theorem: Deck p of a covering is a quotient of the normalizer}
    Let $\E$ be an $\infty$-topos, let $(X,x)$ be a pointed connected object, let $p\colon E\to X$ be a connected pointed $n$-covering map in $\E_\ast$, and write $p_n\colon \Pi_n(E,e)\to\Pi_n(X,x)$ for the induced morphism of $n$-groups. Then there is an equivalence of $n$-group objects: 
    \begin{equation*}
        \Deck(p)\simeq \N(p_n)\sslash \Pi_n(E,e).
    \end{equation*}
\end{theorem}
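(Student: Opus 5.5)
The plan is to reduce the statement to two identifications of deck $n$-groups. By definition, $\B\N(p_n)=\B\Pi_n(E,e)\sslash\Deck(\B p_n)$. From the construction of the normalizer, the map $p_n'\colon\Pi_n(E,e)\to\N(p_n)$ is normal, and its quotient is $\Deck(\B p_n)$. Concretely, there is the Cartesian square
\[\B\Pi_n(E,e)\to\B\N(p_n)\to\B\Deck(\B p_n)\]
with fiber $\B\Pi_n(E,e)$ over the point. Hence $\N(p_n)\sslash\Pi_n(E,e)\simeq\Deck(\B p_n)$ as $n$-groups, where $\B p_n=\tau_n p\colon\tau_nE\to\tau_nX$ restricted to connected components. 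Since $E$ and $X$ are connected, this is just $\tau_np$. It therefore suffices to prove
\[\Deck(p)\simeq\Deck(\tau_np\colon\tau_nE\to\tau_nX).\]

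For this step I would reuse the argument of \cref{prop: group deck of a covering is pi_1-equivariant automorphism of the fiber}. Since $p$ is $(n-1)$-truncated, \cref{corollary: Cartesian square of truncation maps} gives a Cartesian square with $p$ on the left and $\tau_np$ on the right, and horizontal truncation maps $\eta_E,\eta_X$. Note that $\tau_nE\simeq F\sslash\Pi_n(X,x)$ by \cref{example: action of Pi_n(X) on the fiber F}, and $\tau_nX=\B\Pi_n(X,x)$. Base change along $\eta_X$ preserves automorphism objects and classifying objects. This yields a Cartesian square
\[\B\Aut_{/X}(E)\to\B\Aut_{/\tau_nX}(\tau_nE)\]
over $\eta_X$. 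Its top map is $n$-connected, as a base change of $\eta_X$. Its target is $n$-truncated by \cref{truncation level of the object of equivalences}, because $\tau_np$ is $(n-1)$-truncated. So both horizontal maps are $n$-truncations, and \cref{prop: cartesian square of n-truncation implies equivalence on the objects of sections} identifies the objects of sections
\[\prod_X\B\Aut_{/X}(E)\simeq\prod_{\tau_nX}\B\Aut_{/\tau_nX}(\tau_nE).\]
The equivalence sends the canonical basepoint to the canonical basepoint. Taking the images of these basepoints, via \eqref{in text: eq definition classifying object of Deck(p)}, gives a pointed equivalence $\B\Deck(p)\simeq\B\Deck(\tau_np)$. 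In fact, that corollary already gives $\Deck(p)\simeq\Aut_{\Pi_n(X,x)}(F)$, and the same computation identifies $\Aut_{\Pi_n(X,x)}(F)\simeq\Deck(\tau_np)$.

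The point I expect to be the main obstacle is matching $\B p_n$ precisely with $\tau_np$ as objects of $\slice{\E}{\B\Pi_n(X,x)}$, including basepoints. This requires $\B\Pi_n(E,e)=\tau_nE\langle0\rangle=\tau_nE$ (using connectivity of $E$) and checking that the quotient identification in the normalizer construction is compatible with the canonical $\Deck$-action used in \cref{subsection: two persepctives on the action}. Both follow from the pointed functoriality of $\tau_n$ and of \cref{remark: classifying objects of homotopy groups fundamental n-groups}. One also needs that the normalizer quotient is computed with respect to the normality datum $\B\N(p_n)\to\B\Deck(\B p_n)$ given by construction, so no uniqueness issue arises. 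Once this is settled, the two equivalences compose to $\Deck(p)\simeq\Deck(\B p_n)\simeq\N(p_n)\sslash\Pi_n(E,e)$.
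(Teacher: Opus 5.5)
Your proposal is correct and follows essentially the paper's proof. The paper identifies $\N(p_n)\sslash\Pi_n(E,e)$ with $\Deck(\B p_n)$ by construction of the normalizer. It then uses the Cartesian square of truncation maps to obtain $\Deck(p)\simeq\Aut_{\Pi_n(X,x)}(F)\simeq\Deck(\B p_n)$, by citing \cref{prop: group deck of a covering is pi_1-equivariant automorphism of the fiber} and \cref{corollary: deck transformations as Omega X-equivariant autoequivalences}. You instead rerun the sections argument from the proof of that corollary to compare $\Deck(p)$ with $\Deck(\tau_np)$ directly. This is the same computation, minus the detour through $\Aut_{\Pi_n(X,x)}(F)$.
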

\begin{proof}
    Let $F$ be the fiber of $p$ over the base point $x$. Since $E$ and $X$ are connected, \cref{corollary: Cartesian square of truncation maps} yields a Cartesian square:
\[
    \begin{tikzcd}
        E \ar[r, "\eta_E"] \ar[d,"p"'] \ar[dr, phantom, "\lrcorner"{anchor=center, pos=0.125}, near start]
        & {\B\Pi_n(E,e)} \ar[d,"\B p_n"] \\
        X \ar[r, "\eta_X"] & {\B\Pi_n(X,x)}
    \end{tikzcd},
    \]
showing that the fiber of $\B p_n$ is again $F$. By \cref{prop: group deck of a covering is pi_1-equivariant automorphism of the fiber} applied to $p$, and \cref{corollary: deck transformations as Omega X-equivariant autoequivalences} applied to $\B p_n$, we obtain a sequence of equivalences of $n$-groups: \[\Deck(p)\simeq \Aut_{\Pi_n(X,x)}(F)\simeq \Deck(\B p_n).\]
    By construction, the latter object is precisely the quotient $\N(p_n)\sslash \Pi_n(E,e)$. 
\end{proof}

\section{Examples}\label{section: examples}
We illustrate \cref{corollary: group deck of a normal covering is a quotient of fundamental group} by computing the internal $n$-group of deck transformations of normal $n$-covering maps across a range of $\infty$-topoi. The main input is \cref{cor: classification normal n-covering maps}: over a pointed base $(X,x)$, normal $n$-coverings correspond to normal subgroups $A\leq\pi_n(X,x)$, the normality being automatic for $n\geq2$. The covering attached to such an $A$ is the pullback of the $n$-truncation $X\to\B\Pi_n(X,x)$ along $\B^nA\to\B^n\pi_n(X,x)\xrightarrow{\varepsilon}\B\Pi_n(X,x)$, and its deck group is $\Deck(p)\simeq\Pi_n(X,x)\sslash\B^{n-1}A$.
We explore examples in homotopy types, presheaf $\infty$-topoi, sheaf $\infty$-topoi, and cohesive homotopy types. One major contrast with spaces arises in sheaf $\infty$-topoi: the internal deck group is computed by the same formula in every $\infty$-topos $\E$, while its external meaning varies with the cohomology of $\E$. To access these examples we use the characterization of $\infty$-topoi as left exact localizations of presheaf $\infty$-categories \cite[Theorem 6.1.0.6]{LurieHTT} rather than \cref{def: infinity topos}. We first record the two computations that recur throughout, then open with a non-example delimiting the scope of the theory.
\medskip

\textbf{Universal $n$-coverings.}
For a pointed object $(X,x)\in\E_\ast$, the universal $n$-covering $p_n\colon X\langle n\rangle\to X$ corresponds to the trivial subgroup $1\leq\pi_n(X,x)$. So its deck group is the full fundamental $n$-group:\[\Deck(p_n)\simeq\Pi_n(X,x).\]
\textbf{Eilenberg--Mac Lane objects.}
Let $\iota\colon A\to G$ be a sub-$1$-group of an $n$-symmetric $1$-group $G$ in $\E$. For $n=1$, the map $\B\iota\colon\B A\to\B G$ is a normal $1$-covering if and only if $\iota$ is normal, with $\Deck(\B\iota)\simeq G/A$. For $n\geq2$ the group $G$ is necessarily abelian, $\B^n\iota$ is automatically normal, and
\begin{equation}\label{example: eilenberg MacLane objects}\Deck(\B^n\iota)\simeq \B^{n-1}(G/A).
\end{equation}
\textbf{A non-example: parametrized spectra.}
The tangent $\infty$-category $p\colon \T_{\C}\to\C$ of a presentable $\infty$-category is the codomain fibration whose fiber at $X\in\C$ is the stabilization $\mathrm{Sp}(\slice{\C}{X})$ of the slice over $X$ \cite[§7.3.1]{LurieHigherAlgebra}. The objects of the latter category are spectrum parametrized over $X$. When $\C=\E$ is an $\infty$-topos, its tangent $\infty$-category $\T_\E$ is itself an $\infty$-topos, called of \textit{parametrized spectra} \cite{Hoyois_TopoiOfParametrizedObjects}.
In this $\infty$-topos, the theory yields nothing new: the stable directions of $\T_\E$ are independent of the truncated part. Concretely, the objects of $\T_\E$ are pairs $(X,E)$ with $X\in\E$ and $E\in\mathrm{Sp}(\slice{\Sp}{X})$ a parametrized spectrum over $X$, as explained in a model categorical framework in \cite{HarpazNuitenPrasma_TangentBundleModelCategory}. The terminal object is $(\ast,0)$, and the fibration $p\colon \T_\E\to\E$, $(X,E)\mapsto X$, is both left and right adjoint to the zero section $s_0\colon X\mapsto(X,0_X)$, where $0_X$ is the zero object of $\mathrm{Sp}(\slice{\E}{X})$. We claim that the tangent $\infty$-category of homotopy types $\T_{\Sp}$ has no new interesting truncated object.
\begin{prop}
    An object $(X,E)\in \T_{\Sp}$ is $n$-truncated if and only if $X$ is an $n$-truncated space and $E\simeq 0_X$.
\end{prop}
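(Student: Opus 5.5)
The strategy is to compute $n$-truncatedness componentwise. The functor $p\colon\T_{\Sp}\to\Sp$, $(X,E)\mapsto X$, is both left and right adjoint to the zero section $s_0$. In particular it preserves finite limits, so it preserves $n$-truncated objects by \cref{remark: left exact conservative functor preserves and reflects n-truncated maps}. Hence if $(X,E)$ is $n$-truncated then $X$ is an $n$-truncated space. Conversely, $s_0$ is a right adjoint, so it preserves limits, and it sends $n$-truncated spaces to $n$-truncated objects $(X,0_X)$. This settles the direction ``$\Leftarrow$'' and the first half of ``$\Rightarrow$''.

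It remains to show that $n$-truncatedness forces $E\simeq 0_X$. I would use the definition via spheres: $(X,E)$ is $n$-truncated iff $\cst\colon (X,E)\to (X,E)^{S^{n+1}}$ is an equivalence. Here $S^{n+1}$ is computed in $\T_{\Sp}$, and since the terminal object is $(\ast,0)$ and colimits are computed fiberwise over the base, $S^{n+1}=(S^{n+1},0)$. The key computation is the internal hom $(X,E)^{(K,0)}$. I expect the following formula: its base is $X^K$, and its spectrum part is the pullback of $E$ along evaluation, pushed forward (right Kan extended) along the projection $X^K\times K\to X^K$. Fiberwise over a map $\gamma\colon K\to X$, this gives the spectrum of sections $\Gamma(K,\gamma^\ast E)$.

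Now restrict to the fiber over a point $x\in X$ and the constant map $\gamma=\cst_x$. The spectrum part of $\cst$ becomes the diagonal $E_x\to\Gamma(S^{n+1},E_x)=E_x^{S^{n+1}_+}\simeq E_x\oplus\Omega^{n+1}E_x$. If this is an equivalence, then $\Omega^{n+1}E_x\simeq 0$, and by stability $E_x\simeq0$. This holds at every point $x$, so $E\simeq 0_X$.

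The main obstacle is the justification of the internal hom formula in $\T_{\Sp}$. I would establish it through the tangent bundle description: a map $(T,D)\times(K,0)\to(X,E)$ is a map $f\colon T\times K\to X$ together with a map $\pi^\ast D\to f^\ast E$ of parametrized spectra, using that the product in $\T$ has spectrum part $\pi_1^\ast D\oplus\pi_2^\ast 0$. Adjunction along $\pi\colon T\times K\to T$ then gives the formula. An alternative that avoids internal homs is to test $n$-truncatedness via mapping spaces, using \cref{prop: equivalent characterization of truncated maps}. Maps from $(\ast,\Sigma^{m}\mathbb{S})$ into $(X,E)$ over a point $x$ compute $\Omega^{\infty-m}E_x$ for all $m$. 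These spaces can be made to have nontrivial homotopy above degree $n$ unless $E_x=0$.
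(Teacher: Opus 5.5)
Your proposal is correct, but it reaches $E\simeq 0_X$ by a different route than the paper.

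\textbf{Your route.} You compute the cotensor explicitly. The product formula $(T,D)\times(K,0)\simeq(T\times K,\pi_1^\ast D)$ and the adjunction $\pi_1^\ast\dashv\pi_{1\ast}$ give $(X,E)^{(K,0)}\simeq(X^K,\pi_\ast\mathrm{ev}^\ast E)$. This identification also needs base change $\pi_{1\ast}(\tilde f\times K)^\ast\simeq\tilde f^\ast\pi_\ast$ along the pullback square over $\tilde f\colon T\to X^K$. That base change holds because right Kan extension along a map of $\infty$-groupoids is computed fiberwise. The fiber of $\cst$ at $x$ is then the unit
\[E_x\to E_x^{S^{n+1}_+}\simeq E_x\oplus\Omega^{n+1}E_x,\]
and its being an equivalence forces $\Omega^{n+1}E_x\simeq 0$. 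The identification $S^{n+1}\simeq(S^{n+1},0)$ is cleanest from $s_0$ being a left adjoint that also preserves the terminal object.

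\textbf{The paper's route.} The paper never computes an exponential in $\T_{\Sp}$. Once $X$, and hence $s_0X=(X,0_X)$, is $n$-truncated, it argues as follows.
\begin{enumerate}
\item The comparison map $u\colon(X,E)\to(X,0_X)$ is $n$-truncated by two-out-of-three.
\item Base changing $u$ along a point $(\ast,0)\to(X,0_X)$ gives the $n$-truncated object $(\ast,E_x)$.
\item Looping $n+2$ times, using that $\mathrm{Sp}\subseteq\T_{\Sp}$ preserves finite limits, shows that $(\ast,\Omega^{n+2}E_x)$ is terminal, so $E_x\simeq 0$.
\item It concludes that $u$ is an equivalence by base change along the effective epimorphism $\coprod_x(\ast,0)\twoheadrightarrow(X,0_X)$. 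This is the same content as your pointwise check in $\Fun(X,\mathrm{Sp})$.
\end{enumerate}

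\textbf{Comparison.} The paper's argument is more economical and purely formal. It uses only that $p$ and $s_0$ are adjoint on both sides, together with lemmas already in the paper: base change and two-out-of-three for truncated maps, and loops lowering truncation level. Yours needs the extra input of the exponential in the tangent $\infty$-topos, but it shows transparently where stability enters, namely in the summand $\Omega^{n+1}E_x$ of the cotensor. Your fallback via mapping spaces out of $(\ast,\Sigma^m\mathbb{S})$ avoids the internal hom entirely and is closer in spirit to the paper's fiberwise argument.
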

\begin{proof}
    If $X$ is $n$-truncated, then so is $(X,0_X)=s_0(X)$ since $s_0$ is a right adjoint. Conversely, suppose that $(X,E)$ is $n$-truncated in $\T_\Sp$. Then $X=p(X,E)$ is $n$-truncated, and so is $(X,0_X)$ by the first part. Hence the map $u\colon (X,E)\to (X,0_X)$, adjoint to the identity $1_X\colon X\to X$, is $n$-truncated by \cref{prop: equivalent characterization of truncated maps}. Its base change along any point $x\colon (\ast,0)\to (X,0_X)$ is therefore an $n$-truncated object $(\ast,E_x)$. Looping lowers truncation levels by \cref{lemma: truncation and connectivity of loop space}, so $\Omega^{n+2}_{(\ast,0)}(\ast,E_x)$ is $(-2)$-truncated, i.e. terminal. As the inclusion of spectra $\mathrm{Sp}\subseteq \T_\Sp$ preserves finite limits, this object is $(\ast,\Omega^{n+2}E_x)$, thus $\Omega^{n+2}E_x\simeq 0$ is the zero spectrum. Since $\Omega$ is an equivalence on spectra with inverse $\Sigma$, we get $E_x\simeq 0$. Finally, $\coprod_{x\in X}\ast\twoheadrightarrow X$ is an effective epimorphism, and since $s_0$ has both a left and right adjoint, it preserves coproducts and effective epimorphisms, so $z\colon\coprod_{x\in X}(\ast,0)\twoheadrightarrow(X,0)$ is one as well. By universality of coproducts the base change of $u$ along $z$ is $\coprod_{x\in X}\left((\ast,E_x)\to(\ast,0)\right)$, an equivalence since each $E_x\simeq0$. As base change along an effective epimorphism is conservative, $u$ is an equivalence, so $E\simeq0_X$.
\end{proof}
\subsection{Homotopy types}
\begin{example}
    The map $p\colon S^{2n+1}\to\mathbb{C}P^n$ with fiber $S^1$ is a universal $2$-cover, so \[\Deck(p)\simeq\Pi_2(\mathbb{C}P^n)\simeq S^1.\]
    The Hopf fibration $S^1\to S^3\xrightarrow{\eta}S^2$ is the case $n=1$.
\end{example}
\begin{example}
    Since $H^3(S^3;\Z)\cong\Z$, a choice of generator classifies a map $S^3\to \B\mathbb{C}P^{\infty}$ whose homotopy fiber is the universal $3$-covering of $S^3$. Consequently,
    \[\Deck(S^3\langle3\rangle\to S^3)\simeq\Pi_3(S^3)\simeq \mathbb{C}P^{\infty}.\]
\end{example}

\textbf{A $2$-covering from an action.}
    The circle $S^1$ acts on $S^2$ by rotation around an axis through the poles. The homotopy quotient is $\mathbb{C}P^\infty\vee\mathbb{C}P^\infty$, and the quotient map $p\colon S^2\to \mathbb{C}P^\infty\vee\mathbb{C}P^\infty$ is a $2$-covering with fiber $S^1$. Its classifying map fits into the Cartesian square
    \[\begin{tikzcd}
        S^2 \ar[r] \ar[d] \ar[dr, phantom, "\lrcorner"{anchor=center, pos=0.125}, near start]
        & \B S^1 \ar[d] \\
        \mathbb{C}P^\infty\vee\mathbb{C}P^\infty \ar[r]
        & \B(S^1\times S^1)
    \end{tikzcd},\]
    corresponding to the diagonal subgroup $\Z\xrightarrow{\Delta}\Z\times\Z\cong\pi_2(\mathbb{C}P^\infty\vee\mathbb{C}P^\infty)$. It follows that:
    \[\Deck(p)\simeq\Pi_2(\mathbb{C}P^\infty\vee\mathbb{C}P^\infty)\sslash\B\Z\simeq(S^1\times S^1)\sslash\B\Z\simeq S^1.\]
\textbf{A deck $3$-group with non-trivial $k$-invariants.}
The examples so far produce Eilenberg--Mac Lane deck groups. Here is the first with non-trivial Postnikov invariant. For $m\geq 0$, the subgroup inclusion $m\Z\hookrightarrow\Z=\pi_3(S^2)$ gives rise to a $2$-connected $3$-covering $p\colon E_m\to S^2$. By \cref{remark: decomposition of n-covering with the universal n-cover}, $p$ factors as $E_m\to S^3\xrightarrow{\eta}S^2,$
where the first map is the pullback:
\[
\begin{tikzcd}
	E_m \arrow[r] \arrow[d]  \ar[dr, phantom, "\lrcorner"{anchor=center, pos=0.125}, near start] & B^3(m\mathbb{Z}) \arrow[d] \\
	S^3 \arrow[r, "\tau_3"] & B^3\mathbb{Z}
\end{tikzcd}.
\]
The first $k$-invariant of $S^2$ is the cup square $\iota^2\in H^4(\B^2\Z;\Z)$, so by \cref{remark: construction of the n group quotient by a sub 1 group}:
\[\B\Deck(p)\simeq \B\left(\Pi_3(S^2)\sslash \B^2(m\Z)\right)\simeq \mathrm{fib}\big(\B^2\Z\xrightarrow{\iota^2}\B^4\Z\xrightarrow{\B^4\pi}\B^4 (\Z/m\Z)\big).\] Thus $\Deck(p)$ is a $3$-group with $\pi_1=\Z$ and $\pi_2=\Z/m\Z$. For $m=0$ one recovers the universal $3$-cover, $\Deck(p_3)=\Pi_3(S^2)$; while for $m=1$, $\Deck(p)\simeq \Pi_{2}(S^2)\simeq S^1$ and $p=\eta$ is the universal $2$-cover viewed as a $3$-covering.

\subsection{Presheaf $\infty$-topoi}
In a presheaf $\infty$-topos $\Psh(\C)$, limits and $n$-truncations are computed objectwise. We spell out the arrow $\infty$-topos. A second instance is $\Psh(\Orb_G)$, presheaves on the orbit category of a topological group $G$: there the deck group of a $G$-equivariant covering carries a $G$-structure compatible, for a normal $n$-covering, with the $G$-structure on the quotient $\Pi_n(X,x)\sslash\B^{n-1}\pi_n(E,e)$.
\medskip

\textbf{The arrow $\infty$-topos.}
Let $\E=\Fun([1],\Sp)$ be the arrow $\infty$-topos, and take the base $\mathbf X=\Id_X$ for a pointed connected $(X,x)$. A morphism $p\colon\mathbf{E}\to\mathbf{X}$ is a triangle
\[\begin{tikzcd}
    E_1 \ar[rr,"e"] \ar[dr,"p_1"] & & E_2 \ar[dl,"p_2"'] \\
    & X &
\end{tikzcd},\]
and $p$ is an $n$-covering if and only if $p_1$ and $p_2$ are so in $\Sp$. The deck transformation $\infty$-group $\Deck(p)$ is the homomorphism of $\infty$-groups:
\[
\Deck(p_1)\times_{\Map(E_1,E_2)}\Deck(p_2)\to\Deck(p_2),\]
whose domain consists of compatible pairs $(\varphi_1,\varphi_2)$, i.e. $e\circ\varphi_1=\varphi_2\circ e$.  When $p_1,p_2$ are normal $n$-coverings, \cref{corollary: group deck of a normal covering is a quotient of fundamental group} computed in $\E$ gives $\Deck(p)\simeq\Pi_n(\mathbf X,x)\sslash\B^{n-1}\pi_n(\mathbf E,e)$, which objectwise is the morphism \[\Pi_n(X,x)\sslash\B^{n-1}\pi_n(E_1,e_1)\to\Pi_n(X,x)\sslash\B^{n-1}\pi_n(E_2,e_2).\] The same corollary in $\Sp$ identifies its two ends with $\Deck(p_1)$ and $\Deck(p_2)$ respectively. Hence \[\Deck(p_1)\times_{\Map(E_1,E_2)}\Deck(p_2)\simeq\Deck(p_1),\] and in particular there is a fiber sequence of $\infty$-groups
\[\Deck(e)\to\Deck(p_1)\to\Deck(p_2).\]
\subsection{Sheaf $\infty$-topoi}\label{subsection: example: sheaf topoi}
In this subsection the deck group $\Deck(p)$ is best read through two lenses: the internal $n$-group that our classification computes, and its external image $\Gamma(\Deck(p))$, where $\Gamma\colon\E\to\Sp$, $F\mapsto\E(\ast,F)$, is the global sections functor (see \cref{remark: global section of internal deck transformations}). For the Eilenberg--Mac Lane coverings of \eqref{example: eilenberg MacLane objects} the internal deck group is connected when $n\geq 2$, so it has a single component, yet its external deck transformations $\pi_0\Gamma(\Deck(p))$ form a possibly large group determined by the cohomology of the base. 
\begin{definition}[{\cite[7.2.2.14]{LurieHTT}}]
    Let $\E$ be an $\infty$-topos, $n\geq 0$ an integer, and $A$ an abelian $1$-group object in $\E$. The \textit{$n$-th cohomology group of $\E$ with coefficients in $A$} is \[H^n(\E;A)\coloneq \pi_0\E(\ast, \B^n A)=\pi_0\Gamma (\B^n A).\]
\end{definition}
For $0\leq i\leq n$, basing at the canonical point of $\Gamma (\B^n A)$, one obtains \[\pi_i(\Gamma (\B^{n} A))\simeq \pi_0\Omega^i(\Gamma(\B^n A))\simeq \pi_0 \Gamma (\Omega ^i (\B^n A))\simeq \pi_0\Gamma (\B^{n-i} A)\simeq H^{n-i}(\E; A),\]
where the second equivalence holds because $\Gamma$, being a right adjoint, commutes with loops. For $i>n$ these groups vanish, since $\B^nA$ is $n$-truncated and $\Gamma$ preserves truncation levels.\begin{remark}\label{remark: cohomology in a 1-site}
    When $\E=\Sh(\C)$ is an $\infty$-category of sheaves on a $1$-site $\C$, then the cohomology of $\E$ is identified with the cohomology of the $1$-topos $\E^{\sleq 0}\simeq \Sh_{\Set}(\C)$ of $0$-truncated objects \cite[7.2.2.17]{LurieHTT}.
\end{remark}
\textbf{Sheaves on a space.}
Let $B$ be a topological space, $\Sh(B)$ the $\infty$-topos of sheaves on $B$, and let $\Z\in \mathscr{A}\mathrm{b}(\Sh_{\Set}(B))$ be the constant sheaf of integers. It is a $2$-symmetric, hence $n$-symmetric by \cref{prop: k-symmetric groups are k+1-symmetric for large k}, $1$-group object, and as such lives in $\Grp_{(1,n)}(\Sh(B))$. Let $A=m\Z\subseteq\Z$ be a constant subsheaf of abelian groups, which is a normal sub-$1$-group. Then $p_A\colon \B^n A\to\B^n \Z$ is a normal $n$-covering map with deck $n$-group $\Deck(p_A)\simeq \B^{n-1}(\Z/A)$, again a constant sheaf of $n$-groups. Externally, \[\pi_0\Gamma(\Deck(p_A))\simeq \pi_0\Gamma(\B^{n-1}(\Z/A))\simeq H^{n-1}\left(\Sh(B);\Z/A\right)\simeq H^{n-1}(B;\Z/A),\]
    where the last identification follows from \cref{remark: cohomology in a 1-site}.
\begin{example}
    Let $B=S^1$, $A=2\Z$, and $n=2$. The internal deck $2$-group of $p_A\colon \B^2(2\Z)\to\B^2\Z$ is $\Deck(p_A)\simeq \B (\Z/2\Z)$, and externally \[\pi_0\Gamma(\Deck(p_A))\simeq\pi_0\Gamma(\B(\Z/2))\simeq H^1(S^1;\Z/2)\simeq\Z/2.\]
    There is a non-trivial external deck transformation, arising from the Möbius $\Z/2$-torsor over $S^1$, even though the internal deck group is connected.
\end{example}

\textbf{Étale $\infty$-topos.}
Let $X$ be a scheme and $X_{\et}\coloneq(\mathscr{S}\mathrm{ch}_{/X})_{\et}$ its small étale $1$-site of étale maps $U\to X$ \cite[§II.1]{Deligne_CohomologieEtale}. We work in the $\infty$-topos $\E=\Sh(X_{\et})$. Let $\bG_{m,X}=\operatorname{Spec}\Z[x,x^{-1}]\times X$ be the multiplicative group of units, an abelian group object in $\Sh_{\Set}(X_{\et})=\E^{\sleq0}$, viewed as a $2$-symmetric $1$-group object in $\E$. Let $k\in\mathbb{N}$ be invertible on $X$. The \textit{Kummer sequence}  \cite[\href{https://stacks.math.columbia.edu/tag/03PK}{Tag 03PK}]{stacks-project} \[1\to \mu_{k,X}\xrightarrow{\iota}\bG_{m,X}\xrightarrow{\cdot k}\bG_{m,X}\to 1\] is exact as étale sheaves, exhibiting $\mu_{k,X}$ (the $k$-th roots of unity) as a normal sub-$1$-group of $\bG_{m,X}$ (\cref{prop: kernel is a sub n group}), with quotient $\bG_{m,X}/\mu_{k,X}\simeq\bG_{m,X}$. The induced map $p\coloneq\B^n\iota\colon\B^n\mu_{k,X}\to\B^n\bG_{m,X}$ is a normal $n$-covering, with \[\Deck(p)\simeq \B^{n-1} (\bG_{m,X}/\mu_{k,X})\simeq \B^{n-1}\bG_{m,X}.\]
Externally, $\Gamma(\B^{n-1}\bG_{m,X})$ has homotopy groups
\[\pi_i(\Gamma(\B^{n-1}\bG_{m,X}))\simeq H^{n-1-i}(\Sh(X_{\et}); \bG_{m,X})\simeq H^{n-1-i}_{\et}(X;\bG_{m,X}),\qquad 0\leq i\leq n-1,\]
where the last identification follows from \cref{remark: cohomology in a 1-site}.
\begin{example}[Picard groupoid as deck $2$-group]\label{example: picard deck group}
For $n=2$, the external deck group of $p$ is the Picard groupoid of $X$  \cite[\href{https://stacks.math.columbia.edu/tag/03P8}{Tag 03P8}]{stacks-project}:
    \[\pi_0\Gamma(\Deck(p))\simeq H^1_{\et}(X;\bG_{m,X})\simeq\operatorname{Pic}(X),\qquad
    \pi_1\Gamma(\Deck(p))\simeq H^0_{\et}(X;\bG_{m,X})\simeq\mathcal{O}_X^\times(X).\]
Thus every line bundle on $X$ arises as a deck transformation of this covering. Over a field $L$, Hilbert's Theorem~90 gives $H^1_{\et}(L,\bG_{m,L})=0$. Over $X=\operatorname{Spec}\Z[\tfrac1k]$ the Picard group vanishes, but the units are $\mathcal{O}_X^\times(X)=\Z[\tfrac1k]^\times$, so the deck $2$-group has non-trivial $\pi_1$. Over $X=\mathbb P^N_L$ with $L$ a field of characteristic prime to $k$:
    \[\pi_0\Gamma(\Deck(p))\simeq\operatorname{Pic}(\mathbb P^N_L)\simeq\Z,\qquad
    \pi_1\Gamma(\Deck(p))\simeq\mathcal{O}^\times(\mathbb P^N_L)\simeq L^\times,\]
where $\pi_0$ is free on the Serre twist $\mathcal{O}(1)$.
\end{example}
\begin{example}\label{example: kummer theory}
    Let $L$ be a field containing a primitive $k$-th root of unity, $X=\operatorname{Spec} L$, and $d\mid k$. The sequence
    \[1\to\mu_{d,X}\to\mu_{k,X}\to\mu_{k/d,X}\to1\]
    is exact as étale sheaves. The $2$-covering $p\colon\B^2\mu_{d,X}\to\B^2\mu_{k,X}$ corresponding to the sub-$1$-group $\mu_{d,X}\hookrightarrow\mu_{k,X}$ is normal, so $\Deck(p)\simeq\B(\mu_{k,X}/\mu_{d,X})\simeq\B(\mu_{k/d,X})$. Externally, by Kummer theory \cite[\href{https://stacks.math.columbia.edu/tag/03PK}{Tag~03PK}]{stacks-project}:
    \[\pi_0\Gamma(\Deck(p))\simeq H^1_{\et}(L,\mu_{k/d,X})\simeq L^\times/(L^\times)^{k/d},\]
    where $(L^\times)^a$ denotes the $a$-th powers. For $L=\mathbb{Q}(\zeta_k)$ the cyclotomic field and any $d\mid k$ with $k/d\geq2$, this is an infinite group: the covering $\B^2\mu_d\to\B^2\mu_k$ admits infinitely many external deck transformations. By contrast, for the corresponding covering $p\colon\B^2(\Z/d)\to\B^2(\Z/k)$ of homotopy types, the external deck
    $2$-group is connected: $\pi_0\Gamma(\Deck(p))\simeq H^1(\ast;\Z/(k/d))=0$.
\end{example}
                                                      
\subsection{Cohesive $\infty$-topoi}
Let $\E$ be an $\infty$-topos and $(\Delta,\Gamma)\colon\E\to\Sp$ its terminal geometric
morphism.
\begin{definition}[{\cite[Def.~4.1.8]{Schreiber_DifferentialCohomologyCohesiveInfinityTopos}}]
    The $\infty$-topos $\E$ is \emph{cohesive} if
    \begin{enumerate}
        \item the constant-object functor $\Delta$ is fully faithful;
        \item $\Gamma$ admits a fully faithful right adjoint $\coDisc\colon\Sp\to\E$;
        \item $\Delta$ admits a finite product preserving left adjoint $\Pi\colon\E\to\Sp$.
    \end{enumerate}
\end{definition}
Objects of a cohesive $\infty$-topos should be thought of as $\infty$-groupoids equipped with cohesion: a topology, a smooth structure, and so on. To reflect this we write $\Disc\coloneq\Delta$ and call $\Disc(S)$ a \emph{discrete} object; for $X\in\E$, we call $\Pi X$ the \emph{fundamental $\infty$-groupoid} of $X$. The adjunctions assemble into the string $\Pi\dashv\Disc\dashv\Gamma\dashv\coDisc$, and we write $\sh\coloneq\Disc\circ\Pi\colon\E\to\E$ for the idempotent \emph{shape} modality, with unit $\eta_X\colon X\to\sh X$. Only the shape will be used explicitly in what follows.

\begin{example}
    The canonical examples are the cohesive $\infty$-topoi $\Sp_{\mathrm{Top}}$ and $\Sp_{\mathrm{Smooth}}$ of topological and smooth $\infty$-groupoids, defined as the $\infty$-categories of sheaves on the site of topological (resp. smooth) Cartesian spaces $\mathbb{R}^n$ \cite[Def.~6.3.3, resp.\ 6.4.7]{Schreiber_DifferentialCohomologyCohesiveInfinityTopos}. Topological (resp.\ smooth) manifolds embed fully faithfully as $0$-truncated objects \cite[6.4.10]{Schreiber_DifferentialCohomologyCohesiveInfinityTopos}, and the shape $\sh M$ of a manifold is its standard underlying homotopy type \cite[6.3.5.2, resp.\ 6.4.5.3]{Schreiber_DifferentialCohomologyCohesiveInfinityTopos}. In what follows we write $\Sp_{\Mfd}$ for either of these cohesive $\infty$-topoi, say \textit{manifold} for either the topological or the smooth case, and denote by $\Mfd\subseteq \Sp_{\Mfd}^{\sleq 0}\subseteq\Sp_{\Mfd}$ the full inclusion of manifolds.
\end{example}

These examples show that the covering theory of \cref{section : Coverings} must be adapted to the cohesive structure: a manifold $M$ is $0$-truncated in $\Sp_{\Mfd}$, hence has trivial fundamental $n$-groups, even when its underlying homotopy type is non-trivial. The remedy, following Myers in cohesive homotopy type theory \cite[Def.~9.15]{Myers_GoodFibrationsThroughTheModalPrism}, is to single out the maps that are determined by their shape.
\begin{definition}\label{def: étale map and cohesive coveings}
    Let $\E$ be a cohesive $\infty$-topos.
    \begin{enumerate}
        \item A map $p\colon E\to X$ is \emph{$\sh$-étale} if the naturality square
        \begin{equation}\label{def: etale map eq}
            \begin{tikzcd}
            E \ar[r,"\eta_E"] \ar[d,"p"'] & \sh E \ar[d,"\sh p"] \\
            X \ar[r,"\eta_X"'] & \sh X
        \end{tikzcd}
        \end{equation}
        is Cartesian.
        \item A map $p\colon E\to X$ is a \emph{cohesive $n$-covering} if it is $(n-1)$-truncated and $\sh$-étale.
    \end{enumerate}
\end{definition}

$\sh$-étale maps are called $\sh$-closed in \cite[§5.2.7]{Schreiber_DifferentialCohomologyCohesiveInfinityTopos}. It is shown there that $\sh$-étale maps over $X$ are precisely the maps classified by $\Disc(\U)$ for a universe $\U\in\Sp$ \cite[Prop.~5.4.42]{Schreiber_DifferentialCohomologyCohesiveInfinityTopos}. \begin{remark}
    Schreiber shows in \cite[Rmk.~5.2.46]{Schreiber_DifferentialCohomologyCohesiveInfinityTopos}  that these are precisely the locally constant maps in $\slice{\E}{X}$ in the sense of \cite[Def.~A.1.12]{LurieHigherAlgebra}: there is an effective epimorphism $\coprod_i U_i\twoheadrightarrow X$ with equivalences $U_i\times_X E\simeq U_i\times\Disc(S_i)$ over each $U_i$, for homotopy types $S_i\in\Sp$. Hoyois shows that this holds when $\E$ is locally contractible \cite{Hoyois_HigherGaloisTheory}.
\end{remark} 
For this statement to interact with our framework, we record that $\Disc$ preserves univalence.

\begin{lemma}\label{lemma: univalent map of discrete objects}
    Let $\E$ be a cohesive $\infty$-topos. A map $p\colon\U_\ast\to\U$ in $\Sp$ is univalent if and only if $\Disc(p)$ is univalent in $\E$.
\end{lemma}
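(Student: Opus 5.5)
The plan is to use the internal characterization of univalence recalled after \cref{definition: univalent universe}. A map $p\colon\U_\ast\to\U$ is univalent if and only if the canonical map
\[\U\to\Eq_{/\U\times\U}(\U_\ast\times\U,\U\times\U_\ast)\]
over $\U\times\U$ is an equivalence. I would check that $\Disc$ carries this comparison map for $p$ to the comparison map for $\Disc(p)$. Since $\Disc$ is fully faithful, it is conservative, and both implications would then follow.

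First, I would record the exactness properties of $\Disc$. It is a left adjoint (to $\Gamma$) and a right adjoint (to $\Pi$), so it preserves all limits and colimits. In particular it preserves finite products, pullbacks, effective epimorphisms and monomorphisms. It is also fully faithful, hence conservative and reflects equivalences.

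The main obstacle is that $\Disc$ need not commute with internal homs $\Map$, so a priori not with $\Eq_{/T}(A,B)$. To handle this, I would use that $\Disc$ is a left adjoint preserving finite products, which gives a comparison map $\Disc(B^A)\to\Disc(B)^{\Disc(A)}$. This map need not be an equivalence in general, so I would use a slice-level statement instead. The functor $\Disc\colon\slice{\Sp}{T}\to\slice{\E}{\Disc T}$ is fully faithful and has image closed under pullback. Under the cohesive structure, this image consists exactly of the $\sh$-étale maps into $\Disc T$, by \cite[Prop.~5.4.42]{Schreiber_DifferentialCohomologyCohesiveInfinityTopos}.

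Second, I would avoid internal homs by using the external formulation of \cref{definition: univalent universe}. Univalence of $\Disc(p)$ means that $\E(X,\Disc\U)\to(\slice{\E}{X})^\simeq$, $f\mapsto f^\ast\Disc(p)$, is fully faithful for every $X\in\E$. By the $\Pi\dashv\Disc$ adjunction,
\[\E(X,\Disc\U)\simeq\Sp(\Pi X,\U).\]
The family $f^\ast\Disc(p)$ is then the pullback along $\eta_X\colon X\to\Disc\Pi X$ of $\Disc(g^\ast p)$, where $g\colon\Pi X\to\U$ is the adjoint map. This uses that $\Disc$ preserves pullbacks.

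Third, full faithfulness would follow from a chain of fully faithful functors:
\[(\slice{\Sp}{\Pi X})^\simeq\xrightarrow{\Disc}(\slice{\E}{\Disc\Pi X})^\simeq\xrightarrow{\eta_X^\ast}(\slice{\E}{X})^\simeq.\]
The first is fully faithful because $\Disc$ is. For the second, I would prove that $\eta_X^\ast$ is fully faithful on $\sh$-étale maps, i.e.\ on discrete families. The argument is that for discrete $A,B$ over $\Disc\Pi X$, mapping spaces over $X$ into $\eta_X^\ast B$ are computed via $\Pi\dashv\Disc$ and the equivalence $\Pi\eta_X\simeq 1$. Composing with univalence of $p$, applied to $\Pi X$, gives univalence of $\Disc(p)$.

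For the converse, I would take $X=\Disc S$. Then $\Pi\Disc S\simeq S$ and $\eta$ is an equivalence, so full faithfulness for $\Disc(p)$ at $\Disc S$ restricts along the fully faithful $\Disc$ to full faithfulness for $p$ at $S$. The hard step is the full faithfulness of $\eta_X^\ast$ on discrete families.
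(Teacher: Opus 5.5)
Your external strategy is sound. You factor the classifying functor of $\Disc(p)$ at $X$ through $\Sp(\Pi X,\U)\to(\slice{\Sp}{\Pi X})^\simeq\xrightarrow{\Disc}(\slice{\E}{\Disc\Pi X})^\simeq\xrightarrow{\eta_X^\ast}(\slice{\E}{X})^\simeq$, and your converse via $X=\Disc S$ is correct.

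The gap is in the step you flag as hard. Full faithfulness of $\eta_X^\ast$ on discrete families does not follow from $\Pi\dashv\Disc$ and $\Pi\eta_X\simeq 1$ alone. Write $T=\Pi X$ and use the slice adjunction:
\[
\slice{\E}{X}(\eta_X^\ast\Disc A,\eta_X^\ast\Disc B)\simeq\slice{\E}{\Disc T}(X\times_{\Disc T}\Disc A,\Disc B)\simeq\slice{\Sp}{T}(\Pi(X\times_{\Disc T}\Disc A),B).
\]
So you need the map $\Pi(X\times_{\Disc T}\Disc A)\to A$ over $T$ to be an equivalence for every $A\in\slice{\Sp}{T}$. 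The fact $\Pi\eta_X\simeq 1$ is only the case $A=T$. For general $A$ you need $\Pi$ to commute with a pullback, whereas the axioms only say $\Pi$ preserves finite products. This is not formal: it fails for general reflective localizations. For example, under rationalization of spaces the fibre $\mathbb{Q}/\Z$ of $S^1\to S^1_{\mathbb{Q}}$ is local, not acyclic.

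The missing ingredient is exactly the input the paper uses. $\Pi$ preserves pullbacks along maps between discrete objects \cite[Prop.~4.1.35]{Schreiber_DifferentialCohomologyCohesiveInfinityTopos}, i.e.\ it is a locally cartesian localization. The paper then simply cites \cite[Thm.~3.12]{GepnerKock_UnivalenceInLCCC}. You can prove this fact from the axioms as follows:
\begin{enumerate}
\item Since $\Disc$ preserves colimits, $\Disc T\simeq\colim_{t\in T}\ast$.
\item By universality of colimits, $X\times_{\Disc T}\Disc A\simeq\colim_t X_t\times\Disc(A_t)$, where $X_t$ and $A_t$ are the fibres over $t$.
\item Since $\Pi$ preserves colimits and finite products, $\Pi$ of this is $\colim_t\Pi(X_t)\times A_t$.
\item By descent in $\Sp$, the $\Pi(X_t)$ are the fibres of the equivalence $\Pi\eta_X$, so $\Pi(X_t)\simeq\ast$ and the colimit is $A$.
\end{enumerate}
With this lemma your chain closes, and your argument becomes a hands-on proof of the special case of Gepner--Kock that the paper cites.

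Separately, your worry about internal homs was misplaced. Since $\Pi$ preserves finite products and $\Pi\Disc\simeq 1$, Frobenius reciprocity $\Pi(Y\times\Disc A)\simeq\Pi Y\times A$ gives $\Disc(B^A)\simeq(\Disc B)^{\Disc A}$. The same pullback lemma gives the slice version, hence preservation of $\Eq_{/T}$. So your original internal plan, applying the conservative functor $\Disc$ to the comparison map, would also have worked once that lemma is in hand.
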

\begin{proof}
    The localization $\Pi\colon\E\to\Sp$ is locally Cartesian: it preserves pullbacks along maps of the form $\Disc A\to\Disc B$ \cite[Prop.~4.1.35]{Schreiber_DifferentialCohomologyCohesiveInfinityTopos}. The claim is then \cite[Thm.~3.12]{GepnerKock_UnivalenceInLCCC}.
\end{proof}

The key computation is that mapping \emph{into} a discrete object only sees the shape of the source.

\begin{lemma}\label{lemma: internal hom in a discrete object}
    Let $\E$ be a cohesive $\infty$-topos, $A\in\E$ be a discrete object, i.e.\ $A\simeq\sh A$, and $X\in\E$ be arbitrary. Then precomposition with the unit $\eta_X\colon X\to\sh X$ induces an equivalence of
    internal mapping objects \[\Map(\eta_X,A)\colon\Map(\sh X,A)\xrightarrow{\ \simeq\ }\Map(X,A).\]
\end{lemma}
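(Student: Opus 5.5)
I would prove this with the Yoneda lemma: an equivalence of internal mapping objects is detected on generalized elements, and each level reduces to the adjunction $\sh = \Disc\circ\Pi$ together with the fact that $\Disc$ is fully faithful. Concretely, it suffices to show that for every object $T\in\E$ the induced map of spaces
\[\E(T,\Map(\sh X,A))\to\E(T,\Map(X,A))\]
is an equivalence. By the Cartesian closed adjunction this map is identified with precomposition
\[\E(T\times\sh X,A)\to\E(T\times X,A)\]
along $1_T\times\eta_X$.

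The first step is to write $A\simeq\Disc(S)$ for $S=\Pi A$; this is possible since $A\simeq\sh A=\Disc\Pi A$. The adjunction $\Pi\dashv\Disc$ then gives
\[\E(Y,\Disc S)\simeq\Sp(\Pi Y,S)\]
naturally in $Y$. Under this identification, precomposition with $1_T\times\eta_X$ becomes precomposition with
\[\Pi(1_T\times\eta_X)\colon\Pi(T\times X)\to\Pi(T\times\sh X).\]
So the whole statement reduces to showing that this last map is an equivalence of homotopy types.

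The second step uses that $\Pi$ preserves finite products, which is axiom (3) of a cohesive $\infty$-topos. This rewrites the map as
\[1_{\Pi T}\times\Pi(\eta_X)\colon\Pi T\times\Pi X\to\Pi T\times\Pi\sh X.\]
It then suffices that $\Pi(\eta_X)\colon\Pi X\to\Pi\Disc\Pi X$ is an equivalence. Because $\Disc$ is fully faithful, the counit $\Pi\Disc\to\Id$ is an equivalence. By the triangle identity, $\Pi(\eta_X)$ is a section of the counit at $\Pi X$, so it is an equivalence as well.

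I expect the only delicate point to be the naturality bookkeeping: checking that the Cartesian-closure identification and the $\Pi\dashv\Disc$ adjunction transport $\Map(\eta_X,A)$ to exactly the map $1\times\Pi(\eta_X)$, rather than to some other map between the same objects. I would handle this by working throughout with functors natural in $T$, and then invoking the Yoneda lemma in $\E$ to conclude that $\Map(\eta_X,A)$ is an equivalence.
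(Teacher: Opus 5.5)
Your proposal is correct and follows essentially the same route as the paper: the Yoneda lemma on generalized elements $T$, currying, the adjunction $\Pi\dashv\Disc$ applied to $A\simeq\Disc\Pi A$, preservation of finite products by $\Pi$, and invertibility of $\Pi(\eta_X)$ because $\Disc$ is fully faithful. Your triangle-identity argument and your direct tracking of the map $\Pi(1_T\times\eta_X)$ simply make explicit the paper's steps ``$\Pi\sh\simeq\Pi$'' and ``tracing through the adjunctions''.
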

\begin{proof}
    For every $T\in\E$ there is a chain of equivalences, natural in $T$:
    \begin{align*}
        \E\big(T,\Map(X,A)\big) &\simeq\E(T\times X,\sh A) &&\proofstep{$-\times X\dashv\Map(X,-)$, and $A\simeq\sh A$}\\
        &\simeq\Sp\big(\Pi(T\times X),\Pi A\big) &&\proofstep{$\Pi\dashv\Disc$}\\
        &\simeq\Sp(\Pi T\times\Pi X,\Pi A) &&\proofstep{$\Pi$ preserves finite products}\\
        &\simeq\Sp\big(\Pi T\times\Pi(\sh X),\Pi A\big) &&\proofstep{$\Pi\sh\simeq\Pi$, as $\Disc$ is fully faithful}\\
        &\simeq\E\big(T\times\sh X,\sh A\big) &&\proofstep{$\Pi$ preserves finite products, $\Pi\dashv\Disc$}\\
        &\simeq\E\big(T,\Map(\sh X,A)\big) &&\proofstep{$-\times\sh X\dashv\Map(\sh X,-)$, and $A\simeq\sh A$.}
    \end{align*}
    Tracing through the adjunctions, the fourth equivalence is precomposition with $\Pi(1_T\times\eta_X)$, so the induced equivalence of representing objects is $\Map(\eta_X,A)$ by the Yoneda lemma.
\end{proof}

\begin{prop}\label{prop: group deck in a cohesive covering is group deck of the shape}
    Let $\E$ be a cohesive $\infty$-topos, and let $p\colon E\to X$ be a $\sh$-étale map in $\E$. Then there is an equivalence of $\infty$-group objects \[\Deck(p)\simeq\Deck(\sh p).\] In particular, the deck $\infty$-group of a $\sh$-étale map is discrete.
\end{prop}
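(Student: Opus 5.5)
We plan to compute both sides via the fibered-universe description of \cref{prop: image of a map into the fibered universe is deck transformation}. Since $p$ is $\sh$-étale, the square \eqref{def: etale map eq} is Cartesian, so $p$ is the pullback of $\sh p$ along $\eta_X$. Choose a universe $\U\in\Sp$ large enough to classify the relevant fibers; by \cref{lemma: univalent map of discrete objects} the map $\Disc(\U_\ast)\to\Disc(\U)$ is a univalent universe in $\E$. The map $\sh p\colon \sh E\to\sh X$ lies between discrete objects and is $\Disc$ of a map of homotopy types, so it is classified by a map $\corner{\sh p}\colon \sh X\to\Disc(\U)$. Hence $p$ is classified by the composite $\corner{p}\colon X\xrightarrow{\eta_X}\sh X\to\Disc(\U)$.

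Next we pass to adjoints. The global point $\corner{p}^\sharp\colon\ast\to\Disc(\U)^X$ is the image of $\corner{\sh p}^\sharp\colon\ast\to\Disc(\U)^{\sh X}$ under precomposition $\Map(\eta_X,\Disc(\U))$. By \cref{lemma: internal hom in a discrete object}, applied with $A=\Disc(\U)$, which is discrete since $\sh\Disc\simeq\Disc$ because $\Disc$ is fully faithful, this precomposition map is an equivalence $\Disc(\U)^{\sh X}\simeq\Disc(\U)^X$ carrying $\corner{\sh p}^\sharp$ to $\corner{p}^\sharp$. Images of effective epi/mono factorizations are preserved by equivalences. Therefore, by \cref{prop: image of a map into the fibered universe is deck transformation} applied to both $\sh p$ and $p$, we get a pointed equivalence $\B\Deck(\sh p)\simeq\B\Deck(p)$. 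By \cref{theorem: group objects are pointed connected objects}, this gives $\Deck(p)\simeq\Deck(\sh p)$ as $\infty$-groups.

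The delicate point is the use of \cref{prop: image of a map into the fibered universe is deck transformation} with the subuniverse $\Disc(\U)$ in place of a full universe $\U_\E$. Since $\Disc(\U)\hookrightarrow\U_\E$ is a monomorphism (pullback of univalent is univalent iff mono), $\Disc(\U)^X\hookrightarrow\U_\E^X$ is a monomorphism too, being a right adjoint image. So the image of $\ast\to\Disc(\U)^X$ agrees with the image in $\U_\E^X$, and the proposition applies.

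For discreteness, we argue that $\Deck(\sh p)$ is a loop object of a subobject of $\Disc(\U)^{\sh X}$. For discrete $K$ we have $\Disc(\U)^{\Disc K}\simeq\Disc(\U^K)$: by the adjunction chain of \cref{lemma: internal hom in a discrete object}, $\E(T,\Disc(\U)^{\Disc K})\simeq\Sp(\Pi T\times K,\U)\simeq\E(T,\Disc(\U^K))$. Monomorphisms into, and connected components of, discrete objects are discrete, since $\Disc$ is left exact with both adjoints and preserves the epi/mono factorization of $\Disc$ of a map. Hence $\B\Deck(\sh p)\simeq\Disc(\B\Deck_{\Sp}(p'))$, where $\sh p=\Disc(p')$, and its loop object $\Deck(\sh p)$ is discrete.
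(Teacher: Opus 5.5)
Your proposal is correct and is essentially the paper's proof. Both arguments classify $\sh p$ by a map $\sh X\to\Disc(\U)$, univalent by \cref{lemma: univalent map of discrete objects}, and use the Cartesian \'etale square to classify $p$ by its composite with $\eta_X$. They then conclude from \cref{prop: image of a map into the fibered universe is deck transformation} and \cref{lemma: internal hom in a discrete object} that the two images, hence $\B\Deck(p)$ and $\B\Deck(\sh p)$, agree.

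Two small remarks. First, your detour through a big universe $\U_\E$ is unnecessary: that proposition uses only univalence of whichever universe classifies the map. Second, the paper leaves the discreteness claim implicit, and your argument for it is fine, but the justification should be ``images of maps of the form $\Disc(f)$ are discrete'', not ``monomorphisms into discrete objects are discrete''. The latter fails in general, for example for the non-trivial subterminal in presheaves on the poset $\{0<1\}$. The former is exactly what your argument uses.
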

\begin{proof}
    By \cref{theorem: group objects are pointed connected objects} it suffices to produce a pointed equivalence $\B\Deck(p)\simeq\B\Deck(\sh p)$. Since $\sh p$ is $\sh$-étale, it is classified by a map $\corner{\sh p}\colon\sh X\to\Disc(\U)$ for a universe $\U\in\Sp$, and $\Disc(\U)$ is univalent by \cref{lemma: univalent map of discrete objects}. As the naturality square \eqref{def: etale map eq} of a $\sh$-étale map is Cartesian, $p$ is classified by the composite $\corner{p}\colon X\xrightarrow{\eta_X}\sh X\xrightarrow{\corner{\sh p}}\Disc(\U)$. By \cref{prop: image of a map into the fibered universe is deck transformation}, $\B\Deck(\sh p)$ is the image of the point $\ast\xrightarrow{\corner{\sh p}}\Map(\sh X,\Disc\U)$, and $\B\Deck(p)$ the image of its composite with 
    \[\Map(\eta_X,\Disc\U)\colon\Map(\sh X,\Disc\U)\longrightarrow\Map(X,\Disc\U).\]
    The latter is an equivalence by \cref{lemma: internal hom in a discrete object}, since $\Disc(\U)$ is discrete; hence the two images agree.
\end{proof}

Thus the deck $n$-group of a cohesive $n$-covering is computed by its shape, which lives in the full subcategory of discrete objects, i.e.\ in $\Sp$. Combining with the quotient description of deck groups:

\begin{corollary}\label{cor: cohesive deck group}
    Let $\E$ be a cohesive $\infty$-topos, and $p\colon E\to X$ be a pointed cohesive $n$-covering map in $\E$ such that $\sh X$ is connected and $\sh p$ is a normal $n$-covering in $\Sp$. Then \[\Deck(p)\simeq\Pi_n(\sh X,x)\sslash\B^{n-1}\pi_n(\sh E,e).\]
\end{corollary}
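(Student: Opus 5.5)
The plan is to combine \cref{prop: group deck in a cohesive covering is group deck of the shape} with \cref{corollary: group deck of a normal covering is a quotient of fundamental group}, applied not in $\E$ but in $\Sp$, and then transport the answer back along the fully faithful functor $\Disc$.

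\textbf{Step 1: reduce to the shape.} Since $p$ is a cohesive $n$-covering, it is in particular $\sh$-étale. \cref{prop: group deck in a cohesive covering is group deck of the shape} therefore gives $\Deck(p)\simeq\Deck(\sh p)$. Here the right-hand side is computed in $\E$ for the map $\sh p\colon\sh E\to\sh X$ between discrete objects.

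\textbf{Step 2: compare $\Deck$ in $\E$ and in $\Sp$.} We check that $\Disc$ carries $\Deck_{\Sp}(\Pi p)$ to $\Deck_{\E}(\sh p)$. The classifying object $\B\Deck(\sh p)$ is the image of $\ast\to\Map(\sh X,\Disc\U)$ by \cref{prop: image of a map into the fibered universe is deck transformation}, using the univalent universe $\Disc\U$ of \cref{lemma: univalent map of discrete objects}. Because $\Disc$ is fully faithful and preserves finite limits, and because $\Pi$ preserves finite products, we get $\Map(\Disc S,\Disc\U)\simeq\Disc(\U^S)$: this follows from the computation in \cref{lemma: internal hom in a discrete object} with $T$ arbitrary. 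The functor $\Disc$ has both adjoints, so it preserves effective epimorphisms and monomorphisms. Hence it preserves image factorizations, and $\B\Deck_\E(\sh p)\simeq\Disc\,\B\Deck_{\Sp}(\Pi p)$. I expect this compatibility to be the main obstacle. An alternative route is to identify $\Deck$ via the characterization of $\sh$-étale maps as those classified by $\Disc(\U)$, so that deck groups are computed entirely inside the discrete subcategory.

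\textbf{Step 3: apply the normal-cover computation in $\Sp$.} The basepoints $x,e$ map under $\eta$ to basepoints of $\sh X$ and $\sh E$, which is what the formula requires. By hypothesis $\sh X$ is connected and $\sh p$ is a normal $n$-covering in $\Sp$. \cref{corollary: group deck of a normal covering is a quotient of fundamental group} then gives $\Deck(\sh p)\simeq\Pi_n(\sh X,x)\sslash\B^{n-1}\pi_n(\sh E,e)$ in $\Sp$.

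It remains to note that $\Disc$ commutes with $\Pi_n$, $\pi_n$ and quotients of this form. It is a left exact left adjoint, so it commutes with truncations by \cite[5.5.6.28]{LurieHTT}, with loops, and with fibers. Combining Steps 1–3 yields the formula in the statement, read in $\E$ via $\Disc$.
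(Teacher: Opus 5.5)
Your proposal is correct and follows the paper's proof, which simply combines \cref{prop: group deck in a cohesive covering is group deck of the shape} with \cref{corollary: group deck of a normal covering is a quotient of fundamental group} applied to $\sh p$. Your Step 2 uses that $\Disc$ is cartesian closed because $\Pi$ preserves finite products, and that $\Disc$ preserves image factorizations because it has both adjoints. This shows $\Disc$ carries $\Deck_{\Sp}(\Pi p)$ to $\Deck_{\E}(\sh p)$, and so makes explicit the identification of discrete objects with $\Sp$ that the paper leaves implicit.
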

\begin{proof}
    Combine \cref{prop: group deck in a cohesive covering is group deck of the shape} with \cref{corollary: group deck of a normal covering is a quotient of fundamental
    group} applied to $\sh p$.
\end{proof}

\medskip
\textbf{Recovering manifolds.}
Somewhat surprisingly, this framework recovers the classical theory of $1$-coverings of manifolds internally. Suppose there is an object $\A\in\E$ such that the localization $\Pi\colon\E\to\Sp$ is the localization at the class $S\coloneq\{c_i\times(\A\to\ast)\}_{i\in I}$, for a small set of generators $\{c_i\}_{i\in I}$ of $\E$; we then say $\A$ \emph{exhibits the cohesion} of $\E$ \cite[Def.~5.2.48]{Schreiber_DifferentialCohomologyCohesiveInfinityTopos}. In the topological (resp.\ smooth) case, $\A=\mathbb{R}$ exhibits the cohesion \cite[Rmk.~5.2.52]{Schreiber_DifferentialCohomologyCohesiveInfinityTopos}.

\begin{definition}[{\cite[Def.~5.2.56]{Schreiber_DifferentialCohomologyCohesiveInfinityTopos}}]
    Let $\A$ exhibit the cohesion of $\E$. An object $M\in\E$ is a \emph{manifold of dimension $n$} if there is a small family of monomorphisms $\phi_j\colon\A^n\hookrightarrow M$ such that
    \begin{enumerate}
        \item the induced map $\phi\colon\coprod_j\A^n\twoheadrightarrow M$ is an effective epimorphism;
        \item the \v{C}ech nerve $\check{C}(\phi)$ of $\phi$ is degreewise a coproduct of copies of $\A^n$.
    \end{enumerate}
\end{definition}

If $p\colon E\to M$ is a cohesive $1$-covering and $M$ is a manifold of dimension $n$, then $E$ is again a manifold of dimension $n$: pulling back the charts $\phi_j$ along $p$ yields a cover of $E$, and since $\sh(\A^n)\simeq\ast$, every locally constant $0$-truncated map over $\A^n$ is equivalent to a projection $\A^n\times\Disc(S)\simeq\coprod_{s\in S}\A^n\to\A^n$ for a $0$-truncated space $S$. Thus the pulled-back charts again satisfy (1) and (2). The theory of cohesive $1$-coverings of manifolds therefore restricts to a theory of $1$-coverings of \emph{manifolds by manifolds}.

\medskip

In $\E=\Sp_{\Mfd}$, this recovers the classical topological (resp.\ smooth) picture since $\Mfd\subseteq\Sp_{\Mfd}$ is fully faithful. Moreover, it follows that the external set of deck transformations of a cohesive $1$-covering $p\colon N\to M$ over a manifold $M$ is the set of homeomorphisms (resp. diffeomorphisms) $N\cong N$ commuting with $p$: \[\Gamma\Deck(p)\simeq \slice{\Sp_{\Mfd}}{M}^\simeq(N,N)\simeq\slice{\Mfd}{M}^\simeq(N,N).\]
Because it is also equivalent to \[\Gamma\Deck(\sh p)\simeq \slice{\Sp}{\Pi M}^{\simeq}(\Pi N, \Pi N),\]
this shows that topological (resp. smooth) deck transformations are equivalently the deck transformations by homotopy equivalences on their underlying homotopy types.

\begin{remark}
    The analogous statement fails for higher coverings, in the sense that a cohesive $n$-covering of a manifold by a manifold is necessarily a $1$-covering. Indeed, let $M$ be a manifold and $p\colon N\to M$ a cohesive $n$-covering with $N$ a manifold. Then $M$ and $N$ are both $0$-truncated, so $p$ is $0$-truncated as well, a cohesive $1$-covering. Because $M\to \sh M$ is an effective epimorphism, $\sh p$ is a $1$-covering map in $\Sp$.
    \end{remark}
    
\printbibliography
\end{document}